\theoremstyle{plain}
\newtheorem{thm}{Theorem}[chapter]
\newtheorem{prop}[thm]{Proposition}
\newtheorem{conj}[thm]{Conjecture}
\newtheorem{cor}[thm]{Corollary}
\newtheorem{lemma}[thm]{Lemma}
\newtheorem*{question}{Question}
\newtheorem*{thmu}{Theorem}
\newtheorem*{SardSmale}{Sard-Smale theorem}
\theoremstyle{definition}
\newtheorem{example}[thm]{Example}
\newtheorem{exercise}[thm]{Exercise}
\newtheorem{defn}[thm]{Definition}
\newtheorem{defs}[thm]{Definitions}
\newtheorem*{notation}{Notation}
\theoremstyle{remark}
\newtheorem{remark}[thm]{Remark}
\newcommand{\Area}{\operatorname{Area}}
\newcommand{\Aut}{\operatorname{Aut}}
\newcommand{\aut}{\mathfrak{aut}}
\newcommand{\codim}{\operatorname{codim}}
\newcommand{\coker}{\operatorname{coker}}
\newcommand{\Crit}{\operatorname{Crit}}
\newcommand{\crit}{{\operatorname{crit}}}
\newcommand{\Diff}{\operatorname{Diff}}
\newcommand{\dist}{\operatorname{dist}}
\newcommand{\dR}{{\operatorname{dR}}}
\newcommand{\End}{\operatorname{End}}
\newcommand{\ev}{\operatorname{ev}}
\newcommand{\Fred}{\operatorname{Fred}}
\newcommand{\Hom}{\operatorname{Hom}}
\newcommand{\Id}{{\operatorname{Id}}}
\renewcommand{\Im}{\operatorname{Im}}
\newcommand{\im}{\operatorname{im}}
\newcommand{\ind}{\operatorname{ind}}
\newcommand{\Inj}{\operatorname{Inj}}
\newcommand{\inter}{\iota}
\newcommand{\loc}{{\operatorname{loc}}}
\newcommand{\Lie}{{\mathcal{L}}}
\newcommand{\pr}{\operatorname{pr}}
\renewcommand{\Re}{\operatorname{Re}}
\newcommand{\reg}{{\operatorname{reg}}}
\newcommand{\std}{_{\operatorname{std}}}
\newcommand{\Vectors}{\operatorname{Vec}}
\newcommand{\virdim}{\operatorname{vir-dim}}
\newcommand{\GL}{\operatorname{GL}}
\newcommand{\PSL}{\operatorname{PSL}}
\newcommand{\OT}{{\operatorname{OT}}}
\newcommand{\U}{\operatorname{U}}
\newcommand{\SL}{\operatorname{SL}}
\newcommand{\CC}{{\mathbb C}}
\newcommand{\DD}{{\mathbb D}}
\newcommand{\HH}{{\mathbb H}}
\newcommand{\JJ}{{\mathbb J}}
\newcommand{\NN}{{\mathbb N}}
\newcommand{\RR}{{\mathbb R}}
\newcommand{\ZZ}{{\mathbb Z}}
\newcommand{\bB}{{\mathcal B}}
\newcommand{\dD}{{\mathcal D}}
\newcommand{\eE}{{\mathcal E}}
\newcommand{\jJ}{{\mathcal J}}
\newcommand{\lL}{{\mathcal L}}
\newcommand{\mM}{{\mathcal M}}
\newcommand{\oO}{{\mathcal O}}
\newcommand{\pP}{{\mathcal P}}
\newcommand{\tT}{{\mathcal T}}
\newcommand{\uU}{{\mathcal U}}
\newcommand{\vV}{{\mathcal V}}
\newcommand{\zZ}{{\mathcal Z}}
\newcommand{\Jfix}{J_{\operatorname{fix}}}
\newcommand{\Jref}{J_{\operatorname{ref}}}
\newcommand{\metrics}{{\mathfrak M}}
\newcommand{\1}{\mathds{1}}
\newcommand{\p}{\partial}
\renewcommand{\dbar}{\bar{\partial}}
\newcommand{\Cinftyloc}{C^\infty_{\loc}}
\newcommand{\defin}[1]{\textbf{#1}}
\newcommand{\Mod}{{\mathcal M}}
\newcommand{\univ}{{\mathscr U}}
\numberwithin{equation}{chapter}
\numberwithin{section}{chapter}
\definecolor{blue}{rgb}{0,0,1}
\definecolor{red}{rgb}{1,0,0}
\definecolor{green}{rgb}{0,.7,0}
\title{Lectures on 
Holomorphic Curves in Symplectic and Contact Geometry \\
{\large (Work in progress---Version 3.2) \\
\today}}
\author{Chris Wendl}
\address{Department of Mathematics, University College London}
\email{c.wendl@ucl.ac.uk}
\begin{document}
\frontmatter

\maketitle

\pagestyle{empty}             
\begin{center}
\copyright 2014 by Chris Wendl \\
Paper or electronic copies for noncommercial use may be made freely without
explicit permission from the author.  All other rights reserved.
\end{center}
\pagebreak

\dominitoc
\tableofcontents

\chapter*{Preface}

The present book-in-progress began as a set of lecture notes
written at a furious pace to accompany a graduate course on
holomorphic curves that I taught at ETH Z\"urich in Spring 2009, and
repeated at the Humboldt-Universit\"at zu Berlin in the 2009-10 
Winter semester.  In both iterations of the course,
it quickly became clear that my
conceived objectives for the notes were not really attainable within the
length of the semester, but the project nonetheless took on a life of its own.
I have written these notes with the following specific goals in mind:
\begin{enumerate}
\item 
To give a solid but readable presentation of the analytical foundations of
closed holomorphic curves from a modern perspective;
\item
To use the above foundation to explain a few of the classic
applications to symplectic topology, such as Gromov's nonsqueezing theorem
\cite{Gromov} and McDuff's results on rational and ruled symplectic
$4$-manifolds \cite{McDuff:rationalRuled};
\item
To use the aforementioned ``modern perspective'' to generalize
everything as cleanly as possible to the case of punctured holomorphic curves, 
and then explain some applications to contact geometry such as the 
Weinstein conjecture \cite{Hofer:weinstein} and obstructions 
to symplectic fillings \cite{Wendl:fillable}.
\end{enumerate}
The choice of topics covered and their presentation is partly a function of 
my own preferences, as well as my perception of which gaps in the existing
literature seemed most in need of filling.  In particular, I have devoted
special attention to a few topics that seem fundamental but are not covered
in the standard book on this subject by McDuff and Salamon 
\cite{McDuffSalamon:Jhol}, e.g.~the structure of Teichm\"uller space
and of the moduli space of unparametrized holomorphic curves of
arbitrary genus, existence results for local $J$-holomorphic curves,
and regularity for moduli spaces with constrained derivatives.
My choice of applications is biased toward those
which I personally find the most beautiful and which admit proofs with
a very geometric flavor.  For most such results, there are important abstract
invariants lurking in the background, but one need not develop them fully
in order to understand the proofs, and for that reason I have left out
topics such as gluing analysis and Gromov-Witten theory, on which I would in 
any case have nothing to add to the superb coverage in \cite{McDuffSalamon:Jhol}.
In order to save space and energy, I have also included nothing about
holomorphic curves with boundary, but aimed to make up for this by devoting
the last third of the book to punctured holomorphic curves, a topic on which
there are still very few available expositions aimed at graduate students.

My personal attitude toward technical details is essentially that of a 
non-analyst who finds analysis important: what this means is
that I've tried very hard to create an accessible presentation that
is as complete as possible without boring readers who don't enjoy
analysis for its own sake.  In contrast to \cite{McDuffSalamon:Jhol}, 
I have not put the discussion of elliptic regularity in an appendix but
rather integrated it into the main exposition, where it is (I hope) less
likely to be ignored.  On the other hand, I have presented such details
in less generality than would be theoretically possible, in most places
only as much as seems essential for the geometric applications.
One example of this is the discussion in Chapter~\ref{chapter:local}
of a local representation formula that is both \emph{weaker} and
\emph{easier to prove} than the famous result of Micallef and
White \cite{MicallefWhite}, but still suffices for crucial applications
such as positivity of intersections.
If some hardcore analysts find this approach lazy, my hope is that 
at least as many hardcore topologists may benefit from it.

\subsection*{About the current version}

Versions~1 and~2 of these notes were the versions written to accompany
the actual lecture course, and version~3 is the first major revision in book 
form, available on the arXiv at \url{http://arxiv.org/abs/1011.1690}.
Version~3.2 (v2 on the arXiv) has no new chapters that were not in
version~3.1, but it has a few substantial new sections on topics that were 
either not covered or only briefly mentioned in the previous version, including
the contractibility of the space of tame
almost complex structures (\S\ref{sec:compatible}), positivity of 
intersections (complete proofs of the local results underlying the
adjunction formula now appear in \S\ref{sec:positivity}),
transversality of the evaluation map (\S\ref{sec:evaluation}), and a proof
that ``generic holomorphic curves are immersed'' (\S\ref{sec:immersed}).
I have also made several small improvements to the exposition from
version~3.1, especially in Chapters~\ref{chapter:local} and~\ref{chapter:moduli},
and have corrected a number of misprints and minor errors that were
detected by diligent readers.

In all, this version contains a little over half of what I hope to
include in the finished product: there is not yet any serious material on contact
geometry (only a few main ideas sketched in the introduction), but the
development of the technical apparatus for closed holomorphic curves
is mostly complete.  The main thing still missing from this technical
development is Gromov's compactness theorem, though a simple case of it
is covered in Chapter~\ref{chapter:nonsqueezing} in order to prove
the nonsqueezing theorem.  I hope to add the chapter on Gromov
compactness in the next revision, along with further chapters covering
the special analytical properties of closed holomorphic curves in
dimension four, and applications to symplectic $4$-manifolds.

It should be mentioned that since the last revision nearly
four years ago, a substantial portion of the material that I eventually
plan to include in later chapters has appeared in other (shorter) sets
of lecture notes that were written for various minicourses.  In particular,
a comprehensive exposition of my perspective on 
McDuff's characterization of symplectic rational and ruled surfaces
now appears in \cite{Wendl:rationalRuled}, and 
some of the extensions
of these ideas to punctured holomorphic curves and contact $3$-manifolds
are covered in \cite{Wendl:Durham}.  Both are written with similar
target audiences in mind and should be readable by anyone who has
made it through the existing chapters of this book---in fact they
assume less technical background, but provide brief reviews of analytical
material that is treated here in much more detail.
It remains a long-term goal that the main topics covered in 
\cites{Wendl:rationalRuled,Wendl:Durham} should eventually be integrated
into the present manuscript in some form.

\subsection*{Acknowledgments}

I'd like to thank a number of people who have contributed useful comments,
ideas, explanations and encouragement on this project, including 
Peter Albers, Jonny Evans,
Joel Fish, Paolo Ghiggini, Janko Latschev, Klaus Mohnke,
Dietmar Salamon, and Sam Lisi.  I would also especially like to thank
Patrick Massot and Urs Fuchs for careful reading which led to some
important improvements and corrections.

A very large portion of what I know about this subject was
originally imparted to me by Helmut Hofer, whose unpublished manuscript
with Casim Abbas \cite{AbbasHofer} has also been an invaluable resource
for me.  Other invaluable resources worth mentioning include of course
\cite{McDuffSalamon:Jhol}, as well as the expository article \cite{Sikorav}
by Sikorav.

\subsection*{Request}

As should by now be obvious, these notes are work in progress,
and as such I welcome comments, questions, suggestions and corrections
from anyone making the effort to read them.  These may be sent to
\texttt{c.wendl@ucl.ac.uk}.

\adjustmtc

\chapter*{A Note on Terminology}

Unless otherwise specified, whenever we deal with objects such as manifolds 
and vector or fiber bundles that differential geometers normally assume to be
smooth and/or finite dimensional, the reader may assume that they are both.
When infinite-dimensional objects arise, we will either state explicitly that
they are infinite dimensional, or use standard functional analytic terms such
as \emph{Banach manifold} and
\emph{Banach space bundle}.  Similarly, maps on manifolds and sections of
bundles (including e.g.~complex and symplectic structures) should normally 
be assumed smooth unless otherwise specified, with the notation
$\Gamma(E)$ used to denote the space of sections of a bundle~$E$.

\adjustmtc

\mainmatter
\pagestyle{myheadings}

\chapter{Introduction}
\label{chapter:intro}

\minitoc
\vspace{12pt}

\section{Warm up: Holomorphic curves in $\CC^n$}
\label{sec:warmup}

The main subject of these notes is a certain interplay between
\emph{symplectic} structures and \emph{complex} (or rather \emph{almost
complex}) structures on smooth manifolds.  To illustrate the connection, 
we consider first the special case of holomorphic curves in~$\CC^n$.

If $\uU \subset \CC^m$ is an open subset and $u : \uU \to \CC^n$ is a smooth
map, we say that $u$ is \defin{holomorphic} if its partial derivatives
$\frac{\p u}{\p z_j}$ all exist for $i=j,\ldots,m$, i.e.~the limits
$$
\frac{\p u}{\p z_j} = \lim_{h \to 0} \frac{u(z_1,\ldots,z_{j-1},z_j + h,
z_{j+1},\ldots,z_m) - u(z_1,\ldots,z_m)}{h}
$$
exist, where $h$ is complex.  This is the obvious generalization of the
notion of an analytic function of one complex variable, and leads to an
obvious generalization of the usual Cauchy-Riemann equations.

We will find the following equivalent formulation quite useful.  Let us
identify $\CC^n = \RR^{2n}$ by regarding $(z_1,\ldots,z_n) \in \CC^n$
as the real vector 
$$
(p_1,q_1,\ldots,p_n,q_n) \in \RR^{2n},
$$
where $z_j = p_j + i q_j$ for $j = 1,\ldots,n$.  
Then at every point
$z \in \uU \subset \CC^m$, our smooth map
$u : \uU \to \CC^n$ has a differential $du(z) : \CC^m \to \CC^n$, which is
in general a \emph{real}-linear map $\RR^{2m} \to \RR^{2n}$.  
Observe also that for any number
$\lambda \in \CC$, the complex scalar multiplication
$$
\CC^n \to \CC^n : z \mapsto \lambda z
$$
defines a real-linear map from $\RR^{2n}$ to itself.
It turns out that
$u$ is holomorphic if and only if its differential at every point is
also \emph{complex}-linear: in particular it must satisfy
$du(z) \lambda V = \lambda \cdot du(z) V$ for every $V \in \CC^m$ and
$\lambda \in \CC$.  Since $du(z)$ is already real-linear, it suffices
to check that $du(z)$ behaves appropriately with respect to multiplication
by~$i$,~i.e.
\begin{equation}
\label{eqn:CRintegrable}
du(z) \circ i = i \circ du(z),
\end{equation}
where we regard multiplication by $i$ as a linear map on
$\RR^{2m}$ or $\RR^{2n}$.

\begin{exercise}
Show that \eqref{eqn:CRintegrable} is equivalent to the usual Cauchy-Riemann
equations for smooth maps $u : \uU \to \CC^n$.
\end{exercise}

If $m=1$, so $\uU$ is an open subset of $\CC$, we refer to holomorphic maps
$u : \uU \to \CC^n$ as \defin{holomorphic curves} in~$\CC^n$.  The choice of
wording is slightly unfortunate if you like to think in terms of \emph{real}
geometry---after all, the image of $u$ looks more like a surface than
a curve.  But we call $u$ a ``curve'' because, in complex terms, it
is a one-dimensional object.

That said, let us think of holomorphic curves for the moment as 
real $2$-dimensional
objects and ask a distinctly real $2$-dimensional question: what is the
area traced out by $u : \uU \to \CC^n$?  Denote points in $\uU$ by
$s + it$ and think of $u$ as a function of the two real variables~$(s,t)$,
with values in $\RR^{2n}$.  In these coordinates, the action of $i$ on
vectors in $\CC = \RR^2$ can be expressed succinctly by the relation
$$
i \p_s = \p_t.
$$
We first have to compute the area of the parallelogram in $\RR^{2n}$ spanned
by $\p_s u(s,t)$ and $\p_t u(s,t)$.  The Cauchy-Riemann equation
\eqref{eqn:CRintegrable} makes this easy, because 
$$
\p_t u(s,t) = du(s,t) \p_t = du(s,t) i \p_s = i\,du(s,t) \p_s =
i\,\p_s u(s,t),
$$
which implies that $\p_s u(s,t)$ and $\p_t u(s,t)$ are orthogonal vectors of
the same length.  Thus the area of $u$ is
$$
\Area(u) = \int_{\uU} |\p_s u| |\p_t u|\,ds\,dt =
\frac{1}{2} \int_{\uU} \left( |\p_s u|^2 + |\p_t u|^2 \right) \,ds \,dt,
$$
where we've used the fact that $|\p_s u| = |\p_t u|$ to write things slightly
more symmetrically.  Notice that the right hand side is really an 
\emph{analytical} quantity: up to a constant it is the square of the
$L^2$-norm of the first derivative of~$u$.  

Let us now write this area in a slightly different, more topological way.
If $\langle\ ,\ \rangle$ denotes the standard Hermitian inner product on
$\CC^n$, notice that one can define a differential $2$-form on $\RR^{2n}$
by the expression
$$
\omega\std(X,Y) = \Re \langle i X, Y \rangle.
$$
Writing points in $\CC^n$ via the coordinates 
$(p_1 + i q_1,\ldots,p_n + i q_n)$, one can show that $\omega\std$ in these
coordinates takes the form
\begin{equation}
\label{eqn:omega0}
\omega\std = \sum_{j=1}^n dp_j \wedge dq_j.
\end{equation}
\begin{exercise}
\label{ex:omega0}
Prove \eqref{eqn:omega0}, and then show that $\omega\std$ has the following
three properties:
\begin{enumerate}
\item It is \emph{nondegenerate}: $\omega\std(V,\cdot) = 0$ for some vector
$V$ if and only if $V=0$.  Equivalently, for each $z \in \RR^{2n}$, the
map $T_z \RR^{2n} \to T_z^* \RR^{2n} : V \mapsto \omega\std(V,\cdot)$ is an
isomorphism.
\item It is \emph{closed}: $d\omega\std = 0$.
\item The $n$-fold product $\omega\std^n = \omega\std \wedge \ldots \wedge
\omega\std$ is a constant multiple of the natural volume form on~$\RR^{2n}$.
\end{enumerate}
\end{exercise}
\begin{exercise}
\label{ex:volume}
Show that a $2$-form $\omega$ on $\RR^{2n}$ (and hence on any $2n$-dimensional
manifold) is nondegenerate if and only if $\omega^n$ is a volume form.
\end{exercise}

Using $\omega\std$, we see that the area of the parallelogram above is also
$$
|\p_s u| \cdot |\p_t u| = |\p_t u|^2 = \Re \langle \p_t u, \p_t u \rangle =
\Re \langle i \p_s u , \p_t u \rangle = \omega\std(\p_s u,\p_t u),
$$
thus
\begin{equation}
\label{eqn:area}
\Area(u) = \| du \|_{L^2}^2 = \int_{\uU} u^*\omega\std.
\end{equation}
This is the first appearance of symplectic geometry in our study of
holomorphic curves; we call $\omega\std$ the \defin{standard symplectic form}
on $\RR^{2n}$.  The point is that the expression on the right hand side of
\eqref{eqn:area} is essentially topological: it depends only on the evaluation
of a certain closed $2$-form on the $2$-chain defined by $u(\uU)$.  The
present example is trivial because we're only working in $\RR^{2n}$, but as
we'll see later in more interesting examples, one can often find an easy
topological bound on this integral, which by \eqref{eqn:area} implies a
bound on the analytical quantity $\| du \|_{L^2}^2$.  One can use
this to derive compactness results for spaces of holomorphic curves,
which then encode symplectic topological information about the space in
which these curves live.  We'll come back to this theme again and again.

\section{Hamiltonian systems and symplectic manifolds}
\label{sec:Hamiltonian}

To motivate the study of symplectic manifolds in general, let us see how
symplectic structures arise naturally in classical mechanics.  We shall
only sketch the main ideas here; a good comprehensive introduction may be
found in \cite{Arnold}.

Consider
a mechanical system with ``$n$ degrees of freedom'' moving under the
influence of a Newtonian potential~$V$.  This means there are $n$
``position'' variables $q = (q_1,\ldots,q_n) \in \RR^n$, which are functions
of time~$t$ that satisfy the second order differential equation
\begin{equation}
\label{eqn:Newton}
m_i \ddot{q}_i = - \frac{\p V}{\p q_i},
\end{equation}
where $m_i > 0$ are constants representing the masses
of the various particles, and $V : \RR^n \to \RR$ is a smooth function,
the ``potential''.  The space $\RR^n$, through which the vector $q(t)$
moves, is called the \defin{configuration space} of the system.
The basic idea of Hamiltonian mechanics is to turn
this 2nd order system into a 1st order system by introducing an extra set of
``momentum'' variables $p = (p_1,\ldots,p_n) \in \RR^n$, where 
$p_i = m_i\dot{q}_i$.  The space $\RR^{2n}$ with coordinates $(p,q)$
is then called \defin{phase space}, and we define a real-valued function on
phase space called the \defin{Hamiltonian}, by
$$
H : \RR^{2n} \to \RR : (p,q) \mapsto \frac{1}{2} \sum_{i=1}^n \frac{p_i^2}{m_i}
+ V(q).
$$
Physicists will recognize this as the ``total energy'' of the system, but
its main significance in the present context is that the combination of the
second order system \eqref{eqn:Newton} with our definition of $p$ 
is now equivalent to the $2n$ first order equations,
\begin{equation}
\label{eqn:Hamilton}
\dot{q}_i = \frac{\p H}{\p p_i},
\qquad
\dot{p}_i = -\frac{\p H}{\p q_i}.
\end{equation}
These are \defin{Hamilton's equations} for motion in phase space.

The motion of $x(t) := (p(t),q(t))$ in $\RR^{2n}$ can be described in more
geometric terms: it is an orbit of the vector field
\begin{equation}
\label{eqn:XHexplicit}
X_H(p,q) = \sum_{i=1}^n \left( \frac{\p H}{\p p_i} \frac{\p}{\p q_i} -
\frac{\p H}{\p q_i} \frac{\p}{\p p_i} \right).
\end{equation}
As we'll see in a moment, vector fields of this form have some important 
properties that have nothing
to do with our particular choice of the function~$H$, thus it is sensible
to call any vector field defined by this formula (for an arbitrary smooth
function $H : \RR^{2n} \to \RR$) a \defin{Hamiltonian vector field}.
This is where the symplectic structure enters the story.

\begin{exercise}
\label{ex:characterize}
Show that the vector field $X_H$ of \eqref{eqn:XHexplicit} 
can be characterized as
the unique vector field on $\RR^{2n}$ that satisfies 
$\omega\std(X_H,\cdot) = - dH$.
\end{exercise}

The above exercise shows that the symplectic structure makes it 
possible to write down a much
simplified definition of the Hamiltonian vector field.  Now we can
already prove something slightly impressive.

\begin{prop}
\label{prop:flowSymplectic}
The flow $\varphi_H^t$ of $X_H$ satisfies
$(\varphi_H^t)^*\omega\std = \omega\std$ for all~$t$.
\end{prop}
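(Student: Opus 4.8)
The plan is to use Cartan's formula together with the two defining properties of $\omega\std$ established in Exercise~\ref{ex:omega0}, namely closedness and the characterization of $X_H$ from Exercise~\ref{ex:characterize}. Recall that for a vector field $X$ with flow $\varphi^t$, the Lie derivative satisfies $\frac{d}{dt}(\varphi^t)^*\alpha = (\varphi^t)^*\Lie_X\alpha$ for any differential form $\alpha$; in particular, if $\Lie_{X_H}\omega\std = 0$ then $(\varphi_H^t)^*\omega\std$ is constant in $t$, and since $\varphi_H^0 = \Id$ this constant value is $\omega\std$.

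So the key step is to show $\Lie_{X_H}\omega\std = 0$. First I would invoke Cartan's magic formula $\Lie_{X_H}\omega\std = d(\inter_{X_H}\omega\std) + \inter_{X_H}(d\omega\std)$. The second term vanishes because $\omega\std$ is closed ($d\omega\std = 0$, by Exercise~\ref{ex:omega0}(2)). For the first term, I would use Exercise~\ref{ex:characterize}, which tells us precisely that $\inter_{X_H}\omega\std = \omega\std(X_H,\cdot) = -dH$. Hence $d(\inter_{X_H}\omega\std) = d(-dH) = -d^2 H = 0$. Combining these, $\Lie_{X_H}\omega\std = 0$, which is the whole content of the argument.

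I expect the only real obstacle to be a bookkeeping one: making sure the sign conventions in the interior-product/Lie-derivative identities match those used implicitly in the earlier exercises, and (if one wants to be fully self-contained) justifying the formula $\frac{d}{dt}(\varphi^t)^*\alpha = (\varphi^t)^*\Lie_X\alpha$ rather than treating it as known. Everything else is a direct substitution of the two cited exercises into Cartan's formula. One could also give a more hands-on proof by differentiating $(\varphi_H^t)^*\omega\std$ directly in coordinates using \eqref{eqn:XHexplicit} and \eqref{eqn:omega0}, but the Cartan-formula route is cleaner and generalizes immediately to any symplectic manifold, so that is the approach I would present.
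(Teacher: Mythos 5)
Your proposal is correct and follows essentially the same route as the paper: Cartan's formula combined with the characterization $\iota_{X_H}\omega\std = -dH$ from Exercise~\ref{ex:characterize} and the closedness of $\omega\std$ yields $\Lie_{X_H}\omega\std = -d^2H = 0$. The only difference is that you spell out the step from $\Lie_{X_H}\omega\std = 0$ to $(\varphi_H^t)^*\omega\std = \omega\std$ via $\frac{d}{dt}(\varphi^t)^*\alpha = (\varphi^t)^*\Lie_X\alpha$, which the paper leaves implicit.
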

\begin{proof}
Using Cartan's formula for the Lie derivative of a form, together with
the characterization of $X_H$ in Exercise~\ref{ex:characterize} and
the fact that $\omega\std$ is closed, we compute
$\Lie_{X_H}\omega\std = d \iota_{X_H}\omega\std + \iota_{X_H} d\omega\std = -d^2 H = 0$.
\end{proof}

By Exercise~\ref{ex:omega0}, one can compute volumes on
$\RR^{2n}$ by integrating the $n$-fold product $\omega\std \wedge \ldots
\wedge \omega\std$, thus an immediate consequence of 
Prop.~\ref{prop:flowSymplectic} is the following:

\begin{cor}[Liouville's theorem]
The flow of $X_H$ is volume preserving.
\end{cor}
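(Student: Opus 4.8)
The plan is to deduce this directly from Proposition~\ref{prop:flowSymplectic} together with the third property of $\omega\std$ established in Exercise~\ref{ex:omega0}. That exercise shows that the $n$-fold wedge product $\omega\std^n := \omega\std \wedge \cdots \wedge \omega\std$ equals $c \cdot \mu$, where $\mu = dp_1 \wedge dq_1 \wedge \cdots \wedge dp_n \wedge dq_n$ is the standard volume form on $\RR^{2n}$ and $c \neq 0$ is a constant (in fact $c = n!$, though its precise value plays no role). It therefore suffices to show that the flow $\varphi_H^t$ preserves $\omega\std^n$, equivalently that it preserves $\mu$.

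First I would use the fact that pullback of differential forms commutes with the wedge product, i.e.\ $f^*(\alpha \wedge \beta) = f^*\alpha \wedge f^*\beta$ for any smooth map $f$. Applying this repeatedly yields $(\varphi_H^t)^*(\omega\std^n) = \bigl( (\varphi_H^t)^* \omega\std \bigr)^n$. Proposition~\ref{prop:flowSymplectic} says $(\varphi_H^t)^*\omega\std = \omega\std$, so the right-hand side is simply $\omega\std^n$, and dividing by the constant $c$ gives $(\varphi_H^t)^* \mu = \mu$ for every $t$.

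Finally, to phrase this as the assertion that the flow is volume preserving, I would recall that a diffeomorphism $f$ of an oriented manifold is called volume preserving precisely when $f^*\mu = \mu$, which by the change-of-variables formula is equivalent to $\Vol(f(A)) = \Vol(A)$ for every measurable set~$A$. One minor point to keep in mind: the Hamiltonian flow is in general only a \emph{local} flow, so strictly speaking the conclusion is that for each~$t$ the map $\varphi_H^t$ is volume preserving on its domain of definition, which is all the statement is meant to claim. I do not expect any genuine obstacle here: the entire content of the corollary is the algebraic identity $(\varphi_H^t)^*(\omega\std^n) = \omega\std^n$, which follows immediately from the two cited results.
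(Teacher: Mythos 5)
Your proof is correct and follows exactly the route the paper intends: Proposition~\ref{prop:flowSymplectic} gives $(\varphi_H^t)^*\omega\std = \omega\std$, and since pullback commutes with wedge products, the flow preserves $\omega\std^n$, which by Exercise~\ref{ex:omega0} is a constant multiple of the standard volume form. Nothing is missing.
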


Notice that in most of this discussion we've not used our precise knowledge
of the $2$-form $\omega\std$ or function~$H$.  Rather, we've used the fact
that $\omega\std$ is nondegenerate (to characterize $X_H$ via $\omega\std$
in Exercise~\ref{ex:characterize}), and
the fact that it's closed (in the proof of Prop.~\ref{prop:flowSymplectic}).
It is therefore natural to generalize as follows.

\begin{defs}
\label{defn:symplectic}
A \defin{symplectic form} on a $2n$-dimensional manifold $M$ is a smooth
differential $2$-form $\omega$ that is both closed and nondegenerate.
The pair $(M,\omega)$ is then called a \defin{symplectic manifold}.
Given a smooth function $H : M \to \RR$, the corresponding \defin{Hamiltonian
vector field} is defined to be the unique vector field
$X_H \in \Vectors(M)$ such that\footnote{Some sources in the literature
define $X_H$ by $\omega(X_H,\cdot) = dH$, in which case
one must choose different sign conventions for the orientation of phase space
and definition of~$\omega\std$.  One must always be careful not to mix sign
conventions from different sources---that way you could prove anything!}
\begin{equation}
\label{eqn:XH}
\omega(X_H,\cdot) = -dH.
\end{equation}
For two symplectic manifolds $(M_1,\omega_1)$ and $(M_2,\omega_2)$, a smooth
map $\varphi : M_1 \to M_2$ is called \defin{symplectic} if 
$\varphi^*\omega_2 = \omega_1$.  If $\varphi$ is a symplectic embedding,
then we say that $\varphi(M_1)$ is a \defin{symplectic submanifold} of
$(M_2,\omega_2)$.  If $\varphi$ is symplectic and is also a
diffeomorphism, it is called a \defin{symplectomorphism}, and we then say
that $(M_1,\omega_1)$ and $(M_2,\omega_2)$ are \defin{symplectomorphic}.
\end{defs}

Repeating verbatim the argument of Prop.~\ref{prop:flowSymplectic}, we see
now that any Hamiltonian vector field on a symplectic manifold $(M,\omega)$
defines a smooth $1$-parameter family of symplectomorphisms.  If we define
volumes on $M$ by integrating the $2n$-form $\omega^n$
(see Exercise~\ref{ex:volume}), then
all symplectomorphisms are volume preserving---in particular this applies
to the flow of~$X_H$.

\begin{remark}
An odd-dimensional manifold can never admit a nondegenerate $2$-form.
(Why not?)
\end{remark}

\section{Some favorite examples}
\label{sec:examples}

We now give a few examples of symplectic manifolds (other than
$(\RR^{2n},\omega\std)$) which will be useful to have in mind.

\begin{example}
Suppose $N$ is any smooth $n$-manifold and $(q_1,\ldots,q_n)$ are a
choice of coordinates on an open subset $\uU \subset N$.  These naturally
define coordinates $(p_1,\ldots,p_n,q_1,\ldots,q_n)$
on the cotangent bundle $T^*\uU \subset T^*N$, where an arbitrary
cotangent vector at $q \in \uU$ is expressed as
$$
p_1 \,dq_1 + \ldots + p_n \,dq_n.
$$
Interpreted differently, this expression also defines a smooth $1$-form
on $T^*\uU$; we abbreviate it by $p\,dq$.
\begin{exercise}
Show that the $1$-form $p\,dq$ doesn't actually 
depend on the choice of coordinates $(q_1,\ldots,q_n)$.
\end{exercise}
What the above exercise reveals is that $T^*N$ globally
admits a \emph{canonical} $1$-form $\lambda$, whose expression in 
the local coordinates $(p,q)$ always looks
like $p\,dq$.  Moreover, $d\lambda$ is clearly a symplectic form, as it
looks exactly like \eqref{eqn:omega0} in coordinates.  We call this the
canonical symplectic form on $T^*N$.  Using this symplectic structure, the
cotangent bundle can be thought of as the ``phase space'' of a smooth
manifold, and is a natural setting for studying Hamiltonian systems when
the configuration space is something other than a Euclidean vector 
space (e.g.~a ``constrained'' mechanical system).
\end{example}

\begin{example}
On any oriented surface $\Sigma$, a $2$-form $\omega$ is symplectic if 
and only if it is
an area form, and the symplectomorphisms are precisely the area-preserving 
diffeomorphisms.  Observe that one can always find area-preserving 
diffeomorphisms between small open subsets of $(\RR^2,\omega\std)$
and $(\Sigma,\omega)$, thus every point in $\Sigma$ has a neighborhood
admitting local coordinates $(p,q)$ in which $\omega = dp \wedge dq$.
\end{example}

\begin{example}
\label{ex:CPn}
A more interesting example of a closed symplectic manifold is the
$n$-dimensional complex projective space $\CC P^n$.  This is both a
real $2n$-dimensional symplectic manifold and a complex $n$-dimensional
manifold, as we will now show.  By definition, $\CC P^n$ is the space of
complex lines in $\CC^{n+1}$, which we can express in two equivalent
ways as follows:
$$
\CC P^n = ( \CC^{n+1} \setminus \{0\}) / \CC^* = S^{2n + 1} / S^1.
$$
In the first case, we divide out the natural free action (by scalar
multiplication) of the multiplicative group
$\CC^* := \CC \setminus\{0\}$ on $\CC^{n+1} \setminus \{0\}$, and the
second case is the same thing but restricting to the unit sphere
$S^{2n + 1} \subset \CC^{n+1} = \RR^{2n+2}$ and unit circle
$S^1 \subset \CC = \RR^2$.  To define a symplectic form, consider first
the $1$-form $\lambda$ on $S^{2n+1}$ defined for $z \in S^{2n+1} \subset
\CC^{n+1}$ and $X \in T_z S^{2n+1} \subset \CC^{n+1}$ by
$$
\lambda_z(X) = \langle iz , X \rangle,
$$
where $\langle\ ,\ \rangle$ is the standard Hermitian inner product on
$\CC^{n+1}$.  (Take a moment to convince yourself that this expression is
always real.)  Since $\lambda$ is clearly invariant under the $S^1$-action
on $S^{2n + 1}$, the same is true for the closed $2$-form $d\lambda$,
which therefore descends to a closed $2$-form $\omega\std$ on $\CC P^n$.
\begin{exercise}
Show that $\omega\std$ as defined above is symplectic.
\end{exercise}

The complex manifold structure of $\CC P^n$ can be seen explicitly by
thinking of points in $\CC P^n$ as equivalence classes of vectors
$(z_0,\ldots,z_n) \in \CC^{n+1} \setminus \{0\}$, 
with two vectors equivalent if they are complex multiples of each other.
We will always write the equivalence class represented 
by $(z_0,\ldots,z_n) \in \CC^{n+1}\setminus\{0\}$ as
$$
[ z_0 : \ldots : z_n ] \in \CC P^n.
$$
Then for each $k = 0,\ldots,n$, there is an embedding
\begin{equation}
\label{eqn:complexChart}
\iota_k : \CC^n \hookrightarrow \CC P^n : (z_1,\ldots,z_n) \mapsto
[ z_1 : \ldots,z_{k-1} : 1 : z_k : \ldots : z_n ],
\end{equation}
whose image is the complement of the subset
$$
\CC P^{n-1} \cong \{ [ z_1 : \ldots : z_{k-1} : 0 : z_k : \ldots : z_n ] \in 
\CC P^n \ |\ (z_1,\ldots,z_n) \in \CC^n \}.
$$
\begin{exercise}
Show that if the maps $\iota_k^{-1}$ are thought of as complex coordinate
charts on open subsets of $\CC P^n$, then the transition maps
$\iota_k^{-1} \circ \iota_j$ are all holomorphic.
\end{exercise}
By the exercise, $\CC P^n$ naturally carries the structure of a complex 
manifold such that the embeddings $\iota_k : \CC^n \to \CC P^n$ are 
holomorphic.  Each of these embeddings also 
defines a decomposition of $\CC P^n$ into
$\CC^n \cup \CC P^{n-1}$, where $\CC P^{n-1}$ is a complex submanifold of
(complex) codimension one.
The case $n=1$ is particularly enlightening,
as here the decomposition becomes $\CC P^1 = \CC \cup \{ \text{point} \}
\cong S^2$; this is simply the Riemann sphere with its natural complex
structure, where the ``point at infinity'' is $\CC P^0$.  
In the case $n=2$, we have $\CC P^2 \cong \CC^2 \cup \CC P^1$, and we'll
occasionally refer to the complex submanifold $\CC P^1 \subset \CC P^2$
as the ``sphere at infinity''.
\end{example}

We continue for a moment with the example of $\CC P^n$ in order to observe
that it contains an abundance of holomorphic spheres.  Take for instance
the case $n=2$: then for any $\zeta \in \CC$, we claim that the holomorphic
embedding
$$
u_\zeta : \CC \to \CC^2 : z \mapsto (z,\zeta)
$$
extends naturally to a holomorphic embedding of $\CC P^1$ in $\CC P^2$.
Indeed, using $\iota_2$ to include $\CC^2$ in $\CC P^2$,
$u_\zeta(z)$ becomes the point $[z : \zeta : 1 ] = [ 1 : \zeta/z : 1/z ]$, and
as $z \to \infty$, this converges to the point $x_0 := [ 1 : 0 : 0 ]$ 
in the sphere
at infinity.  One can check using alternate charts that this extension is
indeed a holomorphic map.  The collection of all these embeddings
$u_\zeta : \CC P^1 \to \CC P^2$ thus gives a very nice decomposition of
$\CC P^2$: together with the sphere at infinity, they foliate the region
$\CC P^2 \setminus \{ x_0 \}$, but all intersect precisely at~$x_0$
(see Figure~\ref{fig:CP2}).
This decomposition will turn out to be crucial in the proof of
Theorem~\ref{thm:CP2}, stated below.

\begin{figure}
\includegraphics{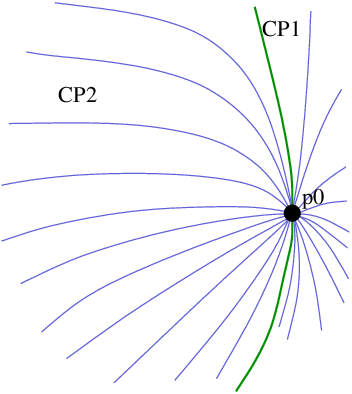}
\caption{\label{fig:CP2} $\CC P^2 \setminus \{x_0\}$ is foliated by
holomorphic spheres that all intersect at~$x_0$.}
\end{figure}

\section{Darboux's theorem and the Moser deformation trick}
\label{sec:Moser}

In Riemannian geometry, two Riemannian manifolds of the same dimension with
different metrics can have quite different local structures: there can be
no isometries between them, not even \emph{locally}, unless they have the
same curvature.  The following basic result of symplectic geometry
shows that in the symplectic world, things are quite different.  We will
give a proof using the beautiful \emph{Moser deformation trick}, which
has several important applications throughout symplectic and
contact geometry, as we'll soon see.\footnote{An alternative approach
to Darboux's theorem may be found in \cite{Arnold}.}

\begin{thm}[Darboux's theorem]
Near every point in a symplectic manifold $(M,\omega)$, there are local
coordinates
$(p_1,\ldots,p_n,q_1,\ldots,q_n)$ in which $\omega = \sum_i dp_i \wedge dq_i$.
\end{thm}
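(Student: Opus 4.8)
The plan is to prove this via the \emph{Moser deformation trick}. First I would reduce to a local statement in $\RR^{2n}$: choosing any smooth chart centered at the given point $x_0 \in M$ and transporting $\omega$ through it, it suffices to show that a closed, nondegenerate $2$-form $\omega$ defined near $0 \in \RR^{2n}$ can be related to $\omega\std = \sum_i dp_i \wedge dq_i$ by a diffeomorphism fixing the origin; composing this diffeomorphism with the original chart then provides the desired coordinate map. Next I would invoke the linear algebra of nondegenerate skew-symmetric bilinear forms — every such form on $\RR^{2n}$ is carried to the standard one by a linear isomorphism — to perform one further linear change of coordinates, after which $\omega|_0 = \omega\std|_0$ as bilinear forms on $T_0\RR^{2n}$.

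Now comes the Moser argument proper. I would set $\omega_t := \omega\std + t(\omega - \omega\std)$ for $t \in [0,1]$. Each $\omega_t$ is closed, and each agrees with $\omega\std$ at the origin, hence is nondegenerate there; since nondegeneracy is an open condition and $[0,1]$ is compact, there is a single neighborhood of $0$ on which $\omega_t$ is nondegenerate for all $t$ simultaneously. Since $\omega - \omega\std$ is closed and a ball around $0$ is contractible, the Poincaré lemma gives a $1$-form $\lambda$ with $d\lambda = \omega - \omega\std$, and the standard cone homotopy operator moreover produces $\lambda$ with $\lambda|_0 = 0$. Using nondegeneracy of $\omega_t$, I would then define a time-dependent vector field $X_t$ by $\iota_{X_t}\omega_t = -\lambda$; it depends smoothly on $t$ and vanishes at the origin because $\lambda$ does.

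Let $\psi_t$ denote the flow of $X_t$ with $\psi_0 = \Id$. Because $X_t(0) = 0$, the origin is a fixed point and the flow is defined for all $t \in [0,1]$ on a possibly smaller neighborhood of $0$, fixing $0$. Using Cartan's formula exactly as in Proposition~\ref{prop:flowSymplectic}, together with $d\omega_t = 0$ and the defining equation for $X_t$,
\[
\frac{d}{dt}\bigl(\psi_t^*\omega_t\bigr)
= \psi_t^*\Bigl(\Lie_{X_t}\omega_t + \frac{d}{dt}\omega_t\Bigr)
= \psi_t^*\bigl(d\,\iota_{X_t}\omega_t + (\omega - \omega\std)\bigr)
= \psi_t^*\bigl(-d\lambda + d\lambda\bigr) = 0,
\]
so $\psi_1^*\omega = \psi_1^*\omega_1 = \omega_0 = \omega\std$. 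Pulling back through the preliminary linear map and the original chart, $\psi_1$ yields coordinates in which $\omega = \sum_i dp_i \wedge dq_i$.

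The step I expect to be the main obstacle is not any single computation but the careful bookkeeping of shrinking neighborhoods: one must verify that a common neighborhood of $0$ can be chosen on which $\omega_t$ stays nondegenerate for every $t \in [0,1]$, and — more delicately — that the flow of the time-dependent field $X_t$ actually survives until time $1$ on some neighborhood of $0$. This last point is exactly where the normalization $\lambda|_0 = 0$, hence $X_t(0) = 0$, is indispensable; without a fixed point there is no guarantee the flow does not escape the domain before $t = 1$.
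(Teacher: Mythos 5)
Your proof is correct and follows essentially the same route as the paper: the Moser deformation trick applied to $\omega_t$ after a preliminary linear normalization at the origin. The only (cosmetic) difference is that you take a primitive $\lambda$ of $\omega - \omega\std$ vanishing at $0$ directly via the cone operator, whereas the paper primitivizes $\omega$ itself and interpolates the primitives, arriving at the same equation $\iota_{Y_t}\omega_t = -\dot{\lambda}_t$ with $\dot{\lambda}_t$ vanishing at the origin.
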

\begin{proof}
Denote by $(p_1,\ldots,p_n,q_1,\ldots,q_n)$ the standard coordinates
on $\RR^{2n}$ and define the standard symplectic form $\omega\std$
by \eqref{eqn:omega0}; this is the exterior derivative of the $1$-form
$$
\lambda\std = \sum_j p_j\,dq_j.
$$
Since the statement in the theorem is purely
local, we can assume (by choosing local coordinates) that
$M$ is an open neighborhood of the origin in $\RR^{2n}$, on which
$\omega$ is any closed, nondegenerate $2$-form.  Then it will suffice
to find two open neighborhoods $\uU, \uU_0 \subset \RR^{2n}$ of~$0$,
and a diffeomorphism
$$
\varphi : \uU_0 \to \uU
$$
preserving~$0$
such that $\varphi^*\omega = \omega\std$.  Using Exercise~\ref{EX:sympLinAlg}
below (the ``linear Darboux's theorem''), we can also assume after a linear
change of coordinates that $\varphi^*\omega$ and $\omega\std$ match
at the origin.

The idea behind the Moser trick is now the following bit of optimism:
we assume that the desired diffeomorphism $\varphi$ is the time~$1$ flow
of a time-dependent vector field defined near~$0$, and derive conditions
that this vector field must satisfy.  In fact, we will be a bit more
ambitious: consider the smooth $1$-parameter family of $2$-forms
$$
\omega_t = t \omega + (1 - t) \omega\std,
\qquad
t \in [0,1]
$$
which interpolate between $\omega\std$ and $\omega$.  These are all
obviously closed, and if we restrict to a sufficiently small neighborhood
of the origin then they are near $\omega\std$ and thus nondegenerate.
Our goal is to find a time-dependent vector field $Y_t$ on some neighborhood
of~$0$, for $t \in [0,1]$, whose flow $\varphi_t$ is well defined on some 
smaller neighborhood of~$0$ and satisfies
$$
\varphi_t^*\omega_t = \omega\std
$$
for all $t \in [0,1]$.  Differentiating this expression with respect to~$t$
and writing $\dot{\omega}_t := \frac{\p}{\p t}\omega_t$, we find
$$
\varphi_t^* \Lie_{Y_t}\omega_t + \varphi_t^*\dot{\omega}_t = 0,
$$
which by Cartan's formula and the fact that $\omega_t$ is closed
and $\varphi_t$ is a diffeomorphism, implies
\begin{equation}
\label{eqn:Moser}
d \iota_{Y_t}\omega_t + \dot{\omega}_t = 0.
\end{equation}
At this point it's useful to observe that if we restrict to a contractible
neighborhood of the origin, $\omega$ (and hence also $\omega_t$) is exact:
let us write
$$
\omega = d\lambda.
$$
Moreover, by adding a constant $1$-form, we can choose $\lambda$ so that
it matches $\lambda\std$ at the origin.  Now if $\lambda_t := t\lambda +
(1-t) \lambda\std$, we have $d\lambda_t = \omega_t$, and
$\dot{\lambda}_t := \frac{\p}{\p t}\lambda_t = \lambda - \lambda\std$
vanishes at the origin.  Plugging this into \eqref{eqn:Moser}, we see now
that it suffices to find a vector field $Y_t$ satisfying
\begin{equation}
\label{eqn:Moser2}
\omega_t(Y_t,\cdot) = - \dot{\lambda}_t.
\end{equation}
Since $\omega_t$ is nondegenerate, this equation can be solved and determines
a unique vector field $Y_t$, which vanishes at the origin since 
$\dot{\lambda}_t$ does.  
The flow $\varphi_t$ therefore exists for all $t \in [0,1]$ on a
sufficiently small neighborhood of the origin, and $\varphi_1$ is the
desired diffeomorphism.
\end{proof}
\begin{exercise}
\label{EX:sympLinAlg}
The following linear version of Darboux's theorem is an easy exercise
in linear algebra and was the first step in the proof above:
show that if $\Omega$ is any nondegenerate, antisymmetric bilinear
form on $\RR^{2n}$, then there exists a basis
$(X_1,\ldots,X_n,Y_1,\ldots,Y_n)$ such that
$$
\Omega(X_i,Y_i) = 1
$$
and $\Omega$ vanishes on all other pairs of basis vectors.
This is equivalent to the statement that $\RR^{2n}$ admits a linear
change of coordinates in which $\Omega$ looks like the standard
symplectic form~$\omega\std$.
\end{exercise}

It's worth pointing out the crucial role played in the above proof by the 
relation \eqref{eqn:Moser2}, which is almost the same as the relation
used to define Hamiltonian vector fields \eqref{eqn:XH}.  The latter,
together with the argument of Prop.~\ref{prop:flowSymplectic},
tells us that the group of symplectomorphisms on a symplectic manifold
is fantastically large, as it contains all the flows of Hamiltonian
vector fields, which are determined by arbitrary smooth real-valued
functions.  For much the same reason, one can also always find an
abundance of symplectic local coordinate charts
(usually called \emph{Darboux coordinates}).  Contrast this with the
situation on a Riemannian manifold, where the group of isometries is
generally finite dimensional, and different metrics are usually not
locally equivalent, but are distinguished by their curvature.

In light of Darboux's theorem, we can now give the following 
equivalent definition of a symplectic manifold:

\begin{defn}
\label{defn:sympMfd}
A \defin{symplectic manifold} is a $2n$-dimensional manifold $M$ together with
an atlas of coordinate charts whose transition maps are symplectic
(with respect to the standard symplectic structure of $\RR^{2n}$).
\end{defn}

In physicists' language, a symplectic manifold is thus a manifold that
can be identified locally with Hamiltonian phase space, in the sense that
all coordinate changes leave the form of Hamilton's equations unaltered.

Let us state one more important application of the Moser trick, this time
of a more global nature.  Recall that two symplectic manifolds 
$(M,\omega)$ and $(M',\omega')$ are called \emph{symplectomorphic}
if there exists a symplectomorphism between them, i.e.~a diffeomorphism
$\varphi : M \to M'$ such that $\varphi^*\omega' = \omega$.  
Working on a single manifold $M$, we say similarly that two symplectic
structures $\omega$ and $\omega'$ are 
symplectomorphic\footnote{The words ``isomorphic'' and ``diffeomorphic''
can also be used here as synonyms.} if $(M,\omega)$ and $(M,\omega')$
are symplectomorphic.  This is the most obvious notion of equivalence for
symplectic structures, but there are others that are also worth considering.

\begin{defn}
\label{defn:isotopic}
Two symplectic structures $\omega$ and $\omega'$ on $M$ are called
\defin{isotopic} if there is a symplectomorphism $(M,\omega) \to (M,\omega')$
that is isotopic to the identity.
\end{defn}

\begin{defn}
\label{defn:deformation}
Two symplectic structures $\omega$ and $\omega'$ on $M$ are called
\defin{deformation equivalent} if $M$ admits a \defin{symplectic deformation}
between them, i.e.~a smooth family of
symplectic forms $\{ \omega_t \}_{t \in [0,1]}$ such that
$\omega_0 = \omega$ and $\omega_1 = \omega'$.  Similarly, two
symplectic manifolds
$(M,\omega)$ and $(M',\omega')$ are deformation equivalent if there
exists a diffeomorphism $\varphi : M \to M'$ such that $\omega$ and
$\varphi^*\omega'$ are deformation equivalent.
\end{defn}

It is clear that if two symplectic forms are isotopic then they are also
both symplectomorphic
and deformation equivalent.  It is not true, however, that a symplectic
deformation always gives rise to an isotopy: one should not expect this,
as isotopic symplectic forms on $M$ must always represent the same
cohomology class in $H^2_\dR(M)$, whereas the cohomology class can
obviously vary under general deformations.  The remarkable fact is that
this necessary condition is also sufficient!

\begin{thm}[Moser's stability theorem]
Suppose $M$ is a closed manifold with a smooth $1$-parameter family of
symplectic forms $\{ \omega_t \}_{[t \in [0,1]}$ which all represent the
same cohomology class in $H^2_\dR(M)$.  Then there exists a smooth isotopy
$\{ \varphi_t : M \to M \}_{t \in [0,1]}$, with $\varphi_0 = \Id$ and
$\varphi_t^*\omega_t = \omega_0$.
\end{thm}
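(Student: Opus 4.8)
The plan is to run the Moser deformation trick again, this time in its global, parametrized form. First I would observe that since all the $\omega_t$ represent the same de Rham cohomology class, the derivative $\dot\omega_t := \frac{\p}{\p t}\omega_t$ is an exact $2$-form for every $t$: it is certainly closed (being the $t$-derivative of a closed form), and its cohomology class is $\frac{d}{dt}[\omega_t] = 0$. Using this, I would produce a smooth family of $1$-forms $\sigma_t$ with $d\sigma_t = \dot\omega_t$. This requires a little care---the naive pointwise choice need not be smooth in $t$---but it can be arranged, e.g.~by choosing primitives $\lambda_t$ with $d\lambda_t = \omega_t$ (possible after fixing one primitive $\lambda_0$ and integrating, or via Hodge theory on the closed manifold $M$) so that $\sigma_t := \dot\lambda_t$ works. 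Alternatively, smoothness in $t$ follows from a parametrized version of the standard argument that exactness depends smoothly on parameters.

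Next I would mimic exactly the computation in the proof of Darboux's theorem. We seek a time-dependent vector field $Y_t$ whose flow $\varphi_t$ satisfies $\varphi_t^*\omega_t = \omega_0$. Differentiating this ansatz in $t$ and using that $\varphi_t$ is a diffeomorphism gives
\begin{equation*}
\varphi_t^*\bigl( \Lie_{Y_t}\omega_t + \dot\omega_t \bigr) = 0,
\end{equation*}
and since $\varphi_t$ is a diffeomorphism this is equivalent to $\Lie_{Y_t}\omega_t + \dot\omega_t = 0$. By Cartan's formula and closedness of $\omega_t$, this becomes $d\,\iota_{Y_t}\omega_t + \dot\omega_t = 0$, i.e.~$d\,\iota_{Y_t}\omega_t = -d\sigma_t$. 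It therefore suffices to solve the stronger pointwise equation
\begin{equation*}
\omega_t(Y_t,\cdot) = -\sigma_t,
\end{equation*}
which, since each $\omega_t$ is nondegenerate, determines a unique smooth family of vector fields $Y_t$.

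Finally I would address the existence of the flow for all $t \in [0,1]$. Here the hypothesis that $M$ is \emph{closed} is exactly what is needed: a smooth time-dependent vector field on a compact manifold is complete, so the flow $\varphi_t$ is globally defined for $t \in [0,1]$, and $\varphi_0 = \Id$ by construction. Then $\varphi_t^*\omega_t = \omega_0$ holds for all $t$ since both sides agree at $t=0$ and have the same $t$-derivative by the above, and setting $t = 1$ (or keeping the whole family) yields the desired isotopy with $\varphi_t^*\omega_t = \omega_0$. The only genuine subtlety---and the step I expect to be the main obstacle to a fully rigorous write-up---is the smooth-in-$t$ choice of primitives $\sigma_t$; everything else is a verbatim repetition of the Moser argument already used for Darboux's theorem, with compactness of $M$ replacing the earlier restriction to a small neighborhood of the origin.
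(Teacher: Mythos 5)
Your argument is correct and is precisely the one the paper intends: the theorem is left as an exercise there, with a hint pointing at exactly this global Moser trick, using the cohomological hypothesis in place of local exactness and compactness of $M$ for completeness of the flow. One slip in your aside on constructing $\sigma_t$: you cannot choose $\lambda_t$ with $d\lambda_t = \omega_t$, since a symplectic form on a \emph{closed} manifold is never exact ($\int_M \omega_t^n \ne 0$ would be violated by Stokes). What you want is a smooth family $\lambda_t$ with $d\lambda_t = \omega_t - \omega_0$ and $\lambda_0 = 0$, whence $\sigma_t := \dot\lambda_t$ satisfies $d\sigma_t = \dot\omega_t$; alternatively, apply $d^*$ composed with the Green's operator of the Hodge Laplacian directly to $\dot\omega_t$, which is exact for each $t$ and depends smoothly on $t$. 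With that correction the rest of your write-up goes through verbatim.
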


\begin{exercise}
Use the Moser isotopy trick to prove the theorem.  \textsl{Hint:
In the proof of Darboux's theorem, we had to use the fact that
symplectic forms are locally exact in order to get from
\eqref{eqn:Moser} to \eqref{eqn:Moser2}.  Here you will find the
cohomological hypothesis helpful for the same reason.  If you get stuck,
see \cite{McDuffSalamon:ST}.}
\end{exercise}

\begin{exercise}
\label{EX:CPndeformation}
Show that if $\omega$ and $\omega'$ are two deformation equivalent symplectic
forms on $\CC P^n$, then $\omega$ is isotopic to $c \omega'$ for some
constant $c > 0$.
\end{exercise}

\section{From symplectic geometry to symplectic topology}
\label{sec:sympTop}

As a consequence of Darboux's theorem, symplectic manifolds
have no local invariants---there is no ``local symplectic geometry''.
Globally things are different, and here there are a number 
of interesting questions one can ask, all of which fall under the heading of
\emph{symplectic topology}.  (The word ``topology'' is used to indicate
the importance of global rather than local phenomena.)

The most basic such question concerns the classification of symplectic
structures.  One can ask, for example, whether there exists a symplectic
manifold $(M,\omega)$ that is diffeomorphic to $\RR^4$ but not
symplectomorphic to $(\RR^4,\omega\std)$, i.e.~an ``exotic'' 
symplectic~$\RR^4$.  The answer turns out to be 
yes---exotic $\RR^{2n}$'s exist in fact for all~$n$,
see \cite{AudinLalondePolterovich}---but it changes if
we prescribe the behavior of $\omega$ at infinity.  The following result
says that $(\RR^{2n},\omega\std)$ is actually the only aspherical 
symplectic manifold that is ``standard at infinity''.

\begin{thm}[Gromov \cite{Gromov}]
Suppose $(M,\omega)$ is a symplectic $4$-manifold with $\pi_2(M) = 0$, 
and there are compact subsets $K \subset M$ and $\Omega \subset \RR^4$
such that $(M\setminus K,\omega)$ and $(\RR^4\setminus\Omega,\omega\std)$
are symplectomorphic.  Then $(M,\omega)$ is symplectomorphic to
$(\RR^4,\omega\std)$.
\end{thm}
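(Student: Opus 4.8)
The plan is to use the foliation of $\CC P^2 \setminus \{x_0\}$ by holomorphic spheres described above, but to run it relative to the symplectic form $\omega$ rather than the standard Fubini--Study form; the upshot will be a diffeomorphism of $M$ onto $\RR^4$ pulling $\omega\std$ back to $\omega$. First I would normalize the setup: by scaling we may assume the compact sets $K$ and $\Omega$ are such that $(M \setminus K, \omega)$ is symplectomorphic to $(\RR^4 \setminus \Omega, \omega\std)$, and we may enlarge $\Omega$ to a closed ball. Then I would \emph{compactify}: glue a neighborhood of the sphere at infinity $\CC P^1 \subset \CC P^2$ onto $M$ along the ``standard at infinity'' region, producing a closed symplectic $4$-manifold $(X, \omega_X)$ containing an embedded symplectic sphere $S$ (the sphere at infinity) with $[S]^2 = +1$, such that $X \setminus S$ is symplectomorphic to an open subset of $(M, \omega)$ and $X \setminus (\text{nbhd of } S)$ contains all of $K$. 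The point of the hypothesis $\pi_2(M) = 0$ is that it forces $X$ to have the homotopy type (indeed the diffeomorphism type, by work on the topology of such fillings) of $\CC P^2$, with $[S]$ generating $H_2$.

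Next, the analytic heart of the argument: equip $X$ with an almost complex structure $J$ that is tamed by $\omega_X$ and is \emph{standard} (integrable, agreeing with the Fubini--Study complex structure) near $S$, so that $S$ itself is $J$-holomorphic. I would then study the moduli space $\mathcal{M}$ of $J$-holomorphic spheres in $X$ homologous to $[S]$. Since $[S]^2 = 1$ and $c_1([S]) = 3$, the expected dimension of the unparametrized moduli space is $4$, and automatic transversality in dimension four (the curves are embedded spheres with nonnegative self-intersection) guarantees $\mathcal{M}$ is a smooth $4$-manifold. Positivity of intersections (proved in the excerpt's \S\ref{sec:positivity}, underlying the adjunction formula) shows any two distinct such curves meet in exactly one point, and through every point of $X$ there passes exactly one curve from a suitable subfamily. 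The key compactness input --- a simple case of Gromov compactness, since $\omega_X$-energy is bounded by $[\omega_X]\cdot[S]$ which is topological --- shows $\mathcal{M}$ does not degenerate: bubbling would produce spheres of smaller energy, but homological constraints ($[S]$ is primitive, $\pi_2$ controlled) rule out nontrivial bubble configurations except one fixed point-bubble configuration, exactly as in the model picture of Figure~\ref{fig:CP2}.

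With the foliation in hand, I would fix $x_0 \in S$ and consider the curves through $x_0$: these form a $\CC P^1$-family foliating $X \setminus \{x_0\}$, all mutually tangent at $x_0$, recovering the model decomposition. Evaluating and parametrizing gives an explicit diffeomorphism $\Phi : X \to \CC P^2$ carrying $S$ to the standard sphere at infinity and the $J$-holomorphic foliation to the standard one. Restricting to $M = X \setminus S$ yields a diffeomorphism $M \to \CC^2 = \CC P^2 \setminus \CC P^1$; one then checks $\Phi_*\omega$ and $\omega\std$ agree outside a compact set (by construction, since the gluing was along the standard region) and lie in the same cohomology class (both classes being zero on $\CC^2$), so Moser's stability theorem --- in its non-compact, fixed-at-infinity version, proved by the same isotopy trick as in the excerpt once one checks the relevant relative cohomology vanishes --- upgrades $\Phi$ to a symplectomorphism $(M, \omega) \to (\RR^4, \omega\std)$.

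The main obstacle I anticipate is the compactness/no-bubbling step together with the topological identification $X \cong \CC P^2$: controlling degenerations of $J$-holomorphic spheres and ruling out multiply-covered or nodal limits requires the full strength of Gromov compactness and careful homological bookkeeping on the compactified manifold, and verifying that a minimal symplectic filling of the relevant contact sphere is standard is itself a substantial result. Everything downstream --- the automatic transversality, the foliation, the Moser argument at infinity --- is comparatively routine given the tools developed in the excerpt, but assembling the compactification and its topology cleanly is where the real work lies.
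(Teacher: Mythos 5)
You should first be aware that the chapter in front of you does not prove this theorem: it states it and defers the proof, announcing that it will follow from the classification of symplectic fillings of $(S^3,\xi\std)$ (Theorem~\ref{thm:fillingsS3}), which the author proves with \emph{punctured} holomorphic curves asymptotic to Reeb orbits in a noncompact completion. Your proposal instead follows Gromov's original route: cut $M$ along a large standard sphere, cap the compact piece with a neighborhood of the line at infinity in $\CC P^2$, identify the resulting closed manifold $X$ with $\CC P^2$ via Theorem~\ref{thm:CP2} (here $\pi_2(M)=0$ is what forces $H_2(X)$ to be generated by $[S]$ with $[S]^2=1$, so no class can be represented by a symplectic $(-1)$-sphere), and read off $M$ from the foliation by spheres in the class $[S]$. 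That architecture is sound and is a genuinely different implementation from the paper's announced one, though both rest on the same ``spread out a single curve'' philosophy. One small slip along the way: distinct spheres in the class $[S]$ passing through $x_0$ meet there \emph{transversely}, not tangentially --- $[S]\cdot[S]=1$ together with positivity of intersections forces exactly one transverse intersection point.

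Two steps in your endgame, however, fail as written. First, $M$ is not $X\setminus S$: the cap replaces the entire infinite-volume end $\RR^4\setminus B_R^4$ of $M$ by the finite-volume disk bundle over $\CC P^1$, so $X\setminus S$ is only the bounded piece of $M$ lying inside a large standard sphere. What the compactification actually delivers is that the compact region $W\subset M$ bounded by that sphere is standard (a star-shaped domain, equivalently a standard filling of $(S^3,\xi\std)$), after which one recovers $M$ by gluing the untouched standard end back on; restricting a diffeomorphism $X\to\CC P^2$ to $X\setminus S$ does not by itself produce a map defined on all of $M$. Second, the concluding Moser argument is not available in the form you invoke it: two symplectic forms on $\RR^4$ that agree outside a compact set and are cohomologous rel infinity need not be joined by the \emph{linear} path through symplectic forms, since $t\omega_0+(1-t)\omega_1$ can degenerate --- this is exactly why ``standard at infinity but exotic'' is a nontrivial question rather than an immediate Moser exercise. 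To close the gap you must either extract the interpolating family from the geometry (for instance, both forms taming a common $J$ adapted to the sphere foliation, which is how the Lefschetz-pencil argument behind Theorem~\ref{thm:CP2} produces the symplectomorphism), or work at the level of the filling $W$ rel its boundary as above.
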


In a later chapter we will be able to prove a stronger version of 
this statement, as a corollary of some classification results for
symplectic fillings of contact manifolds (cf.~Theorem~\ref{thm:fillingsS3}).

Another interesting question is the following:
suppose $(M_1,\omega_1)$ and $(M_2,\omega_2)$ are symplectic manifolds of
the same dimension~$2n$, possibly with boundary, such that there exists a
smooth embedding $M_1 \hookrightarrow M_2$.  Can one also find a
\emph{symplectic} embedding $(M_1,\omega_1) \hookrightarrow (M_2,\omega_2)$?
What phenomena related to the symplectic structures can prevent this?
There's one obstruction that jumps out immediately: there can be no such
embedding unless
$$
\int_{M_1} \omega_1^n \le \int_{M_2} \omega_2^n,
$$
i.e.~$M_1$ has no more volume than~$M_2$.  In dimension two there's nothing
more to say, because symplectic and area-preserving maps are the same thing.
But in dimension~$2n$ for $n \ge 2$, it was not known for a long time
whether there are obstructions to symplectic embeddings other than the
volume.  A good thought experiment along these lines is the
``squeezing'' question: denote by $B_r^{2n}$ the ball of radius $r$ about
the origin in $\RR^{2n}$.  Then it's fairly obvious that for any
$r, R > 0$ one can always find a volume-preserving embedding
$$
B_r^{2n} \hookrightarrow B_R^2 \times \RR^{2n - 2},
$$
even if $r > R$, for then one can ``squeeze'' the first two dimensions of
$B_r^{2n}$ into $B_R^2$ but make up for it by spreading out further
in $\RR^{2n - 2}$.  But can one do this \emph{symplectically}?
The answer was provided by the following groundbreaking result:

\begin{thm}[Gromov's ``nonsqueezing'' theorem \cite{Gromov}]
There exists a symplectic embedding of $(B_r^{2n},\omega\std)$ into
$(B_R^2 \times \RR^{2n - 2},\omega\std)$ if and only if $r \le R$.
\end{thm}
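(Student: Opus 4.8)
The plan is to prove the two directions separately, with the ``easy'' direction following from the volume obstruction and the ``hard'' direction requiring holomorphic curves.

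\textbf{The easy direction.} If $r \le R$, then the obvious inclusion $B_r^{2n} \hookrightarrow B_R^2 \times \RR^{2n-2}$ is already a symplectic embedding, since $\omega\std$ on $\RR^{2n}$ restricts to $\omega\std$ on each factor in the product decomposition $\RR^{2n} = \RR^2 \times \RR^{2n-2}$. So there is nothing to do here.

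\textbf{The hard direction.} Suppose $\psi : (B_r^{2n},\omega\std) \hookrightarrow (B_R^2 \times \RR^{2n-2},\omega\std)$ is a symplectic embedding; I want to conclude $r \le R$. The first step is to compactify the target: replace $B_R^2$ with a closed surface of slightly larger area, namely take $S^2$ with an area form of total area $\pi R^2 + \epsilon$ (for small $\epsilon > 0$), and replace $\RR^{2n-2}$ by a large torus $\TT^{2n-2}$ with a flat symplectic form, chosen large enough that $\psi(\overline{B_{r'}^{2n}})$ still embeds for any $r' < r$. This produces a closed symplectic manifold $(M,\omega) = (S^2 \times \TT^{2n-2}, \sigma \oplus \omega_0)$ containing a symplectically embedded ball of radius $r' $ for every $r' < r$; it suffices to show $\pi (r')^2 \le \pi R^2 + \epsilon$ for all such $r'$ and $\epsilon$, which gives $r \le R$ in the limit. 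The second step is the heart of the matter: choose an almost complex structure $J$ on $M$ that is compatible with $\omega$, agrees with the push-forward $\psi_* i$ of the standard complex structure on the image of the ball, and is the product structure away from a neighborhood of the image. One then studies the moduli space of $J$-holomorphic spheres in $M$ homologous to $A := [S^2 \times \{pt\}]$. For the split structure and split $J$, this moduli space is nonempty — the slices $S^2 \times \{pt\}$ are such curves — and a compactness plus transversality argument (using that $\langle c_1(TM), A\rangle$ and the low energy $\int_A \omega = \pi R^2 + \epsilon$ forbid bubbling for a generic enough $J$ in this homotopy class of $J$'s) shows that for \emph{our} $J$ there is still a $J$-holomorphic sphere $u$ in class $A$ passing through the center $\psi(0)$ of the embedded ball. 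The third step is the monotonicity/isoperimetric estimate: the preimage $\psi^{-1}(u)$ is, near $\psi(0)$, a proper $J_0$-holomorphic (hence minimal, by \eqref{eqn:area}) surface in $B_{r'}^{2n}$ through the origin, so the monotonicity lemma for minimal surfaces forces its area inside $B_{r'}^{2n}$ to be at least $\pi (r')^2$; on the other hand that area is bounded above by $\int_A \omega = \pi R^2 + \epsilon$ since $\psi$ is symplectic and $\omega$ tames $J$ so that symplectic area computes area for $J$-holomorphic curves. Combining, $\pi (r')^2 \le \pi R^2 + \epsilon$, and letting $r' \to r$, $\epsilon \to 0$ gives $r \le R$.

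\textbf{Main obstacle.} The crux — and the reason this theorem launched the subject — is the second step: producing the $J$-holomorphic sphere through $\psi(0)$ for the \emph{given}, non-integrable, non-split $J$. This requires the full package of Gromov's analytic machinery: Fredholm theory and transversality to make the moduli space of spheres in class $A$ a manifold of the expected dimension for generic $J$, and above all a compactness theorem to rule out degeneration (bubbling) of a sequence of such spheres as $J$ varies from the split structure to our $J$. Here one uses that the energy is small ($=\int_A\omega$) and that $A$ is ``minimal'' in the sense that it cannot decompose into a sum of classes each of positive $\omega$-area and nonnegative ``expected'' contribution — so any bubble would have to carry class $A$ itself and the rest be constant, which one rules out by a dimension count. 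Everything else (the compactification, the monotonicity estimate, the volume direction) is comparatively routine once this existence result is in hand; in the present book only the special split case of Gromov compactness needed for this argument is developed, which is exactly what Chapter~\ref{chapter:nonsqueezing} carries out.
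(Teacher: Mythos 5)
Your proposal is correct and follows essentially the same route as the paper: compactify the target to $S^2\times T^{2n-2}$, produce a $J$-holomorphic sphere in the class $[S^2\times\{*\}]$ through the center of the embedded ball via a deformation from the split almost complex structure (with compactness guaranteed by $\pi_2(T^{2n-2})=0$ and the primitivity of the class), and compare the monotonicity lower bound with the homological energy upper bound. The only cosmetic difference is that the paper excludes bubbling by energy quantization together with the marked-point normalization (rather than a dimension count) and extracts the sphere through $\iota(0)$ via the $\ZZ_2$-degree of the evaluation map, but these are exactly the details your sketch defers to the machinery.
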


This theorem was one of the first important applications of pseudoholomorphic
curves.  We will prove it in Chapter~\ref{chapter:nonsqueezing}, and
will spend a great deal of time in the next few chapters
learning the technical machinery that is needed to understand the proof.

We will close this brief introduction to symplectic topology by sketching
the proof of a result that was introduced in \cite{Gromov} and later 
generalized by McDuff, and provides us with a good excuse to introduce
$J$-holomorphic curves.  Recall from \S\ref{sec:examples} that
$\CC P^2$ admits a singular foliation by embedded spheres
that all intersect each other at one point, and all can be parametrized
by holomorphic maps $\CC P^1 \to \CC P^2$.  One can check that these
spheres are also symplectic submanifolds with respect to the standard
symplectic structure $\omega\std$ introduced in Example~\ref{ex:CPn};
moreover, they intersect each other
positively, so their self-intersection numbers are always~$1$.
The following result essentially says that the existence of such a 
symplectically
embedded sphere is a rare phenomenon: it can only occur in a very specific
set of symplectic $4$-manifolds, of which $(\CC P^2,\omega\std)$ is the simplest.
It also illustrates an important feature of symplectic topology
specifically in four dimensions: once you find a single holomorphic curve
with sufficiently nice local properties, it can sometimes fully
determine the manifold in which it lives.

\begin{thm}[M.~Gromov \cite{Gromov} and D.~McDuff \cite{McDuff:rationalRuled}]
\label{thm:CP2}
Suppose $(M,\omega)$ is a closed and connected 
symplectic $4$-manifold containing
a symplectically embedded $2$-sphere $C \subset M$ with self-intersection
$C \cdot C = 1$, but no symplectically embedded $2$-sphere with
self-intersection~$-1$.  Then $(M,\omega)$ is symplectomorphic to
$(\CC P^2,c \omega\std)$, where $c > 0$ is a constant and $\omega\std$ is
the standard symplectic form on $\CC P^2$.
\end{thm}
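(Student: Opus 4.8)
The plan is to exploit the symplectically embedded sphere $C$ together with a tame almost complex structure to produce, via Gromov compactness, a singular foliation of $M$ by $J$-holomorphic spheres whose combinatorics forces $M \cong \CC P^2$. First I would choose an $\omega$-tame almost complex structure $J$ on $M$ making $C$ a $J$-holomorphic curve; this is possible because $C$ is a symplectic submanifold, so the space of tame $J$ restricting to a compatible complex structure on $C$ is nonempty (and contractible). Since $C \cdot C = 1$ and $C \cong S^2$, the adjunction formula gives $c_1([C]) = 3$, so the relevant moduli space $\mM$ of simple $J$-holomorphic spheres homologous to $C$ has expected dimension $2(c_1([C]) - 1) = 4$; imposing passage through two generic points cuts this down to a compact $0$-dimensional (hence finite) space for generic $J$, and the existence of $C$ itself guarantees it is nonempty. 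Positivity of intersections (available from Chapter~\ref{chapter:local}) and the fact that $[C]^2 = 1$ force any two curves in this family through distinct point-constraints to be either equal or to intersect in exactly one transverse point; likewise each such sphere is embedded, since a positive double point or nodal degeneration would drop the genus and contradict adjunction.

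The heart of the argument is then a continuity/compactness scheme in the parameter $J$: one shows that for \emph{every} tame $J$ (not just generic ones) and every point $p \in M$, there is a $J$-holomorphic sphere in the class $[C]$ through $p$, and that through two generic points there is exactly one. The nonexistence of a symplectically embedded $(-1)$-sphere is exactly what is needed to rule out the bad bubbling scenarios in Gromov compactness: any limit of embedded $[C]$-spheres that degenerates must break into components $A_1 + \dots + A_k = [C]$ with $\sum A_i^2 + \text{(cross terms)} = 1$ and $\sum c_1(A_i) = 3$, and a dimension/adjunction count shows at least one component would be a sphere of self-intersection $-1$, which by positivity of intersections and the standard fact that a $J$-holomorphic sphere is symplectically embedded near a generic such $J$ would give the forbidden $(-1)$-sphere. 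Hence no degeneration occurs, the family of embedded spheres persists, and evaluating, one obtains a map $M \to \CC P^2$ sending each sphere to a point of the dual $\CC P^2 = \{\text{spheres in class }[C]\}$; analyzing this map shows $M$ is the total space of the tautological family, i.e.\ $M \cong \CC P^2$ as smooth manifolds, with $C$ corresponding to a line.

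Finally I would upgrade the diffeomorphism to a symplectomorphism. The foliation of $M$ by symplectic $[C]$-spheres, together with the structure of $\CC P^2$ as these spheres plus the one common base point, lets one build a diffeomorphism $\Phi : M \to \CC P^2$ under which $\Phi_*\omega$ and $\omega\std$ are cohomologous up to scale: both are determined on $H^2$ by their integral over a line, so $[\Phi_*\omega] = c\,[\omega\std]$ for $c = \langle[\omega],[C]\rangle / \langle[\omega\std],[\CC P^1]\rangle > 0$. One then checks $\Phi_*\omega$ and $c\,\omega\std$ are deformation equivalent (e.g.\ via $\omega_t = t\,\Phi_*\omega + (1-t)\,c\,\omega\std$, which stays nondegenerate because both tame the same $J$ along the foliation), and applies Exercise~\ref{EX:CPndeformation} together with Moser's stability theorem to conclude they are isotopic, hence symplectomorphic. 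Composing yields the desired symplectomorphism $(M,\omega) \to (\CC P^2, c\,\omega\std)$.

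The main obstacle I anticipate is the compactness step: carefully ruling out bubbling for \emph{all} tame $J$ using only the hypothesis ``no symplectically embedded $(-1)$-sphere''. This requires the full strength of positivity of intersections and the adjunction inequality in dimension four, plus a somewhat delicate bookkeeping of homology classes and Chern numbers of the bubble components to locate the offending $(-1)$-sphere in every possible degeneration pattern; the genus-$0$ automatic transversality and the persistence of the embedded-sphere family under deformation of $J$ are where the real work lies.
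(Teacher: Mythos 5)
Your proposal is correct in outline and follows essentially the same route the paper sketches: make $C$ holomorphic for a tame $J$, study the moduli space of $J$-holomorphic spheres in the class $[C]$ (the paper packages this as the $2$-dimensional family through one fixed point $x_0$, yielding the singular foliation of $M\setminus\{x_0\}$, rather than your two-point count, but these are the same Lefschetz-pencil argument), use positivity of intersections and adjunction for embeddedness, use the absence of $(-1)$-spheres to rule out bubbling in Gromov compactness, and finish with deformation equivalence plus Moser stability via Exercise~\ref{EX:CPndeformation}. Note that the paper itself only gives this as a sketch, deferring the compactness and intersection-theoretic details to later chapters, so your anticipated "main obstacle" is exactly the part the paper also treats as the real work.
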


The idea of the proof is to choose appropriate data so that the symplectic
submanifold $C \subset M$ can be regarded in some sense as a holomorphic
curve, and then analyze the global structure of the space of 
holomorphic curves to which
it belongs.  It turns out that for a combination of analytical and
topological reasons, this space will contain a smooth family of embedded
holomorphic spheres that fill all of~$M$ and all intersect each other at
one point, thus reproducing the singular foliation of
Figure~\ref{fig:CP2}.  This type of decomposition is a well-known object
in algebraic geometry and has more recently become quite popular in
symplectic topology as well: it's called a \emph{Lefschetz pencil}.
As we'll see when we generalize Theorem~\ref{thm:CP2} in a later chapter, 
there is an intimate connection between isotopy classes of Lefschetz
pencils and deformation classes of symplectic structures: in the present
case, the existence of this Lefschetz pencil implies that $(M,\omega)$
is symplectically deformation equivalent to $(\CC P^2,\omega\std)$, and thus
also symplectomorphic due to the Moser stability theorem
(see Exercise~\ref{EX:CPndeformation}).

The truly nontrivial part of the proof is the analysis of the moduli
space of holomorphic curves, and this is what we'll concentrate on for
the next several chapters.  As a point of departure, consider the
formulation \eqref{eqn:CRintegrable} of the Cauchy-Riemann equations at
the beginning of this chapter.  Here $u$ was a map
from an open subset of $\CC^m$ into $\CC^n$, but one can also make sense
of \eqref{eqn:CRintegrable} when $u$ is a map between two complex 
manifolds.  In such a situation, $u$ is called holomorphic if and only if 
it looks holomorphic in any choice of holomorphic local coordinates.
To put this in coordinate-free language, the tangent spaces of any complex
manifold $X$ are naturally complex vector spaces, on which multiplication
by $i$ makes sense, thus defining a natural bundle endomorphism
$$
i : TX \to TX
$$
that satisfies $i^2 = -\1$.  Then \eqref{eqn:CRintegrable}
makes sense globally and is the equation defining holomorphic maps between
any two complex manifolds.

In the present situation, we're interested in smooth maps
$u : \CC P^1 \to M$.  The domain is thus a complex manifold, but the target
might not be, which means we lack an ingredient needed to write down the
right hand side of \eqref{eqn:CRintegrable}.  It turns out that 
in any symplectic manifold, one
can always find an object to fill this role, i.e.~a fiberwise linear map
$J : TM \to TM$ with the following properties:
\begin{itemize}
\item $J^2 = -\1$,
\item $\omega(\cdot,J\cdot)$ defines a Riemannian metric on $M$.
\end{itemize}
The first condition allows us to interpret $J$ as ``multiplication by~$i$'',
thus turning the tangent spaces of $M$ into complex vector spaces.
The second reproduces the relation between $i$ and $\omega\std$ that
exists in $\RR^{2n}$, thus generalizing the important interaction between
\emph{symplectic} and \emph{complex} that we illustrated in \S\ref{sec:warmup}:
complex subspaces of $TM$ are also symplectic, and their areas can be
computed in terms of~$\omega$.  These conditions make $J$ into a
\emph{compatible almost complex structure} on $(M,\omega)$; we will prove
the fundamental existence result for these
by fairly elementary methods in \S\ref{sec:compatible}.
Now, the fact that $C$ is embedded in $M$
\emph{symplectically} also allows us to arrange the following additional
condition:
\begin{itemize}
\item the tangent spaces $TC \subset TM$ are invariant under~$J$.
\end{itemize}

We are thus ready to introduce the following generalization
of the Cauchy-Riemann equation: consider smooth maps
$u : \CC P^1 \to M$ whose differential is a complex-linear map at every
point,~i.e.
\begin{equation}
\label{eqn:nonlinearCR0}
Tu \circ i = J \circ Tu.
\end{equation}
Solutions to \eqref{eqn:nonlinearCR0} are called \emph{pseudoholomorphic},
or more specifically,
\emph{$J$-holomorphic} spheres in~$M$.  Now pick a point $x_0 \in C$ and
consider the following space of $J$-holomorphic spheres,
\begin{equation*}
\begin{split}
\Mod := \{ u \in C^\infty(\CC P^1,M) \ |\ & Tu \circ i = J \circ Tu,\\
& u_*[\CC P_1] = [C] \in H_2(M),\\
& u(0) = x_0 \} / \sim,
\end{split}
\end{equation*}
where $u \sim u'$ if there is a holomorphic diffeomorphism $\varphi :
\CC P^1 \to \CC P^1$ such that $u' = u \circ \varphi$ and $\varphi(0) = 0$.
We assign to $\Mod$ the natural topology defined by $C^\infty$-convergence
of smooth maps $\CC P^1 \to M$.

\begin{lemma}
$\Mod$ is not empty: in particular it contains an embedded 
$J$-ho\-lo\-mor\-phic sphere whose image is~$C$.
\end{lemma}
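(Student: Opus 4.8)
The plan is to produce the required sphere directly, by exploiting the fact that the symplectic submanifold $C$, equipped with the restricted almost complex structure, is itself a Riemann surface of genus zero, and then to parametrize it so that the marked point lands on $x_0$.

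First I would use the arrangement that $TC \subset TM$ is $J$-invariant: $J$ then restricts to a smooth bundle endomorphism $J_C := J|_{TC}$ of $TC$ with $J_C^2 = -\1$, i.e.\ an almost complex structure on the $2$-manifold $C$. In real dimension two every almost complex structure is \emph{integrable} --- this is the classical existence of isothermal coordinates (Korn--Lichtenstein) --- so $(C, J_C)$ is a genuine complex $1$-manifold. Since $C$ is diffeomorphic to $S^2$, the uniformization theorem provides a biholomorphism $\psi : (\CC P^1, i) \to (C, J_C)$.

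Next I would set $u_0 := \iota \circ \psi : \CC P^1 \to M$, where $\iota : C \hookrightarrow M$ is the inclusion. By construction the image of $Tu_0$ lies in $TC$, on which $J$ agrees with $J_C$, and $\psi$ intertwines $i$ with $J_C$; hence $Tu_0 \circ i = J \circ Tu_0$, so $u_0$ solves \eqref{eqn:nonlinearCR0}. It is manifestly an embedding with image exactly $C$. For the homology condition, note that a biholomorphism is orientation preserving and that $\omega$-compatibility of $J$ forces $J_C$ to induce on $C$ the orientation of the area form $\omega|_C$; taking $[C] \in H_2(M)$ in that (symplectic) orientation gives $u_{0*}[\CC P^1] = [C]$.

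Finally, to meet the constraint $u(0) = x_0$, let $z_0 \in \CC P^1$ be the unique point with $u_0(z_0) = x_0$ and choose any $\varphi \in \PSL(2,\CC)$ with $\varphi(0) = z_0$; then $u := u_0 \circ \varphi$ is again an embedded $J$-holomorphic sphere with image $C$ and homology class $[C]$, and now $u(0) = x_0$, so $[u] \in \Mod$ and the space is nonempty. The only step that is not pure bookkeeping is the integrability of $J_C$ in dimension two, which I expect to be the main point to justify (or cite) carefully; everything else follows from the classification of genus-zero Riemann surfaces and the transitivity of the $\PSL(2,\CC)$-action.
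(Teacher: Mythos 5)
Your proof is correct and is essentially the paper's argument: both rest on the fact that every almost complex structure on $S^2$ is integrable and hence biholomorphic to $(\CC P^1,i)$ by uniformization, the only cosmetic difference being that you place the complex structure on $C$ and uniformize, whereas the paper pulls $J$ back to $\CC P^1$ via an arbitrary diffeomorphism sending $0 \mapsto x_0$ and then straightens it there. Your extra step of post-composing with a M\"obius transformation to arrange $u(0)=x_0$ is a fine alternative to the paper's choice of fixing $x_0$ from the start.
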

\begin{proof}
Since $C$ has $J$-invariant tangent spaces, any diffeomorphism
$u_0 : \CC P^1 \to C$ with $u_0(0) = x_0$ allows us to pull back $J$ to 
an almost complex structure $j := u_0^*J$ on~$\CC P^1$.  As we'll review
in Chapter~\ref{chapter:moduli}, 
the uniqueness of complex structures on $S^2$ then
allows us to find a diffeomorphism $\varphi : \CC P^1 \to \CC P^1$ such
that $\varphi(0) = 0$ and $\varphi^*j = i$, thus the desired curve is
$u := u_0 \circ \varphi$.
\end{proof}

The rest of the work is done by the following rather powerful lemma,
which describes the global structure of $\Mod$.
Its proof requires a substantial volume of analytical machinery which we will
develop in the coming chapters; note that since $M$ is not a complex
manifold, the methods of complex analysis play only a minor role in this
machinery, and are subsumed in particular by the theory of nonlinear
elliptic PDEs.  This is the point where we need the technical
assumptions that $C \cdot C = 1$ and $M$ contains no symplectic spheres
of self-intersection~$-1$,\footnote{As we'll see, the assumption of no
symplectic spheres with self-intersection~$-1$ is a surprisingly weak one:
it can always be attained by modifying $(M,\omega)$ in a standard way known
as ``blowing down''.} as such topological conditions figure into the
index computations that determine the local structure of~$\Mod$.

\begin{lemma}
$\Mod$ is compact and admits the structure of a smooth $2$-dimensional
manifold.  Moreover, the curves in $\Mod$ are all embeddings that do not
intersect each other except at the point $x_0$; in particular, they
foliate $M \setminus \{x_0\}$.
\end{lemma}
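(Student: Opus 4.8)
The plan is to combine three pieces of machinery developed in the chapters to come: the \emph{adjunction formula} and \emph{positivity of intersections} for $J$-holomorphic curves in dimension four (\S\ref{sec:positivity}), which constrain the homology class $A := [C]$ and will force every curve in $\Mod$ to be an embedded, somewhere injective sphere; \emph{automatic transversality} for immersed $J$-holomorphic spheres in a $4$-manifold, which yields the smooth structure on $\Mod$ \emph{without} any genericity hypothesis on $J$ (essential here, since $J$ is constrained along $C$); and \emph{Gromov's compactness theorem}, used together with the hypothesis that $M$ has no symplectically embedded $(-1)$-sphere to establish compactness. As part of the choice of data I also assume $J$ is generic on $M\setminus C$; this is compatible with $J$ being $\omega$-compatible and making $C$ holomorphic, and leaves the preceding lemma intact. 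Applying adjunction to the embedded sphere $C$ gives $c_1(A) := \langle c_1(TM,J),A\rangle = A\cdot A + 2 = 3$.

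First I would show that every curve in $\Mod$ is embedded and that two distinct curves meet only at $x_0$. If some $u$ representing a class of $\Mod$ were a $k$-fold cover, $A = k[v]$ would give $1 = A\cdot A = k^2\,[v]\cdot[v]$, forcing $k = 1$, so $u$ is somewhere injective; the adjunction formula $A\cdot A = 2\delta(u) - 2 + c_1(A) = 2\delta(u) + 1$, with $\delta(u)\ge 0$ the number of double points, then gives $\delta(u) = 0$. For $[u]\ne[u']$ in $\Mod$ the images are distinct (otherwise one is a holomorphic reparametrization of the other fixing $0$), so positivity of intersections forces their intersection set, with positive local multiplicities summing to $A\cdot A = 1$, to be a single transverse point, necessarily $x_0 = u(0) = u'(0)$; in particular distinct curves have distinct complex tangent lines at $x_0$. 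Next, the smooth structure: for embedded $u\in\Mod$ write $u^*TM = T\CC P^1\oplus N_u$ with $c_1(N_u) = c_1(A) - 2 = 1$; deformations of $u$ inside $\Mod$ are governed by the linearized normal Cauchy--Riemann operator on sections of $N_u$ with a prescribed zero at $u(0)$, a Cauchy--Riemann type operator on a complex line bundle of degree $c_1(N_u) - 1 = 0$ over $\CC P^1$. Since $0 \ge 2g - 1 = -1$, automatic transversality makes it surjective, so $\Mod$ is a smooth manifold of dimension equal to its index, $2(0+1) = 2$. The same computation on the unconstrained moduli space of embedded spheres in class $A$ shows that evaluation is submersive at points $u(z)$ with $z\ne 0$ (a normal section vanishing at both $u(0)$ and $z$ would be a section of a degree-$(-1)$ bundle over $\CC P^1$, hence zero), a fact used below.

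The crux is compactness. Let $[u_k]$ be a sequence in $\Mod$; by Gromov compactness a subsequence converges to a stable nodal $J$-holomorphic sphere of total class $A$, and I must show it is again a single embedded sphere in $\Mod$. If $\Vert du_k\Vert_\infty$ stays bounded, $\Cinftyloc$-convergence produces a non-constant limit in class $A$ through $x_0$, embedded by the adjunction argument above (and simple, again by $A\cdot A = 1$), hence in $\Mod$. Otherwise bubbling yields $A = \sum_{i=1}^{N} A_i$ with $N\ge 2$, each $A_i$ carried by a non-constant $J$-holomorphic sphere of positive area, the configuration a connected tree. The genericity of $J$ on $M\setminus C$ forces $c_1(B)\ge 1$ for every simple class $B$ underlying a bubble component: if the component's image lies in $C$ it represents $[C]$, with $c_1 = 3$; otherwise it has a somewhere injective point where $J$ is generic, so its moduli space has expected dimension $2c_1(B) - 2 \ge 0$. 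Passing to the underlying simple classes, $3 = \sum c_1(A_i) \ge N$, so $N\in\{2,3\}$; then the adjunction inequality $A_i\cdot A_i \ge c_1(A_i) - 2$, positivity of intersections ($A_i\cdot A_j\ge 1$ whenever the corresponding components are joined in the tree), and $A\cdot A = 1$ together force, after a short case check, an embedded component with $c_1 = 1$ and self-intersection $-1$ --- a symplectically embedded $(-1)$-sphere, contradicting the hypothesis. Hence no bubbling occurs, and $\Mod$ is compact.

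Finally, the foliation. The set $\bigcup_{[u]\in\Mod}\im u \subseteq M$ is compact, being the image of the (compact) universal curve over $\Mod$; its intersection with $M\setminus\{x_0\}$ is open, since by the smooth-structure step the normal deformations of a curve evaluated at any $z\ne 0$ already surject onto the normal space; and it is closed in $M\setminus\{x_0\}$, being a compact set with $x_0$ removed. As $M\setminus\{x_0\}$ is connected, the curves of $\Mod$ cover $M$. By the first step, two distinct curves meet only at $x_0$, so over $M\setminus\{x_0\}$ exactly one curve of $\Mod$ passes through each point, and the submersion property shows their tangent planes form a smooth foliation of $M\setminus\{x_0\}$. (As a byproduct, sending a curve to its tangent line at $x_0$ is a continuous injection $\Mod\to\mathbb{P}(T_{x_0}M)\cong S^2$, hence by invariance of domain a homeomorphism, so $\Mod\cong S^2$.) I expect the compactness step to be the real obstacle: Gromov compactness is heavy analysis --- needed here in more generality than the special case of Chapter~\ref{chapter:nonsqueezing} --- and excluding bubbles draws on the full strength of positivity of intersections, the adjunction inequality, the genericity of $J$ off $C$, and the no-$(-1)$-sphere hypothesis, whereas the remaining parts are routine once those tools and automatic transversality are available.
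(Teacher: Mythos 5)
Your proposal is correct and follows exactly the strategy the paper has in mind: the paper states this lemma without proof in the introduction, deferring it to machinery developed later (positivity of intersections and adjunction in \S\ref{sec:positivity}, automatic transversality, and Gromov compactness) and to the companion exposition \cite{Wendl:rationalRuled}, and your outline --- $c_1(A)=3$ from adjunction, embeddedness and pairwise transverse intersection at $x_0$ from positivity of intersections, the smooth structure from the surjectivity criterion for Cauchy--Riemann operators on line bundles, and the exclusion of bubbling via the index bound $c_1\ge 1$ for simple components together with the no-$(-1)$-sphere hypothesis --- is precisely that argument. The only place where more care is needed than your sketch suggests is the combinatorial case check in the compactness step, where multiply covered bubble components must be handled via their underlying simple classes (as you indicate), since the adjunction inequality does not apply directly to the total class of a multiple cover.
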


By this result, the curves in $\Mod$ form the fibers of a symplectic
Lefschetz pencil on $(M,\omega)$, so that the latter's diffeomorphism and
symplectomorphism type are completely determined by the moduli space of
holomorphic curves.

\section{Contact geometry and the Weinstein conjecture}
\label{sec:Weinstein}

Contact geometry is often called the ``odd-dimensional cousin'' of
symplectic geometry, and one context in which it arises naturally is in the
study of Hamiltonian dynamics.  Again we shall only sketch the main ideas;
the book \cite{HoferZehnder} is recommended for a more detailed account.

Consider a $2n$-dimensional 
symplectic manifold $(M,\omega)$ with a Hamiltonian
$H : M \to \RR$.  By the definition of the Hamiltonian vector field,
$dH(X_H) = -\omega(X_H,X_H) = 0$, thus the flow of $X_H$ preserves the
level sets 
$$
S_c := H^{-1}(c)
$$
for $c \in \RR$.  If $c$ is a regular value of $H$ then $S_c$ is a smooth
manifold of dimension $2n-1$, called a \defin{regular energy surface}, 
and $X_H$ restricts to a nowhere zero vector field on~$S_c$.

\begin{exercise}
\label{ex:charLine}
If $S_c \subset M$ is a regular energy surface, show that the direction of
$X_H$ is uniquely determined by the condition $\omega(X_H,\cdot)|_{TS_c} = 0$.
\end{exercise}
The directions in Exercise~\ref{ex:charLine} define the so-called 
\defin{characteristic line field} on~$S_c$: its existence implies that the
paths traced out on $S_c$ by orbits of $X_H$ depend only on $S_c$ and on
the symplectic structure, not on~$H$ itself.  In particular, a closed
orbit of $X_H$ on $S_c$ is merely a closed integral curve of the characteristic
line field.  It is thus meaningful to ask the following question:

\begin{question}
Given a symplectic manifold $(M,\omega)$ and a smooth hypersurface 
$S \subset M$, does the characteristic line field on $S$ have any closed
integral curves?
\end{question}

We shall often refer to closed integral curves of the characteristic line
field on $S \subset M$ simply as \emph{closed orbits on~$S$}.
There are examples of Hamiltonian systems that have no closed orbits
at all, cf.~\cite{HoferZehnder}*{\S 4.5}.
However, the following result (and the related result of
A.~Weinstein \cite{Weinstein:convex} for convex energy surfaces)
singles out a special class of hypersurfaces
for which the answer is always yes:

\begin{thm}[P.~Rabinowitz \cite{Rabinowitz:starshaped}]
\label{thm:starshaped}
Every star-shaped hypersurface in the standard symplectic $\RR^{2n}$ 
admits a closed orbit.
\end{thm}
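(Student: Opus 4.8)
The plan is to produce the required closed orbit as a nonconstant periodic orbit of a conveniently chosen Hamiltonian vector field, and to find that orbit by an infinite-dimensional variational argument of Rabinowitz type. \emph{Reduction:} after a translation, $S$ is star-shaped with respect to the origin, so every ray out of $0$ meets $S$ in exactly one point; hence $S = H^{-1}(1)$ for a function $H$ that is smooth on $\RR^{2n}\setminus\{0\}$ and positively homogeneous of degree $2$. By Exercise~\ref{ex:charLine}, the integral curves of the characteristic line field on $S$ are precisely the orbits of $X_H$ lying on $S$, up to reparametrisation. Moreover $X_H$ is then homogeneous of degree $1$, so if $x(\cdot)$ is a nonconstant periodic orbit of $X_H$ lying on any level set $H^{-1}(c)$ with $c>0$, then $c^{-1/2}x(\cdot)$ is such an orbit lying on $S$. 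It therefore suffices to find one nonconstant periodic orbit of $X_H$ of positive energy, and for the analysis one may freely modify $H$ away from a neighbourhood of $S$ so that it becomes smooth on all of $\RR^{2n}$ with whatever growth at $0$ and at infinity the argument needs; this changes neither $S$ nor the orbits on it.

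\emph{The variational setup:} normalising loops to period $1$, on the Sobolev space $E := H^{1/2}(S^1,\RR^{2n})$ consider
$$
\mathcal{A}_H(x) = \tfrac12\int_0^1 \omega\std\bigl(x(t),\dot x(t)\bigr)\,dt - \int_0^1 H\bigl(x(t)\bigr)\,dt,
$$
whose critical points are exactly the $1$-periodic solutions of $\dot x = X_H(x)$ (using $\omega\std(X_H,\cdot)=-dH$). The essential difficulty is that the quadratic part $x\mapsto \tfrac12\int_0^1\omega\std(x,\dot x)\,dt$ is the self-adjoint operator ``$-i\,d/dt$'' on $E$, which is strongly \emph{indefinite}: its eigenspaces for positive and for negative eigenvalues (the non-constant Fourier modes) are both infinite-dimensional, so $\mathcal{A}_H$ is unbounded above and below and has no local extrema, and a critical point can only be found as a saddle. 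Accordingly I would split $E = E^+\oplus E^0\oplus E^-$ into the positive/zero/negative subspaces of that operator, verify the Palais--Smale condition for $\mathcal{A}_H$ (this is where the controlled growth of the modified $H$ is used), and set up a linking configuration: a small sphere $\Sigma_\rho\subset E^+$ with $\inf_{\Sigma_\rho}\mathcal{A}_H = \beta > 0$, and a bounded ``half-space'' $Q$ of the shape $\{w + r e_+ : w\in E^-\oplus E^0,\ r\ge 0,\ \|w+re_+\|\le R\}$ on whose boundary $\mathcal{A}_H\le 0$, chosen so that $\Sigma_\rho$ links $\partial Q$. The Benci--Rabinowitz linking theorem then yields a critical value $c\ge\beta>0$ and hence a critical point $x$; since any constant loop has $\mathcal{A}_H$-value $-H(\mathrm{const})\le 0$ (we arrange the modified $H\ge 0$), $x$ is nonconstant. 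A technically cleaner variant, which forces the orbit onto $S$ directly and so avoids worrying that $x$ might wander into the region where $H$ was altered, is to use the Rabinowitz action functional $\mathcal{A}(x,\tau)=\tfrac12\int_0^1\omega\std(x,\dot x)\,dt - \tau\int_0^1\bigl(H(x)-1\bigr)\,dt$ on $E\times\RR$: a critical point with $\tau\ne 0$ satisfies $\dot x=\tau X_H(x)$ and $H(x)\equiv 1$, i.e.\ it is a closed orbit on $S$ of period $\tau$.

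\emph{Conclusion, and the main obstacle:} by the reduction step, the nonconstant periodic orbit of $X_H$ obtained above rescales to a nonconstant periodic orbit of $X_H$ on $S$, which by Exercise~\ref{ex:charLine} is a closed integral curve of the characteristic line field --- the desired closed orbit. The hard part is entirely the middle step: because the functional is strongly indefinite one is forced into a minimax/linking argument rather than a minimisation, and both the Palais--Smale compactness and the linking geometry are delicate, the degree-two homogeneous model Hamiltonian being exactly borderline for the estimates one needs --- this is what necessitates the preliminary modification of $H$ and the careful choice of its asymptotics (or the use of the Rabinowitz functional). By contrast, the use of star-shapedness and of homogeneity, and the final rescaling, are routine. (In the special case $2n=4$ there is also a more geometric route via pseudoholomorphic curves, in the spirit of Hofer's proof of the three-dimensional Weinstein conjecture --- filling the region outside $S$ by holomorphic planes asymptotic to $S$ and showing that a degeneration of such a family must produce a closed orbit --- but the variational argument above has the advantage of working in every dimension.)
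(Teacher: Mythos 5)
Your outline is essentially correct, but you should know that the paper does not prove Theorem~\ref{thm:starshaped} at all: it is quoted from Rabinowitz's paper, and the only argument the text offers is a sketch of a \emph{different} mechanism. Namely, star-shapedness gives transversality to the Liouville vector field $V\std$, so by Exercise~\ref{ex:symplectization} the hypersurface $S$ embeds in a $1$-parameter family $S_c$ of hypersurfaces whose characteristic line fields all have identical dynamics; the almost-existence results of Hofer--Zehnder and Struwe then produce a closed orbit on almost every $S_c$, hence on $S$ itself --- and this is how the text arrives at the stronger Theorem~\ref{thm:WeinsteinR2n} for general contact-type hypersurfaces. Your proposal instead reconstructs Rabinowitz's original direct variational proof: the degree-$2$ homogeneous Hamiltonian, the rescaling between level sets, the strongly indefinite action functional on $H^{1/2}(S^1,\RR^{2n})$, and a Benci--Rabinowitz linking argument (or the Lagrange-multiplier functional). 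That route is correct in outline and you have identified the genuine difficulties in the right places --- the borderline quadratic growth forcing a modification of $H$, the Palais--Smale verification, and the linking geometry --- though as written it is a strategy rather than a proof, since none of those estimates are actually carried out; in the Lagrange-multiplier variant you would also need to argue that the minimax value is positive so that the critical point has $\tau\ne 0$ rather than being a constant loop on $S$. The trade-off between the two approaches is instructive: the direct variational argument is self-contained and historically primary, while the capacity/almost-existence route used implicitly by the paper requires heavier machinery but immediately yields the more general statement for all contact-type hypersurfaces, which is the direction the chapter is heading.
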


Recall that a hypersurface $S \subset \RR^{2n}$ is called \defin{star-shaped}
if it doesn't intersect the origin and the projection
$\RR^{2n} \setminus \{0\} \to S^{2n-1} : z \mapsto z / |z|$ restricts to
a diffeomorphism $S \to S^{2n-1}$ (see Figure~\ref{fig:starshaped}).  In
particular, $S$ is then transverse to the radial vector field
\begin{equation}
\label{eqn:LiouvilleR}
V\std := \frac{1}{2} \sum_{i=1}^n \left( p_i \frac{\p}{\p p_i} + q_i \frac{\p}
{\p q_i} \right).
\end{equation}

\begin{figure}
\includegraphics{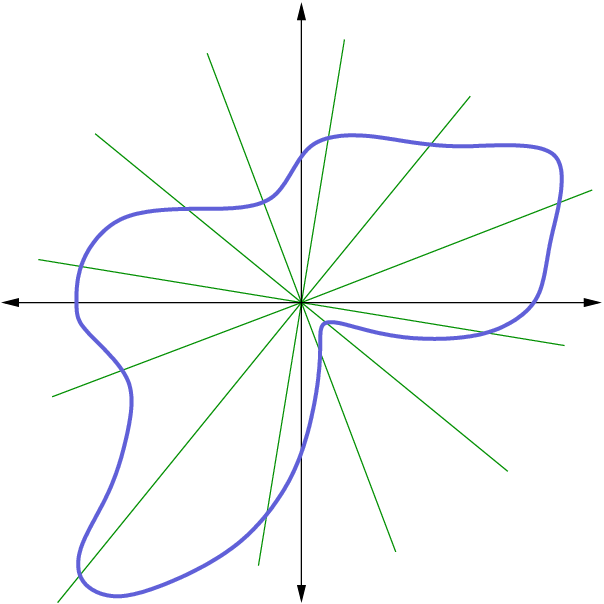}
\caption{\label{fig:starshaped} A star-shaped hypersurface in $\RR^2$.}
\end{figure}

\begin{exercise}
\label{ex:Liouville}
Show that the vector field $V\std$ of \eqref{eqn:LiouvilleR} satisfies
$\Lie_{V\std}\omega\std = \omega\std$.
\end{exercise}

\begin{defn}
A vector field $V$ on a symplectic manifold $(M,\omega)$ is called a
\defin{Liouville vector field} if it satisfies $\Lie_V \omega = \omega$.
\end{defn}

By Exercise~\ref{ex:Liouville}, star-shaped hypersurfaces in $\RR^{2n}$ are
always transverse to a Liouville vector field, and this turns out to be a
very special property.

\begin{defn}
A hypersurface $S$ in a symplectic manifold $(M,\omega)$ is said to be of
\defin{contact type} if some neighborhood of $S$ admits a Liouville vector
field that is transverse to~$S$.
\end{defn}

Given a closed contact type hypersurface $S \subset (M,\omega)$, 
one can use the flow of the Liouville vector field $V$ to produce a very nice
local picture of $(M,\omega)$ near~$S$.  Define a $1$-form on $S$ by
$$
\alpha = \iota_V\omega|_S,
$$
and choose $\epsilon > 0$ sufficiently small so that
$$
\Phi : (-\epsilon,\epsilon) \times S \to M : (t,x) \mapsto \varphi^t_V(x)
$$
is an embedding, where $\varphi_V^t$ denotes the flow of~$V$.
\begin{exercise} \ 
\label{ex:symplectization}
\begin{enumerate}
\renewcommand{\labelenumi}{(\alph{enumi})}
\item
Show that the flow of $V$ ``dilates'' the symplectic form, 
i.e.~$(\varphi_V^t)^*\omega = e^t\omega$.
\item
Show that $\Phi^*\omega = d(e^t \alpha)$, where we define $\alpha$ 
as a $1$-form on $(-\epsilon,\epsilon) \times S$ by pulling it back through 
the natural projection to~$S$.
\textsl{Hint: Show first that if $\lambda := \iota_V\omega$, then
$\Phi^*\lambda = e^t\alpha$, and notice that $d\lambda = \omega$ by the
definition of a Liouville vector field.}
\item
Show that $d\alpha$ restricts to a nondegenerate skew-symmetric $2$-form 
on the hyperplane field $\xi := \ker\alpha$ over $S$.  As
a consequence, $\xi$ is transverse to a smooth line field $\ell$ on $S$
characterized by the property that $X \in \ell$ if and only if
$d\alpha(X,\cdot) = 0$.
\item
Show that on each of the hypersurfaces $\{c\} \times S$ for
$c \in (-\epsilon,\epsilon)$, the line field $\ell$ defined above is
the characteristic line field with respect to
the symplectic form $d(e^t\alpha)$.
\end{enumerate}
\end{exercise}

Several interesting consequences follow from Exercise~\ref{ex:symplectization}.
In particular, the use of a Liouville vector field to identify a neighborhood
of $S$ with $(-\epsilon,\epsilon) \times S$ gives us a smooth family of
hypersurfaces $S_c := \{c\} \times S$ whose characteristic line fields all
have exactly the same dynamics.  This provides some intuitive motivation
to believe Theorem~\ref{thm:starshaped}: it's sufficient to find one 
hypersurface in the family $S_c$ that admits a periodic orbit, for then
they all do.  As it turns out, one can prove a variety of ``almost existence''
results in $1$-parameter families of hypersurfaces, e.g.~in 
$(\RR^{2n},\omega\std)$, a result of Hofer-Zehnder \cite{HoferZehnder:capacity}
and Struwe \cite{Struwe:almostExistence} implies that for any smooth
$1$-parameter family of hypersurfaces, almost every (in a measure theoretic
sense) hypersurface in the family admits a closed orbit.
This gives a proof of the following
generalization of Theorem~\ref{thm:starshaped}:

\begin{thm}[C.~Viterbo \cite{Viterbo:weinstein}]
\label{thm:WeinsteinR2n}
Every contact type hypersurface in $(\RR^{2n},\omega\std)$ admits a closed orbit.
\end{thm}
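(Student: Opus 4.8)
The plan is to reduce the statement to the ``almost existence'' theorem of Hofer--Zehnder \cite{HoferZehnder:capacity} and Struwe \cite{Struwe:almostExistence} recalled just above: in any smooth $1$-parameter family of hypersurfaces in $(\RR^{2n},\omega\std)$, almost every member (in the measure-theoretic sense) carries a closed orbit. All the genuine analytic content is packaged in that theorem, which I would treat as a black box; the remaining work is to exploit the contact-type hypothesis to realize the given hypersurface $S$ inside such a family, and to move closed orbits freely among the family's members.

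First I would build the family. Since $S \subset (\RR^{2n},\omega\std)$ is of contact type, fix a Liouville vector field $V$ on a neighborhood of $S$ that is transverse to $S$, and, exactly as in Exercise~\ref{ex:symplectization}, pick $\epsilon > 0$ small enough that
$$
\Phi : (-\epsilon,\epsilon) \times S \to \RR^{2n} : (t,x) \mapsto \varphi_V^t(x)
$$
is an embedding, where $\varphi_V^t$ denotes the flow of $V$. Setting $S_c := \varphi_V^c(S)$ for $c \in (-\epsilon,\epsilon)$ then produces a smooth $1$-parameter family of hypersurfaces foliating the open set $\Phi\big((-\epsilon,\epsilon)\times S\big) \subset \RR^{2n}$, with $S_0 = S$; geometrically the Liouville flow ``inflates'' $S$ through a slab of nearby hypersurfaces.

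The next step is to see that a closed orbit on \emph{any} $S_c$ produces one on $S$. By part~(a) of Exercise~\ref{ex:symplectization}, $(\varphi_V^c)^*\omega\std = e^c\,\omega\std$, so $\varphi_V^c$ is a \emph{conformal} symplectomorphism. The characterization of the characteristic direction in Exercise~\ref{ex:charLine} --- for a hypersurface $S'$ and $X \in TS'$, that $X$ spans the characteristic line iff $\omega\std(X,\cdot)|_{TS'} = 0$ --- is manifestly insensitive to rescaling $\omega\std$ by a nonzero constant. Hence the diffeomorphism $\varphi_V^c : S \to S_c$ carries the characteristic line field of $S$ onto that of $S_c$, and in particular identifies their closed integral curves; so if $S_c$ admits a closed orbit, then transporting it back by $\varphi_V^{-c}$ gives one on $S$.

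Finally I would apply the almost existence theorem to the family $\{S_c\}_{c\in(-\epsilon,\epsilon)}$, obtaining a full-measure set of parameters $c$ --- in particular at least one --- for which $S_c$ carries a closed orbit; by the previous step $S = S_0$ then does too. The only real obstacle in this argument is the almost existence theorem itself, whose proof rests on the variational theory of symplectic capacities and is not part of the machinery developed here; relative to that input the rest is essentially bookkeeping, the single point genuinely worth checking being the conformal invariance of the characteristic line field used above.
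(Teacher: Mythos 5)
Your proposal is correct and follows essentially the same route the text sketches: use the Liouville flow to embed $S$ in the family $S_c = \varphi_V^c(S)$, observe (via Exercise~\ref{ex:symplectization}) that all members have conjugate characteristic line fields since $\varphi_V^c$ is a conformal symplectomorphism, and then invoke the Hofer--Zehnder/Struwe almost existence theorem to find a closed orbit on some $S_c$ and transport it back to $S$. Your explicit remark that the characteristic line field depends only on the conformal class of $\omega\std$ is exactly the point the paper's Exercise~\ref{ex:symplectization}(d) is driving at, so nothing is missing.
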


Having generalized this far, it's natural to wonder whether the crucial
properties of a contact hypersurface can be considered independently of its
embedding into a symplectic manifold.  The answer comes from the
$1$-form $\alpha$ and hyperplane distribution $\xi = \ker\alpha \subset TS$ 
in Exercise~\ref{ex:symplectization}.

\begin{defn}
A \defin{contact form} on a $(2n-1)$-dimensional manifold is a smooth
$1$-form $\alpha$ such that $d\alpha$ is always nondegenerate on
$\xi := \ker\alpha$.  The hyperplane distribution $\xi$ is then called a
\defin{contact structure}.
\end{defn}

\begin{exercise}
\label{ex:contact}
Show that the condition of $d\alpha$ being nondegenerate on $\xi = \ker\alpha$
is equivalent to $\alpha \wedge (d\alpha)^{n-1}$ being a volume form on~$S$,
and that $\xi$ is nowhere integrable if this is satisfied.
\end{exercise}

Given an orientation of $S$, we call the contact structure 
$\xi = \ker\alpha$ \defin{positive} if the orientation induced by
$\alpha \wedge (d\alpha)^{n-1}$ agrees with the given orientation.
One can show that if $S \subset (M,\omega)$ is a contact type hypersurface 
with the natural orientation induced from $M$ and a transverse
Liouville vector field, then the induced contact structure is always positive.

Note that Liouville vector fields are far from unique, in fact:
\begin{exercise}
Show that if $V$ is a Liouville vector field on $(M,\omega)$ and $X_H$ is
any Hamiltonian vector field, then $V + X_H$ is also a Liouville vector field.
\end{exercise}

Thus the contact form $\alpha = \iota_V\omega|_S$ induced on a contact type
hypersurface should not be considered an intrinsic property of the 
hypersurface.  As the next result indicates, the contact \emph{structure}
is the more meaningful object.

\begin{prop}
\label{prop:sameXi}
Up to isotopy, the contact structure $\xi = \ker \alpha$ induced on a
contact type hypersurface $S \subset (M,\omega)$ by $\alpha = \iota_V\omega|_S$
is independent of the choice of~$V$.
\end{prop}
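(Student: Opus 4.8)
The plan is to show that the space of Liouville vector fields transverse to $S$ is connected (or at least that any two of them can be joined through a path of Liouville vector fields all transverse to $S$), and then to apply a Gray-stability-type argument to conclude that the induced contact structures are isotopic. First I would fix two Liouville vector fields $V_0$ and $V_1$ defined near $S$ and transverse to it. By the exercise noted just above the statement, $V_1 - V_0$ is a symplectic vector field (its contraction with $\omega$ is closed); on a neighborhood of $S$, which we may take to be of the form $(-\epsilon,\epsilon)\times S$ via the flow of $V_0$, this closed $1$-form $\iota_{V_1-V_0}\omega$ is actually exact if we shrink further, so $V_1 - V_0 = X_H$ for some Hamiltonian $H$. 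Thus $V_t := V_0 + t X_H$ for $t\in[0,1]$ is a smooth path of Liouville vector fields interpolating between $V_0$ and $V_1$. Transversality to $S$ is an open condition, and since $S$ is compact I can, after multiplying $H$ by a cutoff and shrinking the neighborhood, assume every $V_t$ along the path stays transverse to $S$ — more carefully, one should note that $V_0$ and $V_1$ are both transverse, but the straight-line path need not be, so the honest statement is that I replace the straight-line path by a path staying in the (open, but possibly disconnected) set of transverse Liouville fields; handling this is exactly the main obstacle, discussed below.

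Granting a path $V_t$ of Liouville vector fields each transverse to $S$, set $\alpha_t := \iota_{V_t}\omega|_S$, a smooth family of contact forms on $S$ with $\xi_t = \ker\alpha_t$. This is precisely the setup of Gray's stability theorem: I would invoke the Moser deformation trick (in the contact rather than symplectic form, parallel to the proof of Darboux's theorem and Moser's stability theorem given earlier in this chapter). Concretely, one seeks an isotopy $\psi_t: S\to S$ with $\psi_0 = \Id$ and $\psi_t^*\alpha_t = f_t\,\alpha_0$ for some positive function $f_t$; differentiating and using nondegeneracy of $d\alpha_t$ on $\xi_t$ together with the Reeb direction to invert the relevant operator produces a time-dependent vector field generating $\psi_t$, and since $S$ is compact the flow exists for all $t\in[0,1]$. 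Then $\psi_1^*\alpha_1 = f_1\alpha_0$ gives $(\psi_1)_*\xi_0 = \xi_1$, i.e. $\xi_0$ and $\xi_1$ are isotopic, which is the claim.

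The main obstacle is the transversality issue in the first step: the naive straight-line path $V_0 + tX_H$ need not remain transverse to $S$. The fix I would pursue is to use the freedom in choosing $H$ — modify $H$ by adding a Hamiltonian generating a flow that rotates $V_0$ toward $V_1$ among transverse fields, or more simply observe that near $S$ one can write both $V_0$ and $V_1$ in the normal form coming from Exercise~\ref{ex:symplectization}, where each is the Liouville field $\p_t$ of a symplectization $d(e^t\alpha_j)$, and the two symplectizations differ by a symplectomorphism identifying collars; then a convex combination at the level of the contact forms $\alpha_0, \alpha_1$ on $S$ (which is automatically a path of contact forms, since the set of contact forms inducing a coorientation is convex once... no — it is not convex in general either, but $t\alpha_0 + (1-t)\alpha_1$ \emph{is} a contact form whenever $\alpha_0$ and $\alpha_1$ induce the same cooriented contact structure, which here they need not a priori) lets one reduce directly to Gray stability. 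The cleanest route, and the one I would write up, is: use the collar neighborhood theorem to reduce to a fixed model collar $(-\epsilon,\epsilon)\times S$, observe any transverse Liouville field there is determined near $S$ by its restriction which is $C^0$-close to $\p_t$ after rescaling, use convexity in this local model to connect the two, and then apply the contact Moser argument to the resulting path of contact forms on $S$.
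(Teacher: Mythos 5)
Your overall strategy is the paper's intended one: the text states that this proposition "is a fairly easy exercise using \ldots Gray's stability theorem," and the exercise is precisely to interpolate between the two Liouville fields and feed the resulting family of contact forms on $S$ into Theorem~\ref{thm:Gray}. Your second paragraph (the contact Moser argument) is fine and reproduces the proof of Gray stability given in the text. The gap is that you declare transversality of the interpolating path to be "exactly the main obstacle" and then never resolve it: your final paragraph cycles through several candidate fixes, retracts one of them mid-sentence, and ends with an appeal to a local model in which a transverse Liouville field is "$C^0$-close to $\partial_t$ after rescaling" --- which is neither true in general nor a proof. The one step that genuinely needs checking is thus left unchecked.

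The resolution is a one-liner, and it is really the whole content of the exercise. The Liouville condition $d\iota_V\omega = \omega$ is an inhomogeneous \emph{linear} equation in $V$, so its solution set is an affine subspace and $V_t := (1-t)V_0 + tV_1$ is a Liouville field for every $t$ --- you have this. But transversality to $S$ with a fixed coorientation is \emph{also} a linear condition: if $f$ is a local defining function for $S$ with $df(V_0) > 0$ and $df(V_1) > 0$, then $df(V_t) = (1-t)\,df(V_0) + t\,df(V_1) > 0$ for all $t \in [0,1]$. Hence the straight-line path is transverse, with no cutoffs, collars, or perturbations required; all that is needed is that $V_0$ and $V_1$ point to the same side of $S$. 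For $S$ closed and connected this is automatic: $\alpha_i \wedge (d\alpha_i)^{n-1}$ is positive for the orientation of $S$ determined by the coorientation $V_i$, while $\alpha_0 - \alpha_1$ is closed and $d\alpha_0 = d\alpha_1 = \omega|_{TS}$, so $\int_S (\alpha_0 - \alpha_1)\wedge(d\alpha_0)^{n-1} = 0$ by Stokes, forcing the two volume forms to induce the same orientation. Separately, your claim that $\iota_{V_1 - V_0}\omega$ becomes exact after shrinking the collar is false in general: a collar $(-\epsilon,\epsilon)\times S$ deformation retracts onto $S$, whose first de Rham cohomology need not vanish, so $V_1 - V_0$ is a symplectic but not necessarily Hamiltonian vector field. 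This error is harmless, since the argument never actually uses $H$ --- the straight-line path of Liouville fields is all you need.
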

The proof of this is a fairly easy exercise using a
standard fundamental result of contact geometry:
\begin{thm}[Gray's stability theorem]
\label{thm:Gray}
If $S$ is a closed $(2n-1)$-dimensional manifold and $\{\xi_t\}_{t \in [0,1]}$
is a smooth $1$-parameter family of contact structures on $S$, 
then there exists a smooth $1$-parameter family of diffeomorphisms
$\{ \varphi_t \}_{t\in[0,1]}$ such that $\varphi_0 = \Id$ and
$(\varphi_t)_*\xi_0 = \xi_t$.
\end{thm}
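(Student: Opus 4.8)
The plan is to run the Moser deformation trick in the contact setting. The one new feature compared with the symplectic case (e.g.\ the proof of Darboux's theorem above) is that a contact structure determines a defining $1$-form only up to multiplication by a nowhere-zero function, so we should not try to preserve a chosen defining form exactly, but only up to a time- and point-dependent positive rescaling.

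First I would fix a smooth $1$-parameter family of contact forms $\alpha_t$ with $\ker\alpha_t = \xi_t$ for all $t$ (working, as is relevant here, with coorientable $\xi_t$): such a family exists because locally each $\xi_t$ is cut out by a smoothly varying $1$-form, and the set of forms defining a given cooriented hyperplane field is convex, so the local choices patch together via a partition of unity. Seeking the isotopy as the flow $\varphi_t$ of a time-dependent vector field $X_t$ with $\varphi_0 = \Id$, I would aim to arrange
\[
\varphi_t^*\alpha_t = \lambda_t\,\alpha_0
\]
for some smooth family of functions $\lambda_t : S \to (0,\infty)$ with $\lambda_0 \equiv 1$. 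Since a diffeomorphism carries $\ker\alpha_t$ back to $\ker(\varphi_t^*\alpha_t)$, this identity immediately gives $(\varphi_t)_*\xi_0 = \xi_t$, which is the desired conclusion.

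Differentiating the displayed equation in $t$, using $\frac{d}{dt}\varphi_t = X_t\circ\varphi_t$, Cartan's formula, and cancelling the common factor $\varphi_t^*$, the requirement becomes
\[
\dot\alpha_t + \Lie_{X_t}\alpha_t = \mu_t\,\alpha_t, \qquad \mu_t := \Big(\frac{\dot\lambda_t}{\lambda_t}\Big)\circ\varphi_t^{-1}.
\]
Now comes the genuinely contact-geometric step: I would look for $X_t$ tangent to $\xi_t = \ker\alpha_t$, so that $\iota_{X_t}\alpha_t = 0$ and the equation collapses to $\dot\alpha_t + \iota_{X_t}d\alpha_t = \mu_t\alpha_t$. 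Contracting with the Reeb vector field $R_t$ of $\alpha_t$ (characterized by $\alpha_t(R_t) = 1$ and $\iota_{R_t}d\alpha_t = 0$) kills the middle term and forces $\mu_t = \dot\alpha_t(R_t)$, so $\mu_t$ is now determined. With $\mu_t$ fixed we must solve $\iota_{X_t}d\alpha_t = \mu_t\alpha_t - \dot\alpha_t =: \beta_t$; one checks $\beta_t(R_t) = 0$, and since $d\alpha_t$ is nondegenerate on $\xi_t$ there is a unique $X_t \in \xi_t$ satisfying this equation restricted to $\xi_t$. Because both $\iota_{X_t}d\alpha_t$ and $\beta_t$ annihilate $R_t$ and $TS = \xi_t \oplus \RR R_t$, the equation then holds on all of $TS$, so $X_t$ is uniquely determined and depends smoothly on $t$.

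Since $S$ is closed, the flow $\varphi_t$ of $X_t$ exists for all $t \in [0,1]$. I would then recover $\lambda_t$ by setting $\lambda_t := \exp\!\int_0^t (\mu_s\circ\varphi_s)\,ds$ and observe that $\gamma_t := \varphi_t^*\alpha_t$ and $\lambda_t\alpha_0$ both solve the linear ODE $\dot\gamma_t = (\mu_t\circ\varphi_t)\,\gamma_t$ with the same initial value $\alpha_0$, hence coincide; this yields $\varphi_t^*\alpha_t = \lambda_t\alpha_0$ and therefore $(\varphi_t)_*\xi_0 = \xi_t$. The main obstacle is the bookkeeping in the middle step: recognizing that $X_t$ must be taken tangent to the contact distribution and that contracting with the Reeb field is precisely what pins down the otherwise-free conformal factor $\mu_t$. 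The existence of a smooth family $\alpha_t$, completeness of the flow on a closed manifold, and the reconstruction of $\lambda_t$ are all routine.
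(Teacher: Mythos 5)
Your proposal is correct and follows essentially the same route as the paper's proof: the Moser trick with the ansatz $\varphi_t^*\alpha_t = \lambda_t\alpha_0$, the simplifying assumption $X_t \in \xi_t$, contraction with the Reeb field to pin down the conformal factor, and nondegeneracy of $d\alpha_t|_{\xi_t}$ to solve for $X_t$. The extra details you supply (existence of the smooth family $\alpha_t$, the ODE argument recovering $\lambda_t$) are correct refinements of steps the paper leaves implicit.
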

This is yet another application of the Moser deformation trick; we'll explain
the proof at the end of this section.  Note that the theorem provides an 
isotopy between any two deformation
equivalent contact \emph{structures}, but there is no such result for contact
\emph{forms}---that's one of the reaons why contact structures are considered 
to be more geometrically natural objects.

By now we hopefully have sufficient motivation to study odd-dimensional
manifolds with contact structures.  The pair $(S,\xi)$ is called a
\defin{contact manifold}, and for two contact manifolds $(S_1,\xi_1)$ and
$(S_2,\xi_2)$ of the same dimension, 
a smooth embedding $\varphi : S_1 \hookrightarrow
S_2$ is called a \defin{contact embedding} 
$$
(S_1,\xi_1) \hookrightarrow (S_2,\xi_2)
$$
if $\varphi_*\xi_1 = \xi_2$.  If $\varphi$ is also a diffeomorphism, then
we call it a \defin{contactomorphism}.  One of the main questions in contact
topology is how to distinguish closed contact manifolds that aren't
contactomorphic.  We'll touch upon this subject in the next section.

But first there is more to say about Hamiltonian dynamics.
We saw in Exercise~\ref{ex:symplectization} that the characteristic line
field on a contact type hypersurface $S \subset (M,\omega)$ 
can be described in terms of a
contact form $\alpha$: it is the unique line field containing all vectors
$X$ such that $d\alpha(X,\cdot) = 0$, and is necessarily transverse to the
contact structure.  The latter implies that $\alpha$ is nonzero in this
direction, so we can use it to choose a normalization, leading to the following
definition.

\begin{defn}
\label{defn:Reeb}
Given a contact form $\alpha$ on a $(2n-1)$-dimensional manifold $S$, the
\defin{Reeb vector field} is the unique vector field $R_\alpha$ satisfying
$$
d\alpha(R_\alpha,\cdot) = 0,
\qquad\text{ and }\qquad
\alpha(R_\alpha) = 1.
$$
\end{defn}

Thus closed integral curves 
on contact hypersurfaces can be identified with closed
orbits of their Reeb vector fields.\footnote{Note that since Liouville vector
fields are not unique, the Reeb vector field on a contact hypersurface is
not uniquely determined, but its \emph{direction} is.}  The ``intrinsic''
version of Theorems~\ref{thm:starshaped} and~\ref{thm:WeinsteinR2n} is
then the following famous conjecture.

\begin{conj}[Weinstein conjecture]
For every closed odd-dimensional manifold $M$ with a contact form $\alpha$, 
$R_\alpha$ has a closed orbit.
\end{conj}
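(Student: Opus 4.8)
The plan is to attack the conjecture through the theory of pseudoholomorphic curves in the \emph{symplectization} $(\RR \times S,\, d(e^t\alpha))$, equipped with the $\RR$-translation-invariant almost complex structure $J$ determined by $\alpha$ together with a compatible complex structure on the contact distribution $\xi = \ker\alpha$. The organizing principle, due to Hofer, is that closed Reeb orbits appear as the asymptotic ends of \emph{finite-energy} $J$-holomorphic curves, so the argument should proceed by contradiction: assume $R_\alpha$ has no closed orbit, build a suitable moduli space of finite-energy curves, and show that its Gromov-type compactification must contain a broken curve whose breaking occurs along a closed orbit --- a contradiction.

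First I would dispatch the case in which $(S,\xi)$ bounds a symplectic filling (or, more generally, embeds as a contact-type hypersurface, so that one is essentially back in the setting of Theorems~\ref{thm:starshaped} and~\ref{thm:WeinsteinR2n}). Here one glues the filling onto the negative end of $\RR \times S$, chooses $J$ to be $\RR$-invariant near the positive end, and studies $J$-holomorphic spheres passing through a generic point. The punctured version of Gromov's compactness theorem --- the analytic core of symplectic field theory, and the natural sequel to the compactness results promised in this book --- shows that a sequence of such spheres either converges or degenerates; absent a closed Reeb orbit the only available degeneration is nodal, which a careful index and intersection computation in dimension four, or a cobordism/degree argument in general, can be arranged to exclude --- so the moduli space is compact and nonempty, yet simultaneously forced to have nonempty boundary. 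This is the modern repackaging of the variational arguments of Rabinowitz and Viterbo.

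The genuinely hard part is the \emph{general} closed contact manifold, where no ambient symplectic geometry is handed to us and one must manufacture the relevant holomorphic curve from scratch. In dimension three this was Hofer's celebrated achievement for the cases $\pi_2(S)\neq 0$ and $\xi$ overtwisted, where a finite-energy plane can be produced by a direct limiting construction (a Bishop family of small holomorphic disks, or a neck-stretching argument), and the full three-dimensional statement was later settled by Taubes using Seiberg--Witten Floer theory --- an input of an entirely different nature, far outside the elliptic-PDE toolkit developed here. In dimension $\ge 5$ the conjecture remains \emph{open}: there is no known analogue of Taubes's gauge-theoretic argument, and the holomorphic-curve approach stalls on controlling compactness when Reeb orbits degenerate or occur with high multiplicity, so that the expected bubbling-off of a closed orbit cannot be guaranteed. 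I therefore expect that no self-contained proof is possible: the honest proposal is to execute the symplectization/filling argument wherever a filling is available, and to invoke the theorems of Hofer and Taubes in dimension three, while flagging the higher-dimensional case as one of the central open problems of the subject.
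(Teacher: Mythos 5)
The statement you were asked about is labeled a \emph{conjecture} in the text, and the surrounding discussion states explicitly that it is still open in general, with only the three-dimensional case settled (by Taubes, via Seiberg--Witten theory) and special cases such as overtwisted contact $3$-manifolds handled by Hofer's finite-energy-plane method in the symplectization --- so there is no proof in the paper to compare against. Your proposal correctly recognizes this and accurately reproduces the same picture the text itself paints (symplectization, finite-energy curves asymptotic to Reeb orbits, Hofer and Taubes in dimension three, open in higher dimensions), which is the right answer here: declining to ``prove'' an open conjecture and instead surveying the genuine state of the art is exactly what the situation calls for.
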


The Weinstein conjecture is still open in general, though a proof in
dimension three was produced recently by C.~Taubes \cite{Taubes:weinstein},
using Seiberg-Witten theory.  Before this, there was a long history of
partial results using the theory of pseudoholomorphic curves, such as the
following (see Definition~\ref{defn:overtwisted} below for the definition
of ``overtwisted''):

\begin{thm}[Hofer \cite{Hofer:weinstein}]
\label{thm:HoferOvertwisted}
Every Reeb vector field on a closed $3$-dimensional overtwisted 
contact manifold admits a contractible periodic orbit.
\end{thm}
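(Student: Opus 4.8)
The plan is to run Hofer's celebrated ``bubbling-off'' argument for the Weinstein conjecture, using the overtwisted disk as a source of a non-compact family of pseudoholomorphic disks whose degeneration must produce a closed Reeb orbit. First I would set up the symplectization $(\RR \times S, d(e^t\alpha))$ and choose an almost complex structure $J$ on it which is $\RR$-invariant, maps $\p_t$ to the Reeb vector field $R_\alpha$, and restricts to a compatible complex structure on the contact hyperplane field $\xi = \ker\alpha$ (such $J$ exist by the same elementary arguments as in \S\ref{sec:compatible}). The key feature of an overtwisted disk $\dD \subset S$ is that along its boundary the characteristic foliation has a specific singular structure — a single elliptic singularity in the interior and the boundary a closed leaf — and Hofer's insight is to use the overtwisted disk to build a \emph{Bishop family} of $J$-holomorphic disks in $\RR \times S$ with boundary on $\RR \times \dD$, emanating from the elliptic singularity $\{0\} \times \{e\}$, solving a totally real boundary value problem.

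The main steps, in order, are: (1) establish existence of the Bishop family near the elliptic point by a local filling argument — a small disk near a nondegenerate elliptic singularity bounds a one-parameter family of small holomorphic disks, which by an implicit-function-theorem/Fredholm argument (the relevant index computation for disks with totally real boundary conditions) extends to a maximal connected family $\mathcal{F}$; (2) show this family cannot be closed off — it cannot shrink back to a point, and it cannot limit onto the closed leaf $\p\dD$ while remaining a family of embedded disks, so the maximal family must be non-compact, meaning either the areas $\int_D u^*d(e^t\alpha)$ blow up or gradients blow up or the disks escape to $t \to +\infty$; (3) derive uniform energy (area) bounds on the Bishop disks from the geometry — here Stokes' theorem applied to $d(e^t\alpha)$ together with control of the boundary integral $\int_{\p D} u^*(e^t\alpha)$ coming from the fixed totally real boundary $\RR \times \dD$ gives an a priori area bound, so compactness can only fail by gradient blow-up; (4) perform the bubbling analysis: rescale near a point of maximal gradient to extract, in the limit, a non-constant finite-energy $J$-holomorphic plane $v : \CC \to \RR \times S$; (5) invoke Hofer's fundamental lemma that a non-constant finite-energy plane is asymptotic at infinity to a (trivial cylinder over a) closed Reeb orbit — this is proved by studying the behavior of $v$ on large circles, showing the $S$-component of $v$ restricted to $|z| = R$ converges in $C^\infty$ as $R \to \infty$ to a periodic Reeb orbit $\gamma$ traversed with some period $T = \int_{S^1} \gamma^*\alpha > 0$; (6) observe that since the whole construction takes place among disks homotopic (rel nothing, via the filling family) to a point in the overtwisted disk, the resulting orbit $\gamma$ is contractible in $S$.

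The hard part is unquestionably step (5) together with the non-degeneracy input needed to make steps (2)–(4) go through — i.e., the full analytic package of finite-energy holomorphic curves in symplectizations: the monotonicity lemma and removable-singularity theorem that let one extract the bubble, the energy quantization that prevents the area from leaking away entirely, and especially Hofer's asymptotic analysis showing a finite-energy plane with nonzero energy genuinely ``converges to a Reeb orbit'' rather than spiraling pathologically. One must also handle the case distinction in the blow-up — the rescaled limit could a priori be a plane asymptotic to a Reeb orbit, or (if all the energy is the $d\alpha$-energy, none of it the ``$\alpha$-energy'') one could try to rule out a ``vanishing cycle'' scenario; the genericity/perturbation argument to reduce to the case of nondegenerate Reeb orbits, or alternatively a limiting argument over degenerate ones, is the technical crux. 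All of this machinery is a refinement of the elliptic regularity, Fredholm theory, and compactness results developed (for closed curves) in the main body of these notes, adapted to curves with punctures and totally real boundary; I would reference those developments and emphasize here only the new ingredient — the Bishop family forced into existence by the overtwisted disk, which is what ties the abstract existence of \emph{some} finite-energy plane to the \emph{contractibility} of the orbit it detects.
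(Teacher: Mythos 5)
The paper does not actually prove this theorem: it is quoted from \cite{Hofer:weinstein}, and the surrounding text only sketches the guiding idea (punctured holomorphic curves in the symplectization whose ends converge to Reeb orbits). What you have written is, in substance, Hofer's original argument --- the Bishop family launched from the elliptic singularity of the overtwisted disk, a priori energy bounds, gradient blow-up producing a nonconstant finite-energy plane, Hofer's asymptotic analysis identifying its limit with a periodic Reeb orbit, and contractibility coming for free because a plane asymptotic to $\gamma$ is itself a nullhomotopy of~$\gamma$. Your identification of the asymptotic analysis (step~(5)) and the degenerate-orbit issue as the technical crux is accurate; note, though, that for mere \emph{existence} of a closed orbit one does not need nondegeneracy --- the subsequence convergence of the loops $v(Re^{i\theta})$ to some periodic orbit holds in general.

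One concrete correction: the totally real boundary condition for the Bishop disks must be the \emph{surface} $\{0\} \times \dD$ (more precisely $\{0\} \times (\dD\setminus\{e\})$, since the elliptic point is a complex tangency), not $\RR \times \dD$. The latter is a hypersurface of the four-dimensional symplectization and cannot be totally real, so the Fredholm theory for the boundary value problem would not even be set up; worse, your Stokes computation in step~(3) only produces a uniform energy bound because $t \equiv 0$ on the boundary, so that $\int_D u^*d(e^t\alpha) = \int_{\p D} u^*\alpha$, which is controlled since the disk boundaries form an embedded family in $\dD$ transverse to the characteristic foliation. If the boundary were allowed to wander in the $\RR$-direction, the factor $e^t$ would destroy the bound and with it the whole compactness dichotomy. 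You should also state explicitly why the family cannot close up at $\p\dD$: the boundary of an overtwisted disk is Legendrian, so a limiting disk with boundary on $\p\dD$ would have $\int_{\p D} u^*\alpha = 0$ and hence be constant, which is impossible; this is precisely what forces the gradient blow-up that produces the finite-energy plane.
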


The key idea introduced in \cite{Hofer:weinstein} was to look at
$J$-holomorphic curves for a suitable class of almost complex structures
$J$ in the so-called \emph{symplectization} $(\RR\times M, d(e^t\alpha))$
of a manifold $M$ with contact form~$\alpha$.  Since the symplectic form is
now exact, it's no longer useful to consider \emph{closed} holomorphic curves,
e.g.~a minor generalization of \eqref{eqn:area} shows that all
$J$-holomorphic spheres $u : \CC P^1 \to \RR\times M$ are constant:
$$
\Area(u) = \| du \|_{L^2}^2 = \int_{\CC P^1} u^* d(e^t\alpha) =
\int_{\p \CC P^1} u^*(e^t\alpha) = 0.
$$
Instead, one considers $J$-holomorphic maps
$$
u : \dot{\Sigma} \to \RR\times M,
$$
where $\dot{\Sigma}$ denotes a closed Riemann surface with finitely
many \emph{punctures}.  It turns out that under suitable conditions, 
the image of $u$ near each puncture approaches $\{\pm\infty\} \times M$ and
becomes asymptotically close to a cylinder of the form
$\RR \times \gamma$, where $\gamma$ is a closed orbit of~$R_\alpha$
(see Figure~\ref{fig:symplectization}).
Thus an existence result for punctured 
holomorphic curves in $\RR\times M$ implies
the Weinstein conjecture on~$M$.

\begin{figure}
\includegraphics{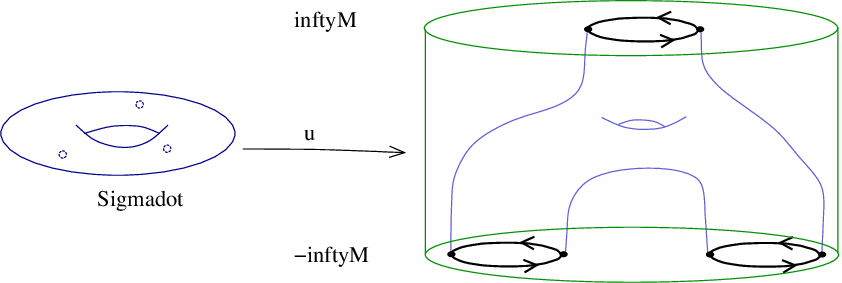}
\caption{\label{fig:symplectization} A three-punctured pseudoholomorphic
torus in the symplectization of a contact manifold.}
\end{figure}

To tie up a loose end, here's the proof of Gray's stability theorem,
followed by another important contact application of the Moser trick.
\begin{proof}[Proof of Theorem~\ref{thm:Gray}] 
Assume $S$ is a closed manifold with a smooth family of contact forms
$\{\alpha_t\}_{t\in [0,1]}$ defining contact structures $\xi_t = \ker\alpha_t$.
We want to find a time-dependent vector field $Y_t$ whose flow $\varphi_t$ 
satisfies
\begin{equation}
\label{eqn:ft}
\varphi_t^*\alpha_t = f_t \alpha_0
\end{equation}
for some (arbitrary) smooth $1$-parameter family of functions
$f_t : S \to \RR$.  Differentiating this expression and writing $\dot{f}_t :=
\frac{\p}{\p t} f_t$ and $\dot{\alpha}_t := \frac{\p}{\p t} \alpha_t$, we have
$$
\varphi_t^*\left(\dot{\alpha}_t + \Lie_{Y_t}\alpha_t \right) = \dot{f}_t \alpha_0
= \frac{\dot{f}_t}{f_t} \varphi_t^*\alpha_t,
$$
and thus
\begin{equation}
\label{eqn:Gray}
\dot{\alpha}_t + d \iota_{Y_t}\alpha_t + \iota_{Y_t} d\alpha_t =
g_t \alpha_t,
\end{equation}
where we define a new family of functions $g_t : S \to \RR$ via the relation
\begin{equation}
\label{eqn:gt}
g_t \circ \varphi_t = \frac{\dot{f}_t}{f_t} = 
\frac{\p}{\p t} \log f_t.
\end{equation}
Now to make life a bit simpler, we assume (optimistically!) that
$Y_t$ is always tangent to $\xi_t$, hence $\alpha_t(Y_t) = 0$ and the
second term in \eqref{eqn:Gray} vanishes.  We therefore need to find a
vector field $Y_t$ and function $g_t$ such that
\begin{equation}
\label{eqn:Gray2}
d\alpha_t(Y_t,\cdot) = -\dot{\alpha}_t + g_t \alpha_t.
\end{equation}
Plugging in the Reeb vector field $R_{\alpha_t}$ on both sides, we find
$$
0 = -\dot{\alpha}_t\left( R_{\alpha_t} \right) + g_t,
$$
which determines the function~$g_t$.  Now restricting both sides of
\eqref{eqn:Gray2} to $\xi_t$, there is a unique solution for $Y_t$
since $d\alpha_t|_{\xi_t}$ is nondegenerate.  We can then integrate
this vector field to obtain a family of diffeomorphisms $\varphi_t$,
and integrate \eqref{eqn:gt} to obtain $f_t$ so that
\eqref{eqn:ft} is satisfied.
\end{proof}

\begin{exercise}
Try to adapt the above argument to construct an isotopy such that
$\varphi_t^*\alpha_t = \alpha_0$ for any two deformation equivalent
contact forms.  But don't try very hard.
\end{exercise}

Finally, just as there is no local symplectic geometry, there is no local
contact geometry either:

\begin{thm}[Darboux's theorem for contact manifolds]
Near every point in a $(2n+1)$-dimensional manifold $S$ with contact form 
$\alpha$, there are local coordinates
$(p_1,\ldots,p_n,q_1,\ldots,q_n,z)$ in which 
$\alpha = dz + \sum_i p_i\,dq_i$.
\end{thm}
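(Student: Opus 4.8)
The plan is to imitate the proof of the symplectic Darboux theorem above: first put $\alpha$ into a linear normal form at the point, and then use the Moser deformation trick to kill the remaining discrepancy. The one genuinely new feature is that $d\alpha$ has odd rank, so its kernel is a line (spanned by the Reeb field); consequently the cohomological equation coming out of the Moser argument cannot be solved by a single algebraic inversion, and must be handled in two stages.

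Since the statement is local, choose coordinates $(p_1,\dots,p_n,q_1,\dots,q_n,z)$ centered at the given point and let $\alpha_1 := dz + \sum_i p_i\,dq_i$ be the target contact form. At the origin $\alpha$ is a nonzero covector and $d\alpha$ restricts to a nondegenerate antisymmetric form on the hyperplane $\ker\alpha|_0$; applying the linear Darboux theorem (Exercise~\ref{EX:sympLinAlg}) to that hyperplane, and then making a further linear change of coordinates, we may assume that $\alpha$ and $\alpha_1$ agree to first order at the origin, i.e.\ $\alpha|_0 = \alpha_1|_0$ and $d\alpha|_0 = d\alpha_1|_0$. Put $\alpha_0 := \alpha$ and $\alpha_t := (1-t)\alpha_0 + t\alpha_1$ for $t \in [0,1]$. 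Then $\alpha_t$, $d\alpha_t$, and hence $\alpha_t \wedge (d\alpha_t)^n$, are all independent of $t$ at the origin, where the latter is nonzero; since $[0,1]$ is compact, every $\alpha_t$ is therefore a contact form on some fixed neighborhood of the origin, and on that neighborhood the Reeb fields $R_{\alpha_t}$ are defined and depend smoothly on $(t,x)$.

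Now I would run the Moser trick, looking for a time-dependent vector field $Y_t$, vanishing at the origin, whose flow $\varphi_t$ satisfies $\varphi_t^*\alpha_t = \alpha_0$. Differentiating and using Cartan's formula, this becomes
\begin{equation*}
d\bigl(\alpha_t(Y_t)\bigr) + \iota_{Y_t} d\alpha_t = -\beta, \qquad \beta := \alpha_1 - \alpha_0,
\end{equation*}
where $\beta$ vanishes at the origin. Decompose $Y_t = u_t R_{\alpha_t} + Z_t$ with $u_t$ a function and $Z_t$ a section of $\ker\alpha_t$; then $\alpha_t(Y_t) = u_t$ and, since $R_{\alpha_t} \in \ker d\alpha_t$, the equation reduces to $du_t + \iota_{Z_t} d\alpha_t = -\beta$. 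Pairing with $R_{\alpha_t}$ annihilates the middle term, leaving the transport equation $du_t(R_{\alpha_t}) = -\beta(R_{\alpha_t})$, which I solve by integrating along the Reeb flow, imposing $u_t \equiv 0$ on a hypersurface through the origin transverse to all the $R_{\alpha_t}$; since the right-hand side vanishes at the origin, so does $du_t$ there. With $u_t$ fixed, the $1$-form $-\beta - du_t$ annihilates $R_{\alpha_t}$ by construction, hence lies in the image of the injective bundle map $\ker\alpha_t \ni Z \mapsto \iota_Z d\alpha_t$ (whose image is exactly the annihilator of $R_{\alpha_t}$, by a dimension count using nondegeneracy of $d\alpha_t|_{\ker\alpha_t}$); this determines $Z_t$ uniquely, and $Z_t$ vanishes at the origin because $-\beta - du_t$ does. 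Thus $Y_t$ vanishes at the origin, its flow is defined for all $t \in [0,1]$ on a small enough neighborhood, and $\varphi_1^*\alpha_1 = \alpha$; taking $(p_i,q_i,z)\circ\varphi_1$ as the new coordinates finishes the proof.

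The main obstacle is precisely the step where this cohomological equation is solved: unlike in the symplectic case, $\iota_{(\cdot)} d\alpha_t$ is not invertible, so one must split off the Reeb direction, solve a first-order transport equation there, and then keep careful track of vanishing at the base point so that the Moser flow really is defined up to time $1$ near that point. Everything else is routine bookkeeping. One could alternatively give a direct inductive construction, peeling off one conjugate pair $(p_i,q_i)$ at a time using flows of suitable contact vector fields, but the Moser argument is the one most in keeping with the preceding sections.
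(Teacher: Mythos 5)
Your proposal is correct and follows exactly the route the paper intends (the theorem is left there as an exercise with the hint ``use a Moser argument,'' building on the proof of Gray's theorem immediately preceding it), and your treatment of the new ingredient---splitting $Y_t$ into a Reeb component solving a transport equation plus a component in $\ker\alpha_t$---is the right way to handle the degeneracy of $\iota_{(\cdot)}d\alpha_t$. The only spot worth tightening is the claim that $du_t$ vanishes at the origin: the transport equation only controls the Reeb-direction derivative there, and you should say explicitly that the derivatives tangent to the transversal hypersurface vanish because $u_t\equiv 0$ along it; together these do give $du_t(0)=0$, so there is no actual gap.
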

\begin{exercise}
Prove the theorem using a Moser argument.  If you get stuck, see
\cite{Geiges:book}.
\end{exercise}

\section{Symplectic fillings of contact manifolds}
\label{sec:fillings}

In the previous section, contact manifolds were introduced as objects that
occur naturally as hypersurfaces in symplectic manifolds.  In particular,
every contact manifold $(M,\xi)$ with contact form $\alpha$ is obviously
a contact type hypersurface in its own symplectization
$(\RR\times M, d(e^t\alpha))$, though this example is in some sense trivial.
By contrast, it is far from obvious whether any given contact manifold
can occur as a contact hypersurface in a \emph{closed} symplectic manifold,
or relatedly, if it is a ``contact type boundary'' of some compact 
symplectic manifold.

\begin{defn}
\label{defn:convexBoundary}
A compact symplectic manifold $(W,\omega)$ with boundary is said to have 
\defin{convex boundary} if there exists a Liouville vector field in a
neighborhood of $\p W$ that points transversely out of~$\p W$.
\end{defn}

\begin{defn}
\label{defn:strongFilling}
A \defin{strong symplectic filling} (also called a \defin{convex filling})
of a closed contact manifold $(M,\xi)$ is a compact symplectic manifold
$(W,\omega)$ with convex boundary, such that $\p W$ with the contact
structure induced by a Liouville vector field is contactomorphic to
$(M,\xi)$.
\end{defn}

Since we're now considering symplectic manifolds that are not closed,
it's also possible for $\omega$ to be exact.  Observe that a primitive
$\lambda$ of $\omega$ always gives rise to a Liouville vector field,
since the unique vector field $V$ defined by $\iota_V\omega = \lambda$
then satisfies
$$
\Lie_V\omega = d\iota_V\omega = d\lambda = \omega.
$$

\begin{defn}
\label{defn:exactFilling}
A strong filling $(W,\omega)$ of $(M,\xi)$ is called an \defin{exact filling}
if $\omega = d\lambda$ for some $1$-form $\lambda$ such that the vector
field $V$ defined by $\iota_V\omega = \lambda$ points transversely
out of~$\p W$.
\end{defn}

\begin{exercise}
Show that if $(W,\omega)$ is a compact symplectic manifold with boundary,
$V$ is a Liouville vector field defined near $\p W$ and 
$\lambda = \iota_V\omega$, then $V$ is positively transverse to $\p W$
if and only if $\lambda|_{\p W}$ is a positive contact form.
\end{exercise}

The exercise makes possible the following alternative formulations of
the above definitions:
\begin{enumerate}
\item
A compact symplectic manifold $(W,\omega)$ with boundary is a strong filling
if $\p W$ admits a contact form that extends to a primitive of $\omega$ on
a neighborhood of $\p W$.
\item
A strong filling is exact if the primitive mentioned above can be extended
globally over~$W$.
\item
A strong filling is exact if it has a transverse outward pointing Liouville 
vector field near $\p W$ that can be extended globally over~$W$.
\end{enumerate}

By now you're surely wondering what a ``weak'' filling is.
Observe that for any strong filling $(W,\omega)$ with Liouville vector
field $V$ and induced contact structure $\xi = \ker \iota_V\omega$ on the
boundary, $\omega$ has a nondegenerate restriction to~$\xi$ (see
Exercise~\ref{ex:symplectization}).  The latter condition can be expressed
without mentioning a Liouville vector field, hence:

\begin{defn}
\label{defn:weakFilling}
A \defin{weak symplectic filling} of a closed contact manifold $(M,\xi)$
is a compact symplectic manifold $(W,\omega)$ with boundary, such that
there exists a diffeomorphism $\varphi : \p W \to M$ and $\omega$ has
a nondegenerate restriction to $\varphi^*\xi$.
\end{defn}

\begin{remark}
One important definition that we are leaving out of the present discussion
is that of a \emph{Stein filling}: this is a certain type of complex
manifold with contact boundary, which is also an exact symplectic filling.
The results we'll prove in these notes for strong and exact fillings 
apply to Stein fillings as well, but we will usually not make specific
mention of this since the Stein condition itself has no impact on our general
setup.  Much more on Stein manifolds can be found in the monographs
\cite{OzbagciStipsicz} and \cite{CieliebakEliashberg}.
\end{remark}

A contact manifold is called exactly/strongly/weakly fillable if it admits an
exact/strong/weak filling.  Recall that in the smooth
category, every $3$-manifold is the boundary of some $4$-manifold; by
contrast, we will see that many contact $3$-manifolds are not
symplectically fillable.

The unit ball in $(\RR^4,\omega\std)$ obviously has convex boundary: the
contact structure induced on $S^3$ is called the \defin{standard} contact
structure~$\xi\std$.  But there are other contact structures on $S^3$
not contactomorphic to~$\xi\std$, and one way to see this is to show that
they are not fillable.  Indeed, it is easy (via ``Lutz twists'', see
\cite{Geiges:book} or \cite{Geiges:contact}) to produce a contact 
structure on $S^3$ that is \emph{overtwisted}.  Note that the following is
not the standard definition\footnote{It is standard to call a contact
$3$-manifold $(M,\xi)$ overtwisted if it contains an embedded \emph{overtwisted
disk}, which is a disk $\dD \subset M$ such that $T(\p \dD) \subset \xi$
but $T\dD|_{\p\dD} \ne \xi|_{\p\dD}$.}
of this term, but is equivalent due to a deep
result of Eliashberg \cite{Eliashberg:overtwisted}.

\begin{defn}
\label{defn:overtwisted}
A contact $3$-manifold $(M,\xi)$ is \defin{overtwisted} if it admits
a contact embedding of $(S^1 \times \DD,\xi_{\OT})$,
where $\DD \subset \RR^2$ is the closed unit disk and $\xi_{\OT}$ is a
contact structure of the form
$$
\xi_{\OT} = \ker \left[ f(\rho)\,d\theta + g(\rho)\,d\phi \right]
$$
with $\theta \in S^1$, $(\rho,\phi)$ denoting polar coordinates on~$\DD$,
and $(f,g) : [0,1] \to \RR^2 \setminus \{0\}$ a smooth path that begins
at $(1,0)$ and winds counterclockwise around the origin, making at least
one half turn.
\end{defn}

\begin{figure}
\includegraphics{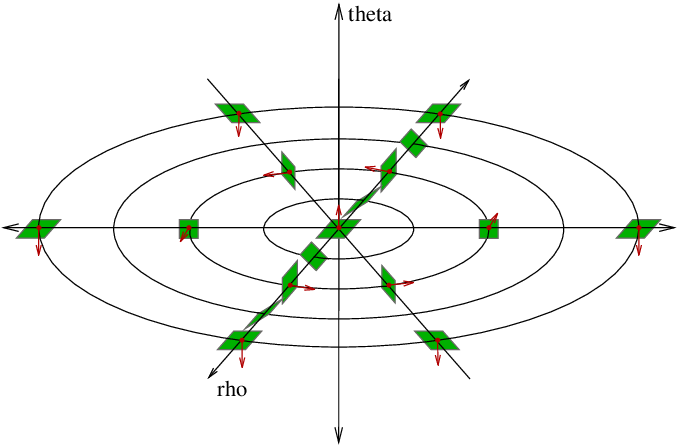}
\caption{\label{fig:overtwisted} An overtwisted contact structure.}
\end{figure}

For visualization, a portion of the domain $(S^1 \times \DD,\xi_{\OT})$ is shown
in Figure~\ref{fig:overtwisted}.  One of the earliest applications of
holomorphic curves in contact topology was the following nonfillability 
result.

\begin{thm}[M.~Gromov \cite{Gromov} and 
Ya.~Eliashberg \cite{Eliashberg:diskFilling}]
If $(M,\xi)$ is closed and overtwisted, then it is not weakly fillable.
\end{thm}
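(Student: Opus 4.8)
The plan is to prove that overtwisted contact $3$-manifolds $(M,\xi)$ cannot admit weak symplectic fillings by the classical ``Bishop disk'' / moduli-space argument, deriving a contradiction from the existence of an overtwisted disk together with a (hypothetical) filling $(W,\omega)$. First I would suppose such a filling exists, and use the embedded solid torus $(S^1\times\DD,\xi_{\OT})\hookrightarrow M=\p W$ to locate an embedded \emph{overtwisted disk} $\dD_0\subset M$, i.e.\ a disk whose boundary is tangent to $\xi$ and whose interior is transverse to $\xi$, with a singular characteristic foliation having a single elliptic singularity at the center. Near the boundary circle $\p\dD_0$ (a Legendrian knot), one sets up the standard family of small \emph{Bishop disks}: local $J$-holomorphic disks in $W$ with boundary on $\dD_0$, for a suitably chosen almost complex structure $J$ tamed by $\omega$ and making $\dD_0$ (near its boundary) totally real. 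The existence of this initial family is a local construction near an elliptic point, essentially the Bishop phenomenon.

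Next I would introduce the moduli space $\mM$ of all such $J$-holomorphic disks $u:(\DD,\p\DD)\to(W,\dD_0)$ homotopic to the Bishop family, and analyze it: by the index/transversality machinery, for generic $J$ this is a smooth $1$-dimensional manifold (once one quotients by reparametrization and fixes a marked boundary point), and it is \emph{open} because of the implicit function theorem. The crucial input is an a priori energy bound: since $\dD_0$ is a disk, $\int u^*\omega = \int_{\p\DD} u^*\lambda$ for a primitive $\lambda$ of $\omega|_{\dD_0}$ defined near $\p\dD_0$, so the symplectic area—hence the energy—of disks in the Bishop family is bounded by a constant depending only on $\dD_0$, uniformly over $\mM$. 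The weak filling hypothesis (nondegeneracy of $\omega$ on $\xi$) is exactly what is needed to choose $J$ so that no holomorphic disk or sphere bubbles off at the boundary or in the interior, which gives compactness of $\mM$ up to the expected degenerations.

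Then I would run the continuation argument: $\mM$ is a nonempty, $1$-dimensional manifold that is both open and, by Gromov compactness together with the absence of bubbling, compact away from a controlled boundary stratum. Following the disks as they grow away from the elliptic point, they sweep out a region; but the family cannot close up into a compact $1$-manifold without boundary, because it starts at the trivial (constant) disks at the elliptic singularity. The only remaining possibility is that the disks degenerate by having their boundary escape $\dD_0$ or by a boundary point hitting the boundary singularity structure, and a careful analysis of the characteristic foliation on the overtwisted disk—in particular that $\p\dD_0$ is the only closed leaf and it is ``totally real but not $\xi$-invariant'' in the sense prescribed in the definition of $\xi_{\OT}$—shows each of these is impossible. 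This contradiction shows no weak filling can exist.

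The main obstacle, and the part that needs genuine analytic care rather than formal bookkeeping, is the \emph{compactness} step: establishing that for the chosen tame $J$ no holomorphic spheres or disks bubble off, that the boundaries of the disks stay on the totally real part of $\dD_0$ and do not run into the elliptic singularity prematurely, and that the Bishop family actually extends to a maximal compact moduli space whose only boundary is the elliptic point. This is precisely where the weak-filling condition (nondegeneracy of $\omega|_\xi$, which controls the behavior of $J$-holomorphic curves touching $\p W$ via a maximum-principle-type argument) is indispensable, and it is the heart of the Gromov–Eliashberg argument. Everything else—existence of the initial Bishop family, the index computation giving $\dim\mM=1$, transversality for generic $J$, and the topological ``open and compact $1$-manifold'' contradiction—is comparatively routine given the machinery developed in the preceding chapters.
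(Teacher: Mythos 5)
The paper does not actually prove this theorem in the text you were given: it describes the Gromov--Eliashberg strategy in a single paragraph (a family of $J$-holomorphic disks with boundary on a totally real submanifold of $\p W$) and then explicitly announces that the proof to be presented later will follow Hofer's alternative route instead --- attaching a cylindrical end to $\p W$ and studying punctured holomorphic curves asymptotic to Reeb orbits. So your proposal is the ``original'' disk-filling argument, a genuinely different route from the one the paper intends. What the punctured-curve approach buys is that it avoids totally real boundary conditions altogether (and with them the simple-versus-somewhere-injective subtleties for disks that the paper flags elsewhere), and it generalizes directly to the Giroux torsion nonfillability theorem, which the disk method does not reach.

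On the merits of your sketch: the overall shape is right, but two points are misstated and the genuinely hard steps are only named, not performed. First, the Bishop family does not live ``near the boundary circle $\p\dD_0$''; it emanates from the elliptic singularity in the \emph{interior} of the overtwisted disk, i.e.\ from the point where $T\dD_0 = \xi$ (so your phrase ``interior transverse to $\xi$'' is also wrong --- the elliptic point is precisely an interior tangency). The Legendrian boundary $\p\dD_0$ plays the opposite role: it is a closed leaf of the characteristic foliation, which the boundaries of the holomorphic disks --- being positively transverse to that foliation --- can never reach; this is what makes the moduli space compact and yields the contradiction, namely a compact $1$-manifold with exactly one boundary point (the constant disk at the elliptic singularity). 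Second, the weak-filling hypothesis is not ``exactly what rules out bubbling''; it is what allows one to choose a tame $J$ for which $\p W$ is pseudoconvex, so that interior points of holomorphic curves cannot touch $\p W$ and boundary points meet $\dD_0$ transversely to the characteristic foliation with a definite sign. Excluding sphere and disk bubbles is a separate matter (energy quantization together with the a priori bound, embeddedness of the Bishop disks, and --- in the weak setting, where $\omega$ need not be exact --- the possible presence of holomorphic spheres in $W$), and together with the analysis at $\p\dD_0$ this constitutes essentially all of the work; as written, your proposal defers exactly these steps.
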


The Gromov-Eliashberg proof worked by assuming a weak filling $(W,\omega)$
of $(M,\xi)$ exists, then constructing a family of 
$J$-holomorphic disks in $W$ with boundaries on a totally real submanifold
in~$M$ and showing that this
family leads to a contradiction if $(M,\xi)$ contains an overtwisted disk.  
We will later
present a proof that is similar in spirit but uses slightly different
techniques: instead of dealing with boundary conditions for holomorphic
disks, we will adopt Hofer's methods and consider punctured holomorphic
curves in a noncompact symplectic manifold obtained by gluing a cylindrical
end to $\p W$.  The advantage of this approach is that it generalizes
nicely to prove the following related result on Giroux torsion, 
which is much more recent.
Previous proofs due to D.~Gay and Ghiggini and Honda required the large
machinery of gauge theory
and Heegaard Floer homology respectively, but we will only use
punctured holomorphic curves.

\begin{thm}[D.~Gay \cite{Gay:GirouxTorsion}, P.~Ghiggini and K.~Honda \cite{GhigginiHonda:twisted}]
\label{thm:torsion}
Suppose $(M,\xi)$ is a closed contact $3$-manifold that admits a contact
embedding of $(T^2 \times [0,1],\xi_T)$, where $\xi_T$ is the contact structure
defined in coordinates $(\theta,\phi,r) \in S^1 \times S^1 \times [0,1]$ by
$$
\xi_T = \ker \left[ \cos(2\pi r) \,d\theta + \sin(2\pi r)\,d\phi \right].
$$
Then $(M,\xi)$ is not strongly fillable.  Moreover if the embedded torus
$T^2 \times \{0\}$ separates $M$, then $(M,\xi)$ is also not weakly fillable.
\end{thm}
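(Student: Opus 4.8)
The plan is to argue by contradiction, in the spirit of the proof of Theorem~\ref{thm:CP2}: produce a holomorphic foliation adapted to the Giroux torsion domain, and show it is forced to spread through the whole filling in a way that is geometrically impossible. Assume first that $(M,\xi)$ admits a \emph{strong} filling $(W,\omega)$. (If instead it admits a weak filling and $T := T^2\times\{0\}$ separates $M$, I would first upgrade the weak filling to a strong one by a standard deformation of $\omega$ in a collar of $\partial W$: the separating hypothesis forces $[T]=0$ in $H_2(W;\RR)$, hence $\int_T\omega = 0$, which is exactly the cohomological condition needed to carry out this upgrade. This is the only point at which the separating assumption is used, and it is why the weak case requires it.) Form the completion $\widehat W = W \cup_{\partial W}([0,\infty)\times M)$, extend $\omega$ so that it is cylindrical, $d(e^t\alpha)$, on the end, and \emph{choose} the contact form $\alpha$ for $\xi$ so that on $T^2\times[0,1]$ it is the model form $\cos(2\pi r)\,d\theta + \sin(2\pi r)\,d\phi$. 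The Reeb field of this model is $R = \cos(2\pi r)\,\partial_\theta + \sin(2\pi r)\,\partial_\phi$, whose direction makes a full turn as $r$ runs over $[0,1]$; I would then perturb $\alpha$ inside the torsion domain to a Morse--Bott contact form whose closed Reeb orbits of action below a large threshold consist exactly of two $2$-torus families $\nN_-$ and $\nN_+$, lying over $T^2\times\{r_-\}$ and $T^2\times\{r_+\}$ with orbit directions $+\partial_\theta$ and $-\partial_\theta$ respectively (possible precisely because of the full turn), together with at most some high-action nondegenerate orbits.

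Next I would bring in the holomorphic curves. For a compatible, cylindrical-at-infinity almost complex structure $J$ on $\widehat W$ chosen to be suitably symmetric on $\RR\times(T^2\times[0,1])$, the symplectization $\RR\times M$ carries an explicit family of embedded finite-energy $J$-holomorphic cylinders contained in $\RR\times(T^2\times[r_-,r_+])$, each with one positive puncture asymptotic to an orbit of $\nN_+$ and one negative puncture asymptotic to an orbit of $\nN_-$; a direct analysis shows these are Fredholm regular with the right Conley--Zehnder/Morse--Bott data, and that they foliate a neighborhood of $\RR\times(T^2\times[r_-,r_+])$. I would then study, \emph{in $\widehat W$ itself}, the moduli space $\mM$ of finite-energy $J$-holomorphic cylinders both of whose punctures are positive, one asymptotic to $\nN_-$ and one to $\nN_+$ --- a sensible object, since every puncture of a finite-energy curve in a completed filling is positive and the relative homology class of such a cylinder is forced by the torsion model (here one needs $[\gamma_-]+[\gamma_+]=0$ in $H_1(M)$, which is exactly what the model provides). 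An a priori energy bound, coming from the exactness of $\omega$ near infinity, places $\mM$ under the control of the compactness theorem for punctured holomorphic curves of Bourgeois--Eliashberg--Hofer--Wysocki--Zehnder. The crux is then to show that no bubbling, no nodal degeneration, and no breaking beyond the expected one can occur in this compactification --- using the small action of the generating orbits, positivity of intersections and the adjunction formula in dimension four, and the fact that the torsion domain sits in the \emph{interior} of $\widehat W$ --- so that $\mM$, together with the symplectization cylinders that appear as its boundary strata, assembles into a smooth holomorphic foliation of a region of $\widehat W$. Exactly as in Theorem~\ref{thm:CP2}, intersection-positivity then forces this foliation to propagate; but it cannot, since the leaves would have to reach points of $M$ lying outside the torsion domain, or escape along the cylindrical end. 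This contradiction shows that no filling exists.

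The main obstacle is the holomorphic-curve analysis of the second paragraph: concretely, (i) choosing $J$ and writing down the model cylinders in $\RR\times(T^2\times[0,1])$ explicitly, (ii) getting the Morse--Bott index count right so that $\mM$ has the expected dimension and is cut out transversally, and above all (iii) ruling out \emph{every} degenerate configuration in the compactification of $\mM$ --- this is where the torsion hypothesis must genuinely be used, and where one needs the full strength of the intersection theory of punctured holomorphic curves in four dimensions. A further, more structural obstacle is that the whole strategy rests on the compactness theory for punctured holomorphic curves, the punctured analogue of Gromov's compactness theorem, which is not developed in the present version of these notes; a self-contained treatment would have to build that foundation first.
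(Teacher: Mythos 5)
A preliminary remark: this version of the notes only \emph{states} Theorem~\ref{thm:torsion} in the introduction and explicitly defers its proof to an unwritten chapter on punctured holomorphic curves, so there is no proof in the text to compare against. Your outline does match the strategy the author announces (and the published proofs): complete the filling by a cylindrical end, exhibit explicit finite-energy cylinders in the symplectization of the torsion domain asymptotic to Morse--Bott tori of Reeb orbits, and let the corresponding moduli space in $\widehat{W}$ spread until compactness and intersection positivity produce a contradiction. Your list of the genuinely hard points --- the model cylinders and choice of $J$, the Morse--Bott index count, and above all the exclusion of every degeneration in the SFT-type compactification --- is accurate; that is where the real content lies, and none of it can be supplied from the machinery developed so far in these notes (which covers only closed curves).

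There is, however, one step you assert as routine that is false as stated. From the hypothesis that $T = T^2\times\{0\}$ separates $M$ you correctly deduce $[T]=0$ in $H_2(M;\RR)$ and hence $\int_T \omega = 0$, but this is \emph{not} the cohomological condition needed to deform a weak filling into a strong one: that deformation requires $\omega|_{TM}$ to be exact, i.e.\ the vanishing of the full class $\left[\omega|_{TM}\right] \in H^2_\dR(M)$, and a separating torus says nothing about the rest of $H_2(M)$ --- the boundary may well contain another closed surface on which $\omega$ integrates nontrivially, in which case no global weak-to-strong upgrade exists. What is true, and what the actual argument uses, is a local statement: since $H_2(T^2\times[0,1];\RR)$ is generated by $[T]$, the condition $\int_T\omega=0$ makes $\omega$ exact on a neighborhood of the torsion domain, and this suffices to install the collar and cylindrical-end model in the only region the holomorphic curves ever interact with the boundary. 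So the weak case does go through under the separating hypothesis, but via a local modification near the torsion domain rather than the global upgrade you describe; as written, your first reduction step would fail.
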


A contact $3$-manifold that admits a contact embedding of
$(T^2 \times [0,1],\xi_T)$ as defined above is said to have
\defin{Giroux torsion}.

\begin{example}
\label{ex:T3}
Using coordinates $(\theta,\phi,\eta) \in S^1 \times S^1 \times S^1 = T^3$,
one can define for each $N \in \NN$ a contact structure $\xi_N = \ker\alpha_N$,
where
$$
\alpha_N = \cos(2\pi N \eta)\,d\theta + \sin(2\pi N\eta)\,d\phi.
$$
Choosing the natural flat metric on $T^2 = S^1 \times S^1$, it's easy to
show that the unit circle bundle in $T^*T^2$ is a contact type hypersurface
contactomorphic to $(T^3,\xi_1)$, thus this is strongly (and even
exactly) fillable.  Giroux
\cite{Giroux:plusOuMoins} and Eliashberg \cite{Eliashberg:fillableTorus}
have shown that $(T^3,\xi_N)$ is in fact weakly fillable for all~$N$,
but Theorem~\ref{thm:torsion} implies that it is not strongly 
fillable for $N \ge 2$ (a result originally proved by Eliashberg 
\cite{Eliashberg:fillableTorus}).  Unlike the case of $S^3$, none of these
contact structures are overtwisted---one can see this easily from
Theorem~\ref{thm:HoferOvertwisted} and the exercise below.
\end{example}

\begin{exercise}
Derive expressions for the Reeb vector fields $R_{\alpha_N}$ on $T^3$ and
show that none of them admit any contractible periodic orbits.
\end{exercise}

Finally, we mention one case of a fillable contact manifold in which all
the symplectic fillings can be described quite explicitly.  Earlier we
defined the standard contact structure $\xi\std$ on $S^3$ to be the one that is
induced on the convex boundary of a round ball in $(\RR^4,\omega\std)$.
By looking at isotopies of convex boundaries and using Gray's stability 
theorem, you should easily be able to convince yourself that every
star-shaped hypersurface in $(\RR^4,\omega\std)$ has an induced contact structure
isotopic to~$\xi\std$.  Thus the regions bounded by these hypersurfaces,
the ``star-shaped domains'' in $(\RR^4,\omega\std)$, can all be regarded as
convex fillings of $(S^3,\xi\std)$.  Are there any others?  Well\ldots

\begin{thm}[Eliashberg \cite{Eliashberg:diskFilling}]
\label{thm:fillingsS3}
Every exact filling of $(S^3,\xi\std)$ is symplectomorphic to a
star-shaped domain in $(\RR^4,\omega\std)$.
\end{thm}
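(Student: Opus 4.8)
The plan is to cap off the exact filling with a piece of $\CC P^2$, producing a closed symplectic $4$-manifold that contains a symplectically embedded sphere of self-intersection $+1$, apply Theorem~\ref{thm:CP2}, and then recover the filling as the complement of that sphere's neighbourhood. So let $(W,\omega=d\lambda)$ be an exact filling of $(S^3,\xi\std)$. By Gray's stability theorem (Theorem~\ref{thm:Gray}) I may assume, after an isotopy, that the contact structure induced on $\p W$ by the Liouville field $V$ (defined by $\iota_V\omega=\lambda$) is literally $\xi\std$, so that $\alpha:=\lambda|_{\p W}$ is a contact form for $\xi\std$ on $S^3$. Any such form is realized by a star-shaped hypersurface $\Sigma_\alpha\subset(\RR^4,\omega\std)$ bounding a star-shaped domain $D_\alpha$; set $N_\alpha:=\CC P^2\setminus\interior D_\alpha$, using $\RR^4=\CC^2\subset\CC P^2$ as in Example~\ref{ex:CPn}. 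Then $N_\alpha$ is a compact symplectic manifold with \emph{concave} boundary $\Sigma_\alpha\cong(S^3,\alpha)$ (the radial Liouville field points into $N_\alpha$), and its core $C:=\CC P^1\subset\CC P^2$ is a symplectically embedded sphere with $C\cdot C=1$. Using Exercise~\ref{ex:symplectization} to match the collars along $(S^3,\alpha)$ --- the convex side of $W$ to the concave side of $N_\alpha$ --- I obtain a closed, connected symplectic $4$-manifold $(X,\omega_X)=W\cup_{\Sigma_\alpha}N_\alpha$ that contains the symplectic $+1$-sphere $C$.

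Next I would check that $X$ admits no symplectically embedded sphere of self-intersection $-1$, so that Theorem~\ref{thm:CP2} applies. This is where exactness is essential. Completing $W$ by a positive cylindrical end $[0,\infty)\times S^3$ with symplectic form $d(e^t\alpha)$ yields a manifold $\hat W$ on which $\omega$ has a global primitive; by Stokes' theorem $\hat W$ then carries no non-constant closed $J$-holomorphic curve, for any compatible $J$. Observe that $\hat W$ is nothing but $X\setminus C$, since the cylindrical end is identified with $N_\alpha\setminus C$. Now a symplectic $-1$-sphere $E\subset X$ can lie neither in $W$ nor in $N_\alpha$ (the former by the exactness just noted, the latter because $H_2(N_\alpha)$ is generated by $[C]$ with $[C]\cdot[C]=1$), so it would have to meet $C$; but choosing a compatible $J$ making $C$, a neighbourhood of $C$, and $E$ all $J$-holomorphic, a homological argument --- using that a closed symplectic $4$-manifold with a symplectic $+1$-sphere is rational, together with the adjunction formula --- forces $[E]\cdot[C]=0$, whence $E$ and $C$ are disjoint by positivity of intersections (\S\ref{sec:positivity}). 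Then $E$ is a closed holomorphic curve in $X\setminus C=\hat W$, a contradiction. Granting this, Theorem~\ref{thm:CP2} gives $(X,\omega_X)\cong(\CC P^2,c\,\omega\std)$, and after rescaling I take $c=1$.

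It remains to identify $W$. Under the symplectomorphism $(X,\omega_X)\cong(\CC P^2,\omega\std)$ the sphere $C$ becomes a symplectically embedded sphere in the line class; by the uniqueness of such ``lines'' --- the pencil of Figure~\ref{fig:CP2} that underlies the proof of Theorem~\ref{thm:CP2} --- the symplectomorphism can be arranged so that $C$ together with its concave neighbourhood $N_\alpha$ is carried to the standard model, i.e.\ $C$ to the sphere at infinity and $N_\alpha$ to $\CC P^2\setminus\interior D_\alpha$ with its standard structure. Then $W=X\setminus\interior N_\alpha$ is carried onto $D_\alpha$, a star-shaped domain in $(\RR^4,\omega\std)$; a concluding Moser argument (using that $H^2_\dR(W)=0$, so that both forms are exact) upgrades the identification to a symplectomorphism, which is the assertion of the theorem.

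\textbf{Main obstacle.} Essentially all of the hard analysis enters through Theorem~\ref{thm:CP2}; the genuinely new content is the verification in the second paragraph that capping an \emph{exact} filling never creates a symplectic $-1$-sphere, which is really a statement about how $J$-holomorphic curves may interact with the gluing hypersurface, and which forces one to combine positivity of intersections, the adjunction formula, and exactness. A cleaner route might be to bypass $\CC P^2$ entirely and reprove the relevant part of Theorem~\ref{thm:CP2} directly inside $\hat W$: study finite-energy $J$-holomorphic planes asymptotic to the (Morse--Bott family of) Reeb orbits of $\alpha$, use exactness to exclude bubbling of closed components, and show these planes form a $2$-sphere's worth of leaves foliating $W\setminus\{x_0\}$ and all passing through one point $x_0$ --- the truncation of Figure~\ref{fig:CP2} to $W$ --- from which the star-shaped model is read off. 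Establishing compactness and transversality for that family of planes is the analytic heart and the step I would expect to be most delicate.
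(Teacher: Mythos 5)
This theorem is not actually proved in the present version of the manuscript: the text only announces that its eventual proof will use \emph{punctured} holomorphic curves in the completion $\hat W$, asymptotic to Reeb orbits of a contact form for $\xi\std$, letting the moduli space of finite-energy planes ``spread out'' into a decomposition of the filling --- which is exactly the alternative route you sketch in your final paragraph. Your primary route --- capping with $N_\alpha = \CC P^2\setminus\interior D_\alpha$ and invoking Theorem~\ref{thm:CP2} --- is the classical closed-curve approach going back to McDuff, and much of its skeleton is sound: the collars match so the gluing is legitimate, and the observation that $X\setminus C=\hat W$ carries an \emph{exact} symplectic form is precisely the right way to exploit the exactness hypothesis. (Your endgame is underargued, though: matching all of $N_\alpha$, rather than just a Weinstein neighborhood of $C$, to its standard copy in $\CC P^2$, and then seeing that the resulting domain is honestly star-shaped rather than merely transverse to \emph{some} Liouville field, each require a flow argument you have compressed into one sentence.)

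The genuine gap is in your verification that $X$ contains no symplectically embedded $(-1)$-sphere. You assert that rationality plus adjunction force $[E]\cdot[C]=0$ for every symplectic $(-1)$-sphere $E$; this is false. In $\CC P^2\,\#\,k\overline{\CC P^2}$ with $k\ge 2$ and $[C]$ equal to the line class $L$, the class $L-E_1-E_2$ has square $-1$ and $c_1=1$ (so it is a $(-1)$-sphere class, and is represented by an embedded symplectic sphere), yet $(L-E_1-E_2)\cdot L=1$. So in the hypothetical non-minimal case, $(-1)$-spheres meeting $C$ genuinely exist and cannot be excluded by your homological computation; nothing in $H_2(X)=\ZZ[C]\oplus H_2(W)$ rules them out either, since you have no control over $H_2(W)$ a priori. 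The correct repair runs the logic the other way: if $X$ carried any symplectic $(-1)$-sphere, the strong (blow-up) form of McDuff's theorem --- alluded to in the footnote to Theorem~\ref{thm:CP2} but strictly stronger than the statement you are applying --- would exhibit $X$ as $\CC P^2\,\#\,k\overline{\CC P^2}$ with $[C]=L$ and $k\ge 1$; the standard exceptional classes $E_i$ satisfy $E_i\cdot L=0$, and for generic $J$ standard near $C$ each $E_i$ has an embedded $J$-holomorphic representative, which by positivity of intersections is then disjoint from $C$ and hence lies in the exact manifold $X\setminus C=\hat W$, contradicting $E(u)>0$. In other words, the minimality of $X$ must be \emph{deduced from} the blow-up version of McDuff's theorem together with exactness; it cannot be checked in advance by intersection numbers alone, so Theorem~\ref{thm:CP2} as literally stated is not strong enough to serve as your black box.
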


In fact we will just as easily be able to classify all the weak fillings
of $(S^3,\xi\std)$ up to symplectic deformation equivalence.
Again, our proof will differ from Eliashberg's in using punctured
holomorphic curves asymptotic to Reeb orbits instead of compact curves
with totally real boundary conditions.  But in either case,
the proof has much philosophically in common 
with the proof of Theorem~\ref{thm:CP2} that we already sketched: one
first finds a single holomorphic curve, in this case near the boundary
of the filling, and then lets the moduli space of such curves ``spread out''
until it yields a geometric decomposition of the filling.


\chapter{Fundamentals}
\label{chapter:local}

\minitoc
\vspace{12pt}

\section{Almost complex manifolds and $J$-holomorphic curves}
\label{sec:intro}

We now begin the study of $J$-holomorphic curves in earnest by defining
the nonlinear Cauchy-Riemann equation in its most natural setting, and
then examining the analytical properties of its solutions.  This will be
the focus for the next few chapters.

Given a $2n$-dimensional real vector space, we define a 
\defin{complex structure} on $V$ to be any linear map $J : V \to V$ such
that $J^2 = -\1$.  It's easy to see that a complex structure always
exists when $\dim V$ is even, as one can choose a basis to identify
$V$ with $\RR^{2n}$ and identify this in turn with $\CC^n$, so that the
natural ``multiplication by $i$'' on $\CC^n$ becomes a linear map on~$V$.
In the chosen basis, this linear map is represented by the matrix
$$
\JJ\std := 
\begin{pmatrix}
0 & -1 &        &   & \\
1 &  0 &        &   & \\
  &    & \ddots &   & \\
  &    &        & 0 & -1 \\
  &    &        & 1 &  0 
\end{pmatrix}.
$$
We call this the \defin{standard} complex structure on~$\RR^{2n}$, and will
alternately denote it by $\JJ\std$ or~$i$, depending on the context.  A
complex structure $J$ on~$V$ allows us to view $V$ as a complex 
$n$-dimensional
vector space, in that we identify 
the scalar multiplication by any complex number
$a + ib \in \CC$ with the linear map $a\1 + b J$.  A real-linear map on $V$
is then also complex linear in this sense if and only if it commutes with~$J$.
Similarly, we call a real-linear map $A : V \to V$ 
\defin{complex antilinear} if it anticommutes with $J$, i.e.~$A J = - J A$.
This is equivalent to the requirement that $A$ preserve vector addition
but satisfy $A (\lambda v) = \bar{\lambda} Av$ for all $v \in V$ and
complex scalars $\lambda \in \CC$.

\begin{exercise}
\label{EX:Jstandard} \ 
\begin{enumerate}
\renewcommand{\labelenumi}{(\alph{enumi})}
\item
Show that for every even-dimensional
vector space $V$ with complex structure $J$, there
exists a basis in which $J$ takes the form of the standard complex
structure~$\JJ\std$.
\item
Show that if~$V$ is an odd-dimensional vector space, then there is no
linear map $J : V \to V$ satisfying $J^2 = -\1$.
\item
Show that all real-linear maps on $\RR^{2n}$ that commute with~$\JJ\std$ have
positive determinant.
\end{enumerate}
\end{exercise}

Note that due to the above exercise, a complex structure~$J$
on a $2n$-dimensional vector space~$V$ induces a natural \emph{orientation}
on~$V$, namely by defining any basis of the form
$(v_1,J v_1,\ldots,v_n, J v_n)$ to be positively oriented.  This is equivalent
to the statement that every finite-dimensional complex vector space has
a natural orientation as a \emph{real} vector space.

The above notions can easily be generalized from spaces to
bundles: if~$M$ is a topological space and $E \to M$ is a real vector
bundle of even rank, then a \defin{complex structure} on $E \to M$ is
a continuous family of complex structures on the fibers of~$E$, i.e.~a
section $J \in \Gamma(\End(E))$ of the bundle $\End(E)$ of fiber-preserving
linear maps $E \to E$, such that $J^2 = -\1$.  If $E \to M$ is a smooth
vector bundle, then we will always assume that~$J$ is \emph{smooth} unless some
other differentiability class is specifically indicated.
A complex structure gives
$E \to M$ the structure of a complex vector bundle, due to the following
variation on Exercise~\ref{EX:Jstandard} above.

\begin{exercise}
\label{EX:Jtriv} \ 
\begin{enumerate}
\renewcommand{\labelenumi}{(\alph{enumi})}
\item Show that whenever $E \to M$ is a real vector bundle of even rank
with a complex structure~$J$, every point $p \in M$ lies in a
neighborhood on which
$E$ admits a trivialization such that~$J$ takes the form of the standard
complex structure~$\JJ\std$.
\item Show that for any two trivializations having the above property,
the transition map relating them is fiberwise complex linear (using the
natural identification $\RR^{2n} = \CC^n$).
\end{enumerate}
\end{exercise}

For this reason, it is often convenient to denote complex vector bundles
of rank~$n$ as pairs $(E,J)$, where $E$ is a real bundle of rank~$2n$
and~$J$ is a complex structure on~$E$.
Note that not every real vector bundle of even rank admits a complex
structure: the above discussion shows that such bundles must always be
orientable, and this condition is not even generally
sufficient except for the case of rank two.

For a smooth $2n$-dimensional manifold~$M$, we refer to any complex 
structure~$J$ on the tangent bundle~$TM$ as an
\defin{almost complex structure}
on~$M$, and the pair $(M,J)$ is then an \defin{almost complex manifold}.  
The reason for the word ``almost''
will be explained in a moment.

\begin{example}
\label{ex:integrable}
Suppose $M$ is a complex manifold of complex dimension $n$, i.e.~there exist
local charts covering $M$ that identify subsets of $M$ with subsets of
$\CC^n$ such that all transition maps are holomorphic.  Any choice
of holomorphic local coordinates on a subset $\uU \subset M$ then identifies
the tangent spaces $T_p \uU$ with $\CC^n$.  If we use this identification to
assign the standard complex structure $i$ to each tangent space $T_p \uU$,
then the fact that transition maps are holomorphic implies that this
assignment doesn't depend on the choice of coordinates (prove this!).
Thus $M$ has a natural almost complex structure $J$ that looks like the
standard complex structure in any holomorphic coordinate chart.
\end{example}

An almost complex structure is called \defin{integrable} if it arises in
the above manner from a system of holomorphic coordinate charts; in this
case we drop the word ``almost'' and simply call $J$ a \defin{complex 
structure} on~$M$.  
By definition, then, a real manifold $M$ admits a complex structure
(i.e.~an integrable almost complex structure) if and only if it also admits
coordinate charts that make it into a complex manifold.  In contrast to
Exercise~\ref{EX:Jtriv}, which applies to trivializations on vector bundles,
one cannot always find a coordinate chart that makes a given almost complex
structure look standard on a neighborhood.  The following
standard (but hard) result of complex analysis characterizes integrable
complex structures; we include it here for informational purposes, but will
not make essential use of it in the following.

\begin{thm}
\label{thm:Nijenhuis}
The almost complex structure $J$ on $M$ is integrable if and only if the
tensor $N_J$ vanishes identically, where $N_J$ is defined on two vector fields
$X$ and $Y$ by
\begin{equation}
\label{eqn:Nijenhuis}
N_J(X,Y) = [JX,JY] - J[JX,Y] - J[X,JY] - [X,Y].
\end{equation}
\end{thm}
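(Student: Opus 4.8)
The plan is to split Theorem~\ref{thm:Nijenhuis} into its two implications, dispose of the tensoriality of $N_J$ and the easy direction by direct computation, reduce the difficult direction to a classical integrability problem, and — since these notes make no later use of the result — settle the remaining analytic core by citation and a brief sketch. The first point to establish is that, although \eqref{eqn:Nijenhuis} is written with Lie brackets, it defines a genuine $(2,1)$-tensor: antisymmetry $N_J(Y,X) = -N_J(X,Y)$ is immediate by relabeling, and $C^\infty(M)$-linearity in $X$ follows by substituting $fX$, applying the Leibniz rule $[fX,Y] = f[X,Y] - (Yf)X$ in each of the four terms, and using $J(fX) = fJX$ together with $J^2 = -\1$ to see that the first-order corrections in $f$ cancel in pairs; linearity in $Y$ then follows from antisymmetry. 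Consequently $N_J(X,Y)$ at a point depends only on the vectors $X,Y$ there, so it may be computed on any convenient local frame. For the implication ``integrable $\Rightarrow N_J = 0$'', choose holomorphic coordinates about an arbitrary point: there $J = \JJ\std$ has constant entries and carries each real coordinate vector field to $\pm$ another one, and since all coordinate vector fields commute, every bracket in \eqref{eqn:Nijenhuis} vanishes when $X$ and $Y$ are coordinate vector fields. Thus $N_J$ vanishes on a frame, hence everywhere.

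The converse is the substantive direction, and I would first recast it in terms of the complexified tangent bundle. Extend $J$ complex-linearly to $TM\otimes\CC$ and split it into its $\pm i$-eigenbundles $TM\otimes\CC = T^{1,0}M \oplus T^{0,1}M$, so that $T^{0,1}M$ is locally spanned by vectors of the form $X + iJX$ with $X \in TM$. A direct (mildly tedious) computation gives, for real $X,Y$, that the $T^{1,0}M$-component of $[\,X+iJX,\,Y+iJY\,]$ equals $-\frac{1}{2}\bigl(N_J(X,Y) - iJN_J(X,Y)\bigr)$; since $R - iJR = 0$ forces the real vector $R$ to vanish, this shows that $N_J \equiv 0$ if and only if $T^{0,1}M$ is involutive. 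On the other hand, $J$ is integrable precisely when every point has a neighborhood carrying complex-valued functions $z_1,\dots,z_n$ with $\CC$-linearly independent differentials, each satisfying the Cauchy--Riemann condition $dz_j\circ J = i\,dz_j$ in the sense of \eqref{eqn:CRintegrable} with target $\CC$ --- equivalently, $T^{0,1}M$ is locally the span of $\p/\p\bar z_1,\dots,\p/\p\bar z_n$. So the theorem reduces to the assertion that an involutive $T^{0,1}M$ admits $n$ functionally independent local first integrals.

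This last step is the main obstacle, and is exactly what makes the Newlander--Nirenberg theorem hard. In the real-analytic category it is painless: one complexifies $M$ locally, extends the (real-analytic, involutive) distribution $T^{0,1}M$ to a holomorphic involutive distribution on the complexification, applies the holomorphic Frobenius theorem to obtain $n$ independent holomorphic first integrals, and restricts them to $M$ to produce the desired $z_j$. In the $C^\infty$ setting there is no complexification to exploit, and one must instead construct solutions of $dz\circ J = i\,dz$ directly, treating this as a small perturbation of the ordinary $\dbar$-operator and running a nonlinear iteration scheme whose convergence is controlled by elliptic estimates for the perturbed Cauchy--Riemann operator; this is the analytic heart of the result. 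My proposal, therefore, is to present the tensoriality check, the easy implication, and the reduction to involutivity of $T^{0,1}M$ in full, and for the smooth case of the converse to give only this outline together with a reference to \cite{McDuffSalamon:Jhol}.
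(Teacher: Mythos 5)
Your proposal is correct as far as it goes, but be aware that the paper itself offers no proof of this theorem at all: it is stated ``for informational purposes,'' the tensoriality check and the implication ``integrable $\Rightarrow N_J \equiv 0$'' are relegated to Exercise~\ref{EX:Nijenhuis}, and the converse is dispatched with a citation to \cite{DonaldsonKronheimer}*{Chapter~2}. So you are supplying strictly more than the text does. Your easy direction and tensoriality computation are fine (the cancellation of the $(Yf)X$ and $(JYf)JX$ terms is exactly right), and your reduction of the converse to involutivity of $T^{0,1}M$, including the identity expressing the $T^{1,0}$-component of $[X+iJX,Y+iJY]$ in terms of $N_J$, is the standard and correct route; it is also consistent with the reference the paper points to. Two remarks on the comparison. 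First, the paper's own (implicit) perspective on the hard direction is slightly different in emphasis: it repeatedly flags the \emph{local existence of pseudoholomorphic curves} (Theorem~\ref{thm:localExistence3}, built on Theorem~\ref{thm:linearExistence}) as ``the first step in the proof of Theorem~\ref{thm:Nijenhuis}'' --- i.e.\ one constructs holomorphic coordinate functions directly by viewing $dz \circ J = i\,dz$ as a perturbation of the standard $\dbar$-operator and invoking the surjectivity/right-inverse results of \S\ref{sec:estimates}; this is the same ``analytic heart'' you identify, and in dimension two the paper actually carries it out (Theorem~\ref{thm:RiemannSurfaces}), so you could strengthen your sketch by noting that your black box is genuinely proved in the text for $n=1$. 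Second, your citation should be to \cite{DonaldsonKronheimer} rather than \cite{McDuffSalamon:Jhol}: the latter does not contain a proof of the Newlander--Nirenberg theorem, and the former is the reference the paper designates for the converse.
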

The tensor \eqref{eqn:Nijenhuis} is called the \defin{Nijenhuis tensor}.

\begin{exercise} \ 
\label{EX:Nijenhuis}
\begin{enumerate}
\renewcommand{\labelenumi}{(\alph{enumi})}
\item Verify that \eqref{eqn:Nijenhuis} defines a tensor.
\item Show that $N_J$ always vanishes if $\dim M = 2$.
\item Prove one direction of Theorem~\ref{thm:Nijenhuis}: if $J$ is
integrable, then $N_J$ vanishes.
\end{enumerate}
\end{exercise}
The converse direction is much harder to prove, see for instance
\cite{DonaldsonKronheimer}*{Chapter~2}.  But if you believe this, then
Exercise~\ref{EX:Nijenhuis} has the following nice consequence:

\begin{thm}
\label{thm:RiemannSurfaces}
Every almost complex structure on a surface is integrable.
\end{thm}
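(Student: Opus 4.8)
The plan is to obtain this as a one-line corollary of the Newlander--Nirenberg theorem (Theorem~\ref{thm:Nijenhuis}), combined with the dimensional vanishing of the Nijenhuis tensor recorded in Exercise~\ref{EX:Nijenhuis}(b). So the first step is to verify carefully that $N_J \equiv 0$ whenever $\dim M = 2$. Since $N_J$ is a tensor, it suffices to evaluate it on the elements of a pointwise frame, and on a surface, starting from any locally nonvanishing vector field $X$, the pair $(X, JX)$ is such a frame. The expression \eqref{eqn:Nijenhuis} is antisymmetric in its two arguments, so $N_J(X,X) = N_J(JX,JX) = 0$ and $N_J(JX,X) = -N_J(X,JX)$ hold automatically; it therefore remains only to compute $N_J(X,JX)$. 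Substituting $Y = JX$ into \eqref{eqn:Nijenhuis} and using $J(JX) = -X$ together with $[X,X] = 0$, each of the four bracket terms either vanishes outright or cancels against another, giving $N_J(X,JX) = 0$. Hence $N_J$ vanishes identically on any surface.

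With that in hand, the second step is simply to invoke the hard (converse) direction of Theorem~\ref{thm:Nijenhuis}: vanishing of $N_J$ forces $J$ to arise from a holomorphic atlas, i.e.\ $J$ is integrable, which is precisely the assertion.

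The only genuine obstacle is the analytic input, namely that hard direction of Newlander--Nirenberg, which the text has explicitly invited us to accept here without proof. If one insisted instead on a self-contained argument valid specifically in real dimension two, I would reduce the statement to the existence of local isothermal coordinates: pick a local coframe, express $J$ through a smooth complex-valued function $\mu$ with $|\mu| < 1$ (the Beltrami coefficient of the conformal structure determined by $J$), and solve the Beltrami equation $\partial_{\bar z} w = \mu\,\partial_z w$ for a local diffeomorphism $w$; such a $w$ pulls the standard $i$ back to $J$, and in the smooth category the required solvability is classical. But within the framework of this book, the two-step deduction above is both the natural and the intended proof.
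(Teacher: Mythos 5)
Your argument is correct as far as it goes: the frame computation showing $N_J \equiv 0$ whenever $\dim M = 2$ is exactly Exercise~\ref{EX:Nijenhuis}(b), and the substitution $Y = JX$ in \eqref{eqn:Nijenhuis} does kill all four terms as you claim. However, the route you chose is the one the text deliberately declines to rely on. Theorem~\ref{thm:Nijenhuis} is included ``for informational purposes'' only, its hard direction is never proved in the book, and the text states explicitly that it will not make essential use of it; the remark before the theorem (``if you believe this\ldots'') is offering motivation, not a proof. The proof the text actually intends is the one announced immediately after the statement: it goes through the local existence theorem for pseudoholomorphic curves, Theorem~\ref{thm:localExistence3}, proved in \S\ref{sec:nonlinear}. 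Given $p \in \Sigma$ and $0 \ne X \in T_p\Sigma$, that theorem produces a $j$-holomorphic map $u : (B_\epsilon,i) \to (\Sigma,j)$ with $u(0)=p$ and $\p_s u(0) = X$; since domain and target both have real dimension two and $du(0)$ is complex-linear and nonzero, $u$ is a local diffeomorphism near~$0$, so $u^{-1}$ serves as a holomorphic chart, and transition maps between two such charts are pseudoholomorphic maps between open subsets of $(\CC,i)$, hence honestly holomorphic. This is essentially your Beltrami-equation fallback in disguise---Theorem~\ref{thm:localExistence3} is proved by treating the nonlinear Cauchy-Riemann operator as a small perturbation of the standard $\dbar$ and applying the implicit function theorem, which is how one solves the Beltrami equation locally in the smooth category. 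What your primary route buys is brevity, at the cost of importing an unproved black box that is genuinely hard in higher dimensions; what the text's route buys is a complete proof from ingredients developed in the book, plus the observation that this local existence result is itself the first step in a proof of Theorem~\ref{thm:Nijenhuis}.
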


In other words, complex $1$-dimensional
manifolds are the same thing as almost complex manifolds of real
dimension two.  This theorem follows from an existence result for local
pseudoholomorphic curves which we'll prove in \S\ref{sec:nonlinear}.
Actually, that existence result can be thought of as the first step
in the proof of Theorem~\ref{thm:Nijenhuis}.  Complex manifolds in the
lowest dimension have a special status, and deserve a special name:

\begin{defn}
A \defin{Riemann surface} is a complex manifold of complex dimension one.
\end{defn}

By Theorem~\ref{thm:RiemannSurfaces}, a Riemann surface can equivalently
be regarded as a surface $\Sigma$ with an almost complex structure $j$,
and we will thus typically denote Riemann surfaces as pairs $(\Sigma,j)$.

Surfaces are the easy special case; in dimensions four and higher,
\eqref{eqn:Nijenhuis} does not usually vanish, in fact it is \emph{generically}
nonzero, which shows that, in some sense, ``generic'' almost complex structures
are not integrable.  Thus in higher dimensions, integrable complex structures
are very rigid objects---too rigid for our purposes, as it will turn out.
For instance, there are real manifolds that do not admit complex structures
but do admit almost complex structures.  It will be most important for our
purposes
to observe that \emph{symplectic} manifolds always admit almost complex
structures that are ``compatible'' with the symplectic form in a certain
geometric sense.  We'll come back to this in \S\ref{sec:compatible} and make
considerable use of it in later applications, but for most of the present
chapter, we will focus only on
the \emph{local} properties of $J$-holomorphic curves and thus be content
to work in the more general context of almost complex manifolds.

\begin{defn}
Suppose $(\Sigma,j)$ is a Riemann surface and $(M,J)$ is an almost complex
manifold.  A smooth map $u : \Sigma \to M$ is called
\defin{$J$-holomorphic} (or \defin{pseudoholomorphic}) if its differential
at every point is complex-linear,~i.e.
\begin{equation}
\label{eqn:nonlinearCR}
Tu \circ j = J \circ Tu.
\end{equation}
\end{defn}

Note that in general, the equation \eqref{eqn:nonlinearCR} makes sense if
$u$ is only of class $C^1$ (or more generally, of Sobolev class $W^{1,p}$) 
rather than smooth, but it will turn out to
follow from elliptic regularity (see \S\ref{sec:estimates} and
\S\ref{sec:nonlinear}) that 
$J$-holomorphic curves are \emph{always} smooth if $J$ is smooth---we will
therefore assume smoothness whenever convenient.
Equation~\eqref{eqn:nonlinearCR} is a nonlinear first-order PDE, 
often called the \defin{nonlinear Cauchy-Riemann equation}.  If you are
not accustomed to PDEs expressed in geometric notation, you may prefer to view
it as follows: choose holomorphic local coordinates $s + it$ on a subset
of $\Sigma$, so $j \p_s = \p_t$ and $j \p_t = -\p_s$ (note that we're
assuming the integrability of~$j$).  Then \eqref{eqn:nonlinearCR} is
locally equivalent to the equation
\begin{equation}
\label{eqn:nonlinearCR2}
\p_s u + J(u)\, \p_t u = 0.
\end{equation}

\begin{notation}
We will sometimes write $u : (\Sigma,j) \to (M,J)$ to mean that
$u : \Sigma \to M$ is a map satisfying \eqref{eqn:nonlinearCR}.  
When the domain is
the open unit ball $B \subset \CC$ (or any other open subset of~$\CC$)
and we say $u : B \to M$ is $J$-holomorphic without specifying the complex
structure of the domain, then the standard complex structure is implied,
i.e.~$u$ is a pseudoholomorphic map $(B,i) \to (M,J)$ and thus satisfies
\eqref{eqn:nonlinearCR2}.  The symbol $B_r$ for $r > 0$ will be used to
denote the open ball of radius $r$ in $(\CC,i)$.
\end{notation}

Note that the standard Cauchy-Riemann equation for maps $u : \CC \to \CC^n$
can be written as $\p_s u + i\, \p_t u = 0$, thus \eqref{eqn:nonlinearCR2} can
be viewed as a perturbation of this.  In fact, due to 
Exercise~\ref{EX:Jstandard}, one can always choose
coordinates near a point $p \in M$ so that $J(p)$ is identified with the
standard complex structure; then in a sufficiently small neighborhood of~$p$,
\eqref{eqn:nonlinearCR2} really is a \emph{small} perturbation of the
usual Cauchy-Riemann equation.  We'll make considerable
use of this perspective in the
following.  Here is a summary of the most important results we aim to
prove in this chapter.

\begin{thmu}
Assume $(M,J)$ is a smooth almost complex manifold.  Then:
\begin{itemize}
\item \textsc{(regularity)} Every map $u : \Sigma \to M$ of class~$C^1$ 
solving the nonlinear Cauchy-Riemann equation \eqref{eqn:nonlinearCR} is smooth
(cf.~Theorem~\ref{thm:regularity}).
\item \textsc{(local existence)} For any $p \in M$ and $X \in T_p M$, there 
exists a neighborhood $\uU \subset \CC$ of
the origin and a $J$-holomorphic map $u : \uU \to M$
such that $u(0) = p$ and $\p_s u(0) = X$ in standard coodinates
$s + it \in \uU$ (cf.~Theorem~\ref{thm:localExistence3}).
\item \textsc{(critical points)}
If $u : \Sigma \to M$ is a nonconstant $J$-holomorphic curve with a critical
point $z \in \Sigma$, then there is a neighborhood $\uU \subset \Sigma$ of $z$
such that $u|_{\uU\setminus\{z\}}$ is a $k$-to-$1$ immersion for some
$k \in \NN$
(cf.~Corollary~\ref{cor:critical} and Theorem~\ref{thm:injective}).
\item \textsc{(intersections)} Suppose $u_1 : \Sigma_1 \to M$ and 
$u_2 : \Sigma_2 \to M$ are two nonconstant
$J$-holomorphic curves with an intersection $u_1(z_1) = u_2(z_2)$.  Then
there exist neighborhoods $z_1 \in \uU_1 \subset \Sigma_1$ and
$z_2 \in \uU_2 \subset \Sigma_2$ such that the images
$u_1(\uU_1\setminus\{z_1\})$ and $u_2(\uU_2\setminus\{z_2\})$ 
are either identical or disjoint (cf.~Theorem~\ref{thm:intersections}).
In the latter case, if $\dim M = 4$, then the intersection has positive
local intersection index, which equals~$1$ if and only if the intersection
is transverse (cf.~Theorem~\ref{thm:positivity}).
\end{itemize}
\end{thmu}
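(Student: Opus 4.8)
The plan is to prove the four bullets in order, since the tools accumulate, and to organize everything around two analytic pillars: interior $L^p$-estimates for the standard Cauchy--Riemann operator $\bar\partial = \tfrac12(\partial_s + i\,\partial_t)$ on $\CC^n$-valued maps (the Calder\'on--Zygmund inequality), and the \emph{Carleman similarity principle}, which asserts that a $W^{1,p}$ solution $f$ of a linear perturbed equation $\bar\partial f + Cf = 0$ with $C \in L^\infty$ can be written near a point as $f = \Phi\,g$, with $\Phi$ continuous and invertible and $g$ holomorphic; thus such an $f$ behaves locally exactly like a holomorphic map, with isolated zeros of well-defined finite order. The unifying elementary observation is that after choosing coordinates near $p \in M$ with $J(p) = i$, the equation $\partial_s u + J(u)\,\partial_t u = 0$ reads $\bar\partial u = A(u)\,\partial u$ with $A$ smooth and $A(p) = 0$, i.e.\ a controllably small perturbation of $\bar\partial u = 0$.

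For \textsc{regularity} I would use local elliptic bootstrapping. A $C^1$ solution lies in $W^{1,p}_{\mathrm{loc}}$ for every $p$; a difference-quotient argument (the difference quotients of $u$ solve equations with $L^p$-bounded right-hand sides, using smallness of $A$) first upgrades $u$ to $W^{2,p}_{\mathrm{loc}}$, and then, absorbing the $A(u)\,\partial$ term into the elliptic operator and differentiating, the interior estimate together with the module property of Sobolev spaces pushes $u$ into every $W^{k,p}_{\mathrm{loc}}$, hence $u \in C^\infty$ by Sobolev embedding. For \textsc{local existence}, in coordinates with $J(p) = i$ the affine map $\zeta \mapsto p + \zeta X$ is an approximate solution on a small ball $B_\rho$, with error $O(\rho)$; the linearization of $u \mapsto \bar\partial u - A(u)\,\partial u$ there is $\bar\partial$ plus small lower-order terms, which after rescaling admits a right inverse bounded uniformly in $\rho$ on Sobolev spaces incorporating the constraints $u(0) = p$ and $\partial_s u(0) = X$, so a contraction-mapping (Newton) scheme produces a genuine $J$-holomorphic $u$ with the prescribed $1$-jet. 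As the text notes, this already yields Theorem~\ref{thm:RiemannSurfaces}.

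For \textsc{critical points} I would apply the similarity principle to the complex derivative $\partial_z u := \tfrac12(\partial_s u - i\,\partial_t u)$: after the normalization above and differentiating the equation, $\partial_z u$ solves a linear perturbed Cauchy--Riemann equation, so $\partial_z u = \Phi\cdot g$ with $g$ holomorphic; nonconstancy of $u$ forces $g \not\equiv 0$, so $\partial_z u$ vanishes to finite order $k-1$ at the critical point $z_0$ (in particular critical points are isolated), and a second application gives the normal form $u(z) = u(z_0) + \big(a z^k + o(|z|^k),\, O(|z|^{k+1})\big)$ with $a \ne 0$ in suitable target coordinates. A reparametrization of the domain then exhibits $u$ near $z_0$ as an immersion precomposed with $w \mapsto w^k$, hence a $k$-to-$1$ immersion on a punctured neighborhood. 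For \textsc{intersections}, given $u_1(z_1) = u_2(z_2) = p$ I would first reduce via the previous step to the case where each curve is immersed at the intersection point, then use the local existence theorem to foliate a neighborhood of $p$ by $J$-holomorphic discs, one leaf being the image of $u_1$ near $p$, and straighten this foliation by a diffeomorphism of $M$ so that in new coordinates $u_1$ is the standard disc $z \mapsto (z,0,\dots,0)$ and $J$ preserves the first coordinate axis. Then the ``transverse part'' $v$ of $u_2$ (its last $n-1$ components) satisfies $v(0) = 0$ and, after absorbing lower-order terms, an equation $\bar\partial v + Cv = 0$ with $C \in L^\infty$; either $v \equiv 0$, so $u_1$ and $u_2$ have the same image germ, or $v = \Phi g$ with $g$ holomorphic vanishing to finite order, so the intersection is isolated. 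When $\dim M = 4$, $v$ is $\CC$-valued, $g$ vanishes to order $m \ge 1$, the local intersection index equals $\wind_{|z|=\varepsilon} v = m \ge 1$, and $m = 1$ precisely when $dv(0) \ne 0$, i.e.\ when the intersection is transverse; the covering multiplicities from critical points only multiply this positive count.

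The step I expect to be the main obstacle is the setup for the intersection statement: straightening an immersed $J$-holomorphic curve by an ambient coordinate change in which $J$ still splits off the curve's tangent direction, so that the second curve's transverse components genuinely solve a \emph{linear} equation amenable to the similarity principle. This is exactly where the local existence theorem is indispensable (to produce the ambient $J$-holomorphic foliation), and where one must watch the regularity of the straightening diffeomorphism. The similarity principle itself --- whose proof leans on the $W^{1,p}$-theory developed for the regularity step, plus a fixed-point argument for an associated singular integral equation --- is the other substantial ingredient; once it and the straightening are in place, the critical-point and positivity statements reduce to elementary facts about zeros and intersections of holomorphic functions.
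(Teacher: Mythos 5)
Your treatment of the first two bullets is sound and matches the paper's strategy: regularity via difference quotients and bootstrapping off the Calder\'on--Zygmund estimate (the paper's induction step uses the $1$-jet trick, viewing $\p_s u$ as pseudoholomorphic for an auxiliary almost complex structure, but differentiating the equation directly also works), and local existence as a perturbation of $\dbar$ with a bounded right inverse (the paper phrases this via the implicit/inverse function theorem rather than an explicit Newton scheme, which is essentially the same argument). The similarity principle applied to $du$ does give that critical points are isolated, and your argument for the \emph{immersed} case of the intersection statement is exactly the paper's Theorem~\ref{thm:babyIntersections}.

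However, there are two genuine gaps, both stemming from the same missing ingredient. First, for the \textsc{critical points} bullet: from the normal form $u(z) = \bigl(z^k, O(|z|^{k+1})\bigr)$ it does \emph{not} follow that $u$ is an immersion precomposed with $w \mapsto w^k$. The cusp $z \mapsto (z^2,z^3)$ has $k=2$ but is \emph{injective} near $0$ (a $1$-to-$1$ immersion on the punctured disk), and does not factor through $w \mapsto w^2$. The covering multiplicity is a divisor of $k$ determined by which rotations $e^{2\pi i \ell/k}$ satisfy $\hat u(z) \equiv \hat u(e^{2\pi i \ell/k}z)$, and establishing the dichotomy ``$\hat u - \hat u_\ell \equiv 0$ or $\hat u - \hat u_\ell$ has an isolated zero'' is precisely the content of the representation formula (Theorem~\ref{thm:representation}), which your outline never proves. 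Second, for \textsc{intersections}: your reduction ``to the case where each curve is immersed at the intersection point'' fails, because the injective factor $v$ in the factorization $u = v\circ\varphi$ need not be immersed (again, the cusp). When both curves have critical points and a common tangent at the intersection, there is no ambient $J$-holomorphic foliation to straighten, and the transverse component of one curve relative to the other does not obviously satisfy a linear equation of the form $\dbar v + Cv = 0$. The paper's resolution is to put both curves (after branched covers equalizing critical orders) in the form $(z^k,\hat u)$, $(z^k,\hat v)$ and to prove that the leading-order term of $\hat u - \hat v_\ell$ is a nonzero \emph{holomorphic} monomial $z^{k+\ell'}C'$; the proof is a rescaling argument using the bivector (complex-tangent-plane) formulation of the Cauchy--Riemann equation, made necessary by the fact that the two reparametrized curves are holomorphic for \emph{different} complex structures on their domains. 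This comparison result is the analytic heart of both the $k$-to-$1$ statement and positivity of intersections at non-immersed points, and it is not recoverable from the similarity principle alone.
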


This theorem amounts to the statement that locally, $J$-holomorphic
curves behave much the same way as holomorphic curves, i.e.~the same as in
the integrable case.  But since $J$ is usually not integrable, the methods
of complex analysis cannot be applied here, and we will instead need to employ 
techniques from the theory of elliptic PDEs.  As preparation, we'll derive 
the natural linearization of \eqref{eqn:nonlinearCR} and introduce the 
theory of linear Cauchy-Riemann operators, as well as some fundamental ideas
of global analysis, all of which will be useful in the chapters to come.

\section{Compatible and tame almost complex structures}
\label{sec:compatible}

For any given even-dimensional manifold~$M$, it is not always immediately
clear whether an almost complex structure exists.  If $\dim M = 2$
for instance, then this is true if and only if~$M$ is orientable, and in
higher dimensions the question is more delicate.  We will not
address this question in full generality, but merely show in the present
section that for the cases we are most interested in, namely for
\emph{symplectic} manifolds, the answer is exactly as we might hope.
The results of this section are mostly independent of the rest of the
chapter, but they will become crucial once we discuss compactness results and
applications, from Chapter~\ref{chapter:nonsqueezing} onwards.

Given a manifold $M$ and a smooth vector bundle 
$E \to M$ of even rank, denote by
$\jJ(E)$ the space of all (smooth) complex structures on~$E$.  We shall
regard this as a topological space with the $\Cinftyloc$-topology,\footnote{Also
known as the \emph{weak} or \emph{compact-open $C^\infty$-topology}, see
e.g.~\cite{Hirsch}*{Chapter~2}.}
i.e.~a sequence $J_k \in \jJ(E)$ converges if and only if it is
$C^\infty$-convergent on all compact subsets.  As explained in
\S\ref{sec:intro} above, any choice of $J \in \jJ(E)$ makes $(E,J)$ into a
complex vector bundle.

\begin{notation}
We shall denote by $\End_\RR(\CC^n)$ the space of real-linear endomorphisms
of $\CC^n$, i.e.~$\End_\RR(\CC^n) = \End(\RR^{2n})$ under the usual
identification of $\CC^n$ with $\RR^{2n}$.  The spaces of complex-linear
and complex-antilinear endomorphisms of $\CC^n$ will be denoted by
$\End_\CC(\CC^n)$ and $\overline{\End}_\CC(\CC^n)$ respectively, 
or $\End_\CC(\CC^n,J)$ and $\overline{\End}_\CC(\CC^n,J)$ whenever an
alternative complex structure $J$ on $\CC^n$ is specified.
For a complex vector bundle $(E,J)$, we will analogously denote the various 
vector bundles of fiber-preserving linear maps on $E$ by
$\End_\RR(E)$, $\End_\CC(E,J)$ and $\overline{\End}_\CC(E,J)$.  The open
subsets
\begin{equation*}
\begin{split}
\Aut_\RR(E) &:= \left\{ A \in \End_\RR(E) \ |\ \text{$A$ is invertible} \right\} \\
\Aut_\CC(E,J) &:= \left\{ A \in \End_\CC(E,J) \ |\ \text{$A$ is invertible} \right\} 
\end{split}
\end{equation*}
are then smooth fiber bundles.
Let $\jJ(\CC^n) \subset \End_\RR(\CC^n)$ denote the space of all
complex structures on the vector space $\RR^{2n} = \CC^n$.
\end{notation}

\begin{exercise}
\label{EX:homogeneous}
Consider the smooth map
$$
\Phi : \GL(2n,\RR) \to \GL(2n,\RR) : A \mapsto A i A^{-1},
$$
where $i$ is identified with the standard complex structure
on $\RR^{2n} = \CC^n$.
Show that if $\GL(n,\CC)$ is regarded as the subgroup of all matrices
in $\GL(2n,\RR)$ that commute with $i$, then $\Phi$ descends to an
embedding of the homogeneous space $\GL(2n,\RR) / \GL(n,\CC)$ into
$\GL(2n,\RR)$, whose image is precisely~$\jJ(\CC^n)$.  Deduce that
$\jJ(\CC^n)$ is a noncompact $2n^2$-dimensional smooth submanifold of
$\End_\RR(\CC^n)$, and show that its tangent space at any 
$J \in \jJ(\CC^n)$ is
$$
T_J\jJ(\CC^n) = \overline{\End}_\CC(\CC^n,J) \subset
\End_\RR(\CC^n).
$$
\end{exercise}

\begin{exercise}
\label{EX:J}
Use Exercise~\ref{EX:homogeneous} to show that for any smooth complex vector bundle
$(E,J_0) \to M$, the space $\jJ(E)$ of complex structures on $E$ can be
identified with the space of smooth sections of the fiber bundle
$\Aut_\RR(E) / \Aut_\CC(E,J_0) \to M$.
\end{exercise}

The map $\Phi : \GL(2n,\RR) \to \jJ(\CC^n)$ of Exercise~\ref{EX:homogeneous} also 
yields a natural way to construct smooth local charts on~$\jJ(\CC^n)$.  
For instance, the standard structure $i \in \jJ(\CC^n)$ is $\Phi(\1)$, and 
on $T_\1 \GL(2n,\RR) = \End_\RR(\CC^n)$ we have a natural splitting
$$
\End_\RR(\CC^n) = \End_\CC(\CC^n) \oplus \overline{\End}_\CC(\CC^n) =
T_\1 \GL(n,\CC) \oplus T_i \jJ(\CC^n),
$$
so that matrices of the form $\1 + Y$ for $Y \in \overline{\End}_\CC(\CC^n)$
near~$0$ form a local slice parametrizing a neighborhood of $[\1]$ in
$\GL(2n,\RR) / \GL(n,\CC)$.  Consequently, $\jJ(\CC^n)$ is parametrized
near~$i$ by matrices of the form $\Phi(\1 + Y) = (\1 + Y) i (\1 + Y)^{-1}$
for $Y \in \overline{\End}_\CC(\CC^n)$.
It will be convenient to modify this parametrization by a linear
transformation on $\overline{\End}_\CC(\CC^n)$: consider the map
\begin{equation}
\label{eqn:Cayley0}
Y \mapsto J_Y := \left( \1 + \frac{1}{2} i Y \right) i 
\left( \1 + \frac{1}{2} i Y \right)^{-1}.
\end{equation}
This identifies a neighborhood of~$0$ in $T_i \jJ(\CC^n) = 
\overline{\End}_\CC(\CC^n)$ with a neighborhood of~$i$ in
$\jJ(\CC^n)$, and the following exercise shows that it can
be thought of informally as a kind of ``exponential map'' on~$\jJ(\CC^n)$.

\begin{exercise}
\label{EX:Cayley0}
Show that the derivative of the map \eqref{eqn:Cayley0} at~$0$ is
the identity transformation on $\overline{\End}_\CC(\CC^n)$.
\end{exercise}

\begin{remark}
\label{remark:Cayley0}
Since all complex structures on $\CC^n$ are equivalent up to a change of
basis, the above discussion also shows that a neighborhood of any 
$J_0 \in \jJ(\CC^n)$ can be identified with a neighborhood of~$0$ in 
$\overline{\End}_\CC(\CC^n,J_0)$ via the map
$Y \mapsto J := \left( \1 + \frac{1}{2} J_0 Y \right) J_0
\left( \1 + \frac{1}{2} J_0 Y \right)^{-1}$.
\end{remark}

Suppose next that $(E,\omega)$ is a 
\defin{symplectic vector bundle}, 
i.e.~a vector bundle
whose fibers are equipped with a nondegenerate skew-symmetric bilinear 
$2$-form $\omega$ that varies smoothly.
It is straightforward to show that such a bundle admits local trivializations
that identify every fiber symplectically with $(\RR^{2n},\omega\std)$;
see \cite{McDuffSalamon:ST}.  On $(E,\omega)$, we will consider two special
subspaces of~$\jJ(E)$:
\begin{equation*}
\begin{split}
\jJ^\tau(E,\omega) &:= \{ J \in \jJ(E)\ |\ 
\omega(v,Jv) > 0 \text{ for all } v \ne 0 \}, \\
\jJ(E,\omega) &:= \{ J \in \jJ(E)\ |\ 
g_J(v,w) := \omega(v,Jw) \text{ is a Euclidean bundle metric} \}.
\end{split}
\end{equation*}
We say that $J$ is \defin{tamed by $\omega$} if $J \in \jJ^\tau(E,\omega)$,
and it is \defin{compatible with} (some authors also say \emph{callibrated by}) 
$\omega$ if $J \in \jJ(E,\omega)$.
Clearly $\jJ(E,\omega) \subset \jJ^\tau(E,\omega)$.  
The taming condition is weaker than compatibility because we do not require
the bilinear form $(v,w) \mapsto \omega(v,Jw)$ to be symmetric, but one
can still symmetrize it to define a bundle metric,
\begin{equation}
\label{eqn:gJ}
g_J(v,w) := \frac{1}{2} \left[ \omega(v,Jw) + \omega(w,Jv) \right],
\end{equation}
which is identical to the above definition in the case $J \in \jJ(E,\omega)$.

\begin{exercise}
\label{EX:Jinvariant}
Show that a tamed complex structure $J \in \jJ^\tau(E,\omega)$ 
is also $\omega$-compatible if and only if $\omega$ is $J$-invariant, 
i.e.~$\omega(Jv,Jw) = \omega(v,w)$ for all~$v,w \in E$.
\end{exercise}

\begin{exercise}
\label{EX:subbundles}
Suppose $(E,\omega)$ is a symplectic vector bundle and $F \subset E$
is a symplectic subbundle, i.e.~a smooth subbundle such that
$\omega|_{F}$ is also nondegenerate.  Denote its \defin{symplectic
complement} by
$$
F^{\perp\omega} = \{ v \in E\ |\ \omega(v,\cdot)|_F = 0 \},
$$
and recall that $\omega|_{F^{\perp\omega}}$ is necessarily also
nondegenerate, and $E = F \oplus F^{\perp\omega}$ (see e.g.~\cite{McDuffSalamon:ST}).
Show that if $j$ and $j'$ are tame/compatible complex structures on
$(F,\omega)$ and $(F^{\perp\omega},\omega)$ respectively, 
then $j \oplus j'$ defines
a tame/compatible complex structure on~$(E,\omega)$.  
\end{exercise}

\begin{exercise}
\label{EX:Un}
Show that for any symplectic vector bundle $(E,\omega)$, a complex structure
$J \in \jJ(E)$ is compatible with $\omega$ if and only if
there exists a system of local trivializations that simultaneously
identify $\omega$ and $J$ with the standard symplectic and complex 
structures $\omega\std$ and $i$ respectively on~$\RR^{2n} = \CC^n$.
\textsl{Hint: If $J$ is $\omega$-compatible, then the pairing
$\langle v,w \rangle := \omega(v,Jw) + i \omega(v,w) \in \CC$ defines a
Hermitian bundle metric on $(E,J)$.}
\end{exercise}

The main result of this section is the following.

\begin{thm}
\label{thm:contractible}
For any finite rank symplectic vector bundle $(E,\omega) \to M$, the spaces
$\jJ(E,\omega)$ and $\jJ^\tau(E,\omega)$ are both nonempty and
contractible.
\end{thm}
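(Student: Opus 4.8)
The plan is to prove the theorem in two stages: first establish the pointwise/fiberwise linear-algebraic statement that the spaces of compatible and tamed complex structures on a single symplectic vector space are contractible, and then upgrade this to bundles by a section-selection argument.

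First I would treat the compatible case, $\jJ(E,\omega)$. The starting observation is that a compatible $J$ is equivalent to a pair consisting of a complex structure $J$ and a Euclidean metric $g_J = \omega(\cdot,J\cdot)$; conversely, given \emph{any} Euclidean metric $g$ on a symplectic vector space $(V,\omega)$, there is a canonical way to produce a compatible $J$, via polar decomposition: the nondegeneracy of $\omega$ gives a skew-adjoint (with respect to $g$) isomorphism $A$ defined by $\omega(v,w) = g(Av,w)$, and then $J := (A A^{\transpose})^{-1/2} A = (-A^2)^{-1/2} A$ is compatible with $\omega$ and depends smoothly (indeed continuously) on $g$. I would check that when $g$ already equals $g_J$ for some compatible $J$, this construction returns that same $J$, so the map $g \mapsto J(g)$ is a retraction from the (convex, hence contractible) space of metrics onto $\jJ(V,\omega)$. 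This proves $\jJ(V,\omega)$ is a retract of a convex set, hence contractible; in particular it is nonempty and path-connected. I would then note that the whole construction is natural enough to run fiberwise over $M$: choosing any Euclidean bundle metric on $E$ (which exists by a partition of unity) and applying the fiberwise polar-decomposition map yields an element of $\jJ(E,\omega)$, establishing nonemptiness, and the same formula gives a deformation retraction of the space of bundle metrics onto $\jJ(E,\omega)$ in the $\Cinftyloc$-topology. Since the space of Euclidean bundle metrics is convex, $\jJ(E,\omega)$ is contractible.

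For the tamed case $\jJ^\tau(E,\omega)$, I would exploit that $\jJ(E,\omega) \subset \jJ^\tau(E,\omega)$ together with convexity-type properties of the taming condition. The key point is that the taming condition $\omega(v,Jv) > 0$ is an \emph{open} condition that behaves well under the symmetrization \eqref{eqn:gJ}: given $J \in \jJ^\tau(E,\omega)$, the form $g_J$ in \eqref{eqn:gJ} is a genuine bundle metric, so applying the polar-decomposition retraction from the compatible case to $g_J$ produces $\pi(J) \in \jJ(E,\omega)$. One checks that the straight-line path $t \mapsto$ (the compatible structure associated to the metric $(1-t)g_{J} + t\, g_{\pi(J)}$), or more simply a direct convexity argument on the level of the ``tamed'' region, connects $J$ to $\pi(J)$ within $\jJ^\tau(E,\omega)$; combined with the contraction of $\jJ(E,\omega)$ onto a point, this contracts $\jJ^\tau(E,\omega)$. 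Nonemptiness of $\jJ^\tau(E,\omega)$ is immediate from nonemptiness of $\jJ(E,\omega)$.

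The main obstacle I anticipate is verifying smoothness and $\Cinftyloc$-continuity of the polar-decomposition map $g \mapsto (-A^2)^{-1/2}A$ — i.e.\ that the operator square root $S \mapsto S^{-1/2}$, applied to the positive-definite self-adjoint operator $-A^2$, is smooth in $g$ and respects the bundle structure. This is standard (the square root of a positive self-adjoint operator is real-analytic in the operator, e.g.\ via the holomorphic functional calculus or an explicit integral formula), but it requires care to present cleanly, and it is the place where one must be slightly precise about topologies when passing from fibers to bundles and from continuity to smoothness. A secondary subtlety is confirming that $J(g_J) = J$ for compatible $J$, which makes the retraction honest rather than merely a continuous surjection; this follows because for a compatible $J$ the associated operator $A$ equals $-J$ (so $-A^2 = \1$ and $(-A^2)^{-1/2}A = A = -J$... up to the sign convention, which I would pin down carefully in the write-up).
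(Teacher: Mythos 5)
Your treatment of the compatible case is correct and is essentially identical to the paper's first proof (Gromov's): the polar-decomposition map $g \mapsto A\sqrt{A^*A}^{-1}$ gives a continuous left inverse to $J \mapsto g_J$, exhibiting $\jJ(E,\omega)$ as a retract of the convex space of bundle metrics. The sign issue you flag is harmless, and the smoothness of the operator square root is indeed the only point requiring care there.

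The tamed case, however, has a genuine gap. Your proposed homotopy from $J$ to $\pi(J)$ runs through the structures $J\bigl((1-t)g_J + t\,g_{\pi(J)}\bigr)$, but every structure produced by the polar-decomposition map is \emph{compatible} with $\omega$ by construction, so this path lies entirely in $\jJ(E,\omega)$; its starting point is $J(g_J) = \pi(J)$, not $J$ itself (these coincide only when $J$ is already compatible). So you have shown that $\jJ(E,\omega)$ is a retract of $\jJ^\tau(E,\omega)$, but to conclude contractibility of the larger space you need it to be a \emph{deformation} retract, i.e.\ a homotopy through tame structures from the identity to $\pi$, and no such homotopy has been produced. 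The fallback of ``a direct convexity argument on the level of the tamed region'' does not work as stated: while the taming condition $\omega(v,Jv)>0$ is convex in $J$, the constraint $J^2 = -\1$ is not, so $\jJ^\tau(E,\omega)$ is not a convex subset of $\End_\RR(E)$ and straight-line interpolation between two tame complex structures leaves the space. This is precisely the difficulty the paper's two proofs are built to circumvent: Gromov's argument uses Serre fibrations and the homotopy exact sequence applied to the space of pairs $(\omega,J)$ to transfer contractibility from the compatible to the tamed fibers, while S\'evennec's argument (Proposition~\ref{prop:convex}) reparametrizes $\jJ^\tau(E,\omega)$ via the Cayley-type map \eqref{eqn:Cayley1}, $Y \mapsto J_Y$, under which the tamed structures correspond exactly to sections of a fiberwise \emph{convex} open set, where straight-line homotopies do make sense. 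To repair your proof you would need to import one of these two mechanisms; the retraction $\pi$ alone does not suffice.
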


\begin{exercise}
\label{EX:sympConvex}
The following is a converse of sorts to Theorem~\ref{thm:contractible},
but is much easier.  Given a smooth vector bundle $E \to M$, 
define the space of \defin{symplectic vector bundle structures}
$\Omega(E)$ as the space of smoothly varying nondegenerate skew-symmetric
bilinear $2$-forms $\omega$ on the fibers of~$E$, and assign to this
space the natural $\Cinftyloc$-topology.  
Show that on any complex vector bundle $(E,J)$, the
spaces
\begin{equation*}
\begin{split}
\Omega^\tau(E,J) &:= \left\{ \omega \in \Omega(E)\ |\ J \in \jJ^\tau(E,\omega) \right\},\\
\Omega(E,J) &:= \left\{ \omega \in \Omega(E)\ |\ J \in \jJ(E,\omega) \right\}
\end{split}
\end{equation*}
are each nonempty convex subsets of vector spaces and are thus contractible.
\textsl{Hint: To show nonemptiness, choose a Hermitian metric and consider
its imaginary part.}
\end{exercise}

Before proving the theorem, let us give some initial indications of the
role that tameness plays in the theory of $J$-holomorphic curves.
We will usually assume $(E,\omega) := (TM,\omega)$ for some symplectic manifold
$(M,\omega)$, and in this case use the notation
$$
\jJ(M) := \jJ(TM),\quad \jJ^\tau(M,\omega) := \jJ^\tau(TM,\omega),\quad
\jJ(M,\omega) := \jJ(TM,\omega).
$$
Most simple examples of almost complex structures one can write down on
symplectic manifolds are compatible: e.g.~this is true for the standard
(integrable) complex structures on $(\CC^n = \RR^{2n},\omega\std)$ and
$(\CC P^n,\omega\std)$, and for any
complex structure compatible with the canonical orientation on a 
$2$-dimensional symplectic manifold.
Since every almost complex structure looks like the standard one
at a point in appropriate coordinates, it is easy to see that every $J$ is
\emph{locally} tamed by some symplectic structure: namely, if
$J$ is any almost complex structure on a neighborhood of the origin
in $\RR^{2n}$ with $J(0) = i$, then $J$ is tamed by~$\omega\std$ on a
possibly smaller neighborhood of~$0$, since tameness
is an open condition.

The key property of a
tame almost complex structure on a symplectic manifold is that every complex 
line in a tangent space is also a symplectic subspace, hence every embedded 
$J$-holomorphic curve parametrizes a symplectic submanifold.  
At the beginning of Chapter~\ref{chapter:intro}, 
we showed that holomorphic curves
in the standard $\CC^n$ have the important property that the area they
trace out can be computed by integrating the standard symplectic structure.
It is precisely this relation between symplectic structures and tame
almost complex structures that makes the compactness theory of
$J$-holomorphic curves possible.  The original computation generalizes as follows:
assume $(M,\omega)$ is a symplectic manifold,
$J \in \jJ^\tau(M,\omega)$, and let $g_J$ be the Riemannian metric
defined in \eqref{eqn:gJ}.  If $u : (\Sigma,j) \to (M,J)$ is a $J$-holomorphic
curve and we choose holomorphic local coordinates $(s,t)$ on a subset
of $\Sigma$, then $\p_t u = J \p_s u$ implies that with respect to the metric
$g_J$, $\p_s u$ and $\p_t u$ are orthogonal vectors of the same length.
Thus the geometric area of the parallelogram spanned by these two vectors
is simply
$$
| \p_s u |_{g_J} \cdot | \p_t u |_{g_J} = | \p_s u |_{g_J}^2 =
\omega(\p_s u, J \p_s u) = \omega(\p_s u, \p_t u),
$$
hence
\begin{equation}
\label{eqn:area2}
\Area_{g_J}(u) = \int_\Sigma u^*\omega.
\end{equation}

\begin{defn}
\label{defn:energy}
For any symplectic manifold $(M,\omega)$ and tame almost complex structure
$J \in \jJ^\tau(M,\omega)$, we define the \defin{energy} of a
$J$-holomorphic curve $u : (\Sigma,j) \to (M,J)$ by
$$
E(u) = \int_\Sigma u^*\omega.
$$
\end{defn}

The following is an immediate consequence of \eqref{eqn:area2}.

\begin{prop}
If $J \in \jJ^\tau(M,\omega)$ then for every $J$-holomorphic
curve $u : (\Sigma,j) \to (M,J)$,
$E(u) \ge 0$, with equality if and only if $u$ is locally 
constant.\footnote{The term \defin{locally constant} means that
the restriction of~$u$ to each connected component of its
domain~$\Sigma$ is constant.}
\end{prop}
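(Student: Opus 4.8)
The plan is to read off the entire statement from the area identity \eqref{eqn:area2}, which already expresses $E(u) = \int_\Sigma u^*\omega$ as the geometric area $\Area_{g_J}(u)$ of the image computed with the Riemannian metric $g_J$ from \eqref{eqn:gJ}. First I would recall that in holomorphic local coordinates $(s,t)$ on a patch $V \subset \Sigma$, the equation \eqref{eqn:nonlinearCR2} gives $\p_t u = J\,\p_s u$, so that $\p_s u$ and $\p_t u$ are $g_J$-orthogonal vectors of equal length; hence the pointwise area element is $|\p_s u|_{g_J}^2\, ds\, dt = \frac{1}{2}\bigl(|\p_s u|_{g_J}^2 + |\p_t u|_{g_J}^2\bigr)\, ds\, dt$, which is manifestly non-negative. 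In other words $u^*\omega = |\p_s u|_{g_J}^2\, ds \wedge dt$ as a $2$-form on $V$, and integrating over a locally finite atlas of such patches gives $E(u) = \int_\Sigma u^*\omega \ge 0$.

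For the equality case I would argue that $E(u) = 0$ forces the continuous non-negative function $|\p_s u|_{g_J}^2$ to vanish identically on every coordinate patch, hence $\p_s u \equiv 0$ on $\Sigma$, and then by the Cauchy--Riemann relation $\p_t u = J\,\p_s u$ also $\p_t u \equiv 0$, so $du \equiv 0$. A smooth map with vanishing differential is constant on each connected component of its domain, i.e.\ $u$ is locally constant. Conversely, if $u$ is locally constant then $du \equiv 0$, so $u^*\omega = 0$ and $E(u) = 0$.

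I expect essentially no obstacle here: the only point deserving a word of care is that when $\Sigma$ is noncompact the quantity $\int_\Sigma u^*\omega$ should be read as the integral of the non-negative $2$-form $u^*\omega$, taking values in $[0,\infty]$; both the inequality $E(u) \ge 0$ and the equality characterization are unaffected by whether this integral is finite, since they are really statements about the pointwise integrand. Everything else follows immediately from \eqref{eqn:area2} together with \eqref{eqn:gJ}.
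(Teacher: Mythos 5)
Your proof is correct and follows exactly the paper's route: the paper derives \eqref{eqn:area2} via the same pointwise computation ($\p_t u = J\p_s u$ makes $\p_s u,\p_t u$ orthogonal of equal $g_J$-length, so $u^*\omega = |\p_s u|_{g_J}^2\,ds\wedge dt \ge 0$) and then states the proposition as an immediate consequence, with the equality case reading off from tameness that the integrand vanishes only where $du$ does. Nothing is missing.
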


The energy as defined above is especially important in the case where
the domain~$\Sigma$ is a closed surface.  Then $u : \Sigma \to M$
represents a homology class $[u] := u_*[\Sigma] \in H_2(M)$, and the quantity
$E(u)$ is not only nonnegative but also \emph{topological}:
it can be computed via the pairing
$\langle [\omega] , [u] \rangle$, and thus depends only on 
$[u] \in H_2(M)$ and $[\omega] \in H^2_\dR(M)$.
This implies an a priori
energy bound for $J$-holomorphic curves in a fixed homology class, which
we'll make considerable use of in applications.

For the next result, we can drop the assumption that $M$ is a symplectic
manifold, though the proof does make use of a (locally defined)
symplectic structure.  The result can be summarized by saying that for
any reasonable moduli space of $J$-holomorphic curves, the \emph{constant}
curves form an open subset.

\begin{prop}
\label{prop:seqConst}
Suppose $\Sigma$ is a closed surface,
$J_k \in \jJ(M)$ is a sequence of almost complex structures that
converge in $C^\infty$ to $J \in \jJ(M)$, and
$u_k : (\Sigma,j_k) \to (M,J_k)$ is a sequence of non-constant
pseudoholomorphic curves
converging in $C^\infty$ to a pseudoholomorphic curve
$u : (\Sigma,j) \to (M,J)$.  Then $u$ is also not constant.
\end{prop}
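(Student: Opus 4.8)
The plan is to argue by contradiction. Suppose $u$ is locally constant. Since $\Sigma$ is closed it has only finitely many connected components, so by the pigeonhole principle there is one component $\Sigma_0$ on which $u_k|_{\Sigma_0}$ is non-constant for infinitely many~$k$; passing to that subsequence and replacing $\Sigma$ by $\Sigma_0$, we may assume $\Sigma$ is a closed \emph{connected} surface, each $u_k$ is non-constant, and $u \equiv p$ for a single point $p \in M$. (When $\Sigma$ is already connected this reduction is unnecessary.)

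Next I would pass to local coordinates around $p$. By Exercise~\ref{EX:Jstandard} one can choose coordinates identifying a neighborhood of $p$ with a ball about the origin in $\RR^{2n}$ so that $p$ corresponds to~$0$ and $J(0) = i = \JJ\std$. Since the taming condition is open, $J$ is tamed by $\omega\std$ on some smaller closed ball $\overline{B}$ about~$0$. Because $J_k \to J$ in $\Cinftyloc$ and the inequality $\omega\std(v, Jv) > 0$ over the unit sphere bundle of $T\RR^{2n}$ restricted to the compact set $\overline{B}$ is a stable condition, $J_k$ is also tamed by $\omega\std$ on $\overline{B}$ for every sufficiently large~$k$. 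Finally, $u_k \to u \equiv p$ in $C^\infty$ and hence in $C^0$, so for $k$ large the image $u_k(\Sigma)$ lies in the interior of $\overline{B}$.

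Now I would invoke the energy identity from \S\ref{sec:compatible}. Fix $k$ large enough that both of the above hold. Since $J_k$ is tamed by $\omega\std$ on a neighborhood of $u_k(\Sigma)$, formula~\eqref{eqn:area2} gives
\[ \int_\Sigma u_k^*\omega\std = \Area_{g_{J_k}}(u_k) > 0, \]
with strict inequality because $u_k$ is non-constant on the connected surface~$\Sigma$. On the other hand, $\omega\std = d\lambda\std$ is exact and $\Sigma$ is closed, so Stokes' theorem gives
\[ \int_\Sigma u_k^*\omega\std = \int_\Sigma d\bigl(u_k^*\lambda\std\bigr) = 0. \]
This contradiction proves the proposition.

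There is no real obstacle here once the idea is found, but two small points deserve attention. First, no uniform lower bound on $E(u_k)$ is available—the images shrink to a point, so monotonicity-type estimates degenerate—and none is needed: the contradiction comes instead from the observation that a \emph{closed} non-constant curve which is holomorphic for a tame $J$ confined to an \emph{exact} symplectic ball would be forced to carry both positive and zero energy. Second, the openness-of-taming step must be applied uniformly over a compact set, which is why I pass to the closed ball $\overline{B}$ before invoking the $\Cinftyloc$-convergence $J_k \to J$.
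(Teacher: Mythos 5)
Your proof is correct and follows essentially the same route as the paper's: after passing to local coordinates with $J(0)=i$, you observe that $\omega\std$ tames $J_k$ near the origin for large~$k$, so a non-constant $u_k$ would have positive energy by \eqref{eqn:area2}, while the exactness of $\omega\std$ (equivalently, the vanishing of $[u_k] \in H_2$, which is how the paper phrases it) forces that energy to be zero. Your extra care with disconnected domains and with applying openness of taming uniformly over a compact ball are fine refinements but do not change the argument.
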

\begin{proof}
Assume $u$ is constant and its image is $p \in M$.  Choosing coordinates
near~$p$, we can assume without loss of generality that $p$ is the origin
in $\CC^n$ and $u_k$ maps into a neighborhood of the origin, with
almost complex structures $J_k$ on $\CC^n$ converging to~$J$ such that 
$J(0) = i$.  Then for sufficiently large~$k$, the standard symplectic 
form $\omega\std$ tames each $J_k$ in a 
sufficiently small neighborhood of the origin, and
$[u_k] = [u] = 0 \in H_2(M)$, implying $E(u_k) = \langle [\omega\std], [u_k]
\rangle = 0$, thus $u_k$ is also constant.
\end{proof}

The remainder of this section is devoted to proving Theorem~\ref{thm:contractible}.
We will explain two quite different proofs.  In the first, which is due to
Gromov \cite{Gromov}, the spaces $\jJ(E,\omega)$ and $\jJ^\tau(E,\omega)$
must be handled by separate arguments, and the former is easier---it is also 
the space
that is most commonly needed in applications, so we shall explain this part
first.

\begin{proof}[Proof of Theorem~\ref{thm:contractible} for $\jJ(E,\omega)$]
Let $\metrics(E)$ denote the space of smooth bundle metrics on~$E \to M$, also
with the $\Cinftyloc$-topology.  There
is then a natural continuous map
$$
\jJ(E,\omega) \to \metrics(E) : J \mapsto g_J,
$$
where $g_J := \omega(\cdot,J\cdot)$.  We shall construct a continuous
left inverse to this map, i.e.~a continuous map
$$
\Phi : \metrics(E) \to \jJ(E,\omega)
$$
such that $\Phi(g_J) = J$ for every $J \in \jJ(E,\omega)$.  Then since
$\metrics(E)$ is a nonempty convex subset of a vector space and hence
contractible, the identity map $J \mapsto \Phi(g_J)$ can be contracted
to a point by contracting~$\metrics(E)$.

To construct the map~$\Phi$, observe that if $g \in \metrics(E)$ happens
to be of the form $g_J$ for some $J \in \jJ(E,\omega)$, then it is
related to~$J$ by $\omega \equiv g(J\cdot,\cdot)$.  For more
general metrics~$g$, this relation still determines $J$ as a linear
bundle map on~$E$, and the latter will not necessarily be a complex 
structure, but we will see that it is not hard to derive one from it.  
Thus as a first step, define a continuous map 
$$
\metrics(E) \to \Gamma(\End(E)) : g \mapsto A
$$
via the relation
$$
\omega \equiv g(A\cdot,\cdot).
$$
As is easy to check, the skew-symmetry of $\omega$ now implies that the
fiberwise adjoint of~$A$ with respect to the bundle metric~$g$ is
$$
A^* = -A,
$$
so in particular $A$ is a fiberwise \emph{normal} operator, i.e.~it commutes
with its adjoint.  Since $A^*A$ is a positive definite symmetric form
(again with respect to~$g$), it has a well-defined square root, and there
is thus a continuous map $\Gamma(\End(E)) \to \Gamma(\End(E))$ that 
sends~$A$ to
$$
J_g := A \sqrt{A^* A}^{-1}.
$$
Now since $A$ is normal, it also commutes with $\sqrt{A^*A}^{-1}$, and
then $A^*A = -A^2$ implies $J_g^2 = -\1$.
It is similarly straightforward to check that $J_g$ is compatible
with~$\omega$, and $J_g = J$ whenever $g = g_J$, hence the desired map
is $\Phi(g) = J_g$.
\end{proof}

The above implies that $\jJ^\tau(E,\omega)$ is also nonempty, since it
contains $\jJ(E,\omega)$.  Gromov's proof concludes by using certain
abstract topological principles to show that once
$\jJ(E,\omega)$ is known to be contractible, this forces
$\jJ^\tau(E,\omega)$ to be contractible as well.
The abstract principles in question come from homotopy theory---in
particular, one needs to be familiar with the notion of a \emph{Serre
fibration} and the \emph{homotopy exact sequence}
(see e.g.~\cite{Hatcher}*{Theorem~4.41}), which has the following useful
corollary:

\begin{lemma}
\label{lemma:LES}
Suppose $\pi : X \to B$ is a Serre fibration with path-connected base.
Then the fibers $\pi^{-1}(*)$ are weakly contractible if and only if
$\pi$ is a weak homotopy equivalence.  \qed
\end{lemma}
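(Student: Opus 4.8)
The plan is to deduce everything from the long exact sequence of homotopy groups associated to a Serre fibration, which is precisely the content of the cited result \cite{Hatcher}*{Theorem~4.41}. Fix a basepoint $* \in B$; since $B$ is path-connected and $\pi$ is a Serre fibration, all fibers $\pi^{-1}(b)$ are weakly homotopy equivalent, so it is harmless to work with $F := \pi^{-1}(*)$, together with a chosen basepoint in $F$ (hence in $X$). The long exact sequence then reads
\[
\cdots \to \pi_n(F) \to \pi_n(X) \xrightarrow{\pi_*} \pi_n(B) \to \pi_{n-1}(F) \to \cdots \to \pi_1(B) \to \pi_0(F) \to \pi_0(X) \to \pi_0(B),
\]
exact as a sequence of groups for $n \ge 2$, of groups and pointed sets near the bottom, and with $\pi_0(B)$ a single point by hypothesis.

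First I would dispatch the easy direction: if $F$ is weakly contractible, so that $\pi_n(F)$ is trivial for all $n \ge 1$ and $\pi_0(F)$ is a point, then in the exact sequence the two terms flanking $\pi_n(X) \xrightarrow{\pi_*} \pi_n(B)$ vanish, so $\pi_*$ is an isomorphism for every $n \ge 1$; at the bottom, $\pi_0(X) \to \pi_0(B)$ is surjective because $\pi_0(B)$ is a point and injective because $\pi_0(F)$ is a point, so $X$ is path-connected and $\pi$ induces a bijection on $\pi_0$. Hence $\pi$ is a weak homotopy equivalence.

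For the converse, suppose $\pi_* : \pi_n(X) \to \pi_n(B)$ is a bijection for all $n \ge 0$. Fix $n \ge 1$ and examine the segment $\pi_{n+1}(X) \xrightarrow{\pi_*} \pi_{n+1}(B) \xrightarrow{\partial} \pi_n(F) \xrightarrow{\iota_*} \pi_n(X) \xrightarrow{\pi_*} \pi_n(B)$. Since the left-hand $\pi_*$ is surjective, exactness forces $\partial = 0$, hence $\iota_*$ is injective; since the right-hand $\pi_*$ is injective, exactness forces $\operatorname{im}\iota_* = \ker\pi_* = 0$, hence $\iota_*$ is the zero map. An injective zero map has trivial domain, so $\pi_n(F)$ is trivial. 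The same argument at the bottom of the sequence—now phrased for pointed sets, using that $\pi_1(X) \to \pi_1(B)$ is onto and $\pi_0(X) \to \pi_0(B)$ is injective—shows $\pi_0(F)$ is a single point. Thus $F$ is weakly contractible.

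I do not expect any real obstacle here beyond bookkeeping. The only points needing a little care are: (i) basepoint-independence of the statement, which follows from path-connectedness of $B$ together with the homotopy lifting property; and (ii) the treatment of $\pi_0$ and $\pi_1$, where ``exact sequence'' must be read as exactness of pointed sets, so one argues with surjectivity and injectivity rather than naively with kernels. The hypothesis that $B$ is path-connected enters exactly at the point where one needs $\pi_0(B)$ trivial, which is what pins down $\pi_0(X)$ and $\pi_0(F)$.
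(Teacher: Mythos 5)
Your proof is correct and is exactly the argument the paper has in mind: the lemma is stated there as an immediate corollary of the homotopy exact sequence of a Serre fibration (with the proof omitted), and your careful chase through that sequence, including the pointed-set bookkeeping at the $\pi_0$ and $\pi_1$ level, fills in precisely the intended details. Nothing to add.
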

Recall that a map $f : X \to Y$ is said to be a \defin{weak homotopy 
equivalence} whenever the induced maps $f_* : \pi_k(X) \to \pi_k(Y)$ are
isomorphisms for all~$k$, and $X$ is \defin{weakly contractible} if
$\pi_k(X) = 0$ for all~$k$.  Whitehead's theorem \cite{Hatcher}*{Theorem~4.5}
implies that whenever $X$ is a connected smooth manifold, contractibility
and weak contractibility are equivalent.

We will find it convenient at this point to dispense with the
vector bundle $E \to M$
and restrict attention to a single fiber.  Recall that by Exercise~\ref{EX:J},
$\jJ(E)$ can be regarded as the space of smooth sections of a locally
trivial fiber bundle over~$M$.  We claim that the same is true of 
$\jJ^\tau(E,\omega)$:\footnote{The same is also true of $\jJ(E,\omega)$
and can be deduced from Proposition~\ref{prop:convex} or
Corollary~\ref{cor:compatible} below, but this is not
needed for the present discussion.}
indeed, pick a compatible
structure $J_0 \in \jJ(E,\omega)$, whose existence is guaranteed by the
above proof.  Then by Exercise~\ref{EX:Un}, $E \to M$ admits local
trivializations that identify $\omega$ and $J$ simultaneously with the standard
structures $\omega\std$ and~$i$, and in such a trivialization, any
$J \in \jJ^\tau(E,\omega)$ is identified locally with a
smooth map into a fixed open subset of the manifold
$\GL(2n,\RR) / \GL(n,\CC)$; see Exercise~\ref{EX:homogeneous}.
The following standard topological lemma will thus allow us to restrict 
attention to the various spaces of complex structures on the vector space~$\CC^n$.

\begin{lemma}
\label{lemma:contractibleFibers}
Suppose $\pi : E \to M$ is a smooth locally trivial fiber bundle over a 
manifold~$M$, and the fibers are contractible.
Then the space $\Gamma(E)$ of smooth sections is nonempty and contractible
(in the $\Cinftyloc$-topology).
\end{lemma}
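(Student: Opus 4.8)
The plan is to separate the statement into an existence claim and a uniqueness-up-to-homotopy claim, both of which are standard instances of obstruction theory once one knows $\pi_k(F) = 0$ for all $k$ (here $F$ denotes a fiber), and then to strengthen the latter to genuine contractibility by producing a fiberwise contraction of the total space $E$ onto a section. All of the homotopy-theoretic constructions will be carried out for continuous sections first and then transferred to smooth ones via standard approximation theory.

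For nonemptiness, I would fix a CW or handle decomposition of the smooth manifold $M$ and build a continuous section skeleton by skeleton: over the $0$-skeleton one chooses a point in each (necessarily nonempty) fiber, and given a continuous section over the $k$-skeleton, extending over a $(k+1)$-cell — after trivializing $E$ over that contractible closed cell — means extending a map $S^k \to F$ over $D^{k+1}$, whose obstruction lies in $\pi_k(F) = 0$. This yields a continuous section $s_0$, and one then invokes the smoothing theorem that a continuous section of a smooth fiber bundle can be approximated in the weak topology by a smooth section joined to it through sections (see \cite{Hirsch}*{Chapter~2}), giving $\Gamma(E) \neq \emptyset$.

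For contractibility, fix a smooth section $s_0$. The key observation is that $s_0 \colon M \to E$ is a \emph{fiberwise} homotopy equivalence over $M$, since on each fiber it is the inclusion of a point into the contractible space $E_x$. By Dold's theorem on fiber homotopy equivalences (or, alternatively, by the same cell-by-cell obstruction argument as above, using now that the mapping space $\mathrm{Map}(F,F)$ is weakly contractible — precomposition with a homotopy equivalence $F \to \{\ast\}$ identifies it up to homotopy with $\mathrm{Map}(\{\ast\},F) = F$), there is a fiberwise homotopy $R \colon [0,1] \times E \to E$ with $\pi \circ R_t = \pi$, $R_0 = \mathrm{id}_E$, and $R_1 = s_0 \circ \pi$. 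After smoothing $R$, the formula $h(t,s) := R_t \circ s$ defines a continuous map $[0,1] \times \Gamma(E) \to \Gamma(E)$ for the $\Cinftyloc$-topology with $h(0,\cdot) = \mathrm{id}_{\Gamma(E)}$ and $h(1,\cdot) \equiv s_0$, so $\Gamma(E)$ is contractible.

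The homotopy theory above is routine; the part I expect to require the most care is the interface between the continuous and smooth categories. One must appeal to the smoothing and approximation theory of \cite{Hirsch} — both to conclude that $\Gamma(E)$ is nonempty from a continuous section and to know that continuous fiberwise homotopies may be replaced by smooth ones — and be slightly careful that these approximations can be performed in a locally finite way, so that everything is continuous in the $\Cinftyloc$-topology even when $M$ is noncompact. The infinitely many cells in a decomposition of a noncompact $M$ are harmless, since the inductive constructions proceed one dimension at a time.
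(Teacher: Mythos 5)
Your proof is correct, and its overall architecture coincides with the paper's: both arguments reduce the lemma to producing a smooth section $s_0$ together with a smooth fiber-preserving homotopy $R$ between $\mathrm{id}_E$ and $s_0\circ\pi$, construct these continuously by induction over a CW decomposition of $M$, pass to smooth objects by relative approximation, and then contract $\Gamma(E)$ via $s\mapsto R_t\circ s$. The one genuine divergence is how the fiberwise retraction is obtained. You build $s_0$ first and then invoke Dold's theorem on fiber homotopy equivalences (or, equivalently, the weak contractibility of $\mathrm{Map}(F,F)$) to deform $\mathrm{id}_E$ to $s_0\circ\pi$ through fiber-preserving maps; the paper instead constructs $s_0$ and $r$ \emph{simultaneously} in the same skeleton-by-skeleton induction, extending $r$ over each cell $[0,1]\times\DD^k\times F$ directly by means of a contraction of $F$. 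Your route cleanly decouples the two constructions at the cost of a larger black box (Dold's theorem needs $E\to M$ to be a fibration over a nice base, which holds here, and the $\mathrm{Map}(F,F)$ variant requires some care with the compact-open topology since the fibers in the intended applications, such as $\GL(2n,\RR)/\GL(n,\CC)$, are noncompact); the paper's route is self-contained and never leaves the category of extension problems over disks. One point you gloss over that is worth making explicit: the smoothing of $R$ must be performed \emph{relative to} $\{0,1\}\times E$, where $R$ is already smooth, since otherwise $h(0,\cdot)$ would only be close to, rather than equal to, the identity of $\Gamma(E)$. The paper arranges this by regarding $r$ as a section of the pullback bundle $(\pi\circ\pr_2)^*E\to[0,1]\times E$ constrained to agree with a fixed smooth section over the closed subset $\{0,1\}\times E$, and approximating rel that set.
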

\begin{proof}
It suffices to construct a smooth section $s_0 \in \Gamma(E)$ and a smooth
map $r : [0,1] \times E \to E$ such that $r(\tau,\cdot) : E \to E$ is
fiber preserving for all $\tau \in [0,1]$, $r(1,\cdot)$ is the identity
and $r(0,\cdot) = s_0 \circ \pi$.  Note that any such map can also be viewed
as a section of a fiber bundle, namely of 
$(\pi \circ \pr_2)^*E \to [0,1] \times E$,
where $\pr_2 : [0,1] \times E \to E$ denotes the natural projection, and
$r$ is required to match a fixed section over the closed subset
$\{0,1\} \times E$.  Then since continuous sections can always be approximated
by smooth ones \cite{Steenrod}*{\S 6.7}, it suffices to construct a
\emph{continuous} map $r$ with the above properties.

Let us therefore work in the topological category: assume $\pi : E \to M$ is
a topological fiber bundle with contractible fiber $F$, 
and $M$ is a finite-dimensional CW-complex.\footnote{The assumption 
that the CW-complex is finite dimensional is inessential, but lifting
it involves some logical subtleties, and we are anyway most interested in
the case where $M$ is a smooth finite-dimensional manifold.}
There is a standard procedure for constructing sections by induction over
the skeleta of~$M$, see \cite{Steenrod}.  Since $E$ is necessarily trivial
over each cell, it suffices to consider the closed $k$-disk $\DD^k \subset \RR^k$
for each $k \in \NN$ and the trivial bundle $\DD^k \times F \to \DD^k$: the 
key inductive step is then to show that any continuous maps 
$s_0 : \p \DD^k \to F$ and 
$r : [0,1] \times \p \DD^k \times F \to F$ satisfying $r(0,b,p) = s_0(b)$ 
and $r(1,b,p) = p$ for all $(b,p) \in \p\DD^k \times F$ can be
extended with these properties continuously over $\DD^k$ and $[0,1] \times \DD^k
\times F$ respectively.  Let us first extend $s_0$: this is clearly possible
since $\pi_{k-1}(F) = 0$.  We then require any extension of $r$ to satisfy
$r(0,b,p) = s_0(b)$ and $r(1,b,p) = p$ for all $(b,p) \in \DD^k \times F$, thus
the problem is to extend a map defined on
$$
\left( \{0,1\} \times \DD^k \times F \right) \cup \left( [0,1] \times \p\DD^k \times F \right)
= \p\left( [0,1] \times \DD^k \right) \times F
$$
over the interior of $[0,1] \times \DD^k \times F \cong \DD^{k+1} \times F$.  
This can be done using a contraction of~$F$.
\end{proof}

With Lemma~\ref{lemma:contractibleFibers} in hand, the proof of
Theorem~\ref{thm:contractible} will be complete if we can show
that the space $\jJ^\tau(\CC^n,\omega\std)$ of linear complex structures
on $\CC^n$ tamed by the standard symplectic form is contractible.

\begin{proof}[Proof that $\jJ^\tau(\CC^n,\omega\std)$ is contractible (Gromov)]
Let $\Omega(\CC^n)$ denote the space of nondegenerate skew-symmetric bilinear
forms on $\CC^n$, i.e.~linear symplectic structures.  We then
define the topological spaces
\begin{equation*}
\begin{split}
X(\CC^n) &= \{ (\omega,J) \in \Omega(\CC^n) \times \jJ(\CC^n) \ |\ 
J \in \jJ(\CC^n,\omega) \},\\
X^\tau(\CC^n) &= \{ (\omega,J) \in \Omega(\CC^n) \times \jJ(\CC^n) \ |\ 
J \in \jJ^\tau(\CC^n,\omega) \}.
\end{split}
\end{equation*}
Observe that for any fixed $J \in \jJ(\CC^n)$, the set of all $\omega \in \Omega(\CC^n)$
that tame~$J$ is convex, and thus contractible; the same is true for the set
of all $\omega \in \Omega(\CC^n)$ for which $J$ is $\omega$-compatible.
Thus the projection maps $\pr_2 : X(\CC^n) \to \jJ(\CC^n)$ and $\pr_2 : X^\tau(\CC^n) \to
\jJ(\CC^n)$ both have contractible fibers; one can show moreover that both are
Serre fibrations, and both are therefore weak homotopy equivalences by
Lemma~\ref{lemma:LES}.  This implies that the inclusion
$X(\CC^n) \hookrightarrow X^\tau(\CC^n)$ is also a weak homotopy equivalence.
Since the fibers $\jJ(\CC^n,\omega)$ of the projection
$\pr_1 : X(\CC^n) \to \Omega(\CC^n)$ are also contractible, the latter is also
a weak homotopy equivalence, and by commuting diagrams, we see that
$\pr_1 : X^\tau(\CC^n) \to \Omega(\CC^n)$ is therefore a weak homotopy equivalence,
whose fibers $\jJ^\tau(\CC^n,\omega)$ must then be contractible.
\end{proof}

\begin{exercise}
\label{EX:homotEquiv}
Show that for any vector bundle $E$ of even rank, there is a natural weak homotopy
equivalence between the space of complex structures $\jJ(E)$ and the
space of symplectic vector bundle structures $\Omega(E)$
(cf.~Exercise~\ref{EX:sympConvex}).
\end{exercise}

\begin{remark}
\label{remark:Achtung}
Exercise~\ref{EX:homotEquiv} does not immediately imply any correspondence between
the space of symplectic forms on a manifold $M$ and the space of almost
complex structures $\jJ(M)$, as a symplectic vector
bundle structure on $TM \to M$ is in general a nondegenerate $2$-form which
need not be closed.  Such a correspondence does exist however if $M$ is
\emph{open}, by a deep ``flexibility'' result of Gromov, 
see e.g.~\cite{EliashbergMishachev} or \cite{Geiges:hPrinciple}.
\end{remark}

We next give a more direct proof of Theorem~\ref{thm:contractible} 
using a variation on 
an argument due to S\'evennec (cf.~\cite{Audin:almostComplex}*{Corollary~1.1.7}), 
which can be applied somewhat more generally.
The starting point is the observation 
that for any choice of ``reference'' complex structure 
$J_0 \in \jJ(\CC^n)$, the map
\begin{equation}
\label{eqn:Cayley1}
Y \mapsto J_Y := \left( \1 + \frac{1}{2} J_0 Y \right) J_0 
\left( \1 + \frac{1}{2} J_0 Y \right)^{-1}.
\end{equation}
identifies a neighborhood of~$0$ in $\overline{\End}_\CC(\CC^n,J_0)$
smoothly with a neighborhood of $J_0$ in~$\jJ(\CC^n)$, 
and can thus be regarded as the inverse of a local chart on the 
smooth submanifold $\jJ(\CC^n) \subset \End_\RR(\CC^n)$;
cf.~Remark~\ref{remark:Cayley0} and the discussion that precedes it.
In fact, \eqref{eqn:Cayley1} is well defined for all $Y$ in the
open subset of $\overline{\End}_\CC(\CC^n)$ for which 
$\1 + \frac{1}{2} J_0 Y \in \GL(2n,\RR)$,
which turns out to be a large enough domain to cover the entirety of
$\jJ^\tau(\CC^n,\omega_0)$!  In the following statement, we say that a
subset $\uU \subset E$ in a vector bundle $E$ is \defin{fiberwise convex} if 
its intersection with every fiber is convex, and we denote by
$\Gamma(\uU)$ the space of (smooth) sections of~$E$ that are everywhere 
contained in~$\uU$.

\begin{prop}
\label{prop:convex}
Suppose $(E,\omega) \to M$ is a symplectic vector bundle and 
$J_0 \in \jJ^\tau(E,\omega)$.  Then there exists an open and 
fiberwise convex subset $\uU^{\omega,J_0} \subset \overline{\End}_{\CC}(E,J_0)$ such that
$$
\jJ^\tau(E,\omega) = \left\{ J_Y\ |\ Y \in \Gamma\left(\uU^{\omega,J_0}\right) \right\},
$$
where $J_Y$ is defined via \eqref{eqn:Cayley1}.  Moreover, if
$J_0 \in \jJ(E,\omega)$, let $\End_\RR^S(E,\omega,J_0) \subset \End_\RR(E)$
denote the subbundle of linear maps that are symmetric with respect to the
bundle metric $\omega(\cdot,J_0 \cdot)$.  Then
$$
\jJ(E,\omega) = \left\{ J_Y\ |\ Y \in 
\Gamma\left(\uU^{\omega,J_0} \cap \End_\RR^S(E,\omega,J_0)\right) \right\}.
$$
\end{prop}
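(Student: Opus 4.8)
The plan is to carry everything out fiberwise, as linear algebra on a symplectic vector space $(E_p,\omega)$ performed smoothly in $p$ — recall a section of $\overline{\End}_\CC(E,J_0)$ lies in a subset $\uU$ exactly when it does so at every point, so it suffices to produce, in each fiber, an open fiberwise-convex set with the stated properties. Write $P := \tfrac12 J_0 Y$ and $A := \1 + P$, so $J_Y = AJ_0A^{-1}$; since $Y$, hence $P$, anticommutes with $J_0$, one has $AJ_0 = J_0(\1-P)$, and therefore, substituting $v = Aw$,
\[
\omega(v,J_Y v) \;=\; \omega\big((\1+P)w,\; J_0(\1-P)w\big) \;=:\; f_w(Y).
\]
I would then define $\uU^{\omega,J_0}$ fiberwise as $\{\,Y\in\overline{\End}_\CC(E_p,J_0) : f_w(Y)>0 \text{ for all } w\ne 0\,\}$. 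If $f_w(Y)>0$ for all $w\ne 0$, then $A$ is injective, hence invertible, $J_Y$ is defined, and $\omega(v,J_Y v) = f_{A^{-1}v}(Y)>0$; conversely, if $A$ is invertible and $J_Y$ is $\omega$-tame, then $f_w(Y) = \omega(Aw, J_Y Aw)>0$ for $w\ne 0$. Thus $\uU^{\omega,J_0}$ is exactly the set of $Y$ for which $J_Y$ is defined and $\omega$-tame, and in particular $\{J_Y : Y\in\Gamma(\uU^{\omega,J_0})\}\subseteq\jJ^\tau(E,\omega)$.

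For convexity I would expand $f_w(Y) = \omega(w,J_0w) - \omega(w,J_0Pw) + \omega(Pw,J_0w) - \omega(Pw,J_0Pw)$: the first term is a positive constant because $J_0$ is tame; the middle two are linear in $Y$; and since $Pw = \tfrac12 J_0 Yw$, the last term equals $-\tfrac14\,\omega(Yw,J_0Yw)$, which is $\le 0$ and is a negative-semidefinite quadratic form in $Y$ — namely $-\tfrac14\,q(Yw)$ with $q := \omega(\cdot,J_0\cdot)$ positive semidefinite (tameness of $J_0$) and $Y\mapsto Yw$ linear. Hence each $-f_w$ is convex, each $\{f_w>0\}$ is convex, and $\uU^{\omega,J_0}$, being their intersection over $w\ne 0$, is fiberwise convex; it is open since $f_w$ is quadratic-homogeneous in $w$, so membership is the continuous condition $\min_{|w|=1}f_w(Y)>0$, and everything depends smoothly on $p$.

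The substantive step is surjectivity onto $\jJ^\tau(E,\omega)$. The key elementary observation is that $\1 - J_0J$ is invertible whenever $J_0$ and $J$ are both $\omega$-tame: if $(\1-J_0J)v = 0$ then $J_0v = -Jv$, so $\omega(v,J_0v) = -\omega(v,Jv)$ with the left side $\ge 0$ and the right side $\le 0$, forcing $v = 0$. Given $J\in\jJ^\tau(E,\omega)$, I would therefore set $A := 2(\1-J_0J)^{-1}$ and $Y := -2J_0(A-\1)$; using only $J_0^2 = J^2 = -\1$ one checks $AJ_0 = JA$ (so $AJ_0A^{-1} = J$) and $A - J_0AJ_0 = (\1-J_0J)A = 2\1$, the latter being exactly the identity equivalent to $A-\1$ anticommuting with $J_0$, i.e. $Y\in\overline{\End}_\CC(E,J_0)$, and it also gives $\1 + \tfrac12 J_0Y = A$. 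Hence $J = J_Y$, which therefore lies in $\uU^{\omega,J_0}$ since $J$ is tame, with $Y$ smooth in $p$ because inversion is smooth on invertibles. For the ``moreover'', now with $J_0\in\jJ(E,\omega)$, so that $\omega$ is $J_0$-invariant and $g_0 := \omega(\cdot,J_0\cdot)$ is a metric, I would show that for $Y\in\uU^{\omega,J_0}$ the (already tame) $J_Y$ is $\omega$-compatible iff $Y$ is $g_0$-symmetric: by Exercise~\ref{EX:Jinvariant}, compatibility amounts to $\omega(J_Yv,J_Yw) = \omega(v,w)$, and writing $v = Aa$, $w = Ab$ and using $J_Y A = J_0(\1-P)$ together with $J_0$-invariance of $\omega$, this expands to $\omega(a,Pb)+\omega(Pa,b) = 0$ for all $a,b$, i.e. $P$ is $\omega$-skew; since $Y$ anticommutes with $J_0$ while $J_0$ is $g_0$-skew, an adjoint computation then gives $P$ $\omega$-skew $\iff$ $P$ $g_0$-symmetric $\iff$ $Y$ $g_0$-symmetric, i.e. $Y\in\End_\RR^S(E,\omega,J_0)$. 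Combined with surjectivity (the $Y$ built above is $g_0$-symmetric precisely when $J = J_Y$ is compatible), this yields $\jJ(E,\omega) = \{J_Y : Y\in\Gamma(\uU^{\omega,J_0}\cap\End_\RR^S(E,\omega,J_0))\}$.

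I expect the main obstacle to be keeping the bookkeeping honest: getting the sign of the quadratic term in the expansion of $f_w$ right — it is precisely tameness of $J_0$ that forces it to be $\le 0$, hence $-f_w$ convex — and recognizing $A = 2(\1-J_0J)^{-1}$ as the correct inverse of the Cayley-type parametrization. With those two points secured, the compatible case is a routine variant and all smoothness-in-$p$ assertions are formal.
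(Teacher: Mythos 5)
Your proposal is correct and follows essentially the same route as the paper: surjectivity comes from the invertibility of $\1 - J_0J$ for a tame pair (equivalent to the paper's invertibility of $J_0 + J$, since $J_0 + J = J_0(\1 - J_0J)$, with your explicit $Y$ agreeing with the inverse formula of Exercise~\ref{EX:Cayley}), convexity from the fact that $Y \mapsto \omega\bigl((\1+P)w, J_0(\1-P)w\bigr)$ is a concave quadratic function whose leading term is controlled by tameness of $J_0$, and the compatible case from the same symmetric/skew computation. The only differences are cosmetic — you phrase concavity globally rather than along segments and define $\uU^{\omega,J_0}$ by the positivity condition before identifying it with the tame image.
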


The next exercise is a lemma needed for the proof of
Proposition~\ref{prop:convex}.

\begin{exercise}
\label{EX:Cayley}
Show that for any $J_0 \in \jJ(\CC^n)$, the map
\eqref{eqn:Cayley1} defines a bijection
$$
\left\{ Y \in \overline{\End}_\CC(\CC^n,J_0)\ \Big|\ 
\1 + \frac{1}{2} J_0 Y \in \GL(2n,\RR) \right\} \to
\left\{ J \in \jJ(\CC^n)\ |\ J_0 + J \in \GL(2n,\RR) \right\},
$$
with inverse $J \mapsto 2 J_0 ( J + J_0 )^{-1} ( J - J_0 )$.
\textsl{Hint: The identities
$(J \pm J_0) J_0 = J (J_0 \pm J)$ and $J_0 (J \pm J_0) = (J_0 \pm J)J$ hold for any
$J_0, J \in \jJ(\CC^n)$.  For some additional perspective on this exercise,
see Exercise~\ref{EX:CayleyInverse} and Remark~\ref{remark:explainCayley}.}
\end{exercise}

\begin{proof}[Proof of Proposition~\ref{prop:convex}]
Suppose $J_0$ and $J$ are two $\omega$-tame complex structures on some
fiber $E_x \subset E$ for $x \in M$.  Then $J_0 + J$ is invertible: indeed,
for any nontrivial $v \in E_x$ we have
$$
\omega(v , (J_0 + J) v) = \omega(v, J_0 v) + \omega(v, Jv) > 0,
$$
thus $J_0 + J$ has trivial kernel.  It follows by 
Exercise~\ref{EX:Cayley} that $J = J_Y$ for a unique
$Y \in \overline{\End}_\CC(E_x,J_0)$.  Denote by
$\uU_x^{\omega,J_0}$ the set of complex-antilinear 
maps $Y : E_x \to E_x$ that arise in this way.

To show that $\uU_x^{\omega,J_0}$ is convex, observe that the
condition $Y \in \uU_x^{\omega,J_0}$ means
$$
\omega\left( v , \left(\1 + \frac{1}{2} J_0 Y\right) J_0 
\left( \1 + \frac{1}{2} J_0 Y\right)^{-1} v \right) > 0
\quad\text{ for all $v \in E_x \setminus \{0\}$},
$$
which is equivalent to
$$
\omega\left( \left(\1 + \frac{1}{2} J_0 Y\right) v ,
\left(\1 + \frac{1}{2} J_0 Y\right) J_0 v \right) > 0
\quad\text{ for all $v \in E_x \setminus \{0\}$}.
$$
Given $Y_0, Y_1 \in \uU_x^{\omega,J_0}$, let
$Y_t = t Y_1 + (1-t) Y_0$ for $t \in [0,1]$, fix a nontrivial
vector $v \in E_x$ and consider the function
$$
P_v(t) := 
\omega\left( \left(\1 + \frac{1}{2} J_0 Y_t\right) v ,
\left(\1 + \frac{1}{2} J_0 Y_t \right) J_0 v \right) \in \RR.
$$
This function is of the form $P_v(t) = a t^2 + bt + c$, and using the fact 
that $J_0$ anticommutes with both $Y_0$ and $Y_1$, we find that its quadratic
coefficient is
\begin{equation*}
\begin{split}
a &= \omega\left( \frac{1}{2} J_0 (Y_1 - Y_0) v, \frac{1}{2} J_0
(Y_1 - Y_0) J_0 v \right) \\
&= - \omega\left( \frac{1}{2} J_0 (Y_1 - Y_0) v, J_0 \left[ \frac{1}{2} J_0
(Y_1 - Y_0) v \right] \right) \le 0
\end{split}
\end{equation*}
since $J_0$ is tamed by~$\omega$.  This implies that $P_v$ is a concave 
function, and since $P_v(0)$ and $P_v(1)$ are both positive, we conclude
$P_v(t) > 0$ and hence $Y_t \in \uU_x^{\omega,J_0}$ for all $t \in [0,1]$.

Finally, if $J_0$ is $\omega$-compatible, we will show that $J_Y$ is also
compatible if and only if $Y$ satisfies $\langle v,Yw \rangle =
\langle Yv,w \rangle$ for all $v,w \in E_x$, where $\langle v ,w \rangle
:= \omega(v,J_0 w)$.  Recall that by Exercise~\ref{EX:Jinvariant}, an
$\omega$-tame complex structure $J$ is $\omega$-compatible if and only if
$\omega$ is $J$-invariant, i.e.~$\omega(v,w) = \omega(Jv,Jw)$ for all $v,w$.
Plugging in $J=J_Y$ and replacing $v$ and $w$ by 
$\left(\1 + \frac{1}{2} J_0 Y\right) v$
and $\left(\1 + \frac{1}{2} J_0 Y\right) w$ respectively, this condition
is equivalent to
$$
\omega\left( \left(\1 + \frac{1}{2} J_0 Y\right) v, \left(\1 + \frac{1}{2} J_0 Y\right) w \right) =
\omega\left( \left(\1 + \frac{1}{2} J_0 Y\right) J_0 v, \left(\1 + \frac{1}{2} J_0 Y\right) J_0 w \right)
$$
for all $v,w \in E_x$.
Expanding both sides, using the fact that $\omega$ is also $J_0$-invariant and then
cancelling everything that can be cancelled, one derives from this the condition
$$
-\omega(Yv,J_0 w) + \omega(v,J_0 Yw) = 0 \quad\text{ for all $v,w \in E_x$},
$$
which means $- \langle Yv,w \rangle + \langle v,Yw \rangle = 0$.
\end{proof}

As an easy corollary, we have:

\begin{proof}[Alternative proof of Theorem~\ref{thm:contractible} (after S\'evennec)]
\ \\
Using Proposition~\ref{prop:convex}, each of the spaces 
$\jJ^\tau(E,\omega)$ and $\jJ(E,\omega)$ is contractible if it is nonempty,
as it can then be identified via \eqref{eqn:Cayley1} with a convex subset
of a vector space.  Nonemptiness follows from this almost immediately: indeed,
Proposition~\ref{prop:convex} also implies that both $\jJ^\tau(E,\omega)$ and
$\jJ(E,\omega)$ can be regarded as the spaces of sections of certain smooth
fiber bundles with contractible fibers; the fibers are each obviously nonempty
since $i \in \jJ(\CC^n,\omega\std)$.  Existence of sections then follows from
Lemma~\ref{lemma:contractibleFibers}.
\end{proof}

\begin{exercise}
\label{EX:Jextension}
Prove the following generalization of Theorem~\ref{thm:contractible} for
extensions: given a symplectic vector bundle $(E,\omega) \to M$, 
a closed subset
$A \subset M$ and a compatible/tame complex structure $J$ defined
on $E$ over a neighborhood of~$A$, the space of compatible/tame complex
structures on $(E,\omega)$ that match $J$ near~$A$ is nonempty and contractible.
\end{exercise}
\begin{exercise}
\label{EX:JextensionSub}
In the setting of the previous exercise, suppose additionally that we
are given a submanifold $\Sigma \subset M$ and a symplectic subbundle
$F \subset E|_{\Sigma}$.  Show that if $J$ is a
compatible/tame complex structure that is defined on a neighborhood of~$A$
and preserves $F$ over a neighborhood of $\Sigma \cap A$ in $\Sigma$,
and $j$ is a compatible/tame complex structure on $F$ that matches
$J|_{F}$ near $\Sigma \cap A$, then the space of all compatible/tame
complex structures on $E$ that match $J$ near~$A$ and restrict to
$j$ on $F$ is also nonempty and contractible.
\textsl{Hint: It may help to recall Exercise~\ref{EX:subbundles}.}
\end{exercise}

Proposition~\ref{prop:convex} also implies
the following useful description of $\jJ(\CC^n,\omega\std)$,
which we will need in Chapter~\ref{chapter:moduli}:

\begin{cor}
\label{cor:compatible}
The space $\jJ(\CC^n,\omega\std)$ is a smooth submanifold of
$\End_\RR(\CC^n)$, with tangent space at~$i \in \jJ(\CC^n,\omega\std)$ 
given by
$$
T_i \jJ(\CC^n,\omega\std) = \{ Y \in \overline{\End}_\CC(\CC^n) \ |\ 
\text{$Y$ is symmetric} \}.
$$
Moreover, the map $Y \mapsto J_Y$ of \eqref{eqn:Cayley0} identifies
a neighborhood of $0$ in $T_i \jJ(\CC^n,\omega\std)$ smoothly with a
neighborhood of $i$ in $\jJ(\CC^n,\omega\std)$.
\end{cor}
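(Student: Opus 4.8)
The plan is to deduce the corollary directly from Proposition~\ref{prop:convex}, applied to the (trivial, one-point-base) symplectic vector bundle $\CC^n$ with $\omega = \omega\std$ and reference structure $J_0 = i$; note that \eqref{eqn:Cayley0} is exactly \eqref{eqn:Cayley1} with $J_0 = i$, and that over a single vector space ``fiberwise convex'' just means convex.

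First I would set up the ambient chart. By Exercise~\ref{EX:Cayley}, the map \eqref{eqn:Cayley0}, restricted to the open set $\mathcal{D} := \{Y \in \overline{\End}_\CC(\CC^n) : \1 + \frac{1}{2} i Y \in \GL(2n,\RR)\}$, is a bijection onto $\mathcal{O} := \{J \in \jJ(\CC^n) : J + i \in \GL(2n,\RR)\}$ with smooth inverse $J \mapsto 2 i (J+i)^{-1}(J-i)$; since matrix inversion and multiplication are smooth, both directions are smooth, so \eqref{eqn:Cayley0} is a diffeomorphism from the open subset $\mathcal{D}$ of the vector space $\overline{\End}_\CC(\CC^n)$ onto the open subset $\mathcal{O}$ of the smooth submanifold $\jJ(\CC^n) \subset \End_\RR(\CC^n)$ (the latter being Exercise~\ref{EX:homogeneous}), and in particular a smooth embedding into $\End_\RR(\CC^n)$. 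I would also record that $\jJ(\CC^n,\omega\std) \subset \jJ^\tau(\CC^n,\omega\std) \subset \mathcal{O}$: indeed if $J$ is $\omega\std$-tame then $\omega\std(v,(J+i)v) = \omega\std(v,Jv) + |v|^2 > 0$ for $v \ne 0$, so $J + i$ is invertible.

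Next I would invoke Proposition~\ref{prop:convex} with $J_0 = i$: it provides an open convex subset $\uU := \uU^{\omega\std,i} \subset \overline{\End}_\CC(\CC^n)$ (necessarily contained in $\mathcal{D}$, since each $J_Y$ must be defined) with $\jJ(\CC^n,\omega\std) = \{J_Y : Y \in \uU \cap V\}$, where $V := \overline{\End}_\CC(\CC^n) \cap \End_\RR^S(\CC^n,\omega\std,i)$ is a \emph{linear} subspace of $\End_\RR(\CC^n)$. Since $\omega\std(\cdot,i\cdot) = \gEucl$ is the standard Euclidean inner product, $V$ is precisely the space of complex-antilinear endomorphisms of $\CC^n$ that are symmetric. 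Thus $\jJ(\CC^n,\omega\std)$ is the image, under the embedding \eqref{eqn:Cayley0}, of the relatively open convex set $\uU \cap V \subset V$; this exhibits it as a smooth submanifold of $\End_\RR(\CC^n)$ that is carried diffeomorphically onto the open neighborhood $\uU \cap V$ of $0$ in $V$ by the inverse map $J \mapsto 2 i (J+i)^{-1}(J-i)$, which already gives the submanifold claim and the last assertion. For the tangent space at $i$ (the image of $Y = 0$): $\uU \cap V$ is open in the linear space $V$, so its tangent space at $0$ is $V$ itself, and by Exercise~\ref{EX:Cayley0} the differential of \eqref{eqn:Cayley0} at $0$ is the identity on $\overline{\End}_\CC(\CC^n)$; hence $T_i \jJ(\CC^n,\omega\std) = V = \{Y \in \overline{\End}_\CC(\CC^n) : Y \text{ symmetric}\}$.

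I do not expect a genuine obstacle here: the corollary is essentially Proposition~\ref{prop:convex} read over a single fiber. The points that need a little care are (i) verifying that $\jJ(\CC^n,\omega\std)$ lies entirely in the domain $\mathcal{O}$ of the chart, so that the description $\{J_Y : Y \in \uU \cap V\}$ is global rather than merely local near $i$; (ii) noting that $V$ is a genuine linear subspace, so that $T_0(\uU \cap V) = V$; and (iii) identifying $\omega\std(\cdot,i\cdot)$ with $\gEucl$ so that the description of $V$ matches the word ``symmetric'' in the statement. Alternatively, to bypass the global chart entirely, one can simply take a small open ball $\uU' \ni 0$ in $V$ and apply the inverse function theorem together with Exercise~\ref{EX:Cayley0} to conclude directly that \eqref{eqn:Cayley0} maps $\uU'$ diffeomorphically onto a neighborhood of $i$ in $\jJ(\CC^n,\omega\std)$.
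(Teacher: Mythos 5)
Your proposal is correct and is precisely the argument the paper intends: the corollary is stated without a separate proof as an immediate consequence of Proposition~\ref{prop:convex} read over a single fiber with $J_0 = i$, using Exercise~\ref{EX:Cayley} for the global chart, Exercise~\ref{EX:Cayley0} for the derivative at~$0$, and the identification of $\omega\std(\cdot,i\cdot)$ with the Euclidean inner product. Your write-up simply makes that implication explicit, including the (correct) observation that tameness forces $J+i$ to be invertible so the chart covers all of $\jJ(\CC^n,\omega\std)$.
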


\begin{remark}
The above argument can also be used to show that for any collection
$\Omega$ of symplectic structures on a given bundle $E \to M$, the spaces of
complex structures that are simultaneously either
tamed by or compatible with \emph{every} $\omega \in \Omega$
are contractible whenever they are nonempty, see 
\cite{MassotNiederkruegerWendl}*{Appendix~A.1}.  Of course, such spaces
may indeed be empty if $\Omega$ has more than one element.
\end{remark}

As an aside, it is worth mentioning an alternative way 
to understand Proposition~\ref{prop:convex} in terms of the classical
\emph{Cayley transform}; this was the original viewpoint of S\'evennec as
presented in \cite{Audin:almostComplex}.
The Cayley transform on $\CC$ is the linear fractional
transformation
$$
\varphi(z) = \frac{z - i}{z + i},
$$
which maps $\CC \setminus \{ -i \}$ conformally and bijectively 
to $\CC \setminus \{1\}$, sending $\{ \Im z > 0 \}$ to
$\{ |z| < 1 \}$ and $i$ to~$0$.  Its inverse is
$\varphi^{-1}(w) = -i \frac{w+1}{w-1}$.

Notice that if we identify $\CC$ with the subspace of
$\End_\RR(\CC^n)$ consisting of complex multiples of the
identity, then $\varphi$ is the restriction of the map
\begin{equation}
\label{eqn:CayleyMatrix}
\Phi(J) := (J + i)^{-1} (J - i),
\end{equation}
defined for all $J \in \End_\RR(\CC^n)$ such that $J + i \in
\GL(2n,\RR)$, with $i$ now denoting the standard complex
structure on~$\CC^n$.\footnote{Since $\End_\RR(\CC^n)$
is not commutative, 
there are actually two obvious extensions of $\varphi$
to $\End_\RR(\CC^n)$, the other being $\Phi(J) :=
(J - i) (J + i)^{-1}$.  One could carry out this entire
discussion with the alternative choice and prove equivalent
results.}

\begin{exercise}
\label{EX:CayleyInverse}
Show that \eqref{eqn:CayleyMatrix} defines a diffeomorphism
$$
\{ J \in \End_\RR(\CC^n)\ |\ J + i \in \GL(2n,\RR) \} \to
\{ Y \in \End_\RR(\CC^n)\ |\ Y - \1 \in \GL(2n,\RR) \},
$$
with inverse $\Phi^{-1}(Y) = -i (Y + \1) (Y - \1)^{-1}$.
\end{exercise}
\begin{exercise}
Denote the natural inclusion of $\CC \hookrightarrow \End_\RR(\CC^n)$ as
described above by $z \mapsto J_z$.  If $\omega\std$ is the standard
symplectic form on~$\CC^n = \RR^{2n}$, show that
$\omega\std(v, J_z v) > 0$ holds for all nontrivial $v \in \CC^n$
if and only if $z$ lies in the open upper half-plane.
\end{exercise}

With the previous exercise in mind,
the fact that $\varphi$ maps the upper half-plane to the unit disk
in $\CC$ now generalizes as follows.  Let $\|\cdot\|$ denote the
operator norm on $\End_\RR(\CC^n)$ defined via the standard
Euclidean metric $\langle \cdot,\cdot \rangle = \omega\std(\cdot, i\cdot)$.

\begin{lemma}
\label{lemma:unitBall}
Every $J \in \End_\RR(\CC^n)$ that satisfies $\omega\std(v,Jv) > 0$ 
for all nontrivial $v \in \CC^n$ is in the domain of $\Phi$,
and every $Y \in \End_\RR(\CC^n)$ with $\| Y \| < 1$ is in the
domain of $\Phi^{-1}$.  Moreover, a given $J$ in the domain of $\Phi$
satisfies the above condtion with respect to $\omega\std$
if and only if $\| \Phi(J) \| < 1$.
\end{lemma}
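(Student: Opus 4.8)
The plan is to establish the two ``domain'' assertions by elementary kernel computations, and then reduce the equivalence to a single polarization identity. For the first assertion, suppose $\omega\std(v,Jv) > 0$ for all $v \ne 0$; if $(J+i)v = 0$, then $Jv = -iv$, so
\[
\omega\std(v,Jv) = -\omega\std(v,iv) = -\langle v,v\rangle \le 0,
\]
forcing $v = 0$. Hence $J + i$ is injective, so $J + i \in \GL(2n,\RR)$ and $J$ lies in the domain of $\Phi$. For the second assertion, if $\| Y \| < 1$ and $(Y - \1)v = 0$, then $\| v \| = \| Yv \| \le \| Y \|\,\| v \|$, which again forces $v = 0$; thus $Y - \1 \in \GL(2n,\RR)$ and $Y$ lies in the domain of $\Phi^{-1}$.

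For the equivalence, let $J$ be in the domain of $\Phi$ and set $Y := \Phi(J) = (J+i)^{-1}(J-i)$. The key step is the observation that for any $u \in \CC^n$, writing $v := (J+i)u$ so that $Yv = (J-i)u$, one has
\[
\| v \|^2 - \| Yv \|^2 = \| Ju + iu \|^2 - \| Ju - iu \|^2 = 4\,\langle Ju, iu\rangle = 4\,\omega\std(u, Ju),
\]
where the middle equality is polarization and the last follows from the standard compatibility relations among $\omega\std$, the complex structure $i$, and the Euclidean metric $\langle\cdot,\cdot\rangle = \omega\std(\cdot,i\cdot)$ (concretely, $\langle Ju, iu\rangle = \omega\std(Ju, -u) = \omega\std(u, Ju)$). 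Since $J + i$ is invertible, the correspondence $u \leftrightarrow v$ is a linear bijection of $\CC^n$.

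Granting this identity, both implications are short. If $\omega\std(u, Ju) > 0$ for all $u \ne 0$, then $\| Yv \| < \| v \|$ for every nonzero $v$; as $\CC^n$ is finite-dimensional, the maximum of $\| Yv \|$ over the compact unit sphere is attained and hence strictly less than $1$, so $\| Y \| < 1$. Conversely, if $\| Y \| < 1$, then $\| Yv \| \le \| Y \|\,\| v \| < \| v \|$ for all $v \ne 0$, and the identity gives $\omega\std(u, Ju) > 0$ for all $u \ne 0$. The one point demanding care is the chain of sign conventions in the displayed identity — verifying, with the paper's conventions, that $\langle Ju, iu\rangle$ equals $+\omega\std(u,Ju)$ rather than its negative; once that is pinned down, everything else is routine.
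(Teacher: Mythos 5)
Your proof is correct and follows essentially the same route as the paper, which leaves the lemma as an exercise with precisely these hints: check that $J+i$ (resp.\ $Y-\1$) is invertible by a kernel computation, and then establish the identity relating $\omega\std(u,Ju)$ to $\|v\|^2-\|Yv\|^2$ under the linear bijection $u\mapsto v=(J+i)u$. Your version of the identity differs from the paper's only by the change of variable $w=(Y-\1)^{-1}v=\tfrac{i}{2}(J+i)v$ and the resulting harmless factor of~$4$, and your sign bookkeeping $\langle Ju,iu\rangle=\omega\std(u,Ju)$ is consistent with the paper's convention $\langle\cdot,\cdot\rangle=\omega\std(\cdot,i\cdot)$.
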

\begin{exercise}
Show that if $J \in \End_\RR(\CC^n)$ satisfies $\omega\std(v,Jv) > 0$ 
for all $v \ne 0$, then $J + i$ is invertible, thus $J$ is in the
domain of $\Phi$.  It follows via Exercise~\ref{EX:CayleyInverse}
that for $Y := \Phi(J)$, $Y - \1$ is invertible and
$J = -i (Y + \1) (Y - \1)^{-1}$.  Now given $v \in \CC^n$, write
$w = (Y - \1)^{-1} v$ and show that
$\omega\std(v,Jv) = |w|^2 - |Yw|^2$.
Use this to prove Lemma~\ref{lemma:unitBall}.
\end{exercise}

\begin{exercise}
\label{EX:complexCayley}
If $Y = \Phi(J)$, show that $J \in \jJ(\CC^n)$ if and only if
$Y \in \overline{\End}_\CC(\CC^n)$.
\textsl{Hint: Notice that when $\Phi(J) = Y \in \overline{\End}_\CC(\CC^n)$, we have
\begin{equation}
\label{eqn:whenYisAntilinear}
J = \Phi^{-1}(Y) = -i (Y + \1) (Y - \1)^{-1} = (Y - \1) i (Y - \1)^{-1}.
\end{equation}
As in Exercise~\ref{EX:Cayley}, the identities
$(J \pm i) J = -1 \pm iJ = i (i \pm J)$ and
$(J \pm i) i = Ji \mp \1 = J (i \pm J)$ hold if $J \in \jJ(\CC^n)$.}
\end{exercise}

\begin{remark}
\label{remark:explainCayley}
In light of \eqref{eqn:whenYisAntilinear} above, one can now express the
map $Y \mapsto J_Y$ from \eqref{eqn:Cayley0} as the composition of
$\Phi^{-1}$ with the linear isomorphism
$$
\overline{\End}_\CC(\CC^n) \to \overline{\End}_\CC(\CC^n) : Y \mapsto
-\frac{1}{2} i Y.
$$
\end{remark}

Together with our characterization of the compatible case in the proof
of Proposition~\ref{prop:convex}, the results of
Lemma~\ref{lemma:unitBall} and Exercise~\ref{EX:complexCayley} can now
be summarized as follows.

\begin{thm}[S\'evennec]
\label{thm:Sevennec}
The Cayley transform $J \mapsto (J + i)^{-1} (J - i)$ defines
diffeomorphisms
\begin{equation*}
\begin{split}
\jJ^\tau(\CC^n,\omega\std) &\to \{ Y \in \overline{\End}_\CC(\CC^n)\ |\ 
\| Y \| < 1 \},\\
\jJ(\CC^n,\omega\std) &\to \{ Y \in \overline{\End}_\CC(\CC^n)\ |\ 
\text{$\| Y \| < 1$ and $Y$ is symmetric }\}.
\end{split}
\end{equation*}  \qed
\end{thm}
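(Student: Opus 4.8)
The plan is to obtain the theorem by assembling the pieces already established: first one works with the Cayley transform $\Phi(J) := (J+i)^{-1}(J-i)$ as a diffeomorphism between two open subsets of the vector space $\End_\RR(\CC^n)$, and then one cuts this diffeomorphism down to the relevant submanifolds.

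First I would record, via Exercise~\ref{EX:CayleyInverse}, that $\Phi$ is a diffeomorphism from $\{J \in \End_\RR(\CC^n) \mid J + i \in \GL(2n,\RR)\}$ onto $\{Y \in \End_\RR(\CC^n) \mid Y - \1 \in \GL(2n,\RR)\}$, with smooth inverse $Y \mapsto -i(Y+\1)(Y-\1)^{-1}$; these are honest diffeomorphisms of open subsets of $\End_\RR(\CC^n)$ because matrix inversion is smooth on $\GL(2n,\RR)$. Now set $T := \{J \mid \omega\std(v,Jv) > 0 \text{ for all } v \neq 0\}$ and $B := \{Y \mid \|Y\| < 1\}$, both open. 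Lemma~\ref{lemma:unitBall} gives that $T$ lies in the domain of $\Phi$, that $B$ lies in the domain of $\Phi^{-1}$, and that for $J$ in the domain of $\Phi$ one has $J \in T$ if and only if $\|\Phi(J)\| < 1$; hence $\Phi$ restricts to a bijection $T \to B$, and since the restriction of a diffeomorphism to an open subset with open image is again a diffeomorphism, $\Phi \colon T \to B$ is a diffeomorphism.

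For the tame complex structures, note that $\jJ^\tau(\CC^n,\omega\std) = T \cap \jJ(\CC^n)$, and by Exercise~\ref{EX:complexCayley} a point $J$ in the domain of $\Phi$ lies in $\jJ(\CC^n)$ exactly when $\Phi(J) \in \overline{\End}_\CC(\CC^n)$; hence $\Phi$ restricts to a bijection from $\jJ^\tau(\CC^n,\omega\std)$ onto $B \cap \overline{\End}_\CC(\CC^n) = \{Y \in \overline{\End}_\CC(\CC^n) \mid \|Y\| < 1\}$. Since $\jJ(\CC^n) \subset \End_\RR(\CC^n)$ is a smooth submanifold (Exercise~\ref{EX:homogeneous}) and $\overline{\End}_\CC(\CC^n)$ is a linear subspace, this restriction of the diffeomorphism $\Phi\colon T \to B$ is itself a diffeomorphism, giving the first assertion. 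For the compatible structures, by the compatible half of Proposition~\ref{prop:convex} with reference structure $J_0 = i$ (cf.\ also Corollary~\ref{cor:compatible} and Exercise~\ref{EX:Jinvariant}), a tame $J$ is $\omega\std$-compatible precisely when, writing $J = J_Y$ via \eqref{eqn:Cayley1}, the complex-antilinear $Y$ is symmetric with respect to the Euclidean metric $\langle\cdot,\cdot\rangle = \omega\std(\cdot,i\cdot)$. By Remark~\ref{remark:explainCayley} one has $J_Y = \Phi^{-1}(-\tfrac12 iY)$, so this condition says $\Phi(J) = -\tfrac12 iY$, i.e.\ that $2i\,\Phi(J)$ is complex-antilinear and symmetric; since $\Phi(J)$ is already complex-antilinear and $i^{*} = -i$ with $i$ anticommuting with complex-antilinear maps, one checks that $2i\,\Phi(J)$ is symmetric if and only if $\Phi(J)$ is. Thus $\Phi$ restricts to a bijection, hence as before a diffeomorphism, from $\jJ(\CC^n,\omega\std)$ onto $\{Y \in \overline{\End}_\CC(\CC^n) \mid \|Y\| < 1,\ Y \text{ symmetric}\}$.

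I expect the only real friction to be this last bookkeeping step: reconciling the two parametrizations in play — the Cayley transform $\Phi$ and the chart $Y \mapsto J_Y$ of Proposition~\ref{prop:convex} — so that the word \emph{symmetric} matches on both sides, which comes down to verifying that post-composition with $i$ preserves symmetry of a complex-antilinear endomorphism. One could instead try to compute $\Phi(J)^{*}$ directly and show that $\Phi(J)$ is symmetric if and only if $\omega\std$ is $J$-invariant, but the non-commutativity of $J$ and $i$ makes that route fiddlier, so I would rather quote Proposition~\ref{prop:convex}.
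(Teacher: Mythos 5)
Your proposal is correct and follows essentially the same route as the paper, which states the theorem with an immediate \qed precisely because it is intended as a summary of Lemma~\ref{lemma:unitBall}, Exercise~\ref{EX:complexCayley}, and the symmetry characterization from the proof of Proposition~\ref{prop:convex}, reconciled via Remark~\ref{remark:explainCayley}. Your bookkeeping step — that for complex-antilinear $A$ with $i^* = -i$ and $Ai = -iA$, the map $iA$ is symmetric if and only if $A$ is — checks out, so the two parametrizations agree on the meaning of ``symmetric'' as claimed.
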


\begin{remark}
Theorem~\ref{thm:Sevennec} could be stated a bit more generally by
replacing $\omega\std$ and $i$ with different symplectic and complex
structures $\omega$ and $J_0$ respectively, but in this form, it
does require the assumption that $J_0$ be \emph{compatible} with
$\omega$, not just tame.  Our alternative proof of
Theorem~\ref{thm:contractible} had the slight advantage of not 
requiring this extra condition, and this relaxation is important in
certain applications, cf.~\cite{MassotNiederkruegerWendl}*{Appendix~A.1}.
\end{remark}

\section{Linear Cauchy-Riemann type operators}

Many important results about solutions to the nonlinear Cauchy-Riemann
equation can be reduced to statements about solutions of corresponding
linearized equations, thus it is important to understand the linearized
equations first.  Consider a Riemann surface $(\Sigma,j)$ and a
complex vector bundle $(E,J) \to (\Sigma,j)$ of (complex) rank~$n$: this
means that $E \to \Sigma$ is a real vector bundle of rank~$2n$ and $J$ is
a complex structure on the bundle.  We say that the bundle admits
a \defin{holomorphic structure} if $\Sigma$ has an open covering 
$\{ \uU_\alpha \}$ with complex-linear local trivializations 
$E|_{\uU_\alpha} \to \uU_\alpha \times \CC^n$
whose transition maps are holomorphic functions from open subsets of
$\Sigma$ to $\GL(n,\CC)$.

On the space $C^\infty(\Sigma,\CC)$ of 
smooth complex-valued functions, there are natural first-order differential
operators
\begin{equation}
\label{eqn:dbar}
\dbar : f \mapsto df + i \,df \circ j
\end{equation}
and
\begin{equation}
\label{eqn:d}
\p : f \mapsto df - i \,df \circ j.
\end{equation}
We can regard $\dbar$ as a linear map
$C^\infty(\Sigma) \to \Gamma(\overline{\Hom}_\CC(T\Sigma,\CC))$, where the
latter denotes the space of smooth sections of the bundle
$\overline{\Hom}_\CC(T\Sigma,\CC)$ of complex-antilinear
maps $T\Sigma \to \CC$; similarly, $\p$ maps $C^\infty(\Sigma)$ to
$\Gamma(\Hom_\CC(T\Sigma,\CC))$.\footnote{Many authors prefer to write 
the spaces of sections of $\Hom_\CC(T\Sigma,\CC)$ and
$\overline{\Hom}_\CC(T\Sigma,\CC)$ as $\Omega^{1,0}(\Sigma)$ and
$\Omega^{0,1}(\Sigma)$ respectively, calling these sections
``$(1,0)$-forms'' and ``$(0,1)$-forms.''}  
Observe that the holomorphic functions
$f : \Sigma \to \CC$ are precisely those which satisfy $\dbar f \equiv 0$;
the solutions of $\p f \equiv 0$ are called \defin{antiholomorphic}.

If $(E,J) \to (\Sigma,j)$ has a holomorphic structure, one can likewise
define a natural operator on the space of sections $\Gamma(E)$,
$$
\dbar : \Gamma(E) \to \Gamma(\overline{\Hom}_\CC(T\Sigma,E)),
$$
which is defined the same as \eqref{eqn:dbar} on any section written in a
local holomorphic trivialization.  We then call a section $v \in \Gamma(E)$
holomorphic if $\dbar v \equiv 0$, which is equivalent to the condition
that it look holomorphic in all holomorphic local trivializations.

\begin{exercise}
Check that the above definition of $\dbar : \Gamma(E) \to
\Gamma(\overline{\Hom}_\CC(T\Sigma,E))$ doesn't depend on the trivialization
if all transition maps are holomorphic.  You may find 
Exercise~\ref{EX:Leibnitz} helpful.  (Note that the operator
$\p f := df - i \,df \circ j$ is \emph{not} similarly well defined on
a holomorphic bundle---it does depend on the trivialization in general.)
\end{exercise}
\begin{exercise}
\label{EX:Leibnitz}
Show that the $\dbar$-operator on a holomorphic vector bundle satisfies
the following Leibnitz identity: for any $v \in \Gamma(E)$ and
$f \in C^\infty(\Sigma,\CC)$, $\dbar(f v) = (\dbar f) v + f (\dbar v)$.
\end{exercise}

\begin{defn}
A \defin{complex-linear Cauchy-Riemann type operator} on a complex vector bundle
$(E,J) \to (\Sigma,j)$ is a complex-linear map 
$$
D : \Gamma(E) \to \Gamma(\overline{\Hom}_\CC(T\Sigma,E))
$$ 
that satisfies the Leibnitz rule
\begin{equation}
\label{eqn:Leibnitz}
D(f v) = (\dbar f) v + f (D v)
\end{equation}
for all $f \in C^\infty(\Sigma,\CC)$ and
$v \in \Gamma(E)$.
\end{defn}
One can think of this definition as analogous to the simplest modern 
definition of a connection on a vector bundle; in fact it turns out that
every complex Cauchy-Riemann type operator is the complex-linear part of
some connection (see Proposition~\ref{prop:connection} below).
The following is then the Cauchy-Riemann version of the existence
of the Christoffel symbols.

\begin{exercise}
\label{EX:Christoffel}
Fix a complex vector bundle $(E,J) \to (\Sigma,j)$.
\begin{enumerate}
\renewcommand{\labelenumi}{(\alph{enumi})}
\item
Show that if $D$ and $D'$ are two complex-linear Cauchy-Riemann type 
operators on $(E,J)$, then there exists a smooth
complex-linear bundle map $A : E \to \overline{\Hom}_\CC(T\Sigma,E)$
such that $D'v = Dv + A v$ for all $v \in \Gamma(E)$.
\item
Show that in any local trivialization on a subset $\uU \subset \Sigma$,
every complex-linear Cauchy-Riemann type operator $D$ can be written in
the form
$$
D v = \dbar v + A v,
$$
for some smooth map $A : \uU \to \End_\CC(\CC^n)$.
\end{enumerate}
\end{exercise}

\begin{exercise}
\label{EX:nabla}
Show that if $\nabla$ is any complex connection on $E$,\footnote{By ``complex
connection'' we mean that the parallel transport isomorphisms are 
complex-linear.  This is equivalent to the requirement that $\nabla :
\Gamma(E) \to \Gamma(\Hom_\RR(T\Sigma,E))$ be a complex-linear map.} then
$\nabla + J \circ \nabla \circ j$ is a complex-linear Cauchy-Riemann type 
operator.
\end{exercise}

\begin{prop}
\label{prop:connection}
For any Hermitian vector bundle $(E,J) \to (\Sigma,j)$ with a
complex-linear Cauchy-Riemann type operator $D : \Gamma(E) \to
\Gamma(\overline{\Hom}_\CC(T\Sigma,E))$, there exists a unique
Hermitian connection $\nabla$ such that $D = \nabla + J \circ \nabla \circ j$.
\end{prop}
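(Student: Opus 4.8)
Let me plan a proof.

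The plan is to realize the desired $\nabla$ as a correction of a fixed reference connection, the correction being governed by a pointwise linear-algebra isomorphism. First I would fix some Hermitian connection $\nabla_0$ on $(E,J,h)$; one exists by the standard partition-of-unity construction, patching trivial unitary connections in local unitary frames (being metric-compatible and being complex-linear are both affine conditions, hence preserved under convex combinations). By Exercise~\ref{EX:nabla}, $D_0 := \nabla_0 + J\circ\nabla_0\circ j$ is a complex-linear Cauchy--Riemann type operator, so by Exercise~\ref{EX:Christoffel}(a) the difference $A := D - D_0$ is a complex-linear bundle map $E \to \overline{\Hom}_\CC(T\Sigma, E)$.

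Next, observe that any other Hermitian connection differs from $\nabla_0$ by a $1$-form $\beta$ with values in the bundle $\mathfrak{u}(E)$ of skew-Hermitian complex-linear endomorphisms of the fibers, and that $(\nabla_0 + \beta) + J\circ(\nabla_0 + \beta)\circ j = D_0 + \Phi(\beta)$, where I write $\Phi(\beta)(v)(X) := \beta_X v + J\beta_{jX} v$. Thus the whole proposition reduces to the claim that at each point $p$ the real-linear map
\[
\Phi : \Hom_\RR\bigl(T_p\Sigma,\, \mathfrak{u}(E_p)\bigr) \longrightarrow \Hom_\CC\bigl(E_p,\, \overline{\Hom}_\CC(T_p\Sigma, E_p)\bigr)
\]
is an isomorphism. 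Granting this, $\beta := \Phi^{-1}(A)$ is the unique $\mathfrak{u}(E)$-valued $1$-form with $\Phi(\beta) = A$, and it is smooth since $\Phi$ is a smooth bundle isomorphism; then $\nabla := \nabla_0 + \beta$ is a Hermitian connection with $\nabla + J\circ\nabla\circ j = D_0 + A = D$, and uniqueness follows from the injectivity of $\Phi$ applied to the difference of two such connections.

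It then remains to check that $\Phi$ is well defined and bijective. One verifies directly that $\Phi(\beta)(v)$ is complex-linear in $v$ (since $\beta_X$, $\beta_{jX}$ and $J$ are complex-linear) and complex-antilinear in $X$ (using $j^2 = -\1$ on $T\Sigma$ and $J^2 = -\1$ on $E$), so $\Phi$ lands in the stated target. For injectivity, suppose $\Phi(\beta) = 0$, i.e.~$\beta_X = -J\beta_{jX}$ for all $X$; replacing $X$ by $jX$ gives $\beta_{jX} = J\beta_X$, so $J\beta_X = \beta_{jX}$ is again skew-Hermitian. But for any skew-Hermitian complex-linear $B$ the endomorphism $JB$ is \emph{self-adjoint}; being both self-adjoint and skew-adjoint it must vanish, so $\beta_X = 0$ for every $X$. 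Finally, a dimension count closes the argument: with $n := \operatorname{rank}_\CC E$, the domain of $\Phi$ has real dimension $2n^2$, and so does the target, since $\overline{\Hom}_\CC(T_p\Sigma, E_p) \cong E_p$ as complex vector spaces; an injective real-linear map between real vector spaces of equal finite dimension is an isomorphism.

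The main obstacle is this pointwise linear-algebra statement, and within it the only substantive points are the incompatibility of self-adjointness and skew-adjointness that kills the kernel of $\Phi$ and getting the dimension bookkeeping exactly right; everything else is formal manipulation of the Leibniz rule and of the affine structure on the space of Hermitian connections.
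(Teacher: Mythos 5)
Your argument is correct, but it takes a genuinely different route from the one in the text. You work on the affine space of Hermitian connections: fix a reference $\nabla_0$, observe that $D-\bigl(\nabla_0+J\circ\nabla_0\circ j\bigr)$ is a complex-linear zeroth-order term, and reduce everything to the fiberwise claim that $\beta \mapsto \bigl(v \mapsto \beta_{(\cdot)}v + J\beta_{j(\cdot)}v\bigr)$ is an isomorphism from $\Hom_\RR(T_p\Sigma,\mathfrak{u}(E_p))$ onto $\Hom_\CC(E_p,\overline{\Hom}_\CC(T_p\Sigma,E_p))$; the kernel dies because an endomorphism that is simultaneously Hermitian and skew-Hermitian vanishes, and surjectivity follows from the dimension count $2n^2 = 2n^2$. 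The text instead never chooses a reference connection: it splits any candidate $\nabla$ into $\nabla^{1,0}$ and $\nabla^{0,1}$, imposes $\nabla^{0,1}=D$, and uses metric compatibility to \emph{solve} for the remaining piece via the explicit formula $\langle \nabla^{1,0}\xi,\eta\rangle = \dbar\langle\xi,\eta\rangle - \langle\xi,D\eta\rangle$, which yields existence and uniqueness in one stroke. Your version is more elementary in flavor and makes the degrees-of-freedom bookkeeping completely transparent (it is essentially the statement that $\mathfrak{u}(n)\cap i\,\mathfrak{u}(n)=0$ together with equality of dimensions), at the cost of an auxiliary choice and a nonconstructive surjectivity step; the text's version produces a closed formula for $\nabla$ directly from $D$ and the metric, which is the form one typically wants when computing with the resulting connection. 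Both proofs are complete; just make sure, if you write yours up, to include the one-line verification that $\Phi(\beta)(v)$ is indeed antilinear in the $T\Sigma$-variable, since that is what places the image in the correct bundle.
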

\begin{proof}
Denote the Hermitian bundle metric by $\langle\ ,\ \rangle$, and for any
choice of connection $\nabla$, denote
$$
\nabla^{1,0} := \nabla - J \circ \nabla \circ j \quad\text{ and }\quad
\nabla^{0,1} := \nabla + J \circ \nabla \circ j.
$$
Any Hermitian connection satisfies
\begin{equation}
\label{eqn:this2}
d \langle \xi,\eta \rangle = \langle \nabla \xi, \eta \rangle +
\langle \xi , \nabla \eta \rangle,
\end{equation}
for $\xi, \eta \in \Gamma(E)$, where both sides are to be interpreted as
complex-valued $1$-forms.  Then applying $\p = d - i \circ d \circ j$ and
$\dbar = d + i \circ d \circ j$ to the function in \eqref{eqn:this2} leads to 
the two relations
\begin{equation*}
\begin{split}
\p \langle \xi, \eta \rangle &= \langle \nabla^{0,1} \xi, \eta \rangle
+ \langle \xi, \nabla^{1,0} \eta \rangle, \\
\dbar \langle \xi, \eta \rangle &= \langle \nabla^{1,0} \xi, \eta \rangle
+ \langle \xi, \nabla^{0,1} \eta \rangle.
\end{split}
\end{equation*}
Now if we require $\nabla^{0,1} = D$, the rest of $\nabla$ is uniquely
determined by the relation
$$
\langle \nabla^{1,0} \xi, \eta \rangle = \dbar \langle \xi, \eta \rangle -
\langle \xi , D \eta \rangle.
$$
Indeed, taking this as a \emph{definition} of $\nabla^{1,0}$ and writing
$\nabla := \frac{1}{2}(\nabla^{1,0} + D)$, it is straightforward 
to verify that $\nabla$ is now a Hermitian connection.
\end{proof}

Since connections exist in abundance on any vector bundle, there is always
a Cauchy-Riemann type operator, even if $(E,J)$ doesn't come equipped with
a holomorphic structure.  We now have the following analogue of
Theorem~\ref{thm:RiemannSurfaces} for bundles:

\begin{thm}
\label{thm:holomorphicBundles}
For any complex-linear Cauchy-Riemann type operator $D$ on a complex
vector bundle $(E,J)$ over a Riemann surface $(\Sigma,j)$, there is a 
unique holomorphic structure on $(E,J)$ such that the naturally induced
$\dbar$-operator is~$D$.
\end{thm}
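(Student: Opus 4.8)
The statement is local in character, so the plan is to first produce, near every point of $\Sigma$, a complex‑linear trivialization of $E$ in which $D$ takes the form $\dbar$, and then to check that any two such trivializations are related by a holomorphic transition map; the collection of these trivializations will then be the desired holomorphic structure, and its induced $\dbar$‑operator will be $D$ by construction. Uniqueness will be essentially formal. To set up the local problem, fix $p\in\Sigma$, choose a holomorphic coordinate identifying a neighborhood of $p$ with a disk $B_\rho\subset\CC$, and choose a complex‑linear trivialization $E|_{B_\rho}\cong B_\rho\times\CC^n$; by Exercise~\ref{EX:Christoffel} this turns $D$ into $v\mapsto \dbar v + Av$ for a smooth map $A:B_\rho\to\End_\CC(\CC^n)$. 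A local frame of $D$‑holomorphic sections is then the same thing as a smooth map $\Phi:B_\rho\to\GL(n,\CC)$ solving $\dbar\Phi+A\Phi=0$: if such a $\Phi$ exists, then in the new trivialization given by the columns of $\Phi$ one computes, for any $\CC^n$‑valued $\xi$, that $D(\Phi\xi)=(\dbar\Phi)\xi+\Phi\,\dbar\xi+A\Phi\xi=\Phi\,\dbar\xi$, i.e.\ $D$ becomes exactly $\dbar$.

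So the heart of the matter is the local solvability of $\dbar\Phi+A\Phi=0$ with $\Phi(0)=\1$. The plan here is the classical one: write $\Phi=\1+\Psi$, so that the equation becomes $\dbar\Psi=-A-A\Psi$, and invert $\dbar$ using the Cauchy–Green integral operator $T$ on $B_\rho$, which satisfies $\dbar(Tf)=f$ and has operator norm on $C^0(B_\rho)$ (or $C^{0,\alpha}(B_\rho)$) tending to $0$ as $\rho\to 0$. The equation is then equivalent to the fixed‑point equation $\Psi=\mathcal F(\Psi):=-TA-T(A\Psi)$, which is affine with linear part of norm $\le\|T\|\cdot\|A\|_{C^0}<1$ once $\rho$ is small; the contraction mapping principle gives a unique solution $\Psi$, with $\|\Psi\|$ small, hence $\Phi=\1+\Psi\in\GL(n,\CC)$ after shrinking $\rho$. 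Smoothness of $\Phi$ follows from $\dbar\Phi=-A\Phi$ by standard elliptic bootstrapping (or the smoothing properties of $T$), since $A$ is smooth. This is the step I expect to be the main obstacle to write carefully, and it is exactly where complex dimension one is used: $\dbar u = f$ is locally solvable for \emph{arbitrary} $f$ on a Riemann surface, whereas in higher complex dimension one would need the integrability condition $\dbar f=0$, corresponding to the vanishing of a $(0,2)$‑curvature term as in the Koszul–Malgrange theorem.

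With local frames of $D$‑holomorphic sections in hand, it remains to show these fit together into a holomorphic atlas. If $\Phi_1,\Phi_2$ are two such frames over overlapping neighborhoods, let $g=\Phi_1^{-1}\Phi_2$ be the transition map. Since $D$ acts as $\dbar$ in both trivializations, the standard change‑of‑trivialization formula for Cauchy–Riemann type operators gives, for a section with local representatives $\xi_1=g\,\xi_2$, the identity $\dbar\xi_1=g\,\dbar\xi_2$; combined with $\dbar\xi_1=(\dbar g)\xi_2+g\,\dbar\xi_2$ this forces $(\dbar g)\xi_2=0$ for all $\xi_2$, hence $\dbar g=0$, i.e.\ $g$ is holomorphic. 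Thus the $D$‑holomorphic frames form a complex‑linear atlas with holomorphic transition maps, defining a holomorphic structure on $(E,J)$; and since $D$ has the form $\dbar$ in each of these trivializations, the $\dbar$‑operator induced by this holomorphic structure (defined as in \eqref{eqn:dbar} in holomorphic trivializations) is precisely $D$. Finally, for uniqueness: any holomorphic structure on $(E,J)$ is determined by its sheaf of holomorphic sections, namely those looking holomorphic in its trivializations, and a local complex‑linear trivialization belongs to its atlas exactly when its frame sections are holomorphic (again by the change‑of‑trivialization computation above). If two holomorphic structures induce the same $\dbar$‑operator $D$, their holomorphic sections both coincide with the local kernel of $D$, so their maximal atlases agree and the structures are equal.
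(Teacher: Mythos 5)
Your proposal is correct and follows essentially the same route as the paper: the theorem is reduced to the local existence of a frame of $D$-holomorphic sections, obtained by inverting $\dbar$ with the Cauchy--Green operator $T$ on a sufficiently small disk (your contraction mapping for $\Psi\mapsto -TA-T(A\Psi)$ is just the Neumann-series form of the paper's argument that a small perturbation of an operator with bounded right inverse remains surjective, as in Theorem~\ref{thm:linearExistence}), followed by the standard Leibnitz-rule computation showing transition maps between such frames are holomorphic and that the holomorphic structure is determined by $\ker D$. The only cosmetic point is that your fixed point gives $\Phi(0)$ close to $\1$ rather than equal to it, but invertibility is all that is needed (or normalize by right-multiplying with the constant matrix $\Phi(0)^{-1}$).
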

The proof can easily be reduced to the following local existence lemma,
which is a special case of an analytical result that we'll prove in
\S\ref{sec:localSections} (see Theorem~\ref{thm:linearExistence}):
\begin{lemma}
\label{lemma:localExistence}
Suppose $D$ is a complex-linear Cauchy-Riemann type operator on
$(E,J) \to (\Sigma,j)$.  Then for any $z \in \Sigma$ and $v_0 \in E_z$,
there is a neighborhood $\uU \subset\Sigma$ of $z$ and a smooth section
$v \in \Gamma(E|_{\uU})$ such that $D v = 0$ and $v(z) = v_0$.
\end{lemma}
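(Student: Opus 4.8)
The plan is to reduce the lemma to a purely local analytic problem for the standard $\dbar$-operator and then solve that by a contraction-mapping argument built on the Cauchy transform; this is exactly the shape of the argument that gets carried out in full generality in \S\ref{sec:localSections}, so here I only sketch it. First I would localize. Since $j$ is integrable (Theorem~\ref{thm:RiemannSurfaces}), choose holomorphic coordinates identifying a neighborhood of $z$ with a ball $B \subset \CC$ centered at the origin, and by Exercise~\ref{EX:Jtriv} choose a complex-linear trivialization $E|_B \cong B \times \CC^n$. By part (b) of Exercise~\ref{EX:Christoffel}, in this trivialization $D$ takes the form $Dv = \dbar v + Av$ for a smooth map $A : B \to \End_\CC(\CC^n)$, where $\dbar$ now denotes (a fixed nonzero multiple of) the standard Cauchy-Riemann operator on $\CC^n$-valued functions. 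Thus the lemma reduces to: given $v_0 \in \CC^n$, find on some smaller ball $B_\epsilon \subset B$ a smooth $v : B_\epsilon \to \CC^n$ with $\dbar v = -Av$ and $v(0) = v_0$.

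Next I would set up the fixed-point equation. Let $T$ be the Cauchy--Pompeiu transform on $B_\epsilon$, the integral operator with $\dbar(Tf) = f$ for continuous $f$; the analytic facts I need about it --- boundedness on $C^0(B_\epsilon,\CC^n)$ with operator norm tending to $0$ as $\epsilon \to 0$, and the regularity-improving estimate taking $C^{k,\alpha}$ into $C^{k+1,\alpha}$ --- are precisely what is packaged in Theorem~\ref{thm:linearExistence}. On the Banach space $C^0(B_\epsilon,\CC^n)$ define
$$
\Phi(v) := v_0 - T(Av) + [T(Av)](0).
$$
Since the subtracted constant $[T(Av)](0)$ is $\dbar$-closed, any fixed point $v$ of $\Phi$ satisfies $\dbar v = -\dbar(T(Av)) = -Av$ and $v(0) = v_0 - [T(Av)](0) + [T(Av)](0) = v_0$, which is exactly what we want; the extra constant term is inserted solely to enforce the pointwise constraint without disturbing the equation.

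Then I would verify that $\Phi$ is a contraction for $\epsilon$ small: $\|\Phi(v) - \Phi(v')\|_{C^0} \le 2\,\|T\|\,\|A\|_{C^0(B_\epsilon)}\,\|v - v'\|_{C^0}$, and $\|A\|_{C^0(B_\epsilon)}$ stays bounded while $\|T\| \to 0$, so for $\epsilon$ small enough the constant is $< 1$. The Banach fixed-point theorem then produces a (unique) $v \in C^0(B_\epsilon,\CC^n)$ with $Dv = 0$ and $v(z) = v_0$. Smoothness follows by a standard bootstrap: from $v = v_0 - T(Av) + \text{const}$, the fact that $A$ is smooth together with the mapping property $T : C^{k,\alpha} \to C^{k+1,\alpha}$ gives inductively $v \in C^{k,\alpha}$ for every $k$; equivalently, interior elliptic regularity for $\dbar$ applies once $v$ is known to be continuous with $\dbar v = -Av$.

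The main obstacle is not conceptual but analytic: the careful construction of $T$ and the proof of its mapping properties on $C^0$ (or Hölder spaces), and in particular the fact that its norm on $B_\epsilon$ shrinks with $\epsilon$, which is what makes the iteration converge. This is exactly why the author isolates it as the self-contained result of \S\ref{sec:localSections} rather than proving it here. Everything else --- the reduction to the standard $\dbar$-equation and the bookkeeping needed to respect the condition $v(z) = v_0$ --- is routine once that tool is available.
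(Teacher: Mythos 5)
Your argument is correct and is, at bottom, the same as the paper's: the proof of Theorem~\ref{thm:linearExistence} also reduces to $\dbar v + Av = 0$, $v(0)=v_0$ on a small ball, writes down the bounded right inverse $(f,u_0)\mapsto Tf - Tf(0) + u_0$ of $u \mapsto (\dbar u, u(0))$ --- which is precisely your affine map --- and then invokes the openness of ``surjective with bounded right inverse'' (Exercise~\ref{EX:rightInverse}), whose proof is the Neumann series that your contraction mapping iterates. The one substantive difference is the choice of function space. The paper works on $W^{1,q}(B)$ and gets smallness from $\chi_\epsilon A$ as a multiplication operator $W^{1,q}\to L^q$, controlled by $\|A\|_{L^p(B_\epsilon)} \to 0$; you work on $C^0(B_\epsilon)$ and get smallness from $\|T\|_{C^0\to C^0} = O(\epsilon)$. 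Your version is arguably more elementary, but be aware that the analytic inputs you attribute to Theorem~\ref{thm:linearExistence} --- the $C^0$-boundedness of $T$ with norm shrinking in $\epsilon$, and above all the Schauder-type gain $T : C^{k,\alpha}\to C^{k+1,\alpha}$ --- are not proved or even stated anywhere in the text, whose estimates for $T$ are purely $L^p$/Sobolev (Prop.~\ref{prop:rightInverse}). The $C^0\to C^0$ bound is an easy direct computation with the kernel $1/(2\pi z)$, but the H\"older regularity gain is a singular-integral estimate of the same depth as the Calder\'on-Zygmund inequality; so either prove it, or do what your final sentence already suggests and drop the H\"older bootstrap in favor of Corollary~\ref{cor:weakRegularity}, which upgrades your continuous fixed point (an $L^p$ weak solution of $\dbar v = -Av$ with smooth $A$) to a smooth one. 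With that substitution the proof is complete and rests on the same analytic black box as the paper's.
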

\begin{exercise}
Prove Theorem~\ref{thm:holomorphicBundles}, assuming 
Lemma~\ref{lemma:localExistence}.
\end{exercise}

As we'll see in the next section, it's also quite useful to consider 
Cauchy-Riemann type operators
that are only \emph{real}-linear, rather than complex.

\begin{defn}
\label{defn:realCR}
A \defin{real-linear Cauchy-Riemann type operator} on a complex vector bundle
$(E,J) \to (\Sigma,j)$ is a real-linear map $D : \Gamma(E) \to 
\Gamma(\overline{\Hom}_\CC(T\Sigma,E))$ 
such that \eqref{eqn:Leibnitz} is satisfied
for all $f \in C^\infty(\Sigma,\RR)$ and $v \in \Gamma(E)$.
\end{defn}
\begin{remark}
To understand Definition~\ref{defn:realCR}, it is important to note that
when $f$ is a real-valued function on $\Sigma$, the $1$-form
$\dbar f$ is still \emph{complex}-valued, so multiplication of
$\dbar f$ by sections of $E$ involves the complex structure.
\end{remark}
The following is now an addendum to Exercise~\ref{EX:Christoffel}.
\begin{exercise}
\label{EX:ChristoffelReal}
Show that in any local trivialization on a subset $\uU \subset \Sigma$,
every real-linear Cauchy-Riemann type operator $D$ can be written in
the form
$$
D v = \dbar v + A v,
$$
for some smooth map $A : \uU \to \End_\RR(\CC^n)$, where 
$\End_\RR(\CC^n)$ denotes
the space of \emph{real}-linear maps on $\CC^n = \RR^{2n}$.
\end{exercise}

\section{The linearization of $\dbar_J$ and critical points}
\label{sec:linearization}

We shall now see how linear Cauchy-Riemann type operators arise naturally
from the nonlinear Cauchy-Riemann equation.  
Theorem~\ref{thm:holomorphicBundles} will then allow us already to prove
something quite nontrivial: nonconstant $J$-holomorphic curves have 
only isolated critical points!  It turns out that one can reduce this
result to the corresponding statement about zeroes of holomorphic functions, 
a well-known fact from complex analysis.

For the next few paragraphs, we will be doing a very informal version of
``infinite-dimensional differential geometry,'' in which we assume that
various spaces of smooth maps can sensibly be regarded as infinite-dimensional
smooth manifolds and vector bundles.  For now this is purely for motivational
purposes, thus we can avoid worrying about the technical details; when it
comes time later to prove something using these ideas, we'll have to
replace the spaces of smooth maps with Banach spaces, which will have to
contain nonsmooth maps in order to attain completeness.

So, morally speaking, if $(\Sigma,j)$ is a Riemann surface and
$(M,J)$ is an almost complex manifold, then
the space of smooth maps $\bB := C^\infty(\Sigma,M)$ is an infinite-dimensional
smooth manifold, and there is a vector bundle $\eE \to \bB$ whose fiber
$\eE_u$ at $u \in \bB$ is the space of smooth sections,
$$
\eE_u = \Gamma(\overline{\Hom}_\CC(T\Sigma,u^*TM)),
$$
where we pull back $J$ to define a complex bundle structure on $u^*TM \to
\Sigma$.  The tangent vectors at a point $u \in \bB$ are simply vector fields
along $u$, thus
$$
T_u \bB = \Gamma(u^*TM).
$$
Now we define a section $\dbar_J : \bB \to \eE$ by
$$
\dbar_J u = Tu + J \circ Tu \circ j.
$$
This section is called the nonlinear Cauchy-Riemann operator, and its
zeroes are precisely the $J$-holomorphic maps from
$\Sigma$ to~$M$.  Recall now that zero sets of smooth sections on 
bundles generically have a very nice structure---this follows from the
implicit function theorem, of which we'll later use an infinite-dimensional
version.  For motivational purposes only, we state here a finite-dimensional
version with geometric character.  
Recall that any section of a bundle can be regarded as an
embedding of the base into the total space, thus we can always ask whether
two sections are ``transverse'' when they intersect.

\begin{thm}[Finite dimensional implicit function theorem]
Suppose $E \to B$ is a smooth vector bundle of real rank~$k$ over an 
$n$-dimensional manifold and $s : B \to E$ is a smooth section that is
everywhere transverse to the zero section.  Then $s^{-1}(0) \subset B$ is a
smooth submanifold of dimension $n - k$.
\end{thm}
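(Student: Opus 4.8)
The plan is to reduce the statement to the classical regular value theorem by working in local trivializations of $E$. Being a smooth submanifold of $B$ of dimension $n - k$ is a local condition, so it suffices to show that every $p \in s^{-1}(0)$ has an open neighborhood $U \subseteq B$ such that $s^{-1}(0) \cap U$ is a smooth submanifold of $U$ of dimension $n - k$.

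Fix $p \in s^{-1}(0)$ and choose $U$ small enough that $E$ is trivial over it, fixing a trivialization $E|_U \cong U \times \RR^k$. In this trivialization $s$ takes the form $s(x) = (x, f(x))$ for a smooth map $f : U \to \RR^k$ with $f(p) = 0$, and $s^{-1}(0) \cap U = f^{-1}(0)$. The first step is to reformulate the transversality hypothesis in terms of $f$. At the point $s(p)$, which lies on the zero section, there is a canonical splitting $T_{s(p)} E = H \oplus V$ into the ``horizontal'' subspace $H$ tangent to the zero section and the ``vertical'' subspace $V = \ker d\pi(s(p))$; in the trivialization these are $T_p U \oplus \{0\}$ and $\{0\} \oplus \RR^k$, and $ds(p) : T_p U \to T_p U \oplus \RR^k$ is given by $v \mapsto (v, df(p) v)$. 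Transversality of $s$ to the zero section at $p$ says $\im ds(p) + H = T_{s(p)} E$, and plugging in the formula for $ds(p)$ shows this is equivalent to surjectivity of $df(p) : T_p U \to \RR^k$. This equivalence does not depend on the chosen trivialization, since changing it replaces $f$ by its composition with a smooth $\GL(k,\RR)$-valued function, which does not affect surjectivity of the differential at a zero of $f$.

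It remains to invoke the standard fact that the preimage of a regular value of a smooth map is a submanifold of the complementary dimension. Since $df(p)$ is surjective and surjectivity is an open condition, after shrinking $U$ we may assume $df(x)$ is surjective for every $x \in f^{-1}(0)$, so $f^{-1}(0)$ is a smooth submanifold of $U$ of dimension $n - k$; this is exactly $s^{-1}(0) \cap U$. Carrying this out at each $p \in s^{-1}(0)$ finishes the proof. The only real analytic ingredient is the inverse function theorem, via the regular value theorem; everything else is bookkeeping. The one place that calls for a little care is the coordinate-free content of ``transverse'' --- in particular the use of the canonical horizontal--vertical splitting of $TE$ along the zero section --- but this is precisely what makes the local computation transparent, so I expect no genuine obstacle here.
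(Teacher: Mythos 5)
Your proof is correct, and the reduction to the regular value theorem via a local trivialization is the standard argument. Note that the paper itself states this theorem without proof (``for motivational purposes only,'' as a finite-dimensional warm-up for the Banach-space implicit function theorem used later), so there is no proof of record to compare against; your use of the canonical horizontal--vertical splitting of $TE$ along the zero section to unpack transversality is exactly the point the paper alludes to in the surrounding discussion. One tiny simplification: since $s$ is assumed transverse to the zero section \emph{everywhere}, $df(x)$ is automatically surjective at every $x \in f^{-1}(0)\cap U$, so the shrinking-by-openness step is unnecessary (though harmless).
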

The transversality assumption can easily be restated in terms of the
\emph{linearization} of the section $s$ at a zero.  The easiest way to
define this is by choosing a connection $\nabla$ on $E \to B$, as one can
easily show that the linear map $\nabla s : T_p B \to E_p$ is independent
of this choice at any point $p$ where $s(p) = 0$; this follows from the
fact that $TE$ along the zero section has a canonical splitting into 
horizontal and vertical subspaces.  Let us therefore denote
the linearization at $p \in s^{-1}(0)$ by
$$
Ds(p) : T_p B \to E_p.
$$
Then the intersections of $s$ with the zero section are precisely the set
$s^{-1}(0)$, and these intersections are transverse if and only if
$Ds(p)$ is a surjective map for all $p \in s^{-1}(0)$.

In later chapters we will devote considerable effort to finding ways of
showing that the
linearization of $\dbar_J$ at any $u \in \dbar_J^{-1}(0)$ is a 
surjective operator in the appropriate Banach space setting.  With this as
motivation, let us now deduce a formula for the linearization itself.  
It will be slightly easier to do this if we regard $\dbar_J$ as a section
of the larger vector bundle $\widehat{\eE}$ with fibers
$$
\widehat{\eE}_u = \Gamma(\Hom_\RR(T\Sigma,u^*TM)).
$$
To choose a ``connection'' on $\widehat{\eE}$, choose first a
connection $\nabla$ on $M$ and assume that for any smoothly parametrized path
$\tau \mapsto u_\tau \in \bB$ and a section 
$\ell_\tau \in \widehat{\eE}_{u_\tau} = \Gamma(\Hom_\RR(T\Sigma,u_\tau^*TM))$
along the path,
the covariant derivative $\nabla_\tau \ell_\tau \in \widehat{\eE}_{u_\tau}$ 
should take the form
$$
(\nabla_\tau \ell_\tau)X = \nabla_\tau\left( \ell_\tau(X) \right) \in
(u^*TM)_z = T_{u(z)}M
$$
for $z \in\Sigma$, $X \in T_z\Sigma$.  Then $\nabla_\tau \ell_\tau$ doesn't depend on
the choice of $\nabla$ at any value of $\tau$ for which $\ell_\tau=0$.

Now given $u \in \dbar_J^{-1}(0)$, consider a smooth family of maps
$\{ u_\tau \}_{\tau \in (-1,1)}$ with $u_0 = u$, and write
$\p_\tau u_\tau|_{\tau=0} =: \eta \in \Gamma(u^*TM)$.  By definition,
the linearization
$$
D\dbar_J(u) : \Gamma(u^*TM) \to \Gamma(\Hom_\RR(T\Sigma,u^*TM))
$$
will be the unique linear map such that
$$
D\dbar_J(u) \eta = \left.\nabla_\tau \left( \dbar_J u_\tau 
\right)\right|_{\tau=0} =
\left. \nabla_\tau \left[ T u_\tau + J(u_\tau) \circ T u_\tau \circ j \right]
\right|_{\tau=0}.
$$
To simplify this expression, choose holomorphic local coordinates
$s + it$ near the point $z \in \Sigma$ and consider the action of the above
expression on the vector $\p_s$: this gives
$$
\left. \nabla_\tau \left[ \p_s u_\tau + J(u_\tau) \p_t u_\tau \right]
\right|_{\tau=0}.
$$
The expression simplifies further if we assume $\nabla$ is a
\emph{symmetric} connection on~$M$; this is allowed since the end result
will not depend on the choice of connection.  In this case
$\nabla_\tau \p_s u_\tau |_{\tau=0} = \nabla_s \p_\tau u_\tau |_{\tau=0}
= \nabla_s \eta$ and similarly for the derivative by~$t$, thus the above
becomes
$$
\nabla_s \eta + J(u) \nabla_t \eta + (\nabla_\eta J) \p_t u.
$$
Taking the coordinates back out, we're led to the following expression for
the linearization of $\dbar_J$:
\begin{equation}
\label{eqn:linearization}
D\dbar_J(u) \eta = \nabla \eta + J(u) \circ \nabla\eta \circ j +
(\nabla_\eta J) \circ Tu \circ j.
\end{equation}
Though it may seem non-obvious from looking at the formula, it turns
out that the right hand side of
\eqref{eqn:linearization} belongs not only to $\widehat{\eE}_u$ but
also to $\eE_u$, i.e.~it is a complex 
antilinear bundle map $T\Sigma \to u^*TM$.
\begin{exercise}
Verify that if $u \in \dbar_J^{-1}(0)$, then 
for any $\eta \in \Gamma(u^*TM)$, the bundle map 
$T\Sigma \to u^*TM$ defined by the right hand side of
\eqref{eqn:linearization} is complex-antilinear.  \textsl{Hint: Show first
that $\nabla_X J$ always anticommutes with $J$ for any vector $X$.}
\end{exercise}

To move back into the realm of solid mathematics, let us now regard
\eqref{eqn:linearization} as a definition, i.e.~to any smooth $J$-holomorphic
map $u : \Sigma \to M$ we associate the operator
$$
\mathbf{D}_u := D\dbar_J(u),
$$
which is a real-linear map taking sections of $u^*TM$ to sections of
$\overline{\Hom}_\CC(T\Sigma,u^*TM)$.  The following exercise is
straightforward but important.

\begin{exercise}
Show that $\mathbf{D}_u$ is a real-linear Cauchy-Riemann type operator on
$u^*TM$.
\end{exercise}

With this and Theorem~\ref{thm:holomorphicBundles} to work with, it is
already quite easy to prove that $J$-holomorphic curves have isolated
critical points.  The key idea, due to Ivashkovich and Shevchishin
\cite{IvashkovichShevchishin}, is to use the linearized operator $\mathbf{D}_u$
to define a holomorphic structure on $\Hom_\CC(T\Sigma,u^*TM)$ so that
$du$ becomes a holomorphic section.  Observe first that since
$(\Sigma,j)$ is a complex manifold, the bundle $T\Sigma \to \Sigma$ has
a natural holomorphic structure, so one can speak of holomorphic vector
fields on~$\Sigma$.  In general such vector fields will be defined only
locally, but this is sufficient for our purposes.

\begin{exercise}
\label{EX:holX}
A map $\varphi : (\Sigma,j) \to (\Sigma,j)$ is holomorphic if and only 
if it satisfies
the low-dimensional case of the nonlinear Cauchy-Riemann equation,
$\dbar_j\varphi = 0$.  The simplest example of such a map is the 
identity~$\Id : \Sigma \to \Sigma$, 
and the linearization $\mathbf{D}_{\Id}$ gives an operator
$\Gamma(T\Sigma) \to \Gamma(\overline{\Hom}_\CC(T\Sigma,T\Sigma))$.
Show that $\mathbf{D}_{\Id}$ is complex-linear, and in fact it is the natural
Cauchy-Riemann operator determined by the holomorphic structure of~$T\Sigma$.
\textsl{Hint: In holomorphic local coordinates this is almost obvious.}
\end{exercise}

\begin{lemma}
\label{lemma:holomorphicFlow}
Suppose $X$ is a holomorphic vector field on some open subset
$\uU \subset \Sigma$, $\uU' \subset \uU$ is another open subset and
$\epsilon > 0$ a number such that the flow $\varphi^t_X : \uU' \to \Sigma$
is well defined for all $t \in (-\epsilon,\epsilon)$.  Then the maps
$\varphi^t_X$ are holomorphic.
\end{lemma}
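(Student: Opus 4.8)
The plan is to show that a holomorphic vector field is an \emph{infinitesimal automorphism} of the complex structure, i.e.~that $\Lie_X j = 0$, and then to deduce from the standard naturality of the Lie derivative under flows that each $\varphi^t_X$ preserves $j$---which is exactly the assertion that $\varphi^t_X$ is holomorphic.

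First I would prove the infinitesimal statement. Viewing $j$ as a section of $\End(T\uU)$, its Lie derivative along $X$ is the $(1,1)$-tensor $(\Lie_X j)(Y) = [X, jY] - j[X,Y]$, and the claim $\Lie_X j = 0$ is local. Since $(\Sigma,j)$ is a Riemann surface we may work in a holomorphic chart, in which $j$ is the constant standard structure and $X$ is represented by $g(z)\,\p_z$ for a holomorphic function $g$---this being exactly the condition that $X$ be a holomorphic section of $T\uU$. Writing $X = a\,\p_s + b\,\p_t$ in the underlying real coordinates $z = s+it$, a direct computation of $(\Lie_X j)(\p_s) = [X,\p_t] - j[X,\p_s]$ and of $(\Lie_X j)(\p_t) = -[X,\p_s] - j[X,\p_t]$, using the Cauchy-Riemann equations for the components of $X$ (i.e.~holomorphicity of $g$), shows both vanish. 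Hence $\Lie_X j = 0$ on $\uU$.

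Next I would invoke the standard flow identity $\frac{d}{dt}(\varphi^t_X)^* T = (\varphi^t_X)^*(\Lie_X T)$, valid for any tensor field $T$ wherever the flow is defined. For each $t \in (-\epsilon,\epsilon)$ the map $\varphi^t_X$ is a local diffeomorphism from $\uU'$ into $\Sigma$, so $(\varphi^t_X)^* j$ is a well-defined $(1,1)$-tensor on $\uU'$; the identity together with $\Lie_X j = 0$ gives $\frac{d}{dt}(\varphi^t_X)^* j = 0$, and since $(\varphi^0_X)^* j = j$ we conclude $(\varphi^t_X)^* j \equiv j$ for all $t \in (-\epsilon,\epsilon)$. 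Unravelling the definition of the pullback, this says $T\varphi^t_X \circ j = j \circ T\varphi^t_X$ on $\uU'$, i.e.~$\dbar_j\varphi^t_X = 0$, as required.

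The only substantive point is the identity $\Lie_X j = 0$ for holomorphic $X$; everything after it is formal, so I do not expect a genuine obstacle. If one prefers to avoid the Lie-derivative formalism, an equivalent argument runs purely through ODEs: in a holomorphic chart the flow solves the holomorphic equation $\dot z = g(z)$, and differentiating its solution in the initial condition shows that $\p_{\bar z_0} z(t)$ satisfies a linear ODE with vanishing initial value, hence is identically zero, so $\varphi^t_X$ is holomorphic for small $t$; writing $\varphi^t_X$ for general $t$ in the given range as an $N$-fold composition of $\varphi^{t/N}_X$ with $N$ large then finishes it. The only thing requiring a little care in either version is the bookkeeping that the relevant pullbacks (or compositions) stay defined on all of $\uU'$ over the whole interval $(-\epsilon,\epsilon)$, which is immediate from the hypothesis on $\varphi^t_X$.
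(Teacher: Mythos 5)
Your proposal is correct. Your primary argument --- computing $\Lie_X j = 0$ in a holomorphic chart via the Cauchy--Riemann equations for the components of $X$, and then integrating this infinitesimal statement using the identity $\frac{d}{dt}(\varphi^t_X)^*j = (\varphi^t_X)^*(\Lie_X j)$ --- is a genuinely different route from the one taken in the text. The paper's proof is essentially the ``alternative'' you sketch in your last paragraph: working in a holomorphic chart, it applies $\dbar = \p_s + i\p_t$ (in the initial-condition variable) to both sides of the flow equation $\p_\tau \varphi^\tau(z) = X(\varphi^\tau(z))$, exchanges the order of derivatives to obtain the linear ODE $\p_\tau\, \dbar\varphi^\tau(z) = X'(\varphi^\tau(z))\cdot \dbar\varphi^\tau(z)$ for each fixed $z$, and concludes $\dbar\varphi^\tau \equiv 0$ from the zero initial condition. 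Note that in this version your $N$-fold composition step is unnecessary: for each fixed $z \in \uU'$ the path $\tau \mapsto \dbar\varphi^\tau(z)$ solves the linear ODE on the entire interval $(-\epsilon,\epsilon)$, so uniqueness kills it globally, not just for small $t$. Comparing the two approaches: your Lie-derivative argument is coordinate-free once the local computation $\Lie_X j = 0$ is done, generalizes verbatim to higher-dimensional complex manifolds, and makes the ``infinitesimal automorphism'' content of holomorphicity explicit; its only cost is invoking the tensorial flow/Lie-derivative formula (and checking that the pullback $(\varphi^t_X)^*j$ makes sense on $\uU'$, which you do). The paper's computation is more elementary and self-contained --- it uses nothing beyond uniqueness for linear ODEs --- which fits the surrounding exposition, where the lemma is a small step toward Lemma~\ref{lemma:holX}.
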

\begin{proof}
Working in local holomorphic coordinates, this reduces to the following
claim: if $\uU \subset \CC$ is an open subset containing a smaller open
set $\uU' \subset \uU$, $X : \uU \to \CC$ is a
holomorphic function and $\varphi^\tau : \uU' \to \CC$ satisfies
\begin{equation}
\label{eqn:this}
\begin{split}
\p_\tau \varphi^\tau(z) &= X(\varphi^\tau(z)),\\
\varphi^0(z) &= z
\end{split}
\end{equation}
for $\tau \in (-\epsilon,\epsilon)$, then $\varphi^\tau$ is holomorphic
for every~$\tau$.  To see this, apply the operator $\dbar := \p_s + i\p_t$
to both sides of \eqref{eqn:this} and exchange the order of partial
derivatives: this gives
$$
\frac{\p}{\p \tau} \dbar \varphi^\tau(z) = X'(\varphi^\tau(z)) \cdot
\dbar\varphi^\tau(z).
$$
For any fixed $z \in \uU'$, this is a linear differential equation for
the complex-valued path $\tau \mapsto \dbar\varphi^\tau(z)$.
Since it begins at zero, uniqueness of solutions implies that it is
identically zero.
\end{proof}

\begin{lemma}
\label{lemma:holX}
For any holomorphic vector field $X$ defined on an open subset
$\uU \subset \Sigma$, $\mathbf{D}_u \left[ Tu(X) \right] = 0$ on~$\uU$.
\end{lemma}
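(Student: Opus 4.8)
The plan is to exhibit $Tu(X)$ as the infinitesimal variation of a genuine one-parameter family of $J$-holomorphic maps, so that the claimed identity $\mathbf{D}_u[Tu(X)] = 0$ becomes simply the statement that the derivative of $\dbar_J$ along a path contained in $\dbar_J^{-1}(0)$ vanishes. The natural family to use is $u_t := u \circ \varphi^t_X$, the precomposition of $u$ with the flow of $X$; its variation field at $t=0$ is exactly $Tu(X)$, and the point is that this family stays pseudoholomorphic precisely because $X$ is holomorphic.

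Concretely, I would first fix an arbitrary $z \in \uU$ and, by the standard local existence theory for flows, choose a neighborhood $\uU' \subset \uU$ of $z$ and $\epsilon > 0$ so that $\varphi^t_X : \uU' \to \uU$ is defined and smooth for $|t| < \epsilon$, with $\varphi^0_X = \Id$ and $\partial_t\varphi^t_X|_{t=0} = X$. Next I would invoke Lemma~\ref{lemma:holomorphicFlow} to know that each $\varphi^t_X$ is holomorphic, i.e.\ $T\varphi^t_X \circ j = j \circ T\varphi^t_X$. The key (and only) computation is then that $u_t := u|_{\uU'} \circ \varphi^t_X$ is $J$-holomorphic for every such $t$: using the chain rule, holomorphicity of $\varphi^t_X$, and the $J$-holomorphicity $Tu \circ j = J \circ Tu$ of $u$,
\[
Tu_t \circ j = Tu \circ T\varphi^t_X \circ j = Tu \circ j \circ T\varphi^t_X = J \circ Tu \circ T\varphi^t_X = J \circ Tu_t ,
\]
so $\dbar_J u_t \equiv 0$ for $|t| < \epsilon$. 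Differentiating this identically-zero path at $t=0$, and noting $\partial_t u_t|_{t=0} = Tu \circ \partial_t\varphi^t_X|_{t=0} = Tu(X)$ on $\uU'$, the defining property of the linearization gives $\mathbf{D}_u[Tu(X)] = \nabla_t(\dbar_J u_t)|_{t=0} = 0$ on $\uU'$, hence in particular at $z$.

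I do not expect a serious obstacle; the one point needing a little care is the bookkeeping of domains, since $\varphi^t_X$ is only defined on a shrinking neighborhood for small $t$, so the family $u_t$ is local rather than a path in some global space of maps. This is harmless because $\mathbf{D}_u$ is the concrete first-order differential operator written out in \eqref{eqn:linearization}: its value at a point $z$ depends only on the $1$-jet at $z$ of the section it is applied to, so the vanishing $\mathbf{D}_u[Tu(X)](z) = 0$ obtained from the local family is genuine, and since $z \in \uU$ was arbitrary we conclude $\mathbf{D}_u[Tu(X)] = 0$ on all of $\uU$.
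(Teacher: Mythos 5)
Your proposal is correct and is essentially the paper's own argument: shrink the domain so the flow $\varphi^t_X$ exists, use Lemma~\ref{lemma:holomorphicFlow} to see that $u\circ\varphi^t_X$ remains $J$-holomorphic, and differentiate the resulting zero path of $\dbar_J$ at $t=0$. Your extra remarks (the explicit chain-rule verification and the observation that $\mathbf{D}_u$ is a local operator, so the shrinking domain is harmless) only make explicit what the paper leaves implicit.
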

\begin{proof}
By shrinking $\uU$ if necessary, we can assume that the flow
$\varphi_X^t : \uU \to \Sigma$ is well defined for sufficiently small
$|t|$, and by Lemma~\ref{lemma:holomorphicFlow} it is holomorphic,
hence the maps $u \circ \varphi_X^t$ are also $J$-holomorphic.  Then
$\dbar_J(u\circ \varphi_X^t) = 0$ and
$$
\mathbf{D}_u\left[ Tu(X)\right] = \left.\nabla_{t} \left[ \dbar_J (u\circ\varphi_X^t)
\right] \right|_{t=0} = 0.
$$
\end{proof}

The Cauchy-Riemann type operator $\mathbf{D}_u$ is real-linear, but one can easily
define a complex-linear operator by projecting out the antilinear part:
$$
\mathbf{D}_u^\CC = \frac{1}{2}\left( \mathbf{D}_u - J \circ \mathbf{D}_u \circ J\right).
$$
This defines a complex-linear map $\Gamma(u^*TM) \to
\Gamma(\overline{\Hom}_\CC(T\Sigma,u^*TM))$.
\begin{exercise}
\label{EX:DuC}
Show that $\mathbf{D}_u^\CC$ is a complex-linear Cauchy-Riemann type operator.
\end{exercise}
In light of Exercise~\ref{EX:DuC} and Theorem~\ref{thm:holomorphicBundles},
the induced bundle $u^*TM \to \Sigma$ for any smooth $J$-holomorphic curve
$u : \Sigma \to M$ admits a holomorphic structure for which holomorphic
sections satisfy $\mathbf{D}_u^\CC \eta = 0$.  Moreover, Lemma~\ref{lemma:holX}
implies that for any local holomorphic vector field $X$ on~$\Sigma$,
$$
\mathbf{D}_u^\CC\left[ Tu(X)\right] = \frac{1}{2}\mathbf{D}_u\left[Tu(X)\right] -
\frac{1}{2} J \mathbf{D}_u\left[ J \circ Tu(X) \right] =
\frac{1}{2} J \mathbf{D}_u\left[ Tu(jX) \right] = 0,
$$
where we've used the nonlinear Cauchy-Riemann equation for $u$ and the
fact that $jX$ is also holomorphic.  Thus $Tu(X)$ is a holomorphic
section on $u^*TM$ whenever $X$ is holomorphic on~$T\Sigma$.
Put another way, the holomorphic bundle structures on $T\Sigma$ and
$u^*TM$ naturally induce a holomorphic structure on $\Hom_\CC(T\Sigma,u^*TM)$,
and the section $du \in \Gamma(\Hom_\CC(T\Sigma,u^*TM))$ is then holomorphic.
We've proved:

\begin{thm}
\label{thm:IShol}
For any smooth 
$J$-holomorphic map $u : \Sigma \to M$, the complex-linear part of
the linearization $\mathbf{D}_u$ induces on $\Hom_\CC(T\Sigma,u^*TM)$ a holomorphic
structure such that $du$ is a holomorphic section.
\end{thm}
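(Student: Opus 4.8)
The plan is to assemble the statement from three results already in hand: the complex-linear operator $\mathbf{D}_u^\CC$ (Exercise~\ref{EX:DuC}), the correspondence between complex-linear Cauchy-Riemann type operators and holomorphic structures (Theorem~\ref{thm:holomorphicBundles}), and the vanishing identity of Lemma~\ref{lemma:holX}. What is left is mostly organization, plus making precise the passage from holomorphic structures on $T\Sigma$ and $u^*TM$ to one on $\Hom_\CC(T\Sigma,u^*TM)$.

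First I would fix the holomorphic structure. By Exercise~\ref{EX:DuC}, $\mathbf{D}_u^\CC = \frac{1}{2}(\mathbf{D}_u - J \circ \mathbf{D}_u \circ J)$ is a complex-linear Cauchy-Riemann type operator on $u^*TM$, so Theorem~\ref{thm:holomorphicBundles} equips $u^*TM \to \Sigma$ with a unique holomorphic structure whose $\dbar$-operator is $\mathbf{D}_u^\CC$; a local section $\eta$ of $u^*TM$ is then holomorphic exactly when $\mathbf{D}_u^\CC\eta = 0$. Since $(\Sigma,j)$ is a complex manifold, $T\Sigma$ carries its standard holomorphic structure, with local holomorphic frame $\p_s$ in a holomorphic coordinate $s + it$. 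These two structures induce one on $\Hom_\CC(T\Sigma,u^*TM)$, and because the nonlinear Cauchy-Riemann equation makes $du = Tu$ complex-linear, $du$ is genuinely a section of $\Hom_\CC(T\Sigma,u^*TM)$; the goal is to show it is holomorphic for the induced structure.

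The key pointwise check is that $du$ sends local holomorphic vector fields on $\Sigma$ to holomorphic sections of $u^*TM$. Given a holomorphic vector field $X$ on an open set $\uU \subset \Sigma$, the field $jX$ is also holomorphic, so Lemma~\ref{lemma:holX} gives $\mathbf{D}_u[Tu(X)] = 0$ and $\mathbf{D}_u[Tu(jX)] = 0$; using $J \circ Tu = Tu \circ j$ to rewrite $J \cdot Tu(X) = Tu(jX)$, we obtain
$$
\mathbf{D}_u^\CC[Tu(X)] = \frac{1}{2}\mathbf{D}_u[Tu(X)] - \frac{1}{2}J\,\mathbf{D}_u[Tu(jX)] = 0,
$$
so $Tu(X)$ is a holomorphic section of $u^*TM$.

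Finally I would invoke the standard description of the holomorphic structure on a Hom-bundle: a section $\varphi$ of $\Hom_\CC(E,F)$ is holomorphic if and only if it maps local holomorphic sections of $E$ to local holomorphic sections of $F$, this criterion being meaningful because there are enough local holomorphic sections — which for $T\Sigma$ and for $(u^*TM,\mathbf{D}_u^\CC)$ is precisely the content of Lemma~\ref{lemma:localExistence}. Concretely, in a holomorphic coordinate the frame $\p_s$ of $T\Sigma$ together with a local holomorphic frame of $u^*TM$ from Lemma~\ref{lemma:localExistence} represents $du$ by the function describing $Tu(\p_s) = \p_s u$ in that frame, and the identity $\mathbf{D}_u^\CC[Tu(\p_s)] = 0$ says exactly that this representing function solves the $\dbar$-equation. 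The main obstacle is not analytic but bookkeeping: carefully identifying the induced holomorphic structure on the Hom-bundle and verifying that these frame-level $\dbar$-equations line up; everything else is a direct appeal to the cited results.
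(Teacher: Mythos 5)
Your argument is correct and is essentially the paper's own proof: both derive the holomorphic structure on $u^*TM$ from $\mathbf{D}_u^\CC$ via Theorem~\ref{thm:holomorphicBundles}, compute $\mathbf{D}_u^\CC[Tu(X)] = \tfrac{1}{2}\mathbf{D}_u[Tu(X)] - \tfrac{1}{2}J\mathbf{D}_u[Tu(jX)] = 0$ using Lemma~\ref{lemma:holX} together with $J \circ Tu = Tu \circ j$, and conclude that $du$ is holomorphic for the induced structure on $\Hom_\CC(T\Sigma,u^*TM)$. The only difference is that you spell out the characterization of holomorphic sections of the Hom-bundle (via local holomorphic frames supplied by Lemma~\ref{lemma:localExistence}), a bookkeeping step the paper leaves implicit.
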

\begin{cor}
\label{cor:critical}
If $u : \Sigma \to M$ is smooth, $J$-holomorphic and not constant, then
the set $\Crit(u) := \{ z \in \Sigma\ |\ du(z) = 0 \}$ is discrete.
\end{cor}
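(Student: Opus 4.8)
The plan is to deduce the corollary directly from Theorem~\ref{thm:IShol}, which tells us that $du$ is a holomorphic section of the holomorphic vector bundle $\Hom_\CC(T\Sigma,u^*TM) \to \Sigma$, with respect to the holomorphic structure induced by the complex-linear operator $\mathbf{D}_u^\CC$. Since $\Crit(u)$ is precisely the zero set of this section, the statement will follow once we know that a holomorphic section of a holomorphic vector bundle over a connected Riemann surface either vanishes identically or has only isolated zeros, together with the observation that $du \not\equiv 0$ because $u$ is nonconstant. (If $\Sigma$ is disconnected one simply applies the argument to each connected component; in the convention where $J$-holomorphic curves have connected domain this point does not arise.)

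First I would reduce the key fact about holomorphic sections to the classical one-variable statement. Working in a local holomorphic trivialization $\Hom_\CC(T\Sigma,u^*TM)|_\uU \cong \uU \times \CC^n$ over an open disk $\uU \subset \CC$, the section $du$ is represented by a genuinely holomorphic map $f = (f_1,\dots,f_n) : \uU \to \CC^n$, by the very definition of the induced $\dbar$-operator; its zero set is $f^{-1}(0) = \bigcap_j f_j^{-1}(0)$. If not every $f_j$ vanishes identically on $\uU$, pick some $f_{j_0} \not\equiv 0$; then the one-variable identity theorem says $f_{j_0}^{-1}(0)$ is discrete, hence so is $f^{-1}(0) \subset f_{j_0}^{-1}(0)$. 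Thus on each such $\uU$, either $du \equiv 0$ or $du$ has isolated zeros.

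Next I would globalize this dichotomy. Let $Z \subset \Sigma$ be the set of points having a neighborhood on which $du$ vanishes identically; then $Z$ is open by definition, and it is also closed, because on a small disk around any limit point of $Z$ the section $du$ vanishes on a nonempty open subset, so the one-variable identity theorem applied componentwise in a trivialization forces $du$ to vanish on the whole disk. Hence $Z = \emptyset$ or $Z = \Sigma$; the latter would make $u$ locally constant, hence constant, contradicting the hypothesis. Therefore $Z = \emptyset$, which is exactly the statement that every point of $\Sigma$ has a neighborhood in which $\Crit(u) = (du)^{-1}(0)$ is discrete, so $\Crit(u)$ is discrete.

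I do not expect a serious obstacle here: essentially all the content has been pushed into Theorem~\ref{thm:IShol}, and what remains is the standard fact that holomorphic sections over Riemann surfaces have isolated zeros. The only points requiring minor care are the reduction of the vector-valued vanishing statement to a single nonvanishing component (so that the scalar identity theorem applies) and the open–closed argument needed to rule out identical vanishing of $du$ on a connected $\Sigma$.
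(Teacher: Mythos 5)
Your proposal is correct and follows essentially the same route as the paper: Corollary~\ref{cor:critical} is deduced there directly from Theorem~\ref{thm:IShol} by viewing $du$ as a holomorphic section and noting that a nontrivial holomorphic section (locally $du(z) = z^k F(z)$ with $F$ holomorphic and nonvanishing at the origin) has isolated zeros. The details you supply — the reduction to a single nonvanishing component and the open--closed argument ruling out identical vanishing on a connected domain — are exactly the standard facts the paper leaves implicit.
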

Actually we've proved more: using a holomorphic trivialization of the bundle
$\Hom_\CC(T\Sigma,u^*TM)$ near any $z_0 \in \Crit(u)$,
one can choose holomorphic coordinates identifying $z_0$ with $0 \in \CC$
and write $du(z)$ in the trivialization as
$$
du(z) = z^k F(z),
$$
where $k \in \NN$ and $F$ is a nonzero $\CC^n$-valued holomorphic function.
This means that each critical point of $u$ has a well-defined and positive
\emph{order} (the number $k$), as well as a \emph{tangent plane}
(the complex $1$-dimensional subspace spanned by $F(0)$ in the 
trivialization).  We will see this again when we investigate intersections
in \S\ref{sec:intersections}, and it will also prove useful later when
we discuss ``automatic'' transversality.

\begin{remark}
The above results for the critical set of a $J$-holomorphic curve $u$
remain valid if we don't require smoothness but 
only assume $J \in C^1$ and $u \in C^2$:
then $u^*TM$ and $\overline{\Hom}_\CC(T\Sigma,u^*TM)$ are complex vector 
bundles of class $C^1$ and $du$ is a $C^1$-section, but turns out to
be holomorphic with respect to a system of non-smooth trivializations 
which have holomorphic (and therefore smooth!) transition functions.
One can prove this using the weak regularity assumptions in
Theorem~\ref{thm:linearExistence} below; in practice of course, the
regularity results of \S\ref{sec:nonlinear} will usually allow us to
avoid such questions altogether.
\end{remark}

\section{Linear elliptic regularity}
\label{sec:estimates}

Until now we've usually assumed that our $J$-holomorphic
maps $u : \Sigma \to M$ are smooth, but for technical reasons we'll later
want to allow maps with weaker, 
Sobolev-type regularity assumptions.  In the end it all comes to the
same thing, because if $J$ is smooth, then it turns out that all
$J$-holomorphic curves are also smooth.  In the integrable case,
one can choose coordinates in $M$ so that $J = i$ and $J$-holomorphic 
curves are honestly holomorphic, then this
smoothness statement is a well-known corollary of the Cauchy integral formula.
The nonintegrable case requires more work and makes heavy use of the
machinery of elliptic PDE theory.  We will not cover this subject in full
detail---in particular, a few estimates will have to be taken as black
boxes---but we shall give an overview of the regularity
results that we'll need and try to explain why they're true.\footnote{For
a more comprehensive treatment from a slightly different perspective,
Appendix~B of \cite{McDuffSalamon:Jhol} is indispensable.}
As consequences, in this section we will see how to prove smoothness of 
solutions to linear Cauchy-Riemann type equations and also derive an important 
surjectivity property of the $\dbar$-operator, which will later 
help in proving local existence results.  
The discussion necessarily begins 
with the \emph{linear} case, and we will address the nonlinear case in
\S\ref{sec:nonlinearReg}.  It should also be mentioned that
the estimates in this section have more than
just local consequences: they will be crucial later when we
discuss the global Fredholm and compactness theory of $J$-holomorphic curves.

Let us first look at a much simpler differential equation to illustrate 
the idea of elliptic regularity.  Suppose $F : \RR^n \to \RR^n$ is a function
of class $C^k$ and we have a $C^1$ solution to the nonlinear ODE,
\begin{equation}
\label{eqn:ODE}
\dot{x} = F(x).
\end{equation}
Then if $k \ge 1$, the right hand side is clearly of class $C^1$, thus so
is $\dot{x}$, implying that $x$ is actually~$C^2$.  If $k \ge 2$,
we can repeat the argument and find that $x$ is $C^3$ and so on;
in the end we find $x \in C^{k+1}$, i.e.~$x$ is at least one step smoother
than~$F$.  This induction is the simplest example of an ``elliptic
bootstrapping argument''.

The above argument is extremely easy because the left
hand side of \eqref{eqn:ODE} tells us everything we'd ever want to know
about the first derivative of our solution.  The situation for a
first-order PDE is no longer so simple: e.g.~consider the usual 
Cauchy-Riemann operator for functions $\CC \to \CC$,
$$
\dbar = \p_s + i\p_t,
$$
and the associated linear inhomogeneous equation 
$$
\dbar u = f.
$$
Now the left hand side carries \emph{part}, but not \emph{all} of the
information one could want to have about $du$: one can say that $\p_s u +
i \p_t u$ is at least as smooth as $f$, but this doesn't immediately imply
the same statement for each of $\p_s u$ and~$\p_t u$.  What we need is a way 
to estimate $du$ (in some suitable norm) in terms of $u$ and $\dbar u$, and this
turns out to be possible precisely because $\dbar$ is an
\emph{elliptic} operator.  We will not attempt here to define
precisely what ``elliptic'' 
means;\footnote{The definition of a second-order elliptic operator is
treated in most introductory texts on PDE theory, see for example
\cite{Evans}.  A general definition for all orders may be found
in \cite{DouglisNirenberg}.} in practice, a differential operator
is called elliptic if it can be shown to satisfy a fundamental
estimate of the type stated in Theorem~\ref{thm:elliptic} below.

We briefly recall some ideas and notation from the theory of Sobolev spaces
(see e.g.~\cite{Evans} or the appendices of \cite{McDuffSalamon:Jhol}).  
If $k \in \NN$, $p \ge 1$
and $\uU \subset \CC$ is an open subset, then
$W^{0,p}(\uU) := L^p(\uU)$ is the space of (real- or complex-valued)
functions of class $L^p$ on $\uU$, and inductively $W^{k,p}(\uU)$ denotes the
space of functions in $L^p(\uU)$ that have weak derivatives
in $W^{k-1,p}(\uU)$.  Recall that a locally integrable
function $u \in L^1_\loc(\uU)$
is said to have \defin{weak derivative} $\p_s u = g \in L^1_\loc(\uU)$ if
for every smooth compactly supported function 
$\varphi \in C_0^\infty(\uU)$,
$$
\int_\uU \varphi g = - \int_\uU (\p_s\varphi) u.
$$
In other words, $g = \p_s u$ \emph{in the sense of distributions}.
One defines $\p_t u$ and higher order weak derivatives via similar
formulas based on integration by parts, and in this way one can also speak
of \defin{weak solutions} to equations such as $\dbar u = f$, where in general
$u$ and $f$ need not be more regular than~$L^1_\loc$ (or even more
generally, distributions).  We say that a function on $\uU$ is of class
$W^{k,p}_\loc$ if it is in $W^{k,p}(\uU')$ for every open subset 
$\uU'$ with compact closure $\overline{\uU}' \subset \uU$.
In the following, we will consider
Sobolev spaces of maps valued in complex vector spaces such as
$\CC^n$; we'll specify the target space by writing e.g.~$W^{k,p}(\uU,\CC^n)$ 
whenever there is danger of confusion.  The symbols $B$ and $B_r$ will as usual
denote the open balls 
in $\CC$ of radius $1$ and $r$ respectively.  

We will often make use of
the \defin{Sobolev embedding theorem}, which in the present context implies
that if $\uU \subset \CC$ is a bounded open domain with smooth boundary
and $kp > 2$, then there are natural continuous inclusions
$$
W^{k+d,p}(\uU) \hookrightarrow C^d(\uU)
$$
for each integer $d \ge 0$.  In fact, these inclusions are \emph{compact} linear
operators (cf.~Exercise~\ref{EX:Massot} below), as are the obvious inclusions
$$
W^{k,p}(\uU) \hookrightarrow W^{k-1,p}(\uU).
$$
Additionally, $W^{k,p}(\uU)$ has two related properties when $k \ge 1$ and
$p > 2$ that will be especially useful: first, it is a 
\defin{Banach algebra}, meaning that products of functions in 
$W^{k,p}(\uU,\CC)$ are also in $W^{k,p}(\uU,\CC)$ and satisfy
\begin{equation}
\label{eqn:BanachAlgebra0}
\| u v \|_{W^{k,p}} \le \| u \|_{W^{k,p}} \| v \|_{W^{k,p}}.
\end{equation}
Secondly, if $\Omega \subset \CC^n$ is an open subset and we denote by
$W^{k,p}(\uU,\Omega)$ the (open) set of functions $u \in W^{k,p}(\uU,\CC^n)$
such that $u(\uU) \subset \Omega$,
then the pairing $(f,u) \mapsto f \circ u$ defines a continuous map
\begin{equation}
\label{eqn:CkPairing}
C^k(\Omega,\RR^N) \times W^{k,p}(\uU,\Omega) \to W^{k,p}(\uU,\RR^N) :
(f,u) \mapsto f \circ u.
\end{equation}

\begin{exercise}
\label{EX:Massot}
Use H\"older's inequality to prove the following simple case of the
Sobolev embedding theorem: for every $p > 1$, 
there exists a constant $C > 0$ such that
for all smooth functions $f : (0,1) \to \RR$ with compact support,
$$
\| f \|_{C^{0,\alpha}} \le C \| f \|_{W^{1,p}},
$$
where $\alpha := 1 - 1/p$, and the two norms are defined by
$$
\| f \|_{C^{0,\alpha}} := \sup_{t \in (0,1)} | f(t) | + 
\sup_{s,t \in (0,1),\ s \ne t} \frac{|f(s) - f(t)|}{|s - t|^\alpha},
$$
and
$$
\| f \|_{W^{1,p}} := \left( \int_0^1 |f(t)|^p \, dt \right)^{1/p} +
\left( \int_0^1 |f'(t)|^p \, dt \right)^{1/p}.
$$
Conclude via the Arzel\`{a}-Ascoli theorem that any sequence
$f_k \in C_0^\infty((0,1))$ that is bounded in $W^{1,p}$ has a
$C^0$-convergent subsequence.
\end{exercise}

The following is the basic analytical result we will need.

\begin{thm}[Calder\'{o}n-Zygmund inequality]
\label{thm:elliptic}
For each $p \in (1,\infty)$, there is a constant $c > 0$ such that for every 
$u \in C_0^\infty(B,\CC^n)$,
$$
\| u \|_{W^{1,p}} \le c \| \dbar u \|_{L^{p}}.
$$
\end{thm}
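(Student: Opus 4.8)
The plan is to reduce the estimate to the $L^p$-boundedness of the Beurling--Ahlfors transform, a Calder\'on--Zygmund singular integral operator, which I will invoke as a black box in the spirit of this section. Since the $W^{1,p}$-norm is computed componentwise, it suffices to treat scalar functions $u \in C_0^\infty(B,\CC)$; writing $z = s + it$, set $f := \dbar u = \p_s u + i\,\p_t u$ and extend $f$ by zero to all of~$\CC$, which is legitimate because $u$ has compact support.

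First I would record the integral representation: a suitable constant multiple of $1/z$ is a fundamental solution of $\dbar$ on $\CC$-valued functions (this is essentially the Cauchy integral formula, giving $\dbar\bigl(\tfrac{1}{2\pi z}\bigr) = \delta_0$ in the sense of distributions), so convolution with this kernel inverts $\dbar$ on compactly supported functions:
\begin{equation*}
u(z) = \frac{1}{2\pi}\int_{\CC} \frac{f(w)}{z - w}\,dA(w) = \frac{1}{2\pi}\int_B \frac{f(w)}{z - w}\,dA(w),
\end{equation*}
and in particular $u$ vanishes outside~$B$. For the $L^p$-bound on $u$ itself, note that for $z \in B$ only values $w$ with $|z - w| < 2$ contribute, so $|u(z)| \le (g * |f|)(z)$ with $g := \frac{1}{2\pi|z|}\1_{B_2}$; since $1/|z|$ is locally integrable in two real dimensions we have $g \in L^1(\CC)$, and Young's convolution inequality gives $\| u \|_{L^p(B)} \le \| g \|_{L^1}\,\| f \|_{L^p(B)}$.

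For the derivatives I would differentiate the representation formula. Carrying out $\p_z$ under the integral, with due care for the kernel's singularity, one obtains $\p_s u$ and $\p_t u$ in the form $a f + b\,(Tf)$ with fixed constants $a, b \in \CC$, where $T$ is the Beurling--Ahlfors transform, i.e.\ the principal-value singular integral operator whose kernel is a constant multiple of $1/z^2$: this kernel is homogeneous of degree $-2$ with vanishing mean over circles, equivalently its Fourier symbol is $\bar\zeta/\zeta$ with $\zeta = \xi_1 + i\xi_2$ and hence bounded, so $T$ is a Calder\'on--Zygmund operator and is bounded on $L^p(\CC)$ for every $p \in (1,\infty)$. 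This yields $\| \nabla u \|_{L^p(B)} \le C_p\,\| f \|_{L^p(B)}$, and combining with the bound on $\| u \|_{L^p}$ gives $\| u \|_{W^{1,p}} \le c\,\| \dbar u \|_{L^p}$. The genuinely hard step is the $L^p$-boundedness of~$T$: it rests on the Calder\'on--Zygmund decomposition together with Marcinkiewicz interpolation (or, equivalently, the Mihlin--H\"ormander multiplier theorem), and is exactly the kind of black box announced at the start of the section, whereas the integral representation, Young's inequality, and the linear algebra relating $\p_s, \p_t$ to $\p_z, \dbar$ are all elementary.
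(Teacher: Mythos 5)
Your proposal is correct and follows essentially the same route as the paper: represent $u$ as the convolution of $\dbar u$ with the fundamental solution $\tfrac{1}{2\pi z}$, bound the zeroth-order term by an elementary convolution estimate (the paper uses the Poincar\'e inequality instead of Young, a cosmetic difference), and reduce the derivative bound to the $L^p$-boundedness of the principal-value operator with kernel proportional to $1/z^2$ --- the Beurling--Ahlfors transform, which the paper calls $\Pi = \p T$ and likewise takes as a black box (its Lemma on the $L^p$-estimates for $T$ and $\Pi$, with references to Vekua and McDuff--Salamon). The only content in the paper not reflected in your write-up is the elementary Stokes-theorem proof of the $p=2$ case, which is optional.
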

\begin{exercise}
\label{EX:CZhigher}
Assuming the theorem above, differentiate the equation $\dbar u = f$ and 
argue by induction to prove the following generalization: for each
$k \in \NN$ and $p \in (1,\infty)$ there is a constant $c > 0$ such that
$$
\| u \|_{W^{k,p}} \le c \| \dbar u \|_{W^{k-1,p}}
$$
for all $u \in C^\infty_\loc(B)$.  By a density argument, show that this
also holds for all $u \in W^{k,p}_0(B)$, where the latter denotes the
closure of $C^\infty_0(B)$ in $W^{k,p}(B)$.
\end{exercise}

Theorem~\ref{thm:elliptic} follows from 
estimates of certain singular integrals carried out in
\cites{CalderonZygmund:52,CalderonZygmund:56} (see also the discussion in
Appendix~B of \cite{McDuffSalamon:Jhol}, which expresses it in terms of the
Laplace operator).  More general versions for
elliptic systems of all orders appear in \cite{DouglisNirenberg}, and
versions with boundary conditions are treated in
\cites{AgmonDouglisNirenberg:I,AgmonDouglisNirenberg:II}.
Before launching into a cursory discussion of the proof,
let us see how this estimate
can be used to prove a basic local regularity result for
the linear inhomogeneous Cauchy-Riemann equation.  We will later improve
this result to apply to weak solutions of class~$L^1_\loc$ (see
Theorem~\ref{thm:inhomWeak} below). 

\begin{prop}
\label{prop:inhomogeneousRegularity}
Suppose $u \in W^{1,p}(B,\CC^n)$ and $\dbar u \in W^{k,p}(B,\CC^n)$
for some $p \in (1,\infty)$.  
Then $u \in W^{k+1,p}(B_r,\CC^n)$ for any $r < 1$,
and there is a constant $c$, depending on $r$ and $p$ 
but not on~$u$, such that
\begin{equation}
\label{eqn:FredholmEstimate}
\| u \|_{W^{k+1,p}(B_r)} \le c \| u \|_{W^{k,p}(B)} + 
c \| \dbar u \|_{W^{k,p}(B)}.
\end{equation}
\end{prop}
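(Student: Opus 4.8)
The plan is to reduce everything to the a priori estimate of Theorem~\ref{thm:elliptic} (and its iterated form from Exercise~\ref{EX:CZhigher}) by multiplying with cutoff functions, and then to climb up in regularity one derivative at a time. Concretely, I would prove by induction on $k \ge 0$ the slightly stronger ``nested ball'' statement: \emph{for all radii $0 < r < R \le 1$, if $v \in W^{1,p}(B_R)$ and $\dbar v \in W^{k,p}(B_R)$, then $v \in W^{k+1,p}(B_r)$ and $\|v\|_{W^{k+1,p}(B_r)} \le c(r,R,p,k)\bigl( \|v\|_{L^p(B_R)} + \|\dbar v\|_{W^{k,p}(B_R)} \bigr)$.} The Proposition then follows by taking $R = 1$ and bounding $\|u\|_{L^p(B)} \le \|u\|_{W^{k,p}(B)}$ (the right-hand side being read as $+\infty$ in the degenerate case $u \notin W^{k,p}(B)$, which is why the regularity assertion is the substantive part).

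For the base case $k=0$, pick $\beta \in C_0^\infty(B_R)$ with $\beta \equiv 1$ on $\overline{B_r}$. Since $v \in W^{1,p}(B_R)$, the product $\beta v$, extended by zero, lies in the closure of $C_0^\infty(B)$ in $W^{1,p}(B)$, so Theorem~\ref{thm:elliptic} (valid there by density) gives $\|v\|_{W^{1,p}(B_r)} \le \|\beta v\|_{W^{1,p}(B)} \le c\,\|\dbar(\beta v)\|_{L^p(B)}$. Expanding $\dbar(\beta v) = (\dbar\beta) v + \beta\,\dbar v$ and estimating the two terms separately yields the $k=0$ case with $\|v\|_{L^p(B_R)}$ and $\|\dbar v\|_{L^p(B_R)}$ on the right.

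For the inductive step, assume the statement at level $k-1$ and suppose $\dbar u \in W^{k,p}(B_R)$. Since $W^{k,p}\subset W^{k-1,p}$, the inductive hypothesis already gives $u \in W^{k,p}(B_\rho)$ for every $\rho < R$ with the desired bound, so the only remaining task is to extract one more derivative. This is the one genuinely delicate point: we may not simply differentiate the equation and invoke Exercise~\ref{EX:CZhigher}, since $u$ is not yet known to have $k+1$ weak derivatives. Instead I would use difference quotients: for $j \in \{1,2\}$ (the coordinate directions $s,t$ on $\CC = \RR^2$) and small $|h|$, the quotient $D_j^h u(z) := h^{-1}\bigl(u(z+he_j) - u(z)\bigr)$ lies in $W^{k,p}$ on a slightly smaller ball, and because $\dbar$ has constant coefficients $\dbar(D_j^h u) = D_j^h(\dbar u)$, whose $W^{k-1,p}$ norm on the smaller ball is bounded by $\|\partial_j\dbar u\|_{W^{k-1,p}} \le \|\dbar u\|_{W^{k,p}(B_R)}$, uniformly in $h$. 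Applying the level-$(k-1)$ estimate to $D_j^h u$, together with the standard bound $\|D_j^h u\|_{L^p} \le \|\partial_j u\|_{L^p} \le \|u\|_{W^{k,p}}$, produces a bound on $\|D_j^h u\|_{W^{k,p}(B_r)}$ uniform in $h$. Since $1 < p < \infty$ makes $W^{k,p}(B_r)$ reflexive, a weakly convergent subsequence exists, and its limit is forced to be $\partial_j u$ (matching the distributional derivative); hence $\partial_j u \in W^{k,p}(B_r)$ for $j = 1,2$, i.e.\ $u \in W^{k+1,p}(B_r)$, and the norm estimate passes to the limit by weak lower semicontinuity. Chaining the two steps over nested radii $r < \rho_1 < \rho_2 < R$ and absorbing constants completes the induction.

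The main obstacle is precisely this gain of one derivative. Theorem~\ref{thm:elliptic} controls $\|v\|_{W^{1,p}}$ by $\|\dbar v\|_{L^p}$ only for compactly supported $v$, so each cutoff reintroduces a lower-order term $(\dbar\beta)v$ that must be absorbed via the preceding induction step; and the passage from ``$u$ has $k$ weak derivatives in $L^p$'' to ``$u$ has one more weak derivative in $W^{k,p}$'' has to go through difference quotients (equivalently, through mollifying $u$, applying the estimate to the smooth approximants, and passing to the limit), rather than by naively differentiating the PDE. Everything else — the product rule for $\dbar$, the manipulations with cutoffs, and the bookkeeping of constants across nested balls — is routine.
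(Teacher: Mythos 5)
Your proposal is correct and follows essentially the same route as the paper: a cutoff function combined with the Calder\'on--Zygmund inequality gives the a priori estimate, and difference quotients plus weak compactness (Banach--Alaoglu) supply the gain of one derivative. The only difference is organizational --- the paper runs the difference-quotient argument once at the $k=1$ level and then handles higher $k$ by applying that case to $\p_s u$ and $\p_t u$ via $\dbar D^\alpha u = D^\alpha f$, whereas you rerun difference quotients at every stage of the induction; both are valid.
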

\begin{cor}
If $f : B \to \CC^n$ is smooth, then every solution to $\dbar u = f$
of class $W^{1,p}$ for some $p \in (1,\infty)$
is also smooth.  Moreover, given sequences $f_k \to f$ converging in
$C^\infty(B)$ and $u_k \to u$ converging in $W^{1,p}(B)$ and
satisfying $\dbar u_k = f_k$, the sequence $u_k$ also converges in
$C^\infty_\loc$ on~$B$.
\end{cor}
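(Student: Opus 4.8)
The plan is to prove both statements by iterating the interior estimate of Proposition~\ref{prop:inhomogeneousRegularity} (an ``elliptic bootstrap''), together with the Sobolev embedding theorem. For the first claim, I would fix a point $z_0 \in B$ and a radius $\rho > 0$ with $\overline{B_{2\rho}(z_0)} \subset B$, and work on a nested sequence of balls about $z_0$. Since $f$ is smooth, $\dbar u = f$ lies in $W^{k,p}$ on every such ball for every $k$, so after composing with the affine rescaling $z \mapsto z_0 + \rho' z$ that carries the unit ball $B$ to a ball about $z_0$ (under which $\dbar$ and the $W^{k,p}$-norms transform in a controlled way), the hypotheses of Proposition~\ref{prop:inhomogeneousRegularity} are met. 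Starting from $u \in W^{1,p}$ and applying the estimate with $k=1$ gives $u \in W^{2,p}$ on a slightly smaller ball; feeding this back in with $k=2$ gives $u \in W^{3,p}$ on a still smaller ball, and inductively, choosing the decreasing radii $\rho_j = \rho(1 + 2^{-j})$ so that they stay above $\rho$, one obtains $u \in W^{j,p}(B_\rho(z_0))$ for every $j$. By the Sobolev embedding theorem, $W^{j,p}(B_\rho(z_0)) \hookrightarrow C^d(B_\rho(z_0))$ once $j$ is large relative to $d$ and $p$, so $u$ is $C^\infty$ near $z_0$; as $z_0 \in B$ was arbitrary, $u \in C^\infty(B)$.

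For the convergence statement, note first that the above applies to each $u_k$ and to $u$, so all are smooth and $\Cinftyloc$-convergence is meaningful. Since $\dbar(u_k - u) = f_k - f$, applying \eqref{eqn:FredholmEstimate} (again composed with the affine rescalings) on a nested family $B = B_{\rho_0} \supset B_{\rho_1} \supset \cdots$ of balls concentric at the origin, with $\rho_j$ decreasing toward a fixed $r < 1$, gives for every $j$
\begin{equation*}
\| u_k - u \|_{W^{j+1,p}(B_{\rho_j})} \le c_j \| u_k - u \|_{W^{j,p}(B_{\rho_{j-1}})} + c_j \| f_k - f \|_{W^{j,p}(B_{\rho_{j-1}})}.
\end{equation*}
By hypothesis $\| u_k - u \|_{W^{1,p}(B)} \to 0$, and since $f_k \to f$ in $C^\infty$ one has $\| f_k - f \|_{W^{j,p}(B_{\rho_{j-1}})} \to 0$ for every $j$; a finite induction on $j$ then shows $\| u_k - u \|_{W^{j,p}(B_{\rho_{j-1}})} \to 0$ for every $j$. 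Combining with the Sobolev embedding theorem yields $u_k \to u$ in $C^d(B_r)$ for every $d$, and since every compact subset of $B$ lies in some $\overline{B_r}$ with $r < 1$, this gives $\Cinftyloc$-convergence on $B$.

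I expect no genuinely hard step here: the only real bookkeeping issue, and the one most likely to need care in the write-up, is checking that Proposition~\ref{prop:inhomogeneousRegularity} and the estimate \eqref{eqn:FredholmEstimate}, which are stated only for the concentric pair $B$ and $B_r$, transfer under affine changes of variable to arbitrary nested balls, with constants depending only on the radii and $p$. This is routine, since $\dbar$ is equivariant under affine maps up to a scalar factor and the Sobolev norms scale by fixed powers of the dilation; everything else is a direct iteration of an inequality already in hand, with the induction on the number of bootstrap steps being finite for each fixed target regularity.
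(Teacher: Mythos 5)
Your argument is correct and is exactly the intended one: the paper leaves this corollary as an exercise immediately after Proposition~\ref{prop:inhomogeneousRegularity}, and the expected solution is precisely the bootstrap you describe, iterating the interior estimate \eqref{eqn:FredholmEstimate} on nested balls and finishing with the Sobolev embedding. The only remark worth making is that the proposition is already stated for arbitrary $k$ (given $u\in W^{1,p}(B)$ and $\dbar u\in W^{k,p}(B)$ it yields $u\in W^{k+1,p}(B_r)$ in one application), so for the smoothness claim the step-by-step iteration through $W^{2,p}, W^{3,p},\dots$ on shrinking balls is slightly more elaborate than necessary, though of course still valid and needed anyway for the convergence statement.
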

\begin{exercise}
Prove the corollary.
\end{exercise}
\begin{proof}[Proof of Prop.~\ref{prop:inhomogeneousRegularity}]
Write $\dbar u = f$.
It will suffice to consider the case $k=1$, as once this is settled,
the result follows from an easy induction argument using the fact that any 
derivative $D^\alpha u$ of $u$ satisfies $\dbar D^\alpha u = D^\alpha f$.
(Here $\alpha$ is a multiindex, so $D^\alpha$ may be any
differential operator of order one or greater.)

Now assuming $u, f \in W^{1,p}(B)$, 
we'd first like to prove that $u$ is of class
$W^{2,p}$ on $B_r$ for any $r < 1$.  The idea is to show that
$\p_s u$ (and similarly $\p_t u$) is of class $W^{1,p}$
by expressing it as a limit of the difference quotients,
$$
u^h(s,t) := \frac{u(s+h,t) - u(s,t)}{h}
$$
as $h > 0$ shrinks to zero.  These functions are clearly well defined and
belong to $W^{1,p}(B_r)$ if $h$ is sufficiently small, and it is 
straightforward (e.g.~using approximation by smooth functions) 
to show that $u^h \to \p_s u$ in $L^p(B_r)$ as $h \to 0$.  The significance
of Theorem~\ref{thm:elliptic} is that it gives us a uniform 
$W^{1,p}$-bound on $u^h$ with respect to~$h$.  Indeed, pick a cutoff function
$\beta \in C_0^\infty(B)$ that equals $1$ on $B_r$.  Then
$\beta u^h \in W^{1,p}_0(B)$ and thus satisfies 
the estimate of Theorem~\ref{thm:elliptic} (cf.~Exercise~\ref{EX:CZhigher}).  
We compute
\begin{multline}
\label{eqn:diffQuotients}
\| u^h \|_{W^{1,p}(B_r)} \le \| \beta u^h \|_{W^{1,p}(B)} \le
c \| \dbar (\beta u^h) \|_{L^p(B)} \\
= c \| (\dbar\beta) u^h + \beta (\dbar u^h) \|_{L^p(B)} \le
c' \| u^h \|_{L^p(B)} + c' \| f^h \|_{L^p(B)},
\end{multline}
and observe that the right hand side is bounded as $h \to 0$ because
$u^h \to \p_s u$ and $f^h \to \p_s f$ in~$L^p$.

In light of this bound, the Banach-Alaoglu theorem implies that any
sequence $u^{h_k}$ with $h_k \to 0$ has a weakly convergent subsequence
in $W^{1,p}(B_r)$.  But since $u^h$ already converges to $\p_s u$ in 
$L^p(B_r)$, the latter must also be the weak $W^{1,p}$-limit, implying
$\p_s u \in W^{1,p}(B_r)$.  Now the estimate \eqref{eqn:FredholmEstimate}
follows from \eqref{eqn:diffQuotients}, using Exercise~\ref{EX:weakNorm}
below to bound the $W^{1,p}$-norm of the derivative of $u$ in terms
its difference quotients:
\begin{multline*}
\| \p_s u \|_{W^{1,p}(B_r)} \le \liminf_{h \to 0} \| u^h \|_{W^{1,p}(B_r)} \\
\le c \| \p_s u \|_{L^p(B)} + c \| \p_s f \|_{L^p(B)} \le
c \| u \|_{W^{1,p}(B)} + c \| f \|_{W^{1,p}(B)}.
\end{multline*}
\end{proof}

\begin{exercise}
\label{EX:weakNorm}
If $X$ is a Banach space and $x_n \in X$ converges weakly to $x$, show that
$\| x \| \le \liminf \| x_n \|$.
\textsl{Hint: The natural inclusion of $X$ into $(X^*)^*$ is isometric, so
$\| x \| = \sup_{\lambda \in X^* \setminus \{0\}} \frac{| \lambda(x) |}{\| \lambda \|}$.}
\end{exercise}

\begin{exercise}
\label{EX:smoothSections}
Use Proposition~\ref{prop:inhomogeneousRegularity} 
to show that for any real-linear Cauchy-Riemann type
operator $D$ on a vector bundle $(E,J) \to (\Sigma,j)$, continuously
differentiable solutions of
$D\eta = 0$ are always smooth.  \textsl{Note: due to
Exercise~\ref{EX:ChristoffelReal}, this reduces to showing that solutions
$u \in W^{1,p}(B,\CC^n)$ of $(\dbar + A)u = 0$ are smooth if
$A : B \to \End_\RR(\CC^n)$ is smooth.}
\end{exercise}

The proof of Theorem~\ref{thm:elliptic} contains several easy steps and one
that is hard.  For the easy part: observe first that by the
Poincar\'e inequality, $\| u \|_{L^p}$ can be bounded in terms of
$\| du \|_{L^p}$ for any $u \in C_0^\infty(B)$, thus it will suffice to
bound the first derivatives in terms of $\dbar u$.  For this it is natural
to consider the conjugate of the $\dbar$-operator,
$$
\p := \p_s - i\p_t,
$$
as the expressions $\dbar u$ and $\p u$ together can produce both
$\p_s u$ and $\p_t u$ by linear combinations.  Thus we are done if we can
show that $\| \p u \|_{L^p}$ is bounded in terms of $\| \dbar u \|_{L^p}$.

It is easy to see why this is true in the case $p=2$:
the following ``symplectic'' argument is borrowed from \cite{Sikorav}.
It suffices to set $n=1$ and consider compactly supported smooth functions
$u : B \to \CC$.  Using the coordinate $z = s + it$, define the 
differential operators
$$
\p_z = \frac{\p}{\p z} = \frac{1}{2} ( \p_s - i \p_t )
\qquad
\p_{\bar{z}} = \frac{\p}{\p \bar{z}} = \frac{1}{2} (\p_s + i \p_t)
$$
and corresponding complex-valued $1$-forms
$$
dz = d(s + it) = ds + i\,dt
\qquad
d\bar{z} = d(s - it) = ds - i\,dt.
$$
Observe that $\p_z$ and $\p_{\bar{z}}$ are the same as $\p$ and $\dbar$
respectively up to a factor of 
two,\footnote{It is in some sense more natural to define the operators 
$\dbar$ and $\p$ with the factor of $1/2$ included, but we have dropped
this for the sake of notational convenience.} and we now have
$du = \p_z u\,dz + \p_{\bar{z}} u \,d\bar{z}$.  
The complex-valued $1$-form $u\, d\bar{u}$ then has compact support
in the interior of the unit disk~$\DD \subset \CC$,
so applying Stokes' theorem to $d(u \,d\bar{u}) = du \wedge
d\bar{u}$ gives
\begin{equation*}
\begin{split}
0 &= \int_{\p \DD} u\,d\bar{u} = \int_{\DD} du \wedge d\bar{u} = 
\int_{\DD} \left( \p_z u \,dz + \p_{\bar{z}} u\,d\bar{z}\right) \wedge
\left( \p_z \bar{u} \,dz + \p_{\bar{z}} \bar{u} \,d\bar{z} \right) \\
&= \frac{1}{4}\int_{\DD} \left( |\p u|^2 - |\dbar u|^2 \right) \,dz \wedge
d\bar{z},
\end{split}
\end{equation*}
hence $\| \p u \|_{L^2} = \| \dbar u \|_{L^2}$.

For $p \ne 2$, a bound on $\|\p u\|_{L^p}$ can be found by rephrasing the
equation $\dbar u = f$ in terms of fundamental solutions.  We recall the
basic idea: a fundamental solution to the $\dbar$-equation is an 
$L^1_\loc$-function
$K : \CC \to \CC$ that satisfies $\dbar K = \delta$ in the sense of
distributions, where $\delta$ is the Dirac delta function, i.e.~the 
distribution whose action on any test function $\varphi \in C_0^\infty$
is $\delta(\varphi) = \varphi(0)$.  
Then weak solutions of $\dbar u = f$ can be expressed as
convolutions $u = K * f$, since $\dbar(K * f) = \dbar K * f = \delta * f = f$.
To make all this precise, define the function $K \in L^1_\loc(\CC,\CC)$ to be
$$
K(z) = \frac{1}{2\pi z},
$$
and for any $f \in C_0^\infty(\CC,\CC^n)$, define $T f : \CC \to \CC^n$
to be the convolution
\begin{equation}
\label{eqn:T}
T f(z) = K * f(z) = \int_{\CC} K(z - \zeta) f(\zeta) \frac{d\zeta \wedge
d\bar{\zeta}}{-2 i} =
\frac{1}{2\pi} \int_{\CC} \frac{f(\zeta)}{z - \zeta} \,
\frac{d\zeta \wedge d\bar{\zeta}}{-2 i},
\end{equation}
where we use the notation $\frac{d\zeta \wedge d\bar{\zeta}}{-2i}$ to 
abbreviate the standard Lebesgue measure on~$\CC$ with complex 
coordinate~$\zeta$.  This integral is clearly well defined if
$f \in C_0^\infty$, and it is not too hard to show that it gives a
smooth function $T f \in C^\infty(\CC,\CC^n)$ which decays to zero at
infinity and satisfies $\dbar T f = f$
(see for example \cite{HoferZehnder}*{Appendix~A4}).  It follows that
for $u \in C_0^\infty(\CC,\CC^n)$, $\dbar u = f$ if and only if
$u = T f$.  Indeed, if $u$ has compact support then so does $f = \dbar u$,
thus $T f$ is well defined and we have $\dbar (u - T f) = 0$.  But
since $T f$ decays at infinity, this means $u - T f$ is a bounded
holomorphic function on $\CC$ that approaches zero at infinity, hence it is
identically zero.

By the above remarks, it suffices to show that the function
$$
\Pi f := \p T f
$$
satisfies an $L^p$-bound in terms of $\| f \|_{L^p}$ whenever 
$f \in C_0^\infty(B)$.  Naively, one can derive an expression for
$\p T f = \p(K * f)$ by differentiating the fundamental solution: it
should be the convolution of $f$ with
$$
\p K(z) = 2 \frac{\p}{\p z} \frac{1}{2\pi z} = - \frac{1}{\pi z^2}.
$$
Thus we are led to the expression
$$
\Pi f(z) = \lim_{\epsilon \to 0} \int_{|\zeta - z| \ge \epsilon}
- \frac{f(\zeta)}{\pi (z - \zeta)^2} \,\frac{d\zeta \wedge d\bar{\zeta}}{-2i},
$$
where we've attached a limit in order to define the integral since
$\p K \not\in L^1_\loc$;
this is a \emph{Cauchy principal value} integral.  One can now verify
explicitly that this limit is well defined if $f \in C_0^\infty$, 
and it computes $\p T f(z)$.  The hard work is thus reduced to the following
estimates.

\begin{lemma}
\label{lemma:integrals}
For each $p \in (1,\infty)$ there is a constant $c > 0$ 
such that for every $f \in C_0^\infty(B)$,
$$
\| T f \|_{L^p} \le c \| f \|_{L^p}
\qquad\text{ and }\qquad
\| \Pi f \|_{L^p} \le c \| f \|_{L^p}.
$$
\end{lemma}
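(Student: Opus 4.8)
The first estimate, $\|Tf\|_{L^p}\le c\|f\|_{L^p}$, is the easy one and should be disposed of first. Since $f$ is supported in $B$, the convolution kernel $K(z)=\frac{1}{2\pi z}$ may be replaced by its restriction $K\cdot\chi_{B_2}$ when evaluating $Tf$ on $B$, and this truncated kernel lies in $L^1(\CC)$ because $1/|z|$ is locally integrable in two real dimensions. Young's convolution inequality then gives $\|Tf\|_{L^p(B)}\le \|K\chi_{B_2}\|_{L^1}\,\|f\|_{L^p}$, which is exactly the claimed bound. (One should be slightly careful to note that $Tf$ is only being estimated on a bounded region, but since all our applications of $T$ occur on $B$ or subdisks this is harmless; alternatively, invoke the Hardy--Littlewood--Sobolev inequality if one wants a global bound, but for our purposes Young suffices.)

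**The hard estimate.** The bound $\|\Pi f\|_{L^p}\le c\|f\|_{L^p}$ is the genuine analytic content, and this is where one must cite rather than prove. The operator $\Pi$ is the Beurling--Ahlfors transform (a Calder\'on--Zygmund singular integral operator with Calder\'on--Zygmund kernel $-1/(\pi z^2)$, which is homogeneous of degree $-2$, has mean zero on circles, and is smooth away from the origin). The plan is: first establish the $L^2$-bound by hand, since this case is elementary and illuminating—indeed the ``symplectic'' Stokes' theorem computation given just above the lemma shows $\|\p u\|_{L^2}=\|\dbar u\|_{L^2}$ for $u\in C_0^\infty$, i.e.\ $\|\Pi f\|_{L^2}=\|f\|_{L^2}$ when $f=\dbar u$, and since every $f\in C_0^\infty(B)$ arises this way (take $u=Tf$), we get $\|\Pi f\|_{L^2}\le\|f\|_{L^2}$ with constant $1$. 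Then for general $p\in(1,\infty)$, invoke the Calder\'on--Zygmund theorem: a convolution operator whose kernel satisfies the standard size and H\"ormander cancellation conditions, and which is bounded on $L^2$, is automatically bounded on $L^p$ for all $p\in(1,\infty)$. The kernel $-1/(\pi z^2)$ manifestly satisfies $|\p K(z)|\le C|z|^{-2}$ and $|\nabla\p K(z)|\le C|z|^{-3}$, so the H\"ormander condition holds, and one concludes $\|\Pi f\|_{L^p}\le c_p\|f\|_{L^p}$. This is precisely the content of \cites{CalderonZygmund:52,CalderonZygmund:56}, so in the write-up I would state the $L^2$ case in full and then cite Calder\'on--Zygmund for the passage to general $p$, exactly as the surrounding text signals.

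**The main obstacle.** The real obstacle is that there is no elementary, self-contained proof of the $L^p$-boundedness of a Calder\'on--Zygmund operator for $p\ne 2$—it genuinely requires either the Marcinkiewicz interpolation theorem together with a weak-$(1,1)$ estimate (proved via the Calder\'on--Zygmund decomposition), or a good-$\lambda$ inequality argument, or the $T(1)$ theorem. Since the paper has explicitly declared these estimates to be ``black boxes,'' the honest move is not to reprove this but to (i) give the clean $L^2$ argument, (ii) verify that $\p K$ has the Calder\'on--Zygmund kernel structure, and (iii) cite. One subtlety worth flagging in the write-up: the Cauchy principal value defining $\Pi f$ must be shown to exist for $f\in C_0^\infty$—this follows because the cancellation $\int_{\epsilon\le|\zeta-z|\le 1}\frac{d\zeta\wedge d\bar\zeta}{(z-\zeta)^2}=0$ (the kernel has vanishing mean on annuli) lets one rewrite the integrand near the singularity as $\frac{f(\zeta)-f(z)}{(z-\zeta)^2}$, which is integrable since $f$ is Lipschitz. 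After that is noted, and the identity $\Pi f=\p(Tf)$ is confirmed by differentiating under the (regularized) integral, the proof reduces entirely to the cited singular-integral estimate.
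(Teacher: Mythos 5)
Your proposal is sound, and it is worth noting that the paper itself gives \emph{no} proof of this lemma at all: it declares the estimates to be ``quite hard work indeed'' and cites Vekua and the appendix of \cite{McDuffSalamon:Jhol} wholesale. So your write-up is, if anything, more complete than the source. Your Young's-inequality argument for $\| Tf \|_{L^p} \le c \|f\|_{L^p}$ is correct and is a genuine improvement over lumping both bounds into the black box: since $f$ is supported in $B$ and the norm of $Tf$ is only needed on $B$ (which is all the application to Theorem~\ref{thm:elliptic} requires, as $u = Tf$ there has compact support in~$B$), the kernel may be truncated to $K\chi_{B_2} \in L^1$ and Young applies. Your treatment of $\Pi$ also matches what the cited references actually do: isolate the elementary $L^2$ case and then invoke Calder\'on--Zygmund theory for $p \ne 2$, which is unavoidable. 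The one step you should patch is the claim that the Stokes computation gives $\|\p u\|_{L^2} = \|\dbar u\|_{L^2}$ for $u = Tf$: that identity was proved in the text only for $u \in C_0^\infty$, and $Tf$ is \emph{not} compactly supported. It does hold for $u = Tf$, but you must either integrate over $\p B_R$ and use the decay $|Tf| = O(|z|^{-1})$, $|d(Tf)| = O(|z|^{-2})$ to kill the boundary term as $R \to \infty$, or bypass Stokes entirely via Plancherel (the Fourier multiplier of $\Pi$ is $\bar\xi/\xi$, of modulus one). With that repaired, and with your correct observation that the principal value exists for $f \in C_0^\infty$ because the kernel has vanishing mean on annuli, the proposal is complete modulo the cited singular-integral theorem, exactly as the paper intends.
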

This is quite hard work indeed; complete proofs may be found in
\cite{Vekua}*{Part~I, \S 5--9}, or in a slightly different context, 
\cite{McDuffSalamon:Jhol}*{Appendix~B}.  Taking this as a black box,
the lemma implies Theorem~\ref{thm:elliptic}.

In fact, Lemma~\ref{lemma:integrals} implies a stronger result about the
Cauchy-Riemann operator that we will find useful.  Since $C_0^\infty(B)$
is dense in $L^p(B)$, the first inequality in the lemma says that
$T$ can be extended to a bounded linear operator on $L^p(B)$, but the
second inequality implies even more: we already have a bound on
$\dbar T f$ since this is simply $f$, so a bound on $\Pi f = \p T f$
gives a bound on the whole first derivative of $T f$, and we conclude:

\begin{prop}
\label{prop:rightInverse}
For each $p \in (1,\infty)$, 
the operator $T$ of \eqref{eqn:T} extends to a bounded linear operator
$T : L^p(B) \to W^{1,p}(B)$, which is a right inverse of
$\dbar : W^{1,p}(B) \to L^p(B)$.
\end{prop}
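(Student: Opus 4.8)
The plan is to treat Lemma~\ref{lemma:integrals} as a black box and deduce everything by an elementary density-and-completeness argument. First I would observe that for $f \in C_0^\infty(B)$ the function $Tf$ already lies in $W^{1,p}(B)$ with norm controlled by $\|f\|_{L^p}$. Indeed, by the discussion preceding the statement we have $\dbar(Tf) = f$ and $\p(Tf) = \Pi f$; since $\p_s$ and $\p_t$ are fixed linear combinations of $\p$ and $\dbar$, this gives
$$
\| \nabla Tf \|_{L^p(B)} \le c\left( \| \dbar Tf \|_{L^p(B)} + \| \p Tf \|_{L^p(B)} \right) = c\left( \| f \|_{L^p(B)} + \| \Pi f \|_{L^p(B)} \right) \le c' \| f \|_{L^p(B)},
$$
using the second inequality of Lemma~\ref{lemma:integrals}, while the first inequality of that lemma bounds $\| Tf \|_{L^p(B)}$ by $c \| f \|_{L^p(B)}$. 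Hence $\| Tf \|_{W^{1,p}(B)} \le C \| f \|_{L^p(B)}$ for all $f \in C_0^\infty(B)$.

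Next I would invoke the fact that $C_0^\infty(B)$ is dense in $L^p(B)$ for $p \in (1,\infty)$ and that $W^{1,p}(B)$ is a Banach space. The estimate above says that $T$, as a map from the dense subspace $(C_0^\infty(B), \|\cdot\|_{L^p})$ into $W^{1,p}(B)$, is bounded, so it extends uniquely to a bounded linear operator $T : L^p(B) \to W^{1,p}(B)$ with the same norm bound: concretely, for $f \in L^p(B)$ one picks $f_k \in C_0^\infty(B)$ with $f_k \to f$ in $L^p$, notes that $(Tf_k)$ is then Cauchy in $W^{1,p}(B)$, and defines $Tf$ as its $W^{1,p}$-limit, which depends only on $f$.

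Finally I would verify the right-inverse identity. The operator $\dbar : W^{1,p}(B) \to L^p(B)$ is bounded, since $\| \dbar u \|_{L^p(B)} \le 2 \| u \|_{W^{1,p}(B)}$. Therefore $f \mapsto \dbar(Tf)$ and $f \mapsto f$ are two bounded linear maps $L^p(B) \to L^p(B)$; they agree on the dense subspace $C_0^\infty(B)$, where $\dbar Tf = f$, and hence agree on all of $L^p(B)$. This is exactly the statement that $\dbar \circ T = \Id_{L^p(B)}$.

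I do not expect a genuine obstacle here, as all the hard singular-integral analysis has been absorbed into Lemma~\ref{lemma:integrals}. The only point deserving a moment's care is that the $L^p$-limit construction really produces an element of $W^{1,p}(B)$ with the expected weak derivatives rather than merely an $L^p$-function; but this is automatic from completeness of $W^{1,p}(B)$ together with the uniform bound $\| Tf_k \|_{W^{1,p}(B)} \le C \| f_k \|_{L^p(B)}$, since $W^{1,p}$-convergence already entails $L^p$-convergence of all first-order derivatives, so no separate argument about interchanging limits with weak differentiation is needed.
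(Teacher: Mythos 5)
Your proposal is correct and follows essentially the same route as the paper, which deduces the proposition in the paragraph immediately preceding it: the first inequality of Lemma~\ref{lemma:integrals} controls $\|Tf\|_{L^p}$, the second controls $\p Tf = \Pi f$, and together with $\dbar Tf = f$ this bounds the full first derivative, after which density of $C_0^\infty(B)$ in $L^p(B)$ and completeness of $W^{1,p}(B)$ give the extension and the identity $\dbar \circ T = \Id$. You have merely spelled out the density-and-completeness details that the paper leaves implicit.
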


The upshot is that the equation $\dbar u = f$ can be solved for any
$f \in L^p(B)$, and in a way that controls the first derivatives 
of the solution.  This can be improved further using the previous 
regularity results:

\begin{thm}
\label{thm:rightInverse}
For any integer $k \ge 0$ and $p \in (1,\infty)$,
the operator $\dbar : W^{k+1,p}(B) \to W^{k,p}(B)$ admits a bounded right
inverse
$$
\widehat{T} : W^{k,p}(B) \to W^{k+1,p}(B),
$$
i.e.~$\dbar \widehat{T} f = f$ for all $f \in W^{k,p}(B)$.
\end{thm}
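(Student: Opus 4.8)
\textsc{Proof proposal.}
The plan is to bootstrap the case $k=0$, which is exactly Proposition~\ref{prop:rightInverse} ($\widehat{T}:=T$ there), by feeding it into the interior elliptic estimate of Proposition~\ref{prop:inhomogeneousRegularity}. The one genuine complication is that Proposition~\ref{prop:inhomogeneousRegularity} improves regularity only on \emph{interior} subballs $B_r\subset B$: applying $T$ from \eqref{eqn:T} to a function $f\in W^{k,p}(B)$ amounts to convolving its zero-extension to $\CC$ with the fundamental solution, and that zero-extension fails to be of class $W^{k,p}$ across $\p B$ once $k\ge 1$, so $Tf$ will in general not lie in $W^{k+1,p}(B)$ up to the boundary. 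To circumvent this I would enlarge the ball before solving.

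Concretely, for $k\ge 1$ fix once and for all a bounded linear Sobolev extension operator $E_0:W^{k,p}(B)\to W^{k,p}(B_2)$ with $(E_0 f)|_B=f$ (this exists because $\p B$ is smooth; see e.g.~\cite{Evans} or the appendices of \cite{McDuffSalamon:Jhol}), together with a cutoff $\beta\in C_0^\infty(B_2)$ with $\beta\equiv 1$ on $\overline{B}$, and set $Ef:=\beta\cdot E_0 f$, extended by zero. Then $E:W^{k,p}(B)\to W^{k,p}(\CC)$ is bounded, $\supp(Ef)\subset B_2$, and $(Ef)|_B=f$. A straightforward dilation of Proposition~\ref{prop:rightInverse} shows that $T$ (defined globally by the convolution \eqref{eqn:T}, which makes sense on compactly supported $L^p$ functions) maps $L^p(B_2)$ boundedly into $W^{1,p}(B_2)$ with $\dbar Tg=g$ on $B_2$. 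Define
\[
\widehat{T}f:=\bigl(T(Ef)\bigr)\big|_B .
\]
Since $\supp(Ef)\subset B_2$, we get $\dbar\widehat{T}f=\bigl(\dbar T(Ef)\bigr)|_B=(Ef)|_B=f$, so $\widehat{T}$ is a right inverse of $\dbar:W^{k+1,p}(B)\to W^{k,p}(B)$ once we check it is well defined and bounded into $W^{k+1,p}(B)$.

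For that, set $u:=T(Ef)\in W^{1,p}(B_2)$, so $\|u\|_{W^{1,p}(B_2)}\lesssim\|Ef\|_{L^p}\lesssim\|f\|_{W^{k,p}(B)}$ and $\dbar u=Ef\in W^{k,p}(B_2)$. Applying Proposition~\ref{prop:inhomogeneousRegularity} with the role of $B$ played by $B_2$ gives $u\in W^{k+1,p}(B_{2r})$ for every $r<1$; taking $r\in(\tfrac12,1)$ yields $u\in W^{k+1,p}(B)=W^{k+1,p}(B_1)$, so $\widehat{T}f$ is well defined. For the norm bound one iterates the estimate \eqref{eqn:FredholmEstimate} over a finite nested chain $B_2\supset B_{3/2}\supset\dots\supset B_1$, exactly as in the inductive proof of Proposition~\ref{prop:inhomogeneousRegularity} itself (each step trades one derivative for a slightly smaller ball), so that the $\|u\|_{W^{k,p}}$ term on the right is ultimately absorbed into $\|u\|_{W^{1,p}(B_2)}+\|\dbar u\|_{W^{k,p}(B_2)}$; hence
\[
\|\widehat{T}f\|_{W^{k+1,p}(B)}\le C\bigl(\|u\|_{W^{1,p}(B_2)}+\|\dbar u\|_{W^{k,p}(B_2)}\bigr)\le C'\|f\|_{W^{k,p}(B)}.
\]

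The routine points here are the construction and boundedness of the extension operator, the (elementary) scaling behaviour of $T$ and of the Calder\'on--Zygmund estimate from the unit ball to $B_2$, and the bookkeeping of shrinking radii in the iteration. The single step that needs care is the boundary issue flagged above: one must resist the temptation to take $\widehat{T}=T$ for $k\ge 1$, and instead extend $f$ to a \emph{compactly supported} function on a strictly larger ball — so that no new discontinuity is introduced at $\p B_2$ — before solving; this is precisely what makes the interior regularity theorem yield control up to $\p B$.
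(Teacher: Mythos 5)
Your proof is correct and follows essentially the same route as the paper's: extend $f$ to a strictly larger ball by a bounded Sobolev extension operator, solve $\dbar u = \hat f$ there using the already-established lower-order right inverse, and then invoke the interior regularity estimate of Proposition~\ref{prop:inhomogeneousRegularity} to conclude $u|_B \in W^{k+1,p}(B)$ with the required bound. The only cosmetic difference is that the paper packages this as an induction on $k$ (extending once per step and applying the right inverse at level $k-1$), whereas you extend once from the $k=0$ operator and absorb the bootstrapping into the iterated interior estimate over nested balls.
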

\begin{proof}
Prop.~\ref{prop:rightInverse} proves the result for $k=0$, so we proceed
by induction, assuming the result is proven already for~$k-1$.
Pick $R > 1$, and for each $\ell$ let 
$$
W^{\ell,p}(B) \to W^{\ell,p}(B_R) : f \mapsto \hat{f}
$$ 
denote a bounded linear \emph{extension} operator, i.e.~$\hat{f}$ satisfies
$\hat{f}|_B = f$ and $\| \hat{f} \|_{W^{\ell,p}(B_R)} \le c 
\| f \|_{W^{\ell,p}(B)}$ for some $c > 0$.  
Then by assumption there is a bounded operator
$$
T_R : W^{k-1,p}(B_R) \to W^{k,p}(B_R)
$$
that is a right inverse of $\dbar$, hence $u := T_R \hat{f}$ satisfies
$\dbar u = \hat{f}$.  But then if $\hat{f} \in W^{k,p}(B_R)$, 
Prop.~\ref{prop:inhomogeneousRegularity} implies that $u \in W^{k+1,p}(B)$ and
\begin{multline*}
\| u \|_{W^{k+1,p}(B)} \le c \| u \|_{W^{k,p}(B_R)} +  
c \| \hat{f} \|_{W^{k,p}(B_R)} \\
\le \| T_R \| \cdot \| \hat{f} \|_{W^{k-1,p}(B_R)} + c_1 \| f \|_{W^{k,p}(B)}
\le c_2 \| f \|_{W^{k,p}(B)}.
\end{multline*}
\end{proof}

Now that we are guaranteed to have nice solutions of the equation
$\dbar u = f$, we can also improve the previous regularity results to apply to
more general weak solutions.  We begin with the simple fact that ``weakly'' 
holomorphic functions are actually smooth.

\begin{lemma}
\label{lemma:Weyl}
If $u \in L^1(B)$ is a weak solution of $\dbar u = 0$, then
$u$ is smooth.
\end{lemma}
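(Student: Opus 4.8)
The plan is to run the classical Weyl's lemma argument by mollification, using the elementary fact that \emph{honest} holomorphic functions obey the mean value property.

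First I would fix a standard mollifier $\rho_\epsilon \in C_0^\infty$ supported in $B_\epsilon$ with $\rho_\epsilon \ge 0$ and total integral one, and set $u_\epsilon := u * \rho_\epsilon$. Since $u \in L^1(B)$, this convolution is a well-defined smooth function on $B_{1-\epsilon}$ for every $\epsilon \in (0,1)$. The point is that mollification carries the weak equation to the classical one: differentiating under the integral sign gives $\p_s u_\epsilon = u * \p_s\rho_\epsilon$, and since $(\p_s\rho_\epsilon)(z-\cdot)$ is a legitimate test function, the definition of the weak derivative yields $\p_s u_\epsilon = (\p_s u)*\rho_\epsilon$, and likewise for $\p_t$. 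Hence
\[
\dbar u_\epsilon = (\dbar u)*\rho_\epsilon = 0 \qquad \text{on } B_{1-\epsilon},
\]
so each $u_\epsilon$ is a genuinely holomorphic (in particular smooth) function on $B_{1-\epsilon}$.

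Next I would use that $u_\epsilon \to u$ in $L^1_\loc(B)$ as $\epsilon \to 0$, a standard property of mollifications of $L^1_\loc$ functions. Now invoke the mean value property: if $v$ is holomorphic on a ball $B_r(z_0)$ about $z_0$ with $\overline{B_r(z_0)} \subset B$, then $v(z_0) = \frac{1}{\pi r^2}\int_{B_r(z_0)} v$, so $|v(z_0)| \le \frac{1}{\pi r^2}\,\| v \|_{L^1(B_r(z_0))}$. Applying this to the differences $u_\epsilon - u_{\epsilon'}$ (which are holomorphic on $B_{1-\max(\epsilon,\epsilon')}$) and using the $L^1_\loc$ convergence shows that $(u_\epsilon)$ is uniformly Cauchy on every compact subset of $B$. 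Therefore $u_\epsilon$ converges locally uniformly to a continuous function $v$ on $B$, which must agree with $u$ almost everywhere since it is also the $L^1_\loc$ limit. A locally uniform limit of holomorphic functions is holomorphic (by Morera's theorem, or by passing to the limit in the Cauchy integral formula), hence $v$ is smooth; thus $u = v$ a.e., which is the claim.

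There is no genuinely hard step here. The only point needing a moment's care is the commutation $\dbar(u*\rho_\epsilon) = (\dbar u)*\rho_\epsilon$, i.e.\ that the smooth approximants $u_\epsilon$ inherit the equation $\dbar u_\epsilon = 0$ from the weak hypothesis on $u$; but this is exactly the definition of the weak derivative combined with differentiation under the integral sign. (An alternative packaging replaces the mean value estimates by Cauchy estimates on all derivatives of the $u_\epsilon$, followed by Arzel\`a--Ascoli, but since the right-hand side is zero the mean value argument is the shortest route and avoids any appeal to the $L^p$-theory of \S\ref{sec:estimates}.)
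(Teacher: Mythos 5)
Your proof is correct and follows essentially the same route as the paper's: mollify, observe that $u_\epsilon$ satisfies the equation classically, and use the mean value property to upgrade $L^1_\loc$-convergence to something pointwise. The only difference is in the endgame — the paper first reduces to real-valued weak solutions of $\Delta u = 0$ and concludes by citing that continuous functions with the mean value property are smooth harmonic functions, whereas you stay in the holomorphic category and finish with uniform Cauchy estimates plus Morera; both are standard and equally valid.
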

\begin{proof}
By taking real and imaginary parts, it suffices to prove the same statement
for real-valued weak solutions of the Laplace equation: thus consider a 
function $u \in L^1(B,\RR)$ such that $\Delta u = 0$ in the sense of
distributions.  On $B_r$ for any $r < 1$ we
can approximate $u$ by smooth functions $u_\epsilon$ using a standard
mollifier,
$$
u_\epsilon = j_\epsilon * u,
$$
so that $u_\epsilon \to u$ in $L^1(B_r)$ as $\epsilon \to 0$.  
Moreover, $\Delta u_\epsilon =
j_\epsilon * \Delta u = 0$, thus the $u_\epsilon$ are harmonic.
This implies that they satisfy the mean value property, so for every
sufficiently small ball $B_\delta(z)$ about any point $z \in B_r$,
$$
u_\epsilon(z) = \frac{1}{\pi \delta^2} \int_{B_\delta(z)} 
u_\epsilon(s,t) \,ds\,dt.
$$
By $L^1$-convergence, this expression converges pointwise in a neighborhood
of $z$ to the map
$$
z \mapsto \frac{1}{\pi \delta^2} \int_{B_\delta(z)} u(s,t) \,ds \,dt.
$$
The latter is continuous, and must be equal to
$u$ almost everywhere, thus $u$ satisfies the mean value property and is
therefore a smooth harmonic function (see \cite{Evans}*{\S 2.2.3}).
\end{proof}

\begin{thm}
\label{thm:inhomWeak}
Suppose $f \in W^{k,p}(B,\CC^n)$ for some $p \in (1,\infty)$ 
and $u \in L^1(B,\CC^n)$ is a weak
solution of the equation $\dbar u = f$.  Then 
$u \in W^{k+1,p}(B_r,\CC^n)$ for any $r < 1$.
\end{thm}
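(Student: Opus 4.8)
The plan is to reduce the statement to two results already established: the bounded right inverse of $\dbar$ on the unit ball (Theorem~\ref{thm:rightInverse}) and Weyl's lemma for weakly holomorphic functions (Lemma~\ref{lemma:Weyl}). The underlying idea is that a weak solution of $\dbar u = f$ differs from a genuine $W^{k+1,p}$-solution by a weakly holomorphic function, and the latter is automatically smooth, hence contributes nothing to a possible loss of regularity.

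Concretely, I would set $v := \widehat{T} f \in W^{k+1,p}(B,\CC^n)$, where $\widehat{T} : W^{k,p}(B) \to W^{k+1,p}(B)$ is the right inverse furnished by Theorem~\ref{thm:rightInverse}; then $\dbar v = f$ in $L^p(B)$, and integrating by parts against test functions shows that $v$ is in particular a \emph{weak} solution of $\dbar v = f$. Hence $w := u - v$ lies in $L^1(B,\CC^n)$ (using $W^{k+1,p}(B) \hookrightarrow L^1(B)$ on the bounded domain $B$) and satisfies $\dbar w = 0$ in the sense of distributions on $B$. By Lemma~\ref{lemma:Weyl}, $w$ is smooth on $B$. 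Now fix $r < 1$: on the compactly contained ball $B_r$ all derivatives of $w$ are bounded, so $w|_{B_r} \in W^{k+1,p}(B_r,\CC^n)$, while $v \in W^{k+1,p}(B) \subset W^{k+1,p}(B_r)$; therefore $u = v + w \in W^{k+1,p}(B_r,\CC^n)$. Since $r < 1$ was arbitrary, this proves the claim.

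The only verifications required are elementary: that $v$ is a weak solution with the same right-hand side as $u$, so that $\dbar w = 0$ weakly (immediate from linearity of the distributional $\dbar$), and that a function smooth on $B$ restricts to a $W^{k+1,p}$-function on any $B_r$ with $r < 1$ (clear by compactness of $\overline{B_r}$). There is thus no genuine analytic obstacle remaining at this stage — all of the substantive work has been absorbed into Theorem~\ref{thm:rightInverse}, whose proof rests on the Calder\'on--Zygmund-type estimates of Lemma~\ref{lemma:integrals}, and into the mean-value-property argument behind Lemma~\ref{lemma:Weyl}.
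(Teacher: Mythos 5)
Your proof is correct and is essentially identical to the paper's: both write $u$ as $\widehat{T}f$ plus a weakly holomorphic remainder, invoke Lemma~\ref{lemma:Weyl} to conclude the remainder is smooth, and restrict to $B_r$. The extra verifications you flag (that $\widehat{T}f$ is also a weak solution, and that a smooth function lies in $W^{k+1,p}(B_r)$) are indeed routine and are left implicit in the paper.
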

\begin{proof}
By Theorem~\ref{thm:rightInverse}, there is a solution
$\eta \in W^{k+1,p}(B)$ to $\dbar\eta = f$, and then
$\dbar(u - \eta) = 0$.  Lemma~\ref{lemma:Weyl} then
implies that $u - \eta$ is smooth and hence in $W^{k+1,p}(B_r)$
for all $r < 1$, thus $u$ is also in $W^{k+1,p}(B_r)$.
\end{proof}

\begin{cor}
\label{cor:weakRegularity}
Suppose $1 < p < \infty$, $k$ is a nonnegative integer,
$A \in L^\infty(B,\End_\RR(\CC^n))$, $f \in W^{k,p}(B,\CC^n)$ and 
$u \in L^p(B,\CC^n)$ is
a weak solution of the equation $\dbar u + Au = f$.
Then $u \in W^{1,p}(B_r,\CC^n)$ for any $r < 1$.  
Moreover if $A$ is smooth, then 
$u \in W^{k+1,p}(B_r,\CC^n)$, and in particular $u$ is smooth if $f$ is smooth.
\end{cor}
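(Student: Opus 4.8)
The plan is a standard elliptic bootstrapping argument built on the two regularity results just proved for the inhomogeneous Cauchy--Riemann equation, Theorem~\ref{thm:inhomWeak} and Proposition~\ref{prop:inhomogeneousRegularity}. The whole point is to rewrite the equation as $\dbar u = f - Au$ and feed it back into these results, gaining one derivative with each pass while shrinking the domain slightly.

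First I would handle the base case, which already yields the first assertion. Since $A \in L^\infty(B,\End_\RR(\CC^n))$ and $u \in L^p(B,\CC^n)$, the product $Au$ lies in $L^p(B)$, and since $k \ge 0$ we also have $f \in W^{k,p}(B) \subset L^p(B)$; hence $\dbar u = f - Au \in L^p(B) = W^{0,p}(B)$. As $B$ is bounded, $u \in L^p(B) \subset L^1(B)$, so $u$ is a weak solution of $\dbar u = g$ with $g \in W^{0,p}(B)$ in the sense of Theorem~\ref{thm:inhomWeak}, and that theorem (applied with its ``$k$'' equal to~$0$) gives $u \in W^{1,p}(B_r,\CC^n)$ for every $r < 1$. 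It is worth noting that this is the only step that uses merely the $L^\infty$-bound on $A$, and it buys exactly one derivative; to go further one must differentiate the equation, which is precisely why the second half of the statement assumes $A$ smooth.

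Next I would run the induction. Fix $r < 1$ and choose radii $r = r_{k+1} < r_k < \dots < r_1 < r_0 = 1$. We have just shown $u \in W^{1,p}(B_{r_1})$. Assuming inductively that $u \in W^{j,p}(B_{r_j})$ for some $1 \le j \le k$, the smoothness of $A$ implies that multiplication by $A$ preserves $W^{j,p}$ on the relatively compact ball $B_{r_j}$ (expanding $D^\alpha(Au)$ by the Leibniz rule, every term is a bounded derivative of $A$ times a derivative of $u$ of order $\le j$), so $Au \in W^{j,p}(B_{r_j})$; moreover $f \in W^{k,p}(B) \subset W^{j,p}(B_{r_j})$ since $j \le k$. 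Hence $\dbar u = f - Au \in W^{j,p}(B_{r_j})$, and applying Theorem~\ref{thm:inhomWeak} rescaled to the ball $B_{r_j}$ (a translation and dilation reduces to the unit ball) gives $u \in W^{j+1,p}(B_{r_{j+1}})$. After $k$ such steps this produces $u \in W^{k+1,p}(B_{r_{k+1}}) = W^{k+1,p}(B_r)$, and since $r < 1$ was arbitrary we obtain the second assertion. Finally, if $f$ is smooth then $f \in W^{k,p}(B)$ for every $k$, so the above gives $u \in W^{k,p}(B_r)$ for all $k$ and all $r < 1$, and the Sobolev embedding theorem ($kp > 2 + d \Rightarrow W^{k,p} \hookrightarrow C^d$) forces $u$ to be smooth on the interior of $B$.

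I do not expect any genuine obstacle: all the hard analysis is already contained in Theorem~\ref{thm:inhomWeak} and Proposition~\ref{prop:inhomogeneousRegularity}, and the corollary is just their iterated application. The only mild points of care are the clean separation between the $L^\infty$-case (which yields only $W^{1,p}$) and the smooth case (where the bootstrap actually iterates), and the bookkeeping of the shrinking domains, which is handled transparently by fixing the finite chain $r_0 > r_1 > \dots > r_{k+1}$ of radii in advance.
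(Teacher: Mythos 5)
Your proof is correct and follows essentially the same bootstrapping route as the paper: rewrite the equation as $\dbar u = f - Au$, apply Theorem~\ref{thm:inhomWeak} once using only $A \in L^\infty$ to obtain $W^{1,p}$, then iterate using the smoothness of $A$ to gain one derivative per pass. The only difference is that you track the shrinking domains explicitly via a chain of radii and note the rescaling needed to apply the theorem on smaller balls, a bookkeeping detail the paper's proof leaves implicit.
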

\begin{proof}
We have $\dbar u = - Au + f$ of class $L^p$, thus $u \in W^{1,p}(B_r)$ by
Theorem~\ref{thm:inhomWeak}.  If $A$ is also smooth and $k \ge 1$, then 
$-Au + f$ is now of class $W^{1,p}$, so $u \in W^{2,p}(B_r)$, and repeating
this argument inductively, we eventually find $u \in W^{k+1,p}(B_r)$.
\end{proof}

The invertibility results for $\dbar$ will also be useful for proving
more general local existence results, because the property of having a bounded
right inverse is preserved under small perturbations of the operator---thus
any operator close enough to $\dbar$ in the appropriate functional analytic
context is also surjective!

\begin{exercise}
\label{EX:rightInverse}
Show that if $A : X \to Y$ is a bounded linear map between Banach spaces and
$B : Y \to X$ is a bounded right inverse of $A$, then any small perturbation
of $A$ in the norm topology also has a bounded right inverse.
\textsl{Hint: Recall that any small perturbation of the identity on a Banach
space is invertible, as its inverse can be expressed as a power series.}
\end{exercise}

\begin{remark}
In most presentations (e.g.~\cite{McDuffSalamon:Jhol},
\cite{HoferZehnder}), some version of Prop.~\ref{prop:rightInverse}
and Theorem~\ref{thm:rightInverse} is proven
by ``reducing the local problem to a global problem'' so that one can
apply the Fredholm theory of the Cauchy-Riemann operator.
For instance, \cite{McDuffSalamon:Jhol} uses the fact that $\dbar$ is a
surjective Fredholm operator from $W^{1,p}$ to $L^p$ on the \emph{closed}
unit disk if suitable boundary conditions are imposed, and a related
approach is taken in \cite{HoferZehnder}*{Appendix~A.4}, which
introduces the fundamental solution $K(z)$ and defines $Tf$ as a convolution,
but then compactifies $\CC$ to a sphere in order to make $\ker(\dbar)$ 
finite dimensional.  We have chosen
instead to view the existence of a right inverse as an aspect of the 
basic local regularity theory for~$\dbar$, which is a \emph{prerequisite}
for the Fredholm theory mentioned above.  However, 
a second proof of these results will easily present
itself when we discuss the Fredholm theory in Chapter~\ref{chapter:Fredholm}.
\end{remark}

\section{Local existence of holomorphic sections}
\label{sec:localSections}

We now prove a generalization of Lemma~\ref{lemma:localExistence},
which implies the existence of holomorphic structures on complex vector
bundles with Cauchy-Riemann type operators.  The question is a purely local
one, thus we can work in the trivial bundle over the open unit ball
$B \subset \CC$ with coordinates $s + it \in B$ 
and consider operators of the form
$$
C^\infty(B,\CC^n) \to C^\infty(B,\CC^n) : u \mapsto \dbar u + A u
$$
where $\dbar$ denotes the differential operator $\p_s + i\p_t$, and
$A : B \to \End_\RR(\CC^n)$ is a family of real-linear maps
on~$\CC^n$.  For Lemma~\ref{lemma:localExistence} it suffices to assume
$A$ is smooth, but in the proof and in further applications we'll find it
convenient to assume that $A$ has much weaker regularity.  The smoothness
of our solutions will then follow from elliptic regularity.

\begin{thm}
\label{thm:linearExistence}
Assume $A \in L^p(B,\End_\RR(\CC^n))$ for some $p \in (2,\infty]$.  
Then for each finite $q \in (2,p]$, there is an
$\epsilon > 0$ such that for any $u_0 \in \CC^n$, the problem
\begin{equation*}
\begin{split}
\dbar u + A u &= 0 \\
u(0) &= u_0
\end{split}
\end{equation*}
has a solution $u \in W^{1,q}(B_\epsilon,\CC^n)$.
\end{thm}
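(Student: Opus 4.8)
The plan is to reduce the problem by a rescaling so that the zeroth-order coefficient has small $L^q$-norm, and then solve the (linear!) equation by a Neumann-series argument built on the bounded right inverse of $\dbar$ from Proposition~\ref{prop:rightInverse}, adjusting a single constant to meet the pointwise constraint $u(0)=u_0$.

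First I would fix a finite $q\in(2,p]$; since $B$ has finite measure and $q\le p$, H\"older's inequality gives $A\in L^q(B,\End_\RR(\CC^n))$. For $\epsilon\in(0,1]$ set $\hat A_\epsilon(z):=\epsilon\,A(\epsilon z)$ on $B$; a change of variables gives $\|\hat A_\epsilon\|_{L^q(B)}\le\epsilon^{1-2/q}\|A\|_{L^q(B)}$, which tends to $0$ as $\epsilon\to0$ because $1-2/q>0$. The point of this rescaling is the observation that if $\tilde u\in W^{1,q}(B,\CC^n)$ solves $\dbar\tilde u+\hat A_\epsilon\tilde u=0$ with $\tilde u(0)=u_0$, then $u(z):=\tilde u(z/\epsilon)$ lies in $W^{1,q}(B_\epsilon,\CC^n)$ and solves the original problem on $B_\epsilon$, since $\dbar u(z)=\frac{1}{\epsilon}(\dbar\tilde u)(z/\epsilon)=-A(z)u(z)$ and $u(0)=u_0$. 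So it suffices to produce, for all sufficiently small $\epsilon$, a $W^{1,q}(B)$-solution $v$ of $\dbar v+\hat A_\epsilon v=0$ with $v(0)=u_0$.

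Next I would solve this rescaled problem. Let $T:L^q(B)\to W^{1,q}(B)$ be the bounded right inverse of $\dbar$ from Proposition~\ref{prop:rightInverse}, and recall the Sobolev embedding $W^{1,q}(B)\hookrightarrow C^0(\overline B)$ (valid since $q>2$), so that $\|Tf\|_{C^0}\le C\|f\|_{L^q}$ for some constant $C$. Since $\|\hat A_\epsilon g\|_{L^q}\le\|g\|_{C^0}\|\hat A_\epsilon\|_{L^q}$ for $g\in C^0(\overline B)$, the composition $\hat A_\epsilon T$ is a bounded operator on $L^q(B)$ with norm $\le C\|\hat A_\epsilon\|_{L^q}$, hence invertible for $\epsilon$ small by the Neumann series (and $\|(\Id+\hat A_\epsilon T)^{-1}\|\le2$ once $\|\hat A_\epsilon T\|\le 1/2$). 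Trying the ansatz $v=Tf+c$ with $c\in\CC^n$ constant and $f\in L^q(B,\CC^n)$, the equation $\dbar v+\hat A_\epsilon v=0$ becomes $(\Id+\hat A_\epsilon T)f=-\hat A_\epsilon c$, with unique solution $f=f(c):=-(\Id+\hat A_\epsilon T)^{-1}(\hat A_\epsilon c)$, depending linearly on $c$ and satisfying $\|f(c)\|_{L^q}\le 2\|\hat A_\epsilon\|_{L^q}\,|c|$. It then remains to satisfy $v(0)=u_0$, i.e.\ $(\Id+M_\epsilon)c=u_0$, where $M_\epsilon\in\End_\RR(\CC^n)$ is the linear map $c\mapsto (Tf(c))(0)$, which obeys $\|M_\epsilon\|\le 2C\|\hat A_\epsilon\|_{L^q}$. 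Shrinking $\epsilon$ further so that $\|M_\epsilon\|<1$, the map $\Id+M_\epsilon$ is invertible on $\CC^n$; setting $c:=(\Id+M_\epsilon)^{-1}u_0$ and $v:=Tf(c)+c$ gives the desired solution of the rescaled problem, and undoing the rescaling completes the proof. Note that all the smallness thresholds on $\epsilon$ depend only on $A$, $q$ and $\|T\|$, not on $u_0$, exactly as the statement requires.

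This argument is essentially an assembly of already-established facts, so there is no serious obstacle; the one point needing a little care is the bookkeeping of norms under the rescaling, so that the two relevant operators ($\Id+\hat A_\epsilon T$ on $L^q(B)$ and $\Id+M_\epsilon$ on $\CC^n$) become invertible simultaneously for all small $\epsilon$, uniformly in $u_0$. One could alternatively run a contraction-mapping iteration directly, but since the equation is linear the Neumann-series route is cleaner.
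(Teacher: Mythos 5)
Your proof is correct, and it realizes the same underlying idea as the paper's --- on a small enough ball the zeroth-order term is a small perturbation of $\dbar$, which by Proposition~\ref{prop:rightInverse} has a bounded right inverse --- but the implementation is genuinely different in three respects. First, you obtain smallness by \emph{rescaling} to the unit ball, turning $A$ into $\hat A_\epsilon(z)=\epsilon A(\epsilon z)$ with $\|\hat A_\epsilon\|_{L^q(B)}\le\epsilon^{1-2/q}\|A\|_{L^q(B)}\to 0$; the paper instead cuts off, replacing $A$ by $\chi_\epsilon A$ and using that $\|A\|_{L^q(B_\epsilon)}\to 0$, then restricts the solution to $B_\epsilon$ at the end. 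Second, the paper treats the PDE and the pointwise constraint simultaneously, as a single operator $\Phi_\epsilon u=((\dbar+\chi_\epsilon A)u,\,u(0))$ viewed as a small perturbation of the surjective $\Phi=(\dbar,\,\mathrm{ev}_0)$, and invokes the abstract stability of bounded right inverses (Exercise~\ref{EX:rightInverse}); you instead solve sequentially, first inverting $\Id+\hat A_\epsilon T$ on $L^q$ by an explicit Neumann series to get $f(c)$, then inverting $\Id+M_\epsilon$ on $\CC^n$ to fix $u(0)=u_0$. Third, your argument is more constructive (it exhibits the solution as a convergent series and makes the uniformity in $u_0$ manifest through explicit operator-norm bookkeeping), whereas the paper's is shorter because the perturbation lemma absorbs both invertibility steps at once. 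All the estimates you use (the change of variables, the H\"older bound $\|\hat A_\epsilon g\|_{L^q}\le\|g\|_{C^0}\|\hat A_\epsilon\|_{L^q}$, the Sobolev embedding $W^{1,q}\hookrightarrow C^0$ for $q>2$, and the compatibility of the rescaling with $W^{1,q}$ and with the equation) check out, so there is no gap.
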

\begin{proof}
The main idea is that if we take $\epsilon > 0$ sufficiently small,
then the restriction of $\dbar + A$ to $B_\epsilon$ can be regarded as a
small perturbation of the standard operator~$\dbar$, and we conclude
from Prop.~\ref{prop:rightInverse} and Exercise~\ref{EX:rightInverse}
that the perturbed operator is surjective.

Since $q > 2$, the Sobolev embedding theorem implies that functions
$u \in W^{1,q}$ are also continuous and bounded by $\| u \|_{W^{1,q}}$, 
thus we can define a bounded linear operator
$$
\Phi : W^{1,q}(B) \to L^q(B) \times \CC^n : u \mapsto (\dbar u, u(0)).
$$
Prop.~\ref{prop:rightInverse}
implies that this operator is also surjective and has a bounded right inverse,
namely
$$
L^q(B) \times \CC^n \to W^{1,q}(B) : (f,u_0) \mapsto
Tf - Tf(0) + u_0,
$$
where $T : L^q(B) \to W^{1,q}(B)$ is a right inverse of~$\dbar$.
Thus any operator sufficiently close to $\Phi$ in the norm topology
also has a right inverse.
Now define $\chi_\epsilon : B \to \RR$ to be the function that equals $1$ on 
$B_\epsilon$ and $0$ outside of it, and let
$$
\Phi_\epsilon : W^{1,q}(B) \to L^q(B) \times \CC^n : u \mapsto
((\dbar + \chi_\epsilon A) u , u(0)).
$$
To see that this is a bounded operator, it suffices to check that
$W^{1,q} \to L^q : u \mapsto A u$ is bounded if $A \in L^p$; indeed,
$$
\| A u \|_{L^q} \le \| A \|_{L^q} \| u \|_{C^0} \le
c \| A \|_{L^p} \| u \|_{W^{1,q}},
$$
again using the Sobolev embedding theorem and the assumption that
$q \le p$.  Now by this same trick, we find
$$
\|\Phi_\epsilon u - \Phi u\| = \| \chi_\epsilon A u \|_{L^q(B)} 
\le c \| A \|_{L^p(B_\epsilon)} \| u \|_{W^{1,q}(B)},
$$
thus $\|\Phi_\epsilon - \Phi\|$ is small if $\epsilon$ is small, and it
follows that in this case $\Phi_\epsilon$ is surjective.  Our desired solution
is therefore the restriction of any 
$u \in \Phi_\epsilon^{-1}(0,u_0)$ to~$B_\epsilon$.
\end{proof}

By Exercise~\ref{EX:smoothSections}, the local solutions found above 
are smooth if $A : B \to \End_\RR(\CC^n)$ is smooth, thus
applying this to any smooth complex vector bundle with a complex-linear
Cauchy-Riemann operator, we've completed the proof of 
Lemma~\ref{lemma:localExistence} and hence
Theorem~\ref{thm:holomorphicBundles}.

\section{The similarity principle}
\label{sec:similarity}

Another consequence of the local existence result in \S\ref{sec:localSections}
is that all solutions to
equations of the form $\dbar u + Au = 0$, even when $A$ is \emph{real}-linear,
behave like holomorphic sections in certain respects.  This will be
extremely useful in studying the local properties of $J$-holomorphic curves,
as well as global transversality issues.  In practice, we'll usually need this
result only in the case where $A$ is smooth, but we'll state it in greater
generality since the proof is not any harder.

\begin{thm}[The similarity principle]
\label{thm:similarity}
Suppose $A \in L^\infty(B,\End_\RR(\CC^n))$  and
$u \in W^{1,p}(B,\CC^n)$ for some $p > 2$ is a solution of the
equation $\dbar u + Au = 0$ with $u(0) = 0$.  Then for sufficiently 
small $\epsilon > 0$, there exist maps $\Phi \in C^0(B_\epsilon,
\End_\CC(\CC^n))$ and $f \in C^\infty(B_\epsilon,\CC^n)$ such that
$$
u(z) = \Phi(z) f(z),
\qquad
\dbar f = 0,
\qquad\text{ and }\qquad
\Phi(0) = \1.
$$
\end{thm}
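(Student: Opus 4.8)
The plan is to first absorb the complex-antilinear part of $A$ into a complex-linear coefficient, then produce the conjugating map $\Phi$ by solving an auxiliary matrix-valued Cauchy--Riemann equation with the local existence theorem, and finally read off the holomorphic factor $f$ and invoke elliptic regularity.

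\emph{Reduction to complex-linear $A$.} Decompose $A = A^{\CC} + A^{0,1}$ into its complex-linear and complex-antilinear parts. The term $A^{0,1}u$ is the only obstruction to a clean $\dbar$-equation, so I would rewrite it as $Cu$ for a complex-linear $C$: using the standard Hermitian inner product $\langle\cdot,\cdot\rangle$ on $\CC^n$, set $C(z)\xi := |u(z)|^{-2}\langle u(z),\xi\rangle\,A^{0,1}(z)u(z)$ where $u(z)\neq 0$, and $C(z) := 0$ otherwise. Since $u$ is continuous (because $p > 2$), $C$ is measurable, and $\|C(z)\| \le \|A^{0,1}(z)\| \le \|A(z)\|$, so $C \in L^\infty$; by construction $C(z)u(z) = A^{0,1}(z)u(z)$ for every $z$. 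Hence $u$ solves $\dbar u + \widetilde A u = 0$ with $\widetilde A := A^{\CC} + C \in L^\infty(B,\End_{\CC}(\CC^n))$, and from here on I may assume $A$ itself is complex-linear and bounded.

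\emph{The conjugating matrix.} Look for $\Phi$ valued in $\End_{\CC}(\CC^n)$ solving $\dbar\Phi + A\Phi = 0$ with $\Phi(0) = \1$. Viewing $\End_{\CC}(\CC^n) \cong \CC^{n^2}$ as a complex vector space and left-multiplication by $A$ as a (real-linear) zeroth-order coefficient, Theorem~\ref{thm:linearExistence}, with the role of ``$p$'' played by $\infty$, provides for some $\epsilon > 0$ and some finite $q > 2$, which I also take $\le p$ so that $u \in W^{1,q}(B_\epsilon,\CC^n)$, a solution $\Phi \in W^{1,q}(B_\epsilon,\End_{\CC}(\CC^n))$ with $\Phi(0) = \1$. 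By the Sobolev embedding theorem $\Phi$ is continuous, so after shrinking $\epsilon$ I may assume $\Phi(z)$ is invertible as a complex-linear map for every $z \in B_\epsilon$, and then $z \mapsto \Phi(z)^{-1}$ is again of class $W^{1,q}$ by \eqref{eqn:CkPairing}.

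\emph{Extracting $f$ and regularity.} Put $f := \Phi^{-1}u \in W^{1,q}(B_\epsilon,\CC^n)$, a legitimate product since $W^{1,q}$ is a Banach algebra for $q > 2$. From $\Phi f = u$ and the Leibniz rule, $(\dbar\Phi)f + \Phi\,\dbar f = \dbar u = -Au = -A\Phi f = (\dbar\Phi)f$, hence $\Phi\,\dbar f = 0$ and therefore $\dbar f = 0$. Thus $f$ is a weakly holomorphic $L^1$-function, so $f$ is smooth by Lemma~\ref{lemma:Weyl}, and $u = \Phi f$ with $\Phi(0) = \1$ as required. The genuinely clever step here is the reduction in the first paragraph --- the rank-one construction that trades the antilinear term for a complex-linear coefficient without destroying the $L^\infty$ bound; everything afterward is a routine application of local existence plus the smoothness of weak $\dbar$-solutions. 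The only minor care needed is that $\widetilde A$ is merely bounded rather than continuous, but Theorem~\ref{thm:linearExistence} is already stated with that generality.
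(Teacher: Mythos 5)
Your proof is correct and follows essentially the same route as the paper's: replace the real-linear zeroth-order term by a complex-linear $L^\infty$ map agreeing with it on $u$, solve $(\dbar + \widetilde{A})\Phi = 0$ with $\Phi(0)=\1$ via Theorem~\ref{thm:linearExistence}, and extract $f = \Phi^{-1}u$ using the Leibniz rule and Lemma~\ref{lemma:Weyl}. The only cosmetic difference is that the paper replaces all of $A$ (not just its antilinear part) by a complex-linear map agreeing with it on $u$; your explicit rank-one formula for $C$ is just a concrete instance of the paper's ``extend to a complex-linear map with a uniform bound.''
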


The theorem says in effect that the trivial complex vector bundle 
$B \times \CC^n \to B$ admits a holomorphic structure for which the given
$u$ is a holomorphic section.  In particular, this implies that if $u$
is not identically zero, then the zero at~$0$ is isolated,
a fact that we'll often find quite useful.  
There's a subtlety here to be aware of:
the holomorphic structure in question is generally 
\emph{not compatible with the
canonical smooth structure} of the bundle, i.e.~the sections that we now
call ``holomorphic'' are not smooth in the usual sense.  They will instead be
of class $W^{1,q}$ for some $q > 2$, 
which implies they're continuous, and that's
enough to imply the above statement about $u$ having isolated zeroes.
Of course, a holomorphic structure also \emph{induces} a smooth structure
on the bundle, but it will in general be a \emph{different} smooth structure.

\begin{proof}[Proof of Theorem~\ref{thm:similarity}]
Given the solution $u \in W^{1,p}(B,\CC^n)$,
we claim that there exists a map $C \in L^\infty(B,\End_\CC(\CC^n))$
such that $C(z) u(z) = A(z) u(z)$ almost everywhere.  Indeed, whenever
$u(z) \ne 0$ it is simple enough to define
$$
C(z) \frac{u(z)}{|u(z)|} = A(z) \frac{u(z)}{|u(z)|}
$$
and extend $C(z)$ to a complex-linear map so that it satisfies a uniform
bound in~$z$ almost everywhere; 
it need not be continuous.  Now $(\dbar + C)u = 0$,
and we use Theorem~\ref{thm:linearExistence} to find 
a basis of $W^{1,p}$-smooth solutions 
to $(\dbar + C) v = 0$ on $B_\epsilon$ that define
the standard basis of $\CC^n$ at~$0$; equivalently, this is a map
$\Phi \in W^{1,p}(B_\epsilon,\End_\CC(\CC^n))$ that satisfies
$(\dbar + C)\Phi = 0$ and $\Phi(0) = \1$.  Since $p > 2$, $\Phi$ is continuous
and we can thus assume without loss of generality that $\Phi(z)$ is
invertible everywhere on~$B_\epsilon$, and the smoothness of the map
$\GL(n,\CC) \to \GL(n,\CC) : \Psi \mapsto \Psi^{-1}$ then implies via 
\eqref{eqn:CkPairing} that $\Phi^{-1}  \in W^{1,p}(B_\epsilon,\End_\CC(\CC^n))$.
Then we can define a function $f := \Phi^{-1} u : B_\epsilon \to \CC^n$,
which is of class $W^{1,p}$ since $W^{1,p}$ is a Banach algebra.
But since $u = \Phi f$, the Leibnitz rule implies $\dbar f = 0$, thus
$f$ is smooth and holomorphic.
\end{proof}

\begin{exercise}
\label{EX:Jvaries}
By a change of local trivialization, show that a minor variation on
Theorem~\ref{thm:similarity} also holds for any $u : B \to \CC^n$ satisfying
$$
\p_s u(z) + J(z) \p_t u(z) + A(z) u(z) = 0,
$$
where $J(z)$ is a smooth family of complex structures on $\CC^n$, parametrized
by $z \in B$.  In particular, $u$ has only isolated zeroes.
\end{exercise}

\begin{remark}
\label{remark:smoothness}
It will occasionally be useful to note that if
the $0$th-order term $A(z)$ is not only smooth but \emph{complex}-linear, 
then the term $\Phi(z)$ in the factorization $u(z) = \Phi(z) f(z)$
given by Theorem~\ref{thm:similarity} will also be smooth.  This is clear
by a minor simplification of the proof, since it is no longer necessary
to replace $A(z)$ by a separate complex-linear term $C(z)$ (which in our
argument above could not be assumed to be more regular than $L^\infty$),
but suffices to find a local solution of the equation 
$(\dbar + A) \Phi = 0$ with $\Phi(0) = \1$.  This exists due to 
Theorem~\ref{thm:linearExistence} and is smooth by the regularity results 
of \S\ref{sec:estimates}.  A similar remark holds in the generalized situation
treated by Exercise~\ref{EX:Jvaries},
whenever $\p_s + J(z) \p_t + A(z)$ defines a \emph{complex}-linear operator with
smooth coefficients, e.g.~it is always true if $J$ is smooth and $A \equiv 0$.
\end{remark}

We shall study a few simple applications of the similarity principle in the
next two sections.

\section{Unique continuation}
\label{sec:continuation}

The following corollary of the similarity principle will be important when 
we study the transversality question for global solutions to the linearized 
Cauchy-Riemann equation.

\begin{cor}
Suppose $u : (\Sigma,j) \to (M,J)$ is a smooth $J$-holomorphic curve and
$\eta \in \Gamma(u^*TM)$ is in the kernel of the linearization
$D\dbar_J(u)$.  Then either $\eta \equiv 0$ or 
the zero set of $\eta$ is discrete.
\end{cor}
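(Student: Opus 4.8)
The plan is to reduce the statement to the similarity principle (Theorem~\ref{thm:similarity}) by expressing the linearized operator $D\dbar_J(u) = \mathbf{D}_u$ in a local trivialization. By the definition of $\mathbf{D}_u$ in \eqref{eqn:linearization} and Exercise~\ref{EX:ChristoffelReal}, around any point $z_0 \in \Sigma$ we can choose holomorphic local coordinates identifying a neighborhood of $z_0$ with the ball $B \subset \CC$ and a smooth complex trivialization of $u^*TM$ over this neighborhood, so that the equation $\mathbf{D}_u \eta = 0$ becomes $\dbar \eta + A\eta = 0$ for a smooth map $A : B \to \End_\RR(\CC^n)$, where $\eta$ is now viewed as a $\CC^n$-valued function. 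Since $\eta$ is smooth, it is certainly of class $W^{1,p}(B)$ for any $p > 2$ after shrinking $B$ slightly, so the hypotheses of Theorem~\ref{thm:similarity} are met (with $A \in L^\infty$).

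Next I would invoke the similarity principle directly. Suppose $\eta(z_0) = 0$; translating so $z_0$ corresponds to $0 \in B$, Theorem~\ref{thm:similarity} gives, on a possibly smaller ball $B_\epsilon$, a factorization $\eta(z) = \Phi(z) f(z)$ with $\Phi$ continuous and pointwise invertible, $\Phi(0) = \1$, and $f$ holomorphic. Since $\Phi(z)$ is invertible, the zeros of $\eta$ near $z_0$ are exactly the zeros of the $\CC^n$-valued holomorphic function $f$. Now either $f \equiv 0$ on $B_\epsilon$, in which case $\eta \equiv 0$ on $B_\epsilon$, or $f$ is a nonzero holomorphic map and hence has an isolated zero at $0$ (each component of $f$ is a holomorphic function, and since $f(0)=0$ but $f \not\equiv 0$, the zero set is discrete by the identity theorem for holomorphic functions of one variable). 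Thus $z_0$ is an isolated zero of $\eta$.

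Finally I would promote this local dichotomy to a global statement via a connectedness argument. Let $Z = \{ z \in \Sigma : \eta(z) = 0 \}$, and let $Z_0 \subset Z$ be the set of points that have a neighborhood on which $\eta$ vanishes identically (equivalently, the interior of $Z$). By the local analysis above, every point of $Z$ is either isolated in $Z$ or lies in $Z_0$; hence if $Z_0 = \emptyset$ then $Z$ is discrete. On the other hand $Z_0$ is open by definition, and it is also closed in $\Sigma$: if $z_n \in Z_0$ with $z_n \to z$, then $\eta(z) = 0$ by continuity, so the local factorization $\eta = \Phi f$ applies near $z$, and since $\eta$ vanishes on a nonempty open subset near $z$, the holomorphic function $f$ vanishes on a nonempty open set and hence (by the identity theorem) vanishes identically near $z$, giving $z \in Z_0$. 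Since $\Sigma$ is connected (or, arguing component by component if not), either $Z_0 = \Sigma$, i.e.~$\eta \equiv 0$, or $Z_0 = \emptyset$, i.e.~$Z$ is discrete. The main obstacle is purely bookkeeping: making sure the coordinate and trivialization changes really do bring $\mathbf{D}_u$ into the exact form $\dbar + A$ required by Theorem~\ref{thm:similarity}, which is precisely the content of Exercise~\ref{EX:ChristoffelReal} together with the formula \eqref{eqn:linearization}; once that is in hand, the argument is immediate.
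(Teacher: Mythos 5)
Your proof is correct and is exactly the argument the paper has in mind: the corollary is presented as an immediate consequence of the similarity principle, obtained by writing $\mathbf{D}_u$ locally as $\dbar + A$ (Exercise~\ref{EX:ChristoffelReal}), applying Theorem~\ref{thm:similarity} at each zero, and globalizing by the open-closed argument on the connected surface. The paper leaves these details implicit, so your write-up simply fills them in faithfully.
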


On the local level, one can view this as a unique continuation result
for $J$-holomorphic curves.  The following is a simple special case
of such a result, which we'll generalize in a moment.

\begin{prop}
\label{prop:uniqueContin}
Suppose $J$ is a smooth almost complex structure on $\CC^n$ and
$u,v : B \to \CC^n$ are smooth $J$-holomorphic curves such that
$u(0) = v(0) = 0$ and $u$ and $v$ have matching partial derivatives 
of all orders at~$0$.  Then $u \equiv v$ on a neighborhood of~$0$.
\end{prop}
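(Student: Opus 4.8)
The plan is to reduce this statement to the similarity principle (Theorem~\ref{thm:similarity}) applied to the difference $w := u - v$. First I would set up the nonlinear Cauchy-Riemann equation for $u$ and $v$ in the form \eqref{eqn:nonlinearCR2}: $\p_s u + J(u)\p_t u = 0$ and $\p_s v + J(v)\p_t v = 0$. Subtracting, one gets
$$
\p_s w + J(u)\p_t w + \bigl[ J(u) - J(v) \bigr] \p_t v = 0.
$$
The term $J(u) - J(v)$ should be rewritten using the fundamental theorem of calculus along the segment from $v(z)$ to $u(z)$: writing $J(u)-J(v) = \left( \int_0^1 dJ\bigl(v + \tau w\bigr)\,d\tau \right) w =: \widetilde{A}(z)\, w(z)$ for a smooth family of real-linear maps $\widetilde{A}(z)$. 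This turns the equation into
$$
\p_s w + J(u(z))\,\p_t w + A(z)\, w = 0,
$$
where $A(z) := \widetilde{A}(z)\bigl(\cdot\bigr)\p_t v(z)$ is a bounded (indeed smooth) family of real-linear endomorphisms of $\CC^n$, and $J(u(z))$ is a smooth family of complex structures with $w(0) = 0$. This is exactly the setting of Exercise~\ref{EX:Jvaries}, the variant of the similarity principle for equations $\p_s u + J(z)\p_t u + A(z) u = 0$.

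Next I would invoke Exercise~\ref{EX:Jvaries} to obtain, on a small ball $B_\epsilon$, a factorization $w(z) = \Phi(z) f(z)$ with $\Phi \in C^0(B_\epsilon, \End_\RR(\CC^n))$ invertible, $\Phi(0) = \1$, and $f : B_\epsilon \to \CC^n$ holomorphic (in the sense of the trivialization used in that exercise — strictly speaking one should be slightly careful because the complex structure on the target varies with $z$, but the change-of-trivialization argument indicated in Exercise~\ref{EX:Jvaries} handles this). Since $\Phi(z)$ is invertible everywhere on $B_\epsilon$, the zero set of $w$ coincides with the zero set of $f$. Now the hypothesis that $u$ and $v$ have matching partial derivatives of all orders at $0$ says precisely that $w$ vanishes to infinite order at $0$; since $\Phi(0) = \1$ and $\Phi$ is continuous, $f$ also vanishes to infinite order at $0$ (one can see this by noting $f = \Phi^{-1} w$ with $\Phi^{-1}$ of the same Sobolev regularity as $\Phi$, hence continuous, so all Taylor coefficients of $f$ at $0$ vanish). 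But a holomorphic $\CC^n$-valued function vanishing to infinite order at a point is identically zero near that point, by the standard fact from complex analysis that the Taylor series of a holomorphic function converges to it. Hence $f \equiv 0$ on $B_\epsilon$, so $w \equiv 0$, i.e.\ $u \equiv v$ on a neighborhood of $0$.

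The main obstacle I anticipate is purely bookkeeping rather than conceptual: one must check that the difference equation genuinely has the form covered by Exercise~\ref{EX:Jvaries}, i.e.\ that the zeroth-order coefficient $A(z)$ is bounded (which is clear, since $dJ$ is smooth, $u, v$ are smooth, and $\p_t v$ is continuous on a compact neighborhood) and that the family of complex structures $z \mapsto J(u(z))$ is smooth (immediate, as $J$ is smooth and $u$ is smooth). A second minor point is the transfer of ``infinite-order vanishing'' from $w$ to $f$ through the factor $\Phi^{-1}$: this works because $\Phi, \Phi^{-1}$ are at least continuous (in fact $W^{1,q}$ for some $q > 2$), so the value and all partial derivatives of $f = \Phi^{-1}w$ at $0$ — computed via the Leibniz rule — are built from those of $w$ and vanish. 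A cleaner alternative, if one wants to avoid differentiating $\Phi^{-1}$ directly, is to observe that $|f(z)| \le \|\Phi^{-1}\|_{C^0}\,|w(z)|$, so $f$ vanishes to infinite order at $0$ as a consequence of the same estimate for $w$; then the identity theorem for holomorphic functions finishes it. Everything else is routine.
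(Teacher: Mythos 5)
Your proposal is correct and follows essentially the same route as the paper's proof: subtract the two Cauchy--Riemann equations, interpolate $J(u)-J(v)$ via the fundamental theorem of calculus to obtain a linear equation $\p_s w + J(u(z))\p_t w + A(z)w = 0$ for the difference, apply the similarity principle in the form of Exercise~\ref{EX:Jvaries}, and conclude via the infinite-order vanishing of the holomorphic factor $f$. The only caveat is that your first method of transferring infinite-order vanishing to $f$ (differentiating $\Phi^{-1}w$ by a Leibniz rule) is not justified since $\Phi$ is merely continuous, but your ``cleaner alternative'' --- the pointwise estimate $|f(z)| \le \|\Phi^{-1}\|_{C^0}|w(z)| = o(|z|^k)$ for all $k$ --- is exactly what the paper does and is the argument you should keep.
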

\begin{proof}
Let $h = v - u : B \to \CC^n$.  We have
\begin{equation}
\label{eqn:u}
\p_s u + J(u(z)) \p_t u = 0
\end{equation}
and
\begin{equation}
\label{eqn:v}
\begin{split}
\p_s v + J(u(z)) \p_t v &= \p_s v + J(v(z)) \p_t v + \left[
J(u(z)) - J(v(z)) \right] \p_t v \\
&= - \left[ J(u(z) + h(z)) - J(u(z)) \right] \p_t v \\
&= -\left( \int_0^1 \frac{d}{dt} J(u(z) + t h(z)) \,dt \right) \p_t v \\
&= -\left(\int_0^1 dJ(u(z) + t h(z)) \cdot h(z) \,dt \right) 
\p_t v =: -A(z) h(z),
\end{split}
\end{equation}
where the last step defines a smooth family of linear maps $A(z) \in
\End_\RR(\CC^n)$.  Subtracting \eqref{eqn:u} from \eqref{eqn:v} gives
the linear equation
$$
\p_s h(z) + J(u(z)) \p_t h(z) + A(z) h(z) = 0,
$$
thus by Theorem~\ref{thm:similarity} and Exercise~\ref{EX:Jvaries},
$h(z) = \Phi(z) f(z)$ near~$0$ for some continuous 
$\Phi(z) \in \GL(2n,\RR)$ and holomorphic $f(z) \in \CC^n$.  Now if
$h$ has vanishing derivatives of all orders at~$0$, Taylor's formula
implies 
$$
\lim_{z \to 0} \frac{| \Phi(z) f(z) |}{|z|^k} = 0
$$
for all $k \in \NN$, so $f$ must also have a zero of infinite order
and thus $f \equiv 0$.
\end{proof}

The preceding proposition is not generally as useful as one would hope,
because we'll usually want to think of pseudoholomorphic curves not as 
specific maps but as equivalence classes of maps up to parametrization,
whereas the condition that $u$ and $v$ have matching 
derivatives of all
orders at a point depends heavily on the choices of parametrizations.
We shall now prove a more powerful version of unique continuation that
doesn't have this drawback.  It will be of use to us when we study
local intersection properties in \S\ref{sec:intersections}.

\begin{thm}
\label{thm:uniqueContin}
Suppose $j_1$ and $j_2$ are smooth complex structures on $B$, $J$ 
is a smooth almost complex structure on $\CC^n$, and 
$u : (B,j_1) \to (\CC^n,J)$ and 
$v : (B,j_2) \to (\CC^n,J)$ are smooth nonconstant
pseudoholomorphic curves which satisfy $u(0) = v(0) = 0$ and have
matching partial derivatives to all orders at~$z=0$.  Then for sufficiently
small $\epsilon > 0$ there exists an embedding $\varphi : B_\epsilon \to B$
with $\varphi(0) = 0$ such that $u \equiv v \circ \varphi$ on $B_\epsilon$.
\end{thm}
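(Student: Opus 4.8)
The plan is to reduce the statement to Proposition~\ref{prop:uniqueContin}, which already settles the case in which \emph{both} domains carry the standard complex structure. The only genuine difficulty will be to show that the hypotheses force $j_1$ and $j_2$ to have the same infinite-order Taylor expansion at $0$; once that is known, the reparametrizations can be chosen to agree infinitesimally, and the rest is bookkeeping.

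First I would prove that $j_1$ and $j_2$ \emph{agree to infinite order at $0$}. Writing the nonlinear Cauchy--Riemann equations for $u$ and $v$ in the ambient coordinate $z=s+it$ on $B$ as $Du\cdot J_1 = J(u)\,Du$ and $Dv\cdot J_2 = J(v)\,Dv$, where $J_1(z),J_2(z)$ are the matrices representing $j_1,j_2$ and $Du,Dv$ are the differentials, subtraction and a rearrangement give
\[
Du(z)\bigl(J_1(z)-J_2(z)\bigr) \;=\; J(u)\bigl(Du-Dv\bigr) + \bigl(J(u)-J(v)\bigr)Dv - \bigl(Du-Dv\bigr)J_2 \;=:\; R(z).
\]
Because $u$ and $v$ have matching partial derivatives of all orders at $0$, the same is true of $Du,Dv$ and, using $u(0)=v(0)$ and the smoothness of $J$, of $J(u),J(v)$; hence $R$ vanishes to infinite order at $0$. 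On the other hand $u$ is nonconstant, so by Theorem~\ref{thm:IShol} the section $du$ is holomorphic with respect to a suitable holomorphic structure and therefore vanishes at $0$ to some \emph{finite} order $m\ge 0$; consequently $Du(z)\ne 0$ for $z$ near $0$ with $z\ne 0$ (cf.\ also Corollary~\ref{cor:critical}), and there $Du(z)$ is injective and admits a left inverse of operator norm at most $C|z|^{-m}$. Applying this left inverse to the displayed identity yields $\|J_1(z)-J_2(z)\|\le C|z|^{-m}\|R(z)\|$, which still vanishes to infinite order; hence $J_1-J_2$, and therefore $j_1-j_2$, vanishes to infinite order at $0$.

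Granting this, I would pass to the standard case. By Theorem~\ref{thm:RiemannSurfaces} there exist biholomorphic charts $\alpha\colon(\uU_1,i)\to(B,j_1)$ and $\beta\colon(\uU_2,i)\to(B,j_2)$ with $\alpha(0)=\beta(0)=0$, and since the infinite-order jet of such a chart at $0$ is determined by its $1$-jet together with that of the complex structure it straightens, these can be chosen with a common $1$-jet (possible as $j_1(0)=j_2(0)$), hence with the same $\infty$-jet at $0$. Then $\tilde u:=u\circ\alpha$ and $\tilde v:=v\circ\beta$ are $J$-holomorphic maps of a neighbourhood of $0$ in $(\CC,i)$ into $\CC^n$ with $\tilde u(0)=\tilde v(0)=0$, and composing the common $\infty$-jet of $u,v$ with that of $\alpha,\beta$ shows $\tilde u$ and $\tilde v$ have matching partial derivatives of all orders at $0$. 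Proposition~\ref{prop:uniqueContin} then gives $\tilde u\equiv\tilde v$ on some ball $B_\delta$, so $u\circ\alpha=v\circ\beta$ there; setting $\varphi:=\beta\circ\alpha^{-1}$ on a sufficiently small ball $B_\epsilon\subset\alpha(B_\delta)$ produces, as a composition of biholomorphisms, a holomorphic embedding with $\varphi(0)=0$ and $u\equiv v\circ\varphi$ on $B_\epsilon$.

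The hard part will be the infinite-order agreement of $j_1$ and $j_2$, and within it the case where $0$ is a critical point of $u$, so that $Du(0)=0$: there the identity $Du(z)\bigl(J_1(z)-J_2(z)\bigr)=R(z)$ cannot simply be solved at $z=0$, and one must control the rate at which $Du(z)$ degenerates as $z\to0$---precisely what the finiteness of the vanishing order of the holomorphic section $du$ (Theorem~\ref{thm:IShol}) supplies. A secondary, routine point is the choice in the second step of two coordinate charts sharing the same infinite jet at $0$.
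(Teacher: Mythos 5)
Your overall architecture matches the paper's: first show that $j_1$ and $j_2$ agree to infinite order at $0$, then reduce to Proposition~\ref{prop:uniqueContin} by reparametrizing both domains to the standard structure. Your first step is a legitimate variant of the paper's Lemma~\ref{lemma:uniqueContin1}: the paper instead identifies $\p_z^{m-1}du(0)$ as the (complex-linear, hence injective) leading coefficient via Lemma~\ref{lemma:higherDerivs} and runs an induction on $\left. D^\alpha\left[du\cdot(i-j)\right]\right|_{z=0}=0$, while your left-inverse bound $C|z|^{-m}$ needs exactly that injectivity of the leading coefficient of $du$; this does follow from the complex-linearity of $du(z)$ and the factorization $du(z)=z^kF(z)$, so that part is fine, if slightly under-justified.

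The gap is in the second step. The assertion that ``the infinite-order jet of such a chart at $0$ is determined by its $1$-jet together with that of the complex structure it straightens'' is false: already for $j=i$ the charts are arbitrary local biholomorphisms fixing $0$, and $z\mapsto z$ and $z\mapsto z+z^2$ have the same $1$-jet but different $2$-jets. The equation $\p_t\alpha=j(\alpha)\,\p_s\alpha$ determines the mixed partials $\p_s^a\p_t^b\alpha(0)$ with $b\ge 1$ from the pure $\p_s$-derivatives, but the latter remain free for all orders $\ge 2$, so you have not produced charts $\alpha,\beta$ with matching $\infty$-jets. What is true is that for arbitrary charts $\alpha,\beta_0$ the transition map $g=\beta_0^{-1}\circ\alpha$ satisfies $Tg\circ i-i\circ Tg=(T\beta_0)^{-1}\circ(j_1-j_2)\circ T\alpha$, which vanishes to infinite order at $0$, so the $\infty$-jet of $g$ is \emph{formally} holomorphic; to correct $\beta_0$ you then need an actual biholomorphism $f$ of $(\CC,i)$ near $0$ with the same $\infty$-jet as $g$. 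A smooth function whose $\dbar$ vanishes to infinite order at a point need not coincide with, or even have a convergent Taylor series equal to, a holomorphic function a priori, so this requires proof: it is precisely the content of the paper's Lemma~\ref{lemma:holoTaylor}, established there by a Cauchy-integral argument, and it is the one genuinely nontrivial ingredient your proposal omits. Once such an $f$ is supplied, $\beta:=\beta_0\circ f$ has the same $\infty$-jet as $\alpha$ and the rest of your reduction to Proposition~\ref{prop:uniqueContin} goes through.
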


A corollary is that if $u , v : (B,i) \to (\CC^n,J)$ are 
$J$-holomorphic curves that have the same $\infty$-jet at~$0$ after a
smooth reparametrization, then they are also identical up to 
parametrization.  The reparametrization may be
smooth but not necessarily holomorphic, in which case it changes $i$ on the
domain to a nonstandard complex structure $j$, so that the reparametrized curve
no longer satisfies $\p_s u + J(u) \p_t = 0$ and
Prop.~\ref{prop:uniqueContin} thus no longer applies.  We will show however
that in this situation, one can find a diffeomorphism on the domain that
not only transforms $j$ back into~$i$ but also has vanishing derivatives
of all orders at~$0$, thus producing the conditions for 
Prop.~\ref{prop:uniqueContin}.

To prepare for the next lemma, recall that if $d \in \NN$ and
$u : B \to \CC^n$ is a $C^d$-smooth map, then its degree~$d$
Taylor polynomial at $z=0$ can be expressed in terms of the variables
$z = s + it$ and $\bar{z} = s - it$ as
\begin{equation}
\label{eqn:Taylor}
\sum_{k=0}^d \sum_{j+\ell=k} \frac{1}{j! \ell!}
\p_z^j \p_{\bar{z}}^\ell u(0) z^j \bar{z}^\ell,
\end{equation}
where the differential operators 
$$
\p_z = \frac{\p}{\p z} = \frac{1}{2} (\p_s - i \p_t)
\qquad\text{ and }\qquad
\p_{\bar{z}} = \frac{\p}{\p \bar{z}} = \frac{1}{2} (\p_s + i\p_t)
$$
are defined via the formal chain rule.  If you've never seen this before,
you should take a moment to convince yourself that
\eqref{eqn:Taylor} matches the standard Taylor's formula for a 
complex-valued function of two real variables $u(s,t) = u(z)$.  
The advantage of this formalism is that it is quite easy to recognize
whether a polynomial expressed in $z$ and $\bar{z}$ is holomorphic:
the holomorphic polynomials are precisely those which only depend on powers
of~$z$, and not~$\bar{z}$.

In the following, we'll use multiindices of the form $\alpha=(j,k)$ to
denote higher order partial derivatives with respect to $z$ and $\bar{z}$
respectively, i.e.
$$
D^\alpha = \p^j_z \p^k_{\bar{z}}.
$$

\begin{lemma}
\label{lemma:higherDerivs}
Suppose $u : B \to \CC^n$ is a smooth solution to the linear
Cauchy-Riemann type equation
\begin{equation}
\label{eqn:linearCR3}
\p_s u(z) + J(z) \p_t u(z) + A(z) u(z) = 0
\end{equation}
with $u(0) = 0$, where $J, A \in C^\infty(B,\End_\RR(\CC^n))$
with $[J(z)]^2 = -\1$ and $J(0) = i$.  If there exists
$k \in \NN$ such that $\p^\ell_z u(0) = 0$ for all 
$\ell = 1,\ldots,k$, then $\p_{\bar{z}} \p^\alpha u(0) = 0$ for all multiindices
$\alpha$ with $|\alpha| \le k$.  In particular, the first $k$ derivatives
of $u$ at $z=0$ all vanish, and $\p^{k+1}_z u(0)$ is the only potentially
nonvanishing partial derivative of order~$k+1$.
\end{lemma}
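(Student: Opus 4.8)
The plan is to rewrite the real-linear Cauchy--Riemann equation \eqref{eqn:linearCR3} in terms of the Wirtinger operators $\p_z = \tfrac12(\p_s - i\p_t)$ and $\p_{\bar z} = \tfrac12(\p_s + i\p_t)$, solve for $\p_{\bar z}u$, and then induct on the order of differentiation at the origin.

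First I would use $\p_s u = \p_z u + \p_{\bar z}u$ and $\p_t u = i(\p_z u - \p_{\bar z}u)$, where $i$ denotes the standard complex structure on the target $\CC^n$, to rewrite \eqref{eqn:linearCR3} as
\[
(\1 + J(z)\circ i)\,\p_z u + (\1 - J(z)\circ i)\,\p_{\bar z} u + A(z)\, u = 0 .
\]
Since $J(0) = i$, at the origin $J(0)\circ i = i\circ i = i^2 = -\1$, so $\1 - J(0)\circ i = 2\cdot\1$ is invertible, and after shrinking $B$ I may assume $\1 - J(z)\circ i$ is invertible throughout $B$. Setting $P(z) := -(\1 - J(z)\circ i)^{-1}(\1 + J(z)\circ i)$ and $Q(z) := -(\1 - J(z)\circ i)^{-1}A(z)$, both smooth, the equation becomes
\[
\p_{\bar z}u = P(z)\,\p_z u + Q(z)\,u , \qquad P(0) = 0 ,
\]
the last identity because $\1 + J(0)\circ i = 0$.

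Next I would prove by induction on $m = 0,1,\dots,k$ that $\p_{\bar z}\p^\alpha u(0) = 0$ for every multiindex $\alpha$ with $|\alpha| = m$, using throughout the standing hypotheses $u(0) = 0$ and $\p_z^\ell u(0) = 0$ for $1 \le \ell \le k$. The case $m = 0$ is immediate from evaluating the previous display at $z = 0$: $\p_{\bar z}u(0) = P(0)\p_z u(0) + Q(0)u(0) = 0$. For the inductive step, given $\alpha = (a,b)$ with $a+b = m \le k$, I would apply $\p^\alpha = \p_z^a\p_{\bar z}^b$ to the equation $\p_{\bar z}u = P\p_z u + Qu$; its left side becomes $\p_{\bar z}\p^\alpha u$, and the Leibniz rule expands the right side into a sum of terms $\bigl(\p_z^{a'}\p_{\bar z}^{b'}P\bigr)\bigl(\p_z^{a''+1}\p_{\bar z}^{b''}u\bigr)$ and $\bigl(\p_z^{a'}\p_{\bar z}^{b'}Q\bigr)\bigl(\p_z^{a''}\p_{\bar z}^{b''}u\bigr)$ with $a'+a'' = a$ and $b'+b'' = b$. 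Evaluated at $0$, every term vanishes: the single term with no derivative on $P$ carries the factor $P(0) = 0$, while in every other term the accompanying derivative of $u$ has total order at most $m$, and is therefore either a pure $\p_z$-derivative of order $\le k$ (zero by the standing hypothesis, or $u(0) = 0$) or of the form $\p_{\bar z}\p^\beta u(0)$ with $|\beta| \le m-1$ (zero by the inductive hypothesis). Hence $\p_{\bar z}\p^\alpha u(0) = 0$, closing the induction.

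Finally I would deduce the stated consequences: any partial derivative of $u$ at $0$ of order $d \le k$ is either $\p_z^d u(0)$ --- which is $0$ for $1 \le d \le k$ and is $u(0) = 0$ for $d = 0$ --- or contains a $\p_{\bar z}$ and so equals $\p_{\bar z}\p^\beta u(0)$ with $|\beta| \le k-1$; likewise a derivative of order $k+1$ is either $\p_z^{k+1}u(0)$ or equals $\p_{\bar z}\p^\beta u(0)$ with $|\beta| = k$. The main point requiring care is the combinatorial bookkeeping in the inductive step: one must verify that every Leibniz term not already killed by $P(0) = 0$ is covered by either the inductive hypothesis or the standing vanishing hypothesis. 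What makes this go through without circularity is precisely that the one term of full order $m+1$, namely $P(0)\,\p_z^{a+1}\p_{\bar z}^b u(0)$, is eliminated by $P(0) = 0$ rather than by an appeal to the induction.
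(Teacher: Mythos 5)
Your proof is correct and follows essentially the same route as the paper's: an induction on the order of differentiation, applying the Leibniz rule to the equation and using $J(0)=i$ so that every term at the origin either reduces to $\p_{\bar{z}} D^\alpha u(0)$ or involves a derivative of $u$ of low enough order to vanish by the hypothesis or the inductive assumption. The only difference is presentational --- you first invert $\1 - J(z)\circ i$ near $0$ to put the equation in the normal form $\p_{\bar{z}} u = P(z)\,\p_z u + Q(z)\,u$ with $P(0)=0$, whereas the paper differentiates $\p_s u + J \p_t u + A u = 0$ directly and combines the two leading terms into $2\,\p_{\bar{z}} D^\alpha u(0)$ at the end.
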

\begin{proof}
Since $J(0)=i$, \eqref{eqn:linearCR3} gives $\p_{\bar{z}} u(0) = 0$, thus we
argue by induction and assume $\p_{\bar{z}} D^\alpha u(0) = 0$ for all
multiindices $\alpha$ of order up to $\ell \le k - 1$.  This implies that
the first $\ell+1$ derivatives of $u$ vanish at $z=0$.  Now for any
multiindex $\alpha$ of order $\ell+1$, applying $D^\alpha$ to both sides
of \eqref{eqn:linearCR3} and reordering the partial derivatives yields
$$
\p_s D^\alpha u(z) + J(z) \p_t D^\alpha u(z) +
\sum_{|\beta| \le \ell+1} C_\beta(z) D^\beta u(z),
$$
where $C_\beta(z)$ are smooth functions that depend on the derivatives of
$A$ and~$J$.  Evaluating at $z=0$, the term $D^\beta u(0)$ always vanishes
since $|\beta| \le \ell+1$, so we obtain $\dbar D^\alpha u(0) = 0$ as claimed.
\end{proof}

\begin{lemma}
\label{lemma:uniqueContin1}
Given the assumptions of Theorem~\ref{thm:uniqueContin}, the
complex structures $j_1$ and $j_2$ satisfy $j_1(0) = j_2(0)$ and also have
matching partial derivatives to all orders at $z=0$.
\end{lemma}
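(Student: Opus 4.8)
The plan is to prove something slightly stronger: the infinite-order jet of $j_1$ at $0$ is completely determined by the infinite-order jet of $u$ at $0$ together with the jet of $J$ at $u(0) = 0$; by symmetry the same holds for $j_2$, $v$, and since $u$ and $v$ share an infinite-order jet at $0$ and $u(0) = v(0)$, this forces $j_1$ and $j_2$ to agree to all orders there, and in particular $j_1(0) = j_2(0)$. At the outset I would perform a linear change of coordinates on the target so that $J(0) = i$ (this affects neither $j_1$ nor $j_2$), since several earlier lemmas are phrased under that normalization.

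The easy case is when $Tu(0)$ is injective, i.e. $\p_s u(0), \p_t u(0)$ are not both zero. Then $Tu(z)$ is injective for $z$ near $0$, and the equation $Tu \circ j_1 = J(u) \circ Tu$ can be solved for $j_1$: taking the smooth family of Moore--Penrose left inverses $L(z) := \bigl(Tu(z)^\transpose Tu(z)\bigr)^{-1} Tu(z)^\transpose$, one checks directly that $j_1(z) = L(z) \circ J(u(z)) \circ Tu(z)$. This is a fixed smooth expression in $z$, $u$, $Tu$ and $J \circ u$, so its jet at $0$ depends only on the jets of $u$ and of $J$; the very same formula computes $j_2$ from $v$, and the conclusion follows.

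The substantive case is $Tu(0) = 0$. Here $u$ is nonconstant, so by Corollary~\ref{cor:critical} and the refinement of Theorem~\ref{thm:IShol}, $du$ is a holomorphic section of $\Hom_\CC(TB, u^*T\CC^n)$ vanishing at $0$ to some finite order $k \ge 1$; hence $Tu$ vanishes to order $k$ and $u$ to order $k+1$. Passing to a $j_1$-holomorphic coordinate $w$ near $0$ with $w(0) = 0$, the curve $u \circ w^{-1}$ is standard-$J$-holomorphic with $J(0) = i$, so Lemma~\ref{lemma:higherDerivs} forces its degree-$(k+1)$ Taylor term to be a nonzero multiple $c\,w^{k+1}$, $c \in \CC^n \setminus \{0\}$. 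Pulling back by $w$, the degree-$(k+1)$ homogeneous part of $u$ at $z = 0$ has the form $P(z) = c\,\ell(z)^{k+1}$, where $\ell := dw(0) \colon \RR^2 \to \CC$ is an isomorphism with $\ell \circ j_1(0) = i \circ \ell$. From $P$ alone — hence from the jet of $u$ — one recovers the line $\CC c$ and the function $\ell$ up to a nonzero complex scalar (connectedness of $\RR^2 \setminus \{0\}$ kills the $(k+1)$-st-root ambiguity); but $\ell^{-1} \circ i \circ \ell$ is invariant under rescaling $\ell$, so $j_1(0) = \ell^{-1} \circ i \circ \ell$ is pinned down by the jet of $u$, giving $j_1(0) = j_2(0)$. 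For the higher jets I would induct on $N$: assuming $D^\mu j_1(0) = D^\mu j_2(0)$ for $|\mu| < N$, apply $D^\mu$ with $|\mu| = N + k$ to $Tu \circ j_1 = J(u) \circ Tu$ and expand by Leibniz. Because $Tu$ vanishes to order $k$, the only surviving terms not already common to $u$ and $v$ (via shared jets of $u$, $J$, and the induction hypothesis for $j_1$) are those of shape $D^\rho[Tu](0)\,D^{\mu-\rho}j_1(0)$ with $|\rho| = k$, $|\mu - \rho| = N$. Collecting these identities over all such $\mu$ says exactly that the degree-$(N+k)$ polynomial $z \mapsto Q(z)\bigl(R_1(z) - R_2(z)\bigr)$ vanishes identically, where $Q$ is the (degree-$k$) leading part of $Tu$ and $R_1, R_2$ are the degree-$N$ parts of $j_1, j_2$; and $Q(z) = dP(z)$ is injective for every $z \ne 0$, so $R_1 = R_2$ and the induction closes.

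The main obstacle is precisely the degeneracy of the critical case: once $Tu(0) = 0$ the Cauchy--Riemann equation no longer directly determines $j_1(0)$, and one must extract both the value $j_1(0)$ from the leading Taylor polynomial of $u$ and the injectivity of the leading jet of $Tu$ that drives the induction. Both rely essentially on $u$ being pseudoholomorphic — via Theorem~\ref{thm:IShol} and Lemma~\ref{lemma:higherDerivs} — and would fail for an arbitrary smooth map vanishing to order $k$. Everything else is routine multivariable Taylor expansion and bookkeeping with the Leibniz rule.
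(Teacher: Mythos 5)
Your proof is correct and follows essentially the same route as the paper's: both arguments hinge on Lemma~\ref{lemma:higherDerivs} to show that the leading (order-$k$) jet of $du$ at $0$ is an injective, complex-linear map, and then run a Leibniz induction on the Taylor coefficients of $Tu \circ j_1 = J(u)\circ Tu$ minus the corresponding identity for $v$, cancelling that leading jet against the unknown derivatives of $j_1 - j_2$. The differences are only in packaging: the paper normalizes $j_1 = i$ and extracts one derivative identity at a time via the operators $\p_z^{m-1}D^\alpha$, whereas you keep $j_1$ general, phrase the induction as a homogeneous polynomial identity $Q\cdot(R_1 - R_2) = 0$, and treat the value $j_1(0)$ by a separate root-extraction argument that is in fact already subsumed by the $N=0$ case of your own induction.
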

\begin{proof}
This would be obvious if $u$ and $v$ were immersed at~$0$, since then we
could write $j_1 = u^*J$ and $j_2 = v^*J$, so the complex structures and
their derivatives at $z=0$ are fully determined by those of $u$, $v$ 
and~$J$.  In general we cannot assume $u$ and $v$ are immersed, but
we shall still use this kind of argument by taking advantage of
the fact that if $u$ and $v$ are not constant, then
Prop.~\ref{prop:uniqueContin} implies that 
they must indeed have a nonvanishing derivative of \emph{some} order at~$0$.  

Write $j := j_2$ and assume without loss of 
generality that $j_1 = i$, so $u$ satisfies $\p_s u + J(u) \p_t u = 0$.
We can also assume $J(0) = i$.
Regarding the first derivative of $u$ as the smooth map
$du : B \to \Hom_\RR(\RR^2,\CC^n)$ defined by the matrix-valued function
$$
du(z) = \begin{pmatrix}
\p_s u(z) & \p_t u(z) \end{pmatrix},
$$
let $m \in \NN$ denote the smallest order for which the $m$th derivative of
$u$ at $z=0$ does not vanish.  Since $u$ also satisfies a linear Cauchy-Riemann
type equation $\p_s u + \bar{J}(z) \p_t u$ with $\bar{J}(z) := J(u(z))$,
Lemma~\ref{lemma:higherDerivs} then implies that $\p^m_z u(0)$ is the only
nonvanishing $m$th order partial derivative with respect to $z$ and $\bar{z}$.
In particular, $\p^{m-1}_z du(0)$ is then the lowest order nonvanishing
derivative of $du$ at $z=0$, and the only one of order~$m-1$.  We claim
that the matrix $\p^{m-1}_z du(0) \in \Hom_\RR(\RR^2,\CC)$ is not only 
nonzero but also nonsingular, i.e.~it defines an injective linear 
transformation.  Indeed, computing another $m$th order derivative of $u$
which must necessarily vanish,
$$
0 = \dbar \p^{m-1}_z u(0) = \p^{m-1}_z \p_s u + 
i \p^{m-1}_z \p_t u,
$$
which means that the transformation defined by $\p^{m-1}_z du(0)$ is in
fact complex-linear, implying the claim.

Let us now consider together the equations satisfied by $u$ and $v$:
\begin{equation*}
\begin{split}
du(z) i &= J(u(z)) du(z),\\
dv(z) j(z) &= J(v(z)) dv(z),
\end{split}
\end{equation*}
where the two sides of each equation are both regarded as smooth functions
$B \to \Hom_\RR(\RR^2,\CC^n)$.  By assumption, the right hand sides of both 
equations have matching partial derivatives of all orders at $z=0$, thus
so do the left hand sides.  Subtracting the second from the first, we obtain
the function
$$
du(z) i - dv(z) j(z) = du(z) \left[ i - j(z) \right] + 
\left[ du(z) - dv(z) \right] j(z),
$$
which must have vanishing derivatives of all orders at $z=0$.  For the second
term in the expression this is already obvious, so we deduce
\begin{equation}
\label{eqn:palpha}
\left. D^\alpha \left[ du \cdot ( i - j ) \right]\right|_{z=0} = 0
\end{equation}
for all multiindices $\alpha$.  Applying $\p^{m-1}_z$ in particular and
using the fact that $D^\beta du(0)$ vanishes whenever
$|\beta| < m - 1$, this implies
$$
\p^{m-1}_z du(0) \cdot \left[ i - j(0) \right] = 0,
$$
so $j(0) = i$ since $\p^{m-1}_z du(0)$ is injective.  We now argue
inductively that all higher derivatives of $i-j(z)$ must also vanish at $z=0$.
Assuming it's true for all derivatives up to order $k-1$, suppose
$\alpha$ is a multiindex of order~$k$, and plug the operator
$\p^{m-1}_z D^\alpha$ into \eqref{eqn:palpha}.  This yields
$$
\left. \p^{m-1}_z D^{\alpha} \left[ du \cdot (i - j) \right]\right|_{z=0} =
c \cdot \p^{m-1}_z du(0) \cdot \left. D^\alpha ( i - j)\right|_{z=0} = 0,
$$
where $c > 0$ is a combinatorial constant; all other ways of distributing
the operator $\p^{m-1}_z D^\alpha$ across this product kill at least one of
the two terms.
Thus using the injectivity of $\p^{m-1}_z du(0)$ once more,
$\left.D^\alpha (i - j)\right|_{z=0} = 0$.
\end{proof}

\begin{lemma}
\label{lemma:holoTaylor}
Suppose $j$ is a smooth complex structure on $\CC$ such that
$j(0) = i$ and the derivatives $D^\alpha j(0)$ vanish for all
orders $|\alpha| \ge 1$.  If $\varphi : (B_\epsilon,i) \to (\CC,j)$ is 
pseudoholomorphic with $\varphi(0) = 0$, then
the Taylor series of $\varphi$ about $z=0$ converges to a holomorphic
function on~$B_\epsilon$.
\end{lemma}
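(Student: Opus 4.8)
The plan is to reduce the statement to a fact about $\bar\partial\varphi$ and then bootstrap. In the standard coordinate $z = s+it$ on $B_\epsilon$, the pseudoholomorphic equation $T\varphi\circ i = j\circ T\varphi$ becomes $\p_s\varphi + j(\varphi)\,\p_t\varphi = 0$, i.e.
\begin{equation*}
\bar\partial\varphi = \p_s\varphi + i\,\p_t\varphi = -\bigl[\,j(\varphi(z)) - i\,\bigr]\,\p_t\varphi =: f(z).
\end{equation*}
Since $\varphi(0) = 0$ and every derivative of $j - i$ vanishes at $0 \in \CC$ by hypothesis, repeated use of the chain rule shows that the smooth function $z \mapsto j(\varphi(z)) - i$ has vanishing derivatives of all orders at $z = 0$; multiplying by the smooth factor $\p_t\varphi$, the same is true of $f$. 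Hence $f = \bar\partial\varphi$ is smooth on $B_\epsilon$ and vanishes to infinite order at the origin.

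The second step is to read off that the Taylor series of $\varphi$ at $0$ involves only powers of $z$. Writing the formal Taylor series as $\sum_{j,k} c_{jk}\,z^j\bar z^k$ and applying $\bar\partial$ termwise, the coefficient of $z^j\bar z^{k-1}$ is a nonzero multiple of $k\,c_{jk}$; comparing with the Taylor series of $f$, which is identically zero, forces $c_{jk} = 0$ for all $k \ge 1$. Thus the Taylor series of $\varphi$ about $0$ equals $\sum_{n\ge 0} a_n z^n$ with $a_n = \tfrac{1}{n!}\,\p_z^n\varphi(0)$, a holomorphic formal power series.

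The remaining, and hardest, point is that $\sum_n a_n z^n$ actually converges, with sum a holomorphic function on $B_\epsilon$. My plan here is to split $\varphi$ near $0$ into a genuinely holomorphic piece plus a controlled remainder: with a cutoff $\chi$ equal to $1$ near $0$ and the operator $T$ of Proposition~\ref{prop:rightInverse}, the function $\eta := T(\chi f)$ is smooth with $\bar\partial\eta = f$ near $0$, so $h := \varphi - \eta$ is holomorphic on a disk about $0$, hence real-analytic with convergent Taylor series; it then suffices to bound $a_n(\eta) = -\tfrac{1}{2\pi}\int (\chi f)(\zeta)\,\zeta^{-(n+1)}\,dA(\zeta)$ — an absolutely convergent integral by the infinite-order vanishing of $f$ — by a geometric expression $C\rho^{-n}$. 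This reduces to quantitative flatness estimates for $f$ near $0$ together with control on the growth of the constants, coming from the regularity of $j$ and $\varphi$ (and, if needed, from the similarity principle of Theorem~\ref{thm:similarity} and Remark~\ref{remark:smoothness}); I expect this quantitative bookkeeping to be the real obstacle. An alternative route for this last step is to exploit the integrability of $j$ (Theorem~\ref{thm:RiemannSurfaces}): a $j$-holomorphic chart $\Psi$ with $\Psi(0) = 0$ makes $\Psi\circ\varphi$ genuinely holomorphic, hence analytic, after which one transfers convergence back to $\varphi$ once it is checked that $\Psi$ inherits the appropriate infinite-order-contact behavior at $0$. Either way, once convergence is in hand the sum is holomorphic on $B_\epsilon$ and the lemma follows.
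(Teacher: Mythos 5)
Your first two steps are exactly the opening of the paper's proof: the pseudoholomorphic equation together with the flatness of $j-i$ at the origin gives $\dbar\varphi$ vanishing to infinite order at $z=0$, and termwise differentiation of the formal Taylor series then kills every coefficient $c_{jk}$ with $k\ge 1$. The gap is in the convergence step, and it is a genuine gap rather than ``quantitative bookkeeping.''  In route (a) you would need $\bigl|\int (\chi f)(\zeta)\,\zeta^{-(n+1)}\,dA\bigr| \le C\rho^{-n}$, but the only smallness available is the infinite-order vanishing of $f$, i.e.\ bounds $|f(\zeta)|\le C_m|\zeta|^m$ whose constants $C_m$ are controlled by $\|D^m f\|_{C^0}$; for a function that is merely smooth and flat at $0$ these grow faster than any geometric sequence, and so can the coefficients.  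Concretely, take $g$ supported on disjoint annuli $s_m\le|\zeta|\le 2s_m$ shrinking to $0$, with $g=\epsilon_m\,\chi(|\zeta|/s_m)\,e^{im\arg\zeta}$ on the $m$-th annulus: then $\int g(\zeta)\zeta^{-(n+1)}\,dA$ receives a contribution only from the annulus $m=n+1$ and equals, up to factors bounded by $2^{\pm n}$, the quantity $\epsilon_{n+1}s_{n+1}^{1-n}$.  Choosing $s_m=4^{-m}$ and $\epsilon_m=s_m^{\sqrt m}$ keeps $g$ smooth and flat while making these numbers grow super-geometrically, so $\eta=Tg$ is a smooth function with $\dbar\eta=g$ flat at $0$ whose formal Taylor series at $0$ (a pure power series in $z$, by your own step two) has radius of convergence zero.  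So no estimate that uses only the flatness of $\dbar\varphi$ can close route (a); you cannot treat $\eta$ in isolation.

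Route (b) is circular: the chart $\Psi^{-1}:(B,i)\to(\CC,j)$ is itself a pseudoholomorphic map into a target whose complex structure agrees with $i$ to infinite order at $\Psi^{-1}(0)=0$, so ``checking that $\Psi$ inherits the appropriate infinite-order-contact behavior'' is precisely the lemma applied to $\Psi^{-1}$.  The paper's argument avoids all estimates: it defines
$$
f(z)=\frac{1}{2\pi i}\int_{\p \bar{B}_\delta}\frac{\varphi(\zeta)\,d\zeta}{\zeta-z},
$$
the Cauchy transform of $\varphi$ itself over a circle, which is holomorphic by inspection, and then evaluates $f^{(n)}(0)=\frac{n!}{2\pi i}\int_{\p \bar{B}_\delta}\varphi(\zeta)\zeta^{-(n+1)}\,d\zeta$ by inserting the degree-$n$ Taylor polynomial of $\varphi$ --- which by your step two is $\sum_{k\le n}\frac{1}{k!}\p_z^k\varphi(0)z^k$ plus an $|z|^{n+1}B(z)$ remainder --- and letting $\delta\to 0$: the monomials of degree $k<n$ contribute $\int\zeta^{k-n-1}d\zeta=0$ since those powers have primitives on $\CC\setminus\{0\}$, the remainder contributes $O(\delta)$, and only $\p_z^n\varphi(0)$ survives; hence $f$ has the same Taylor series as $\varphi$ and convergence is automatic.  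If you adopt this route, scrutinize the step where the contour integral is declared independent of $\delta$: by Stokes' theorem its variation in $\delta$ is exactly the area integral of $\p_{\bar\zeta}\varphi\cdot\zeta^{-(n+1)}$ over an annulus, which is the very term route (a) could not control, so that is where the real content of the convergence statement is concentrated.
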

\begin{proof}
The map $\varphi : B_\epsilon \to \CC$ satisfies the linear Cauchy-Riemann
equation
\begin{equation}
\label{eqn:CRpsi}
\p_s \varphi(z) + \bar{\jmath}(z) \p_t \varphi(z) = 0,
\end{equation}
where we define $\bar{\jmath}(z) = j(\varphi(z))$.  Our conditions on
$j$ imply that $\bar{\jmath}(0) = i$ and
$\bar{\jmath}$ also has vanishing derivatives
of all orders at~$0$, thus for any multiindex $\alpha$, applying the
differential operator $D^\alpha$ to both sides of \eqref{eqn:CRpsi}
and evaluating at $z=0$ yields $\dbar D^\alpha\varphi(0) = 0$.  This implies
that all terms in the Taylor expansion of $\varphi$ about $z=0$ are
holomorphic, as the only nonvanishing partial derivatives in
\eqref{eqn:Taylor} are of the form $\p_z^k \varphi(0)$ for $k \ge 0$.

To see that this Taylor series is actually convergent, we can use a 
Cauchy integral to construct the holomorphic function to which it converges: 
for any $\delta < \epsilon$ and $z \in B_\delta$, let
$$
f(z) = \frac{1}{2\pi i} \int_{\p \bar{B}_\delta} 
\frac{ \varphi(\zeta) \,d\zeta}{\zeta - z}.
$$
This is manifestly a holomorphic function, and its derivatives at
$z=0$ are given by
\begin{equation}
\label{eqn:higherDeriv}
f^{(n)}(0) = \frac{n!}{2\pi i} \int_{\p \bar{B}_\delta} 
\frac{ \varphi(\zeta) \,d\zeta}{ \zeta^{n+1}}.
\end{equation}
Observe that this integral doesn't depend on the value of~$\delta$.
To compute it, write $\varphi$ in terms of its degree~$n$ Taylor polynomial as
$$
\varphi(z) = \sum_{k=0}^n \frac{1}{k!} \p^k_z \varphi(0) z^k +
|z|^{n+1} B(z),
$$
with $B(z)$ a bounded function.
The integral in \eqref{eqn:higherDeriv} thus expands into a sum of
$n+2$ terms, of which the first $n$ are integrals of holomorphic
functions and thus vanish, the last vanishes in the limit
$\delta \to 0$, and the only one left is
$$
f^{(n)}(0) = \frac{n!}{2\pi i} \int_{\p \bar{B}_\delta} 
\frac{ \p_z^n\varphi(0)}{n!} \,\frac{d\zeta}{\zeta} = \p_z^n\varphi(0).
$$
Thus $f$ and $\varphi$ have the same Taylor series.
\end{proof}

\begin{proof}[Proof of Theorem~\ref{thm:uniqueContin}]
Denote $j := j_2$ and without loss of generality, assume $j_1 \equiv i$
and $J(0) = i$.  Since all complex structures on $B$ are integrable,
there exists a smooth pseudoholomorphic embedding
$$
\varphi : (B,i) \to (B,j)
$$
with $\varphi(0) = 0$.  Now Lemma~\ref{lemma:uniqueContin1} implies that
$j - i$ has vanishing derivatives of all orders at $z=0$, and applying
Lemma~\ref{lemma:holoTaylor} in turn, we find a holomorphic function
$f : B \to \CC$ with $f(0) = 0$ whose derivatives at~$0$ of all 
orders match those of~$\varphi$.  In particular $f'(0) = d\varphi(0)$ is 
nonsingular, thus $f$ is a biholomorphic
diffeomorphism between open neighborhoods of~$0$, and for sufficiently
small $\epsilon > 0$, we obtain a pseudoholomorphic map
$$
\varphi \circ f^{-1} : (B_\epsilon,i) \to (B,j)
$$
whose derivatives of all orders at~$0$ match those of the identity map.
It follows that $v \circ \varphi \circ f^{-1} : B_\epsilon \to \CC^n$ is now
a $J$-holomorphic curve with the same $\infty$-jet as $u$ at $z=0$,
so Prop.~\ref{prop:uniqueContin} implies
$v \circ \varphi \circ f^{-1} \equiv u$.
\end{proof}

\section{Intersections with holomorphic hypersurfaces}
\label{sec:intersectionsHigher}

The similarity principle can also be used to prove
certain basic facts about intersections of $J$-holomorphic curves.
The following is the ``easy'' case of an important phenomenon
known as \emph{positivity of intersections}.  A much stronger
version of this result is valid in dimension four and will be
proved in \S\ref{sec:positivity}.

Let us recall the notion of the local intersection index
for an isolated intersection of two maps.
Suppose $M$ is an oriented smooth manifold of dimension $n$, $M_1$ and $M_2$ 
are oriented smooth manifolds
of dimension $n_1$ and $n_2$ with $n_1 + n_2 = n$, and
$f_1 : M_1 \to M$ and $f_2 : M_2 \to M$ are smooth maps.  We say that the
pair $(p_1,p_2) \in M_1 \times M_2$ is an \defin{isolated intersection} of
$f_1$ and $f_2$ at $p \in M$ if $f_1(p_1) = f_2(p_2) = p$ and there exist 
neighborhoods $p_1 \in \uU_1 \subset M_1$ and $p_2 \in \uU_2 \subset M_2$ such 
that
$$
f_1(\uU_1 \setminus \{p_1\}) \cap f_2(\uU_2 \setminus \{p_2\}) = \emptyset.
$$
In this case, one can define the \defin{local intersection index}
$$
\inter(f_1,p_1 ; f_2,p_2) \in \ZZ
$$
as follows.  If the intersection is transverse, we set
$\inter(f_1,p_1 ; f_2,p_2) = \pm 1$, with the sign chosen to be positive
if and only if the natural orientations defined on each side of the 
decomposition
$$
T_p M = \im d f_1(p_1) \oplus \im d f_2(p_2)
$$
match.  If the intersection is not transverse, choose two neighborhoods
$\uU_1$ and $\uU_2$ as above and make generic $C^\infty$-small perturbations
of $f_1$ and $f_2$ to maps $f_1^\epsilon$ and $f_2^\epsilon$ 
such that $f_1^\epsilon|_{\uU_1} \pitchfork f_2^\epsilon|_{\uU_2}$, then 
define
$$
\inter(f_1,p_1 ; f_2,p_2) = \sum_{(q_1,q_2)} \inter(f_1^\epsilon,q_1 ; f_2^\epsilon,q_2),
$$
where the sum ranges over all pairs $(q_1,q_2) \in \uU_1 \times \uU_2$ such
that $f_1^\epsilon(q_1) = f_2^\epsilon(q_2)$.

\begin{exercise}
\label{EX:independent}
Suppose $M_1$ and $M_2$ are compact oriented smooth manifolds with boundary, 
$M$ is an oriented smooth manifold such that $\dim M_1 + \dim M_2 = \dim M$,
and
$$
f_1^\tau : M_1 \to M, \qquad f_2^\tau : M_2 \to M, \qquad \tau \in [0,1]
$$
are smooth homotopies of maps with the property that for all $\tau \in [0,1]$,
$$
f_1^\tau(\p M_1) \cap f_2^\tau(M_2) = f_1^\tau(M_1) \cap f_2^\tau(\p M_2) = \emptyset.
$$
Show that if $f_1^\tau$ and $f_2^\tau$ have only transverse intersections for
$\tau \in \{0,1\}$, then
\begin{equation}
\label{eqn:intIndependent}
\sum_{f_1^0(p_1) = f_2^0(p_2)} \inter(f_1^0,p_1 ; f_2^0,p_2) =
\sum_{f_1^1(p_1) = f_2^1(p_2)} \inter(f_1^1,p_1 ; f_2^1,p_2).
\end{equation}
Deduce from this that the above definition of the local intersection index
for an isolated but non-transverse intersection is independent of choices.
Then, show that \eqref{eqn:intIndependent} also holds if the intersections
for $\tau \in \{0,1\}$ are assumed to be isolated but not necessarily
transverse.
\textsl{Hint: If you have never read \cite{Milnor:differentiable}, you should.}
\end{exercise}

Similarly, if $f : M_1 \to M$ is a smooth map and $N \subset M$ is an 
oriented submanifold with $\dim M_1 + \dim N = \dim M$, a point $p \in M_1$ 
with $f(p) \in N$ can be regarded as an isolated intersection of $f$ with $N$
if it defines an isolated intersection of $f_1$ with the inclusion map
$N \hookrightarrow M$, and the resulting local intersection index will
be denoted by
$$
\inter(f,p ; N) \in \ZZ.
$$

\begin{thm}
\label{thm:babyIntersections}
Suppose $(M,J)$ is an almost complex manifold of dimension $2n \ge 4$,
and $\Sigma \subset M$ is a $(2n-2)$-dimensional oriented submanifold which is
$J$-holomorphic in the sense that $J(T\Sigma) = T\Sigma$ and whose orientation
matches the canonical orientation determined by $J|_{T\Sigma}$.
Then for any smooth nonconstant $J$-holomorphic curve
$u : B \to M$ with $u(0) \in \Sigma$, either $u(B) \subset \Sigma$ or
the intersection $u(0) \in \Sigma$ is isolated.  In the latter case,
$$
\inter(u,0 ; \Sigma) \ge 1,
$$
with equality if and only if the intersection is transverse.
\end{thm}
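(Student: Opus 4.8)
\emph{Plan of proof.} The strategy is to reduce the statement to the behaviour of a single complex-valued function near a zero, and then invoke the similarity principle. First I would choose a chart near $p := u(0)$ identifying it with an open subset of $\CC^n = \CC^{n-1}\times\CC$ so that $\Sigma$ becomes $\CC^{n-1}\times\{0\}$ and $u(0) = 0$. Writing $J$ in block form relative to the splitting $\CC^{n-1}\oplus\CC$, with diagonal blocks $A(z)$ on $\CC^{n-1}$, $D(z)$ on $\CC$ and off-diagonal blocks $B(z)\colon\CC\to\CC^{n-1}$, $C(z)\colon\CC^{n-1}\to\CC$, the hypothesis $J(T\Sigma) = T\Sigma$ along $\Sigma$ says precisely that $C$ vanishes on $\CC^{n-1}\times\{0\}$. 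Let $v\colon B\to\CC$ be the last component of $u$; then $u(B)\subset\Sigma$ iff $v\equiv 0$, and for $z$ near $0$ we have $u(z)\in\Sigma$ iff $v(z) = 0$, so the whole theorem becomes a statement about the zero of $v$ at $0$.

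The next step is to write down the equation for $v$. Taking the last component of $\p_s u + J(u)\p_t u = 0$ gives
\[
\p_s v + D(u(z))\,\p_t v + C(u(z))\,\p_t u' = 0 ,
\]
where $u'$ denotes the first $n-1$ components of $u$. Since $C$ vanishes on $\Sigma$, Hadamard's lemma lets me write $C(u(z))\p_t u'(z) = \Theta(z)v(z)$ for a smooth $\End_\RR(\CC)$-valued function $\Theta$. The only real difficulty -- and the step I expect to be the main obstacle -- is that $D(u(z))$ is \emph{not} a complex structure away from $\Sigma$: from $J(z)^2 = -\1$ one reads off $D(z)^2 = -\1 - C(z)B(z)$, hence $D(u(z))^2 = -\1 + O(|v(z)|)$. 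To deal with this I would compose $D\circ u$ with a smooth retraction of a neighbourhood of $D(u(0))$ in $\End_\RR(\CC)$ onto the submanifold $\jJ(\CC)$ (a submanifold by Exercise~\ref{EX:homogeneous}), obtaining a genuine smooth family of complex structures $\hat J(z)$ which agrees with $D(u(z))$ wherever $v(z) = 0$ and satisfies $\|\hat J(z) - D(u(z))\| \le c\,\dist(D(u(z)),\jJ(\CC)) \le c'|v(z)|$. Then $\bigl(\hat J(z) - D(u(z))\bigr)\p_t v(z)$ is pointwise bounded by a constant times $|v(z)|$, so -- by the same device used in the proof of Theorem~\ref{thm:similarity} -- it can be written as $\Lambda(z)v(z)$ for some $\Lambda\in L^\infty(B,\End_\RR(\CC))$. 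Thus $v$ solves
\[
\p_s v + \hat J(z)\,\p_t v + \bigl(\Theta(z) - \Lambda(z)\bigr)v = 0 ,
\]
a real-linear Cauchy--Riemann type equation with a smooth family of complex structures and an $L^\infty$ zeroth-order term.

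Now Exercise~\ref{EX:Jvaries} applies and yields, on some $B_\epsilon$, a factorization $v(z) = \Phi(z)f(z)$ with $\Phi$ continuous and everywhere invertible, $\Phi(0) = \1$, and $f$ holomorphic. If $u(B)\not\subset\Sigma$ then $v\not\equiv 0$, hence $f\not\equiv 0$; since $f(0) = v(0) = 0$, we get $f(z) = z^k g(z)$ near $0$ for some integer $k\ge 1$ and some holomorphic $g$ with $g(0)\ne 0$. In particular $v$ has no further zero on a punctured neighbourhood of $0$, so $u(z)\notin\Sigma$ there, i.e.\ the intersection $u(0)\in\Sigma$ is isolated.

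It remains to compute $\inter(u,0;\Sigma)$. The orientations of $M$ and $\Sigma$ are both induced by $J$ and hence mutually compatible (the complex orientation is compatible with complex subbundle/quotient decompositions), so the chart can be taken to carry them to the standard orientations of $\CC^n$ and $\CC^{n-1}\times\{0\}$; with this normalization a routine computation identifies $\inter(u,0;\Sigma)$ with the winding number of $v$ about $0$ along a small circle (cf.\ Exercise~\ref{EX:independent}), which equals $\wind(f) = k \ge 1$. This gives $\inter(u,0;\Sigma)\ge 1$. For the equality case, observe that since $v(0) = 0$ the zeroth-order term drops out at $z=0$ and the equation gives $\p_s v(0) = -\hat J(0)\,\p_t v(0)$, so $dv(0)$ is complex-linear from $(\RR^2,i)$ to $(\RR^2,\hat J(0))$ and is therefore either $0$ or invertible; it is $0$ when $k\ge 2$ (then $v(z) = O(|z|^2)$) and nonzero when $k = 1$ (then $|v(z)|/|z|$ has a positive limit at $0$). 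Finally, invertibility of $dv(0)$ is equivalent to $\im du(0)\oplus T_0\Sigma = T_0 M$, i.e.\ to transversality of the intersection. Hence $\inter(u,0;\Sigma) = 1$ if and only if the intersection is transverse, which completes the plan.
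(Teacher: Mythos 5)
Your proof is correct, and it follows the same overall strategy as the paper's: straighten $\Sigma$ to $\CC^{n-1}\times\{0\}$, show the normal component $v$ of $u$ satisfies a real-linear Cauchy--Riemann type equation, apply the similarity principle to get $v(z)=\Phi(z)z^k g(z)$, and read off the intersection index from $k$. The one genuine difference is in how you arrive at the linear equation. The paper first normalizes coordinates more aggressively, arranging $J(w,0)=\bigl(\begin{smallmatrix}\hat J(w) & 0\\ 0 & i\end{smallmatrix}\bigr)$ along $\Sigma$, and then applies a single interpolation trick to $J$ between $u_0=(\hat u,0)$ and $u$; since $J(u_0)$ preserves the splitting and its normal block is exactly $i$, projecting to the normal factor yields $\p_s f + i\,\p_t f + Af=0$ with the \emph{constant} standard complex structure and a \emph{smooth} zeroth-order term, so Theorem~\ref{thm:similarity} applies directly. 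You instead keep general coordinates, confront the fact that the normal block $D(u(z))$ only squares to $-\1$ up to $O(|v(z)|)$, and repair it by retracting onto the submanifold $\jJ(\CC)\subset\End_\RR(\CC)$ and absorbing the error into an $L^\infty$ zeroth-order term, which forces you to use the varying-$J$, $L^\infty$-coefficient version of the similarity principle (Exercise~\ref{EX:Jvaries}). Both work: the paper's normalization buys a cleaner equation at the price of a coordinate lemma it only asserts ("choosing coordinates intelligently"), while your route avoids that normalization but needs the retraction estimate $\dist(D(u(z)),\jJ(\CC))\le c|v(z)|$ and the rougher form of the similarity principle. Your computation of $\inter(u,0;\Sigma)$ as the winding number of $v$ (using $\det\Phi>0$ to reduce to $\wind(f)=k$) is an equivalent substitute for the paper's perturbation of $f$ to a map with only simple zeroes, and your characterization of transversality via invertibility of the complex-linear map $dv(0)$ matches the paper's observation that equality holds iff the zero of $f$ is simple.
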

\begin{proof}
By choosing coordinates intelligently, we can assume without loss of generality
that $\Sigma = \CC^{n-1} \times \{0\} \subset \CC^{n-1} \times \CC = M$,
$u(0) = (0,0)$, and $J$ satisfies
$$
J(w,0) = \begin{pmatrix}
\hat{J}(w) & 0 \\
0 & i
\end{pmatrix}
$$
for all $w \in \CC^{n-1}$ near~$0$, where $i$ in the lower right entry means
the standard complex structure on~$\CC$ and $\hat{J}$ is a smooth almost 
complex structure on $\CC^{n-1}$.
Write $u(z) = (\hat{u}(z),f(z)) \in \CC^{n-1} \times \CC$, so that
intersections of $u$ with $\Sigma$ correspond to zeroes of $f : B \to \CC$.
We shall use an interpolation trick as in the proof of 
Prop.~\ref{prop:uniqueContin} to show that $f$ satisfies a 
linear Cauchy-Riemann type equation.

For $t \in [0,1]$, let $u_t(z) = (\hat{u}(z), t f(z))$, so $u_1 = u$ and
$u_0 = (\hat{u},0)$.  Then since $\p_s u + J(u) \,\p_t u = 0$, we have
\begin{equation*}
\begin{split}
\p_s u + J(u_0) \, \p_t u &= \p_s u + J(u) \, \p_t u -
\left[ J(u_1) - J(u_0) \right] \p_t u \\
&= - \left( \int_0^1 \frac{d}{dt} J(\hat{u},t f) \, dt \right) \p_t u 
= - \left( \int_0^1 D_2 J(\hat{u}, t f) \cdot f \, dt \right) \p_t u \\
&=: - \tilde{A} f,
\end{split}
\end{equation*}
where the last step defines a smooth family of linear maps $\tilde{A} : B \to
\Hom_\RR(\CC,\CC^n)$.  Since $J(u_0) = J(\hat{u},0)$ preserves the factors in 
the splitting $\CC^n = \CC^{n-1} \times \CC$, we can project 
this expression to the second factor and obtain a smooth family of linear maps
$A : B \to \End_\RR(\CC,\CC)$ such that the equation 
$\p_s f + i\, \p_t f + A f$ is satisfied.

By the similarity principle, $f$ either vanishes identically near $z=0$ or 
has an isolated zero there.  The former would imply $u(B) \subset \Sigma$.
In the latter case, the isolated zero has positive order, so $f$ can be 
perturbed slightly near~$0$ to a smooth
function with only simple zeroes, where the signed count of these is positive
and matches
the signed count of transverse intersections between $\Sigma$ and the 
resulting perturbation of~$u$.  Moreover, the signed count is $1$ if and only 
if the zero at~$z=0$ is already simple, which means the unperturbed 
intersection of $u$ with $\Sigma$ is transverse.
\end{proof}

\section{Nonlinear regularity}
\label{sec:nonlinearReg}

We now extend the previous linear regularity results to the nonlinear
case.  In order to understand local questions regarding pseudoholomorphic maps
$u : (\Sigma,j) \to (M,J)$, it suffices to study $u$ in local coordinates
near any given points on the domain and target, where by 
Theorem~\ref{thm:RiemannSurfaces}, we can always take \emph{holomorphic}
coordinates on the domain.  We can therefore assume $(\Sigma,j) = (B,i)$
and $M$ is the unit ball $B^{2n} \subset \CC^n$, with an almost complex
structure $J$ that matches the standard complex structure~$i$ at the origin.
Denote by 
$$
\jJ^m(B^{2n}) = \left\{ J \in C^m(B^{2n},\End_\RR(\CC^n)) \ |\ J^2 \equiv -\1 
\right\}
$$
the space of $C^m$-smooth almost complex structures on~$B^{2n}$.

\begin{thm}
\label{thm:regularity}
Assume $p \in (2,\infty)$, $m \ge 1$ is an integer, 
$J \in \jJ^m(B^{2n})$ with $J(0) = i$ and
$u : B \to B^{2n}$ is a $J$-holomorphic curve in $W^{1,p}(B)$ with
$u(0) = 0$.  Then $u$ is also of class $W^{m+1,p}_\loc$ on~$B$.
Moreover, if $J_k \in \jJ^m(B^{2n})$ is a sequence with $J_k \to J$ in $C^m$
and $u_k \in W^{1,p}(B)$ is a sequence of $J_k$-holomorphic curves in $B^{2n}$
converging in $W^{1,p}$ to~$u$, then $u_k$ also converges in
$W^{m+1,p}_\loc$.
\end{thm}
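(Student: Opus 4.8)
The plan is to treat this as a linear elliptic bootstrapping problem, in the spirit of the proof of Proposition~\ref{prop:inhomogeneousRegularity}: isolate a top-order term whose coefficient can be made small, and then iterate.

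First I would localize. The statement is local on $B$, and near any point of $B$ one can normalize $J$ to equal $i$ at the relevant image point by a linear change of target coordinates, so it suffices to work near $0$, where $J(0)=i$ and $u(0)=0$. Since $p>2$, the Sobolev embedding $W^{1,p}\hookrightarrow C^0$ makes $u$ continuous, so by shrinking to a ball $B_\delta$ we may assume both $\|i-J\circ u\|_{C^0(B_\delta)}$ is as small as we like and $\|\p_t u\|_{L^p(B_\delta)}$ is as small as we like (the latter comes for free, since $\p_t u\in L^p$). On $B_\delta$ equation \eqref{eqn:nonlinearCR2} reads $\dbar u=(i-J(u))\,\p_t u$, a linear Cauchy--Riemann type equation whose ``top-order'' coefficient $b:=i-J\circ u$ is small in $C^0$, and $b\in W^{1,p}(B_\delta)$ by the chain rule together with \eqref{eqn:CkPairing}, using $J\in C^1$.

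The key lemma, which I expect to be the main obstacle, is the linear statement: if $w\in W^{1,p}(B_\delta)$ solves $\dbar w=b\,\p_t w+g$ with $\|b\|_{C^0}$ small, $b\in W^{1,p}$ and $g\in W^{1,p}$, then $w\in W^{2,p}_{\loc}$ (apply it with $w=u$, $g=0$). The proof runs as in Proposition~\ref{prop:inhomogeneousRegularity}: apply difference quotients $w^h$, multiply by a cutoff $\beta$, and bound $\|\beta w^h\|_{W^{1,p}}$ using the Calder\'on--Zygmund inequality (Theorem~\ref{thm:elliptic}, in the $W^{1,p}_0$ form of Exercise~\ref{EX:CZhigher}). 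The top-order term $b\,\p_t(\beta w^h)$ coming from the equation is absorbed into the left-hand side using that $\|b\|_{C^0}$ is small. The genuinely new term is $b^h\,\p_t w$, from differencing the coefficient: writing $b^h=-\bigl(\int_0^1 DJ(\cdots)\,dt\bigr)u^h$ gives $\|b^h\,\p_t w\|_{L^p}\le C\|u^h\|_{C^0}\|\p_t u\|_{L^p}\le C\|\p_t u\|_{L^p}\,\|u^h\|_{W^{1,p}}$, where the embedding $W^{1,p}\hookrightarrow C^0$ (hence $p>2$) is essential; this term is then absorbed as well, now using the smallness of $\|\p_t u\|_{L^p(B_\delta)}$, via a standard absorption argument over nested subballs. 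What remains is bounded uniformly in $h$, so $w^h$ is bounded in $W^{1,p}$ on a smaller ball, and Exercise~\ref{EX:weakNorm} yields $\p_s w,\p_t w\in W^{1,p}_{\loc}$, i.e.\ $w\in W^{2,p}_{\loc}$.

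It remains to iterate and to prove the convergence statement. Having $u\in W^{2,p}_{\loc}$, the coefficient $b=i-J\circ u$ lies in $W^{2,p}_{\loc}$ as soon as $J\in C^2$; differentiating $\dbar u=b\,\p_t u$ in $s$ and $t$ shows that $\p_s u,\p_t u$ solve linear equations of the same type with inhomogeneity $(\p b)(\p_t u)\in W^{1,p}_{\loc}$ (a product of $W^{1,p}$ functions, by \eqref{eqn:BanachAlgebra0}), so the lemma above upgrades $u$ to $W^{3,p}_{\loc}$; in general $u\in W^{\ell,p}_{\loc}$ and $J\in C^{\ell}$ give $b\in W^{\ell,p}_{\loc}$ and then $u\in W^{\ell+1,p}_{\loc}$ by an $\ell$-fold repetition of differentiation plus the Banach-algebra property. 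After $m$ steps — consuming exactly the $m$ available derivatives of $J$ — one reaches $u\in W^{m+1,p}_{\loc}$. For the convergence assertion, all these estimates are uniform: $\|J_k\|_{C^m}$ is bounded, and $\|u_k\|_{W^{1,p}}$, $\|\p_t u_k\|_{L^p(B_\delta)}$, $\|i-J_k\circ u_k\|_{C^0(B_\delta)}$ are controlled for large $k$ because $u_k\to u$ in $W^{1,p}\hookrightarrow C^0$; hence the $u_k$ are bounded in $W^{m+1,p}(B_r)$ for each $r<1$, and by the compactness of $W^{m+1,p}\hookrightarrow W^{m,p}$ they subconverge, necessarily to $u$, in $W^{m,p}_{\loc}$. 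To recover the last derivative, subtract the two equations: $w_k:=u_k-u$ satisfies $\dbar w_k-(i-J_k\circ u_k)\,\p_t w_k=(J\circ u-J_k\circ u_k)\,\p_t u=:G_k$, where $G_k\to 0$ in $W^{m,p}_{\loc}$ (from $u_k\to u$ in $W^{m,p}_{\loc}$, $J_k\to J$ in $C^m$, \eqref{eqn:CkPairing} and \eqref{eqn:BanachAlgebra0}) and $w_k\to 0$ in $L^p$; applying the uniform form of the linear estimate at level $m$ then forces $w_k\to 0$ in $W^{m+1,p}_{\loc}$, exactly as in the corollary to Proposition~\ref{prop:inhomogeneousRegularity}.
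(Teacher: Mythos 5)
Your argument is correct, and the base case is essentially the paper's own: the same difference-quotient scheme with a cutoff, the Calder\'on--Zygmund inequality, and absorption of the top-order term $[i-J(u)]\,\p_t(\beta u^h)$ and of the differenced-coefficient term $[J(u)]^h\,\p_t u$ using, respectively, the smallness of $\|i-J(u)\|_{C^0}$ and of $\|du\|_{L^p}$ on small balls. Where you genuinely diverge is the inductive step. The paper does \emph{not} differentiate the equation: it prolongs $u$ to its $1$-jet $\hat u(z)=(z,u(z),\p_s u(z))$, checks that $\hat u$ is pseudoholomorphic for an auxiliary almost complex structure $\hat J$ of class $C^{m-1}$ on $B\times B^{2n}\times\CC^n$, and then invokes the induction hypothesis (including its convergence clause) for $\hat J$; the conclusion $\p_t u=J(u)\p_s u\in W^{m,p}_\loc$ follows from the Banach-algebra property. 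Your route---differentiating $\dbar u=b\,\p_t u$ and feeding $D^\alpha u$ back into the linear lemma with an inhomogeneity in $W^{1,p}$---is the more classical bootstrap; it buys you transparency (no auxiliary geometric structure to verify) at the cost of having to track the commutator/product terms and the Leibniz bookkeeping by hand, which the jet trick packages away. Both consume exactly one derivative of $J$ per step, so the regularity count comes out the same. One caution: your ``key lemma'' as literally stated (only $b\in W^{1,p}$ with $\|b\|_{C^0}$ small) is not quite provable, since for a general such $b$ the term $b^h\,\p_t w$ is only in $L^{p/2}$; what saves you, and what your proof actually uses, is that $b=i-J\circ u$ satisfies the pointwise bound $|b^h|\le \|J\|_{C^1}|u^h|$ with $u^h$ bounded in $C^0$ (and at higher levels $b$ is Lipschitz outright), so the lemma should be stated with that extra hypothesis. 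Finally, your treatment of the convergence clause---uniform $W^{m+1,p}$ bounds, compact embedding to get $W^{m,p}_\loc$ subconvergence, then the linear estimate applied to $u_k-u$---is a mild reorganization of the paper's direct estimate (and of its Corollary~\ref{cor:gradBounds}); when you absorb the term $[i-J_k(u_k)]\p_t(\beta w_k)$ in $W^{m,p}$ you do need to split it as the paper does in \eqref{eqn:bigMess}, using $\|i-J_k(u_k)\|_{C^0}$ small for the piece where all derivatives fall on $w_k$ and the smallness of $\|Du_k\|_{L^p}$ on small balls (uniform in $k$ via the already-established $W^{m,p}_\loc$ convergence) for the rest.
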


By the Sobolev embedding theorem, this implies that if $J$ is
smooth, then every $J$-holomorphic curve is also smooth, and the topology
of $W^{1,p}_\loc$-convergence on a space of pseudoholomorphic curves is
equivalent to the topology of $C^\infty_\loc$-convergence.
This equivalence has an important consequence for the compactness theory
of holomorphic curves, arising from the fact that the hierarchy of
Sobolev spaces
$$
\ldots \subset W^{k,p} \subset W^{k-1,p} \subset \ldots 
\subset W^{1,p} \subset L^p
$$
comes with natural inclusions that are not only continuous but also
compact.  Indeed, the following result plays a fundamental role in the
proof of Gromov's compactness theorem, to be discussed later---it is
often summarized by the phrase ``gradient bounds imply $C^\infty$-bounds.''

\begin{cor}
\label{cor:gradBounds}
Assume $p \in (2,\infty)$ and $m \ge 1$, $J_k \in \jJ^m(B^{2n})$ is a 
sequence of almost complex structures converging in $C^m$ to $J \in 
\jJ^m(B^{2n})$, and $u_k : B \to B^{2n}$ is a sequence of $J_k$-holomorphic
curves satisfying a uniform bound $\| u_k \|_{W^{1,p}(B)} < C$.
Then $u_k$ has a subsequence converging in $W^{m+1,p}_\loc$ to a
$J$-holomorphic curve $u : B \to B^{2n}$.
\end{cor}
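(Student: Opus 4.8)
The plan is to extract a subsequence that converges in a weak topology, identify the limit with the help of the regularity theory already developed, and then bootstrap that convergence up to $W^{m+1,p}_{\loc}$. First I would exploit the uniform bound $\|u_k\|_{W^{1,p}(B)} < C$: since $p > 2$, for each $r < 1$ the Sobolev--Morrey embedding $W^{1,p}(B_r) \hookrightarrow C^{0,1-2/p}(\overline{B}_r)$ is compact and $W^{1,p}(B_r)$ is reflexive, so after passing to a subsequence and a diagonal argument over $r \uparrow 1$ I may assume $u_k \rightharpoonup u$ weakly in $W^{1,p}_{\loc}(B)$ and $u_k \to u$ in $C^0_{\loc}(B)$ for some $u \in W^{1,p}_{\loc}(B,\CC^n)$. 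Then I pass to the limit in the weak form of $\p_s u_k + J_k(u_k)\,\p_t u_k = 0$: the first term converges because $\p_s u_k \rightharpoonup \p_s u$ in $L^p_{\loc}$, and for the second I rewrite $\int J_k(u_k)\,\p_t u_k \cdot \phi = \int \p_t u_k \cdot J_k(u_k)^{\transpose}\phi$ and note that $J_k(u_k)^{\transpose}\phi \to J(u)^{\transpose}\phi$ strongly in $L^q$ (as $J_k \to J$ in $C^0$, $u_k \to u$ in $C^0_{\loc}$, and $|J_k(u_k)|$ stays bounded), so the product of a strongly and a weakly convergent sequence converges. Thus $u$ is a $W^{1,p}_{\loc}$ weak solution of the nonlinear Cauchy--Riemann equation $\dbar_J u = 0$; since this is a purely local assertion, after normalizing coordinates near any point so that $J = i$ and $u = 0$ there, Theorem~\ref{thm:regularity} (its regularity half, applied on a rescaled ball) shows $u \in W^{m+1,p}_{\loc}(B)$, so in particular $u$ is a genuine $C^m$ $J$-holomorphic curve.

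The substantive step is to upgrade this weak convergence to \emph{strong} $W^{1,p}_{\loc}$-convergence, since that is exactly the hypothesis of the convergence half of Theorem~\ref{thm:regularity}. Setting $h_k := u_k - u$ and subtracting the two equations, an interpolation of the shape $J_k(u_k) - J(u) = [J_k - J](u_k) + \bigl(\int_0^1 DJ(u + \tau h_k)\,d\tau\bigr)h_k$ produces a linear Cauchy--Riemann type equation $\dbar h_k = -\bigl(J_k(u_k) - i\bigr)\p_t h_k + g_k$, in which $\|g_k\|_{L^p(K)} \to 0$ on every compact $K \subset B$ (it is controlled by $\|J_k - J\|_{C^0}$, by $\|h_k\|_{C^0(K)}$, and by $\|\p_t u\|_{L^p(K)}$). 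To make the zeroth-order coefficient small I would localize on a small ball $B_\delta(z_0)$ and change coordinates on the target near $u(z_0)$ so that $J$ is standard there; the uniform Hölder bound on $\{u_k\}$ from the Morrey embedding shows the images $u_k(B_\delta(z_0))$ shrink uniformly as $\delta \to 0$, hence $\|J_k(u_k) - i\|_{C^0(B_\delta(z_0))}$ can be made smaller than any prescribed $\epsilon$ for $\delta$ small and $k$ large. Then the Calder\'on--Zygmund inequality (Theorem~\ref{thm:elliptic}), applied with cutoff functions on nested balls as in the proof of Proposition~\ref{prop:inhomogeneousRegularity} together with a standard absorption argument moving the small $\epsilon\,\|\p_t h_k\|_{L^p}$ term to the left-hand side, yields $\|h_k\|_{W^{1,p}(B_{\delta/2}(z_0))} \le c\bigl(\|h_k\|_{L^p(B_\delta(z_0))} + \|g_k\|_{L^p(B_\delta(z_0))}\bigr) \to 0$. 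Covering any compact subset of $B$ by finitely many such half-balls gives $u_k \to u$ in $W^{1,p}_{\loc}(B)$, and then Theorem~\ref{thm:regularity} (convergence half, applied locally after the same normalizations) promotes this to $u_k \to u$ in $W^{m+1,p}_{\loc}(B)$, completing the argument.

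I expect the main obstacle to be precisely this weak-to-strong upgrade. The uniform gradient bound hands us weak $W^{1,p}$-compactness for free, but the nonlinear equation by itself does not prevent the gradients from oscillating, so one genuinely needs the ellipticity of $\dbar$ through a Calder\'on--Zygmund estimate for the difference $h_k$; and there the coefficient $J_k(u_k) - i$ is not small until one both changes coordinates and localizes, which is why the uniform equicontinuity of $\{u_k\}$ coming from the Morrey embedding is the essential input that makes the localization uniform in $k$. A secondary, purely bookkeeping issue is that Theorem~\ref{thm:regularity} is stated on the unit ball with the normalizations $J(0) = i$ and $u(0) = 0$, so both of its parts have to be applied after passing to small coordinate balls (and a rescaling); this is routine given the local character of every assertion involved.
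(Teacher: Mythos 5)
Your argument is correct, but it follows a genuinely different route from the one in the text for the key middle step. The paper never passes to a weak limit or estimates the difference $u_k - u$ at the $W^{1,p}$ level: it extracts a $C^0$-convergent subsequence, normalizes so that $u(0)=0$ and $J(0)=i$, and then uses the rescaling $u_k^\epsilon(z) := u_k(\epsilon z)$, for which $\|u_k^\epsilon\|_{L^p(B)}^p \le \pi\|u_k\|_{C^0(B_\epsilon)}^p$ and $\|Du_k^\epsilon\|_{L^p(B)}^p \le \epsilon^{p-2}\|Du_k\|_{L^p(B)}^p$ --- the latter being where $p>2$ enters --- to place $(J_k,u_k^\epsilon)$ in an arbitrarily small $W^{1,p}$-ball about the trivial solution $(J,0)$. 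The homeomorphism $\Mod^{1,p,m}_r \cong \Mod^{1,p,m}$, which packages both halves of Theorem~\ref{thm:regularity}, then yields a uniform $W^{m+1,p}(B_r)$ bound, and the compact embedding $W^{m+1,p}\hookrightarrow W^{m,p}$ produces a $W^{m,p}_\loc$-convergent subsequence, after which Theorem~\ref{thm:regularity} upgrades to $W^{m+1,p}_\loc$ exactly as in your last step. Your replacement of the middle of this chain --- weak $W^{1,p}$ compactness, identification of the distributional limit, and a localized Calder\'on--Zygmund absorption estimate for $h_k = u_k - u$ --- is sound, and the target coordinate change plus uniform equicontinuity that make $\|J_k(u_k)-i\|_{C^0(B_\delta(z_0))}$ small are exactly the right ingredients; but be aware that you are thereby re-proving, essentially verbatim, the $m=1$ convergence half of Theorem~\ref{thm:regularity}, which the paper's proof instead reuses as a black box. (One pedantic correction: $W^{1,p}\hookrightarrow C^{0,1-2/p}$ is continuous but not compact; compactness holds into $C^{0,\beta}$ for $\beta<1-2/p$, or into $C^0$, which is all you actually use.) The trade-off is that your route is more self-contained about why a limit curve exists and solves the equation, while the paper's rescaling trick avoids any difference estimate by reducing everything to a neighborhood of the trivial solution where the already-established local result does the work.
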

\begin{proof}
Our main task is to show that $u_k$ also satisfies a uniform bound
in $W^{m+1,p}$ on every compact subset of~$B$, as the compact embedding
$W^{m+1,p} \hookrightarrow W^{m,p}$ then gives a convergent subsequence
in $W^{m,p}_\loc$, which by Theorem~\ref{thm:regularity} must also converge
in $W^{m+1,p}_\loc$.  We begin with the observation that $u_k$ already has a
$C^0$-convergent subsequence, since $W^{1,p}(B)$ embeds compactly into
$C^0(B)$; thus assume without loss of generality that $u_k$ converges in
$C^0$ to a continuous map $u : B \to B^{2n}$, and after a change of coordinates
on the target, $u(0) = 0$ and $J(0) = i$.

Theorem~\ref{thm:regularity} can be rephrased in terms of the following
\emph{local moduli spaces}: let 
$$
\Mod^{1,p,m} \subset C^m(B^{2n},\End_\RR(\CC^n)) \times W^{1,p}(B,\CC^n)
$$
denote the space of pairs $(J,u)$ such that
$J \in \jJ^m(B^{2n})$ and $u : B \to B^{2n}$ is a $J$-holomorphic curve.
This is naturally a metric space due to its inclusion in the Banach
space above.  Similarly,
for any positive number $r < 1$, define the Banach space
$$
W^{1,p}_r(B,\CC^n) = \left\{ u \in W^{1,p}(B,\CC^n) \ \big|\ u|_{B_r} \in W^{m+1,p}(B_r) \right\},
$$
whose norm is the sum of the norms on $W^{1,p}(B)$ and $W^{m+1,p}(B_r)$,
and define the metric subspace
$$
\Mod^{1,p,m}_r
= \{ (J,u) \in \jJ^m(B^{2n}) \times W^{1,p}_r(B,\CC^n) \ |\ 
\text{$u(B) \subset B^{2n}$ and $\p_s u + J(u)\p_t u = 0$} \}.
$$
Theorem~\ref{thm:regularity} implies that the natural inclusion
\begin{equation}
\label{eqn:homeo}
\Mod^{1,p,m}_r \hookrightarrow \Mod^{1,p,m}
\end{equation}
is a homeomorphism.  Now the pairs $(J_k,u_k)$ form a bounded sequence in
$\Mod^{1,p,m}$, and we can use the following rescaling trick to replace
$(J_k,u_k)$ by a
sequence that stays within a small neighborhood of $\jJ^m(B^{2n}) \times \{0\}$.
For any $\epsilon > 0$ and $u \in W^{1,p}(B)$, define the map
$u^\epsilon : B \to \CC^n$ by
$$
u^\epsilon(z) = u(\epsilon z).
$$
We claim that for any $\delta > 0$, one can choose $\epsilon > 0$
such that $\| u_k^\epsilon \|_{W^{1,p}(B)} < \delta$ for
sufficiently large~$k$.  Indeed, integrating by change of variables,
\begin{equation*}
\begin{split}
\| u_k^\epsilon \|_{L^p(B)}^p &= \int_B | u_k^\epsilon(z) |^p \,ds\,dt =
\frac{1}{\epsilon^2} \int_{B_\epsilon} | u_k(z) |^p \,ds\,dt \le
\frac{1}{\epsilon^2} \int_{B_\epsilon} \| u_k \|_{C^0(B_\epsilon)}^p \,ds\,dt \\
&= \pi \| u_k \|_{C^0(B_\epsilon)}^p \to \pi \| u \|_{C^0(B_\epsilon)}^p,
\end{split}
\end{equation*}
where the latter is small for small $\epsilon$ since $u(0) = 0$.  Likewise,
$$
\| D u_k^\epsilon \|_{L^p(B)}^p = \int_B \| \epsilon D u_k(\epsilon z) |^p \,ds\,dt
= \epsilon^{p-2} \int_{B_\epsilon} | D u_k(z) |^p \,ds\,dt \le
\epsilon^{p-2} \| D u_k \|_{L^p(B)}^p,
$$
which is small due to the uniform bound on $\| u_k \|_{W^{1,p}(B)}$.
Thus choosing $\epsilon$ sufficiently small, 
$(J_k,u_k^\epsilon) \in \Mod^{1,p,m}$ lies in an arbitrarily small ball about
$(J,0)$ for large~$k$, and the homeomorphism \eqref{eqn:homeo} then implies that
the same is true in $\Mod^{1,p,m}_r$, thus giving a uniform bound
$$
\| u_k^\epsilon \|_{W^{m+1,p}(B_r)} < C.
$$
Rescaling again, this implies a uniform bound on 
$\| u_k \|_{W^{m+1,p}(B_{\epsilon r})}$.  Since this same argument can be
carried out on any sufficiently small ball about an interior point in $B$,
and any compact subset is covered by finitely many such balls, this
implies the desired bound in $W^{m+1,p}_\loc$ on~$B$.
\end{proof}

Theorem~\ref{thm:regularity} will be proved by induction, and the hard
part is the initial step: we need to show that if $J$ is of class $C^1$,
then the regularity of $u$ can be improved from $W^{1,p}$ to $W^{2,p}_\loc$.
Observe that it suffices to find a number $\epsilon > 0$ such that
$u \in W^{2,p}(B_\epsilon)$ and the sequence $u_k$ converges 
in $W^{2,p}(B_\epsilon)$, since any compact subset of $B$ can be covered by
finitely many such balls of arbitrarily small radius.  To obtain the
desired results on $B_\epsilon$, we will use much the same argument that was
used in Prop.~\ref{prop:inhomogeneousRegularity} for the linear case: more
bookkeeping is required since $J$ is not standard, but we'll take advantage
of the assumption $J(0)=i$, so that $J$ is \emph{nearly} standard on 
$B_\epsilon$ if $\epsilon$ is sufficiently small.

\begin{proof}[Proof of Theorem~\ref{thm:regularity} for $m=1$]
We shall use the method of difference quotients as in 
Prop.~\ref{prop:inhomogeneousRegularity} to show that
$u \in W^{2,p}(B_\epsilon)$ for small $\epsilon > 0$.\footnote{The difference
quotient argument explained here is adapted from the proof given in
\cite{AbbasHofer}*{Appendix~4}.}
For any $r < 1$ and 
$h \in \RR\setminus \{0\}$ sufficiently small, define a function
$u^h \in W^{1,p}(B_r,\CC^n)$ by
$$
u^h(s,t) = \frac{u(s+h,t) - u(s,t)}{h},
$$
so $u^h$ converges in $L^p(B_r)$ to $\p_s u$ as $h \to 0$.  Our main
goal is to find constants $\epsilon > 0$ and $C > 0$ such that
\begin{equation}
\label{eqn:diffQuotBound}
\| u^h \|_{W^{1,p}(B_\epsilon)} < C
\end{equation}
for all sufficiently small $h \ne 0$.
The Banach-Alaoglu theorem then gives a
sequence $h_j \to 0$ such that $u^{h_j}$ converges weakly in
$W^{1,p}(B_\epsilon)$, implying that its limit $\p_s u$ is also in
$W^{1,p}(B_\epsilon)$; since exactly the same argument works for $\p_t u$,
we will conclude $u \in W^{2,p}(B_\epsilon)$.

To prove the bound \eqref{eqn:diffQuotBound}, assume at first that $\epsilon$ 
is any real number with $0 < \epsilon < 1/2$; its value will be further
specified later.  Choose a smooth cutoff function
$\beta_\epsilon : B \to [0,1]$ with support in $B_{2\epsilon}$ such that
$\beta|_{B_\epsilon} \equiv 1$.
It will then suffice to show that if $\epsilon$ is
taken small enough, we can find a uniform bound on
$\| \beta_\epsilon u^h \|_{W^{1,p}(B_{2\epsilon})}$ as $h \to 0$.  
The latter has compact 
support in~$B_{2\epsilon}$, so the Calder\'{o}n-Zygmund inequality gives
$$
\| \beta_\epsilon u^h \|_{W^{1,p}(B_{2\epsilon})} 
\le c \| \dbar (\beta_\epsilon u^h) \|_{L^p(B_{2\epsilon})}.
$$
We wish to take advantage of the fact that $\dbar_J u \equiv 0$, where
we abbreviate $\dbar_J := \p_s + J(u) \p_t$.  The latter can be
regarded as the standard
Cauchy-Riemann operator on a trivial bundle with nonstandard complex
structure $J(u(z))$, so in particular it satisfies the Leibnitz rule
$\dbar_J(f v) = (\dbar_J f) v + f (\dbar_J v)$ for $f : B \to \RR$ and
$v : B \to \CC^n$.  The difference quotient also satisfies a Leibnitz
rule $(fv)^h = f^h v + f v^h$.  Now rewriting $\dbar(\beta_\epsilon u^h)$
in terms of $\dbar_J$, we have
\begin{equation}
\label{eqn:ineq1}
\dbar(\beta_\epsilon u^h) = \dbar_J(\beta_\epsilon u^h) +
\left[ i - J(u) \right] \p_t (\beta_\epsilon u^h),
\end{equation}
where the first term can be expanded as
\begin{equation}
\label{eqn:ineq2}
\begin{split}
\dbar_J(\beta_\epsilon u^h) 
&= (\dbar_J \beta_\epsilon) u^h + \beta_\epsilon \dbar_J(u^h) \\
&= (\dbar \beta_\epsilon) u^h + \left[J(u) - i \right] (\p_t \beta_\epsilon) u^h 
+ \beta_\epsilon \left( \p_s u^h + J(u) \p_t u^h \right) \\
&= (\dbar \beta_\epsilon) u^h + \left[J(u) - i \right] (\p_t \beta_\epsilon) u^h 
+ \beta_\epsilon \left( (\dbar_J u)^h - [J(u)]^h \p_t u \right) \\
&= (\dbar \beta_\epsilon) u^h + \left[J(u) - i \right] (\p_t \beta_\epsilon) u^h 
- \beta_\epsilon [J(u)]^h \p_t u.
\end{split}
\end{equation}
The last term in \eqref{eqn:ineq1} satisfies the bound
\begin{equation*}
\begin{split}
\left\| \left[ i - J(u) \right] \p_t(\beta_\epsilon u^h) 
\right\|_{L^p(B_{2\epsilon})} &\le
\| i - J(u) \|_{C^0(B_{2\epsilon})} \| \p_t (\beta_\epsilon u^h) \|_{L^p(B_{2\epsilon})} \\
&\le C_1(\epsilon) \| \beta_\epsilon u^h \|_{W^{1,p}(B_{2\epsilon})},
\end{split}
\end{equation*}
where $C_1(\epsilon) := \| i - J(u) \|_{C^0(B_{2\epsilon})}$, and the
fact that $J(u(0)) = J(0) = i$ implies that $C_1(\epsilon)$ goes to zero as
$\epsilon \to 0$.  We can find similar bounds for every term on the right
hand side of \eqref{eqn:ineq2}: the first two, $\| (\dbar\beta_\epsilon) u^h \|_{L^p}$ 
and $\| [ J(u) - i ] (\p_t\beta_\epsilon) u^h \|_{L^p}$, are both bounded
uniformly in~$h$ since $\| u^h \|_{L^p} \to \| \p_s u \|_{L^p}$ as $h \to 0$.
For the third term, we use the fact that $J \in C^1$ to find a pointwise bound
\begin{equation*}
\begin{split}
\left| [J(u)]^h(s,t) \right| &= \frac{1}{h} \left| J(u(s + h,t)) - J(u(s,t)) \right| \le
\frac{1}{h} \| J \|_{C^1} \left| u(s+h,t) - u(s,t) \right| \\ 
&= \| J \|_{C^1} \left| u^h(s,t) \right|,
\end{split}
\end{equation*}
which implies
\begin{equation*}
\begin{split}
\left\| \beta_\epsilon [J(u)]^h \p_t u \right\|_{L^p(B_{2\epsilon})} &\le
\left\| \beta_\epsilon [J(u)]^h \right\|_{C^0(B)} 
\| \p_t u \|_{L^p(B_{2\epsilon})} \\
&\le C \| \beta_\epsilon u^h \|_{C^0(B)} \| u \|_{W^{1,p}(B_{2\epsilon})} \\
&\le C_2(\epsilon) \| \beta_\epsilon u^h \|_{W^{1,p}(B)}
= C_2(\epsilon) \| \beta_\epsilon u^h \|_{W^{1,p}(B_{2\epsilon})},
\end{split}
\end{equation*}
using the continuous embedding of $W^{1,p}(B)$ into $C^0(B)$.  Here
$C_2(\epsilon)$ is a constant multiple of $\| u \|_{W^{1,p}(B_{2\epsilon})}$
and thus also decays to zero as $\epsilon \to 0$.  Putting all of this
together, we have
$$
\| \beta_\epsilon u^h \|_{W^{1,p}(B_{2\epsilon})} \le C +
C_3(\epsilon) \| \beta_\epsilon u^h \|_{W^{1,p}(B_{2\epsilon})}
$$
where $C_3(\epsilon) \to 0$ as $\epsilon \to 0$, thus taking $\epsilon$
sufficiently small, we can move the last term to the left hand side and
obtain the desired bound,
$$
\| \beta_\epsilon u^h \|_{W^{1,p}(B_{2\epsilon})} \le
\frac{C}{1 - C_3(\epsilon)}.
$$

The statement about convergent sequences follows by a similar argument:
we assume $\| u - u_k \|_{W^{1,p}(B)} \to 0$ and use 
Exercise~\ref{EX:CZhigher} to estimate
$\| u - u_k \|_{W^{2,p}(B_\epsilon)}$ via
$$
\| \beta_\epsilon (u - u_k) \|_{W^{2,p}(B_{2\epsilon})} \le
c_1 \| \dbar(\beta_\epsilon u) - \dbar(\beta_\epsilon u_k) \|_{W^{1,p}(B_{2\epsilon})}.
$$
It will be important to note that the constant $c_1 > 0$ in this relation
does not depend on the choice of $\epsilon > 0$.
Adapting the computation of \eqref{eqn:ineq1} and \eqref{eqn:ineq2} using
$\p_s u + J(u) \p_t u = \p_s u_k + J_k(u_k)\p_t u_k = 0$, we now find
\begin{equation*}
\begin{split}
\dbar(\beta_\epsilon u) - \dbar(\beta_\epsilon u_k) &= 
(\dbar\beta_\epsilon) (u - u_k) \\
&\quad + (\p_t\beta_\epsilon) [J(u) - i] (u - u_k) 
 + (\p_t\beta_\epsilon) [J(u) - J_k(u_k)] u_k \\
&\quad + [ J_k(u_k) - J(u) ] \p_t (\beta_\epsilon u) 
 + [ i - J_k(u_k) ] [ \p_t(\beta_\epsilon u) - \p_t(\beta_\epsilon u_k) ].
\end{split}
\end{equation*}
Since $W^{1,p}$ is a Banach algebra, it is easy
to see that for any fixed $\epsilon > 0$ sufficiently small,
the first three terms in this expression each decay
to zero in $W^{1,p}(B_{2\epsilon})$ as 
$\| u - u_k \|_{W^{1,p}} \to 0$;
in particular for the third term, we use the fact that $J_k \to J$ in $C^1$
to conclude $J_k(u_k) \to J(u)$ in $W^{1,p}$.  The fourth term is bounded
similarly since $\| \p_t (\beta_\epsilon u) \|_{W^{1,p}(B_{2\epsilon})} \le
\| \beta_\epsilon u \|_{W^{2,p}(B_{2\epsilon})}$, and we've already proved 
above that $u \in W^{2,p}(B_r)$ for sufficiently small~$r$.  The fifth
term is a bit trickier: using the definition of the $W^{1,p}$-norm,
we have
\begin{equation}
\label{eqn:bigMess}
\begin{split}
\big\| [ i - J_k(u_k) ] &[ \p_t(\beta_\epsilon u) - \p_t(\beta_\epsilon u_k) ]
\big\|_{W^{1,p}(B_{2\epsilon})} \\
&\le \big\| [ i - J_k(u_k) ] [ \p_t(\beta_\epsilon u) - \p_t(\beta_\epsilon u_k) ]
\big\|_{L^p(B_{2\epsilon})} \\
& \qquad + \big\| DJ_k(u_k) \cdot Du_k \cdot [ \p_t(\beta_\epsilon u) - \p_t(\beta_\epsilon u_k) ]
\big\|_{L^p(B_{2\epsilon})} \\
& \qquad + \big\| [ i - J_k(u_k) ] [ D\p_t(\beta_\epsilon u) - D\p_t(\beta_\epsilon u_k) ]
\big\|_{L^p(B_{2\epsilon})}.
\end{split}
\end{equation}
Since $u_k \to u$ and $J_k \to J$ in $C^0$ while $J(u(0)) = i$, we can fix
$\epsilon > 0$ small enough so that for all $k$ sufficiently large,
$$
\left\| i - J_k(u_k) \right\|_{C^0(B_{2\epsilon})} \le \frac{1}{3 c_1}.
$$
The first term on the right hand side of \eqref{eqn:bigMess} is then bounded
by a constant times $\| \beta_\epsilon u - \beta_\epsilon u_k \|_{W^{1,p}}$,
which goes to zero as $k \to \infty$, and the third term is bounded by
$$
\| i - J_k(u_k) \|_{C^0(B_{2\epsilon})} 
\| \beta_\epsilon u - \beta_\epsilon u_k \|_{W^{2,p}(B_{2\epsilon})}
\le \frac{1}{3 c_1} \| \beta_\epsilon u - \beta_\epsilon u_k \|_{W^{2,p}(B_{2\epsilon})}.
$$
For the second term, we use the continuous embedding 
$W^{1,p} \hookrightarrow C^0$ and obtain the bound
\begin{equation*}
\begin{split}
\| DJ_k \|_{C^0} & \| D u_k \|_{L^p(B_{2\epsilon})} 
\| \p_t (\beta_\epsilon u) - \p_t (\beta_\epsilon u_k) \|_{C^0(B)} \\
&\le
c_2 \| J_k \|_{C^1} \| u_k \|_{W^{1,p}(B_{2\epsilon})}
\| \p_t (\beta_\epsilon u) - \p_t (\beta_\epsilon u_k) \|_{W^{1,p}(B)} \\
&\le c_3 \| u \|_{W^{1,p}(B_{2\epsilon})} 
\| \beta_\epsilon u - \beta_\epsilon u_k \|_{W^{2,p}(B_{2\epsilon})},
\end{split}
\end{equation*}
where we observe that the constant $c_3 > 0$ is also independent of the
choice of $\epsilon > 0$.  We can therefore shrink $\epsilon$ if necessary
and assume
$$
\| u \|_{W^{1,p}(B_{2\epsilon})} \le \frac{1}{3 c_1 c_3}.
$$
Putting all this together, we now have a bound of the form
$$
\| \beta_\epsilon (u - u_k) \|_{W^{2,p}(B_{2\epsilon})} \le
F\left(\| u - u_k \|_{W^{1,p}}\right) + 
\frac{2}{3} \| \beta_\epsilon (u - u_k) \|_{W^{2,p}(B_{2\epsilon})}
$$
for sufficiently large~$k$, where $F(t) \to 0$ as $t \to 0$, thus 
we conclude that 
$\| \beta_\epsilon (u - u_k) \|_{W^{2,p}(B_{2\epsilon})} \to 0$ as
$k \to \infty$.
\end{proof}

To complete the proof of Theorem~\ref{thm:regularity} by induction,
we use the following simple fact: if $u$ is $J$-holomorphic, then its
$1$-jet can also be regarded as a pseudoholomorphic map.  A global
version of this statement is made precise in the appendix by
P.~Gauduchon of \cite{Audin:almostComplex}, but we will only need a local
version, which is much simpler to see.  If $J \in \jJ^m(B^{2n})$,
we can define an almost complex structure $\hat{J}$ of class $C^{m-1}$
on $B \times B^{2n} \times \CC^n$ in block form by
$$
\hat{J}(z,u,X) = \begin{pmatrix}
i & 0 & 0 \\
0 & J(u) & 0 \\
A(u,X) & 0 & J(u)
\end{pmatrix},
$$
where $A(u,X) \in \Hom_\RR(\CC,\CC^n)$ is defined by
$$
A(u,X) (x + iy) = \begin{pmatrix}
 DJ(u)X \cdot X & DJ(u)X \cdot J(u)X \end{pmatrix}
\begin{pmatrix}
x \\
y
\end{pmatrix}.
$$
Using the fact that $0 = D(J^2)(u)X = DJ(u)X \cdot J(u) + J(u) \cdot DJ(u)X$,
one can easily compute that $A(u,X) i + J A(u,X) = 0$ and thus $\hat{J}$
is indeed an almost complex structure.  Moreover, if $u : B \to B^{2n}$ 
satisfies $\p_s u + J(u) \p_t u = 0$ then 
$$
\hat{u} : B \to B \times B^{2n} \times \CC^n : z \mapsto (z,u(z),\p_s u(z))
$$
satisfies $\p_s\hat{u} + \hat{J}(\hat{u}) \p_t\hat{u} = 0$.  Indeed,
this statement amounts to a system of three PDEs, of which the first is
trivial, the second is $\p_s u + J(u) \p_t u = 0$ and the third is the
latter differentiated with respect to~$s$.

\begin{exercise}
Verify all of the above.
\end{exercise}

We can now carry out the inductive step in the proof of 
Theorem~\ref{thm:regularity}: assume the theorem is proved for almost
complex structures of class $C^{m-1}$.  Then if $J \in \jJ^m(B^{2n})$ and
$u \in W^{1,p}(B)$ is $J$-holomorphic, we have
$u \in W^{m,p}_\loc$, and $\p_s u$ is $\hat{J}$-holomorphic for an almost
complex structure $\hat{J}$ of class $C^{m-1}$, implying $\p_s u \in
W^{m,p}_\loc$ as well.  Now $\p_t u = J(u) \p_s u$ is also in
$W^{m,p}_\loc$ since $W^{m,p}$ is a Banach algebra, hence 
$u \in W^{m+1,p}_\loc$ as claimed.  The statement about converging
sequences follows by a similarly simple argument.

\section{Some tools of global analysis}
\label{sec:calculus}

To understand the structure of spaces of solutions to the nonlinear 
Cauchy-Riemann equation, and in particular to prove local existence
in the next section, we will use the generalization of the standard 
differential calculus for smooth maps between Banach spaces.  
A readable and elegant introduction to this topic may be found in the 
book of Lang \cite{Lang:analysis}; here we shall merely summarize the 
essential facts.

Most of the familiar properties of derivatives and
differentiable functions generalize nicely to maps between arbitrary normed 
linear spaces $X$ and $Y$, so long as both spaces are complete.  The 
derivative of the map $f: X \rightarrow Y$ at $x \in X$ (also often 
called its \defin{linearization}) is by definition a continuous linear operator
$$
df(x) \in {\mathcal L}(X,Y)
$$
such that for small $h \in X$,
$$
f(x + h) = f(x) + df(x)h + o(\| h \|_X ),
$$
where $o(\|h\|_X)$ denotes an arbitrary map of the form $\eta(h) \cdot \|h\|_X$
with $\lim_{h\rightarrow 0}\eta(h) = 0$.  If $df(x)$ exists for all $x\in X$, 
one has a map between Banach spaces
 $df: X \rightarrow {\mathcal L}(X,Y)$, 
which may have its own derivative, and one thus obtains the notions of higher 
order derivatives and smoothness.  Proving differentiability 
in the infinite-dimensional setting is sometimes an intricate problem, 
often requiring integral inequalities such as Sobolev or H\"older 
estimates, and it is not hard to find natural examples of maps that are 
everywhere continuous but nonsmooth on some dense set.

\begin{exercise}
\label{EX:notDifferentiable}
If $S^1 = \RR / \ZZ$, we can denote the Banach space of real-valued continuous
and $1$-periodic functions on $\RR$ by $C^0(S^1)$.
Show that the map $\Phi : \RR \times C^0(S^1)
\to C^0(S^1)$ defined by $\Phi(s,f)(t) = f(s + t)$ is continuous but
not differentiable.
\end{exercise}

Despite these complications, having defined the derivative, 
one can prove infinite-dimensional versions of the familiar differentiation rules, Taylor's 
formula and the implicit function theorem, which can become powerful tools.  
The proofs, in fact, are virtually the same as in the finite-dimensional 
case, with occasional reference to some simple tools of linear functional 
analysis such as the Hahn-Banach theorem.  Let us state the two most 
important results that we will make use of.

\begin{thm}[Inverse function theorem]
Suppose $X$ and $Y$ are Banach spaces, $\uU \subset X$ is an open subset and
$f : \uU \to Y$ is a map of class $C^k$ for $k \ge 1$ such that for some
$x_0 \in \uU$, $df(x_0) : X \to Y$ is a continuous isomorphism.
Then $f$ maps some neighborhood $\oO$ of $x_0$ bijectively to an open
neighborhood of $y_0 := f(x_0)$, and its local inverse
$f^{-1} : f(\oO) \to \oO$ is also of class $C^k$, with
$$
d(f^{-1})(y_0) = \left[ df(x_0) \right]^{-1}.
$$
\end{thm}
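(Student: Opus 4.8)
The plan is to reduce to a normalized situation and then invoke the Banach fixed point theorem, following the classical argument (see \cite{Lang:analysis}). First I would compose $f$ with translations in $X$ and $Y$ and with the continuous linear isomorphism $\left[df(x_0)\right]^{-1} : Y \to X$ (which is automatically continuous, since a bounded bijective linear map between Banach spaces has bounded inverse), reducing to the case $x_0 = 0$, $y_0 = 0$, $Y = X$ and $df(0) = \Id$; this preserves the $C^k$ hypothesis because continuous linear maps are smooth. Writing $g(x) := x - f(x)$, we have $g(0) = 0$ and $dg(0) = 0$, so by continuity of $df$ there is $r > 0$ with $\bar{B}_r(0) \subset \uU$ and $\| dg(x) \|_{\mathcal{L}(X,X)} \le \tfrac12$ for $\|x\| \le r$; the mean value inequality then gives $\| g(x_1) - g(x_2) \| \le \tfrac12 \| x_1 - x_2 \|$ on $\bar{B}_r(0)$.

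Next, for each $y$ with $\|y\| < r/2$, consider the map $\Phi_y(x) := y + g(x)$. One checks that $\Phi_y$ maps $\bar{B}_r(0)$ into itself and is a $\tfrac12$-contraction, so by completeness of $X$ it has a unique fixed point, which I call $\varphi(y)$; this is exactly the unique solution in $\bar{B}_r(0)$ of $f(x) = y$. Setting $\oO := f^{-1}(B_{r/2}(0)) \cap B_r(0)$, which is open since $f$ is continuous, $f$ restricts to a bijection $\oO \to B_{r/2}(0)$ with inverse $\varphi$. The estimate $\| x_1 - x_2 \| \le \| f(x_1) - f(x_2) \| + \| g(x_1) - g(x_2) \| \le \| f(x_1) - f(x_2) \| + \tfrac12 \| x_1 - x_2 \|$ yields $\| \varphi(y_1) - \varphi(y_2) \| \le 2 \| y_1 - y_2 \|$, so $\varphi$ is Lipschitz continuous.

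For differentiability: since $df(0) = \Id$ is invertible and the set of invertible elements of $\mathcal{L}(X,X)$ is open (inverses of nearby operators given by a Neumann series), after shrinking $r$ we may assume $df(x) \in \GL(X)$ for all $x \in \oO$. Given $y, y + k \in B_{r/2}(0)$, write $x = \varphi(y)$ and $x + h = \varphi(y+k)$; from $k = f(x+h) - f(x) = df(x) h + o(\|h\|)$ together with the Lipschitz bound $\|h\| \le 2\|k\|$ one extracts $h = \left[df(x)\right]^{-1} k + o(\|k\|)$, i.e.\ $\varphi$ is differentiable at $y$ with $d\varphi(y) = \left[df(\varphi(y))\right]^{-1}$; in particular $d(f^{-1})(y_0) = \left[df(x_0)\right]^{-1}$.

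Finally, the $C^k$ regularity of $f^{-1}$ is a bootstrap. The formula $d\varphi = \iota \circ df \circ \varphi$, where $\iota : \GL(X) \to \GL(X)$, $A \mapsto A^{-1}$, is smooth, shows that if $\varphi$ is $C^j$ then $d\varphi$ is of class $C^{\min(j, k-1)}$, hence $\varphi$ is of class $C^{\min(j+1, k)}$; starting from $j = 0$ and iterating gives $\varphi = f^{-1} \in C^k$. The place needing genuine care — and the part I expect to be the main technical obstacle — is the differentiability step and the verification that smoothness of operator inversion really drives the bootstrap cleanly; the contraction and continuity arguments are essentially a verbatim transcription of the finite-dimensional proof, with completeness of $X$ playing the role that makes the fixed point argument go through.
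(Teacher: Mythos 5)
Your proof is correct and is precisely the classical contraction-mapping argument that the paper itself points to (it gives no proof of its own, citing Lang and noting only that "the proof uses Banach's fixed point theorem"). All the steps check out — including the reduction to $df(0)=\Id$, the verification that $\Phi_y$ maps $\bar{B}_r(0)$ into itself, the Lipschitz bound $\|\varphi(y_1)-\varphi(y_2)\|\le 2\|y_1-y_2\|$ used to control the $o(\|h\|)$ term, and the bootstrap via smoothness of operator inversion.
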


Note that while derivatives and notions of differentiability 
can be defined in more
general normed vector spaces, the inverse function theorem really requires 
$X$ and $Y$ to be \emph{complete}, as the proof uses Banach's fixed point 
theorem (i.e.~the ``contraction mapping principle'').  The implicit 
function theorem follows from this, though we should emphasize that it 
requires an extra hypothesis that is vacuous in the finite-dimensional case:

\begin{thm}[Implicit function theorem]
\label{thm:IFT}
Suppose $X$ and $Y$ are Banach spaces, $\uU \subset X$ is an open subset and
$f : \uU \to Y$ is a map of class $C^k$ for $k \ge 1$ such that for some
$x_0 \in \uU$, $df(x_0) : X \to Y$ is surjective and admits a bounded right inverse.
Then there exists a $C^k$-map
$$
\Phi_{x_0} : \oO_{x_0} \to X,
$$
which maps some open neighborhood $\oO_{x_0} \subset \ker df(x_0)$ of~$0$
bijectively to an open neighborhood of $x_0$ in $f^{-1}(y_0)$, where
$y_0 = f(x_0)$.
\end{thm}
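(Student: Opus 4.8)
The plan is to deduce the implicit function theorem from the inverse function theorem, exactly as one does in the finite-dimensional case, but paying attention to the extra hypothesis that $df(x_0)$ admits a \emph{bounded} right inverse — something automatic when $X$ is finite-dimensional but genuinely needed here. Let me write $L := df(x_0) : X \to Y$, let $K := \ker L \subset X$, and let $R : Y \to X$ be a bounded linear right inverse of $L$, so $LR = \Id_Y$. The key algebraic observation is that $P := RL : X \to X$ is a bounded projection (indeed $P^2 = RLRL = RL = P$), so $X$ splits as a topological direct sum $X = K \oplus Z$ where $Z := \im R = \im P$ is a closed complemented subspace, and $L|_Z : Z \to Y$ is a continuous isomorphism with inverse $R|_Y$.

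First I would use this splitting to reduce the problem to the inverse function theorem. Write points of $X$ near $x_0$ as $x_0 + k + z$ with $k \in K$, $z \in Z$, and define a map
$$
G : \uU' \to K \oplus Y, \qquad G(x_0 + k + z) := \bigl( k,\ f(x_0 + k + z) \bigr),
$$
on a suitable neighborhood $\uU'$ of $x_0$. This $G$ is of class $C^k$ since $f$ is and the projection onto $K$ is bounded linear. Its derivative at $x_0$ is $dG(x_0)(h) = (P_K h,\ L h)$, where $P_K = \Id - P$ is the projection onto $K$; I claim this is a continuous isomorphism $X \to K \oplus Y$. It is injective because $dG(x_0)(h) = 0$ forces $h \in K$ (from $L h = 0$) and then $P_K h = h = 0$; it is surjective because given $(k, y)$ one takes $h := k + Ry \in K \oplus Z$, which satisfies $P_K h = k$ and $Lh = LRy = y$; and its inverse $(k,y) \mapsto k + Ry$ is bounded since $R$ is. Hence by the inverse function theorem, $G$ maps a neighborhood $\oO$ of $x_0$ bijectively onto an open neighborhood of $(0, y_0)$ in $K \oplus Y$, with $C^k$ inverse $G^{-1}$.

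Finally I would extract $\Phi_{x_0}$ from $G^{-1}$. The fiber $f^{-1}(y_0) \cap \oO$ is exactly the preimage under $G$ of the slice $\{(k, y_0) : k \in K\}$, so define $\Phi_{x_0}$ on the open set $\oO_{x_0} := \{ k \in K : (k, y_0) \in G(\oO)\}$ (a neighborhood of $0$ in $K$) by
$$
\Phi_{x_0}(k) := G^{-1}(k, y_0).
$$
This is a composition of the $C^k$ map $G^{-1}$ with the affine $C^\infty$ inclusion $k \mapsto (k, y_0)$, hence $C^k$; by construction it is a bijection of $\oO_{x_0}$ onto $f^{-1}(y_0) \cap \oO$, which is an open neighborhood of $x_0$ in $f^{-1}(y_0)$. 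Since $G$ is the identity in the first coordinate, $\Phi_{x_0}(k)$ has the form $x_0 + k + (\text{higher order})$, so $d\Phi_{x_0}(0)$ is the inclusion $K \hookrightarrow X$, as expected. The one point requiring genuine care — the ``main obstacle,'' though it is more a matter of bookkeeping than difficulty — is verifying that the splitting $X = K \oplus Z$ is topological (i.e.\ that $P = RL$ really is bounded and $Z = \im R$ is closed), since this is precisely where the boundedness of the right inverse $R$ is used and where the argument would fail without that hypothesis; everything else is a faithful transcription of the classical finite-dimensional proof.
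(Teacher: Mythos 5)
Your proof is correct and follows essentially the same route as the paper: split $X = \ker df(x_0) \oplus Z$ using the bounded right inverse, augment $f$ with the projection onto the kernel to get a map whose derivative at $x_0$ is a Banach space isomorphism, invert it via the inverse function theorem, and restrict the inverse to the slice $\{y = y_0\}$. The only (welcome) difference is that you explicitly construct the bounded projection as $P = RL$ and verify the splitting is topological, a point the paper merely asserts.
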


Note that the existence of a bounded right inverse of $df(x_0)$ is equivalent
to the existence of a splitting
$$
X = \ker df(x_0) \oplus V,
$$
where $V \subset X$ is a closed linear subspace, so there is a bounded 
linear projection map $\pi_K : X \to \ker df(x_0)$.
One makes use of this in the proof as follows: assume without loss of 
generality that $x_0 = 0$ and consider the map
\begin{equation}
\label{eqn:Psi}
\Psi_0 : \uU \to Y \oplus \ker df(0) : x \mapsto (f(x), \pi_K(x)).
\end{equation}
Then $d\Psi_0(0) = (df(0),\pi_K) : X \to Y \oplus \ker df(0)$ is an isomorphism,
so the inverse function theorem gives a local $C^k$-smooth inverse
$\Psi_0^{-1}$, and the desired parametrization of $f^{-1}(y_0)$ can be
written as $\Phi_0(v) = \Psi_0^{-1}(f(0),v)$ for sufficiently small
$v \in \ker df(0)$.

Of course the most elegant way to state the implicit function theorem is
in terms of manifolds: a \defin{Banach manifold} of class $C^k$ is simply
a topological space that has local charts identifying neighborhoods with
open subsets of Banach spaces such that all transition maps are
$C^k$-smooth diffeomorphisms.  Then the map $\Phi_{x_0}$ in the implicit
function theorem can be regarded as the inverse of a chart, defining a
Banach manifold structure on a subset of $f^{-1}(y_0)$.  In fact, it is
not hard to see that if $x_1 , x_2 \in f^{-1}(y_0)$ are two distinct
points satisfying the hypotheses of the theorem, then the resulting
``transition maps''
$$
\Phi_{x_1}^{-1} \circ \Phi_{x_2} : \oO_{x_2} \to \oO_{x_1}
$$
are $C^k$-smooth diffeomorphisms.  Indeed, these can be defined in terms
of the $\Psi$-map of \eqref{eqn:Psi} via
$$
\Psi_{x_1} \circ \Psi_{x_2}^{-1}(y_0,v) = (y_0, \Phi_{x_1}^{-1} \circ
\Phi_{x_2}(v)),
$$
where $\Psi_{x_1}$ and $\Psi_{x_2}$ are $C^k$-smooth local diffeomorphisms.
Moreover, these charts identify the tangent space to $f^{-1}(y_0)$ at
any $x_0 \in f^{-1}(y_0)$ with $\ker df(x_0) \subset X$.
Thus we can restate the implicit function theorem as follows.

\begin{cor}
Suppose $X$ and $Y$ are Banach spaces, $\uU \subset X$ is an open subset,
$f : \uU \to Y$ is a $C^k$-smooth map for $k \ge 1$
and $y \in Y$ is a regular value
of $f$ such that for every $x \in f^{-1}(y)$, $df(x)$ has a bounded right
inverse.  Then $f^{-1}(y)$ admits the structure of a $C^k$-smooth Banach
submanifold of~$X$, whose tangent space at $x \in f^{-1}(y)$ is
$\ker df(x)$.
\end{cor}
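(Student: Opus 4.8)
The plan is to deduce the corollary directly from Theorem~\ref{thm:IFT} together with the chart construction sketched in the paragraphs following it; there is essentially no new content, only the task of checking that the pointwise parametrizations assemble into an atlas.

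First I would fix $x_0 \in f^{-1}(y)$ and record the functional-analytic input. Since $y$ is a regular value, $df(x_0) : X \to Y$ is surjective, and by hypothesis it has a bounded right inverse $R$; then $P := R \circ df(x_0)$ is a bounded idempotent with $\ker P = \ker df(x_0)$ and image a closed complement $V_{x_0}$, so $X = \ker df(x_0) \oplus V_{x_0}$ with bounded projection $\pi_{x_0} := \1 - P$ onto $\ker df(x_0)$. Applying Theorem~\ref{thm:IFT} at $x_0$ then gives a $C^k$ map $\Phi_{x_0}$ carrying a neighborhood $\oO_{x_0} \subset \ker df(x_0)$ of $0$ bijectively onto a neighborhood of $x_0$ in $f^{-1}(y)$; concretely, as in \eqref{eqn:Psi}, $\Phi_{x_0}(v) = \Psi_{x_0}^{-1}(y,v)$ where $\Psi_{x_0}(x) = (f(x), \pi_{x_0}(x - x_0))$ is a $C^k$ local diffeomorphism near $x_0$. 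Because $\Psi_{x_0}$ is a homeomorphism onto an open set and $\Phi_{x_0}^{-1}$ is just $x \mapsto \pi_{x_0}(x - x_0)$, the map $\Phi_{x_0}$ is a homeomorphism onto an open subset of $f^{-1}(y)$ in the subspace topology, and I would take the $\Phi_{x_0}^{-1}$ as coordinate charts.

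Next I would verify the cocycle condition. For $x_1, x_2 \in f^{-1}(y)$ with overlapping chart domains, the identity $\Psi_{x_1} \circ \Psi_{x_2}^{-1}(y,v) = (y, \Phi_{x_1}^{-1} \circ \Phi_{x_2}(v))$ exhibits $\Phi_{x_1}^{-1} \circ \Phi_{x_2}$ as a component of a composition of the $C^k$ local diffeomorphisms $\Psi_{x_1}$ and $\Psi_{x_2}^{-1}$, hence as a $C^k$-smooth diffeomorphism between open subsets of $\ker df(x_2)$ and $\ker df(x_1)$. Since $f^{-1}(y)$ is Hausdorff as a subspace of $X$, the family $\{\Phi_{x_0}^{-1}\}$ is a $C^k$ atlas, giving $f^{-1}(y)$ the structure of a $C^k$ Banach submanifold of $X$ (the inclusion being smooth, as in each chart it is $\Phi_{x_0}$). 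For the tangent space, differentiating $\Psi_{x_0}$ at $x_0$ gives $d\Psi_{x_0}(x_0) h = (df(x_0)h, \pi_{x_0}h)$, so $d\Psi_{x_0}(x_0) v = (0,v)$ for $v \in \ker df(x_0)$; inverting shows $d\Phi_{x_0}(0) : \ker df(x_0) \to X$ is the inclusion, whence $T_{x_0} f^{-1}(y) = \ker df(x_0)$.

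I do not expect any serious obstacle here: the statement is a repackaging of the implicit function theorem, and the only points requiring a moment's thought are the splitting lemma of the first step (standard once one has a bounded right inverse) and the check that the chart topology agrees with the subspace topology on $f^{-1}(y)$ (immediate from the inverse function theorem, since $\Psi_{x_0}$ is an open map). Everything else is bookkeeping inherited from Theorem~\ref{thm:IFT}.
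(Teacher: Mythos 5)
Your proposal is correct and follows exactly the route the paper takes: the paper's "proof" of this corollary is precisely the discussion preceding it, where the charts $\Phi_{x_0}^{-1}$ are built from $\Psi_{x_0}(x) = (f(x),\pi_{x_0}(x-x_0))$ via the inverse function theorem, the transition maps are identified through $\Psi_{x_1}\circ\Psi_{x_2}^{-1}(y,v) = (y,\Phi_{x_1}^{-1}\circ\Phi_{x_2}(v))$, and the tangent space is read off from $d\Psi_{x_0}(x_0)$. Your additions (the idempotent $P = R\circ df(x_0)$ producing the splitting, and the check that the chart topology is the subspace topology) are correct fillings-in of details the paper leaves implicit.
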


By picking local charts, one sees that a similar statement is true if
$X$ and $Y$ are also Banach manifolds instead of linear spaces, and one
can generalize a step further to consider smooth sections of Banach
space bundles.  These results will become particularly useful when we deal 
with Fredholm maps, 
for which the linearization has finite-dimensional kernel and thus satisfies 
the bounded right inverse assumption trivially whenever it is surjective.
In this way one can prove that solution sets of certain PDEs are
finite-dimensional smooth manifolds.  In contrast, we'll see an example in 
the next section of a solution set that is an \emph{infinite-dimensional} 
smooth Banach manifold.

The differential geometry of Banach manifolds in infinite dimensions is
treated at length in \cite{Lang:geometry}.  A more basic question is
how to prove that certain spaces which naturally ``should'' be
Banach manifolds actually are.  This rather delicate question has been 
studied in substantial generality in the literature (see for example 
\cites{Eells:setting,Palais:global,Eliasson}): 
the hard part is always to show that
certain maps between Banach spaces are differentiable.  The key is to
consider only Banach spaces that have nice enough properties so that
certain natural classes of maps
are continuous, so that smoothness can then be proved by induction.

The next two lemmas are illustrative examples of the kinds of results 
one needs, and we'll make use of them in the next section.  First a
convenient piece of notation: if $\uU \subset \RR^m$ and $\Omega \subset \RR^n$ 
are open subsets and $\mathbf{X}(\uU,\RR^n)$ denotes some Banach space
of maps $\uU \to \RR^n$ that admits a continuous inclusion into
$C^0(\uU,\RR^n)$, then denote
$$
\mathbf{X}(\uU,\Omega) = \{ u \in \mathbf{X}(\uU,\RR^n)\ |\ 
u(\uU) \subset \Omega \}.
$$
Due to the continuous inclusion assumption, this is an open subset of
$\mathbf{X}(\uU,\RR^n)$.  We assume below for simplicity that $\Omega$ is 
convex, but this assumption is easy to remove at the cost of more cumbersome
notation; see \cite{Eliasson}*{Lemma~4.1} for a much more general version.

\begin{lemma}
\label{lemma:babySmoothness}
Suppose $\uU \subset \RR^m$ denotes an open subset, and the symbol
$\mathbf{X}$ associates to any Euclidean space $\RR^N$ a Banach space
$\mathbf{X}(\uU,\RR^N)$ consisting of bounded continuous 
maps $\uU \to \RR^N$ such that the
following hypotheses are satisfied:
\begin{itemize}
\item \textsc{($C^0$-inclusion)} The inclusion 
$\mathbf{X}(\uU,\RR^N) \hookrightarrow C^0(\uU,\RR^N)$ is continuous.
\item \textsc{(Banach algebra)} The natural bilinear pairing
$$
\mathbf{X}(\uU,\lL(\RR^n,\RR^N)) \times \mathbf{X}(\uU,\RR^n) \to
\mathbf{X}(\uU,\RR^N) : (A,u) \mapsto Au
$$
is well defined and continuous.
\item \textsc{($C^k$-continuity)}
For some integer $k \ge 0$, if $\Omega \subset \RR^n$ is any open set and
$f \in C^k(\Omega,\RR^N)$, the map
\begin{equation}
\label{eqn:Phif}
\Phi_f : \mathbf{X}(\uU,\Omega) \to \mathbf{X}(\uU,\RR^N) : u \mapsto f \circ u
\end{equation}
is well defined and continuous.
\end{itemize}
If $\Omega \subset \RR^n$ is a convex open set and
$f \in C^{k+r}(\Omega,\RR^N)$ for some $r \in \NN$, then the map
$\Phi_f$ defined in \eqref{eqn:Phif} is of class $C^r$ and has derivative
\begin{equation}
\label{eqn:theDerivative}
d\Phi_f(u)\eta = (df \circ u) \eta.
\end{equation}
\end{lemma}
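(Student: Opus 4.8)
The plan is to induct on $r$, the base case $r=1$ containing all the analytic substance while the inductive step is a formal composition argument. Throughout, for $v$ in the open convex set $\mathbf{X}(\uU,\Omega)$ write $\Phi_{df}(v) := df\circ v$; since $f\in C^{k+r}$ forces $df\in C^{k+r-1}\subset C^k$ (as $r\ge 1$), the $C^k$-continuity hypothesis applies to $df$ and shows $\Phi_{df}$ is a well-defined continuous map $\mathbf{X}(\uU,\Omega)\to\mathbf{X}(\uU,\lL(\RR^n,\RR^N))$.

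\emph{Base case $r=1$.} Fix $u\in\mathbf{X}(\uU,\Omega)$; since this set is open and convex (the latter because $\Omega$ is), for $\eta$ of small norm the whole segment $u+t\eta$, $t\in[0,1]$, stays in it. Applying the one-variable fundamental theorem of calculus pointwise and commuting the resulting continuous $\mathbf{X}$-valued Riemann integral past point evaluations — permissible because the $C^0$-inclusion hypothesis makes evaluation a bounded functional — I get
\[
\Phi_f(u+\eta)-\Phi_f(u)=\int_0^1\Phi_{df}(u+t\eta)\,\eta\,dt,
\]
each integrand lying in $\mathbf{X}(\uU,\RR^N)$ by the Banach-algebra hypothesis. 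Subtracting $(df\circ u)\eta$ and using boundedness of the pairing $(A,\eta)\mapsto A\eta$ with constant $C$,
\[
\bigl\|\Phi_f(u+\eta)-\Phi_f(u)-(df\circ u)\eta\bigr\|_{\mathbf{X}}\le C\,\|\eta\|_{\mathbf{X}}\sup_{t\in[0,1]}\bigl\|\Phi_{df}(u+t\eta)-\Phi_{df}(u)\bigr\|_{\mathbf{X}}.
\]
A short sequential argument (were the supremum bounded away from $0$ along some $\eta_j\to0$, one could pick $t_j\in[0,1]$ violating continuity of $\Phi_{df}$ at $u$, since $u+t_j\eta_j\to u$) shows the last factor is $o(1)$, so $\Phi_f$ is differentiable at $u$ with $d\Phi_f(u)\eta=(df\circ u)\eta$, a bounded operator again by the Banach-algebra hypothesis. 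Continuity of $u\mapsto d\Phi_f(u)$ in operator norm follows from $\|d\Phi_f(u_1)\eta-d\Phi_f(u_2)\eta\|_{\mathbf{X}}\le C\|\Phi_{df}(u_1)-\Phi_{df}(u_2)\|_{\mathbf{X}}\|\eta\|_{\mathbf{X}}$ together with the continuity of $\Phi_{df}$. Hence $\Phi_f\in C^1$.

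\emph{Inductive step.} Assume the statement for $r-1$ and let $f\in C^{k+r}(\Omega,\RR^N)$. The base case gives $\Phi_f\in C^1$ with $d\Phi_f=\Lambda\circ\Phi_{df}$, where $\Lambda\colon\mathbf{X}(\uU,\lL(\RR^n,\RR^N))\to\lL\bigl(\mathbf{X}(\uU,\RR^n),\mathbf{X}(\uU,\RR^N)\bigr)$ is the linear map $A\mapsto(\eta\mapsto A\eta)$, which is bounded — hence $C^\infty$ — by the Banach-algebra hypothesis. Since $df\in C^{k+(r-1)}(\Omega,\lL(\RR^n,\RR^N))$, the inductive hypothesis (with target $\lL(\RR^n,\RR^N)$ replacing $\RR^N$) yields $\Phi_{df}\in C^{r-1}$, so $d\Phi_f=\Lambda\circ\Phi_{df}\in C^{r-1}$ and therefore $\Phi_f\in C^r$, completing the induction.

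The main obstacle is bookkeeping rather than conceptual: one must track exactly which differentiability order of $f$ is consumed where — the $C^k$-continuity hypothesis is invoked for $df\in C^k$, while the induction feeds $df\in C^{k+(r-1)}$ into the $(r-1)$-case — and one must use convexity and openness of $\mathbf{X}(\uU,\Omega)$ to make the segment integral meaningful. The one genuinely analytic point beyond a one-line estimate is the uniform convergence $\sup_{t\in[0,1]}\|\Phi_{df}(u+t\eta)-\Phi_{df}(u)\|_{\mathbf{X}}\to0$ as $\eta\to0$, which rests on compactness of $[0,1]$ (or the sequential argument above) together with the continuity supplied by the $C^k$-continuity hypothesis.
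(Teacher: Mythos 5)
Your proof is correct and follows essentially the same route as the paper's: the integral form of the first-order Taylor remainder, the Banach-algebra hypothesis to bound the pairing, and induction on $r$ via writing $d\Phi_f$ as the composition of $\Phi_{df}$ with the bounded inclusion $\mathbf{X}(\uU,\lL(\RR^n,\RR^N)) \hookrightarrow \lL(\mathbf{X}(\uU,\RR^n),\mathbf{X}(\uU,\RR^N))$. The only divergence is in controlling the remainder: the paper packages it as $[\theta_f\circ(u+\eta,u)]\eta$ with $\theta_f(x,y)=\int_0^1\left[df((1-t)y+tx)-df(y)\right]dt$ a $C^k$ map on $\Omega\times\Omega$ and invokes the $C^k$-continuity hypothesis for $\theta_f$, whereas you bound the same quantity by $C\|\eta\|_{\mathbf{X}}\sup_{t\in[0,1]}\|\Phi_{df}(u+t\eta)-\Phi_{df}(u)\|_{\mathbf{X}}$ and finish with a sequential compactness argument---both are valid, and yours avoids applying the $C^k$-continuity hypothesis on the product domain.
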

\begin{remark}
In the formula \eqref{eqn:theDerivative} for the derivative we're implicitly
using both the \textsc{Banach algebra} and \textsc{$C^k$-continuity} 
hypotheses: 
the latter implies that $df \circ u$ is a map in
$\mathbf{X}(\uU,\lL(\RR^n,\RR^N))$, which the former then embeds continuously
into $\lL(\mathbf{X}(\uU,\RR^n),\mathbf{X}(\uU,\RR^N))$.
\end{remark}
\begin{proof}[Proof of Lemma~\ref{lemma:babySmoothness}]
We observe first that it suffices to prove differentiability and the
formula \eqref{eqn:theDerivative}, as $df \circ u$ is a continuous
function of $u$ and $C^r$-smoothness follows by induction.
Thus assume $r=1$ and $\eta \in \mathbf{X}(\uU,\RR^n)$ is small enough so
that $u + \eta \in \mathbf{X}(\uU,\Omega)$.  Then
\begin{equation}
\label{eqn:remainder}
\begin{split}
\Phi_f(u + \eta) &= \Phi_f(u) + [f \circ (u + \eta) - f \circ u ] =
\Phi_f(u) + \int_0^1 \frac{d}{dt} f \circ (u + t\eta) \,dt \\
&= \Phi_f(u) + \left[ \int_0^1 df \circ (u + t\eta) \,dt \right] \eta \\
&= \Phi_f(u) + (df \circ u) \eta + \left[\theta_f 
\circ (u + \eta,u)\right] \eta,
\end{split}
\end{equation}
where we've defined $\theta_f : \Omega \times \Omega \to \lL(\RR^n,\RR^N)$ by
\begin{equation}
\label{eqn:theta}
\theta_f(x,y) = \int_0^1 \left[ df((1-t) y + tx) - df(y) \right] \,dt,
\end{equation}
and observe that $\theta_f \in C^k$ since $f \in C^{k+1}$.
It follows that $\theta_f$ defines a continuous map
$$
\mathbf{X}(\uU,\Omega \times \Omega) \to \mathbf{X}(\uU,\lL(\RR^n,\RR^N)) :
(u,v) \mapsto \theta_f \circ (u,v),
$$
and in particular
$$
\lim_{\eta \to 0} \theta_f \circ (u+\eta,u) = \theta_f(u,u) = 0,
$$
where the limit is taken in the topology of
$\mathbf{X}(\uU,\lL(\RR^n,\RR^N))$.  Thus \eqref{eqn:remainder} proves
the stated formula for $d\Phi_f(u)$.
\end{proof}

We will need something slightly more general, since we'll 
also want to be able to differentiate $(f,u) \mapsto f \circ u$
with respect to~$f$.

\begin{lemma}
\label{lemma:smoothness}
Suppose $\uU$, $\Omega$ and $\mathbf{X}(\uU,\RR^n)$ are as in
Lemma~\ref{lemma:babySmoothness}, and in addition that the pairing
$T(u) f := f \circ u$ defines $T$ as a continuous map
\begin{equation}
\label{eqn:T2}
T : \mathbf{X}(\uU,\Omega) \to \lL(C^k(\Omega,\RR^N), \mathbf{X}(\uU,\RR^N) ).
\end{equation}
Then for any $r \in \NN$, the map
$$
\Psi : C^{k+r}(\Omega,\RR^N) \times \mathbf{X}(\uU,\Omega) \to 
\mathbf{X}(\uU,\RR^N) : (f,u) \mapsto f \circ u
$$
is of class $C^r$ and has derivative
$$
d\Psi(f,u)(g,\eta) = g \circ u + (df \circ u) \eta.
$$
\end{lemma}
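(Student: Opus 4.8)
The strategy is to reduce Lemma~\ref{lemma:smoothness} to Lemma~\ref{lemma:babySmoothness} by splitting the increment of $\Psi$ into a piece that is linear in the $f$-variable and a piece that is the increment of the $\Phi$-type map from the previous lemma. As in the proof of Lemma~\ref{lemma:babySmoothness}, it suffices to establish differentiability together with the stated formula for $d\Psi(f,u)$, since the right-hand side $g\circ u + (df\circ u)\eta$ is manifestly a continuous function of $(f,u)$ (using the continuity hypotheses on $T$ and the $C^k$-continuity of composition), so that $C^r$-smoothness follows by induction on $r$, exactly as before. Thus I take $r=1$ and assume $g\in C^{k+1}(\Omega,\RR^N)$ and $\eta\in\mathbf{X}(\uU,\RR^n)$ small enough that $u+\eta\in\mathbf{X}(\uU,\Omega)$.

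First I would write the increment as
$$
\Psi(f+g,u+\eta) - \Psi(f,u) = \big[(f+g)\circ(u+\eta) - f\circ(u+\eta)\big] + \big[f\circ(u+\eta) - f\circ u\big].
$$
The second bracket is precisely $\Phi_f(u+\eta)-\Phi_f(u)$, so by Lemma~\ref{lemma:babySmoothness} (applicable since $f\in C^{k+1}$) it equals $(df\circ u)\eta + o(\|\eta\|)$, with the remainder controlled by the term $[\theta_f\circ(u+\eta,u)]\eta$ appearing in \eqref{eqn:remainder}. The first bracket equals $g\circ(u+\eta)$, which I would further split as
$$
g\circ(u+\eta) = g\circ u + \big[g\circ(u+\eta) - g\circ u\big].
$$
Here $g\circ u = T(u)g$ is the claimed linear (in $g$) contribution to $d\Psi(f,u)$, so I must show the leftover term $g\circ(u+\eta)-g\circ u$ is $o(\|(g,\eta)\|)$. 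The point is that this leftover is \emph{bilinear-ish small}: it vanishes when $\eta=0$, and it is linear in $g$. Applying the same Taylor-with-remainder device as in \eqref{eqn:remainder} with $g$ in place of $f$ gives
$$
g\circ(u+\eta) - g\circ u = (dg\circ u)\eta + \big[\theta_g\circ(u+\eta,u)\big]\eta,
$$
where $\theta_g$ is defined by \eqref{eqn:theta} with $g$ in place of $f$, and $\theta_g\in C^k$ since $g\in C^{k+1}$. Now $(dg\circ u)\eta$ is bounded in $\mathbf{X}(\uU,\RR^N)$ by a constant times $\|g\|_{C^{k+1}}\|\eta\|$ (using the $C^k$-continuity hypothesis to put $dg\circ u$ into $\mathbf{X}(\uU,\lL(\RR^n,\RR^N))$ and the Banach algebra hypothesis), hence is $o(\|(g,\eta)\|)$ as $(g,\eta)\to 0$; and the $\theta_g$-term is even smaller, being a product of something that $\to 0$ with $\eta$. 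Assembling the two brackets, the total increment is $g\circ u + (df\circ u)\eta$ plus a sum of remainder terms each $o(\|(g,\eta)\|)$, which proves the formula for $d\Psi(f,u)$.

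The only genuinely delicate point is the uniformity needed to see that $\theta_g\circ(u+\eta,u)$ is \emph{small} — i.e.\ that the map $C^{k+1}(\Omega)\times\mathbf{X}(\uU,\Omega\times\Omega)\to\mathbf{X}(\uU,\lL(\RR^n,\RR^N))$ sending $(g; v,w)\mapsto\theta_g\circ(v,w)$ is continuous and vanishes along the diagonal $v=w$, uniformly as $g$ ranges over a bounded neighborhood. This follows by feeding $\theta_g$ (a $C^k$ function of its arguments, depending continuously on $g\in C^{k+1}$) into the continuity statement \eqref{eqn:T2} for $T$, and noting $\theta_g(x,x)=0$. Once this uniformity is in hand the remainder estimates are routine, and the induction step for general $r$ is identical to the one already carried out for $\Phi_f$: differentiating the formula $d\Psi(f,u)(g,\eta)=g\circ u+(df\circ u)\eta$ reintroduces only maps of the same type ($T$-compositions and $\Phi$-compositions with $df$, which has one less derivative), so the hypotheses propagate. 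I expect the bookkeeping of which composition operator is being invoked at each step (the linear-in-$g$ piece uses $T$, the nonlinear-in-$u$ piece uses $\Phi_{(\cdot)}$) to be the main thing to keep straight, rather than any hard analytic estimate.
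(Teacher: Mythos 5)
Your proposal is correct and follows essentially the same route as the paper's proof: the same splitting of the increment into the $u$-increment of $\Phi_f$ (handled by Lemma~\ref{lemma:babySmoothness} via the $\theta_f$ remainder) and the $g$-term $g\circ(u+\eta)=T(u+\eta)g$, whose deviation from the linear contribution $T(u)g$ is quadratically small in $(g,\eta)$, followed by induction on~$r$. The one point the paper treats more explicitly than you do is the induction step for $r>1$: there one must show that $u\mapsto T(u)$ is itself of class $C^{r-1}$ as a map into $\lL(C^{k+r}(\Omega,\RR^N),\mathbf{X}(\uU,\RR^N))$, which the paper establishes by a separate induction using auxiliary operator-valued maps $T_1,T_2$ extracted from the remainder formula --- this is what your phrase ``the hypotheses propagate'' is compressing, and it is a claim that genuinely requires its own (short) argument rather than a direct citation of the hypothesis \eqref{eqn:T2}, which only gives continuity of $T$.
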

\begin{proof}
We'll continue to write $\Phi_f = \Psi(f,\cdot)$
for each $f \in C^{k+r}(\Omega,\RR^N)$;
this is a $C^r$-smooth map $\mathbf{X}(\uU,\Omega) \to \mathbf{X}(\uU,\RR^N)$
by Lemma~\ref{lemma:babySmoothness}.  Observe that the pairing
$T(u)f = f \circ u$ of \eqref{eqn:T2} also gives a map
$$
T : \mathbf{X}(\uU,\Omega) \to \lL(C^{k+r}(\Omega,\RR^N),\mathbf{X}(\uU,\RR^N))
$$
for each integer $r \ge 0$, and we claim that this is of class~$C^r$.
The claim mostly follows already from the proof of
Lemma~\ref{lemma:babySmoothness}: expressing the remainder formula
\eqref{eqn:remainder} in new notation gives
\begin{equation}
\label{eqn:remainder2}
T(u + \eta)f = T(u)f + \left[ T_1(u) df \right] \eta +
\left[ T_2(u+\eta,u) \theta_f \right] \eta,
\end{equation}
where we've defined the related maps
\begin{equation*}
\begin{split}
T_1 : \mathbf{X}(\uU,\Omega) &\to \lL(C^{k+r-1}(\Omega,\lL(\RR^n,\RR^N)),
\mathbf{X}(\uU,\lL(\RR^n,\RR^N)) ), \\
T_2 : \mathbf{X}(\uU,\Omega\times\Omega) 
&\to \lL(C^{k+r-1}(\Omega \times \Omega,
\lL(\RR^n,\RR^N)) , \mathbf{X}(\uU,\lL(\RR^n,\RR^N)) ).
\end{split}
\end{equation*}
Note that the correspondence defined in
\eqref{eqn:theta} gives a bounded linear map
$$
C^{k+r}(\Omega,\RR^N) \to C^{k+r-1}\left(\Omega \times 
\Omega,\lL(\RR^n,\RR^N)\right) : f \mapsto \theta_f.
$$
Now arguing by induction, we can assume $T_1$ and $T_2$ are both of class
$C^{r-1}$.  Then as a family of bounded linear operators acting on~$f$, the 
pairing of $T_2(u + \eta,u)$ with $\theta_f$ goes to zero as $\eta \to 0$, and
\eqref{eqn:remainder2} implies
$$
\left[ dT(u) \eta\right] f = \left[ T_1(u) df \right] \eta,
$$
so $dT$ is of class $C^{r-1}$, proving the claim.

Next consider the derivative of the map $\Psi$ in the case $r=1$.
For any small $g \in C^{k+1}(\Omega,\RR^N)$ and
$\eta \in \mathbf{X}(\uU,\Omega)$, we compute
\begin{equation*}
\begin{split}
\Psi(f + g,u + \eta) &= \Psi(f,u) + 
\left[ T({u+\eta})(f+g) - T({u+\eta})(f) \right] +
\left[ \Phi_f(u + \eta) - \Phi_f(u) \right] \\
&= \Psi(f,u) + T(u) g + (T({u+\eta}) - T(u))g + d\Phi_f(u)\eta + o(\|\eta\|) \\
&= \Psi(f,u) + g \circ u + (df \circ u)\eta + o(\|(g,\eta)\|)  ).
\end{split}
\end{equation*}
Thus $\Psi$ is differentiable and we can write its derivative in the form
$d\Psi(f,u) = T(u) + \Psi(df,u)$.  The general result now follows easily by
induction.
\end{proof}

In the next section we'll apply this using the fact that
if $B \subset \CC$ is the open unit ball, then the space
$W^{k,p}(B)$ (for $k \ge 1$ and $p > 2$) is a
Banach algebra that embeds continuously into $C^0$, and the pairing
$(f,u) \mapsto f \circ u$ gives a continuous map
$$
C^k(\Omega,\RR^N) \times W^{k,p}(B,\Omega) \to W^{k,p}(B,\RR^N).
$$

Observe that by Lemma~\ref{lemma:babySmoothness}, the map 
$u \mapsto f \circ u$ on a suitable Banach space will be smooth if $f$ is
smooth.  Things get a bit trickier if we also consider $f$ to be a
variable in this map: e.g.~if $f$ varies arbitrarily in $C^k$ then the map
$\Psi(f,u) = f \circ u$ also has only finitely many derivatives.  This
headache is avoided if $f$ is allowed to vary only in some Banach space that
embeds continuously into $C^\infty$, for then one can apply 
Lemma~\ref{lemma:smoothness} for every $k$ and conclude that 
$\Psi$ is in $C^r$ for all~$r$.  The most obvious examples of Banach spaces
with continuous embeddings into $C^\infty$ are finite dimensional,
but we will also see an infinite-dimensional example in 
Chapter~\ref{chapter:moduli} when we 
discuss transversality and Floer's ``$C_\epsilon$ space''.

\section{Local existence of $J$-holomorphic curves}
\label{sec:nonlinear}

We shall now apply the machinery described in the previous section
to prove a local existence result
from which Theorem~\ref{thm:RiemannSurfaces} on the integrability of
Riemann surfaces follows as an easy corollary.
As usual in studying such local questions, we will consider
$J$-holomorphic maps from the unit ball $B \subset \CC$ into
$B^{2n} \subset \CC^n$, with the coordinates chosen so that
$J(0) = i$.  Let $B_r$ and $B_r^{2n}$ denote the balls of radius~$r > 0$
in $\CC$ and $\CC^n$ respectively.

In \S\ref{sec:intro} we stated the result that there
always exists a $J$-holomorphic curve tangent to any given vector at a
given point.  What we will actually prove is more general:
if $J$ is sufficiently smooth, then one can find
local $J$-holomorphic curves with specified derivatives up to some fixed
order at a point, not just the first derivatve---moreover one can also find
families of such curves that vary continuously under perturbations of~$J$.  
Some caution is in order:
it would be too much to hope that one could specify all partial derivatives 
arbitrarily, as the nonlinear
Cauchy-Riemann equation implies nontrivial relations, e.g.~$\p_t u(0) =
J(u(0)) \, \p_s u(0)$.  
What turns out to be possible is to specify the
\emph{holomorphic part} of the Taylor polynomial of $u$ at $z=0$ up to
some finite order, i.e.~the terms in the Taylor expansion that depend only
on $z$ and not on $\bar{z}$ (cf.~Equation~\eqref{eqn:Taylor}).
The relevant higher order derivatives of $u$ will thus be those of
the form $\p^k_z u(0)$.  As 
the following simple result demonstrates, trying to specify more partial 
derivatives beyond these would yield an ill-posed problem.

\begin{prop}
\label{prop:vanishingDerivatives}
Suppose $J$ is a smooth almost complex structure on $\CC^n$ with $J(0)=i$,
and $u , v : B \to \CC^n$ are a pair of $J$-holomorhic curves with
$u(0) = v(0) = 0$.  If there exists $d \in \NN$ such that
$$
\p_z^k u(0) = \p_z^k v(0)
$$
for all $k=0,\ldots,d$, then in fact $D^\alpha u(0) = D^\alpha v(0)$
for every multiindex $\alpha$ with $|\alpha| \le d$.
\end{prop}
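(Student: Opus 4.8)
The plan is to run an induction on the order $|\alpha|$ of the multiindex, showing that under the stated hypothesis every partial derivative $D^\alpha u(0)$ is uniquely determined by the values $\p_z^k u(0)$, $k \le d$, together with the almost complex structure $J$ and its derivatives at the origin. Since $u$ and $v$ satisfy the same equation and have the same data $\p_z^k u(0) = \p_z^k v(0)$ for $k \le d$, this will force $D^\alpha u(0) = D^\alpha v(0)$ for all $|\alpha| \le d$. The key structural observation is the one already exploited in the similarity-principle section: writing $\p_z = \frac12(\p_s - i\p_t)$, $\p_{\bar z} = \frac12(\p_s + i\p_t)$, the nonlinear Cauchy-Riemann equation $\p_s u + J(u)\p_t u = 0$ can be rewritten, using $J(0) = i$, in the form
\begin{equation*}
\p_{\bar z} u = Q(z, u, \p_z u, \ldots),
\end{equation*}
where $Q$ is a smooth expression that is ``lower order'' in the appropriate sense: more precisely, $i - J(u)$ vanishes at $z=0$ (since $u(0)=0$ and $J(0)=i$), so $\p_{\bar z} u$ is controlled by a product of $[i - J(u(z))]$ with $\p_t u$, and each factor carries strictly fewer $\p_z$-derivatives than one might fear. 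This is exactly the mechanism of Lemma~\ref{lemma:higherDerivs}, and I expect the cleanest route is to quote or adapt that lemma rather than redo the bookkeeping.

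First I would set up the induction carefully. The claim to be proved by induction on $k$ from $0$ to $d$ is: \emph{for every multiindex $\alpha = (a,b)$ (meaning $D^\alpha = \p_z^a \p_{\bar z}^b$) with $|\alpha| = a+b = k$ and $b \ge 1$, the derivative $D^\alpha u(0)$ is a universal polynomial expression in the $\p_z^j u(0)$ for $j < k$ and in the derivatives of $J$ at $0$.} The base case $k=1$ is immediate: $\p_{\bar z} u(0) = 0$ follows from the equation and $J(0) = i$. For the inductive step, given $\alpha = (a,b)$ with $b \ge 1$ and $a+b = k \le d$, I would apply the operator $\p_z^a \p_{\bar z}^{b-1}$ to the rewritten equation $\p_{\bar z} u = Q$ and evaluate at $z=0$. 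On the left this produces $D^\alpha u(0)$; on the right, every term of $\p_z^a \p_{\bar z}^{b-1} Q$ evaluated at $0$ involves either a derivative of $u$ of order strictly less than $k$ (hence known, or handled by a sub-induction distinguishing pure-$\p_z$ derivatives from mixed ones), or a factor $D^\beta(i - J(u))(0)$ which by the chain rule and Faà di Bruno expands into derivatives of $J$ and derivatives of $u$ of order $< k$. The point is that no term on the right can contain $D^\alpha u$ itself or any order-$k$ mixed derivative other than possibly ones already treated earlier in the same round of the induction — this ``triangularity'' is the heart of the matter.

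The main obstacle, as I see it, is precisely verifying this triangularity: one must check that when $\p_z^a \p_{\bar z}^{b-1}$ is distributed (via the Leibniz and chain rules) across the right-hand side $Q$, the only way to land on a genuinely order-$k$ mixed derivative of $u$ is the trivial distribution that yields $D^\alpha u$ on the left, and that combination sits alone on the left after rearrangement. Lemma~\ref{lemma:higherDerivs} already asserts a statement of exactly this flavor — that if $\p_z^\ell u(0) = 0$ for $\ell \le k$ then all order-$\le k$ mixed derivatives vanish — so I expect the proof will consist of reducing the present, more general claim to (a mild extension of) that lemma: instead of assuming $\p_z^\ell u(0) = 0$, we assume the $\p_z^\ell u(0)$ are prescribed, and the same computation shows the mixed derivatives are then determined rather than zero. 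Concretely I would state the needed extension as follows and prove it by the same argument as Lemma~\ref{lemma:higherDerivs}, then deduce Proposition~\ref{prop:vanishingDerivatives} by applying it to $h = v - u$, which satisfies a linear Cauchy-Riemann type equation $\p_s h + J(u)\p_t h + Ah = 0$ (with $A$ smooth, exactly as in the proof of Proposition~\ref{prop:uniqueContin}) and has $\p_z^k h(0) = 0$ for $k \le d$, whence $D^\alpha h(0) = 0$ for all $|\alpha| \le d$.
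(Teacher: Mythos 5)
Your proposal is correct, and its closing reduction---pass to $h = v - u$, which satisfies the linear Cauchy--Riemann type equation $\p_s h + J(u)\p_t h + A h = 0$ derived in the proof of Proposition~\ref{prop:uniqueContin} and has $\p_z^k h(0) = 0$ for $k \le d$, then invoke Lemma~\ref{lemma:higherDerivs}---is precisely the paper's proof. Note that no ``extension'' of Lemma~\ref{lemma:higherDerivs} is needed: for the difference $h$ the pure $\p_z$-derivatives genuinely vanish rather than being merely prescribed, so the lemma applies verbatim and the determinacy/triangularity machinery of your first two paragraphs can be discarded.
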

\begin{proof}
Recall that when we used the similarity principle to prove unique continuation 
in Prop.~\ref{prop:uniqueContin},
we did so by showing that $h := u - v : B \to \CC^n$ satisfies a linear
Cauchy-Riemann type equation of the form
$$
\p_s h + \bar{J}(z) \p_t h + A(z) h = 0,
$$
where in the present situation $\bar{J} : B \to \End_\RR(\CC^n)$ 
is a smooth family
of complex structures on $\CC^n$ and $A \in C^\infty(B,\End_\RR(\CC^n))$.
Since $\p_z^k h(0) = 0$ for all $k = 0,\ldots,d$,
Lemma~\ref{lemma:higherDerivs} now implies $D^\alpha h(0) = 0$ for all
$|\alpha| \le d$.
\end{proof}

Here is the main local existence result.

\begin{thm}
\label{thm:localExistence3}
Assume $p \in (2,\infty)$, $d \ge 1$ is an integer, $m \in \NN \cup \{\infty\}$
with $m \ge d + 1$, and $J \in \jJ^m(B^{2n})$ with $J(0) = i$.
Then for sufficiently small $\epsilon > 0$, there exists a
$C^{m-d}$-smooth map
$$
\Psi : (B^{2n}_\epsilon)^{d+1} \to W^{d+1,p}(B,\CC^n)
$$
such that for each $(w_0,\ldots,w_d) \in (B^{2n}_\epsilon)^{d+1}$,
$u := \Psi(w_0,\ldots,w_d)$ is a $J$-holomorphic curve with
$$
\p^k_z u(0) = w_k
$$
for each $k= 0,\ldots,d$.
\end{thm}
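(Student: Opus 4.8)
The plan is to deduce this from the implicit function theorem (Theorem~\ref{thm:IFT}), using as the one nontrivial analytic input the existence of a bounded right inverse for $\dbar$ on higher Sobolev spaces (Theorem~\ref{thm:rightInverse}) together with the composition-operator smoothness results of \S\ref{sec:calculus}. The key observation that makes this work directly, with no rescaling of the domain, is that although $J$ is not standard, the nonlinear Cauchy-Riemann operator $u\mapsto \p_s u + J(u)\,\p_t u$ has linearization at $u\equiv 0$ equal to the plain $\dbar$: the term $(dJ(0)\eta)\,\p_t u$ vanishes because $\p_t 0\equiv 0$, and $J(0)=i$ kills the rest. (Contrast this with the linear local existence result, Theorem~\ref{thm:linearExistence}, whose zeroth order term did not vanish and had to be cut off.)

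First I would fix $p\in(2,\infty)$ and work with maps on the fixed unit ball $B\subset\CC$. Since $p>2$ we have $W^{d+1,p}(B)\hookrightarrow C^d(B)$, so there is an open neighborhood $\oO\subset W^{d+1,p}(B,\CC^n)$ of $0$ consisting of maps with image in $B^{2n}$, and on it one can define the $C^{m-d}$-smooth map
\[
\fF_J : \oO \longrightarrow W^{d,p}(B,\CC^n)\times(\CC^n)^{d+1},\qquad
\fF_J(u) = \bigl( \p_s u + J(u)\,\p_t u,\ \p_z^0 u(0),\ \ldots,\ \p_z^d u(0) \bigr).
\]
For the smoothness class I would argue: $u\mapsto\p_s u$ and $u\mapsto\p_t u$ are bounded linear maps $W^{d+1,p}(B)\to W^{d,p}(B)$; composing the bounded inclusion $W^{d+1,p}(B,B^{2n})\hookrightarrow W^{d,p}(B,B^{2n})$ with $u\mapsto J\circ u$ gives (via Lemma~\ref{lemma:babySmoothness} applied with $\mathbf X = W^{d,p}$, $k=d$, $r=m-d$, using that $W^{d,p}(B)$ is a Banach algebra since $d\ge 1$) a $C^{m-d}$ map into $W^{d,p}(B,\End_\RR(\CC^n))$; multiplying by $\p_t u$ via the bounded bilinear multiplication $W^{d,p}\times W^{d,p}\to W^{d,p}$ stays $C^{m-d}$; and $u\mapsto\p_z^k u(0)$ is a bounded linear functional for $k\le d$. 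Choosing the intermediate space to be $W^{d,p}$ rather than $W^{d+1,p}$ is exactly what produces $C^{m-d}$ and not $C^{m-d-1}$. Note $\fF_J(0)=0$.

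Next I would compute $d\fF_J(0)\,\eta = \bigl( \dbar\eta,\ \p_z^0\eta(0),\ \ldots,\ \p_z^d\eta(0) \bigr)$ and check it is surjective with a bounded right inverse. Given $(f,w_0,\dots,w_d)\in W^{d,p}(B,\CC^n)\times(\CC^n)^{d+1}$, let $\widehat T\colon W^{d,p}(B)\to W^{d+1,p}(B)$ be the right inverse of $\dbar$ from Theorem~\ref{thm:rightInverse}, and set
\[
\eta \ :=\ \widehat T f\ +\ \sum_{k=0}^{d}\frac{1}{k!}\bigl( w_k - \p_z^k(\widehat T f)(0) \bigr)\,z^k .
\]
The added term is a holomorphic polynomial, hence $\dbar\eta = f$, and since $\p_z^k(z^j)|_{z=0}=k!\,\delta_{jk}$ it arranges $\p_z^k\eta(0)=w_k$ for $k=0,\dots,d$; all operations are bounded and linear in $(f,w_0,\dots,w_d)$, so this is the desired bounded right inverse.

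Finally I would invoke Theorem~\ref{thm:IFT}: the bounded right inverse yields a closed splitting $W^{d+1,p}(B,\CC^n)=\ker d\fF_J(0)\oplus V$ with bounded projection $\pi_K$ onto $\ker d\fF_J(0)$, and the map $u\mapsto(\fF_J(u),\pi_K u)$ is a $C^{m-d}$ local diffeomorphism near $0$ by the inverse function theorem. Composing its local inverse with $(w_0,\dots,w_d)\mapsto\bigl((0,w_0,\dots,w_d),0\bigr)$ gives, for $\epsilon>0$ small, a $C^{m-d}$ map $\Psi\colon(B^{2n}_\epsilon)^{d+1}\to W^{d+1,p}(B,\CC^n)$ such that $u:=\Psi(w_0,\dots,w_d)$ solves $\p_s u + J(u)\p_t u = 0$ with $\p_z^k u(0)=w_k$; by Theorem~\ref{thm:regularity}, $u$ is automatically of class $W^{m+1,p}_{\loc}$, and smooth if $J$ is. The main obstacle is not conceptual but a matter of careful bookkeeping: tracking that $\fF_J$ really lands in the stated differentiability class $C^{m-d}$, which forces the specific choice of Banach spaces above and the right use of Lemma~\ref{lemma:babySmoothness}; everything else (the right inverse of $\dbar$, the continuity of composition operators, the splitting lemma behind the implicit function theorem) is already available.
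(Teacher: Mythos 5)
Your proof is correct, and it rests on exactly the same analytic inputs as the paper's: the bounded right inverse $\widehat{T}$ of $\dbar : W^{d+1,p}(B) \to W^{d,p}(B)$ from Theorem~\ref{thm:rightInverse}, the $C^{m-d}$-smoothness of $u \mapsto \p_s u + J(u)\,\p_t u$ on $W^{d+1,p}(B,B^{2n})$ (the paper's Lemma~\ref{lemma:howSmooth} with $k = d+1$, which requires precisely your hypotheses $m \ge d+1 \ge 2$), the observation that the linearization at $u \equiv 0$ is the plain $\dbar$, and the role of the holomorphic polynomials of degree $\le d$ in absorbing the jet conditions. The only difference is organizational: the paper first restricts the nonlinear operator to the closed subspace $\Theta\pP_d(B,\CC^n) = \im\widehat{T} \oplus \pP_d$, on which $\dbar$ is surjective with kernel exactly $\pP_d$, so that the resulting zero set $\widehat{\Mod}(J)$ is a finite-dimensional $C^{m-d}$-manifold with $T_0\widehat{\Mod}(J) = \pP_d$, and then inverts the evaluation map $\ev_d$ on it; you instead augment the operator by $\ev_d$ and apply the implicit function theorem once on the full space $W^{d+1,p}$, your explicit right inverse $\eta = \widehat{T}f + \sum_{k}\frac{1}{k!}\bigl(w_k - \p_z^k(\widehat{T}f)(0)\bigr)z^k$ encoding precisely the same splitting. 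Both routes are valid and of comparable length; yours avoids introducing the intermediate finite-dimensional manifold, while the paper's version has the side benefit of exhibiting $\widehat{\Mod}(J)$ as a finite-dimensional slice of the local moduli space $\Mod^{d+1,p}(J)$, which is conceptually useful elsewhere.
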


\begin{exercise}
Convince yourself that Theorem~\ref{thm:localExistence3}, together with
elliptic regularity, implies that smooth
almost complex structures on a real $2$-dimensional manifold are 
always smoothly integrable, i.e.~they admit smooth local charts whose
transition maps are holomorphic.  (See also Corollary~\ref{cor:jvarying}.)
\end{exercise}

\begin{remark}
There is also an analogue of Theorem~\ref{thm:localExistence3} for local
holomorphic half-disks with totally real boundary conditions;
see \cite{Zehmisch:jets}.
\end{remark}

As with local existence of holomorphic sections, our proof of
Theorem~\ref{thm:localExistence3} will be based on the philosophy that
in a sufficiently small neighborhood, everything can be understood as
a perturbation of the standard Cauchy-Riemann equation.  To make this
precise, we will take a
closer look at the \emph{local moduli space} of $J$-holomorphic curves
that was introduced in the proof of Corollary~\ref{cor:gradBounds}.
For $p \in (2,\infty)$ and $k \ge 1$, define
$$
W^{k,p}(B,B^{2n}) = \{ u \in W^{k,p}(B,\CC^n) \ |\ u(B) \subset B^{2n} \},
$$
which is an open subset of $W^{k,p}(B,\CC^n)$ due to the continuous
embedding of $W^{k,p}$ in~$C^0$.  The space of $C^m$-smooth almost
complex structures on $B^{2n}$ will again be denoted by
$\jJ^m(B^{2n})$.  Now for $J \in \jJ^m(B^{2n})$, $p \in (2,\infty)$ and
$k \in \NN$, we define the local moduli space
$$
\Mod^{k,p}(J) = \{ u \in W^{k,p}(B,B^{2n}) \ |\ \p_s u + J(u)\p_t u = 0 \}.
$$
Observe that $\Mod^{k,p}(J)$ always contains the trivial map $u \equiv 0$.

\begin{prop}
\label{prop:BanachManifold}
Suppose $J \in \jJ^m(B^{2n})$ with $J(0) = i$ and
$m \ge k \ge 2$.  Then some
neighborhood of~$0$ in $\Mod^{k,p}(J)$ admits the structure of a
$C^{m-k+1}$-smooth Banach submanifold of $W^{k,p}(B,\CC^n)$, and its 
tangent space at~$0$ is
$$
T_0\Mod^{k,p}(J) = \{ \eta \in W^{k,p}(B,\CC^n) \ |\ \dbar\eta = 0 \}.
$$
\end{prop}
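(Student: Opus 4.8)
The plan is to exhibit a neighborhood of the constant map $0$ in $\Mod^{k,p}(J)$ as the zero set of a $C^{m-k+1}$-smooth map between Banach spaces whose linearization at $0$ admits a bounded right inverse, and then quote the implicit function theorem (Theorem~\ref{thm:IFT}). Concretely, I would work over the open subset $W^{k,p}(B,B^{2n}) \subset W^{k,p}(B,\CC^n)$ (open because $W^{k,p}$ embeds continuously into $C^0$ when $kp>2$) and consider the nonlinear Cauchy-Riemann operator
\begin{equation*}
F : W^{k,p}(B,B^{2n}) \to W^{k-1,p}(B,\CC^n), \qquad F(u) := \p_s u + J(u)\,\p_t u,
\end{equation*}
whose zero set is exactly $\Mod^{k,p}(J)$ and which satisfies $F(0)=0$.

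The first step, and the main technical point, is to verify that $F$ is of class $C^{m-k+1}$. The summand $u \mapsto \p_s u$ is bounded linear $W^{k,p} \to W^{k-1,p}$, hence smooth; the content is in $u \mapsto J(u)\,\p_t u$. Here it is important to use $W^{k-1,p}(B)$ rather than $W^{k,p}(B)$ as the relevant Banach algebra: since $k-1\ge 1$ and $p>2$, $W^{k-1,p}(B)$ is a Banach algebra embedding continuously into $C^0(B)$, and the composition pairing \eqref{eqn:CkPairing} provides $C^{k-1}$-continuity, so Lemma~\ref{lemma:babySmoothness} applied with $f=J\in C^m = C^{(k-1)+(m-k+1)}$ shows that $u \mapsto J\circ u$ is a $C^{m-k+1}$-map $W^{k-1,p}(B,B^{2n}) \to W^{k-1,p}(B,\End_\RR(\CC^n))$. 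Precomposing with the bounded inclusion $W^{k,p} \hookrightarrow W^{k-1,p}$ and then multiplying by $\p_t u \in W^{k-1,p}$ via the continuous bilinear Banach-algebra product keeps everything in class $C^{m-k+1}$, so $F \in C^{m-k+1}$. (Using $W^{k,p}$ as the algebra would cost a derivative and yield only $C^{m-k}$, which is why this bookkeeping matters.)

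Next I would compute $dF(0)$. Differentiating $F(u)=\p_s u + J(u)\,\p_t u$ at $u \equiv 0$ gives $dF(0)\eta = \p_s\eta + J(0)\,\p_t\eta + (dJ(0)\eta)\,\p_t 0$; the last term vanishes because $\p_t$ of the constant map is zero, and $J(0)=i$, so $dF(0)\eta = \p_s\eta + i\,\p_t\eta = \dbar\eta$. Thus $dF(0)=\dbar : W^{k,p}(B,\CC^n) \to W^{k-1,p}(B,\CC^n)$, which by Theorem~\ref{thm:rightInverse} (applied with $k-1\ge 0$ in place of its ``$k$'') is surjective and admits a bounded right inverse. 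Theorem~\ref{thm:IFT} then supplies a $C^{m-k+1}$-smooth parametrization of a neighborhood of $0$ in $F^{-1}(0)=\Mod^{k,p}(J)$ by a neighborhood of $0$ in $\ker dF(0) = \{\eta \in W^{k,p}(B,\CC^n) : \dbar\eta = 0\}$, which is precisely the asserted Banach submanifold structure together with the stated tangent space at $0$. The conclusion is only local because the implicit function theorem is local; no global structure of $\Mod^{k,p}(J)$ is claimed or needed here.
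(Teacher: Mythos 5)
Your argument is correct and follows essentially the same route as the paper: the paper likewise realizes $\Mod^{k,p}(J)$ near $0$ as the zero set of $\Phi_k(u)=\p_s u + J(u)\,\p_t u$, proves $C^{m-k+1}$-smoothness via Lemma~\ref{lemma:babySmoothness} and the Banach-algebra structure of $W^{k-1,p}$ (this is Lemma~\ref{lemma:howSmooth}), identifies $d\Phi_k(0)=\dbar$, and concludes with Theorem~\ref{thm:rightInverse} and the implicit function theorem. Your bookkeeping remark about why one must work in $W^{k-1,p}$ rather than $W^{k,p}$ is exactly the point the paper makes as well.
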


We prove this by presenting
$\Mod^{k,p}(J)$ as the zero set of a differentiable map between Banach
spaces---the tricky detail here is to determine exactly for which
values of $k$, $m$ and~$p$ the map in question is differentiable, and this
is the essential reason behind the condition $m \ge d+1$ in
Theorem~\ref{thm:localExistence3}.
For any $p \in (2,\infty)$ and $k,m \in \NN$ with $m \ge k-1$,
let $J \in \jJ^m(B^{2n})$ with $J(0) = i$ and define the nonlinear map
$$
\Phi_k : W^{k,p}(B,B^{2n}) \to W^{k-1,p}(B,\CC^n) :
u \mapsto \p_s u + J(u) \p_t u.
$$
This is well defined due to the continuous Sobolev embedding
$W^{k,p} \hookrightarrow C^{k-1}$: then $J \circ u$ is of class $C^{k-1}$
and thus defines a bounded multiplication on $\p_t u \in W^{k-1,p}$.
One can similarly show that $\Phi_k$ is continuous, though we are much
more interested in establishing conditions for it to be at least~$C^1$.

\begin{lemma}
\label{lemma:howSmooth}
If $m \ge k \ge 2$, then
$\Phi_k$ is of class $C^{m-k+1}$, and its derivative at~$0$ is
$$
d\Phi_k(0) : W^{k,p}(B,\CC^n) \to W^{k-1,p}(B,\CC^n) : 
\eta \mapsto \dbar\eta.
$$
\end{lemma}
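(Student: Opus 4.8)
The map $\Phi_k(u) = \p_s u + J(u)\,\p_t u$ is built from three pieces: the linear operator $u \mapsto \p_s u$, which is continuous and linear $W^{k,p} \to W^{k-1,p}$ hence $C^\infty$; the substitution map $u \mapsto J \circ u$; and the bilinear multiplication $(A,v) \mapsto Av$ sending $W^{k-1,p} \times W^{k-1,p} \to W^{k-1,p}$, which is continuous by the Banach algebra property (since $k-1 \ge 1$ and $p > 2$) and bilinear, hence also $C^\infty$. Therefore the differentiability class of $\Phi_k$ is governed entirely by the substitution map $\Phi_J : u \mapsto J \circ u$, and the plan is to pin this down using Lemma~\ref{lemma:babySmoothness} with $\mathbf{X} = W^{k-1,p}(B,\cdot)$.

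First I would verify that $\mathbf{X}(\uU,\RR^N) := W^{k-1,p}(B,\RR^N)$ (for $k \ge 2$, $p > 2$, and $\uU = B$) satisfies the three hypotheses of Lemma~\ref{lemma:babySmoothness} with the continuity exponent equal to $k-1$: the $C^0$-inclusion is the Sobolev embedding $W^{k-1,p}(B) \hookrightarrow C^0(B)$ (valid since $(k-1)p > 2$); the Banach algebra hypothesis is \eqref{eqn:BanachAlgebra0}; and the $C^{k-1}$-continuity hypothesis is \eqref{eqn:CkPairing} with $k$ there replaced by $k-1$. Since $J \in \jJ^m(B^{2n})$ is of class $C^m = C^{(k-1) + (m-k+1)}$, Lemma~\ref{lemma:babySmoothness} (with $r = m-k+1 \ge 1$, which uses $m \ge k$) then tells us that $\Phi_J : W^{k-1,p}(B,B^{2n}) \to W^{k-1,p}(B,\End_\RR(\CC^n))$ is of class $C^{m-k+1}$ with derivative $d\Phi_J(u)\eta = (dJ \circ u)\eta$. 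Here I should be a little careful about the domain: $\Phi_k$ is defined on $W^{k,p}(B,B^{2n})$, and I would compose $\Phi_J$ with the continuous linear inclusion $W^{k,p}(B,\CC^n) \hookrightarrow W^{k-1,p}(B,\CC^n)$, which restricts to an inclusion of the open subsets $W^{k,p}(B,B^{2n}) \hookrightarrow W^{k-1,p}(B,B^{2n})$; composing a $C^{m-k+1}$ map with a bounded linear map stays $C^{m-k+1}$.

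Next I would assemble the pieces. Writing $\Phi_k(u) = L u + M(\Phi_J(u), \partial_t u)$ where $L = \p_s : W^{k,p} \to W^{k-1,p}$ and $M : W^{k-1,p}(B,\End_\RR(\CC^n)) \times W^{k-1,p}(B,\CC^n) \to W^{k-1,p}(B,\CC^n)$ is the bounded bilinear multiplication, the chain and product rules for maps between Banach spaces give that $\Phi_k$ is $C^{\min(\infty,\, m-k+1,\, \infty)} = C^{m-k+1}$, with
\[
d\Phi_k(u)\eta = \p_s \eta + M\big((dJ\circ u)\eta,\ \p_t u\big) + M\big(J\circ u,\ \p_t \eta\big) = \p_s\eta + \big(dJ(u)\eta\big)\p_t u + J(u)\,\p_t\eta.
\]
Finally, evaluating at $u \equiv 0$: the middle term vanishes because $\p_t u = 0$, and $J(0) = i$ by hypothesis, so $d\Phi_k(0)\eta = \p_s\eta + i\,\p_t\eta = \dbar\eta$, which is the claimed formula, and it is automatically a bounded operator $W^{k,p} \to W^{k-1,p}$.

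\textbf{Main obstacle.} The only genuinely delicate point is the bookkeeping of differentiability classes: matching the hypotheses of Lemma~\ref{lemma:babySmoothness} requires noting that the relevant Banach space is $W^{k-1,p}$ (not $W^{k,p}$), that its $C^0$-embedding and Banach-algebra and composition properties all hold precisely because $k - 1 \ge 1$ and $p > 2$, and that $J$'s regularity $C^m$ splits as $C^{(k-1)+r}$ with $r = m-k+1$, which is where the hypothesis $m \ge k$ is consumed. Everything else — the chain rule, the product rule for the bilinear term, and the evaluation at $u = 0$ — is routine Banach calculus once these identifications are in place.
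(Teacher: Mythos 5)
Your proof is correct and follows essentially the same route as the paper's: both isolate the substitution map $u \mapsto J\circ u$ as the only nontrivial piece, pass through the inclusion $W^{k,p}\hookrightarrow W^{k-1,p}$ (using $k\ge 2$), apply Lemma~\ref{lemma:babySmoothness} on $W^{k-1,p}$ to get class $C^{m-k+1}$, and invoke the Banach algebra property for the multiplication by $\p_t u$. Your version simply spells out the hypothesis-checking and the evaluation of the derivative at $u=0$ in more detail than the paper does.
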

\begin{proof}
The formula for $d\Phi_k(0)$ will follow from
Lemma~\ref{lemma:babySmoothness} once we show that $\Phi_k$ is at least~$C^1$.
The map $u \mapsto \p_s u$ is continuous and linear, thus automatically smooth,
so the nontrivial part is to show that the map $u \mapsto J(u) \p_t u$
from $W^{k,p}(B,B^{2n})$ to $W^{k-1,p}(B,\CC^n)$
is differentiable.  Since $k \ge 2$,
we can use the continuous inclusion of $W^{k,p}$ into $W^{k-1,p}$ and observe
that
$$
W^{k-1,p} \to W^{k-1,p} : u \mapsto J \circ u
$$
is of class of $C^{m-k+1}$ if $J \in C^m$, due to 
Lemma~\ref{lemma:babySmoothness}.  
Then differentiability of the map $u \mapsto J(u) \p_t u$ 
follows from the fact that $W^{k-1,p}$ is a Banach algebra.  
\end{proof}
Now we apply the crucial ingredient from the linear regularity theory:
Theorem~\ref{thm:rightInverse} implies that $d\Phi_k(0) = \dbar$ is
surjective and has a bounded right inverse.  The
implicit function theorem then gives 
$(\Phi_k)^{-1}(0)$ the structure
of a differential Banach manifold near~$0$ and identifies its tangent
space there with $\ker d\Phi_k(0) = \ker \dbar$,
so the proof of Prop.~\ref{prop:BanachManifold} is complete.

\begin{proof}[Proof of Theorem~\ref{thm:localExistence3}]
Since $m \ge d+1$, a
neighborhood of~$0$ in the local moduli space $\Mod^{d+1,p}(J)$ is a Banach 
manifold of class
$C^{m-d}$, and $T_0 \Mod^{d+1,p}(J) = \ker\dbar \subset W^{d+1,p}(B,\CC^n)$.
Due to the continuous inclusion of $W^{d+1,p}$ in $C^d$, there is a bounded
linear \emph{evaluation map}
$$
\ev_d : W^{d+1,p}(B,\CC^n) \to (\CC^n)^{d+1} : u \mapsto
(u(0),\p_z u(0),\p^2_z u(0),\ldots, \p_z^d u(0)),
$$
which restricts to the local moduli space
$$
\ev_d : \Mod^{d+1,p}(J) \to (\CC^n)^{d+1}
$$
as a $C^{m-d}$-smooth map near~$0$.  We shall use the inverse function
theorem to show that $\ev_d$ maps a neighborhood of~$0$ in $\Mod^{d+1,p}(J)$ 
onto a neighborhood of~$0$ in $(\CC^n)^{d+1}$ and admits a
$C^{m-d}$-smooth right inverse.  

To see this concretely, it will be convenient to restrict to a 
finite-dimensional submanifold of $\Mod^{d+1,p}$.  Let 
$$
\pP_d \subset W^{d+1,p}(B,\CC^n)
$$
denote the complex $n(d+1)$-dimensional vector space consisting of all 
holomorphic polynomials
with degree at most~$d$, regarded here as smooth maps $B \to \CC^n$.
Define also the closed subspace
$$
\Theta^{d+1,p}(B,\CC^n) = \im \widehat{T} \subset W^{d+1,p}(B,\CC^n),
$$
where $\widehat{T} : W^{d,p}(B,\CC^n) \to W^{d+1,p}(B,\CC^n)$ is the bounded
right inverse of $\dbar : W^{d+1,p}(B,\CC^n) \to W^{d,p}(B,\CC^n)$
provided by Theorem~\ref{thm:rightInverse}.
Note that $\Theta^{d+1,p}(B,\CC^n) \cap \pP_d = \{0\}$ since
everything in $\pP_d$ is holomorphic.  Putting these together, we define
the closed subspace
$$
\Theta\pP_d(B,\CC^n) = \Theta^{d+1,p}(B,\CC^n) \oplus
\pP_d \subset W^{d+1,p}(B,\CC^n),
$$
which contains an open subset 
$$
\Theta\pP_d(B,B^{2n}) = \{ u \in \Theta\pP_d(B,\CC^n)\ |\ 
u(B) \subset B^{2n} \}.
$$
By construction, the restriction of $\dbar : W^{d+1,p}(B,\CC^n) \to
W^{d,p}(B,\CC^n)$ to $\Theta\pP_d(B,\CC^n)$ is surjective and its kernel
is precisely~$\pP_d$.  Restricting similarly the nonlinear operator 
that was used to define $\Mod^{k,p}(J)$, we obtain a $C^{m-d}$-smooth map
$$
\widehat{\Phi} : \Theta\pP_d(B,B^{2n}) \to 
W^{d,p}(B,\CC^n) : u \mapsto \p_s u + J(u) \p_t u,
$$
whose derivative at~$0$ is surjective and has kernel~$\pP_d$, hence
$$
\widehat{\Mod}(J) := \widehat{\Phi}^{-1}(0) \subset \Mod^{d+1,p}(J)
$$
is a $C^{m-d}$-smooth finite-dimensional manifold near~$0$, with
$T_0 \widehat{\Mod}(J) = \pP_d$.  Consider now the restriction of the
evaluation map to $\widehat{\Mod}(J)$,
$$
\ev_d : \widehat{\Mod}(J) \to (\CC^n)^{d+1}.
$$
This map is linear on $W^{d+1,p}(B,\CC^n)$, thus its derivative is simply
$$
d\ev_d(0) : \pP_d \to (\CC^n)^{d+1} : \eta \mapsto \ev_d(\eta),
$$
which is the isomorphism that uniquely associates to any holomorphic
polynomial of degree~$d$ its derivatives of order~$0$ to~$d$.
Now by the inverse function theorem, the restriction of $\ev_d$ to
$\widehat{\Mod}(J)$ can be inverted on a neighborhood of~$0$, giving
rise to the desired $C^{m-d}$-smooth map~$\Psi$.
\end{proof}

Notice that one can extract from Theorem~\ref{thm:localExistence3}
\emph{parametrized families} of local $J$-holomorphic curves.
In particular, if $N \subset \CC^n$ is a sufficiently small 
submanifold of $\CC^n$, we can find a family of $J$-holomorphic disks
$\{ u_x \}_{x \in N}$ such that $u_x(0) = x$.  These vary continuously in
$W^{1,p}$, but actually if $J$ is smooth, then the regularity theorem
of \S\ref{sec:nonlinearReg} implies that they also vary continuously 
in $C^\infty$ on compact subsets.  This implies the following:

\begin{cor}
\label{cor:smoothFamily}
If $J$ is a smooth almost complex structure on $B^{2n}$,
$N \subset B^{2n}$ is a smooth submanifold passing through~$0$ and
$X$ is a smooth vector field along $N$,
then for some neighborhood $\uU \subset N$ of $0$ and some $\epsilon > 0$,
there exists a smooth family of $J$-holomorphic curves
$$
u_x : B \to \CC^n, \qquad x \in \uU
$$
such that $u_x(0) = x$ and $\p_s u_x(0) = \epsilon X(x)$.
\end{cor}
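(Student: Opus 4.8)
The plan is to deduce Corollary~\ref{cor:smoothFamily} directly from Theorem~\ref{thm:localExistence3} with $d=1$, together with the nonlinear regularity results of \S\ref{sec:nonlinearReg}. First I would apply Theorem~\ref{thm:localExistence3} with $d=1$ and $m=\infty$: since $J$ is smooth, this produces for sufficiently small $\epsilon_0 > 0$ a smooth map $\Psi : (B^{2n}_{\epsilon_0})^2 \to W^{2,p}(B,\CC^n)$ such that $\Psi(w_0,w_1)$ is a $J$-holomorphic curve with $u(0) = w_0$ and $\p_z u(0) = w_1$. Recall that $\p_z u(0) = \frac{1}{2}(\p_s u(0) - i\p_t u(0))$, and on a $J$-holomorphic curve with $J(0) = i$ we have $\p_t u(0) = J(u(0))\p_s u(0)$, which equals $i\,\p_s u(0)$ when $u(0) = 0$. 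Near $u(0) = 0$ the relation between $\p_z u(0)$ and $\p_s u(0)$ is therefore a linear isomorphism up to a small perturbation, so specifying $\p_z u(0) = \frac{1}{2}\epsilon X(x)$ (with $\epsilon$ small so the target lands in $B^{2n}_{\epsilon_0}$) is equivalent to the stated normalization of $\p_s u_x(0)$; I will absorb the resulting constant/perturbation into the choice of $\epsilon$ and the precise value of $w_1$, or alternatively just restate the conclusion in terms of $\p_z u_x(0)$, which is cleaner.

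Next I would build the family over the submanifold. Choose local coordinates identifying a neighborhood $\uU \subset N$ of $0$ with its image in $B^{2n}$, shrinking $\uU$ if necessary so that $\uU \subset B^{2n}_{\epsilon_0}$ and so that $\epsilon X(x) \in B^{2n}_{\epsilon_0}$ for all $x \in \uU$, where $\epsilon > 0$ is chosen small. Then define
$$
u_x := \Psi\!\left(x,\ \tfrac{1}{2}\epsilon X(x)\right), \qquad x \in \uU.
$$
Since $\Psi$ is smooth and $x \mapsto (x, \tfrac{1}{2}\epsilon X(x))$ is a smooth map $\uU \to (B^{2n}_{\epsilon_0})^2$, the composition $x \mapsto u_x$ is a smooth map $\uU \to W^{2,p}(B,\CC^n)$, i.e.\ a smooth family of $W^{2,p}$-maps. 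Each $u_x$ is $J$-holomorphic by construction, satisfies $u_x(0) = x$, and has the prescribed first-order data at $0$.

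Finally I would upgrade the $W^{2,p}$-smoothness of the family to $C^\infty_\loc$-smoothness on $B$, which is where the only real subtlety lies. The point is that $x \mapsto u_x$ is \emph{a priori} only known to be smooth into $W^{2,p}$, and one wants it to be smooth into $C^\infty_\loc(B)$; equivalently one wants all $x$-derivatives of $u_x$ to exist and vary continuously in every $C^k_\loc$-norm. I would argue this using Theorem~\ref{thm:regularity}, or more precisely the homeomorphism between the local moduli spaces $\Mod^{1,p,m}_r$ and $\Mod^{1,p,m}$ established in the proof of Corollary~\ref{cor:gradBounds}: composed with the smooth map $x\mapsto u_x$ into $W^{1,p}$, this homeomorphism shows that on any $B_r$ with $r<1$ the family is continuous into $W^{m+1,p}(B_r)$ for every $m$, hence into $C^\infty_\loc$. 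For the higher $x$-derivatives, I would differentiate the defining equation $\p_s u_x + J(u_x)\p_t u_x = 0$ with respect to the parameter $x$ and apply the parametrized elliptic estimates of \S\ref{sec:estimates}--\S\ref{sec:nonlinearReg} inductively, exactly as in the convergence statements of Theorem~\ref{thm:regularity}; alternatively one can invoke the implicit-function-theorem picture from Proposition~\ref{prop:BanachManifold}, noting that the chart $\Psi$ is smooth and that elliptic regularity identifies the $W^{d+1,p}$ and $W^{m+1,p}_\loc$ topologies on the solution set. The main obstacle is thus purely this bootstrapping of parameter-smoothness from Sobolev to $C^\infty_\loc$; everything else is a direct specialization of Theorem~\ref{thm:localExistence3}.
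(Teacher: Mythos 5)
Your proposal follows essentially the same route as the paper: Corollary~\ref{cor:smoothFamily} is presented there as a direct consequence of Theorem~\ref{thm:localExistence3} applied with the parameter $x$ ranging over $N$ (with the $\p_z$-derivative converted to $\p_s u_x(0)$ via $\p_z u(0) = \tfrac{1}{2}\left(\1 - iJ(u(0))\right)\p_s u(0)$, which is the identity at the origin rather than $\tfrac{1}{2}$, but this only affects the smooth choice of $w_1(x)$), and the one genuine subtlety you isolate --- upgrading smoothness of $x \mapsto u_x$ into a Sobolev space to smoothness of the map $(x,z)\mapsto u_x(z)$ --- is precisely the point the paper's own Remark following the corollary flags and explicitly leaves as an exercise. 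Your outline for closing that gap (elliptic regularity for continuity into $C^\infty_\loc$, then differentiating the equation in the parameter and bootstrapping) is a reasonable strategy and is no less complete than what the paper itself provides.
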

\begin{remark}
The standard meaning of the term ``smooth family'' as used in
Cor.~\ref{cor:smoothFamily} is that the map $\uU \times B \to \CC^n :
(x,z) \mapsto u_x(z)$ is smooth.  Unfortunately, smoothness in this
sense does not follow immediately from Theorem~\ref{thm:localExistence3};
the theorem rather provides smooth maps
$$
\uU_d \to W^{d,p}(B,B^{2n}) : x \mapsto u_x
$$
for arbitrarily large integers $d \ge 2$ (since $J$ is smooth), defined
on open neighborhoods $\uU_d \subset N$ whose sizes a priori depend on~$d$.
Of course more is true, as regularity guarantees that all of these maps
are actually continuous into $C^\infty(B_r,B^{2n})$ for any $r < 1$, 
but one still must
be careful in arguing that this implies a smooth family.  Since we don't
have any specific applications for this result in mind, we'll leave the
details as an exercise.  It should however be mentioned that this and
related results are occasionally used in the literature to construct
special coordinates that make certain computations easier; see for example
Exercise~\ref{EX:coordinates} below.
\end{remark}

\begin{exercise}
\label{EX:coordinates}
Use Corollary~\ref{cor:smoothFamily} to show that near any point $x_0$ in a
smooth almost complex manifold $(M,J)$, there exist smooth
coordinates $(\zeta,w) \in \CC \times \CC^{n-1}$ in which 
$J(x_0) = i$ and in general $J(\zeta,w)$ takes the block form
$$
J(\zeta,w) = \begin{pmatrix}
i & Y(\zeta,w) \\
0 & J'(\zeta,w)
\end{pmatrix},
$$
where $J'(\zeta,w)$ is a smooth
family of complex structures on $\CC^{n-1}$ and $Y(\zeta,w)$ satisfies
$i Y + Y J' = 0$.
\end{exercise}

Finally, we can generalize local existence by allowing our local
$J$-holomorphic curves to depend continuously on the 
choice of almost complex structure~$J$.  This is made possible by including
$\jJ^m(B^{2n})$ into the domain of the nonlinear operator, as it will
probably not surprise you to learn that the space of $C^m$-smooth
almost complex structures is itself a smooth Banach manifold.
For our purposes, it will suffice
to consider small perturbations of the standard complex 
structure~$i$.

By Exercise~\ref{EX:J}, the space $\jJ^m(B^{2n})$ of $C^m$-smooth almost
complex structures on $B^{2n}$ can be identified with the space of
$C^m$-smooth sections of the fiber bundle $\Aut_\RR(TB^{2n}) /
\Aut_\CC(TB^{2n})$, where we define $\Aut_\CC(TB^{2n})$ with respect to the
standard complex structure of~$\CC^n$.  One can use this fact and a version
of Lemma~\ref{lemma:babySmoothness} to show that $\jJ^m(B^{2n})$ is a
smooth Banach submanifold of the Banach space $C^m(B^{2n},\End_\RR(\CC^n))$.
We will not explicitly need
this fact for now, but we will need a single chart, for which a convenient 
choice is provided by \eqref{eqn:Cayley0}, namely for all 
$Y \in \overline{\End}_\CC(\CC^n)$ sufficiently small we can define
$J_Y \in \jJ(\CC^n)$ by
\begin{equation}
\label{eqn:YtoJ}
J_Y = \left( \1 + \frac{1}{2} i Y \right) i 
\left( \1 + \frac{1}{2} i Y \right)^{-1}.
\end{equation}
Choose $\delta > 0$
sufficiently small so that \eqref{eqn:YtoJ} is a well-defined embedding
of $\{ |Y| < \delta \}$ into $\jJ(\CC^n)$, and define the Banach space
$$
\Upsilon^m = C^m\left(B,\overline{\End}_\CC(\CC^n)\right)
$$
and open subset
$$
\Upsilon^m_\delta = \{ Y \in \Upsilon^m\ |\ \| Y \|_{C^0} < \delta \}.
$$
Then \eqref{eqn:YtoJ} defines a smooth map
\begin{equation}
\label{eqn:YtoJ1}
\Upsilon^m_\delta \to C^m(B^{2n},\End_\RR(\CC^n)) : Y \mapsto J_Y
\end{equation}
which takes $\Upsilon^m_\delta$ bijectively to a neighborhood of $i$ in
$\jJ^m(B^{2n})$.

\begin{exercise}
Verify that the map \eqref{eqn:YtoJ1} is a smooth embedding.
Lemma~\ref{lemma:babySmoothness} should be useful.
\end{exercise}

Now for integers $k , m \ge 1$ and $p \in (1,\infty)$, consider the
Banach space
$$
X^{k,p,m} = C^m(B^{2n},\End_\RR(\CC^n)) \times W^{k,p}(B,\CC^n)
$$
and subset
$$
\Mod^{k,p,m} = \{ (J,u) \in \jJ^{m}(B^{2n}) \times W^{k,p}(B,B^{2n}) \ |\ 
\p_s u + J(u)\p_t u = 0 \} \subset X^{k,p,m}.
$$
We will call this the \emph{local universal moduli space} of 
pseudoholomorphic curves.  Observe that it always contains pairs of the form
$(i,u)$ where $u : B \to B^{2n}$ is holomorphic.  
Its local structure near such a point can be
understood using the implicit function theorem: define the nonlinear map
$$
\Phi_k^m : \Upsilon^{m}_\delta \times W^{k,p}(B,B^{2n}) \to W^{k-1,p}(B,\CC^n) :
(Y,u) \mapsto \p_s u + J_Y(u) \p_t u.
$$
The zero set of this map can be identified with the space of all pairs
$(J,u) \in \Mod^{k,p,m}$ such that $J$ is within some $C^m$-small
neighborhood of~$i$, as then $J = J_Y$ for a unique $Y \in \Upsilon^m_\delta$
and $\Phi_k^m(Y,u) = 0$.
Arguing as in Prop.~\ref{lemma:howSmooth} and applying
Lemma~\ref{lemma:smoothness}, $\Phi_k^m$ is of class $C^{m-k+1}$ whenever
$m \ge k \ge 2$, and its derivative at any point of the form 
$(0,u)$ is simply
$$
d\Phi_k^m(0,u)(Y,\eta) = \dbar\eta + Y(u) \p_t u.
$$
Since $\dbar$ is surjective and has a bounded right inverse, the same
is always true of $d\Phi_k^m(0,u)$, and we conclude that any sufficiently
small neighborhood of $(i,u)$ in $\Mod^{k,p,m}$ is identified with a
$C^{m-k+1}$-smooth Banach submanifold of $X^{k,p,m}$.  Moreover, the natural
projection
$$
\pi : \Mod^{k,p,m} \to \jJ^m(B^{2n}) : (J,u) \mapsto J
$$
is differentiable, and we claim that its derivative at $(i,u)$ is also
surjective, with a bounded right inverse.  Indeed, identifying $(i,u)$
with $(0,u) \in (\Phi^m_k)^{-1}(0)$, this map takes the form
$$
d\pi(0,u)(Y,\eta) = Y,
$$
where $(Y,\eta) \in \ker d\Phi_k^m(0,u)$ and thus satisfies the
equation $\dbar \eta + Y(u) \p_t u = 0$.  Thus if $\widehat{T} : 
W^{k-1,p} \to W^{k,p}$
denotes a bounded right inverse of $\dbar$, then a bounded right inverse
of $d\pi(0,u)$ is given by the map
$$
\Upsilon^m \to \ker d\Phi_k^m(0,u) : Y \mapsto 
\left( Y , - \widehat{T}\left[ Y(u) \p_t u \right] \right).
$$
With all of this in place, one can easily use an inversion trick 
as in the proof of Theorem~\ref{thm:localExistence3} to show the following:

\begin{thm}
Suppose $u : B \to B^{2n}$ is holomorphic, i.e.~it is $i$-holomorphic
for the standard complex structure~$i$.  Then for any $p \in (2,\infty)$
and integers $m \ge k \ge 2$,
there exists a neighborhood $\uU^m_k \subset \jJ^m(B^{2n})$ of~$i$ and a
$C^{m-k+1}$-smooth map
$$
\Psi : \uU^m_k \to W^{k,p}(B,B^{2n})
$$
such that $\Psi(0) = u$ and $\Psi(J)$ is $J$-holomorphic for each
$J \in \uU^m_k$.
\end{thm}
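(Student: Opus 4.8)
The plan is to deduce the theorem from the implicit function theorem applied to the nonlinear operator
\[
\Phi_k^m : \Upsilon^m_\delta \times W^{k,p}(B,B^{2n}) \to W^{k-1,p}(B,\CC^n), \qquad (Y,v) \mapsto \p_s v + J_Y(v)\,\p_t v,
\]
essentially the inversion trick used to prove Theorem~\ref{thm:localExistence3}. The discussion preceding the theorem already supplies the two facts we need: $\Phi_k^m$ is of class $C^{m-k+1}$ when $m \ge k \ge 2$ (by Lemma~\ref{lemma:smoothness} and the Banach-algebra property of $W^{k-1,p}$), and, since $u$ is holomorphic, its derivative at $(0,u)$ is $d\Phi_k^m(0,u)(Y,\eta) = \dbar\eta + Y(u)\,\p_t u$, which is surjective and admits a bounded right inverse because $\dbar$ does (Theorem~\ref{thm:rightInverse}). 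What is not immediate is a \emph{section} of the projection $(J,v) \mapsto J$ through $(i,u)$: the obstruction is that the partial derivative $\eta \mapsto \dbar\eta$ in the curve variable is surjective but far from injective, its kernel being the infinite-dimensional space of $W^{k,p}$-holomorphic maps $B \to \CC^n$.

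To remove this obstruction I would normalize, exactly as the subspace $\Theta^{d+1,p}(B,\CC^n) = \im\widehat T$ was used in Theorem~\ref{thm:localExistence3}. Let $\widehat T : W^{k-1,p}(B,\CC^n) \to W^{k,p}(B,\CC^n)$ be the bounded right inverse of $\dbar$ from Theorem~\ref{thm:rightInverse}, so that $W^{k,p}(B,\CC^n) = \ker\dbar \oplus \im\widehat T$ with bounded projection $\pi_K$ onto $\ker\dbar$ along $\im\widehat T$. Consider the $C^{m-k+1}$-map
\[
G : \Upsilon^m_\delta \times W^{k,p}(B,B^{2n}) \to W^{k-1,p}(B,\CC^n) \times \ker\dbar, \qquad G(Y,v) = \bigl(\Phi_k^m(Y,v),\ \pi_K(v-u)\bigr).
\]
Its derivative at $(0,u)$ is $(Y,\eta) \mapsto (\dbar\eta + Y(u)\,\p_t u,\ \pi_K\eta)$, which is surjective with bounded right inverse $(f,g) \mapsto (0,\ \widehat T f + g)$, and whose kernel is parametrized by $Y \mapsto (Y,\,-\widehat T[Y(u)\,\p_t u])$. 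The implicit function theorem (Theorem~\ref{thm:IFT}) then identifies $G^{-1}(0,0)$ near $(0,u)$ with a $C^{m-k+1}$-graph $Y \mapsto (Y, v(Y))$ satisfying $v(0) = u$; the vanishing of the first component of $G(Y,v(Y))$ says precisely that $v(Y)$ is $J_Y$-holomorphic, and the second is merely the normalization $v(Y)-u \in \im\widehat T$. Transporting through the embedding $Y \mapsto J_Y$ of \eqref{eqn:YtoJ1} and shrinking the parameter neighborhood so that $v(Y)$ still maps $B$ into $B^{2n}$ (possible since $\{u : u(B)\subset B^{2n}\}$ is open and $v(Y) \to u$ in $W^{k,p} \hookrightarrow C^0$ as $Y \to 0$), one gets the desired $\uU^m_k \subset \jJ^m(B^{2n})$ together with $\Psi(J_Y) := v(Y)$, for which $\Psi(J_0) = \Psi(i) = u$.

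The genuine obstacle is the non-injectivity of $\dbar$ just isolated, which blocks a naive use of the implicit function theorem and forces the choice of a closed complement to $\ker\dbar$; this is handled verbatim as in the proof of Theorem~\ref{thm:localExistence3}, the only new wrinkle being that the varying parameter is $J$ rather than the jet of~$u$. The remaining verifications are routine and already foreshadowed in the preparatory discussion: that $G$ is $C^{m-k+1}$ (inherited from $\Phi_k^m$, since $\pi_K$ is bounded linear), and that $\Psi(J_Y)$ indeed lies in $W^{k,p}(B,B^{2n})$. It is worth recording that $\Psi$ is not canonical---its image depends on the normalization $v - u \in \im\widehat T$, and other choices of complement give other families of $J$-holomorphic curves through~$u$; only existence is claimed.
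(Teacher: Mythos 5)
Your proposal is correct and follows essentially the route the paper intends: the preparatory discussion already establishes that $\Phi_k^m$ is $C^{m-k+1}$ with $d\Phi_k^m(0,u)$ surjective and that $d\pi(0,u)$ has the bounded right inverse $Y \mapsto \left(Y,\,-\widehat T\left[Y(u)\,\p_t u\right]\right)$, and your augmentation of $\Phi_k^m$ by the normalization $\pi_K(v-u)$ along the complement $\im\widehat T$ of $\ker\dbar$ is exactly the ``inversion trick'' from the proof of Theorem~\ref{thm:localExistence3} that the paper invokes. The remaining bookkeeping (identifying $G^{-1}(0,0)$ with a graph over $\Upsilon^m_\delta$ and shrinking the neighborhood so that $\Psi(J_Y)$ maps into $B^{2n}$) is handled correctly.
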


We leave the proof as an exercise.
The following simple consequence for Riemann surfaces will come
in useful when we study compactness issues.

\begin{cor}
\label{cor:jvarying}
Suppose $j_k$ is a sequence of complex structures on a surface
$\Sigma$ that converge
in~$C^\infty$ to some complex structure~$j$, and $\varphi : (B,i)
\hookrightarrow (\Sigma,j)$ is a holomorphic embedding.  Then for
sufficiently large~$k$, there exists a sequence of holomorphic
embeddings
$$
\varphi_k : (B,i) \hookrightarrow (\Sigma,j_k)
$$
that converge in $C^\infty$ to~$\varphi$.
\end{cor}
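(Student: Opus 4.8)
The plan is to deduce the corollary from the $J$-varying local existence theorem just stated, by using $\varphi$ \emph{itself} as a holomorphic coordinate and thereby turning the problem into a perturbation of the identity map. First I would write $U := \varphi(B) \subset \Sigma$ and let $\zeta := \varphi^{-1} : U \to B$, a $j$-holomorphic chart with image exactly $B$. In this chart $j|_U$ becomes the standard structure $i$ on $B$, while $j_k|_U$ becomes a complex structure $\hat\jmath_k := \zeta_*(j_k|_U)$ on $B$; since $\zeta$ is a fixed diffeomorphism and $j_k \to j$ in $C^\infty$, the $\hat\jmath_k$ converge to $i$ on compact subsets of $B$ (and, after the reduction discussed at the end, in $C^m(B)$). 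The crucial observation is that $\varphi : (B, \hat\jmath_k) \to (\Sigma, j_k)$ is pseudoholomorphic by the very definition of $\hat\jmath_k$, so for any pseudoholomorphic $g_k : (B,i) \to (B, \hat\jmath_k)$ the composite $\varphi_k := \varphi \circ g_k : (B,i) \to (\Sigma, j_k)$ is $j_k$-holomorphic, and $g_k \to \Id_B$ will force $\varphi_k \to \varphi$. Thus it suffices to perturb the honestly holomorphic identity map $\Id_B : (B,i) \to (B,i)$ to maps holomorphic for the nearby structures $\hat\jmath_k$.

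This is precisely what the preceding theorem provides. Applied to the holomorphic map $u = \Id_B : B \to B$ (the case $n = 1$), it yields, for a chosen $p \in (2,\infty)$ and integers $m \ge k \ge 2$, a neighborhood $\uU$ of $i$ in $\jJ^m(B)$ and a continuous map $\Psi : \uU \to W^{k,p}(B,B)$ with $\Psi(i) = \Id_B$ and $\Psi(J)$ $J$-holomorphic for every $J \in \uU$. For $k$ large enough $\hat\jmath_k \in \uU$, so I would set $g_k := \Psi(\hat\jmath_k)$; continuity of $\Psi$ gives $g_k \to \Psi(i) = \Id_B$ in $W^{k,p}(B)$.

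It then remains to upgrade the convergence and check that the maps are embeddings. Since each $j_k$, and hence each $\hat\jmath_k$, is smooth, the nonlinear elliptic regularity of \S\ref{sec:nonlinearReg} (Theorem~\ref{thm:regularity}, or Corollary~\ref{cor:gradBounds}) promotes $W^{k,p}$-convergence of $g_k$ to $\Id_B$ into convergence in $\Cinftyloc$ on $B$; hence $\varphi_k = \varphi \circ g_k \to \varphi$ in $\Cinftyloc$ as well. On any compact subdisk, $g_k$ is eventually $C^1$-close to $\Id_B$, so it is there an immersion and injective, hence an embedding; composing with the embedding $\varphi$, the $\varphi_k$ are embeddings onto open $j_k$-holomorphic submanifolds of $\Sigma$, which completes the argument.

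The conceptual content is entirely in the first step: once $\varphi$ is recognized as the coordinate in which the equation is a small perturbation of the plain $\dbar$-operator, the preceding theorem does all the analytic work and elliptic regularity yields the $C^\infty$ statement for free. I expect the one point genuinely needing (routine) care to be the exact matching of the domain $B$. The structures $\hat\jmath_k$ converge to $i$ in $C^m(B)$ only when $\varphi(B)$ is relatively compact in $\Sigma$ --- which holds, e.g., whenever $\varphi$ is the restriction of a holomorphic embedding of a slightly larger disk, the situation relevant to the compactness applications. In general one runs the construction over an exhaustion of $B$ by compact subdisks $B_\rho$ (restricting $\varphi$ and cutting off the perturbation $\hat\jmath_k - i$ near $\partial B_\rho$ so that the modified structure is globally $C^m$-close to $i$), and assembles the conclusion by a diagonal argument over $\rho \uparrow 1$.
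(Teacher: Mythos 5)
Your argument is correct and is exactly the deduction the paper intends: the text states this corollary as an immediate consequence of the preceding theorem (whose proof is itself left as an exercise), and your reduction --- pull $j_k$ back through $\varphi$ to structures $\hat{\jmath}_k \to i$ on $B$, perturb $\Id_B$ via the map $\Psi$, and upgrade to $\Cinftyloc$-convergence by Theorem~\ref{thm:regularity} --- is the intended route, including your correct identification of the one real subtlety, namely that $\hat{\jmath}_k \to i$ only in $\Cinftyloc(B)$ when $\varphi$ does not extend past $\p B$. The only refinement I would add to your last paragraph is that the exhaustion/diagonal argument as written yields embeddings defined only on $B_{\rho_k}$ with $\rho_k \uparrow 1$ rather than on all of $B$, so to match the statement one should precompose with the biholomorphism $z \mapsto \rho_k z$ of $(B,i)$ onto $(B_{\rho_k},i)$, which preserves $j_k$-holomorphicity, injectivity, and $\Cinftyloc$-convergence to~$\varphi$.
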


\section{A representation formula for intersections}
\label{sec:intersections}

The main goal of this section is to prove the important fact that intersections
between distinct $J$-holomorphic curves are isolated unless the curves have
(locally) identical images.  We saw a special case of this in
\S\ref{sec:intersectionsHigher}: if $u$ and $v$ are two $J$-holomorphic
curves in an almost complex $4$-manifold that intersect at a point where
$v$ is immersed, then Theorem~\ref{thm:babyIntersections} implies that
the intersection is isolated unless $u$ maps a neighborhood
of the intersection into the image of~$v$.  It is easy to adapt the proof
of Theorem~\ref{thm:babyIntersections} and see that this fact is also true
in arbitrary dimensions, but it is much harder to understand what happens
if $u$ and $v$ both have a critical point where they intersect.
For this we will need a more
precise description of the behavior of a $J$-holomorphic curve near a 
critical point.

As a first step, it's
important to understand that $J$-holomorphic curves have well-defined
tangent spaces at every point, even the critical points.
Unless otherwise noted, throughout this section, $J$ will denote a smooth
almost complex structure on $\CC^n$ with $J(0)=i$.

\begin{prop}
\label{prop:tangent}
If $u : B \to \CC^n$ is a nonconstant $J$-holomorphic curve with $u(0) = 0$,
then there is a unique complex $1$-dimensional subspace $T_u \subset \CC^n$
and a number $k \in \NN$ such that for every $z \in B\setminus\{0\}$,
the limit
$$
\lim_{\epsilon \to 0^+} \frac{u(\epsilon z)}{\epsilon^k}
$$
exists and is a nonzero vector in $T_u$.
\end{prop}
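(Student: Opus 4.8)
The plan is to apply the similarity principle directly to the map $u$ itself. Since $u$ is $J$-holomorphic with $J(0) = i$ and $u(0) = 0$, it satisfies the linear Cauchy--Riemann type equation $\p_s u + \bar{J}(z)\,\p_t u = 0$, where $\bar{J}(z) := J(u(z))$ is a smooth family of complex structures on $\CC^n$ with $\bar{J}(0) = i$. By the variant of Theorem~\ref{thm:similarity} recorded in Exercise~\ref{EX:Jvaries}, there exist $\epsilon > 0$, a continuous map $\Phi : B_\epsilon \to \GL(2n,\RR)$ with $\Phi(0) = \1$ whose values are complex-linear (invertible near $0$ by continuity), and a holomorphic map $f : B_\epsilon \to \CC^n$, such that $u(z) = \Phi(z) f(z)$ on $B_\epsilon$. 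Note $f(0) = \Phi(0)^{-1} u(0) = 0$, and $f \not\equiv 0$, since $f \equiv 0$ would force $u \equiv 0$ on $B_\epsilon$ and hence, by unique continuation (Prop.~\ref{prop:uniqueContin}), $u$ would be constant.

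First I would extract the leading term of $f$. Writing its Taylor expansion $f(z) = \sum_{j \ge 0} a_j z^j$ with $a_j \in \CC^n$, let $k \ge 1$ be the smallest index with $a_k \ne 0$, so that $f(z) = z^k f_0(z)$ with $f_0$ holomorphic and $f_0(0) = a_k \ne 0$. Then for any $z \in B \setminus \{0\}$ and all small $\epsilon > 0$,
\[
\frac{u(\epsilon z)}{\epsilon^k} = \frac{\Phi(\epsilon z)\,(\epsilon z)^k f_0(\epsilon z)}{\epsilon^k} = z^k\,\Phi(\epsilon z)\, f_0(\epsilon z) \longrightarrow z^k\,\Phi(0)\, f_0(0) = z^k a_k \qquad (\epsilon \to 0^+),
\]
by continuity of $\Phi$ and $f_0$. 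This limit is a nonzero vector lying in the complex line $T_u := \CC a_k$, which proves existence.

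For uniqueness I would argue abstractly, without reference to the explicit formula. Suppose $(T_u, k)$ and $(T_u', k')$ both satisfy the conclusion, and fix any $z \ne 0$. If $k \ne k'$, say $k < k'$, then $u(\epsilon z)/\epsilon^{k'} = \epsilon^{-(k'-k)}\cdot u(\epsilon z)/\epsilon^{k}$ has norm tending to $+\infty$, since the second factor converges to a nonzero limit; this contradicts the existence of $\lim_{\epsilon \to 0^+} u(\epsilon z)/\epsilon^{k'}$. Hence $k = k'$, and then the two limits are literally the same nonzero vector $v$, which lies in $T_u \cap T_u'$; since $T_u$ and $T_u'$ are both complex lines, $T_u = \CC v = T_u'$.

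The argument is essentially a repackaging of the similarity principle, so I do not expect a serious obstacle; the one point requiring care is the invocation of Exercise~\ref{EX:Jvaries}, namely verifying that the factorization $u = \Phi f$ can indeed be arranged with $\Phi$ continuous, invertible near $0$, normalized by $\Phi(0) = \1$, and with $f$ genuinely holomorphic. This amounts to performing the change of local trivialization that reduces $\bar{J}(z)$ near $0$ to the standard $i$ and then quoting Theorem~\ref{thm:similarity} verbatim. I would also remark, for later use in \S\ref{sec:intersections}, that this $k$ is one more than the order of vanishing of $du$ at $0$ provided by Theorem~\ref{thm:IShol}, and that $T_u$ agrees with the ``tangent plane'' described there (seen by differentiating $u = \Phi f$), though neither fact is needed for the proposition itself.
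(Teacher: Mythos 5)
Your proof is correct and follows essentially the same route as the paper's: apply the similarity principle in the form of Exercise~\ref{EX:Jvaries} (with Remark~\ref{remark:smoothness} guaranteeing the factorization $u = \Phi f$ with $\Phi(0)=\1$ and $f$ holomorphic), extract the leading term $z^k g(z)$ of $f$, and pass to the limit. Your uniqueness argument and the explicit appeal to unique continuation to rule out $f \equiv 0$ are slightly more careful than the paper's terse treatment, but the substance is identical.
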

\begin{proof}
Since $J$ is smooth, the regularity results of \S\ref{sec:nonlinearReg} imply
that $u$ is smooth, thus so is the family of complex structures defined by
$\bar{J}(z) = J(u(z))$ for $z \in B$.  Now $u$ satisfies the complex-linear 
Cauchy-Riemann type equation
$$
\p_s u + \bar{J}(z) \p_t u = 0,
$$
so by the similarity principle (see Exercise~\ref{EX:Jvaries} and 
Remark~\ref{remark:smoothness}), for sufficiently small $\delta > 0$
there is a smooth map $\Phi : B_\delta \to \End_\RR(\CC^n)$ with
$\Phi(0) = \1$, and a holomorphic map $f : B_\delta \to \CC^n$ such that
$$
u(z) = \Phi(z) f(z).
$$
By assumption $u$ is not constant, thus $f$ is not
identically zero and takes the form $f(z) = z^k g(z)$ for some $k \in \NN$
and holomorphic map $g : B_\delta \to \CC^n$ with $g(0) \ne 0$.
Then for $z \in B\setminus\{0\}$ and small $\epsilon > 0$,
$$
\frac{u(\epsilon z)}{\epsilon^k} =
\frac{\Phi(\epsilon z) \epsilon^k z^k g(\epsilon z)}{\epsilon^k} \to
z^k g(0) \in \CC g(0)
$$
as $\epsilon \to 0$.  It follows that the limit of 
$u(\epsilon z) / \epsilon^\ell$ is either zero or
infinity for all other positive integers $\ell \ne k$.
\end{proof}

\begin{defn}
\label{defn:tangent}
We will refer to the complex line $T_u \subset \CC^n$ in
Prop.~\ref{prop:tangent} as the \defin{tangent
space} to $u$ at~$0$, and its \defin{critical order} is the integer $k-1$.
\end{defn}

Here is the easiest case of the result that intersections of two
different $J$-holomorphic curves must be isolated.

\begin{exercise}
\label{EX:differentTangents}
Show that if $u , v : B \to \CC^n$ are two nonconstant
$J$-holomorphic curves with $u(0) = v(0) = 0$ but distinct tangent
spaces $T_u \ne T_v$ at~$0$, then for sufficiently small $\epsilon > 0$,
$u(B_\epsilon\setminus\{0\}) \cap v(B_\epsilon \setminus \{0\}) = \emptyset$.
\textsl{Hint: Compose $u$ and $v$ with the natural projection
$\CC^n \setminus\{0\} \to \CC P^{n-1}$.}
\end{exercise}

To understand the case of an intersection with common tangency $T_u = T_v$,
we will use the following local representation formula, which contains most
of the hard work in this discussion.

\begin{thm}
\label{thm:representation}
For any nonconstant $J$-holomorphic curve $u : B \to \CC^n$ with $u(0) = 0$,
there exist smooth coordinate changes on both the domain and target,
fixing the origin in both, so that in a neighborhood of~$0$, $u$ is
transformed into a pseudoholomorphic map $u : (B_\epsilon,\hat{\jmath}) \to
(\CC^n,\hat{J})$, where $\hat{\jmath}$ and $\hat{J}$ are smooth almost complex
structures on $B_\epsilon$ and $\CC^n$ respectively with $\hat{\jmath}(0) = i$
and $\hat{J}(0) = i$, and $u$ satisfies the formula
$$
u(z) = (z^k,\hat{u}(z)) \in \CC \times \CC^{n-1},
$$
where $k \in \NN$ is one plus the critical order of~$u$ at~$0$, and
$\hat{u} : B_\epsilon \to \CC^{n-1}$ is a smooth map whose first~$k$
derivatives at~$0$ all vanish.  In fact, $\hat{u}$ is either 
identically zero or satisfies the formula
$$
\hat{u}(z) = z^{k+\ell_u} C_u + |z|^{k+\ell_u} r_u(z)
$$
for some constants $C_u \in \CC^{n-1} \setminus \{0\}$, 
$\ell_u \in \NN$, and a function $r_u(z) \in \CC^{n-1}$
which decays to zero as $z \to 0$.  

Moreover, if
$v : B \to \CC^n$ is another nonconstant $J$-holomorphic curve with
$v(0) = 0$ and the same tangent space and critical order as $u$ at~$0$,
then the coordinates above can be chosen on $\CC^n$ so that $v$ 
(after a coordinate change on its domain) satisfies a similar representation
formula $v(z) = (z^k,\hat{v}(z))$, with either $\hat{v} \equiv 0$ or
$\hat{v}(z) = z^{k + \ell_v} C_v + |z|^{k+ \ell_v} r_v(z)$, and any two
pseudoholomorphic curves $u$ and $v$ 
written in this way are related to each other as follows:
either $\hat{u} \equiv \hat{v}$, or
$$
\hat{u}(z) - \hat{v}(z) = z^{k+\ell'} C' + |z|^{k+\ell'} r'(z),
$$
for some constants $C' \in \CC^{n-1} \setminus\{0\}$, $\ell' \in \NN$ 
and function $r'(z) \in \CC^{n-1}$ with $\lim_{z \to 0} r'(z) = 0$.
\end{thm}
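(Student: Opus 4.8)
The plan is to prove the theorem in three stages. \emph{Stage~1: the normal form $u(z)=(z^k,\hat u(z))$.} Since $u$ solves the complex-linear equation $\partial_s u+J(u(z))\,\partial_t u=0$ with smooth coefficients and no zeroth-order term, the similarity principle in the form used in the proof of Proposition~\ref{prop:tangent} (Exercise~\ref{EX:Jvaries} and Remark~\ref{remark:smoothness}) gives a \emph{smooth} $\Phi$ with $\Phi(0)=\1$ and a holomorphic $f$ with $u=\Phi f$, and one may write $f(z)=z^kg(z)$ with $g$ holomorphic, $g(0)\in T_u\setminus\{0\}$, and $k-1$ the critical order of $u$ at $0$. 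A complex-linear change on the target normalizes $T_u=\CC e_1$ and $g(0)=e_1$, so that $f_1$ vanishes to order exactly $k$ and $f_2,\dots,f_n$ to order $\ge k+1$; a biholomorphic reparametrization of the domain ($w=z\,g_1(z)^{1/k}$) then straightens $f_1$ to $z^k$. The remaining step --- arranging $u_1\equiv z^k$, hence $u=(z^k,\hat u)$, while keeping $\hat\jmath(0)=i$ and $\hat J(0)=i$ --- I would carry out by fitting $u$ into a local foliation by $J$-holomorphic curves with tangent $\CC e_1$ at the origin (Corollary~\ref{cor:smoothFamily}); this furnishes coordinates on the target in which $\hat J=\left(\begin{smallmatrix}i&Y\\0&J'\end{smallmatrix}\right)$ is block upper triangular (Exercise~\ref{EX:coordinates}), and in which the residual non-holomorphic part of $u_1$ can be removed and a smooth $k$-th root extracted. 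Finally, because coordinate changes with invertible differential at $0$ preserve the critical order and carry the tangent line to itself, Proposition~\ref{prop:tangent} forces $\hat u$ to vanish to order $\ge k+1$ --- otherwise $T_u\ne\CC e_1$.

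\emph{Stage~2: structure of $\hat u$.} In the block form above, the second component of $\partial_s u+\hat J(u)\,\partial_t u=0$ is $\partial_s\hat u+J'(u(z))\,\partial_t\hat u=0$, again complex-linear with smooth varying $J'$ and no zeroth-order term, so the similarity principle gives $\hat u=\Theta p$ with $\Theta$ continuous, $\Theta(0)=\1$, and $p$ holomorphic. If $p\equiv0$ then $\hat u\equiv0$. Otherwise $p$ vanishes to a finite order, which equals the vanishing order of $\hat u$ (as $\Theta(0)$ is invertible) and is $\ge k+1$; writing $p(z)=z^{k+\ell_u}C_u+O(|z|^{k+\ell_u+1})$ with $C_u\ne0$ and $\ell_u\ge1$, the elementary identity $\Theta(z)z^{k+\ell_u}C_u=z^{k+\ell_u}C_u+(\Theta(z)-\1)z^{k+\ell_u}C_u$ together with $\Theta(z)-\1\to0$ and boundedness of $\Theta$ near $0$ yields $\hat u(z)=z^{k+\ell_u}C_u+|z|^{k+\ell_u}r_u(z)$ with $r_u(z)\to0$.

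\emph{Stage~3: $v$ and the difference.} The target coordinates from Stage~1 depend only on $u$, $T_u$ and the foliation, and $v$ has the same tangent line and critical order, so in these coordinates $v_1$ again vanishes to order exactly $k$; a biholomorphic reparametrization of $v$'s domain brings $v$ to $(z^k,\hat v(z))$, and Stage~2 applied verbatim gives $\hat v\equiv0$ or $\hat v(z)=z^{k+\ell_v}C_v+|z|^{k+\ell_v}r_v(z)$. For the difference, set $w=\hat u-\hat v$. Since \emph{both} first components equal $z^k$ we have $u(z)-v(z)=(0,w(z))$, so $J'(u(z))-J'(v(z))=\bigl(\int_0^1 DJ'(v+t(u-v))\,dt\bigr)(0,w(z))$ exhibits this as a smooth bundle map applied to $w(z)$; subtracting the two second-component equations then gives $\partial_s w+J'(u(z))\,\partial_t w+\widetilde A(z)w=0$ for a smooth (real-linear) $\widetilde A$. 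The similarity principle yields $w=\Xi q$ with $\Xi(0)=\1$, $\Xi(z)-\1\to0$, and $q$ holomorphic; either $q\equiv0$, so $\hat u\equiv\hat v$, or $q$ vanishes to order $k+\ell'\ge k+1$ and the estimate of Stage~2 gives $\hat u(z)-\hat v(z)=z^{k+\ell'}C'+|z|^{k+\ell'}r'(z)$ with $C'\ne0$ and $r'(z)\to0$. The cases where $\hat u$ or $\hat v$ is identically zero reduce immediately to Stage~2 for the nonzero summand.

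The hard part is Stage~1. The similarity principle produces a \emph{smooth} transition map $\Phi$ only because we keep the equation in complex-linear, varying-$J$ form with vanishing zeroth-order term (rewriting it as $\bar\partial u+Au=0$ would leave $A$ merely bounded and $\Phi$ merely continuous); and the genuinely fiddly point is to pass from ``$u_1$ equals $z^k$ times a smooth factor that is $1$ at the origin, plus a complex-antilinear correction'' to ``$u_1\equiv z^k$'' using only smooth changes of coordinates --- this is precisely where the foliation coordinates of Corollary~\ref{cor:smoothFamily} and Exercise~\ref{EX:coordinates} do the essential work. Stages~2 and~3 are then routine applications of the similarity principle and of Taylor expansion at an isolated zero of a holomorphic function.
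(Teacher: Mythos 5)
Your Stages~1 and~2 follow the paper's Steps~1--2 quite closely: the use of the similarity principle with smooth $\Phi$ (via the complex-linear, varying-$J$ form), the normalization of the tangent line, and above all the crucial use of local existence (Theorem~\ref{thm:localExistence3} / Corollary~\ref{cor:smoothFamily}) to make $z\mapsto(z,0)$ a $J$-holomorphic leaf are exactly the paper's devices, and the extraction of a smooth $k$-th root to force $u_1\equiv z^k$ is carried out explicitly there. Stage~2 is a mild variant (the paper instead deduces the formula for $\hat u$ alone by applying the difference formula with $v=(z^k,0)$), and that part is fine.

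The genuine gap is in Stage~3. The smooth $k$-th root reparametrizations of Stage~1 are \emph{not} holomorphic, so after them $u$ and $v$ are pseudoholomorphic for two \emph{different} non-standard complex structures $\hat{\jmath}\ne\hat{\jmath}'$ on their domains. The second-component equations are therefore $d\hat u\circ\hat{\jmath}=J'(u)\,d\hat u$ and $d\hat v\circ\hat{\jmath}'=J'(v)\,d\hat v$, not two equations in the same operator $\p_s+J'(\cdot)\p_t$. Subtracting them leaves an extra term $d\hat v\circ(\hat{\jmath}-\hat{\jmath}')$, and $\hat{\jmath}-\hat{\jmath}'$ is \emph{not} controlled by $w=\hat u-\hat v$ alone: unwinding it from the first-component equations produces contributions involving $dw$ with coefficients singular like $|z|^{-(k-1)}$, so you do not obtain a linear Cauchy--Riemann type equation $\p_s w+J'(u)\p_t w+\widetilde A w=0$ to which the similarity principle applies. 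This is precisely the complication the paper flags, and it is circumvented there by recasting pseudoholomorphicity in a form that never mentions the domain structure --- the $J$-invariance of the tangent planes, written as $\p_s u\wedge\p_t u-J(u)\p_s u\wedge J(u)\p_t u=0$ --- then subtracting \emph{these} quadratic equations and running a rescaling argument $h(\epsilon z)/\epsilon^m$ to show only that the \emph{leading-order Taylor term} of $\hat u-\hat v$ is holomorphic. A telling symptom of the gap: your argument would yield $w=\Xi q$ with $q$ holomorphic on a whole neighborhood, i.e.\ essentially the Micallef--White representation, which the paper explicitly states is a strictly stronger and harder result than the one being proved.
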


\begin{exercise}
Prove Theorem~\ref{thm:representation} for the case where $J$ is integrable.
In this situation one can arrange for the coordinate changes on the domains
to be holomorphic, so $\hat{\jmath} \equiv i$.
\end{exercise}

Theorem~\ref{thm:representation} is a weak version of a deeper
result proved by Micallef and White \cite{MicallefWhite},\footnote{Our
exposition of this topic is heavily influenced by the asymptotic version
of Theorem~\ref{thm:representation}, which is a more recent result due to
R.~Siefring \cite{Siefring:asymptotics} that extends the intersection
theory of closed $J$-holomorphic curves to the punctured case.  We'll
discuss this in a later chapter.}
which provides
a similar formula in which the map $\hat{u}$ can be taken to be a
\emph{polynomial} in~$z$.  That result is
harder to prove, but it's also more than is needed for our purposes, as
the theorem above will suffice to understand everything we want to know about
intersections of holomorphic curves.  Before turning to
the proof, let us discuss some of its local applications---more such 
applications will be discussed in \S\ref{sec:multiple} and~\ref{sec:positivity}.

\begin{thm}
\label{thm:intersections}
Suppose $u , v : B \to \CC^n$ are injective smooth $J$-holomorphic curves
with $u(0) = v(0) = 0$.  Then for sufficiently small $\epsilon > 0$,
either $u = v \circ \varphi$ on $B_\epsilon$ for some holomorphic
embedding $\varphi : B_\epsilon \to B$ with $\varphi(0)=0$, or 
$$
u(B_\epsilon \setminus \{0\}) \cap v(B_\epsilon \setminus \{0\}) = \emptyset.
$$ 
\end{thm}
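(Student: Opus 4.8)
The plan is to derive the statement from the local representation formula, Theorem~\ref{thm:representation}. Working near $0$, suppose the punctured images are not disjoint, so there are sequences $z_n,w_n\to 0$ with $z_n,w_n\neq 0$ and $u(z_n)=v(w_n)$; the goal is to produce the holomorphic reparametrization $\varphi$. First, by Proposition~\ref{prop:tangent} both curves have well-defined tangent lines $T_u,T_v\subset\CC^n$ at $0$, and if $T_u\neq T_v$ then Exercise~\ref{EX:differentTangents} already forces the punctured images to be disjoint near $0$; so we may assume $T_u=T_v$. One then argues (comparing, via Proposition~\ref{prop:tangent} and the similarity principle, the orders of vanishing of the two curves along the common tangent direction) that the critical orders at $0$ must also coincide, since otherwise one again gets disjointness. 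Thus we may reduce to the case where $u$ and $v$ have the same tangent line and the same critical order $k-1$ at $0$.

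Now apply Theorem~\ref{thm:representation}: after smooth coordinate changes on the two domains and a single coordinate change on $\CC^n$, we have $u(z)=(z^k,\hat u(z))$ and $v(w)=(w^k,\hat v(w))$, with $\hat u,\hat v$ vanishing to order larger than $k$. Since $(\zeta z)^k=z^k$ for any $k$th root of unity $\zeta$, the equation $u(z)=v(w)$ is equivalent to $w=\zeta z$ for some such $\zeta$ together with $\hat u(z)=\hat v(\zeta z)$. For fixed $\zeta$, the reparametrized curve $z\mapsto v(\zeta z)$ is again $v$ written in the form $(z^k,\hat v(\zeta z))$, so $\hat v(\zeta\,\cdot\,)$ is an admissible ``$\hat v$'' in the sense of Theorem~\ref{thm:representation}, applied to the pair $u$ and $v(\zeta\,\cdot\,)$ (which has the same tangent line and critical order as $u$). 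The comparison clause of that theorem therefore gives, for each of the (finitely many) $k$th roots of unity $\zeta$, either $\hat u\equiv\hat v(\zeta\,\cdot\,)$ near $0$, or $\hat u(z)-\hat v(\zeta z)=z^{k+\ell'}C'+|z|^{k+\ell'}r'(z)$ with $C'\neq 0$ and $r'(z)\to 0$, so that this difference has an isolated zero at $0$.

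If the first alternative occurs for some $\zeta$, then in these coordinates $u(z)=(z^k,\hat u(z))=((\zeta z)^k,\hat v(\zeta z))=v(\zeta z)$, and undoing the coordinate changes gives $u=v\circ\varphi$ on a neighborhood of $0$ for a smooth diffeomorphism $\varphi$ with $\varphi(0)=0$. Since $u$ and $v$ are $J$-holomorphic and, by Corollary~\ref{cor:critical}, $v$ is immersed on a punctured neighborhood of $0$, the equations $Tu\circ i=J\circ Tu$ and $Tv\circ i=J\circ Tv$ together with $Tu=Tv\circ T\varphi$ force $\varphi$ to be holomorphic away from $0$, hence holomorphic on all of a neighborhood of $0$ by continuity of $\bar\partial\varphi$; restricting to a small ball yields the holomorphic embedding in the first alternative of Theorem~\ref{thm:intersections}. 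Otherwise the second alternative holds for every $\zeta$, so there is $\epsilon>0$ (the minimum of the finitely many radii, using $|\zeta z|=|z|$) with $\hat u(z)-\hat v(\zeta z)\neq 0$ for all $\zeta$ and all $0<|z|<\epsilon$; then no pair $(z,w)$ with $0<|z|,|w|<\epsilon$ satisfies $u(z)=v(w)$, which is the second alternative.

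The easy content is the generic case---both curves immersed and meeting transversally---in which disjointness is immediate; all the substance is concentrated in the situation where $u$ and $v$ have critical points or high-order tangency at the intersection, which is precisely what Theorem~\ref{thm:representation} is designed to control. Within the argument above, the point requiring care is the root-of-unity reparametrization step: one must verify that $\hat v(\zeta\,\cdot\,)$ genuinely qualifies as a representative to which the comparison clause of Theorem~\ref{thm:representation} applies, and that the finitely many resulting isolated zeros can be separated from $0$ on a single common ball. The step not handled verbatim by Theorem~\ref{thm:representation}---reducing the same-tangent case to equal critical orders---is the one I expect to be the main obstacle, and is where a little extra work (or an appeal to the sharper Micallef--White representation \cite{MicallefWhite}) is needed.
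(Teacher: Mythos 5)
Your argument in the equal-critical-order case is essentially the paper's, including the root-of-unity bookkeeping and the observation that the resulting reparametrization $\varphi$ is automatically holomorphic. The genuine gap is exactly the step you flag at the end: the reduction to equal critical orders. You assert that if $T_u=T_v$ but the critical orders differ, then the punctured images are disjoint near~$0$, and that this follows by ``comparing orders of vanishing'' via Proposition~\ref{prop:tangent} and the similarity principle. It does not: the leading-order asymptotics $u(z)=z^{k_u}C+o(|z|^{k_u})$ and $v(w)=w^{k_v}C'+o(|w|^{k_v})$ with $\CC C=\CC C'$ are perfectly consistent with $u(z_n)=v(w_n)$ for sequences $z_n,w_n\to 0$ even when $k_u\neq k_v$; the two sequences simply tend to zero at different rates, $|z_n|^{k_u}\sim\mathrm{const}\cdot|w_n|^{k_v}$. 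Worse, the claimed reduction is \emph{false} without injectivity---take $u(z)=(z,0)$ and $v(w)=(w^2,0)$ with $J=i$: same tangent line, critical orders $0$ and $1$, and the punctured images coincide---so no argument that never invokes injectivity (yours doesn't, at this step or anywhere else) can establish it.

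The paper sidesteps the reduction entirely, and no appeal to Micallef--White is needed. Set $q=\mathrm{lcm}(k_u,k_v)=k_um_u=k_vm_v$ with $m_u,m_v$ coprime, and pass to the branched covers $u_0(z):=u(z^{m_u})$ and $v_0(z):=v(z^{m_v})$, which \emph{do} have equal tangent lines and equal critical order $q-1$. Now apply Theorem~\ref{thm:representation} and your root-of-unity comparison to $u_0$ and $v_0$, noting that intersections of $u$ with $v$ near~$0$ correspond to solutions of $\hat u_0(z)=\hat v_0(e^{2\pi i\ell/q}z)$. If every difference has an isolated zero at~$0$ you get disjointness of the punctured images of $u$ and~$v$; if some difference vanishes identically you get $u(z^{m_u})=v(e^{2\pi i\ell/k_v}z^{m_v})$, and \emph{only now} does injectivity enter: substituting $e^{2\pi i/m_u}z$ leaves the left-hand side unchanged, so injectivity of $v$ forces $m_v/m_u\in\ZZ$, whence coprimality gives $m_u=1$, and symmetrically $m_v=1$. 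Thus $k_u=k_v$ is a \emph{conclusion} available only in the non-disjoint case, not a hypothesis you may reduce to in advance. With that substitution the rest of your argument goes through.
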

\begin{proof}
By Exercise~\ref{EX:differentTangents}, the second alternative holds unless
$T_u = T_v$, so assume the latter, and let $k_u , k_v$ denote the critical
orders of $u$ and~$v$ respectively, plus one.  Suppose $k_u m_u = k_v m_v = q$,
where $q \in \NN$ is the least common multiple of $k_u$ and $k_v$, hence
$m_u$ and $m_v$ are relatively prime.  Then the two curves
$$
u_0(z) := u(z^{m_u}),\qquad v_0(z) := v(z^{m_v})
$$
have the same tangent spaces and critical orders at~$0$.  We can thus use
Theorem~\ref{thm:representation} to change coordinates and rewrite these
two curves as
$$
u_0(z) = (z^q,\hat{u}_0(z)),\qquad v_0(z) = (z^q,\hat{v}_0(z)).
$$
For each $\ell = 1,\ldots,q-1$, define also the reparametrizations
$$
u_\ell(z) = (z^q,\hat{u}_\ell(z)) = u_0(e^{2\pi i \ell / q} z),
\qquad 
v_\ell(z) = (z^q,\hat{v}_\ell(z)) = v_0(e^{2\pi i \ell / q} z).
$$
Each of the differences $\hat{u}_0 - \hat{v}_\ell$ for $\ell=0,\ldots,q-1$ is
either identically zero or satisfies a formula of the form
$\hat{u}_0(z) - \hat{v}_\ell(z) = z^m C + |z|^m r(z)$,
in which case it has no zeroes in some neighborhood of~$0$.  If the
latter is true for all $\ell =0,\ldots,q-1$, then $u_0$ has no intersections
with $v_0$ near~$0$, as these correspond to
pairs $z \in B_\epsilon$ and $\ell \in \{0,\ldots,q-1\}$ for which 
$\hat{u}_0(z) = \hat{v}_\ell(z)$.  It follows then that $u$ and $v$ have no
intersections in a neighorhood of $u(0) = v(0) = 0$.

Suppose now that $\hat{u}_0 - \hat{v}_\ell \equiv 0$ for some 
$\ell \in \{0,\ldots,q-1\}$, which means
\begin{equation}
\label{eqn:twoCovers}
u(z^{m_u}) = u_0(z) = v_0(e^{2\pi i \ell / q} z) = v(e^{2\pi i \ell / k_v} z^{m_v})
\end{equation}
for all $z \in B_\epsilon$.  We finish by proving the following claim:
$m_u = m_v = 1$.  Indeed, replacing $z$ with
$e^{2\pi i / m_u } z$ in \eqref{eqn:twoCovers}, the left hand side doesn't
change, so we deduce that for all $z \in B_\epsilon$, 
$$
v(z^{m_v}) = v(e^{2\pi i m_v / m_u} z^{m_v}).
$$
Since $v$ is injective by assumption, this implies $m_v / m_u \in \ZZ$,
yet $m_u$ and $m_v$ are also relatively prime, so this can only be true
if $m_u = 1$.  Now performing the same argument again but inserting
$e^{2\pi i / m_v } z$ into \eqref{eqn:twoCovers}, we similarly deduce that
$m_v = 1$.
\end{proof}

The assumption of injectivity in the above theorem may seem like a 
serious restriction, but it is not: it turns out that on a sufficiently
small neighborhood of each point in the domain, every nontrivial
$J$-holomorphic
curve is either injective or is a branched cover of an injective curve.

\begin{thm}
\label{thm:injective}
For any nonconstant smooth $J$-holomorphic curve $u : B \to \CC^n$ with
$u(0) = 0$, there exists an injective
$J$-holomorphic curve $v : B \to \CC^n$ 
and a holomorphic map $\varphi : B_\epsilon \to B$ for some $\epsilon > 0$,
with $\varphi(0) = 0$, such that $u = v \circ \varphi$ on~$B_\epsilon$.
\end{thm}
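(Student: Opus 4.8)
The plan is to realise $u$ near $0$ as a branched cover of an injective curve by quotienting the domain by the finite symmetry group of $u$ at the origin, and then to check that this quotient is again smooth and pseudoholomorphic. First I would apply Theorem~\ref{thm:representation}: after smooth coordinate changes on domain and target fixing the origins, $u$ takes the form $u(z)=(z^k,\hat u(z))$ on a disk $(B_\epsilon,\hat\jmath)$ with $\hat\jmath(0)=i$, where $k-1$ is the critical order of $u$ at $0$, $\hat u$ has vanishing derivatives up to order $k$ at $0$, and $\hat u$ is either $\equiv 0$ or of the form $z^{k+\ell}C+|z|^{k+\ell}r(z)$ with $C\neq 0$ and $r\to 0$. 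Write $\mu_k=\{\zeta\in\CC\ |\ \zeta^k=1\}$ and $\rho_\zeta(z)=\zeta z$. For $\zeta\in\mu_k$ the reparametrisation $u\circ\rho_\zeta(z)=(z^k,\hat u(\zeta z))$ is again a nonconstant pseudoholomorphic curve, for the domain structure $\rho_\zeta^*\hat\jmath$, with the same tangent space $\CC\times\{0\}$ and the same critical order at $0$, so the ``moreover'' clause of Theorem~\ref{thm:representation} applies to the pair and yields the dichotomy: either $\hat u(\zeta\,\cdot)\equiv\hat u$, or $z\mapsto\hat u(\zeta z)-\hat u(z)$ has an isolated zero at $0$. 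Set $G:=\{\zeta\in\mu_k\ |\ \hat u(\zeta\,\cdot)\equiv\hat u\}$; it is a subgroup of $\mu_k$ (closure under multiplication being immediate), hence $G=\mu_d$ for some divisor $d$ of $k$, and I put $m:=k/d$.

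\emph{The quotient curve and its injectivity.} Since $\hat u$ is $\mu_d$-invariant, the assignment $\hat v(w):=\hat u(z)$ for any $z$ with $z^d=w$ is well defined, and $v(w):=(w^m,\hat v(w))$ satisfies $v(z^d)=u(z)$; that is, $u=v\circ\varphi$ with $\varphi(z)=z^d$ holomorphic. Note also that $u$ is literally $\mu_d$-invariant, $u(\omega z)=u(z)$ for $\omega\in G$, and since $u$ is an immersion off the discrete set $\Crit(u)$ (Corollary~\ref{cor:critical}), comparing $Tu\circ\hat\jmath$ with $T(u\circ\rho_\omega)\circ(\rho_\omega^*\hat\jmath)$ at immersed points shows that $\hat\jmath$ is itself $\mu_d$-invariant; consequently the deck group of $\varphi$ preserves $\hat\jmath$, so $j_v:=\varphi_*\hat\jmath$ is a well-defined almost complex structure on the punctured disk, smooth there, and $v$ is $j_v$-holomorphic away from the branch point. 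For injectivity near $0$: if $v(w_1)=v(w_2)$ with $w_i\neq 0$ small, lift to $z_i$ with $z_i^d=w_i$; the first components give $z_1^k=z_2^k$, so $z_2=\zeta z_1$ with $\zeta\in\mu_k$, and the remaining components give $\hat u(\zeta z_1)=\hat u(z_1)$. By the dichotomy above, if $\epsilon$ is chosen small enough then $\zeta\notin G$ is impossible, so $\zeta\in\mu_d$ and therefore $w_2=z_2^d=z_1^d=w_1$ (the cases $m=1$ and $w_i=0$ being immediate).

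\emph{The main obstacle: regularity at the branch point.} It remains to show that $v$ extends to a \emph{smooth} pseudoholomorphic curve through $0$, and I expect this to absorb essentially all of the remaining work. From the asymptotic formula for $\hat u$ together with the chain rule for $w=z^d$ one gets a bound $|\p_w\hat v(w)|,\ |\p_{\bar w}\hat v(w)|\lesssim |z|^{\,k-d}$, which is bounded near $0$ since $d\le k$; hence $v\in W^{1,\infty}_{\loc}\subset W^{1,p}_{\loc}$ across the branch point and $v$ solves the nonlinear Cauchy--Riemann equation (for $j_v$ and the target structure) almost everywhere. Because $\hat\jmath$ is $\mu_d$-invariant, a $\mu_d$-equivariant uniformising coordinate for $\hat\jmath$, which exists by integrability of complex structures on surfaces (Theorem~\ref{thm:RiemannSurfaces}), descends to a uniformising coordinate for $j_v$, reducing the claim to the statement that a $W^{1,p}$ solution of $\p_s v+J'(v)\p_t v=0$ is smooth, which is precisely the content of the nonlinear regularity theory of \S\ref{sec:nonlinearReg}. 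The delicate point, on which I would spend the most care, is the branch point itself: $j_v$ (equivalently, the uniformising change of coordinates) is a priori only known to be continuous there, so one must either push through a removable-singularity argument for $j_v$ or argue directly that the $W^{1,p}$-regularity of $v$ is enough to run the bootstrapping of \S\ref{sec:nonlinearReg} across it. Finally, undoing the coordinate changes and, where necessary, composing $\varphi$ with a biholomorphism to render it holomorphic in the standard sense, produces the injective $J$-holomorphic curve $v$ and the holomorphic map $\varphi$ asserted in Theorem~\ref{thm:injective}.
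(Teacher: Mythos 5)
Your overall strategy---representation formula, identification of the cyclic symmetry group of $\hat u$, quotient by $z\mapsto z^d$, injectivity from the dichotomy, and regularity at the branch point via a gradient bound plus elliptic regularity---is the paper's strategy, and everything up through the injectivity argument is sound. The genuine gap is the step you yourself flag as delicate. You form the quotient structure $j_v:=\varphi_*\hat\jmath$ first and then propose to uniformize it, claiming that a $\mu_d$-equivariant holomorphic chart $\Phi$ for $\hat\jmath$ ``descends'' and thereby reduces the problem to the regularity theory of \S\ref{sec:nonlinearReg}. But the descended chart is $w\mapsto\Phi(w^{1/d})^d$, and equivariance only constrains the Taylor monomials $a_{JK}z^J\bar z^K$ of $\Phi(z)^d$ to satisfy $J\equiv K\pmod d$; these become terms $w^{(J-K)/d}\,|w|^{2K/d}$, which are merely continuous at $w=0$ unless $d\mid K$. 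So the descended coordinate is not a smooth chart at the branch point, $j_v$ is of unknown regularity there, and the reduction to ``a $W^{1,p}$ solution of $\p_s v+J'(v)\p_t v=0$ is smooth'' does not go through as stated: the equation you would be bootstrapping has a domain complex structure that you cannot control at the one point that matters.

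The paper closes exactly this gap by reversing the order of operations: uniformize \emph{before} quotienting. Choose a $\ZZ_d$-invariant simply connected neighborhood of $0$ and a holomorphic chart onto $(B,i)$ fixing $0$ (Riemann mapping theorem); the conjugated deck transformations are then automorphisms of $(B,i)$ fixing the origin, hence literal rotations, so $\tilde u:=u\circ\Psi$ is $(i,J)$-holomorphic and invariant under $z\mapsto e^{2\pi i/d}z$. Setting $\tilde v(w):=\tilde u(w^{1/d})$ then produces a curve that is $(i,J)$-holomorphic on the \emph{standard} punctured disk $\dot B$---no singular complex structure ever appears---and the only issue left at $0$ is the regularity of the map itself, which your derivative estimate together with Exercise~\ref{EX:weakly} and Theorem~\ref{thm:regularity} handles. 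The injectivity of $\tilde v$ is transferred from your $v$ via $w\mapsto[\Psi(w^{1/d})]^d$, which need only be a homeomorphism, not smooth. So your proof is correct in outline and identical in substance to the paper's, but to complete it you should perform the equivariant uniformization on the domain of $u$ and only then take the $d$-fold quotient, rather than pushing the uniformizing coordinate down to the quotient.
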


Observe that if $\varphi'(0) \ne 0$ in the above statement then
$u$ must also be injective near~$0$; the interesting case is therefore when
$\varphi'(0) = 0$, as then $\varphi$ is locally a branched cover, mapping
a neighborhood of the origin $k$-to-$1$ to another neighborhood of the
origin for some $k \in \NN$.  It follows that $u : B_\epsilon \to \CC^n$
is then also a $k$-fold branched cover onto the image of~$v$ near~$0$.

\begin{proof}[Proof of Theorem~\ref{thm:injective}]
Using the coordinates provided by Theorem~\ref{thm:representation}, rewrite
$u$ as a pseudoholomorphic map $(B_\epsilon,j) \to (\CC^n,J)$ with
$u(z) = (z^q,\hat{u}(z))$, and define for each $\ell = 0,\ldots,q-1$,
$$
u_\ell : (B_\epsilon,j_\ell) \to (\CC^n,J) : z \mapsto 
(z^q,\hat{u}_\ell(z)) := u(e^{2 \pi i \ell / q} z).
$$
Then for $z \in B_\epsilon$, there is another point $\zeta \ne z$ with
$u(\zeta) = u(z)$ if and only if $\hat{u}(z) = \hat{u}_\ell(z)$ for some
$\ell \in \{1,\ldots,q-1\}$.  Making $\epsilon$ sufficiently small, the
representation formula for $\hat{u} - \hat{u}_\ell$ implies that
such points do not exist unless $\hat{u} \equiv \hat{u}_\ell$, so define
$$
m = \min \{ \ell \in \{1,\ldots,q \} \ |\ \hat{u} \equiv \hat{u}_\ell \}.
$$
Since $\hat{u} \equiv \hat{u}_m$ implies $\hat{u} \equiv \hat{u}_{\ell m}$
for all $\ell \in \NN$, $m$ must divide~$q$, thus we can define a
positive integer $k = q / m$.  If
$k = 1$ then $u$ is injective near~$0$ and we are done.  Otherwise,
$u$ now satisfies $u = u \circ \psi_\ell$ for all $\ell \in \ZZ_k$,
where we define the diffeomorphisms
$$
\psi_\ell : B_\epsilon \to B_\epsilon : z \mapsto e^{2\pi i \ell / k} z.
$$
This makes it possible to define a continuous map
$$
v : B_{\epsilon^k} \to \CC^n : z \mapsto u\left( \sqrt[k]{z}\right),
$$
which is injective if $\epsilon > 0$ is taken sufficiently small.

In order to view $v$ as a $J$-holomorphic curve,
we shall switch coordinates on the domain so that~$j$ becomes standard.
Observe that since $u = u \circ \psi_\ell$, pulling~$J$
back to $\dot{B}_\epsilon := B_\epsilon \setminus {0}$ through~$u$ implies 
$j = u^*J = \psi_\ell^*j$ on $\dot{B}_\epsilon$ for all $\ell \in \ZZ_k$, 
hence this holds also on 
$B_\epsilon$ by continuity.  The maps $\psi_\ell$ therefore define
a cyclic subgroup of the group of automorphisms of the Riemann surface
$(B_\epsilon,j)$.  Find a simply connected
$\ZZ_k$-invariant open neighborhood $\uU \subset B_\epsilon$ of~$0$ which
admits a holomorphic coordinate chart $\Phi : (\uU,j) \hookrightarrow (\CC,i)$.
By the Riemann mapping theorem, we can assume without loss of
generality that the image of this chart is $B$ and $\Phi(0) = 0$, 
hence the inverse $\Psi := \Phi^{-1}$ defines a holomorphic embedding
$$
\Psi : (B,i) \to (B_\epsilon,j)
$$
that maps the origin to itself and has a $\ZZ_k$-invariant image.  The maps
$$
\tilde{\psi}_\ell := \Psi^{-1} \circ \psi_\ell \circ \Psi : (B,i) \to
(B,i)
$$
for $\ell \in \ZZ_k$
now define an injective homomorphism of $\ZZ_k$ into the group of 
automorphisms of $(B,i)$ that fix~$0$.  The latter consists of rotations,
so we deduce $\tilde{\psi}_\ell(z) = e^{2\pi i \ell / k} z$.  Then the
$J$-holomorphic curve $\tilde{u} := u \circ \Psi : B \to \CC^n$ admits
the symmetry $\tilde{u} = \tilde{u} \circ \tilde{\psi}_\ell$ for all
$\ell \in \ZZ_k$, and we can thus define a new $J$-holomorphic curve on the
punctured ball $\dot{B} := B \setminus \{0\}$ by
$$
\tilde{v} : \dot{B} \to \CC^n : z \mapsto \tilde{u}(\sqrt[k]{z}).
$$
This admits a continuous extension over $B$ with $\tilde{v}(0) = 0$,
thus for all $z \in B_\epsilon$ in a sufficiently small neighborhood
of~$0$, $u$ now factors through a $k$-fold branched cover, namely
$$
u(z) = \tilde{v}\left( \left[ \Phi(z) \right]^k \right).
$$
Moreover, $\tilde{v}$ is injective, which we can see by identifying it
with the injective map $v : B_{\epsilon^k} \to \CC^n$ as follows:
consider the continuous map
$$
f : B \to B_{\epsilon^k} : z \mapsto \left[ \Psi( \sqrt[k]{z}) \right]^k,
$$
which is well defined because $\Psi(e^{2\pi i / k} z) = e^{2\pi i / k} \Psi(z)$.
This is a homeomorphism and satisfies $\tilde{v} = v \circ f$, thus
$\tilde{v}$ is injective if and only if $v$~is.

It remains only to show that the continuous map $\tilde{v} : B \to \CC^n$
is in fact smooth and thus $J$-holomorphic at~$0$.  By elliptic regularity
(Theorem~\ref{thm:regularity}),
it suffices to prove that $\tilde{v} \in W^{1,p}(B,\CC^n)$ for some $p > 2$,
i.e.~that it has a weak derivative of class $L^p$ which is defined almost
everywhere and equals the smooth map $d\tilde{v}$ on $\dot{B}$.  
Recall that $u(z) = (z^q,\hat{u}(z))$ with $q = km$, where $\hat{u}(z)
= o(|z|^q)$, thus the first $q-1$ derivatives of $u$ vanish at $z=0$,
and the same is therefore true for $\tilde{u} = u \circ \Psi$.  It follows
that there is a constant $C > 0$ such that
$$
| d\tilde{u}(z) | \le C |z|^{q - 1}
$$
for all $z \in B$, implying that for $z \in \dot{B}$,
$$
| d\tilde{v}(z) | \le \left| d\tilde{u}\left( \sqrt[k]{z} \right) \right|
\cdot \frac{1}{k} | z |^{\frac{1}{k} - 1} \le
\frac{C}{k} | z |^{\frac{1}{k}(q - 1)} |z|^{\frac{1}{k} - 1}
= \frac{C}{k} |z|^{m - 1}.
$$
Thus $d\tilde{v}$ is $C^0$-bounded on $\dot{B}$, implying it has a finite
$L^p$-norm for any $p > 2$, so the rest follows by Exercise~\ref{EX:weakly}
below.
\end{proof}

\begin{exercise}
\label{EX:weakly}
Assume $u$ is any continuous function on~$B$ which is smooth on
$\dot{B} = B \setminus \{0\}$, and its derivative $du$ on $\dot{B}$ satisfies
$\| du \|_{L^p(\dot{B})} < \infty$.  Show that $u \in W^{1,p}(B)$, and its
weak derivative equals its strong derivative almost everywhere.
\end{exercise}

We now turn to the proof of the representation formula,
Theorem~\ref{thm:representation}.  A somewhat simplified characterization
of the argument would be as follows: we need to show that for any
nonconstant $J$-holomorphic curve $u : B \to \CC^n$, assuming $J(0) = i$,
the ``leading order'' terms in its Taylor expansion about $z=0$ are
holomorphic.  Since terms in the Taylor series can always be expressed as
constant multiples of $z^k \bar{z}^\ell$, holomorphicity means the relevant
terms are actually multiples of~$z^k$, thus producing the powers of~$z$
that appear in the representation formula.  In practice, things are a bit
more complicated than this, e.g.~to keep full control over the remainders,
we will at one point use the similarity principle instead of Taylor's
theorem, but the above can be seen as a motivating principle.

\begin{proof}[Proof of Theorem~\ref{thm:representation}]
We proceed in four steps.

\textsl{Step~1: Coordinates on the target.}
Choose the coordinates on $\CC^n$ so that $J(0) = i$
and $T_u = \CC \times \{0\} \subset \CC^n$.  We can make one more 
requirement on the coordinates without loss of generality: we choose them
so that the map
$$
u_0(z) = (z,0) \in \CC \times \CC^{n-1}
$$
is $J$-holomorphic on $B_\epsilon$ for sufficiently small $\epsilon > 0$.
This is a highly nontrivial condition: the fact that it's possible follows
from the local existence result for $J$-holomorphic curves with
a fixed tangent vector, Theorem~\ref{thm:localExistence3}.

\textsl{Step~2: Coordinates on the domain.}
We next seek a coordinate
change near the origin on the domain so that $u$ becomes a map of the form
$z \mapsto (z^k,o(|z|^k))$ for some $k \in \NN$.  Applying the similarity
principle as in the proof of Prop.~\ref{prop:tangent}, we have
$u(z) = \Phi(z) f(z)$ on $B_\epsilon$ for some small $\epsilon > 0$,
a smooth map $\Phi : B_\epsilon \to \End_\RR(\CC^n)$ with $\Phi(0) = \1$
and a holomorphic map $f : B_\epsilon \to \CC^n$.  Moreover, $f(z) = z^k g(z)$
for some $k \in \NN$ (where $k-1$ is the critical order of~$u$) 
and a holomorphic map $g : B_\epsilon \to \CC^n$ with 
$g(0) \ne 0$, and our assumption on $T_u$ implies that after a complex-linear
coordinate change on the domain, we may assume $g(0) = (1,0) \in \CC \times
\CC^{n-1}$.  Thus $f(z) = (z^k g_1(z) , z^{k+1} g_2(z))$ for some holomorphic
maps $g_1 : B_\epsilon \to \CC$ and $g_2 : B_\epsilon \to \CC^{n-1}$, with
$g_1(0) = 1$.  Let us use the splitting $\CC^n = \CC \times \CC^{n-1}$ to 
write $\Phi(z)$ in block form as
$$
\Phi(z) =
\begin{pmatrix}
\alpha(z) & \beta(z) \\
\gamma(z) & \delta(z)
\end{pmatrix},
$$
so $\alpha(0)$ and $\delta(0)$ are both the identity, while $\beta(0)$ and
$\gamma(0)$ both vanish; note that all four blocks are regarded as 
\emph{real}-linear maps on complex vector spaces, i.e.~they need not commute with
multiplication by~$i$.  Now $u(z)$ takes the form 
$(u_1(z),u_2(z)) \in \CC \times \CC^{n-1}$, where
\begin{equation*}
\begin{split}
u_1(z) &= \alpha(z) z^k g_1(z) + \beta(z) z^{k+1} g_2(z),\\
u_2(z) &= \gamma(z) z^k g_1(z) + \delta(z) z^{k+1} g_2(z)).
\end{split}
\end{equation*}
We claim that after shrinking $\epsilon > 0$ further if necessary, there
exists a smooth function $\zeta : B_\epsilon \to \CC$ such that
$\zeta(0) = 0$, $d\zeta(0) = \1$ and $[\zeta(z)]^k = u_1(z)$.  Indeed,
the desired function can be written as
$$
\zeta(z) = z \sqrt[k]{\alpha(z) g_1(z) + \beta(z) z g_2(z)},
$$
which can be
defined as a smooth function for $z$ near~$0$ since the expression under
the root lies in a neighborhood of~$1$; we set $\sqrt[k]{1} = 1$.
Expressing $u$ now as a function of the new coordinate $\zeta$, we have
\begin{equation}
\label{eqn:smoothVersion}
u(\zeta) = (\zeta^k , \hat{u}(\zeta))
\end{equation}
with $\hat{u}(\zeta) = A(\zeta) \zeta^k$
for some smooth map $A(\zeta) \in \Hom_\RR(\CC,\CC^{n-1})$ with $A(0) = 0$.
Observe that since $d\zeta(0) = \1$, the new expression for $u(\zeta)$ is
pseudoholomorphic for a new complex structure $\hat{\jmath}$ on the domain 
such that $\hat{\jmath}(0) = i$.

\textsl{Step~3: The leading order term in $\hat{u} - \hat{v}$.}
This is the important part.  Using the coordinates chosen above, 
assume now that $J(0) = i$ and the two maps 
$u : (B_\epsilon,j) \to (\CC^n,J)$ and $v : (B_\epsilon,j')
\to (\CC^n,J)$ are pseudoholomorphic curves of the form
\begin{equation*}
\begin{split}
u(z) &= (z^k,\hat{u}(z)), \\
v(z) &= (z^k,\hat{v}(z)),
\end{split}
\end{equation*}
where $\hat{u}$ and $\hat{v}$ each have vanishing derivatives up to at least
order~$k$ at $z=0$.  Let 
$$
h(z) = u(z) - v(z) = (0,\hat{h}(z)),
$$
defining a map $\hat{h} : B_\epsilon \to \CC^{n-1}$.  Our main goal is to show
that the leading order term in $\hat{h}$ is a homogeneous holomorphic 
polynomial.  By unique continuation
(Theorem~\ref{thm:uniqueContin}), $h$ vanishes identically on a neighborhood
of~$0$ if and only if the derivatives $D^\ell h(0)$ of all orders vanish,
so let's assume this is not the case.  Then there is a finite positive
integer~$m$ defined by
$$
m = \min \{ \ell \in \NN \ |\ D^\ell h(0) \ne 0 \},
$$
and $m \ge k+1$ since $h(z) = o(|z|^k)$.  Now for $\epsilon > 0$,
the functions
\begin{equation*}
\begin{split}
h_\epsilon(z) := \frac{h(\epsilon z)}{\epsilon^m}
\end{split}
\end{equation*}
converge in $C^\infty$ as $\epsilon \to 0$ to a nonzero homogenous polynomial 
in $z$ and $\bar{z}$ of degree~$m$, namely the $m$th order term in the Taylor
series of $h$ about~$0$.  We claim that this polynomial is holomorphic, 
which would imply that it has the form
$$
h_0(z) = (0, z^m C)
$$
for some constant $C \in \CC^{n-1}$.

The intuitive reason for this claim should be clear: $u$ and $v$ both satisfy
nonlinear Cauchy-Riemann equations that ``converge'' to the standard one
as $z \to 0$, so their difference in the rescaled limit should also satisfy
$\dbar h_0 = 0$.  One complication in making this argument precise is that
since we've reparametrized the domains by nonholomorphic diffeomorphisms,
$u$ and $v$ are each pseudoholomorphic for \emph{different complex structures}
$j$ and $j'$ on their domains, thus it is not so straightforward to find an
appropriate PDE satisfied by $u - v$.  Of course, since both maps are 
immersed except
at~$0$, the complex structures are uniquely determined by $j = u^*J$ and 
$j' = v^*J$ on $B_\epsilon \setminus\{0\}$, which suggests that there should
be a way to reexpress the two nonlinear Cauchy-Riemann equations without
explicit reference to $j$ and~$j'$.  And there is: we only need observe that
outside of~$0$, $u$ and $v$ parametrize immersed surfaces in $\CC^n$ whose
tangent spaces are \emph{complex}, i.e.~$J$-invariant.  

This can be expressed elegantly
in the language of \defin{bivectors}: recall that a bivector is an element
of the antisymmetric tensor product bundle $\Lambda^2 T \CC^2 \to \CC^2$, 
and thus consists of a linear combination of bilinear wedge
products of the form $X \wedge Y$ for
vectors $X, Y \in T_p \CC^n$, $p \in \CC^n$, where by definition
$X \wedge Y = - Y \wedge X$.  Such a product can be thought
of intuitively as representing the oriented linear subspace in $T_p \CC^n$ 
spanned by $X$ and $Y$, with its magnitude giving the signed area of the 
corresponding parallelogram.  Let $\Aut_\RR(E)$ denote the group of
invertible real-linear smooth bundle maps on any bundle $E$.  Then there
is a natural group homomorphism
$$
\Aut_\RR(T \CC^n) \to \Aut_\RR(\Lambda^2 T\CC^n) : A \mapsto \bar{A}
$$
defined by
$$
\bar{A}(X \wedge Y) = AX \wedge AY.
$$
In particular, $J^2 = -1$ then implies $\bar{J}^2 = \1$ as an operator on
$\Lambda^2 T\CC^n$.  Now, the action of $J$ fixes the oriented subspace spanned
by $X$ and $Y$ if and only if $JX \wedge JY = c (X \wedge Y)$ for some
$c > 0$, but from $\bar{J}^2 = \1$, we deduce that $c = 1$, so the correct
condition is $JX \wedge JY = X \wedge Y$.  We conclude from this discussion
that $u : B_\epsilon \to \CC^n$ and $v : B_\epsilon \to \CC^n$ satisfy the 
first order nonlinear PDEs,
\begin{equation}
\label{eqn:quadraticCR}
\begin{split}
\p_s u \wedge \p_t u - J(u) \p_s u \wedge J(u) \p_t u &= 0, \\
\p_s v \wedge \p_t v - J(v) \p_s v \wedge J(v) \p_t v &= 0. \\
\end{split}
\end{equation}
In order to deduce the consequence for $h_0$, observe first that by the usual
interpolation trick (cf.~the proof of Prop.~\ref{prop:uniqueContin}), on a 
sufficiently small ball $B_\epsilon$ there is a smooth map
$A : B_\epsilon \to \End_\RR(\CC^n,\End_\RR(\CC^n))$ such that
$$
J\left(u(z)) - J(v(z)\right) = A(z) \left[ u(z) - v(z) \right] = A(z) h(z).
$$
Thus subtracting the second equation of \eqref{eqn:quadraticCR} from the
first gives
\begin{multline*}
\p_s u \wedge \p_t h + \p_s h \wedge \p_t v - J(u) \p_s u \wedge J(u) \p_t h
- J(u) \p_s h \wedge J(v) \p_t v \\
- J(u) \p_s u \wedge (Ah) \p_t v - (A h) \p_s v \wedge J(v) \p_t v = 0.
\end{multline*}
Replacing $z$ by $\epsilon z$ and dividing the entire expression by
$\epsilon^{k+m-2}$ now yields
\begin{equation*}
\begin{split}
0 &= \frac{\p_s u(\epsilon z)}{\epsilon^{k-1}} \wedge 
\frac{\p_t h(\epsilon z)}{\epsilon^{m-1}}
 + \frac{\p_s h(\epsilon z)}{\epsilon^{m-1}} \wedge \frac{\p_t v(\epsilon z)}{\epsilon^{k-1}} \\
& \qquad - J(u(\epsilon z)) \frac{\p_s u(\epsilon z)}{\epsilon^{k-1}} \wedge 
J(u(\epsilon z)) \frac{\p_t h(\epsilon z)}{\epsilon^{m-1}}
 - J(u(\epsilon z)) \frac{\p_s h(\epsilon z)}{\epsilon^{m-1}} \wedge 
J(v(\epsilon z)) \frac{\p_t v(\epsilon z)}{\epsilon^{k-1}}  \\
& \qquad - \epsilon^k J(u(\epsilon z)) \frac{\p_s u(\epsilon z)}{\epsilon^{k-1}} 
\wedge \left[ A(\epsilon z) \frac{h(\epsilon z)}{\epsilon^m} \right] 
\frac{\p_t v(\epsilon z)}{\epsilon^{k-1}} \\
& \qquad - \epsilon^k \left[ A(\epsilon z) \frac{h(\epsilon z)}{\epsilon^m} \right] 
\frac{\p_s v(\epsilon z)}{\epsilon^{k-1}} \wedge 
J(v(\epsilon z)) \frac{\p_t v(\epsilon z)}{\epsilon^{k-1}}.
\end{split}
\end{equation*}
We claim that every term in this expression converges in $C^\infty$
as $\epsilon \to 0$.  Indeed, the terms involving $h$ are all either
$h_\epsilon(z)$ or one of its first derivatives, so these converge respectively
to $h_0 = (0,\hat{h}_0)$, $\p_s h_0 = (0,\p_s \hat{h}_0)$ and 
$\p_t h_0 = (0,\p_t \hat{h}_0)$.  Since $\p_s u$ has vanishing derivatives
at~$0$ up until order $k-1$, $\frac{\p_s u(\epsilon z)}{\epsilon^{k-1}}$
converges to the homogenous degree~$k-1$ Taylor polynomial of $\p_s u$ at~$0$,
which is precisely the first derivative of the leading order term in $u$, 
namely $(k z^{k-1},0)$.  Likewise, $\frac{\p_t u(\epsilon z)}{\epsilon^{k-1}}
\to (i k z^{k-1},0)$, and the same goes for the first derivatives of~$v$.
Finally $J(u(\epsilon z))$ and $J(v(\epsilon z))$ both converge to~$i$,
so after the dust settles, we're left with
\begin{multline*}
(k z^{k-1},0) \wedge (0,\p_t \hat{h}_0) + 
(0,\p_s \hat{h}_0) \wedge (i k z^{k-1},0) \\ 
- (i k z^{k-1},0) \wedge (0, i \p_t \hat{h}_0) + 
(0, i \p_s \hat{h}_0) \wedge (k z^{k-1},0) = 0,
\end{multline*}
or equivalently
$$
- (k z^{k-1},0) \wedge ( 0, i \dbar \hat{h}_0) = (i k z^{k-1},0) \wedge
(0 , \dbar \hat{h}_0).
$$
This equation means that for all $z \in B_\epsilon$, if $(k z^{k-1},0)$ 
and $(0, i \dbar h_0(z))$ are linearly independent vectors in $\CC^n$, then
the oriented real subspace 
they span is the same as its image under multiplication by~$i$, 
i.e.~it is complex.  But this is manifestly untrue unless one of the vectors
vanishes, so we conclude that for all $z \in B_\epsilon \setminus \{0\}$,
$\dbar h_0(z) = 0$, and $h_0$ is thus a
holomorphic polynomial on~$B_\epsilon$.

\textsl{Step~4: Conclusion.}  It remains only to assemble the information
gathered above.  Combining Step~3 with Taylor's theorem yields the expression
$$
\hat{u}(z) - \hat{v}(z) = z^m C + |z|^m r(z),
$$
where $C \in \CC^{n-1}$ is a constant, $m > k$ is an integer and $r(z)$ is a 
remainder function such that $\lim_{z \to 0} r(z) = 0$.
The corresponding formulas for $\hat{u}$ and $\hat{v}$ individually follow
from this, because we've chosen coordinates so that 
$z \mapsto u_0(z^k) = (z^k,0)$
is also a $J$-holomorphic curve.  The degree of the leading term in each is
then simply the degree of its lowest order nonvanishing derivative at $z=0$,
and the same applies to $\hat{u} - \hat{v}$.
\end{proof}

\section{Simple curves and multiple covers}
\label{sec:multiple}

We now prove an important global consequence of the local results from
the previous section.
Recall first that if $\Sigma$ and~$\Sigma'$ are two closed, oriented
and connected surfaces, then every continuous map
$$
\varphi : \Sigma \to \Sigma'
$$
has a \defin{mapping degree} $\deg(\varphi) \in \ZZ$, most easily
defined via the homological condition that $\deg(\varphi) = k$ if
$\varphi_*[\Sigma] = k[\Sigma']$.  Equivalently, $\deg(\varphi)$ can
be defined as a signed count of points in the preimage $\varphi^{-1}(\zeta)$
of a generic point $\zeta \in \Sigma'$, cf.~\cite{Milnor:differentiable}.

\begin{exercise}
\label{EX:posDegree}
Show that if $(\Sigma,j)$ and $(\Sigma',j')$ are two closed connected
Riemann surfaces with their natural orientations, then any holomorphic
map $\varphi : (\Sigma,j) \to (\Sigma',j')$ has $\deg(\varphi) \ge 0$.
Moreover,
\begin{itemize}
\item $\deg(\varphi) = 0$ if and only if $\varphi$ is constant,
\item $\deg(\varphi) = 1$ if and only if $\varphi$ is \defin{biholomorphic},
i.e.~a holomorphic diffeomorphism with holomorphic inverse, and
\item if $\deg(\varphi) = k \ge 2$, then
$\varphi$ is a \defin{branched cover},
meaning it has at most finitely many critical points and its restriction to the 
punctured surface $\Sigma\setminus\Crit(\varphi)$ is a $k$-fold
covering map, while in a neighborhood of each critical point it
admits coordinates in which $\varphi(z) = z^\ell$ for some $\ell \in 
\{2,\ldots,k\}$.
\end{itemize}
\end{exercise}

\begin{thm}
\label{thm:multipleCovers}
Suppose $(\Sigma,j)$ is a closed connected Riemann surface,
$(M,J)$ is a smooth almost complex manifold and
$u : (\Sigma,j) \to (M,J)$ is a nonconstant $J$-holomorphic curve.
Then there exists a factorization $u = v \circ \varphi$ where
\begin{itemize}
\item
$(\Sigma',j')$ is a closed connected Riemann surface and
$v : (\Sigma',j') \to (M,J)$ a $J$-holomorphic curve that is
embedded outside a finite set of critical points and self-intersections, and
\item
$\varphi : (\Sigma,j) \to (\Sigma',j')$ is a holomorphic map of
degree~$\deg(\varphi) \ge 1$.
\end{itemize}
Moreover, $v$ is unique up to biholomorphic reparametrization.
\end{thm}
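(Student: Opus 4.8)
The plan is to realise $v$ as a quotient of $u$ by a natural equivalence relation on $\Sigma$ that collapses ``locally indistinguishable'' points, and then to read off uniqueness from the fact that this quotient is canonical. Throughout I work in holomorphic coordinate charts near a point $z \in \Sigma$ and near $u(z) \in M$, so that the local results of the previous sections apply. For each $z$, Theorem~\ref{thm:injective} furnishes a disk $D_z$, an injective $J$-holomorphic curve $v_z : D_z \to M$ and a holomorphic branched cover $\varphi_z$ from a neighbourhood of $z$ onto $D_z$ taking $z$ to the centre $*_z \in D_z$, with $u = v_z \circ \varphi_z$ near $z$ and $v_z(*_z) = u(z)$. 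I then declare $z \sim z'$ if $u(z) = u(z')$ and there are subdisks $*_z \in D \subset D_z$, $*_{z'} \in D' \subset D_{z'}$ and a biholomorphism $\beta : D \to D'$ with $\beta(*_z) = *_{z'}$ and $v_z|_D = v_{z'}|_{D'} \circ \beta$. First I would verify this is an equivalence relation: by Theorem~\ref{thm:intersections} applied to the \emph{injective} curves $v_z, v_{z'}$, whenever $u(z) = u(z')$ their images near $u(z)$ either coincide or are disjoint on punctured neighbourhoods; in the former case the gluing $\beta$ exists and is unique (injectivity of $v_{z'}$), so symmetry and transitivity are immediate. I would also note that the fibres $u^{-1}(p)$ are discrete — by the representation formula $u(z) = (z^k,\hat u(z))$ of Theorem~\ref{thm:representation}, $u^{-1}(u(z_0))$ is locally $\{z_0\}$ — hence finite, so every $\sim$-class is finite.

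Next I would establish the two topological properties that make the quotient well behaved: (i) $\sim$ is an \emph{open} equivalence relation, i.e.\ the saturation of any open set is open (using the biholomorphisms $\beta$ together with the covering maps $\varphi_z$ to propagate membership of a class to nearby points of the partner); and (ii) the graph of $\sim$ is \emph{closed} in $\Sigma \times \Sigma$ — if $z_k \sim z_k'$ and $(z_k,z_k') \to (z,z')$, then $u(z) = u(z')$ and the images of $v_z, v_{z'}$ near $u(z)$ meet, so by Theorem~\ref{thm:intersections} they coincide with centres corresponding, whence $z \sim z'$. Setting $\Sigma' := \Sigma/\!\sim$ with quotient map $q$, properties (i)–(ii) give that $\Sigma'$ is Hausdorff; since $q$ is open with local model $\mathrm{incl} \circ \varphi_z$, where $D_z \hookrightarrow \Sigma'$ is an open embedding because $v_z$ is injective, $\Sigma'$ is a surface and carries a unique complex structure $j'$ in which the charts $D_z \hookrightarrow \Sigma'$ are holomorphic and $q : (\Sigma,j) \to (\Sigma',j')$ is a holomorphic branched covering; it is closed and connected since $\Sigma$ is, so $\deg q \ge 1$ by Exercise~\ref{EX:posDegree}. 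Define $v : \Sigma' \to M$ by $v(q(z)) := u(z)$: this is well defined, in the chart $D_z$ it equals $v_z$ and is therefore a smooth $J$-holomorphic curve, and it is nonconstant since $u = v \circ q$. Finally $v$ is embedded off a finite set: $\Crit(v)$ is discrete by Corollary~\ref{cor:critical} hence finite, and the self-intersection set $\{(z_1,z_2) : z_1 \neq z_2,\ v(z_1)=v(z_2)\} \subset \Sigma' \times \Sigma'$ is discrete by Theorem~\ref{thm:intersections} (the ``coinciding images'' alternative would force $z_1 = z_2$ by the definition of $\sim$) and is bounded away from the diagonal except near the finitely many critical points of $v$, hence closed in the compact $\Sigma' \times \Sigma'$ and thus finite. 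This yields the factorization $u = v \circ q$.

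For uniqueness, suppose $u = v \circ \varphi = v' \circ \varphi'$ with $v : \Sigma' \to M$, $v' : \Sigma'' \to M$ both embedded off finite sets $F \subset \Sigma'$, $F' \subset \Sigma''$. Since $\varphi, \varphi'$ are nonconstant holomorphic maps of closed connected Riemann surfaces they are open and surjective with finite fibres, so $U := \varphi\bigl(\Sigma \setminus (\varphi^{-1}(F) \cup \varphi'^{-1}(F'))\bigr)$ is open and dense in $\Sigma'$. On $U$ I set $\beta(\varphi(z)) := \varphi'(z)$; this is well defined because $v$ is injective on $\Sigma' \setminus F$, injective because $v'$ is injective on $\Sigma'' \setminus F'$, and holomorphic because locally $\beta = (v'|_{O'})^{-1} \circ v$ with $v, v'$ embeddings on small sets. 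Being bounded into the compact $\Sigma''$, $\beta$ extends over the finite set $\Sigma' \setminus U$ by the removable singularity theorem to a holomorphic map $\Sigma' \to \Sigma''$ that is still generically one-to-one, hence a biholomorphism, with $v = v' \circ \beta$ (and $\varphi' = \beta \circ \varphi$) by continuity. Thus $v$ is unique up to biholomorphic reparametrization.

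The main obstacle, I expect, is the passage to the quotient: checking simultaneously that $\sim$ is an open equivalence relation with closed graph and that $\Sigma' = \Sigma/\!\sim$ is a genuine Hausdorff Riemann surface on which $q$ carries the stated branched-cover local model. This is exactly the step that consumes the full strength of the local theory built up earlier — Theorems~\ref{thm:injective}, \ref{thm:intersections}, \ref{thm:representation} and the unique continuation result Theorem~\ref{thm:uniqueContin}; once it is in place, the remaining arguments are soft point-set topology and classical one-variable complex analysis.
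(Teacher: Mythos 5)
Your proof is correct, but it builds $\Sigma'$ by a genuinely different route from the paper's. The paper works on the \emph{target}: it removes the finite bad set $\Crit(u)\cup\Delta$ from $\Sigma$, observes that $\dot{\Sigma}' := u\bigl(\Sigma\setminus(\Crit(u)\cup\Delta)\bigr)$ is an embedded $J$-invariant submanifold of $M$ carrying an induced complex structure, and then compactifies it by explicitly gluing in one disk $B$ for each class in $\widehat{\Delta}$, using the injective local parametrizations from Theorem~\ref{thm:injective} as gluing maps; $\varphi$ is then $u$ itself, extended over the punctures. You instead work on the \emph{domain}, defining $\Sigma'$ as the quotient $\Sigma/\!\sim$ by the relation ``same image germ.'' Both constructions consume exactly the same local input (Theorems~\ref{thm:injective}, \ref{thm:intersections}, \ref{thm:representation}) and produce canonically biholomorphic surfaces. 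The paper's version gets Hausdorffness and the surface structure essentially for free (a submanifold of $M$ with finitely many disks attached along explicit annuli), at the cost of verifying that $\dot{\Sigma}'$ is a submanifold and that the glued space is compact and independent of choices; your version is more canonical --- no choice of disks --- at the cost of the point-set work you rightly identify as the crux (openness of $\sim$, closedness of its graph, the branched-cover local model for $q$), all of which goes through as you describe. One small streamlining: finiteness of the self-intersection set of $v$ follows most cleanly from the observation that $v$ is \emph{locally injective} by construction (in each chart it equals the injective curve $v_z$), so self-intersection pairs cannot accumulate on the diagonal anywhere, not merely away from $\Crit(v)$; combined with discreteness off the diagonal from Theorem~\ref{thm:intersections}, the set is closed and discrete in the compact $\Sigma'\times\Sigma'$, hence finite. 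Your uniqueness argument is the standard one (the paper leaves that part as an exercise).
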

\begin{proof}
Let $\Crit(u) = \{ z \in \Sigma\ |\ du(z) = 0 \}$ denote the
set of critical points, and define $\Delta \subset \Sigma$ to be the set of
all points $z \in \Sigma$ such that there exists $\zeta \in \Sigma$ and
neighborhoods $z \in \uU_z \subset \Sigma$, $\zeta \in \uU_\zeta \subset
\Sigma$ with $u(z) = u(\zeta)$ but
$$
u(\uU_z \setminus \{z\}) \cap u(\uU_\zeta \setminus \{\zeta\}) = \emptyset.
$$
By Theorems~\ref{thm:intersections} and~\ref{thm:injective}, 
both of these sets are discrete and thus finite, and the set
$$
\dot{\Sigma}' := u\left(\Sigma \setminus (\Crit(u) \cup \Delta)\right) \subset M
$$
is a smooth submanifold of~$M$ with $J$-invariant tangent spaces, and thus
inherits a natural complex structure $j'$ such that the inclusion
$(\dot{\Sigma}',j') \hookrightarrow (M,J)$ is pseudoholomorphic.  
We shall now construct
$(\Sigma',j')$ as a compactification of $(\dot{\Sigma}',j')$, so that
$\dot{\Sigma}'$ is obtained from $\Sigma'$ by removing finitely many points.
Let
$$
\widehat{\Delta} = (\Crit(u) \cup \Delta) / \sim
$$
where two points in $\Crit(u) \cup \Delta$ are defined to be
equivalent whenever they have neighborhoods in $\Sigma$ 
with identical images under~$u$.
Then for each $[z] \in \widehat{\Delta}$, Theorem~\ref{thm:injective} 
provides an injective 
$J$-holomorphic map $u_{[z]}$ from the open unit ball $B \subset \CC$ onto
the image of a neighborhood of~$z$ under~$u$.
We define $(\Sigma',j')$ by
$$
\Sigma' = \dot{\Sigma}' \cup_\Phi \left( \bigsqcup_{[z] \in \widehat{\Delta}} B \right),
$$
where the gluing map $\Phi$ is the disjoint union of the maps
$u_{[z]}|_{B\setminus\{0\}} : B\setminus \{0\} \to \dot{\Sigma}'$ for
each $[z] \in \widehat{\Delta}$, and $j = j'$ on $\dot{\Sigma}'$ and $i$ on~$B$.
The surface $\Sigma'$ is clearly compact, and
combining the maps $u_{[z]}$ with the inclusion $\dot{\Sigma}'
\hookrightarrow M$ defines a pseudoholomorphic map
$v : (\Sigma',j') \to (M,J)$ whose restriction to the punctured surface
$\dot{\Sigma'} = \Sigma' \setminus \widehat{\Delta}$ is an embedding.
Moreover, the restriction of $u$ to $\Sigma\setminus (\Crit(u) \cup \Delta)$
defines a holomorphic map to $(\dot{\Sigma}',j')$ which extends over the
punctures to a holomorphic map $\varphi : (\Sigma,j) \to (\Sigma',j')$
such that $u = v \circ \varphi$.

We leave the uniqueness statement as an exercise for the reader.
The positivity of $\deg(\varphi)$ follows from Exercise~\ref{EX:posDegree}.
\end{proof}

\begin{defn}
\label{defn:simple}
A closed, connected and nonconstant pseudoholomorphic curve 
$u : (\Sigma,j) \to (M,J)$ is
called \defin{simple} if it does not admit any factorization
$u = v \circ \varphi$ as in Theorem~\ref{thm:multipleCovers} with
$\deg(\varphi) > 1$.  If $u$ is not simple, we say that it is
\defin{multiply covered}.
\end{defn}

With this definition in hand, the theorem above can be reformulated as follows:

\begin{cor}
\label{cor:simple}
A closed, connected and nonconstant pseudoholomorphic curve is simple
if and only if it is embedded outside of a finite (possibly empty) set of
critical points and self-intersections.
\end{cor}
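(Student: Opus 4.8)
The plan is to obtain Corollary~\ref{cor:simple} as an almost immediate consequence of the factorization theorem, Theorem~\ref{thm:multipleCovers}, together with the basic facts about holomorphic maps of closed Riemann surfaces recorded in Exercise~\ref{EX:posDegree}. Fix a closed, connected, nonconstant $J$-holomorphic curve $u : (\Sigma,j)\to(M,J)$ and let $u = v\circ\varphi$ be a factorization as in Theorem~\ref{thm:multipleCovers}, so that $v : (\Sigma',j')\to(M,J)$ is embedded outside a finite set $F'\subset\Sigma'$ and $\varphi : (\Sigma,j)\to(\Sigma',j')$ is holomorphic with $\deg(\varphi)\ge 1$.

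First I would handle the implication ``simple $\Rightarrow$ embedded outside a finite set''. If $u$ is simple, then by Definition~\ref{defn:simple} the factorization above must have $\deg(\varphi)=1$, and Exercise~\ref{EX:posDegree} then says $\varphi$ is biholomorphic. Consequently $u = v\circ\varphi$ is an injective immersion on $\Sigma\setminus\varphi^{-1}(F')$, and $\varphi^{-1}(F')$ is finite since $\varphi$ is a bijection; this is exactly the stated conclusion.

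For the converse I would argue by contradiction: suppose $u|_{\Sigma\setminus F}$ is an injective immersion for some finite set $F\subset\Sigma$, but $u$ is multiply covered, so there is a factorization $u = v\circ\varphi$ with $k := \deg(\varphi)\ge 2$. By Exercise~\ref{EX:posDegree}, $\varphi$ is a branched cover; in particular it is proper with finite fibres and has only finitely many critical points, so $\varphi(F)\cup\varphi(\Crit(\varphi))$ is a finite subset of $\Sigma'$. Choosing any $\zeta\in\Sigma'$ outside this finite set, the fibre $\varphi^{-1}(\zeta)=\{z_1,\dots,z_k\}$ consists of $k\ge 2$ distinct points, none lying in $F$, with $u(z_i)=v(\zeta)$ for all $i$; this contradicts injectivity of $u$ on $\Sigma\setminus F$, so no such factorization exists and $u$ is simple.

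The whole argument is formal once Theorem~\ref{thm:multipleCovers} is in hand --- the bulk of the work is really in that theorem and in Theorems~\ref{thm:intersections} and~\ref{thm:injective} on which it rests. The only point in the present proof that needs a moment's care is the generic choice of $\zeta$ in the converse direction, which uses precisely that a nonconstant holomorphic map between closed Riemann surfaces is a branched cover, hence has finite fibres and finitely many branch points, so that the set of values one must avoid is finite. I do not expect any real obstacle beyond that.
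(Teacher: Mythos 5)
Your proof is correct and is exactly the argument the paper intends: the corollary is presented as an immediate reformulation of Theorem~\ref{thm:multipleCovers}, with the forward direction coming from $\deg(\varphi)=1$ forcing $\varphi$ to be biholomorphic, and the converse from the observation that a degree-$k\ge 2$ branched cover identifies the $k$ distinct points of any generic fibre, destroying injectivity outside every finite set. No gaps.
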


\section{Positivity of intersections}
\label{sec:positivity}

We saw in \S\ref{sec:intersectionsHigher} that a $J$-holomorphic curve
and a $J$-holomorphic hypersurface (i.e.~a $J$-invariant submanifold
of real codimension two) always intersect positively.  This fact is
especially powerful in dimension four, where a $J$-holomorphic
hypersurface is simply the image of an embedded $J$-holomorphic 
curve---but we would also like to understand what happens when
two holomorphic curves intersect at a point where neither is
locally embedded.
This is made possible by the representation formula of
\S\ref{sec:intersections}, and in this section we will use it to prove two
much more powerful local results about intersections of holomorphic
curves in dimension four.  Both play major roles in applications to
symplectic $4$-manifolds and contact $3$-manifolds that we will discuss
in later chapters.

Throughout this section, $J$ denotes a smooth almost complex
structure on $\CC^2$ with $J(0)=i$.  We shall also assume that
$J$ is tamed by the standard symplectic form $\omega\std$;
since $i$ is already $\omega\std$-tame and we will only really
be concerned with a neighborhood of the origin, this condition 
does not pose a restriction in practice.

\begin{thm}
\label{thm:positivity}
Suppose $u , v : B \to \CC^2$ are $J$-holomorphic curves
with an isolated intersection $u(0) = v(0) = 0$.  Then the
local intersection index satisfies
$$
\inter(u,0 ; v,0) \ge 1,
$$
with equality if and only if the intersection is transverse.
\end{thm}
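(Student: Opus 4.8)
The plan is to reduce the statement to two explicit model computations, using the homotopy invariance of the local intersection index (Exercise~\ref{EX:independent}) together with the observation that for $J$-holomorphic discs $u,v:B\to\CC^2$ the index $\inter(u,0;v,0)$ equals the local mapping degree at $(0,0)$ of the difference map $(z,w)\mapsto u(z)-v(w)$. In particular $\inter(u,0;v,0)$ is unchanged under \emph{any} homotopies $u^\tau,v^\tau$ (not necessarily through $J$-holomorphic maps) for which $u^\tau(\p\bar B_\epsilon)\cap v^\tau(\bar B_\epsilon)=u^\tau(\bar B_\epsilon)\cap v^\tau(\p\bar B_\epsilon)=\emptyset$ throughout. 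Let $T_u,T_v\subset\CC^2$ be the tangent lines and $k_u-1,k_v-1$ the critical orders given by Proposition~\ref{prop:tangent}; the argument splits according to whether $T_u=T_v$.

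First suppose $T_u\neq T_v$. By the similarity principle (Exercise~\ref{EX:Jvaries}, with Remark~\ref{remark:smoothness} supplying smoothness since $u,v$ are smooth) we may write $u(z)=\Phi_u(z)\,z^{k_u}g_u(z)$ and $v(z)=\Phi_v(z)\,z^{k_v}g_v(z)$ with $\Phi_u,\Phi_v$ smooth, $\Phi_u(0)=\Phi_v(0)=\1$, $g_u,g_v$ holomorphic, $g_u(0)$ spanning $T_u$ and $g_v(0)$ spanning $T_v$. Contracting $\Phi_u,\Phi_v$ to $\1$ and $g_u,g_v$ to their values at $0$, then applying the orientation-preserving complex-linear change of coordinates on $\CC^2$ taking $T_u$ to $\CC\times\{0\}$ and $T_v$ to $\{0\}\times\CC$, turns the difference map into the product map $(z,w)\mapsto(z^{k_u},-w^{k_v})$; these homotopies are admissible because along them $u^\tau(z)$ and $v^\tau(w)$ stay near the directions $T_u$ and $T_v$ with sizes of order $|z|^{k_u}$ and $|w|^{k_v}$, so on a small sphere, where at least one of $|z|,|w|$ is bounded away from $0$, they cannot coincide since $T_u\cap T_v=\{0\}$. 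Hence $\inter(u,0;v,0)=\deg_0\bigl((z,w)\mapsto(z^{k_u},-w^{k_v})\bigr)=k_uk_v\geq 1$, with equality precisely when $k_u=k_v=1$.

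Now suppose $T_u=T_v$. I would first reduce to equal critical orders: letting $q$ be the least common multiple of $k_u$ and $k_v$, writing $q=k_um_u=k_vm_v$, and replacing $u,v$ by $z\mapsto u(z^{m_u})$ and $z\mapsto v(z^{m_v})$, we obtain curves of equal critical order $q-1$ and common tangent line; by perturbing the original $u,v$ to transverse maps whose intersections lie off $0$ and noting that an $m$-fold power map is an orientation-preserving local diffeomorphism away from its branch point, one sees this operation multiplies $\inter$ by the positive integer $m_um_v$, so it suffices to treat $k_u=k_v=:q$. Applying the representation formula of Theorem~\ref{thm:representation} — all of whose coordinate changes (the complex-linear one on the target, the normalization making $z\mapsto(z,0)$ $J$-holomorphic, and the domain reparametrizations, which have differential $\1$ at $0$) may be taken orientation-preserving — we may assume $u(z)=(z^q,\hat u(z))$ and $v(w)=(w^q,\hat v(w))$ with $\hat u,\hat v$ having vanishing derivatives at $0$ up to order $q$. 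An intersection $u(z)=v(w)$ forces $w=e^{2\pi i\ell/q}z$ for some $\ell\in\{0,\dots,q-1\}$ together with $\hat u(z)=\hat v_\ell(z)$, where $\hat v_\ell(z):=\hat v(e^{2\pi i\ell/q}z)$, and $v_\ell(w):=v(e^{2\pi i\ell/q}w)=(w^q,\hat v_\ell(w))$ is again a $J$-holomorphic curve already in normalized form with vanishing $q$-jet. Applying the difference part of Theorem~\ref{thm:representation} to each pair $(u,v_\ell)$, we learn that $\hat u-\hat v_\ell$ is either identically zero — which would make $u$ and $v$ have the same image near $0$, contradicting the hypothesis that the intersection is isolated — or has an isolated zero of the form $z^{q+\ell'}C'+|z|^{q+\ell'}r'(z)$ with $C'\neq 0$, hence winding number $\wind_0(\hat u-\hat v_\ell)=q+\ell'\geq q+1$. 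A degree computation that decomposes the zero set of the first component $z^q-w^q$ into its $q$ sheets $w=e^{2\pi i\ell/q}z$ then gives $$\inter(u,0;v,0)=\sum_{\ell=0}^{q-1}\wind_0(\hat u-\hat v_\ell)\geq q(q+1)\geq 2.$$

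Combining the cases: $\inter(u,0;v,0)\geq 1$ always. If $\inter(u,0;v,0)=1$ then we must be in the first case with $k_u=k_v=1$, so $u$ and $v$ are immersed at $0$ with distinct complex tangent lines; then $\im du(0)\oplus\im dv(0)=T_u\oplus T_v=\CC^2$, and the concatenation of the complex orientations of $T_u$ and $T_v$ is the complex orientation of $\CC^2$, so the intersection is transverse (with index $+1$). Conversely, transversality forces $\im du(0)$ and $\im dv(0)$ to be complementary; since each is either $\{0\}$ or a complex line, this means $k_u=k_v=1$ and $T_u\neq T_v$, whence $\inter(u,0;v,0)=1$. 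The main obstacle I anticipate is the degree-theoretic bookkeeping in the third paragraph: one must choose the perturbations of $u$ and $v$ compatibly with the $q$-sheeted relation $z^q=w^q$, verify that the perturbed intersections split sheet by sheet with none trapped at the origin and none double-counted, and keep careful track of signs — the price of having unwound $v$ into its $q$ rotated copies. (A byproduct worth recording is that $\inter(u,0;v,0)>1$ whenever $u$ and $v$ are tangent at $0$.)
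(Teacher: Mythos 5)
Your proof is correct and follows essentially the same route as the paper's: a homotopy to the split model $(z^{k_u},0)$ versus $(0,z^{k_v})$ when the tangent lines differ, and branched covers plus the representation formula, rotated sheets and winding numbers when they coincide (Propositions~\ref{prop:differentTangents} and~\ref{prop:sameTangent}). The only cosmetic differences are that you invoke the similarity principle directly in the distinct-tangent case and use the least common multiple of $k_u,k_v$ rather than their product $k_uk_v$ in the common-tangent case; the winding-number bookkeeping you flag as the main obstacle is exactly what the paper resolves via the perturbation $\hat u(z)+\epsilon$ and a generic choice of $\epsilon$.
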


Before proving the theorem, we would also like to formulate a similar 
result for singularities of a single
curve.  Recall that by Theorem~\ref{thm:injective}, every
nonconstant $J$-holomorphic curve is locally either injective
(perhaps with isolated critical points) or a branched cover
of an injective curve.  Since a nontrivial branched cover
necessarily has infinitely many self-intersections, we restrict
in the following statement to the locally injective case.
It will be most relevant in particular to curves that are
\emph{simple} in the sense of Definition~\ref{defn:simple}.

\begin{thm}
\label{thm:critical}
Suppose $u : B \to \CC^2$ is an injective $J$-holomorphic curve
with $u(0)=0$ and an isolated critical point $du(0)=0$.  Then there exists
an integer $\delta(u,0) > 0$, depending only on the germ of~$u$
near~$0$, such that for any $\rho > 0$,
one can find a smooth map $u_\epsilon : B \to \CC^2$
satisfying the following conditions:
\begin{enumerate}
\item $u_\epsilon$ is $C^\infty$-close to $u$ and matches $u$ outside
$B_\rho$ and at~$0$;
\item $u_\epsilon$ is a \emph{symplectic immersion} with respect to
the standard symplectic structure $\omega\std$, i.e.~it
satisfies $u_\epsilon^*\omega\std > 0$;
\item $u_\epsilon$ has finitely many self-intersections and satisfies
\begin{equation}
\label{eqn:deltaLocal}
\frac{1}{2} \sum_{(z,\zeta)} \inter(u_\epsilon,z ; u_\epsilon,\zeta) = \delta(u,0),
\end{equation}
where the sum ranges over all pairs $(z,\zeta) \in B \times B$
such that $z \ne \zeta$ and $u_\epsilon(z) = u_\epsilon(\zeta)$.\footnote{Notice
that each geometric double-point $u(z) = u(\zeta)$ appears
twice in the summation over pairs $(z,\zeta)$, hence the factor
of $1/2$ in \eqref{eqn:deltaLocal}.}
\end{enumerate}
\end{thm}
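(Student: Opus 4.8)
The strategy is to reduce everything to the local representation formula of Theorem~\ref{thm:representation} and then perturb the holomorphic pieces in a controlled way. First I would apply Theorem~\ref{thm:injective} and Theorem~\ref{thm:representation} to $u$ near~$0$: since $u$ is injective, after smooth coordinate changes on domain and target we may write $u(z) = (z^k, \hat u(z))$ with $k-1$ the critical order of $u$ at~$0$ and $\hat u(z) = z^{k+\ell_u} C_u + |z|^{k+\ell_u} r_u(z)$ (or $\hat u \equiv 0$, which is excluded here because $du(0)=0$ forces $k \ge 2$ while injectivity rules out $\hat u \equiv 0$ in the $k\geq 2$ case). The self-intersections of $u$ near~$0$ correspond to coincidences $\hat u_a(z) = \hat u_b(z)$ among the $k$ ``branches'' $u_a(z) := u(e^{2\pi i a/k} z) = (z^k, \hat u_a(z))$, $a \in \ZZ_k$, and by the representation formula each difference $\hat u_a - \hat u_b$ is either identically zero (impossible for $a\neq b$ by injectivity of $u$) or has the form $z^{m_{ab}} C_{ab} + |z|^{m_{ab}} r_{ab}(z)$ with $C_{ab} \neq 0$ and $m_{ab} > k$. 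So I would \emph{define}
\begin{equation}
\label{eqn:deltaDef}
\delta(u,0) := \frac{1}{2} \sum_{a \ne b \in \ZZ_k} (m_{ab} - k),
\end{equation}
a finite positive integer depending only on the germ, and the work is to show this number is realized by a perturbation of the stated kind.

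\textbf{Key steps.} The second step is the perturbation itself. I would perturb only in the $\CC^{n-1}=\CC$ factor, replacing $\hat u$ by $\hat u_\epsilon(z) := \hat u(z) + \epsilon\, \beta(z/\rho)\, p(z)$, where $\beta$ is a cutoff supported in $B_\rho$ equal to $1$ near~$0$ and $p(z)$ is a generic holomorphic polynomial of appropriate degree (above $k+\max \ell$) chosen so that, after the same reparametrizations, all the coincidence equations $\hat u_{\epsilon,a}(z) = \hat u_{\epsilon,b}(z)$ become transverse and have exactly $m_{ab}-k$ simple solutions near~$0$ (counted with sign all $+1$). This is the standard fact that a holomorphic function $z^m C + (\text{h.o.t.})$, perturbed by a generic polynomial of degree between $0$ and $m$, acquires exactly $m$ simple zeros, all of positive local index; the remainder terms $|z|^m r(z)$ do not affect the count for small $\epsilon$ and small $\rho$ by a degree-theoretic argument (Rouch\'e / the invariance in Exercise~\ref{EX:independent}). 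Because the perturbation is $C^\infty$-small and localized, $u_\epsilon$ matches $u$ outside $B_\rho$ and at~$0$, and since $u$ is an immersion away from~$0$ and $u^*\omega\std > 0$ there (tameness of $J$ by $\omega\std$, cf.~\eqref{eqn:area2}), a sufficiently small perturbation keeps $u_\epsilon^*\omega\std > 0$ on all of $B$; near~$0$ one checks directly that the perturbed $1$-jet is still symplectic using that the leading term $(kz^{k-1},0)$ dominates. That $u_\epsilon$ has only finitely many self-intersections follows since outside a small ball it agrees with the embedded map $u$, and inside the coincidence sets are now cut out transversely hence discrete.

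\textbf{Main obstacle.} The delicate point is verifying \eqref{eqn:deltaLocal} \emph{exactly}, i.e.~that the signed count of perturbed double points equals $\delta(u,0)$ and not merely some related quantity, and in particular that all contributions are positive so that the signed count agrees with the geometric count. Here I would argue that each coincidence locus $\{\hat u_{\epsilon,a} = \hat u_{\epsilon,b}\}$, being the zero set of a map $\CC \to \CC$ whose leading behavior is $z^{m_{ab}} C_{ab}$ perturbed holomorphically, contributes precisely $m_{ab}-k$ to the count, with each contribution $+1$ because near a simple zero of such a map the local picture is holomorphic to leading order and holomorphic maps have positive local degree; then summing over unordered pairs $\{a,b\}$ and dividing by $2$ (to pass from branch-pairs back to geometric double points, as in the footnote to the theorem) gives exactly \eqref{eqn:deltaDef}. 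The one genuine subtlety is the positivity of the \emph{local intersection index} $\inter(u_\epsilon,z;u_\epsilon,\zeta)$ at a transverse perturbed double point: this requires that the two tangent planes $\im du_\epsilon(z)$ and $\im du_\epsilon(\zeta)$ give a correctly oriented splitting of $T_0\CC^2$, which I would deduce from the fact that both planes are $C^\infty$-close to complex lines (since $J$ is tame and $u_\epsilon$ is $C^\infty$-close to the $J$-holomorphic $u$), and complex lines always intersect positively in a complex (hence in any tame) orientation. Once positivity of every contribution is established, independence of $\delta(u,0)$ from all choices follows from the homotopy invariance of the intersection count (Exercise~\ref{EX:independent}), completing the proof; the analogous positivity input is exactly what will also drive Theorem~\ref{thm:positivity}, whose proof I would run in parallel by applying the representation formula to the pair $(u,v)$ with a common or distinct tangent line.
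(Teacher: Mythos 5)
Your overall strategy---reduce to the representation formula of Theorem~\ref{thm:representation}, perturb only the second factor by a cutoff times a holomorphic term, count self-intersections via zeros of branch differences, and handle the symplectic condition by comparing tangent planes to complex lines---is the same as the paper's. But two of your key steps contain genuine errors.

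First, the combinatorics. Your formula $\delta(u,0)=\tfrac12\sum_{a\ne b}(m_{ab}-k)$ is wrong, and the error is checkable: for $u(z)=(z^3,z^5)$ one has $k=3$ and $m_{ab}=5$ for all $a\ne b$, so your formula gives $\tfrac12\cdot 6\cdot 2=6$, whereas the correct count (the classical $\delta$-invariant of the $(3,5)$-singularity, and what the construction actually produces) is~$4$. Two bookkeeping mistakes compound here. (i) A perturbation fixing the origin turns the order-$m_{ab}$ zero of $\hat{u}_a-\hat{u}_b$ into one trivial simple zero at $0$ plus $m_{ab}-1$ nontrivial ones, not $m_{ab}-k$: for your generic polynomial $p$, the lowest-order term of $p_a-p_b$ surviving the cancellation of degrees divisible by $k$ is the linear one, so the origin contributes a single simple zero. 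You never justify subtracting $k$. (ii) Dividing by $2$ does not convert branch-pair coincidences into geometric double points: a double point $u_\epsilon(z)=u_\epsilon(\zeta)$ with $\zeta=e^{2\pi ij/k}z$ arises from $k$ distinct solutions $w$ of the family of equations $\hat{u}_{\epsilon,a}=\hat{u}_{\epsilon,b}$ (one for each $a$, with $b=a+j$ and $w=e^{-2\pi ia/k}z$), so the overcounting factor is $2k$, not~$2$. The paper avoids both issues by indexing self-intersections directly by $j\in\{1,\dots,k-1\}$ and a single variable~$z$, arriving at $\delta(u,0)=\tfrac12\sum_{j=1}^{k-1}(k+\ell_j-1)$.

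Second, the symplectic immersion condition near $z=0$. You argue that the tangent planes of $u_\epsilon$ stay close to complex lines because $u_\epsilon$ is $C^\infty$-close to the $J$-holomorphic~$u$; but $du(0)=0$, so near the critical point the direction of $\im du_\epsilon(z)$ is governed by terms that a $C^\infty$-small perturbation changes completely---this is exactly the subtlety the paper flags as the delicate point. ``The leading term dominates'' is the right intuition, but it must be made quantitative and \emph{uniform in~$\epsilon$}: the paper compares $du_\epsilon$ with the derivative of the holomorphic polynomial $P_\epsilon(z)=(z^k,\,z^{k+\ell}C+\epsilon z)$, proves $\|du_\epsilon(z)-dP_\epsilon(z)\|\le c_1|z|^{k+\ell-1}$ and $\Inj(dP_\epsilon(z))\ge c_2|z|^{k-1}$ with constants independent of~$\epsilon$, and concludes via Lemmas~\ref{lemma:Inj} and~\ref{lemma:nearlyComplex} that $\dist\left(\im dP_\epsilon(z),\im du_\epsilon(z)\right)\le c_3|z|^{\ell}$ on a fixed small ball. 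Without such an estimate your claim that $u_\epsilon^*\omega\std>0$ near~$0$ is unsupported. (By contrast, your concern that every local intersection index be $+1$ is more than is needed: the sum in \eqref{eqn:deltaLocal} is algebraic, and the paper only computes the signed count.)
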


\begin{remark}
Our proof will show in fact that the tangent spaces spanned by the
perturbation $u_\epsilon$ can be arranged to be uniformly close 
to $i$-complex subspaces (or equivalently $J$-complex subspaces,
since $J$ and $i$ may also be assumed uniformly close in a small
enough neighborhood of~$0$).  This implies that it is a symplectic immersion,
since the condition of being a symplectic subspace
is open.  In practice, the crucial point in applications will be 
that the complex structure on the bundle $(u_\epsilon^*T\CC^2,J)$ 
admits a homotopy supported near~$0$ to
a new complex structure for which
$\im d u_\epsilon$ becomes a complex subbundle.
\end{remark}

As a prelude to the proofs of the two theorems above, the following
exercise should provide a concrete feeling for what is involved.

\begin{exercise}
\label{EX:computeInter}
Consider the intersecting holomorphic maps $u , v : \CC \to \CC^2$ defined by
$$
u(z) = (z^3, z^5), \qquad v(z) = (z^4, z^6).
$$
\begin{enumerate}
\renewcommand{\labelenumi}{(\alph{enumi})}
\item
Show that $u$ admits a $C^\infty$-small perturbation to a
map $u_\epsilon$ such that $u_\epsilon$ and $v$ have exactly~$18$
intersections in a neighbourhood of the origin, all transverse and positive.
\item 
Show that for any neighbourhood $\uU \subset \CC$ of~$0$,
$u$ admits a $C^\infty$-small perturbation to an
\emph{immersion} $u_\epsilon$ such that
$$
\frac{1}{2}\# \{ (z,\zeta) \in \uU \times \uU\ |\ u_\epsilon(z) =
u_\epsilon(\zeta), \ z \ne \zeta \} = 10.
$$
\end{enumerate}
\end{exercise}

We now prove Theorem~\ref{thm:positivity}.  Recall from
\S\ref{sec:intersections} that even if $u$ and $v$ have
critical points at~$0$, they both have well-defined
tangent spaces and critical orders.
We first prove the theorem in the case
where the tangent spaces at the intersection are distinct.

\begin{prop}
\label{prop:differentTangents}
Under the assumptions of Theorem~\ref{thm:positivity},
suppose $u$ and $v$ have distinct tangent spaces
$T_u \ne T_v \subset \CC^2$ at the intersection, with
critical orders $k_u - 1$ and $k_v - 1$ respectively.
Then
$$
\inter(u,0 ; v,0) = k_u k_v.
$$
In particular, the intersection index is positive, and equals~$1$
if and only if the intersection is transverse.
\end{prop}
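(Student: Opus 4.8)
The plan is to use homotopy invariance of the local intersection index (Exercise~\ref{EX:independent}) to reduce to an explicit holomorphic model, and then to identify the index of that model with the local topological degree of a holomorphic map.

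First I would record a representation of $u$ and $v$ near the origin coming from the similarity principle, exactly as in the proof of Proposition~\ref{prop:tangent}. Since $u$ is smooth (elliptic regularity) it satisfies a complex-linear Cauchy--Riemann type equation $\p_s u + \bar J(z)\p_t u = 0$ with $\bar J(z):=J(u(z))$ smooth, so by Remark~\ref{remark:smoothness} we may write $u(z) = z^{k_u} w_u(z)$ on a small ball $B_\delta$, where $w_u$ is smooth and $w_u(0)$ spans $T_u$; likewise $v(z) = z^{k_v} w_v(z)$ with $w_v(0)$ spanning $T_v$. As $T_u \ne T_v$, the vectors $w_u(0)$ and $w_v(0)$ are $\CC$-linearly independent. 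Consider the smooth homotopies
\[
u_\tau(z) = z^{k_u}\bigl[(1-\tau)w_u(z) + \tau\,w_u(0)\bigr], \qquad
v_\tau(z) = z^{k_v}\bigl[(1-\tau)w_v(z) + \tau\,w_v(0)\bigr], \quad \tau \in [0,1].
\]
For $\delta$ small the bracketed factors stay in small neighbourhoods of $w_u(0)$ and $w_v(0)$, hence stay $\CC$-linearly independent, and arguing as in Exercise~\ref{EX:differentTangents} (project to $\CC P^1$) any solution of $u_\tau(z) = v_\tau(\zeta)$ with $z,\zeta \in B_\delta$ must have $z=\zeta=0$, uniformly in $\tau$. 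Thus the boundary hypotheses of Exercise~\ref{EX:independent} hold with $M_1=M_2=\bar B_\delta$, $M=\CC^2$, and the endpoint intersections are isolated, so $\inter(u,0;v,0) = \inter(u_1,0;v_1,0)$.

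Next I would compute the model index. A complex-linear change of coordinates on $\CC^2$ (orientation-preserving, hence index-preserving) sends $w_u(0)$ and $w_v(0)$ to $(1,0)$ and $(0,1)$, so $u_1(z) = (z^{k_u},0)$ and $v_1(z) = (0,z^{k_v})$. The local intersection index of these parametrized curves equals the local topological degree at the origin of
\[
F : B_\delta \times B_\delta \to \CC^2, \qquad F(z,\zeta) = u_1(z) - v_1(\zeta) = (z^{k_u}, -\zeta^{k_v});
\]
the usual orientation bookkeeping in passing from $\inter$ to $\deg F$ introduces a sign $(-1)^{\dim_\RR M_2} = (-1)^2 = 1$, so there is no discrepancy. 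Now $F$ is holomorphic with $F^{-1}(0) = \{(0,0)\}$ near the origin, so its local degree is positive and equals the number of preimages of a generic small regular value: for $(a,b)$ small with $a,b\ne 0$ there are $k_u$ solutions of $z^{k_u}=a$ and $k_v$ of $-\zeta^{k_v}=b$, giving $k_u k_v$ transverse preimages, each counted $+1$ since $F$ is holomorphic. Hence $\inter(u,0;v,0) = k_u k_v$.

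For the equality statement, $k_u k_v = 1$ if and only if $k_u = k_v = 1$, i.e.\ both $u$ and $v$ are immersed at $0$; then $T_u = \im du(0)$ and $T_v = \im dv(0)$ are distinct complex lines in $\CC^2$ and hence complementary, so the intersection is transverse. Conversely, transversality forces $du(0)$ and $dv(0)$ to be injective, i.e.\ $k_u = k_v = 1$. I expect no single step to be a real obstacle; the two points requiring care are the uniform-in-$\tau$ verification that the homotopy keeps the intersection isolated (so that Exercise~\ref{EX:independent} genuinely applies), and the orientation bookkeeping relating $\inter(u,0;v,0)$ to the local degree of $F$.
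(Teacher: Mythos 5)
Your argument is correct. It follows the same overall strategy as the paper's proof (homotope to a split monomial model using invariance of the local intersection index, then count), but it differs in a meaningful way in its inputs. The paper starts from Theorem~\ref{thm:representation} to put $u$ in the form $(z^{k_u},|z|^{k_u+1}f(z))$ in coordinates adapted to $u$, kills the second component by the homotopy $\tau\mapsto(z^{k_u},\tau|z|^{k_u+1}f(z))$, repeats this for $v$, and then perturbs the model curves $(z^{k_u}+\epsilon,0)$, $(0,z^{k_v}+\epsilon)$ to count $k_uk_v$ transverse positive intersections. You instead use only the similarity-principle factorization $u(z)=z^{k_u}w_u(z)$ from the proof of Proposition~\ref{prop:tangent} (with $w_u$ smooth via Remark~\ref{remark:smoothness}), freeze the direction vector by the homotopy $\tau\mapsto z^{k_u}\bigl[(1-\tau)w_u(z)+\tau w_u(0)\bigr]$, and evaluate the model index as the local degree of the holomorphic map $(z,\zeta)\mapsto(z^{k_u},-\zeta^{k_v})$; the boundary control via projection to $\CC P^1$ is the same in both arguments, and your sign bookkeeping $(-1)^{\dim_\RR M_2}=1$ is right. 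What your route buys is economy: for the distinct-tangent case the full representation formula is overkill, and the similarity principle suffices. What the paper's route buys is uniformity: the same normal form $(z^k,\hat u(z))$ with the controlled remainder $\hat u(z)-\hat v(z)=z^{m}C+|z|^m r(z)$ is indispensable in Proposition~\ref{prop:sameTangent}, where the tangent spaces coincide and your linear-independence argument for isolating the intersection along the homotopy has no analogue. You also supply the transversality equivalence ($k_uk_v=1$ iff both curves are immersed iff the complex tangent lines are complementary), which the paper leaves implicit; that part is fine as written.
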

\begin{proof}
By Theorem~\ref{thm:representation}, we can smoothly change coordinates 
such that
without loss of generality, $u(z) = \left(z^{k_u} , |z|^{k_u+1} f(z)\right)$ for
some bounded function $f : B \to \CC$.  The condition of distinct
tangent spaces implies (cf.~Exercise~\ref{EX:differentTangents})
that if $\pi : \CC^2 \setminus \{0\} \to \CC P^1$
denotes the natural projection, the images of the maps
$$
\pi \circ u|_{B_\epsilon \setminus\{0\}} , \pi \circ v|_{B_\epsilon \setminus \{0\}} :
B_\epsilon \setminus \{0\} \to \CC P^1
$$
lie in arbitrarily small neighborhoods of two distinct points for small
$\epsilon > 0$.  This remains true if we replace $u$ by any of the maps
$$
u_\tau : B \to \CC^n : z \mapsto \left(z^{k_u} , \tau |z|^{k_u+1} f(z)\right)
$$
for $\tau \in [0,1]$.  Thus by homotopy invariance of the local intersection
index (Exercise~\ref{EX:independent}), $\inter(u,0 ; v,0) = \inter(u_0,0;v,0)$.
After applying the same homotopy argument in different 
coordinates adapted to $v$ and then choosing new coordinates so
that the tangent spaces of $u$ and $v$ match $\CC \times \{0\}$
and $\{0\} \times \CC$ respectively, the problem is reduced to computing
$\inter(u_0,0 ; v_0,0)$, where
$$
u_0(z) = \left( z^{k_u},0 \right), \qquad
v_0(z) = \left( 0,z^{k_v} \right).
$$
Choose $\epsilon > 0$ and perturb these maps to 
$\left(z^{k_u} + \epsilon,0 \right)$ and
$\left( 0,z^{k_v} + \epsilon \right)$ respectively.  Both are then
holomorphic for the standard complex structure on $\CC^2$ and they have
exactly $k_u k_v$ intersections, all transverse.
\end{proof}

\begin{exercise}
\label{EX:branched}
Suppose $u , v : B \to \CC^2$ are $J$-holomorphic curves with an
isolated intersection $u(0) = v(0) = 0$, and for $k,\ell \in \NN$,
define the $J$-holomorphic branched covers
$u^k , v^\ell : B \to \CC^2$ by
$$
u^k(z) := u(z^k), \qquad v^\ell(z) := v(z^\ell).
$$
Show that $\inter(u^k,0 ; v^\ell,0) = k \ell \cdot \inter(u,0 ; v,0)$.
\end{exercise}

The remaining cases of Theorem~\ref{thm:positivity} are covered by
the following result, in which the intersection can never be
transverse.

\begin{prop}
\label{prop:sameTangent}
Under the assumptions of Theorem~\ref{thm:positivity},
suppose $u$ and $v$ have identical tangent spaces
$T_u = T_v \subset \CC^2$ at the intersection, with
critical orders $k_u - 1$ and $k_v - 1$ respectively.
Then
$$
\inter(u,0 ; v,0) \ge k_u k_v + 1.
$$
\end{prop}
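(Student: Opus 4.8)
The plan is to reduce to the representation formula of Theorem~\ref{thm:representation} and then handle the multiple-cover complications by a branched-cover argument as in the proof of Theorem~\ref{thm:intersections}. First I would pass to reparametrizations that unify the critical orders: set $q$ equal to the least common multiple of $k_u$ and $k_v$, write $q = k_u m_u = k_v m_v$, and consider the branched covers $\tilde{u}(z) := u(z^{m_u})$ and $\tilde{v}(z) := v(z^{m_v})$, which now have the same tangent space $T_u = T_v$ and the same critical order $q-1$. By Exercise~\ref{EX:branched}, $\inter(\tilde{u},0;\tilde{v},0) = m_u m_v \cdot \inter(u,0;v,0)$, so it suffices to prove $\inter(\tilde{u},0;\tilde{v},0) \ge q^2 m_u m_v + m_u m_v$; but since $q = k_u m_u$ we have $q^2 = k_u m_u \cdot k_v m_v \cdot \frac{k_u k_v}{q^{?}}$—actually it is cleaner to observe directly that $k_u k_v m_u m_v = q^2$, so the claimed bound $\inter(u,0;v,0) \ge k_u k_v + 1$ is equivalent to $\inter(\tilde{u},0;\tilde{v},0) \ge q^2 + m_u m_v$, and in particular it follows from the weaker assertion $\inter(\tilde{u},0;\tilde{v},0) \ge q^2 + 1$ together with a count of how many preimage pairs each geometric intersection of $u$ with $v$ contributes. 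So I would first prove the statement in the case $k_u = k_v =: k$ (i.e.\ $m_u = m_v = 1$, equal critical orders), namely $\inter(u,0;v,0) \ge k^2 + 1$.

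For that core case, apply Theorem~\ref{thm:representation} to write, in suitable smooth coordinates on domains and target, $u(z) = (z^k, \hat u(z))$ and $v(z) = (z^k, \hat v(z))$ with $\hat u, \hat v$ having vanishing derivatives up to order $k$ at $0$. Since the intersection is isolated, $\hat u \not\equiv \hat v$, so the theorem gives $\hat u(z) - \hat v(z) = z^{k+\ell'} C' + |z|^{k+\ell'} r'(z)$ with $C' \ne 0$, $\ell' \ge 1$, and $r'(z) \to 0$. Now intersections of $u$ with $v$ near $0$ correspond to pairs $(z,\zeta)$ with $z^k = \zeta^k$ and $\hat u(z) = \hat v(\zeta)$; writing $\zeta = e^{2\pi i j/k} z$ for $j \in \{0,\dots,k-1\}$ this becomes $\hat u(z) = \hat v_j(z)$ where $v_j(z) = v(e^{2\pi i j/k} z) = (z^k, \hat v_j(z))$. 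For $j \ne 0$ the two curves $u$ and $v_j$ have the same tangent space and critical order, so again by Theorem~\ref{thm:representation} either $\hat u \equiv \hat v_j$—excluded, since the intersection of $u$ and $v$ is isolated and $v$ is a genuine map, one checks this forces a non-isolated intersection or a contradiction via injectivity-type reasoning—or $\hat u - \hat v_j = z^{k + \ell_j} C_j + |z|^{k+\ell_j} r_j(z)$ with $C_j \ne 0$, so $\hat u - \hat v_j$ has an isolated zero at $0$ of a well-defined positive order $\ge k+1$, whose local mapping degree (as a map $B \to \CC$) equals $k + \ell_j \ge k+1$ and is automatically positive because the leading term is holomorphic. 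Then I would compute $\inter(u,0;v,0)$ as a sum over $j$ of these local degrees: perturb $u$ slightly (homotopy invariance, Exercise~\ref{EX:independent}) so that each $\hat u - \hat v_j$ is replaced by a function with only simple zeroes, the number of which is the leading-order degree $k + \ell_j \ge k+1$; summing over $j = 0,\dots,k-1$ gives $\inter(u,0;v,0) = \sum_{j=0}^{k-1}(k+\ell_j) \ge k(k+1) = k^2 + k \ge k^2 + 1$. (The bookkeeping that this signed count of perturbed simple zeroes is exactly $\inter(u,0;v,0)$, and that all contributions are positive, is exactly parallel to the end of the proof of Theorem~\ref{thm:babyIntersections}.)

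The general case $k_u \ne k_v$ is then obtained by the branched-cover reduction: with $\tilde u, \tilde v$ as above both having critical order $q-1$, the core case gives $\inter(\tilde u, 0; \tilde v, 0) \ge q^2 + q \ge q^2 + m_u m_v$, hence $\inter(u,0;v,0) = \inter(\tilde u,0;\tilde v,0)/(m_u m_v) \ge q^2/(m_u m_v) + 1 = k_u k_v + 1$, using $q^2 = k_u k_v m_u m_v$. I would also remark that the intersection is never transverse here, since transversality would force $\inter = 1$ and $k_u k_v + 1 \ge 2$; this matches the case-split with Proposition~\ref{prop:differentTangents} to complete Theorem~\ref{thm:positivity}. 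The main obstacle I expect is the careful justification that summing the leading-order local degrees of the $k$ functions $\hat u - \hat v_j$ genuinely computes $\inter(u,0;v,0)$ with the correct multiplicities and signs—one has to be sure no intersections are created or lost under the perturbation away from $0$, that each geometric double point of the perturbed pair is transverse and positive (which follows because each $\hat u - \hat v_j$ looks like $z^{k+\ell_j}C_j$ to leading order, a holomorphic model), and that the identification $\zeta = e^{2\pi i j/k}z$ does not overcount; this is where the representation formula's control of remainders, rather than just Taylor's theorem, is essential, exactly as in the proof of Theorem~\ref{thm:intersections}.
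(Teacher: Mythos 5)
Your proposal is correct and follows essentially the same route as the paper: equalize the critical orders by branched covers, apply Theorem~\ref{thm:representation} to the branches $\hat{u}-\hat{v}_j$, count the zeroes of order $\ge m+1$ contributed by each $j$, and divide out the covering multiplicities via Exercise~\ref{EX:branched}. The only (harmless) cosmetic difference is that you take covers of degrees $m_u = q/k_u$, $m_v = q/k_v$ with $q = \operatorname{lcm}(k_u,k_v)$ rather than the paper's simpler choice of degrees $k_v$ and $k_u$ (common critical order $k_uk_v - 1$), which costs you the extra observation $k_uk_v \ge \operatorname{lcm}(k_u,k_v)$ but changes nothing essential.
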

\begin{proof}
Since $k_u$ and $k_v$ may be different, we first replace $u$ and $v$ with
suitable branched covers so that their critical orders become the same: let
$$
m = k_u k_v \in \NN,
$$
and define $u', v' : B \to \CC^2$ by
$$
u'(z) := u(z^{k_v}), \qquad v'(z) := v(z^{k_u}),
$$
so that in particular $u'$ and $v'$ both have critical order $m-1$
at the intersection $u'(0) = v'(0)=0$, as well as matching tangent spaces.
Now by Theorem~\ref{thm:representation}, we find new choices of local
coordinates in $B$ and $\CC^2$ near~$0$ such that
$$
u'(z) = (z^m,\hat{u}(z)), \qquad v'(z) = (z^m,\hat{v}(z))
$$
for $z \in B_\rho$, with $\rho > 0$ 
and some smooth functions $\hat{u} , \hat{v} : B_\rho \to \CC$ with
vanishing derivatives up to order~$m$ at~$0$.  For each $j=0,\ldots,m-1$, there
are also $J$-holomorphic disks (in general with different complex structures
on their domains) $v_j' : B_\rho \to \CC^2$ defined by
$$
v_j'(z) := v'( e^{2\pi i j/m} z) = (z^m, \hat{v}_j(z)),
\quad \text{ where }\quad \hat{v}_j(z) = \hat{v}( e^{2 \pi i j/m} z).
$$
If $\hat{u} - \hat{v}_j$ is identically zero for some $j=0,\ldots,m-1$,
then we have
$$
u'(z) = v'( e^{2\pi i j/m} z) \quad\text{ for all $z \in B_\rho$},
$$
implying that $u'$ and $v'$ have identical images
on some neighborhood of the intersection, in which case so do $u$ and~$v$;
this is impossible since the intersection was assumed isolated.
Now
Theorem~\ref{thm:representation} gives for each $j=0,\ldots,m-1$ the formula
\begin{equation}
\label{eqn:windinguv}
\hat{u}(z) - \hat{v}_j(z) = z^{m + \ell_j} C_j + |z|^{m + \ell_j} r_j(z),
\end{equation}
where $C_j \in \CC \setminus \{0\}$, $\ell_j \in \NN$ and 
$r_j(z) \in \CC$ is a function with $r_j(z) \to 0$ as $z \to 0$.  
We can now compute $\inter(u',0 ; v',0)$ by choosing
$\epsilon \in \CC \setminus \{0\}$ close to~$0$ and defining the perturbation
$$
u'_\epsilon(z) := (z^m , \hat{u}(z) + \epsilon).
$$
This curve does not intersect $v'$ at $z=0$ since $\epsilon \ne 0$.
If $u'_\epsilon(z) = v'(\zeta)$, then $z^m = \zeta^m$, hence
$\zeta = e^{2\pi i j/m} z$ for some $j=0,\ldots,m-1$, and equality in the
second factor then implies
\begin{equation}
\label{eqn:that}
\hat{v}_j(z) - \hat{u}(z) = \epsilon.
\end{equation}
By \eqref{eqn:windinguv}, the zero of $\hat{v}_j(z) - \hat{u}(z)$ at $z=0$
has order $m + \ell_j \ge m + 1$, thus if $\epsilon \ne 0$ is sufficiently close
to~$0$
and chosen generically so that it is a regular value of $\hat{v}_j - \hat{u}$,
we conclude that \eqref{eqn:that} has exactly $m + \ell_j$ solutions
near $z=0$, all of them simple (positive or negative) zeroes 
of $\hat{v}_j - \hat{u} - \epsilon$
and thus corresponding to transverse (positive or negative) intersections of
$u'$ with $v'$.  Adding these up with the correct signs 
for all choices of $j=0,\ldots,m-1$, we conclude
$$
\inter(u',0 ; v',0) = \sum_{j=0}^{m-1} (m + \ell_j) \ge
m (m+1) = k_u k_v (k_u k_v + 1).
$$
Exercise~\ref{EX:branched} then implies $\inter(u,0 ; v,0) \ge k_u k_v + 1$.
\end{proof}

\begin{exercise}
\label{EX:noUpperBound}
Find examples to show that in the situation described
in Proposition~\ref{prop:sameTangent},
the bound $\inter(u,0 ; v,0) \ge k_u k_v + 1$ is sharp, and
there is no similar \emph{upper} bound for $\inter(u,0 ; v,0)$ in terms
of $k_u$ and~$k_v$.  \textsl{Hint: Set $J \equiv i$ and consider
holomorphic maps of the form $z \mapsto (z^k,z^{k+\ell})$.}
\end{exercise}

The proof of Theorem~\ref{thm:critical} will be similar, but there are some
additional subtleties involved in proving that the immersed perturbation
$u_\epsilon$ is \emph{symplectically} immersed---intuitively this should
be unsurprising since $\omega\std$ tames $J$ and the symplectic
subspace condition is open, but the change in tangent subspaces
cannot be understood as a $C^0$-small perturbation due to the singularity
of $du$ at~$0$.  Our strategy will be to show that the tangent spaces
spanned by $du_\epsilon$ are in fact $C^0$-close to the tangent spaces
spanned by another map which is a \emph{holomorphic} immersion.
In order to make this notion precise, we need a practical way of measuring
the ``distance'' between two subspaces of a vector space, in particular for
the case when both subspaces arise as images of injective linear maps.

\begin{defn}
\label{defn:distSubspaces}
Fix the standard Euclidean norm on $\RR^n$.  Given two subspaces
$V, W \subset \RR^n$ of the same positive dimension, define
$$
\dist(V,W) := \max_{v \in V, |v|=1} \dist(v,W) :=
\max_{v \in V, |v|=1} \min_{w \in W} |v - w|.
$$
\end{defn}
\begin{defn}
\label{defn:Inj}
The \defin{injectivity modulus} of a linear map $A : \RR^k \to \RR^n$ is
$$
\Inj(A) = \min_{v \in \RR^k \setminus \{0\}} \frac{|Av|}{|v|} \ge 0.
$$
\end{defn}
Clearly $\Inj(A) > 0$ if and only if $A$ is injective.

\begin{lemma}
\label{lemma:Inj}
For any pair of injective linear maps $A , B : \RR^k \to \RR^n$,
$$
\dist\left(\im A, \im B \right) \le \frac{\| A - B \|}{\Inj(A)}.
$$
\end{lemma}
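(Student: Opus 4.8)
The plan is to unwind the definitions and exhibit, for each unit vector in $\im A$, an explicit nearby vector in $\im B$. Since both maps are injective, each $v \in \im A$ is of the form $v = Ax$ for a \emph{unique} $x \in \RR^k$, and the obvious guess for a point of $\im B$ close to $v$ is $Bx$. The whole argument then reduces to two elementary estimates: one controlling $|Ax - Bx|$ by the operator norm $\|A-B\|$, and one controlling $|x|$ by $|Ax| = |v| = 1$ via the injectivity modulus.

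Concretely, first I would fix $v \in \im A$ with $|v| = 1$ and write $v = Ax$. By Definition~\ref{defn:Inj}, $|Ax| \ge \Inj(A)\,|x|$, and since $A$ is injective we have $\Inj(A) > 0$, so $|x| \le |Ax| / \Inj(A) = 1/\Inj(A)$. Next, $Bx \in \im B$, so
$$
\dist(v, \im B) \le |v - Bx| = |Ax - Bx| = |(A-B)x| \le \|A - B\|\,|x| \le \frac{\|A-B\|}{\Inj(A)}.
$$
Taking the maximum over all unit vectors $v \in \im A$ and invoking Definition~\ref{defn:distSubspaces} yields $\dist(\im A, \im B) \le \|A-B\| / \Inj(A)$, which is the claim. (Note that $\im A$ and $\im B$ indeed have the same dimension $k$, since both maps are injective, so the quantity $\dist(\im A,\im B)$ is well defined as in Definition~\ref{defn:distSubspaces}.)

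There is no real obstacle here; the only point worth a moment's care is that the estimate is genuinely asymmetric in $A$ and $B$ — the bound involves $\Inj(A)$ rather than $\Inj(B)$ — which is exactly why the candidate point in $\im B$ must be taken to be $Bx$ with $x$ determined by $A$, and not the other way around. This asymmetry is harmless for the intended application (Lemma~\ref{lemma:integrals}-style perturbation arguments, and in particular the use in the proof of Theorem~\ref{thm:critical}), where one always perturbs away from a fixed reference map whose injectivity modulus is under control.
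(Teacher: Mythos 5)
Your proof is correct and is essentially identical to the paper's: the paper also takes an arbitrary unit vector $Av/|Av| \in \im A$, compares it with $B(v/|Av|) \in \im B$, and bounds $|v|/|Av|$ by $1/\Inj(A)$, which is exactly your estimate $|x|\le 1/\Inj(A)$. Nothing further is needed.
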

\begin{proof}
Pick any nonzero vector $v \in \RR^n$.  Then $Av \ne 0$ since $A$ is injective,
and we have
\begin{equation*}
\begin{split}
\dist\left(\frac{Av}{|Av|} , \im B \right) &= 
\min_{w \in \RR^k} \left| A \frac{v}{|Av|} - Bw \right| \le 
\left| A \frac{v}{|Av|} - B \frac{v}{|Av|} \right| \\
&\le \| A - B \| \frac{|v|}{|Av|} \le \frac{\| A - B \|}{\Inj(A)}.
\end{split}
\end{equation*}
\end{proof}

\begin{lemma}
\label{lemma:nearlyComplex}
There exists $\epsilon > 0$ such that if $V \subset \CC^2$ is a
complex $1$-dimensional subspace, then all real $2$-dimensional
subspaces $W \subset \CC^2$ satisfying $\dist(V,W) < \epsilon$ are
$\omega\std$-symplectic.
\end{lemma}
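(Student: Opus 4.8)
The plan is to prove this as a quantitative version of the openness of the symplectic condition on the Grassmannian of real $2$-planes in $\CC^2$. One could argue softly---the set of $\omega\std$-symplectic $2$-planes is open, it contains the compact set $\CC P^1$ of complex lines (each complex line $V$ is symplectic, since $\omega\std(v,iv)=|v|^2>0$), and $W\mapsto\dist(V,W)$ is continuous, so a uniform $\epsilon$ exists---but it is just as quick to give an explicit estimate, and I will take that route.

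First I would record the elementary bound $|\omega\std(X,Y)|\le |X|\,|Y|$ for all $X,Y\in\CC^2$, which follows from $\omega\std(X,Y)=\Re\langle iX,Y\rangle$, the Cauchy--Schwarz inequality, and $|iX|=|X|$. I would also invoke the case $n=1$ of Exercise~\ref{ex:volume}: a $2$-form on a real $2$-dimensional space is nondegenerate as soon as it is not identically zero. Hence it suffices to exhibit, for every $W$ with $\dist(V,W)$ small, a single pair of vectors in $W$ on which $\omega\std$ does not vanish.

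Then I would fix a complex line $V$ and a unit vector $v\in V$, so that $iv\in V$ is also a unit vector. Since $v$ and $iv$ are unit vectors of $V$, the definition of $\dist$ provides $w,w'\in W$ with $|v-w|<\epsilon$ and $|iv-w'|<\epsilon$. The key identity is the bilinear expansion
\begin{equation*}
\omega\std(w,w')=\omega\std(v,iv)+\omega\std(v,w'-iv)+\omega\std(w-v,iv)+\omega\std(w-v,w'-iv),
\end{equation*}
in which $\omega\std(v,iv)=\Re\langle iv,iv\rangle=|v|^2=1$, while the three remaining terms are bounded in absolute value by $\epsilon$, $\epsilon$, and $\epsilon^2$ respectively, using the bound above. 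Thus $\omega\std(w,w')\ge 1-2\epsilon-\epsilon^2$, which is positive provided $\epsilon<\sqrt2-1$; fixing such an $\epsilon$ (for instance $\epsilon=\tfrac14$) finishes the proof, since $\omega\std(w,w')\ne 0$ forces $w,w'$ to be linearly independent---hence to span the $2$-plane $W$---and then $\omega\std|_W$ is a nonzero, and therefore nondegenerate, $2$-form, so $W$ is $\omega\std$-symplectic.

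I do not expect a genuine obstacle here: the proof is short bookkeeping, and the one slightly slick observation is that the nonvanishing of $\omega\std$ on the single pair $(w,w')$ simultaneously yields linear independence of $w,w'$ and symplecticity of $W$, which lets me avoid any separate estimate showing that the "nearest-point" correspondence $V\to W$ is close to an isomorphism (and hence lets me use $\dist(V,W)$, which is only one-sided, applied just to the two specific unit vectors $v,iv$). The only things to keep straight are the normalization of $\omega\std$ against the Euclidean metric, so that $\omega\std(v,iv)=1$ for unit $v$, and the sign of the root in $\epsilon<\sqrt2-1$.
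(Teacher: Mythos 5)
Your proof is correct. The paper itself leaves this lemma as an exercise, with the hint ``$\CC P^1$ is compact'' pointing toward the soft argument you mention at the outset: the $\omega\std$-symplectic $2$-planes form an open subset of the Grassmannian of real $2$-planes, the complex lines form a compact subset contained in it, and $\dist$ is continuous, so a uniform $\epsilon$ exists. You instead give a direct quantitative estimate, and every step checks out: the definition of $\dist(V,W)$ in the paper does guarantee, for each unit vector of $V$, a point of $W$ within $\epsilon$; the bound $|\omega\std(X,Y)|\le|X|\,|Y|$ and the normalization $\omega\std(v,iv)=|v|^2$ are right; and your observation that $\omega\std(w,w')\ne 0$ simultaneously forces $w,w'$ to span $W$ and (via the $n=1$ case of Exercise~\ref{ex:volume}) forces $\omega\std|_W$ to be nondegenerate is exactly the point that lets you get away with using only the one-sided distance applied to the two vectors $v,iv$. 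What your route buys is an explicit constant ($\epsilon<\sqrt2-1$) and independence from any discussion of the topology of the Grassmannian; what the compactness route buys is brevity and the fact that it generalizes verbatim to $\CC^n$ and to taming conditions for perturbed $J$'s without redoing any estimates. Either is acceptable here.
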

\begin{exercise}
\label{EX:nearlyComplex}
Prove the lemma.  \textsl{Hint: $\CC P^1$ is compact.}
\end{exercise}

\begin{proof}[Proof of Theorem~\ref{thm:critical}]
By Theorem~\ref{thm:representation}, we can assume after smooth coordinate
changes near $0 \in B$ and $0 \in \CC^2$ that
$$
u(z) = (z^k, \hat{u}(z))
$$
for some integer $k \ge 2$ and a map $\hat{u} : B_\rho \to \CC$ on a
ball of some radius $\rho > 0$, such that the other branches 
$$
u_j(z) := u(e^{2\pi i j/k}z) = (z^k , \hat{u}_j(z)), \qquad
\hat{u}_j(z) := \hat{u}(e^{2\pi i j/k}z),
$$
for $j=1,\ldots,k-1$ are related by
\begin{equation}
\label{eqn:branchy}
\hat{u}_j(z) - \hat{u}(z) = z^{k + \ell_j} C_j + |z|^{k+\ell_j} r_j(z)
\end{equation}
for some $\ell_j \in \NN$, $C_j \in \CC \setminus \{0\}$ and
$r_j : B_\rho \to \CC$ with $r_j(z) \to 0$ as $z \to 0$.  Here we've
used the assumption that $u$ is injective in order to conclude that
$\hat{u}_j - \hat{u}$ is not identically zero, and by shrinking $\rho > 0$
if necessary, we can also assume $u$ is embedded on $B_\rho \setminus \{0\}$.
Fix a smooth cutoff
function $\beta : B_\rho \to [0,1]$ that equals~$1$ on $B_{\rho/2}$ 
and has compact support.  Then for $\epsilon \in \CC$ sufficiently 
close to~$0$, consider the perturbation
$$
u_\epsilon(z) := (z^k, \hat{u}(z) + \epsilon \beta(z) z),
$$
which satisfies $u_\epsilon(0) = 0$ and
is immersed if $\epsilon \ne 0$.  Since $u$ is embedded on
$B_\rho \setminus B_{\rho/2}$, we may assume
for $|\epsilon|$ sufficiently small that $u_\epsilon$
has no self-intersections outside of the region where $\beta \equiv 1$.
Then a self-intersection $u_\epsilon(z) = u_\epsilon(\zeta)$ with 
$z \ne \zeta$ occurs wherever
$\zeta = e^{2 \pi ij/k}z \ne 0$ for some $j=1,\ldots,k-1$ and
$\hat{u}(z) + \epsilon z = \hat{u}_j(z) + \epsilon e^{2\pi i j/k}z$, which by
\eqref{eqn:branchy} means
$$
z^{k + \ell_j} C_j + |z|^{k+\ell_j} r_j(z) + \epsilon \left( e^{2\pi i j/k} - 1\right) z = 0.
$$
Assume $\epsilon \in \CC \setminus \{0\}$ is chosen generically so that the
zeroes of this function are all simple (see Exercise~\ref{EX:trickySard} below).
Then each zero other than the ``trivial'' solution at $z=0$ 
represents a transverse (positive or negative) self-intersection 
of $u_\epsilon$, and the algebraic count of these (discounting the trivial solution) 
for $|\epsilon|$ sufficiently
small is $k + \ell_j - 1 \ge k$.  Adding these up for all $j=1,\ldots,k-1$, we 
obtain 
\begin{equation}
\label{eqn:deltaLocal2}
\delta(u,0) :=
\frac{1}{2}\sum_{(z,\zeta)} \inter(u_\epsilon,z ; u_\epsilon,\zeta) = 
\frac{1}{2} \sum_{j=1}^{k-1} (k + \ell_j - 1) \ge \frac{1}{2} k(k-1),
\end{equation}
which is strictly positive since $k \ge 2$.

It remains to show that $u_\epsilon$ satisfies $u_\epsilon^*\omega\std > 0$,
which is equivalent to showing that $\im du_\epsilon(z) \subset \CC^2$ is an
$\omega\std$-symplectic subspace for all~$z$.
Let us write $\hat{u}$ in the form
$$
\hat{u}(z) = z^{k+\ell} C + |z|^{k+\ell} r(z)
$$
as guaranteed by Theorem~\ref{thm:representation}, where
$C \in \CC\setminus \{0\}$, $\ell \in \NN$ and $\lim_{z \to 0} r(z) = 0$.
We shall compare $u_\epsilon$ with the holomorphic map
$$
P_\epsilon : B_\rho \to \CC^2 : z \mapsto (z^k, z^{k+\ell} C
+ \epsilon z),
$$
obtained by dropping the remainder term from $\hat{u}$.  Note that
$P_\epsilon$ is simply the degree $k+\ell$ Taylor polynomial
of $u_\epsilon$; indeed, both have the same derivatives at~$0$ up to
order $k+\ell$.  Setting $\epsilon=0$ and differentiating both,
it follows that $dP_0 : B_\rho \to
\Hom_\RR(\CC,\CC^2)$ is the degree $k+\ell-1$ Taylor polynomial
of $du_0 : B_\rho \to \Hom_\RR(\CC,\CC^2)$, thus
$$
du_0(z) = dP_0(z) + |z|^{k+\ell-1} R(z)
$$
for some function $R(z)$ with $R(z) \to 0$ as $z \to 0$.  Reintroducing the
$\epsilon$-dependent linear term, it follows that
$$
du_\epsilon(z) = dP_\epsilon(z) + |z|^{k+\ell-1} R(z)
$$
for all $\epsilon \in \CC$, where the function $R(z)$ is independent 
of~$\epsilon$ and is bounded.
Now abbreviate $A_\epsilon(z) := dP_\epsilon(z)$ and
$B_\epsilon(z) := du_\epsilon(z)$.  The Taylor formula above then gives an
estimate of the form
$$
\| A_\epsilon(z) - B_\epsilon(z) \| \le c_1 |z|^{k+\ell-1}
$$
for some constant $c_1 > 0$ independent of~$\epsilon$.  
Computing $dP_\epsilon(0)$, we find similarly
a constant $c_2 > 0$ independent of~$\epsilon$ such that
$$
| A_\epsilon(z) v | \ge c_2 |z|^{k-1} |v| \quad\text{ for all $v \in \CC$},
$$
thus $\Inj(A_\epsilon(z)) \ge c_2 |z|^{k-1}$, and
$$
\frac{\| A_\epsilon(z) - B_\epsilon(z) \|}{\Inj(A_\epsilon(z))} \le c_3 |z|^\ell
$$
for some constant $c_3 > 0$ independent of~$\epsilon$.  
Now since $P_\epsilon$ is holomorphic
(for the standard complex structure) for all $\epsilon$, $\im A_\epsilon(z) \subset \CC^2$
is always complex linear, so the above estimates imply together with
Lemmas~\ref{lemma:Inj} and~\ref{lemma:nearlyComplex} that for a sufficiently small radius
$\rho_0 > 0$, the images of $du_\epsilon(z)$ for all $z \in B_{\rho_0} \setminus \{0\}$
and $\epsilon \in B_{\rho_0}$ are $\omega\std$-symplectic.  This is also true for
$z=0$ if $\epsilon \ne 0$, since then $du_\epsilon(0) = dP_\epsilon(0)$ is
complex linear.

To conclude, fix $\rho_0 > 0$ as above
and choose $\epsilon \in \CC\setminus\{0\}$ sufficiently close to~$0$ so
that outside of $B_{\rho_0}$, $u_\epsilon$ is $C^1$-close enough to $u$ for its
tangent spaces to be $\omega\std$-symplectic (recall that $J$ is also
$\omega\std$-tame).  The previous paragraph then
implies that the tangent spaces of $u_\epsilon$ are $\omega\std$-symplectic
everywhere.
\end{proof}

\begin{exercise}
\label{EX:deltaIndependent}
Verify that the formula obtained in \eqref{eqn:deltaLocal2} for
$\delta(u,0)$ does not depend on any choices.
\end{exercise}

\begin{exercise}
\label{EX:trickySard}
Assume $f : \uU \to \CC$ is a smooth map on a domain
$\uU \subset \CC$ containing~$0$, with $f(0)=0$ and $df(0) = 0$.
Show that for almost every $\epsilon \in \CC$, the map
$f_\epsilon : \uU \to \CC : z \mapsto f(z) + \epsilon z$ has~$0$ as a
regular value.  \textsl{Hint: Use the implicit function theorem to show
that the set}
$$
X := \{ (\epsilon,z) \in \CC \times (\uU \setminus \{0\})\ |\ 
f_\epsilon(z) = 0 \}
$$
\textsl{is a smooth submanifold of $\CC^2$, and a point $(\epsilon,z) \in X$ is
regular for the projection $\pi : X \to \CC : (\epsilon,z) \mapsto \epsilon$
if and only if $z$ is a regular point of~$f_\epsilon$.  Then apply Sard's
theorem to~$\pi$.}
\end{exercise}

\begin{exercise}
\label{EX:noUpperBound2}
The proof of Theorem~\ref{thm:critical} showed that if $u : B \to \CC^2$ is
$J$-holomorphic and injective with critical order $k - 1$ at~$0$, then
$2\delta(u,0) \ge k(k-1)$.  Find examples to show that this bound is
sharp, and that there is no similar upper bound
for $\delta(u,0)$ in terms of~$k$.  \textsl{(Compare Exercise~\ref{EX:noUpperBound}.)}
\end{exercise}


\chapter{Fredholm Theory}
\label{chapter:Fredholm}

\minitoc
\vspace{12pt}

\section{Some Banach spaces and manifolds}
\label{sec:Banach}

In this chapter we begin the study of $J$-holomorphic curves in
global settings.  We will fix the following data throughout: $(\Sigma,j)$ is a
closed connected Riemann surface, and $(M,J)$ is a $2n$-dimensional manifold
with a smooth almost complex structure.  Our goal will be
to understand the local structure of the space of solutions
to the nonlinear Cauchy-Riemann equation, that is,
\begin{equation}
\label{eqn:solutionSpace}
\{ u \in C^\infty(\Sigma,M)\ |\ Tu \circ j = J \circ Tu \}.
\end{equation}
We assign to this space the natural topology defined by $C^\infty$-convergence
of maps $\Sigma \to M$.
Recall that since $J$ is smooth, elliptic regularity implies that all 
solutions of at least
class $W^{k,p}_\loc$ for some $k \in \NN$ and $p > 2$ are actually smooth, and 
the $C^\infty$-topology is equivalent to the
$W^{k,p}$-topology on the solution space.
The main result of this chapter will be that under sufficiently 
fortunate circumstances, this space is a finite-dimensional manifold, and
we will compute its dimension in terms of the given topological data.
We will put off until the next chapter the question of when such
``fortunate circumstances'' are guaranteed to exist, i.e.~when
\emph{transversality} is achieved.  It should also
be noted that in later applications to symplectic topology, the space
\eqref{eqn:solutionSpace} will not really be the one we want to consider:
it has two unnatural features, namely that it fixes an \emph{arbitrary} complex
structure on the domain, and that it may include different curves that are 
reparametrizations of each other, and thus should really be considered
``equivalent''.  We will address these issues in Chapter~\ref{chapter:moduli}, when
we give the proper definition of the moduli space of $J$-holomorphic curves.

For now, \eqref{eqn:solutionSpace} will be the space of interest, and
we sketched already in \S\ref{sec:linearization}
how to turn the study of this space
into a problem of nonlinear functional analysis.  It is time to make that
discussion precise by defining the appropriate Banach manifolds and bundles.

We must first understand how to define Sobolev spaces of sections on
vector bundles.  In general, for any smooth vector bundle $E \to \Sigma$
one can define the space $W^{k,p}_\loc(E)$ to consist of all sections
whose expressions in all choices of local coordinates and trivializations 
are of class $W^{k,p}$ on compact subsets.  One can analogously define
maps of class $W^{k,p}_\loc$ between two smooth manifolds.  When $\Sigma$
is also \emph{compact}, we define the space $W^{k,p}(E)$ to be simply
$W^{k,p}_\loc(E)$, and give it the structure of a Banach space as follows.
Choose a finite open cover $\bigcup_j \uU_j = \Sigma$, and
assume that for each
set $\uU_j \subset \Sigma$, there is a smooth chart
$\varphi_j : \uU_j \to \Omega_j$, where $\Omega_j =
\varphi_j(\uU_j) \subset \CC$, as well as a
local trivialization $\Phi_j : E|_{\uU_j} \to
\uU_j \times \CC^n$.  Then if $\{ \alpha_j : \Sigma \to [0,1] \}$ 
is a partition of unity subordinate to $\{\uU_j\}$, define for any
section $v : \Sigma \to E$,
\begin{equation}
\label{eqn:sectionNorm}
\| v \|_{W^{k,p}(E)} = \sum_j \| \pr_2 \circ \Phi_j \circ (\alpha_j v) 
 \circ \varphi_j^{-1} \|_{W^{k,p}(\Omega_j)}.
\end{equation}
This definition depends on plenty of choices, and the norm on
$W^{k,p}(E)$ is thus not canonically defined; really one should call
$W^{k,p}(E)$ a \emph{Banachable} space rather than a Banach space.  The
exercise below shows that at least the resulting \emph{topology}
on $W^{k,p}(E)$ is canonical.  In a completely analogous way, one can also
define the Banach spaces $C^k(E)$ and $C^{k,\alpha}(E)$.

\begin{exercise} \ 
\begin{enumerate}
\renewcommand{\labelenumi}{(\alph{enumi})}
\item
Show that any alternative choice of finite open covering, 
charts, trivializations
and partition of unity gives an equivalent norm on $W^{k,p}(E)$.
\textsl{Hint: Given two complete norms on the same vector space, it's enough
to show that the identity map from one to the other is continuous
(in one direction!).}
\item
Verify that your favorite embedding theorems hold: in particular,
$W^{k,p}(E)$ admits a continuous and compact embedding into 
$W^{k-1,p}(E)$ for all $p$, and $C^{k-1}(E)$ whenever $p > 2$.
\end{enumerate}
\end{exercise}

\begin{remark}
If $\Sigma$ is not compact, then even the topology of $W^{k,p}(E)$ is
not well defined without some extra choices.  We'll need to deal with this
issue later when we discuss punctured holomorphic curves.
\end{remark}

\begin{exercise}
\label{EX:classWkp}
For $kp > 2$, a vector bundle $E \to \Sigma$ is said to have a
\defin{$W^{k,p}$-smooth structure} if it admits a system of local trivializations
whose transition maps are of class $W^{k,p}$.  Show that $W^{k,p}(E)$ is also 
a well-defined Banachable space in this case,
though one cannot speak of sections of any better regularity than~$W^{k,p}$.
Why doesn't any of this make sense if $kp \le 2$?
\end{exercise}

Next we consider maps of Sobolev-type regularity between the manifolds
$\Sigma$ and $M$; we'll restrict our attention to the case $kp > 2$, so that 
all such maps are continuous.  It was already remarked
that the space $W^{k,p}_\loc(\Sigma,M)$ can be defined naturally by expressing
maps $\Sigma \to M$ in local charts, though since it isn't a vector space,
the question of precisely what structure this space has is a bit subtle.
Intuitively, we expect spaces of maps $\Sigma \to M$ to be manifolds, and
this motivates the following definition.

\begin{defn}
\label{defn:Wkp}
For any $k \in \NN$ and $p > 1$ such that $kp > 2$, choose any smooth
connection on $M$, and for any smooth map $f \in C^\infty(\Sigma,M)$,
choose a neighborhood $\uU_f$ of the zero section in $f^*TM$ such that
for all $z \in \Sigma$, the restriction of $\exp$ to $T_{f(z)}M \cap \uU_f$
is an embedding.  Then we define the space of $W^{k,p}$-smooth maps
from $\Sigma$ to $M$ by
\begin{equation*}
\begin{split}
W^{k,p}(\Sigma,M) = \{ u \in C^0(\Sigma,M) \ |\ & u = \exp_{f} \eta
\text{ for some $f \in C^\infty(\Sigma,M)$ and} \\ 
& \text{$\eta \in W^{k,p}(f^*TM)$ with $\eta(\Sigma) \subset \uU_f$} \}.
\end{split}
\end{equation*}
\end{defn}
We've not yet assigned a topology to $W^{k,p}(\Sigma,M)$, but a topology
emerges naturally from the nontrivial observation that our definition
gives rise to a smooth Banach manifold structure.  Indeed, the charts 
are the maps $\exp_f \eta \mapsto \eta$ which take subsets of
$W^{k,p}(\Sigma,M)$ into open subsets of Banach spaces, namely
$$
W^{k,p}(\uU_f) := \{ \eta \in W^{k,p}(f^*TM)\ |\ \eta(\Sigma) \subset \uU_f \}.
$$
Since the exponential map is smooth, a slight generalization
of Lemma~\ref{lemma:babySmoothness} shows that
the resulting transition maps are smooth---this depends fundamentally on
the same three properties of $W^{k,p}$ that were listed in the lemma:
it embeds into~$C^0$, it is a Banach algebra, and it behaves continuously
under composition with smooth functions.  In the same manner, one shows
that the transition maps arising from different choices of connection on $M$
are also smooth, thus the smooth structure of $W^{k,p}(\Sigma,M)$ doesn't
depend on this choice.  The complete details of these arguments
(in a very general context) are carried out in \cite{Eliasson}.  
The same paper also shows that the tangent
spaces to $W^{k,p}(\Sigma,M)$ are canonically isomorphic to exactly what
one would expect:
$$
T_u W^{k,p}(\Sigma,M) = W^{k,p}(u^*TM).
$$
Note that in general, $u^*TM \to \Sigma$ is only a bundle of class
$W^{k,p}$, but the resulting Banach space of sections is well defined
due to Exercise~\ref{EX:classWkp} above.

\begin{exercise}
\label{EX:evaluation}
Assuming $kp > 2$ as in the above discussion, show that for any chosen
point $z_0 \in \Sigma$, the natural \emph{evaluation map}
$$
W^{k,p}(\Sigma,M) \to M : u \mapsto u(z_0)
$$
is smooth.  \textsl{Hint: This depends essentially on the fact that
(1)~the exponential map on~$M$ is smooth, and (2)~for any smooth
vector bundle $E \to \Sigma$, the inclusion of $W^{k,p}$ into $C^0$
implies that
$W^{k,p}(E) \to E_{z_0} : \eta \mapsto \eta(z_0)$ defines a bounded linear
operator.}
\end{exercise}
\begin{exercise}
Show that the map $W^{k,p}(\Sigma,M) \times \Sigma \to M : (u,z) \mapsto u(z)$
is not smooth.
\end{exercise}

The definition of \emph{Banach manifold} that we have been using thus far is
absurdly general: indeed, a topological space with an atlas of
smoothly compatible charts generally need not be either Hausdorff or 
paracompact (see \cite{Lang:geometry}).  
It will be useful to note that the particular 
Banach manifolds we are considering are topologically not
nearly so exotic.

\begin{prop}
\label{prop:notExotic}
The Banach manifold $W^{k,p}(\Sigma,M)$ defined above is metrizable and
separable.
\end{prop}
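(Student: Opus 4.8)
The goal is to show that $W^{k,p}(\Sigma,M)$ is metrizable and separable. The strategy is to exploit the fact that $\Sigma$ is compact, so that $M$ can be embedded properly in some Euclidean space, which reduces everything to standard facts about Sobolev spaces of vector-valued functions on a compact manifold. First I would invoke the Whitney embedding theorem to choose a smooth embedding $\iota : M \hookrightarrow \RR^N$ for some large $N$, identifying $M$ with a closed (in general noncompact, but properly embedded) submanifold of $\RR^N$. Composition with $\iota$ gives an injective map
$$
W^{k,p}(\Sigma,M) \to W^{k,p}(\Sigma,\RR^N),
$$
and the plan is to show this is a homeomorphism onto its image, where the target carries the usual Banach-space topology of $W^{k,p}$-maps into $\RR^N$ (here $\Sigma$ is compact, so $W^{k,p}(\Sigma,\RR^N)$ is genuinely a separable Banach space, being built from finitely many copies of $W^{k,p}(\Omega_j,\RR^N)$ via charts and a partition of unity as in \eqref{eqn:sectionNorm}). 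Since $kp > 2$, the Sobolev embedding $W^{k,p}(\Sigma,\RR^N) \hookrightarrow C^0(\Sigma,\RR^N)$ is continuous, so the image of $W^{k,p}(\Sigma,M)$ is precisely $\{ u \in W^{k,p}(\Sigma,\RR^N) \mid u(\Sigma) \subset \iota(M) \}$, which is a closed subset of a separable metric space and hence itself separable and metrizable.

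\textbf{Key steps.} The heart of the argument is checking that the inclusion $W^{k,p}(\Sigma,M) \to W^{k,p}(\Sigma,\RR^N)$ is a homeomorphism onto this subset, i.e.\ that the subspace topology it inherits agrees with the manifold topology defined via the charts $\exp_f\eta \mapsto \eta$ of Definition~\ref{defn:Wkp}. This is a local statement: fix $f \in C^\infty(\Sigma,M)$ and a chart domain $W^{k,p}(\uU_f) \subset W^{k,p}(f^*TM)$, and compare the two topologies there. Using a tubular neighborhood $\nN$ of $\iota(M)$ in $\RR^N$ with smooth retraction $r : \nN \to \iota(M)$, the chart map $\eta \mapsto \iota \circ \exp_f\eta$ and the inverse assignment $u \mapsto (\text{fiberwise}) \exp_f^{-1}(r \circ u)$ are both given by postcomposition with fixed smooth maps between (open subsets of) Euclidean spaces, so continuity in both directions follows from the $C^k$-continuity property of $W^{k,p}$ under composition with smooth functions (the third bullet of Lemma~\ref{lemma:babySmoothness}, applied to $\Sigma$ in place of $\uU$), together with the Banach-algebra property and the $C^0$-embedding. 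Thus the manifold charts are homeomorphisms onto open subsets of the ambient Sobolev Banach space, which establishes the claim; metrizability and separability of $W^{k,p}(\Sigma,M)$ then follow by restricting the metric of $W^{k,p}(\Sigma,\RR^N)$.

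\textbf{The main obstacle.} The genuinely delicate point is the verification that the subspace topology coincides with the manifold topology, i.e.\ that the exponential charts are \emph{open} maps into the ambient space and have continuous inverses. The forward continuity (manifold topology finer than subspace topology) is immediate since each chart is the restriction of a smooth map into $W^{k,p}(\Sigma,\RR^N)$. The reverse direction requires one to see that, locally, an ambient $W^{k,p}$-small perturbation of $\iota\circ f$ landing in $\iota(M)$ comes from a $W^{k,p}$-small section $\eta$; this is exactly where the tubular-neighborhood retraction $r$ and the composition estimates of Lemma~\ref{lemma:babySmoothness} do the work, and one has to be a little careful that the neighborhood $\uU_f$ of the zero section from Definition~\ref{defn:Wkp} can be shrunk so that everything stays inside the domains where $r$ and $\exp_f^{-1}$ are defined and smooth. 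Once this local homeomorphism statement is in hand, metrizability and separability are formal. (A more self-contained alternative, avoiding the embedding, is to note directly that each chart domain $W^{k,p}(\uU_f)$ is an open subset of the separable Banach space $W^{k,p}(f^*TM)$ and that $W^{k,p}(\Sigma,M)$ is a countable union of such, with smooth—hence continuous—transition maps; separability and metrizability of a second-countable, locally metrizable Hausdorff space then follow, the Hausdorff property itself being the one remaining thing to check, which again is most transparent via the $C^0$-embedding.)
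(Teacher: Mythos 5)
Your proposal is correct and follows essentially the same route as the paper: embed $M$ smoothly into $\RR^N$, observe that this induces an embedding of $W^{k,p}(\Sigma,M)$ into the separable Banach space $W^{k,p}(\Sigma,\RR^N)$, and transfer metrizability and separability from the latter. The only difference is that the paper delegates the key step---that the induced map is a smooth embedding onto a submanifold---to a citation of Eliasson, whereas you sketch the tubular-neighborhood/retraction argument yourself; that sketch is sound and correctly identifies the comparison of the chart topology with the subspace topology as the one nontrivial point.
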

\begin{proof}
Choose a smooth embedding of $M$ into $\RR^N$ for some sufficiently large
$N \in \NN$.  Using \cite{Eliasson}*{Theorem~5.3}, one can show that
this induces a smooth embedding of $W^{k,p}(\Sigma,M)$ into the
linear Banach space $W^{k,p}(\Sigma,\RR^N)$ as a smooth submanifold.
The latter is metrizable and separable, so we conclude the same
for $W^{k,p}(\Sigma,M)$.
\end{proof}

One can take these ideas further and speak of vector bundles whose fibers
are Banach spaces: a \defin{Banach space bundle} of class $C^k$ is defined
by a system of local trivializations whose 
transition maps are of class $C^k$ from open subsets of the base to the
Banach space of bounded endomorphisms $\lL(X)$ on some Banach space~$X$.  
Note that if $g : \uU \to \lL(X)$ is a transition map and $z \in \uU$,
$x \in X$, it is not enough to require continuity or smoothness of the
map $(z,x) \mapsto g(z) x$; that is a significantly weaker condition in
infinite dimensions.  We refer to \cite{Lang:geometry} for more on the
general properties of Banach space bundles.

For our purposes, it will be important to consider the Banach manifold
$$
\bB^{k,p} := W^{k,p}(\Sigma,M)
$$
with a Banach space bundle $\eE^{k-1,p} \to \bB^{k,p}$ whose fiber at
$u \in \bB^{k,p}$ is
$$
\eE^{k-1,p}_u := W^{k-1,p}(\overline{\Hom}_\CC(T\Sigma,u^*TM)).
$$
You should take a moment to convince yourself that for any
$u \in \bB^{k,p}$, it makes sense to speak of sections of class
$W^{k-1,p}$ on the bundle $\overline{\Hom}_\CC(T\Sigma,u^*TM) \to \Sigma$.  
As it turns out, the general framework of \cite{Eliasson} implies that
$\eE^{k-1,p} \to \bB^{k,p}$ admits the structure of a smooth Banach
space bundle such that
$$
\dbar_J : \bB^{k,p} \to \eE^{k-1,p} : u \mapsto Tu + J \circ Tu \circ j
$$
is a smooth section.  Note that in the last observation, we are using
the assumption that $J$ is smooth, as the question can be reduced to yet
another application of Lemma~\ref{lemma:babySmoothness}:
the section $\dbar_J$ contains the map
$W^{k,p} \to W^{k,p} : u \mapsto J \circ u$, which has only as many
derivatives as $J$ (minus some constant).  For this reason, we will assume 
whenever possible from now on that $J$ is smooth.

The zero set of $\dbar_J$ is the space of solutions \eqref{eqn:solutionSpace},
and as we already observed, the topology of this solution space will have
no dependence on $k$ or~$p$.  To show that $\dbar_J^{-1}(0)$ has a nice
structure, we want to apply the infinite-dimensional bundle version of the
implicit function theorem, which will 
apply near any point $u \in \dbar_J^{-1}(0)$ at which the linearization
$$
\mathbf{D}_u := D\dbar_J(u) : T_u\bB^{k,p} \to \eE_u^{k-1,p}
$$
is surjective and has a bounded right inverse.  Here $\mathbf{D}_u$ is the
operator we derived in \S\ref{sec:linearization}; at the time we were assuming
everything was smooth, but the result clearly extends to a bounded
linear operator
\begin{equation*}
\begin{split}
\mathbf{D}_u : W^{k,p}(u^*TM) &\to W^{k-1,p}(\overline{\Hom}_\CC(T\Sigma,u^*TM)) \\
\eta &\mapsto \nabla \eta + J(u) \circ \nabla\eta \circ j +
(\nabla_\eta J) Tu \circ j,
\end{split}
\end{equation*}
where $\nabla$ is an arbitrary symmetric connection on $M$, and
this operator must be identical to $D\dbar_J(u)$ since $C^\infty$ is
dense in all the spaces under consideration.

The condition that $\mathbf{D}_u$ have
a bounded right inverse will turn out to be trivially satisfied whenever
$\mathbf{D}_u$ is surjective,
because $\ker \mathbf{D}_u$ is finite dimensional.  This is an important 
new feature of the global setting that did not exist locally, and we will
spend the rest of this chapter proving it and computing the
dimension.  The main result can be summarized as follows.

\begin{thm}
\label{thm:main}
For any $u \in \dbar_J^{-1}(0)$, $\mathbf{D}_u$ is a Fredholm operator with index
$$
\ind(\mathbf{D}_u) = n\chi(\Sigma) + 2 \langle c_1(TM) , [u] \rangle,
$$
where $[u] := u_*[\Sigma] \in H_2(M)$ and $c_1(TM) \in H^2(M)$ 
is the first Chern
class of the complex vector bundle $(TM,J)$.
\end{thm}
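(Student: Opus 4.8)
The plan is to deduce Theorem~\ref{thm:main} from an abstract statement: any real-linear Cauchy-Riemann type operator $D$ on a complex vector bundle $(E,J)$ of rank~$n$ over a closed Riemann surface $(\Sigma,j)$ is Fredholm, with index $n\chi(\Sigma) + 2\langle c_1(E),[\Sigma]\rangle$. This suffices, since we established in \S\ref{sec:linearization} that $\mathbf{D}_u$ is a real-linear Cauchy-Riemann type operator on $u^*TM$, and by naturality of Chern classes together with the projection formula, $\langle c_1(u^*TM),[\Sigma]\rangle = \langle c_1(TM), u_*[\Sigma]\rangle = \langle c_1(TM),[u]\rangle$. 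Throughout one works with the completions $\mathbf{D}_u : W^{k,p}(u^*TM) \to W^{k-1,p}(\overline{\Hom}_\CC(T\Sigma,u^*TM))$ for $p > 2$ and $k \ge 1$; elliptic regularity (Corollary~\ref{cor:weakRegularity} and Exercise~\ref{EX:smoothSections}) shows the kernel consists of smooth sections and is independent of $k,p$, and the corresponding statement for the cokernel will fall out of the adjoint argument below, so that $\ind(\mathbf{D}_u)$ is a genuine invariant of the geometric data.

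First I would prove the Fredholm property. In a holomorphic chart and trivialization, Exercise~\ref{EX:ChristoffelReal} writes $D$ as $\dbar + A$ with $A$ a zeroth-order term, so the Calder\'on--Zygmund inequality (Theorem~\ref{thm:elliptic}, in the form of Exercise~\ref{EX:CZhigher}) yields a local estimate for $\|\eta\|_{W^{k,p}}$ in terms of $\|D\eta\|_{W^{k-1,p}}$ and lower-order norms. Patching these with a finite partition of unity and absorbing the resulting commutator terms gives a global semi-Fredholm estimate $\|\eta\|_{W^{k,p}} \le c\big(\|D\eta\|_{W^{k-1,p}} + \|\eta\|_{W^{k-1,p}}\big)$. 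Since $\Sigma$ is compact, the inclusion $W^{k,p} \hookrightarrow W^{k-1,p}$ is compact, so this forces $\ker D$ to be finite-dimensional and $\im D$ to be closed. For the cokernel I would identify it with the kernel of the formal $L^{p'}$-adjoint $D^*$: integration by parts shows that $D^*$ is, up to a conjugate-linear bundle identification, again a Cauchy-Riemann type operator on a complex bundle over $(\Sigma,j)$, so the same semi-Fredholm estimate, combined with elliptic regularity to upgrade its weak solutions to smooth ones, makes $\ker D^* \cong \coker D$ finite-dimensional. Hence $D$ is Fredholm.

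Next I would compute the index. The difference of any two Cauchy-Riemann type operators on $(E,J)$ is a zeroth-order bundle map, and such a map, as an operator $W^{k,p}(E) \to W^{k-1,p}(\overline{\Hom}_\CC(T\Sigma,E))$, factors through the compact inclusion $W^{k,p} \hookrightarrow W^{k-1,p}$ and is therefore compact; since the Fredholm index is invariant under compact perturbations, $\ind(D)$ depends only on the smooth complex bundle $(E,J)$, i.e.\ only on $n$ and $d := \langle c_1(E),[\Sigma]\rangle$. In particular $\ind(D) = \ind(D^\CC)$ for the complex-linear part $D^\CC = \tfrac12(D - J\circ D\circ J)$ (Exercise~\ref{EX:DuC}), and, being free to choose any convenient model, I would replace $(E,J)$ by a split holomorphic bundle $\underline{\CC}^{\,n-1} \oplus L$ with $\deg L = d$. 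By Theorem~\ref{thm:holomorphicBundles}, computing $\ind D^\CC$ now amounts to computing the real index of the $\dbar$-operator on this bundle. The index is additive over direct sums, so it remains to handle (i)~the trivial line bundle over a closed surface of genus $g$, where the Dolbeault computation gives $\dim_\CC H^0 = 1$, $\dim_\CC H^1 = g$, hence real index $2(1-g) = \chi(\Sigma)$, and (ii)~a line bundle of degree $d$, which reduces to (i) via the fact that tensoring with the holomorphic line bundle of a single point raises the degree by one and the complex index by one (a short exact sequence / jet-evaluation argument), giving real index $\chi(\Sigma) + 2d$. Assembling: $\ind(D) = (n-1)\chi(\Sigma) + \big(\chi(\Sigma) + 2d\big) = n\chi(\Sigma) + 2d$, which with $d = \langle c_1(TM),[u]\rangle$ is exactly the claim.

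The main obstacle is the analytic input of the second paragraph: promoting the purely local Calder\'on--Zygmund estimate to a global semi-Fredholm estimate on a vector bundle, tracking carefully how cutoff functions and changes of trivialization interact with the lower-order terms, and then correctly identifying the $L^p$-cokernel with the kernel of a Cauchy-Riemann type \emph{adjoint} operator. The conjugate-linear bookkeeping in that identification, and the verification that elliptic regularity applies to the weak solutions of $D^*$, are the delicate points. By contrast, once homotopy (compact-perturbation) invariance and the reduction to split bundles are in place, the index computation is essentially formal.
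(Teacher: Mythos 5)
Your Fredholm argument coincides with the paper's: the local Calder\'on--Zygmund estimate plus a partition of unity gives the semi-Fredholm estimate $\|\eta\|_{W^{k,p}} \le c\|D\eta\|_{W^{k-1,p}} + c\|\eta\|_{W^{k-1,p}}$, the compact inclusion $W^{k,p}\hookrightarrow W^{k-1,p}$ then forces finite-dimensional kernel and closed image, and the formal adjoint (conjugate to a Cauchy--Riemann type operator) together with regularity of weak solutions identifies $\coker D$ with $\ker D^*$. Your index computation, however, takes a genuinely different route. The paper's only complete computation is for $\Sigma = S^2$: it builds explicit model line bundles $E_k$ by a clutching construction with transition function $z^{-k}$, uses the similarity-principle criteria of Theorem~\ref{thm:linearAutomatic} to see that the relevant operator is automatically surjective or injective, and identifies the kernel with polynomials of degree $\le k$; for higher genus the paper only sketches a heuristic (index affine in $c_1(E)$ with slope $2$, then the relation $\ind(D^*) = -\ind(D)$ pins down the constant as $n\chi(\Sigma)$) and defers the rigorous proof to the literature. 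You instead split $(E,J)$ as $\underline{\CC}^{\,n-1}\oplus L$ and invoke classical Riemann--Roch: the Dolbeault computation $h^0(\Sigma,\oO)=1$, $h^1(\Sigma,\oO)=g$ for the trivial line bundle, plus the degree-shifting short exact sequence for the skyscraper sheaf at a point. This is complete for all genera, which the paper's argument is not, but it imports Hodge-theoretic input ($h^{0,1}=g$) and the sheaf-cohomology long exact sequence, both of which lie outside the elementary analytic toolkit the paper is trying to stay within; the paper's $S^2$ argument, by contrast, is entirely self-contained and suffices for most of its later applications. Both routes share the same essential reduction: compact-perturbation invariance of the index and the classification of complex bundles over closed surfaces by rank and first Chern number.
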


Recall that a bounded linear operator $D : X \to Y$ between Banach spaces
is called \defin{Fredholm} if both $\ker D$ and $Y / \im D$ are finite
dimensional; the latter space is called the \defin{cokernel} of $D$, 
often written as $\coker D$.  The \defin{Fredholm index} of $D$
is then defined to be
$$
\ind(D) = \dim \ker(D) - \dim \coker(D).
$$
Fredholm operators have many nice things in common with linear maps on
finite-dimensional spaces.  Proofs of the following standard facts may
be found in e.g.~\cite{Taylor:PDE1}*{Appendix~A}
and~\cite{AbramovichAliprantis}*{\S 4.4}.
\begin{prop}
\label{prop:Fredholm}
Assume $X$ and $Y$ are Banach spaces, and let $\Fred(X,Y) \subset \lL(X,Y)$
denote the space of Fredholm operators from $X$ to~$Y$.
\begin{enumerate}
\item $\Fred(X,Y)$ is an open subset of $\lL(X,Y)$.
\item The map $\ind : \Fred(X,Y) \to \ZZ$ is continuous.
\item If $D \in \Fred(X,Y)$ and $K \in \lL(X,Y)$ is a compact operator, 
then $D + K \in \Fred(X,Y)$.
\item If $D \in \Fred(X,Y)$ then $\im D$ is a closed subspace of~$Y$, 
and there exists
a closed linear subspace $V \subset X$ and finite-dimensional subspace
$W \subset Y$ such that
$$
X = \ker(D) \oplus V,
\qquad
Y = \im(D) \oplus W,
$$
and $D|_V : V \to \im(D)$ is a Banach space isomorphism.
\end{enumerate}
\end{prop}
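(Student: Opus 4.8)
The plan is to establish the four assertions in the order~(4), then (1) and~(2) together, then~(3), since the structural decomposition in~(4) is exactly what makes the perturbation arguments for the others run. For \textbf{part~(4)}, the one genuinely nonobvious point is that $\im D$ is closed, so I would isolate a lemma: \emph{if $T\colon X\to Y$ is bounded with $\dim\ker T<\infty$ and there is a finite-dimensional $W\subseteq Y$ with $Y=\im T+W$, then $\im T$ is closed.} Its proof: $\ker T$ is closed, so $X/\ker T$ is a Banach space; after shrinking $W$ so that $W\cap\im T=0$, the map $\hat T\colon (X/\ker T)\oplus W\to Y$, $(\bar x,w)\mapsto Tx+w$, is a continuous bijection of Banach spaces, hence a topological isomorphism by the open mapping theorem, so $\im T=\hat T\bigl((X/\ker T)\oplus\{0\}\bigr)$ is closed. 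Applying this with $T=D$ gives that $\im D$ is closed. Then $\ker D$, being finite-dimensional, is complemented by a closed subspace $V$ via Hahn--Banach (extend a coordinate basis of $\ker D$ to bounded functionals on $X$ and intersect their kernels), and $\im D$, being closed of finite codimension, is complemented by a finite-dimensional $W$, the algebraic splitting $Y=\im D\oplus W$ being automatically topological (open mapping theorem again, applied to $\im D\oplus W\to Y$). Finally $D|_V\colon V\to\im D$ is a continuous bijection between Banach spaces, hence an isomorphism.

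For \textbf{parts~(1) and~(2)}, I would fix $D\in\Fred(X,Y)$ together with the splittings $X=\ker D\oplus V$, $Y=\im D\oplus W$ from~(4), and note that $\Psi\colon V\oplus W\to Y$, $(v,w)\mapsto Dv+w$, is an isomorphism. For $S\in\lL(X,Y)$ of small norm, $\Psi_S\colon (v,w)\mapsto (D+S)v+w$ is a small perturbation of $\Psi$, hence still an isomorphism (Neumann series); let $P\colon Y\to W$ be the associated projection along $(D+S)(V)$, and set $A:=P\circ(D+S)|_{\ker D}\colon\ker D\to W$, a linear map between fixed finite-dimensional spaces. An elementary bookkeeping argument identifies $\ker(D+S)$ with $\ker A$ (sending $x_0\in\ker A$ to $x_0+v$, where $v\in V$ is the unique solution of $(D+S)v=-(D+S)x_0$) and $\coker(D+S)=Y/\im(D+S)$ with $W/\im A=\coker A$. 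Hence $D+S$ is Fredholm, with $\ind(D+S)=\ind A=\dim\ker D-\dim W=\ind D$. This shows $\Fred(X,Y)$ is open and $\ind$ is locally constant, hence continuous.

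For \textbf{part~(3)}, the cleanest route is through the parametrix characterization (Atkinson): \emph{$D$ is Fredholm if and only if there is a bounded $Q\colon Y\to X$ with $QD-I_X$ and $DQ-I_Y$ of finite rank.} For the forward direction, with $D_0:=D|_V\colon V\to\im D$ the isomorphism from~(4), the composite $Q:=\iota\circ D_0^{-1}\circ\pi$ (where $\iota\colon V\hookrightarrow X$ is the inclusion and $\pi\colon Y\to\im D$ the projection along $W$) satisfies $QD=I_X-P_{\ker D}$ and $DQ=I_Y-P_W$, with $P_{\ker D},P_W$ finite-rank projections. For the converse, suppose $QD=I+F_1$ and $DQ=I+F_2$ with $F_i$ merely compact; then $\ker D\subseteq\ker(I+F_1)$ is finite-dimensional because the identity on $\ker(I+F_1)$ coincides with the compact operator $-F_1|_{\ker(I+F_1)}$, so the closed unit ball of $\ker(I+F_1)$ is precompact (Riesz's lemma), and dually $\coker D$ is a quotient of $\coker(I+F_2)\cong\ker(I+F_2^{*})$, finite-dimensional by the same argument together with Schauder's theorem and the fact that $\im(I+F_2)$ is closed (the usual bounded-sequence-plus-compactness argument). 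Granting this characterization, part~(3) is immediate: if $D$ is Fredholm with parametrix $Q$ and $K$ is compact, then $Q(D+K)=I_X+(F_1+QK)$ and $(D+K)Q=I_Y+(F_2+KQ)$ exhibit $D+K$ as invertible modulo compacts, hence Fredholm.

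I expect the main obstacle to be the Riesz--Schauder input feeding the converse half of Atkinson's theorem in~(3): proving that $I$ plus a compact operator has finite-dimensional kernel and closed, finite-codimensional range. This is the only step that genuinely leaves soft functional analysis, requiring the characterization of finite-dimensionality by local compactness together with the compactness of adjoints; by contrast parts~(1), (2), (4) and the forward direction of Atkinson reduce, more or less mechanically, to the open mapping theorem and the decomposition established in~(4). (As an alternative for~(3) one could try to argue that Fredholmness and the index are preserved along the path $t\mapsto D+tK$, but this still bottoms out at the same Riesz--Schauder facts, so I would present the parametrix version as the more transparent one.)
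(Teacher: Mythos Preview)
Your proposal is correct and well-organized; the order (4)$\to$(1,2)$\to$(3) is the natural one, and each step is sound, including the slightly delicate converse half of Atkinson's theorem and the identification of $\ker(D+S)$ and $\coker(D+S)$ with the kernel and cokernel of the finite-dimensional map $A\colon\ker D\to W$.

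By way of comparison, the paper does not actually prove this proposition: it states the four assertions as standard facts and refers the reader to \cite{Taylor:PDE1}*{Appendix~A} and \cite{AbramovichAliprantis}*{\S 4.4}. Your argument is essentially the classical one found in such references (decomposition via open mapping, local constancy of the index via a finite-dimensional reduction, stability under compact perturbation via parametrices), so there is no tension---you have simply filled in what the paper leaves to the literature. The only point worth flagging is exactly the one you already identified: the Riesz--Schauder input that $I+\text{compact}$ has closed range and finite-dimensional kernel/cokernel is the genuine analytic content of part~(3), and everything else reduces to the open mapping theorem.
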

Note that the continuity of the map $\ind : \Fred(X,Y) \to \ZZ$ means it is
locally constant, thus for any continuous family of Fredholm operators
$\{D_t\}_{t \in [0,1]}$, $\ind(D_t)$ is constant.  This fact is extremely
useful for index computations, and is true despite the fact that the dimensions
of $\ker D_t$ and $Y / \im D_t$ may each change quite drastically.  
As a simple application, this
implies that for any compact operator $K$, $\ind(D + K) = \ind(D)$,
as these two are connected by the continuous family $D + tK$.
\begin{exercise}
The definition of a Fredholm operator $D : X \to Y$ often includes the 
assumption that $\im D$ is closed, but this is redundant.  Convince yourself
that for any $D \in \lL(X,Y)$, if $Y / \im D$ is finite dimensional then
$\im D$ is closed.  If you get stuck, see 
\cite{AbramovichAliprantis}*{Corollary~2.17}.
\end{exercise}

Theorem~\ref{thm:main} is of course most interesting in the case where $\mathbf{D}_u$ is
surjective, as then the implicit function theorem yields:
\begin{cor}
If $u \in \dbar_J^{-1}(0)$ and $\mathbf{D}_u$ is surjective, then a neighborhood
of $u$ in $\dbar_J^{-1}(0)$ admits the structure of a smooth finite-dimensional 
manifold, with
$$
\dim \dbar_J^{-1}(0) = n\chi(\Sigma) + 2 \langle c_1(TM) , [u] \rangle.
$$
\end{cor}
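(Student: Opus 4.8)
The plan is to prove the index formula of Theorem~\ref{thm:main}; the Corollary then follows immediately from the implicit function theorem (Theorem~\ref{thm:IFT} and its corollary), since a surjective Fredholm operator automatically has a bounded right inverse (Proposition~\ref{prop:Fredholm}(4)), and the resulting smooth submanifold structure on a neighborhood of $u$ in $\dbar_J^{-1}(0)$ has tangent space $\ker\mathbf{D}_u$, hence dimension $\ind(\mathbf{D}_u)$. So everything reduces to showing $\mathbf{D}_u$ is Fredholm with the stated index. As observed in \S\ref{sec:linearization}, $\mathbf{D}_u$ is a real-linear Cauchy-Riemann type operator on the complex vector bundle $(E,J) := (u^*TM,J) \to (\Sigma,j)$ of rank $n$. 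I would begin by splitting off the complex-antilinear part: write $\mathbf{D}_u = \mathbf{D}_u^\CC + A$, where $\mathbf{D}_u^\CC = \tfrac12(\mathbf{D}_u - J\circ\mathbf{D}_u\circ J)$ is a complex-linear Cauchy-Riemann type operator (Exercise~\ref{EX:DuC}) and $A := \mathbf{D}_u - \mathbf{D}_u^\CC$ is zeroth order, i.e.\ a smooth bundle map $E \to \overline{\Hom}_\CC(T\Sigma,E)$, as one checks from the Leibnitz rules for $\mathbf{D}_u$ and $\mathbf{D}_u^\CC$. Since $A$ factors through the compact inclusion $W^{k,p}(E)\hookrightarrow W^{k-1,p}(E)$, it is a compact operator $W^{k,p}(E)\to W^{k-1,p}(\overline{\Hom}_\CC(T\Sigma,E))$, so by Proposition~\ref{prop:Fredholm}(3) it suffices to prove that $\mathbf{D}_u^\CC$ is Fredholm and that $\ind(\mathbf{D}_u^\CC) = \ind(\mathbf{D}_u)$.

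To establish Fredholmness, note that in any local trivialization $\mathbf{D}_u^\CC$ has the form $\dbar + B$ with $B$ smooth (Exercise~\ref{EX:Christoffel}), so the Calder\'on--Zygmund inequality (Theorem~\ref{thm:elliptic}, in the higher-order form of Exercise~\ref{EX:CZhigher}) together with a cutoff gives local estimates which patch, via a partition of unity, into a global inequality
$$
\|\eta\|_{W^{k,p}(E)} \le c\,\|\mathbf{D}_u^\CC\eta\|_{W^{k-1,p}} + c\,\|\eta\|_{W^{k-1,p}(E)}.
$$
Combined with the compactness of $W^{k,p}(E)\hookrightarrow W^{k-1,p}(E)$, a standard functional-analytic argument then shows $\ker\mathbf{D}_u^\CC$ is finite dimensional and $\im\mathbf{D}_u^\CC$ is closed. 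For the cokernel I would pass to the formal $L^2$-adjoint $(\mathbf{D}_u^\CC)^*$, which is again a first-order elliptic Cauchy-Riemann type operator and hence subject to the same kind of estimate; by elliptic regularity (Corollary~\ref{cor:weakRegularity}) the $L^2$-orthogonal complement of the closed subspace $\im\mathbf{D}_u^\CC$ is identified with $\ker(\mathbf{D}_u^\CC)^*$, which is therefore finite dimensional. This makes $\mathbf{D}_u^\CC$, and hence $\mathbf{D}_u$, Fredholm, with $\ind(\mathbf{D}_u)=\ind(\mathbf{D}_u^\CC)$.

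It then remains to compute $\ind(\mathbf{D}_u^\CC)$. By Theorem~\ref{thm:holomorphicBundles}, $\mathbf{D}_u^\CC$ is the $\dbar$-operator of a unique holomorphic structure on $(E,J)$, so elliptic regularity gives $\ker\mathbf{D}_u^\CC = H^0(\Sigma,E)$, the space of holomorphic sections, and $\coker\mathbf{D}_u^\CC \cong H^1(\Sigma,E)$, the first Dolbeault cohomology. The Riemann--Roch theorem for holomorphic vector bundles over a closed Riemann surface of genus $g$ yields
$$
\dim_\CC H^0(\Sigma,E) - \dim_\CC H^1(\Sigma,E) = \langle c_1(E),[\Sigma]\rangle + n(1-g).
$$
Since the kernel and cokernel of the complex-linear operator $\mathbf{D}_u^\CC$ are complex vector spaces, its real Fredholm index is twice the left-hand side; using $1-g = \tfrac12\chi(\Sigma)$ and $\langle c_1(E),[\Sigma]\rangle = \langle c_1(u^*TM),[\Sigma]\rangle = \langle c_1(TM), u_*[\Sigma]\rangle = \langle c_1(TM),[u]\rangle$, this gives $\ind(\mathbf{D}_u) = n\chi(\Sigma) + 2\langle c_1(TM),[u]\rangle$, as claimed. (If one wishes to avoid invoking Riemann--Roch as a black box, one can instead compute the index directly for holomorphic line bundles over $S^2$ via spaces of polynomials, use additivity of the Fredholm index under direct sums to reduce to line bundles, use homotopy invariance of the index to see that it depends only on $\langle c_1(E),[\Sigma]\rangle$ and $g$, and pass to arbitrary genus by a linear gluing argument for the domain.)

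The main obstacle I anticipate is the Fredholm property itself, and within that the cokernel statement: upgrading the semi-Fredholm estimate above to finite-dimensionality of $\coker\mathbf{D}_u^\CC$ requires a careful treatment of the formal $L^2$-adjoint and of the $L^2$-duality pairing on the relevant Sobolev completions, together with the bookkeeping needed to assemble the local Calder\'on--Zygmund estimates into a single global inequality. By comparison, once Fredholmness is secured, the reduction to the complex-linear part and the index computation (whether via Riemann--Roch or via the line-bundle/gluing route) are comparatively routine, and the deduction of the Corollary is essentially immediate.
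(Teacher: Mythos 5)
Your proposal is correct and follows essentially the same route as the paper: reduce to a complex-linear Cauchy--Riemann operator by a compact zeroth-order perturbation, establish the Fredholm property via the Calder\'on--Zygmund estimate together with the formal adjoint (identifying $\coker D$ with $\ker D^*$ using elliptic regularity for weak solutions), compute the index by Riemann--Roch, and conclude with the implicit function theorem. The only cosmetic difference is that you quote classical Riemann--Roch in Dolbeault form as the primary route, whereas the paper proves the genus-zero case directly via model line bundles and defers the general case to the literature --- but you sketch that alternative as well, so the content matches.
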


\section{Formal adjoints}
\label{sec:formal}

The Fredholm theory for the operator $\mathbf{D}_u$ fits naturally into the more
general context of Cauchy-Riemann type operators on vector bundles.
For the next three sections, we will consider an arbitrary smooth complex 
vector bundle $(E,J) \to (\Sigma,j)$ of (complex) rank~$n$, where
$(\Sigma,j)$ is a closed connected Riemann surface unless otherwise noted.  We will 
often abbreviate the first Chern number of $(E,J)$ by writing
$$
c_1(E) := \langle c_1(E,J),[\Sigma] \rangle \in \ZZ.
$$
Let $D : \Gamma(E) \to \Gamma(\overline{\Hom}_\CC(T\Sigma,E))$ denote a
(real- or complex-) linear Cauchy-Riemann type operator.  In order to
understand the properties of this operator, it will be extremely useful
to observe that it has a \defin{formal adjoint},
$$
D^* : \Gamma(\overline{\Hom}_\CC(T\Sigma,E)) \to \Gamma(E),
$$
which will turn out to have all the same nice properties of a Cauchy-Riemann
type operator.  We'll use this in the next section to understand the
cokernel of $D$, which turns out to be naturally isomorphic to the
kernel of~$D^*$.

Choose a Hermitian bundle metric 
$\langle\ ,\ \rangle$ on $E$, and let $(\ ,\ )$ denote its real part,
which is a real bundle metric that is invariant under the action of~$J$.
Choose also a Riemannian metric $g$ on $\Sigma$ that is compatible with the
conformal structure defined by~$j$; this defines a volume form $\mu_g$
on $\Sigma$, and conversely (since $\dim_\RR \Sigma = 2$), 
such a volume form uniquely determines the compatible metric~$g$
via the relation
$$
\mu_g(X,Y) = g(jX,Y).
$$
These choices naturally induce a bundle metric $(\ ,\ )_g$ on
$\overline{\Hom}_\CC(T\Sigma,E))$, and both $\Gamma(E)$ and
$\Gamma(\overline{\Hom}_\CC(T\Sigma,E))$ now inherit natural $L^2$-inner
products, defined by
$$
\langle \xi,\eta \rangle_{L^2} = \int_\Sigma (\xi,\eta)\,\mu_g,
\qquad
\langle \alpha , \beta \rangle_{L^2} = \int_\Sigma (\alpha,\beta)_g\,\mu_g
$$
for $\xi,\eta \in \Gamma(E)$ and
$\alpha, \beta \in \Gamma(\overline{\Hom}_\CC(T\Sigma,E))$.
We say that an operator 
$D^* : \Gamma(\overline{\Hom}_\CC(T\Sigma,E)) \to \Gamma(E)$ is the
\defin{formal adjoint} of $D$ if it satisfies
\begin{equation}
\label{eqn:adjoint}
\langle \alpha , D\eta \rangle_{L^2} = \langle D^*\alpha , \eta \rangle_{L^2}.
\end{equation}
for all smooth sections $\eta \in \Gamma(E)$ and
$\alpha \in \Gamma(\overline{\Hom}_\CC(T\Sigma,E))$.
The existence of such operators is a quite general phenomenon that is easy
to see locally using integration by parts: roughly speaking, 
if $D$ has the form 
$D = \dbar + A$ in some local trivialization, then we expect $D^*$ in the same 
local picture to take the form $-\p + A^T$.  
One sees also from this local expression that
$D^*$ is almost a Cauchy-Riemann type operator; to be precise, it is
\emph{conjugate} to a Cauchy-Riemann type operator.  The extra minus sign
can be removed by an appropriate bundle isomorphism, and one can always
transform $\p = \p_s - i \p_t$ into $\dbar = \p_s + i \p_t$ by reversing
the complex structure on the bundle.  Globally, the result will be the
following.

\begin{prop}
\label{prop:formalAdjoint}
For any choice of Hermitian bundle metric on $(E,J) \to (\Sigma,j)$
and Riemannian metric $g$ on $\Sigma$ compatible with~$j$, every
linear Cauchy-Riemann type operator
$D : \Gamma(E) \to \Gamma(\overline{\Hom}_\CC(T\Sigma,E))$ admits a
formal adjoint 
$$
D^* : \Gamma(\overline{\Hom}_\CC(T\Sigma,E)) \to \Gamma(E)
$$
which is conjugate to a linear Cauchy-Riemann type operator in the following
sense.  Defining a complex vector bundle $(\widehat{E},\hat{J})$
over $\Sigma$ by
$$
(\widehat{E},\hat{J}) := (\overline{\Hom}_\CC(T\Sigma,E),-J),
$$
there exist smooth real-linear vector bundle isomorphisms
$$
\Phi : \widehat{E} \to \overline{\Hom}_\CC(T\Sigma,E),
\qquad
\Psi : E \to \overline{\Hom}_\CC(T\Sigma,\widehat{E})
$$
such that $\Psi \circ D^* \circ \Phi$ is a linear Cauchy-Riemann type
operator on~$\widehat{E}$.
\end{prop}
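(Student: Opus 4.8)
The plan is to build $D^*$ chart by chart via integration by parts, glue the local pieces using uniqueness, and then correct the resulting local formula by two metric-dependent real-linear bundle isomorphisms so that it becomes a Cauchy--Riemann type operator on $\widehat{E}$.

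\textbf{Uniqueness and localization.} First I would observe that both $L^2$-pairings are nondegenerate: if $\langle \delta,\eta\rangle_{L^2}=0$ for all $\eta\in\Gamma(E)$ then $\delta\equiv 0$, and likewise on $\Gamma(\overline{\Hom}_\CC(T\Sigma,E))$. Hence a first-order operator defined over any open subset $\uU\subset\Sigma$ and satisfying \eqref{eqn:adjoint} for all $\eta$ compactly supported in $\uU$ is unique; in particular a global formal adjoint is unique if it exists. This reduces existence to a purely local problem: over each coordinate chart $\uU$ carrying a unitary local trivialization of $(E,J)$, one produces an operator $D^*_\uU$ by integrating $\langle\alpha,D\eta\rangle_{L^2}$ by parts, uniqueness on the overlaps $\uU\cap\uU'$ then forces the $D^*_\uU$ to agree there, so they assemble into a global operator $D^*$, and the global relation \eqref{eqn:adjoint} for arbitrary $\eta$ follows by splitting $\eta$ with a partition of unity subordinate to the charts.

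\textbf{The local formula.} In such a chart I would use the holomorphic coordinate $z=s+it$, trivialize $\overline{\Hom}_\CC(T\Sigma,E)|_\uU$ by evaluation on $\p_s$ and so identify it with $\CC^n$, and write $g=\lambda^2(ds^2+dt^2)$ with $\lambda>0$. By Exercise~\ref{EX:ChristoffelReal}, $D$ then reads $Dv=\dbar v+Av$ with $A\colon\uU\to\End_\RR(\CC^n)$, where $\dbar v=\p_s v+i\,\p_t v$ in this identification. It is cleanest to do the computation first for the flat metric $\lambda\equiv 1$: integrating by parts against a compactly supported $v$, and using that the real inner product on $E$ is $J$-invariant so that $i$ moves across the pairing with a sign, one gets $D^*\alpha=-\p\alpha+B\alpha$ with $\p\alpha=\p_s\alpha-i\,\p_t\alpha$ and $B$ collecting the transpose of $A$ relative to the real metric. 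A conformal change of $g$ then alters $D^*$ only by composition with a positive fiberwise (conformal) rescaling together with a further zeroth-order term. Thus in every such trivialization $D^*$ is, up to a positive fiberwise factor, ``$-\p$ plus zeroth order.''

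\textbf{Conjugating to a Cauchy--Riemann operator.} Finally I would convert $-\p$ into a $\dbar$-operator. Passing to $(\widehat{E},\hat{J}):=(\overline{\Hom}_\CC(T\Sigma,E),-J)$ reverses the complex structure, and in the trivializations above the operator $\dbar_{\widehat{E}}$ is precisely $\p_s+(-i)\,\p_t=\p$; the tautological real-linear identification $\Phi\colon\widehat{E}\to\overline{\Hom}_\CC(T\Sigma,E)$ of underlying real bundles leaves $D^*$ unchanged as a real operator but lets us read its principal part against $\hat{J}$. Composing on the target with a real-linear bundle isomorphism $\Psi\colon E\to\overline{\Hom}_\CC(T\Sigma,\widehat{E})$ — one which exists by a rank count, reduces to $-\mathrm{id}$ in the evaluation-on-$\p_s$ trivializations when $g$ is flat, and in general is pinned down globally using the Riemannian metric $g$ on $\Sigma$ (absorbing also the conformal rescaling from the previous step) — removes the overall minus sign, so that $\Psi\circ D^*\circ\Phi$ has the local form $\dbar_{\widehat{E}}+(\text{zeroth order})$. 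Since the Leibniz rule characterizing real-linear Cauchy--Riemann type operators (Definition~\ref{defn:realCR}) is a local condition and is satisfied by any operator of this local form (cf.~Exercise~\ref{EX:ChristoffelReal}), it follows that $\Psi\circ D^*\circ\Phi$ is a real-linear Cauchy--Riemann type operator on $(\widehat{E},\hat{J})$, which is the assertion.

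\textbf{Main obstacle.} The difficulties here are bookkeeping rather than analysis: keeping straight the two distinct complex structures in play (the one on $T\Sigma$ and the one on $\overline{\Hom}_\CC(T\Sigma,E)$), tracking exactly how the conformal factor of $g$ enters the principal symbol and checking that it can be absorbed fiberwise into $\Psi$ rather than leaving behind a genuine twist, and — since $D$, hence $D^*$ and $\Psi\circ D^*\circ\Phi$, need not be complex-linear — running the entire argument in the real-linear category while defining $\Phi$ and $\Psi$ as honest global real-linear bundle isomorphisms, not merely chart-by-chart identifications. I expect the clean global construction of $\Psi$ and the verification that it has the claimed local form to be the step that needs the most care.
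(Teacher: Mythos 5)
Your argument is correct, but it follows a genuinely different route from the one in the text. The paper's proof is global from the start: it develops the Hodge star on bundle-valued forms, writes $D = \dbar_\nabla + A$ for a Hermitian connection~$\nabla$ and a zeroth-order term~$A$, and produces the closed formula $D^* = -*\p_\nabla * + A^T$; the bundle maps $\Phi$ and $\Psi$ then come for free from the Hodge star together with the canonical identification $\Lambda^{1,1}T^*\Sigma \otimes E = \overline{\Hom}_\CC(T\Sigma,\widehat{E})$, and the Cauchy--Riemann property of the conjugated operator is read off from the Leibnitz rule $\p_\nabla(f\beta) = (\p f)\beta + f\,\p_\nabla\beta$. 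You instead construct $D^*$ chart by chart via integration by parts and glue using uniqueness of formal adjoints; this is perfectly sound (your sign computation $(\alpha,i\beta) = -(i\alpha,\beta)$ and the observation that the conformal factor of~$g$ enters only as a positive fiberwise rescaling are both right, and in fact the $\lambda^{-2}$ from the metric on the $T^*\Sigma$ factor cancels against the $\lambda^2$ in $\mu_g$ before one even integrates by parts). What your approach buys is elementarity---no covariant exterior derivative or Hodge theory is needed---at the cost of leaving the global bundle map~$\Psi$ somewhat implicit: ``exists by a rank count'' only guarantees \emph{some} isomorphism, not one with the required local form, so to close that step you should exhibit $\Psi$ canonically, e.g.\ as $e \mapsto -\mu_g \otimes e$ under the identification of $\overline{\Hom}_\CC(T\Sigma,\overline{\Hom}_\CC(T\Sigma,E))$ with $\Lambda^2 T^*\Sigma \otimes E$ (this is exactly the Hodge star $\Omega^0 \to \Omega^2$ in disguise, and it manifestly glues and reduces to the local formula you want). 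With that one addition your proof is complete.
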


We will prove this by deriving a global expression for $D^*$.
One can construct it by a generalization of the same procedure by which
one constructs the formal adjoint of $d$ on the algebra of differential forms,
so let us recall this first.  If $M$ is any smooth oriented manifold of real
dimension~$m$ with a Riemannian metric~$g$, let $\mu_g$ denote the induced
volume form, and use $g$ also to denote the natural extension of $g$ to
a bundle metric on each of the skew-symmetric tensor bundles 
$\Lambda^k T^*M$ for $k=0,\ldots,m$.  We will denote $\Omega^k(M) :=
\Gamma(\Lambda^k T^*M)$, i.e.~this is simply the vector space of smooth
differential $k$-forms on~$M$.  Now
for each $k = 0,\ldots,m$, 
there is a unique bundle isomorphism,
$$
* : \Lambda^k T^*M \to \Lambda^{m-k} T^*M
$$
the \defin{Hodge star operator}, which has
the property that for all $\alpha, \beta \in \Omega^k(M)$,
\begin{equation}
\label{eqn:Hodge}
g(\alpha,\beta)\,\mu_g = \alpha \wedge *\beta.
\end{equation}
One can easily show that $*$ is a bundle isometry and satisfies
$*^2 = (-1)^{k(m-k)}$.  With this, one can associate to the exterior
derivative $d : \Omega^k(M) \to \Omega^{k+1}(M)$ a formal adjoint
\begin{equation*}
\begin{split}
d^* &: \Omega^k(M) \to \Omega^{k-1}(M), \\
d^* &= (-1)^{m(k+1)+1} * d *,
\end{split}
\end{equation*}
which satisfies
$$
\int_M g(\alpha,d\beta)\,\mu_g = \int_M g(d^* \alpha,\beta)\,\mu_g
$$
for any $\alpha \in \Omega^k(M)$ and $\beta \in \Omega^{k-1}(M)$ with
compact support.  The proof of this relation is an easy exercise in
Stokes' theorem, using \eqref{eqn:Hodge}.

We can extend this discussion to bundle-valued differential forms:
given a real vector bundle $E \to M$,
let $\Omega^k(M,E) := \Gamma(\Lambda^k T^*M \otimes E)$, which is
naturally identified with the space of smooth $k$-multilinear antisymmetric
bundle maps $TM \oplus \ldots \oplus TM \to E$.  Choosing a bundle metric
$(\  ,\ )$ on~$E$, the combination of $g$ and $(\ ,\ )$ induces a
natural tensor product 
metric on $\Lambda^k T^*M \otimes E$, which we'll denote by
$(\ ,\ )_g$.  There is also an isomorphism of $E$ to its dual bundle
$E^* \to M$, defined by
$$
E \to E^* : v \mapsto \bar{v} := ( v,\cdot ),
$$
which extends naturally to an isomorphism 
$$
\Lambda^k T^*M \otimes E \to \Lambda^k T^*M \otimes E^* :
\alpha \mapsto \bar{\alpha}.
$$
There is no natural product structure on $\Lambda^* T^*M \otimes E$,
but the wedge product does define a natural pairing
$$
(\Lambda^* T^*M \otimes E^*) \oplus (\Lambda^* T^*M \otimes E) \to
\Lambda^*T^*M : (\alpha \otimes \lambda, \beta \otimes v) \mapsto
\lambda(v) \cdot \alpha \wedge \beta,
$$
as well as a fiberwise module structure,
$$
\Lambda^k T^*M \oplus (\Lambda^\ell T^*M\otimes E) \to
\Lambda^{k+\ell} T^*M \otimes E : (\alpha,\beta) \mapsto \alpha \wedge
\beta,
$$
so that in particular $\Omega^*(M,E)$ becomes an $\Omega^*(M)$-module.  

Now if $\nabla : \Gamma(E) \to \Gamma(\Hom(TM,E)) = \Omega^1(M,E)$ 
is a connection on $E \to M$, this has a natural extension to a 
\defin{covariant exterior derivative}, which is a degree~$1$
linear map $d_\nabla : \Omega^*(M,E) \to \Omega^*(M,E)$ satisfying
the graded Leibnitz rule
$$
d_\nabla (\alpha \wedge \beta) = d\alpha \wedge \beta + (-1)^k \alpha \wedge
d_\nabla\beta
$$
for all $\alpha \in \Omega^k(M)$ and $\beta \in \Omega^\ell(M,E)$.
This also has a formal adjoint $d^*_\nabla : \Omega^*(M,E) \to
\Omega^*(M,E)$, which is a linear map of degree~$-1$.  We can write it down
using a slight generalization of the Hodge star operator:
$$
* : \Lambda^k T^*M \otimes E \to \Lambda^{m-k}T^*M \otimes E :
\alpha \otimes v \mapsto *\alpha \otimes v,
$$
in other words for any $p \in M$, $\alpha \in \Lambda^k T^*_p M$ and 
$v \in E_p$, the product $\alpha v$ defines a skew-symmetric $k$-form 
on $T_p M$ with values in $E_p$, and we define $*(\alpha v)$ to be
$(*\alpha) v$.  This map has the property that for all
$\alpha,\beta \in \Omega^k(M,E)$,
$$
(\alpha,\beta)_g \,\mu_g = \bar{\alpha} \wedge *\beta,
$$
and it is then straightforward to verify that
\begin{equation}
\label{eqn:deltaNabla}
\begin{split}
d^*_\nabla &: \Omega^k(M,E) \to \Omega^{k-1}(M,E), \\
d^*_\nabla &= (-1)^{m(k+1)+1} * d_\nabla *
\end{split}
\end{equation}
has the desired property, namely that
\begin{equation}
\label{eqn:deltaAdjoint}
\int_M (\alpha,d_\nabla\beta)_g \,\mu_g = \int_M (d^*_\nabla 
\alpha,\beta)_g \,\mu_g
\end{equation}
for all $\alpha \in \Omega^k(M,E)$ and $\beta \in \Omega^{k-1}(M,E)$ with
compact support.

Let us now extend some of these constructions to
a complex vector bundle $(E,J)$ of rank~$n$ over a complex manifold
$(\Sigma,j)$ of (complex) dimension~$m$.
Here it becomes natural to split the space of bundle-valued 
$1$-forms $\Omega^1(\Sigma,E)$
into the subspaces of complex-linear and antilinear forms,
often called $(1,0)$-forms and $(0,1)$-forms respectively,
$$
\Omega^1(\Sigma,E) = \Omega^{1,0}(\Sigma,E) \oplus \Omega^{0,1}(\Sigma,E),
$$
where by definition $\Omega^{1,0}(\Sigma,E) =
\Gamma(\Hom_\CC(T\Sigma,E))$ and $\Omega^{0,1}(\Sigma,E) =
\Gamma(\overline{\Hom}_\CC(T\Sigma,E))$.  Choosing holomorphic local
coordinates $(z^1,\ldots,z^m)$ on some open subset of~$\Sigma$, all the
$(1,0)$-forms can be written on this subset as
$$
\alpha = \sum_{j=1}^m \alpha_j \,dz^j
$$
for some local sections $\alpha_j$ of~$E$, and the $(0,1)$-forms likewise
take the form
$$
\alpha = \sum_{j=1}^m \alpha_j \,d\bar{z}^j.
$$
The space of bundle-valued $k$-forms then splits into subspaces
of $(p,q)$-forms for $p + q = k$,
$$
\Omega^k(\Sigma,E) = \bigoplus_{p+q=k} \Omega^{p,q}(\Sigma,E),
$$
where any $\alpha \in \Omega^{p,q}(\Sigma,E)$ can be written locally as
a linear combination of terms of the form
$$
dz^{j_1} \wedge \ldots \wedge dz^{j_p} \wedge d\bar{z}^{k_1} \wedge
\ldots \wedge d\bar{z}^{k_q}
$$
multiplied with local sections of~$E$.  The $(p,q)$-forms are
sections of a vector bundle
$$
\Lambda^{p,q}T^*\Sigma \otimes E,
$$
which is a subbundle of $\Lambda^{p+q}T^*\Sigma \otimes E$.

As a special case, let $\Omega^{p,q}(\Sigma) := \Omega^{p,q}(\Sigma,
\Sigma \times \CC)$ denote the space of complex-valued $(p,q)$-forms.
Then the image of the exterior derivative on $\Omega^{p,q}(\Sigma)$
splits naturally:
$$
d : \Omega^{p,q}(\Sigma) \to \Omega^{p+1,q}(\Sigma) \oplus 
\Omega^{p,q+1}(\Sigma),
$$
and with respect to this splitting we can define linear operators
$$
\p : \Omega^{p,q}(\Sigma) \to \Omega^{p+1,q}(\Sigma),
\qquad
\dbar : \Omega^{p,q}(\Sigma) \to \Omega^{p,q+1}(\Sigma)
$$
such that $d = \p + \dbar$.  The restriction to $\Omega^{0,0}(\Sigma) =
C^\infty(\Sigma,\CC)$ gives (up to a factor of two)\footnote{For this section
only, we are modifying our usual definition of the operators $\dbar$ and $\p$
on $C^\infty(\Sigma,\CC)$ to include the extra factor of $1/2$.
The difference is harmless.}
the usual operators
$\p$ and $\dbar$ on smooth functions $f : \Sigma \to \CC$, namely
$$
\p f = \frac{1}{2}( df - i\,df \circ j),
\qquad
\dbar f = \frac{1}{2} ( df + i \,df \circ j).
$$
It follows now almost tautologically that $\p$ and $\dbar$ satisfy
graded Leibnitz rules,
\begin{equation*}
\begin{split}
\p (\alpha \wedge \beta) &= \p\alpha \wedge \beta + (-1)^{p+q} \alpha 
\wedge \p\beta, \\
\dbar (\alpha \wedge \beta) &= \dbar\alpha \wedge \beta + (-1)^{p+q}
\alpha \wedge \dbar\beta
\end{split}
\end{equation*}
for $\alpha \in \Omega^{p,q}(\Sigma)$ and $\beta \in \Omega^{r,s}(\Sigma)$.

Choosing a Hermitian metric on the bundle $(E,J) \to (\Sigma,j)$, we can
similarly split the derivation $d_\nabla : \Omega^k(\Sigma,E) \to
\Omega^{k+1}(\Sigma,E)$ defined by any Hermitian connection, giving rise
to complex-linear operators
\begin{equation*}
\begin{split}
\p_\nabla : \Omega^{p,q}(\Sigma,E) &\to \Omega^{p+1,q}(\Sigma,E), \\
\dbar_\nabla : \Omega^{p,q}(\Sigma,E) &\to \Omega^{p,q+1}(\Sigma,E)
\end{split}
\end{equation*}
which satisfy similar Leibnitz rules,
\begin{equation}
\label{eqn:moreLeibnitz}
\begin{split}
\p_\nabla (\alpha \wedge \beta) &= \p\alpha \wedge \beta + (-1)^{p+q} \alpha 
\wedge \p_\nabla \beta, \\
\dbar_\nabla (\alpha \wedge \beta) &= \dbar\alpha \wedge \beta + (-1)^{p+q}
\alpha \wedge \dbar_\nabla\beta
\end{split}
\end{equation}
for $\alpha \in \Omega^{p,q}(\Sigma)$ and $\beta \in \Omega^{r,s}(\Sigma,E)$.
In particular, this shows that $\dbar_\nabla : \Omega^{p,q}(\Sigma,E) \to
\Omega^{p,q+1}(\Sigma,E)$ can be regarded as a complex-linear
Cauchy-Riemann type operator on the bundle $\Lambda^{p,q}T^*\Sigma \otimes
E$, where we identify $\overline{\Hom}_\CC(T\Sigma,
\Lambda^{p,q}T^*\Sigma \otimes E)$ naturally with
$\Lambda^{p,q+1}T^*\Sigma \otimes E$.
Restricting to $\Omega^{0,0}(\Sigma,E) = \Gamma(E)$, $\dbar_\nabla :
\Gamma(E) \to \Omega^{0,1}(\Sigma,E)$ has the form
$$
\dbar_\nabla = \frac{1}{2}\left( \nabla + J \circ \nabla \circ j \right).
$$
We are now almost ready to write down the formal adjoint of this operator.
For simplicity, we restrict
to the case where $\Sigma$ has complex dimension one, since this 
is all we need.  Observe that the Hodge star then defines a
bundle isomorphism of $\Lambda^1 T^*\Sigma$ to itself, whose natural
extension to $\Lambda^1 T^*\Sigma \otimes E$ is complex-linear.

\begin{exercise}
\label{EX:Hodge} \ 
\begin{enumerate}
\renewcommand{\labelenumi}{(\alph{enumi})}
\item
Show that for any choice of local holomorphic coordinates $z = s+it$ on
$\Sigma$, $* ds = dt$ and $* dt = -ds$.  
\item
Show that for any
$\alpha \in T^*\Sigma$, $*\alpha = -\alpha \circ j$.
\item
Show that for any $\alpha \in \Lambda^{1,0}T^*\Sigma \otimes E$,
$*\alpha = -J\alpha$ and for any $\alpha \in \Lambda^{0,1}T^*\Sigma \otimes E$,
$*\alpha = J\alpha$.  In particular, $*$ respects the splitting
$\Omega^1(\Sigma,E) = \Omega^{1,0}(\Sigma,E) \oplus \Omega^{0,1}(\Sigma,E)$.
\end{enumerate}
\end{exercise}

We claim now that the formal adjoint of $\dbar_\nabla$ is defined by
a formula analogous to the operator $d^*_\nabla$
of \eqref{eqn:deltaNabla}, namely
\begin{equation}
\label{eqn:dbarStar}
\dbar_\nabla^* := - * \p_\nabla * : \Omega^{0,1}(\Sigma,E) \to
\Omega^0(\Sigma,E).
\end{equation}
In fact, this is simply the restriction of $d^*_\nabla$ to
$\Omega^{0,1}(\Sigma,E)$, as we observe that $\dbar_\nabla$ maps
$\Omega^{0,1}(\Sigma,E)$ to $\Omega^{0,2}(\Sigma,E)$, which is trivial
since $\Sigma$ has only one complex dimension.  Thus the claim follows
easily from \eqref{eqn:deltaAdjoint} and the following exercise.

\begin{exercise}
Show that $\Lambda^{1,0}T^*\Sigma \otimes E$ and $\Lambda^{0,1}T^*\Sigma
\otimes E$ are orthogonal subbundles with respect to the metric
$(\ ,\ )_g$ on $\Lambda^1 T^*\Sigma \otimes E$.
\end{exercise}

It is now easy to write down the formal adjoint of a more general
Cauchy-Riemann type operator.

\begin{proof}[Proof of Prop.~\ref{prop:formalAdjoint}]
Choosing any Hermitian connection $\nabla$ on $E$, 
Exercise~\ref{EX:Christoffel} allows us to write
$$
D = \dbar_\nabla + A,
$$
where $A : E \to \overline{\Hom}_\CC(T\Sigma,E)$ is a smooth real-linear
bundle map.  (Note that Exercise~\ref{EX:Christoffel} 
dealt only with the complex-linear
case, but the generalization to the real case is obvious.)
Extending a well-known fact from linear algebra to the context of bundles,
there is a unique smooth real-linear bundle map 
$A^T : \overline{\Hom}_\CC(T\Sigma,E) \to E$ such that
$$
(\alpha, A \eta)_g = (A^T \alpha,\eta)
$$
for all $z \in \Sigma$, $\eta \in E_z$ and $\alpha \in 
\Lambda^{0,1}T^*_z \Sigma \otimes E_z$.  Then the desired operator
$D^*$ is given by
$$
D^* = \dbar_\nabla^* + A^T.
$$
From \eqref{eqn:dbarStar}, we see that $D^*$ is conjugate to an operator
of the form
$$
D_1 = \p_\nabla + A_1 : \Omega^{0,1}(\Sigma,E) \to
\Omega^{1,1}(\Sigma,E),
$$
where $A_1 : \Lambda^{0,1}T^*\Sigma \otimes E \to \Lambda^{1,1}T^*\Sigma
\otimes E$ is some smooth bundle map, i.e.~a ``zeroth order term.''
By \eqref{eqn:moreLeibnitz}, this satisfies the Leibnitz rule,
\begin{equation}
\label{eqn:LeibnitzD1}
D_1 (f \alpha) = (\p f) \alpha + f D_1 \alpha
\end{equation}
for all smooth functions $f : \Sigma \to \CC$.  We can turn this into the
Leibnitz rule for an actual Cauchy-Riemann type operator on the bundle,
$$
(\widehat{E},\hat{J}) = (\overline{\Hom}_\CC(T\Sigma,E),-J).
$$
Indeed, the identity $\widehat{E} \to \overline{\Hom}_\CC(T\Sigma,E)$ is
then a complex-antilinear bundle isomorphism, 
and there are canonical isomorphisms
$$
\Lambda^{1,1}T^*\Sigma \otimes E = \Hom_\CC(T\Sigma,\Lambda^{0,1}T^*\Sigma
\otimes E) = \overline{\Hom}_\CC(T\Sigma,\widehat{E}),
$$
so that $D_1$ is now conjugate to an operator
$$
D_2 : \Gamma(\widehat{E}) \to \Gamma(\overline{\Hom}_\CC(T\Sigma,\widehat{E}))
$$
which satisfies $D_2 (f\beta) = (\dbar f) \beta + f D_2 \beta$ due to
\eqref{eqn:LeibnitzD1}.  
\end{proof}

\begin{exercise}
\label{EX:c1}
Show that the bundle $(\widehat{E},\hat{J})$, as defined in
Prop.~\ref{prop:formalAdjoint} satisfies
$$
c_1(\widehat{E}) = - c_1(\Lambda^{0,1}T^*\Sigma \otimes E) =
-c_1(E) - n\chi(\Sigma).
$$
\end{exercise}

\begin{remark}
It's worth noting that if $(\Sigma,j)$ is a general complex
manifold with a Hermitian vector bundle $(E,J) \to (\Sigma,j)$ and
Hermitian connection $\nabla$, the resulting complex-linear Cauchy-Riemann
type operator 
$$
\dbar_\nabla : \Gamma(E) \to \Gamma(\overline{\Hom}_\CC(T\Sigma,E))
$$
does \emph{not} necessarily define a holomorphic structure
if $\dim_\CC \Sigma \ge 2$.  It turns out that the required local existence
result for holomorphic sections is true if and only if the map
$$
\dbar_\nabla \circ \dbar_\nabla : \Gamma(E) \to \Omega^{0,2}(\Sigma,E)
$$
is zero.  It's easy to see that this condition is necessary, because if
there is a holomorphic structure, then $\dbar_\nabla$ looks like the standard
$\dbar$-operator in a local holomorphic trivialization and
$\dbar \circ \dbar = 0$ on $\Omega^*(\Sigma,E)$.  The converse is,
in some sense, a complex version of the Frobenius integrability theorem:
indeed, the corresponding statement in real differential geometry is that
vector bundles with connections locally admit flat sections if and only if
$d_\nabla \circ d_\nabla = 0$, which means the curvature vanishes.
A proof of the complex version may be found in 
\cite{DonaldsonKronheimer}*{\S~2.2.2}, and the first step in this proof
is the local existence result for the case $\dim_\CC\Sigma = 1$
(our Theorem~\ref{thm:linearExistence}).  
Observe that the integrability condition is
trivially satisfied when $\dim_\CC\Sigma = 1$, since then
$\Omega^{0,2}(\Sigma,E)$ is a trivial space.
\end{remark}

\section{The Fredholm property}
\label{sec:FredholmProperty}

For the remainder of this chapter, $(\Sigma,j)$ will be a closed Riemann surface and
$(E,J) \to (\Sigma,j)$ will be a complex vector bundle of rank~$n$ with
a real-linear Cauchy-Riemann operator~$D$.  We shall now prove the
Fredholm property for the obvious extension of $D$ to a bounded linear map
\begin{equation}
\label{eqn:D}
D : W^{k,p}(E) \to W^{k-1,p}(\overline{\Hom}_\CC(T\Sigma,E)),
\end{equation}
with $k \in \NN$ and $p \in (1,\infty)$.

\begin{thm}
\label{thm:FredholmProperty}
The operator $D$ of \eqref{eqn:D} is Fredholm, and neither $\ker D$ nor
$\ind(D)$ depends on the choice of~$k$ and~$p$.
\end{thm}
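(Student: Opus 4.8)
The plan is to follow the classical route for Cauchy-Riemann type operators: first establish a global elliptic estimate, deduce from it together with compactness of a Sobolev inclusion that $D$ has finite-dimensional kernel and closed range, then identify the cokernel with the kernel of the formal adjoint from Proposition~\ref{prop:formalAdjoint} (which is finite-dimensional for the same reason), and finally invoke elliptic regularity to strip away all dependence on $k$ and $p$.

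First I would prove that there is a constant $c>0$, depending on $k$, $p$ and the auxiliary data used to define the norms but not on $\eta$, such that
\[
\| \eta \|_{W^{k,p}(E)} \le c\left( \| D\eta \|_{W^{k-1,p}} + \| \eta \|_{W^{k-1,p}(E)} \right)
\]
for all $\eta \in W^{k,p}(E)$. This follows by patching the Calder\'on--Zygmund inequality: choose a finite cover of $\Sigma$ by trivializing charts with subordinate cutoffs $\beta_\alpha$, write $D = \dbar + A_\alpha$ with $A_\alpha$ smooth in each chart (Exercise~\ref{EX:ChristoffelReal}), apply Exercise~\ref{EX:CZhigher} to each $\beta_\alpha\eta$, expand $\dbar(\beta_\alpha\eta) = \beta_\alpha(D\eta - A_\alpha\eta) + (\dbar\beta_\alpha)\eta$, and use that multiplication by the smooth bundle maps $A_\alpha$ and the smooth functions $\dbar\beta_\alpha$ is bounded on $W^{k-1,p}$; summing over $\alpha$ gives the estimate. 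Since the inclusion $W^{k,p}(E) \hookrightarrow W^{k-1,p}(E)$ is compact, a standard argument now yields the semi-Fredholm conclusion: restricting the estimate to $\ker D$ turns the $W^{k,p}$-norm into something dominated by the $W^{k-1,p}$-norm, so the identity on $\ker D$ is compact and $\ker D$ is finite-dimensional; and picking a closed complement $V$ of $\ker D$, the estimate on $V$ can be upgraded (by a contradiction argument using the compact embedding) to $\|\eta\|_{W^{k,p}} \le c\|D\eta\|_{W^{k-1,p}}$, so $D|_V$ is bounded below and $\im D = \im D|_V$ is closed.

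Next I would identify the cokernel using the formal adjoint. Since $\im D$ is closed, $\coker D = W^{k-1,p}/\im D$ is a Banach space whose dual, by Hahn--Banach, is the annihilator of $\im D$. Treating first $k=1$, where the dual of $L^p$ is $L^{p'}$, an element $\alpha$ annihilating $\im D$ satisfies $\langle \alpha, D\eta \rangle_{L^2} = 0$ for all smooth $\eta$, i.e.\ $D^*\alpha = 0$ weakly, with $D^*$ the formal adjoint of Proposition~\ref{prop:formalAdjoint}. Because $D^*$ is conjugate to a genuine Cauchy-Riemann type operator, the weak elliptic regularity of \S\ref{sec:estimates} (Corollary~\ref{cor:weakRegularity}, via Exercise~\ref{EX:ChristoffelReal}) makes $\alpha$ smooth, hence $\alpha \in \ker D^*$; and the elliptic estimate above, applied to $D^*$, shows $\ker D^*$ is finite-dimensional. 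So $\coker D$ is finite-dimensional for $k=1$ and is naturally identified with the space $\ker D^* \subset \Gamma(\overline{\Hom}_\CC(T\Sigma,E))$ of smooth sections.

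Finally, for independence of $k$ and $p$: by elliptic regularity (Exercise~\ref{EX:smoothSections}) every element of $\ker D$ is smooth no matter which $k,p$ we use, so $\ker D$ is always the same finite-dimensional subspace of $\Gamma(E)$. For the cokernel I would show that for every $k\ge 1$ the image $\im(D : W^{k,p}\to W^{k-1,p})$ equals the $L^2$-orthogonal complement of the \emph{fixed} smooth space $\ker D^*$: the inclusion $\subset$ is the adjoint relation \eqref{eqn:adjoint}, and for $\supset$, given $f \in W^{k-1,p}$ orthogonal to $\ker D^*$ the $k=1$ case yields $\eta \in W^{1,p}$ with $D\eta = f$, which the local linear regularity theory of \S\ref{sec:estimates} (applied as in Proposition~\ref{prop:inhomogeneousRegularity}, since $D = \dbar + A$ with $A$ smooth and $f \in W^{k-1,p}$) bootstraps up to $\eta \in W^{k,p}$. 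Hence $\coker D \cong \ker D^*$ for all $k,p$, so $\ind(D) = \dim\ker D - \dim\ker D^*$ depends only on $D$. The main obstacle I expect is the global elliptic estimate together with the duality/regularity argument it feeds — in particular making the identification $\coker D \cong \ker D^*$ manifestly $(k,p)$-independent, which is precisely where the bootstrapping from $k=1$ and the smoothness of $\ker D^*$ carry the load; the remaining statements follow from the standard Fredholm facts in Proposition~\ref{prop:Fredholm}.
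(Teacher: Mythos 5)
Your proposal is correct and follows essentially the same route as the paper: the patched Calder\'on--Zygmund estimate plus the compact inclusion $W^{k,p}\hookrightarrow W^{k-1,p}$ gives the semi-Fredholm property, the cokernel is identified with $\ker D^*$ via Hahn--Banach and weak regularity in the case $k=1$, and the general case together with the $(k,p)$-independence is obtained by bootstrapping against the fixed smooth spaces $\ker D$ and $\ker D^*$. The paper phrases the cokernel step as a direct-sum splitting $W^{k-1,p} = \im D \oplus \ker D^*$ rather than via annihilators, but the underlying argument is identical.
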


This result depends essentially on three ingredients: first, the
Calder\'on-Zygmund inequality gives an estimate for $\| \eta \|_{W^{k,p}}$
in terms of $\| D \eta \|_{W^{k-1,p}}$, 
from which we will be able to show quite
easily that $\ker D$ is finite dimensional.  The second ingredient is the
formal adjoint $D^*$ that was derived in the previous section:
since $D^*$ is also conjugate to a Cauchy-Riemann type operator,
the previous step implies that its kernel is also finite dimensional.
The final ingredient is elliptic regularity, which we can use to
identify the cokernel of $D$ with the kernel of~$D^*$.  The regularity theory
also implies that both of these kernels consist only of smooth sections,
and are thus completely independent of $k$ and~$p$.

As sketched above, the first step in proving Theorem~\ref{thm:FredholmProperty}
is an a priori estimate that follows from the linear regularity theory
of \S\ref{sec:estimates}.  In particular, the Calder\'on-Zygmund inequality
(Theorem~\ref{thm:elliptic} and Exercise~\ref{EX:CZhigher}) gives
\begin{equation}
\label{eqn:CZWkp}
\| \eta \|_{W^{k,p}} \le c \| \eta \|_{W^{k-1,p}} 
\text{ for all $\eta \in W^{k,p}_0(B,\CC^n)$}.
\end{equation}
If $\eta \in W^{k,p}(E)$ and $\{ \alpha_j \}$ is a partition of unity
subordinate to some finite open cover of $\Sigma$, then we can apply
\eqref{eqn:CZWkp} in local trivializations and charts to the sections
$\alpha_i \eta$.  In this local picture,
$D$ becomes an operator of the form $\dbar + A$, where $A$ is a smooth
family of matrices, thus locally defining a bounded linear operator from
$W^{k,p}$ to itself.  The result is the following estimate, of which
a more detailed proof may be found in \cite{McDuffSalamon:Jhol}*{Lemma~C.2.1}.

\begin{lemma}
\label{lemma:estimate}
For each $k \in \NN$ and $p \in (1,\infty)$,
there exists a constant $c > 0$ such that for every $\eta \in W^{k,p}(E)$,
$$
\| \eta \|_{W^{k,p}(E)} \le c \| D \eta \|_{W^{k-1,p}(E)} 
+ c \| \eta \|_{W^{k-1,p}(E)}.
$$
\end{lemma}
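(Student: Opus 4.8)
The estimate is local in nature, so the strategy is to patch together the Calderón–Zygmund inequality on small balls using a partition of unity, then absorb the error terms that arise from differentiating the cutoffs and from the zeroth-order part of $D$. First I would choose a finite open cover $\{\uU_j\}$ of $\Sigma$ together with holomorphic charts $\varphi_j : \uU_j \to \Omega_j \subset \CC$ and complex-linear trivializations $\Phi_j : E|_{\uU_j} \to \uU_j \times \CC^n$, shrinking the sets if necessary so that each $\Omega_j$ is contained in a ball $B_{R_j}$ and each chart domain supports a cutoff $\beta_j \in C_0^\infty(\Omega_j)$ that equals $1$ on the coordinate image of the support of a corresponding partition of unity function $\alpha_j$ subordinate to $\{\uU_j\}$. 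In each such chart, by Exercise~\ref{EX:ChristoffelReal}, the operator $D$ takes the form $Dv = \dbar v + A_j v$ for a smooth matrix-valued function $A_j$, so that $v \mapsto A_j v$ is a bounded operator $W^{k,p} \to W^{k,p}$ (using that $W^{k,p}$ is a Banach algebra, or more simply that $A_j$ is smooth with bounded derivatives on the relevant compact set).

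**Key steps.** Given $\eta \in W^{k,p}(E)$, write $\eta = \sum_j \alpha_j \eta$ and work with each piece $\eta_j := \alpha_j\eta$, which in the $j$-th chart and trivialization becomes a $\CC^n$-valued function with compact support in $\Omega_j$; after multiplying by the larger cutoff $\beta_j$ (which changes nothing since $\beta_j \equiv 1$ on the support) we may apply Exercise~\ref{EX:CZhigher} to get $\|\eta_j\|_{W^{k,p}} \le c\,\|\dbar \eta_j\|_{W^{k-1,p}}$. Now expand $\dbar \eta_j = \dbar(\alpha_j \eta) = (\dbar\alpha_j)\eta + \alpha_j \dbar\eta = (\dbar\alpha_j)\eta + \alpha_j(D\eta - A_j\eta)$ using the Leibniz rule and the local form of $D$. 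The term $\alpha_j D\eta$ is controlled by $\|D\eta\|_{W^{k-1,p}(E)}$, and both $(\dbar\alpha_j)\eta$ and $\alpha_j A_j\eta$ are controlled by $\|\eta\|_{W^{k-1,p}(E)}$ — for the latter one uses that multiplication by the smooth functions $\dbar\alpha_j$ and $A_j$ is bounded on $W^{k-1,p}$, noting that the derivative hitting $\alpha_j$ costs one order of regularity but this is precisely why the right-hand side involves $\|\eta\|_{W^{k-1,p}}$ and not $\|\eta\|_{W^{k,p}}$. Summing over the finitely many $j$ and using the definition \eqref{eqn:sectionNorm} of the $W^{k,p}(E)$-norm (with the same charts and partition of unity, which is harmless by equivalence of norms) gives the claimed inequality $\|\eta\|_{W^{k,p}(E)} \le c\|D\eta\|_{W^{k-1,p}(E)} + c\|\eta\|_{W^{k-1,p}(E)}$.

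**Main obstacle.** The analytically substantive content — the Calderón–Zygmund inequality \eqref{eqn:CZWkp} — is already available from \S\ref{sec:estimates}, so no hard estimate remains to be proved here. The only real care needed is bookkeeping: keeping track of which terms land in $W^{k-1,p}$ versus $W^{k,p}$, and making sure the constant $c$ can be chosen uniformly over the finite cover (which is automatic since there are finitely many charts and all the relevant functions $\alpha_j$, $\dbar\alpha_j$, $A_j$ are fixed and smooth on compact sets). One should also check that the trivializations can be taken complex-linear so that $D$ really does have the form $\dbar + A$ rather than something with the leading term conjugated — this is exactly Exercise~\ref{EX:Jtriv}. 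I would also remark that the identical argument applies to $D^*$ (via Proposition~\ref{prop:formalAdjoint}, since $D^*$ is conjugate to a Cauchy-Riemann type operator), which is what will be needed in the next step to bound its kernel. The detailed version of this patching argument is carried out in \cite{McDuffSalamon:Jhol}*{Lemma~C.2.1}, so in the write-up I would present the key expansion of $\dbar(\alpha_j\eta)$ and then refer there for the remaining routine estimates.
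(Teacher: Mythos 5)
Your proof is correct and follows exactly the same route as the paper's: a partition of unity, the Calder\'on--Zygmund inequality applied to each $\alpha_j\eta$ in a holomorphic chart and complex trivialization where $D = \dbar + A_j$, and absorption of the cutoff and zeroth-order error terms into $c\|\eta\|_{W^{k-1,p}(E)}$. The paper only sketches this argument and defers the bookkeeping to \cite{McDuffSalamon:Jhol}*{Lemma~C.2.1}, so your write-up simply supplies the details it omits.
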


Observe that the inclusion $W^{k,p}(E) \hookrightarrow W^{k-1,p}(E)$ is
compact.  This will allow us to make use of the following general result.

\begin{prop}
Suppose $X$, $Y$ and $Z$ are Banach spaces, $A \in \lL(X,Y)$,
$K \in \lL(X,Z)$ is compact, and there is a constant $c > 0$ such that for all
$x \in X$,
\begin{equation}
\label{eqn:generalEstimate}
\| x \|_X \le c \| Ax \|_Y + c \| Kx \|_Z.
\end{equation}
Then $\ker A$ is finite dimensional and $\im A$ is closed.
\end{prop}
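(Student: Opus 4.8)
The plan is to exploit the compactness of $K$ together with the estimate \eqref{eqn:generalEstimate} in the two standard ways: first to force the closed unit ball of $\ker A$ to be compact, and then to upgrade \eqref{eqn:generalEstimate} to a genuine lower bound for $A$ on a closed complement of $\ker A$.

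First I would treat $\ker A$. It is a closed subspace of $X$ since $A$ is bounded, and on it the estimate collapses to $\| x \|_X \le c \| Kx \|_Z$. Given any bounded sequence $(x_n)$ in $\ker A$, compactness of $K$ yields a subsequence along which $(Kx_n)$ converges in $Z$; applying \eqref{eqn:generalEstimate} to the differences $x_{n_j} - x_{n_k} \in \ker A$ (where the $Ax$ term vanishes) shows that this subsequence is Cauchy in $X$, hence converges in $\ker A$. Thus the closed unit ball of $\ker A$ is sequentially compact, and by Riesz's lemma $\ker A$ is finite dimensional.

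Next, since $\ker A$ is finite dimensional it admits a closed complement, i.e.\ there is a closed subspace $V \subset X$ with $X = \ker A \oplus V$, so that $A|_V : V \to Y$ is injective with $\im(A|_V) = \im A$. The crucial step is to show that $A|_V$ is bounded below: there exists $c' > 0$ with $\| x \|_X \le c' \| Ax \|_Y$ for all $x \in V$. I would argue by contradiction. If no such $c'$ existed, choose $x_n \in V$ with $\| x_n \|_X = 1$ and $\| A x_n \|_Y \to 0$; compactness of $K$ gives a subsequence along which $(Kx_n)$ converges, and then \eqref{eqn:generalEstimate} applied to differences shows that this subsequence is Cauchy, hence converges to some $x \in V$ with $\| x \|_X = 1$ and $Ax = \lim A x_n = 0$, contradicting $V \cap \ker A = \{ 0 \}$.

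Finally, once $A|_V$ is bounded below, $\im A = \im(A|_V)$ is closed: if $A x_n \to y$ with $x_n \in V$, the lower bound makes $(x_n)$ Cauchy, so $x_n \to x \in V$ and $y = Ax \in \im A$. I expect the contradiction argument producing the lower bound on $V$ to be the heart of the proof; the rest is routine, the only nontrivial external inputs being Riesz's lemma and the fact that finite-dimensional subspaces of a Banach space are complemented (via Hahn--Banach). This proposition will then feed directly into Theorem~\ref{thm:FredholmProperty} once it is combined with the analogous finiteness statement for the formal adjoint $D^*$ and the regularity identification of $\coker D$ with $\ker D^*$.
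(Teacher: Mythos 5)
Your proof is correct and follows essentially the same route as the paper's: compactness of $K$ together with the estimate forces the closed unit ball of $\ker A$ to be compact, and the same normalized-sequence contradiction on a closed complement $V$ of $\ker A$ yields closedness of $\im A$. The only cosmetic difference is that you isolate the lower bound $\| x \|_X \le c' \| A x \|_Y$ on $V$ as an intermediate step, whereas the paper runs the identical contradiction argument inline for a given $y \in \overline{\im A}$.
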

\begin{proof}
A vector space is finite dimensional if and only if the unit ball in that
space is a compact set, so we begin by proving the latter holds for $\ker A$.
Suppose $x_k \in \ker A$ is a bounded sequence.  Then since $K$ is a
compact operator, $K x_k$ has a convergent subsequence in $Z$, which is
therefore Cauchy.  But \eqref{eqn:generalEstimate} 
then implies that the corresponding
subsequence of $x_k$ in $X$ is also Cauchy, and thus converges.

Since we now know $\ker A$ is finite dimensional, we also know there is a
closed complement $V \subset X$ with $\ker A \oplus V = X$.  
Then the restriction
$A|_V$ has the same image as $A$, thus if $y \in \overline{\im A}$, there
is a sequence $x_k \in V$ such that $A x_k \to y$.  We claim that $x_k$
is bounded.  If not, then $A(x_k / \|x_k\|_X) \to 0$ and $K(x_k / \|x_k\|_X)$
has a convergent subsequence, so \eqref{eqn:generalEstimate} implies that
a subsequence of $x_k / \|x_k\|_X$ also converges to some $x_\infty \in V$
with $\|x_\infty\| = 1$ and $A x_\infty = 0$, a contradiction.
But now since $x_k$ is bounded, $K x_k$ also has a convergent subsequence 
and $A x_k$ converges by assumption, thus \eqref{eqn:generalEstimate}
yields also a convergent subsequence of $x_k$, whose limit $x$ satisfies
$Ax = y$.  This completes the proof that $\im A$ is closed.
\end{proof}

The above implies that every Cauchy-Riemann type operator has finite-dimensional 
kernel and closed image; operators with these two properties
are called \defin{semi-Fredholm}.  Note that by elliptic regularity,
$\ker D$ only contains smooth sections, and is thus the same space for
every $k$ and~$p$.

By Prop.~\ref{prop:formalAdjoint},
the same results obviously apply to the formal adjoint, after extending it
to a bounded linear operator
$$
D^* : W^{k,p}(\overline{\Hom}_\CC(T\Sigma,E)) \to W^{k-1,p}(E).
$$

\begin{prop}
Using the natural inclusion $W^{k,p} \hookrightarrow W^{k-1,p}$ to inject
$\ker D$ and $\ker D^*$ into $W^{k-1,p}$, there are direct sum splittings
\begin{equation*}
\begin{split}
W^{k-1,p}(\overline{\Hom}_\CC(T\Sigma,E)) &= \im D \oplus \ker D^* \\
W^{k-1,p}(E) &= \im D^* \oplus \ker D.
\end{split}
\end{equation*}
Thus the projections along $\im D$ and $\im D^*$ yield natural isomorphisms
$\coker D = \ker D^*$ and $\coker D^* = \ker D$.
\end{prop}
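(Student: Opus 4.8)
The plan is to prove the two orthogonal splittings
\[
W^{k-1,p}(\overline{\Hom}_\CC(T\Sigma,E)) = \im D \oplus \ker D^*,
\qquad
W^{k-1,p}(E) = \im D^* \oplus \ker D,
\]
by combining three facts already established in the excerpt: (1) $D$ and $D^*$ are both semi-Fredholm, so in particular $\im D$ and $\im D^*$ are closed subspaces with finite-dimensional complements available; (2) $\ker D$ and $\ker D^*$ consist only of smooth sections, by elliptic regularity (Corollary~\ref{cor:weakRegularity}), hence are the same finite-dimensional spaces for every $k$ and $p$; and (3) the $L^2$-pairing adjunction identity $\langle \alpha, D\eta\rangle_{L^2} = \langle D^*\alpha,\eta\rangle_{L^2}$ from \eqref{eqn:adjoint}, valid for smooth sections. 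By symmetry it suffices to prove the first splitting, since the second follows by applying the same argument to the Cauchy-Riemann type operator conjugate to $D^*$ provided by Proposition~\ref{prop:formalAdjoint}.

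First I would show $\im D \cap \ker D^* = \{0\}$. Suppose $\alpha = D\eta$ for some $\eta \in W^{k,p}(E)$, and $D^*\alpha = 0$. By elliptic regularity $\alpha$ is smooth, and then $\eta$ is smooth too (again Corollary~\ref{cor:weakRegularity}, since $D\eta = \alpha$ is smooth). Now $\|\alpha\|_{L^2}^2 = \langle \alpha, D\eta\rangle_{L^2} = \langle D^*\alpha, \eta\rangle_{L^2} = 0$, so $\alpha \equiv 0$. Next, $\im D$ is closed in $W^{k-1,p}$ by the semi-Fredholm property, so it has a closed complement; the content is to show we may take that complement to be exactly $\ker D^*$. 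The inclusion $\ker D^* \cap \im D = 0$ is done; what remains is $\ker D^* + \im D = W^{k-1,p}(\overline{\Hom}_\CC(T\Sigma,E))$, equivalently that the only element of $W^{k-1,p}$ annihilating $\ker D^*$ under the pairing and lying outside $\im D$ is forced to vanish — but cleaner is the following dimension count. The cokernel $W^{k-1,p}(\overline{\Hom}_\CC(T\Sigma,E))/\im D$ is finite dimensional (semi-Fredholm plus the observation after the excerpt that a finite-codimension closed-image operator is Fredholm once one shows closedness — here Lemma~\ref{lemma:estimate} gives exactly this). Using the $L^2$-pairing one produces an injective linear map $\ker D^* \hookrightarrow \coker D$ by $\alpha \mapsto [\beta \mapsto \langle \alpha,\beta\rangle_{L^2}]$ viewed as a functional on $W^{k-1,p}$ that vanishes on $\im D$ (it does, since $\langle\alpha,D\eta\rangle = \langle D^*\alpha,\eta\rangle = 0$); injectivity uses that $\alpha \in \ker D^*$ is smooth and $\langle\alpha,\alpha\rangle_{L^2} = 0 \Rightarrow \alpha = 0$. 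Conversely, one shows this map is surjective: given a bounded functional $\lambda$ on $W^{k-1,p}$ vanishing on $\im D$, elliptic regularity lets one represent $\lambda$ by $L^2$-pairing with a smooth section $\alpha$ (identify $\lambda$ with an element of the dual Sobolev space, and the condition $\lambda|_{\im D} = 0$ means $\alpha$ is a weak solution of $D^*\alpha = 0$, hence smooth and in $\ker D^*$). Therefore $\dim \ker D^* = \dim \coker D$, and since $\im D$ is closed with $\ker D^* \cap \im D = 0$ and the two have complementary dimensions, the direct sum splitting follows.

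The main obstacle I anticipate is the surjectivity half of $\ker D^* \cong \coker D$: making precise the "integration by parts" step that turns a functional on $W^{k-1,p}$ vanishing on $\im D$ into a genuine weak solution of $D^*\alpha = 0$, and then invoking regularity to upgrade it to a smooth section. This requires identifying the dual of $W^{k-1,p}$ with an appropriate negative-order Sobolev space and checking that the adjunction identity \eqref{eqn:adjoint}, stated only for smooth sections, extends by density to make the weak-solution interpretation valid. Once that identification is in hand, everything else is bookkeeping: the splittings give $\coker D = \ker D^*$ and $\coker D^* = \ker D$ as claimed, and these are $k,p$-independent since both kernels consist of smooth sections. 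I would cite \cite{McDuffSalamon:Jhol}*{Appendix~C} for the duality details rather than reproduce them.
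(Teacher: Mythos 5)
Your proposal is correct in outline and the first half (showing $\im D \cap \ker D^* = \{0\}$ via regularity of $\ker D^*$ plus the $L^2$-adjunction identity) coincides with the paper's argument. For the spanning statement you take a slight detour through a dimension count ($\ker D^* \hookrightarrow \coker D$ injectively, then surjectively, then "complementary dimensions"), whereas the paper argues directly: $\im D + \ker D^*$ is closed (closed subspace plus finite-dimensional subspace), so if it were proper, Hahn--Banach would produce a nonzero functional annihilating both $\im D$ and $\ker D^*$; annihilating $\im D$ makes it a weak solution of $D^*\alpha = 0$, hence by Corollary~\ref{cor:weakRegularity} a smooth element of $\ker D^*$, and annihilating $\ker D^*$ then forces $\alpha = 0$. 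The two routes are essentially the same duality argument, but yours carries unnecessary bookkeeping.

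The one substantive point is exactly the obstacle you flag yourself: representing a functional on $W^{k-1,p}$ by an $L^2$-pairing requires identifying $(W^{k-1,p})^*$ with a negative-order Sobolev space when $k \ge 2$, and then you would need elliptic regularity for distributional (not merely $L^p$ or $L^1$) weak solutions of $D^*\alpha = 0$ --- a statement the paper's regularity results (Lemma~\ref{lemma:Weyl}, Theorem~\ref{thm:inhomWeak}, Corollary~\ref{cor:weakRegularity}) do not directly cover. The paper's proof is structured precisely to avoid this: it proves the spanning statement only for $k=1$, where $(L^p)^* = L^q$ and the annihilating functional is an honest $L^q$-section to which Corollary~\ref{cor:weakRegularity} applies, and then obtains the case $k \ge 2$ by a one-line bootstrap ($\alpha = D\eta + \beta$ with $\beta$ smooth and $\alpha \in W^{k-1,p}$ forces $\eta \in W^{k,p}$ by elliptic regularity). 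You should adopt this reduction rather than appealing to negative-order Sobolev duality; it closes the gap with tools already available in the text.
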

\begin{proof}
We will prove only the first of the two splittings, as the second is
entirely analogous.  We claim first that $\im D \cap \ker D^* = \{0\}$.
Indeed, if $\alpha \in W^{k-1,p}(\overline{\Hom}_\CC(T\Sigma,E))$ with
$D^*\alpha = 0$, then since $D^*$ is conjugate to a Cauchy-Riemann type
operator via smooth bundle isomorphisms, 
elliptic regularity implies that $\alpha$ is smooth.
Then if $\alpha = D\eta$ for some $\eta \in W^{k,p}(E)$, $\eta$ must also
be smooth, and we find
$$
0 = \langle D^*\alpha, \eta \rangle_{L^2} = \langle \alpha , 
D\eta \rangle_{L^2} = \| \alpha \|_{L^2}^2.
$$

To show that $\im D + \ker D^* = W^{k-1,p}(\overline{\Hom}_\CC(T\Sigma,E))$,
it will convenient to address the case $k=1$ first.  Note that 
$\im D + \ker D^*$ is a closed subspace since $\im D$ is closed and
$\ker D^*$ is finite dimensional.  Then if it is not all of $L^p$, there
exists a nonzero $\alpha \in L^q(\overline{\Hom}_\CC(T\Sigma,E))$, where
$\frac{1}{p} + \frac{1}{q} = 1$, such that
\begin{equation*}
\begin{split}
\langle \alpha , D\eta \rangle_{L^2} &= 0 
\text{ for all $\eta \in W^{1,p}(E)$,} \\
\langle \alpha , \beta \rangle_{L^2} &= 0
\text{ for all $\beta \in \ker D^*$.}
\end{split}
\end{equation*}
The first relation is valid in particular for all smooth $\eta$, and this
means that $\alpha$ is a weak solution of the equation $D^*\alpha = 0$, so
by regularity of weak solutions (see Corollary~\ref{cor:weakRegularity}),
$\alpha$ is smooth and belongs to $\ker D^*$.  Then we can plug $\beta =\alpha$
into the second relation and conclude $\alpha = 0$.

Now we show that $\im D + \ker D^* = W^{k-1,p}(\overline{\Hom}_\CC(T\Sigma,E))$
when $k \ge 2$.  Given $\alpha \in W^{k-1,p}(\overline{\Hom}_\CC(T\Sigma,E))$,
$\alpha$ is also of class $L^p$ and thus the previous step gives
$\eta \in W^{1,p}(E)$ and $\beta \in \ker D^*$ such that
$$
D\eta + \beta = \alpha.
$$
Then $\beta$ is smooth, and $D\eta = \alpha - \beta$ is of class
$W^{k-1,p}$, so regularity (Corollary~\ref{cor:weakRegularity} again) 
implies that $\eta \in W^{k,p}(E)$, and we are done.
\end{proof}

We are now finished with the proof of Theorem~\ref{thm:FredholmProperty},
as we have shown that both $\ker D$ and $\ker D^* \cong \coker D$ 
are finite-dimensional spaces consisting only of smooth sections, which
are thus contained in $W^{k,p}$ for all $k$ and~$p$.

\begin{exercise}
This exercise is meant to convince you that ``boundary conditions are
important.''
Recall that the Calder\'on-Zygmund inequality $\| u \|_{W^{1,p}} \le
c \| \dbar u \|_{L^p}$ is valid for smooth $\CC^n$-valued functions $u$ with
compact support in the open unit ball $B \subset \CC$.  Show that this
inequality cannot be extended to functions without compact support; in fact
there is not even any estimate of the form
$$
\| u \|_{W^{1,p}} \le c \| \dbar u \|_{L^p} + c \| u \|_{L^p}
$$
for general functions $u \in C^\infty(B) \cap W^{1,p}(B)$.  Why not?
For contrast, see Exercise~\ref{EX:disk} below.
\end{exercise}

\section{The Riemann-Roch formula and transversality criteria}

It is easy to see that the index of a Cauchy-Riemann type operator
$D : W^{k,p}(E) \to W^{k-1,p}(\overline{\Hom}_\CC(T\Sigma,E))$ depends
only on the isomorphism class of the bundle $(E,J) \to (\Sigma,j)$.
Indeed, by Exercise~\ref{EX:Christoffel}, the difference
between any two such operators $D$ and $D'$ on the same bundle defines a
smooth real-linear bundle map $A : E \to \overline{\Hom}_\CC(T\Sigma,E)$
such that
$$
D' \eta - D \eta = A \eta.
$$
We often refer to this bundle map as a ``zeroth order term.''  It defines
a bounded linear map from $W^{k,p}(E)$ to
$W^{k,p}(\overline{\Hom}_\CC(T\Sigma,E))$, which is then composed with the
compact inclusion into
$W^{k-1,p}(\overline{\Hom}_\CC(T\Sigma,E))$ and is
therefore a compact operator.  We conclude that all Cauchy-Riemann type
operators on the same bundle are compact perturbations of each 
other,\footnote{This statement is false when $\Sigma$ is not compact:
we'll see when we later discuss Cauchy-Riemann type operators on domains with
cylindrical ends that the zeroth order term is no longer compact, and the
index does depend on the behavior of this term at infinity.}
and thus have the same Fredholm index.  Since complex vector bundles over
a closed surface are classified up to isomorphism by the first Chern
number, the index will therefore depend only
on the topological type of $\Sigma$ and on $c_1(E)$.
To compute it, we can use the fact that every complex bundle admits a 
\emph{complex-linear}
Cauchy-Riemann operator (cf.~Exercise~\ref{EX:nabla}), 
and restrict our attention to the complex-linear case.  Then $E$ is a
holomorphic vector bundle, and $\ker D$ is simply the vector space 
of holomorphic sections.  We'll see below that in some important examples,
it is not hard to compute this space explicitly.  The key observation is
that one can identify holomorphic sections on vector bundles
over $\Sigma$ with complex-valued meromorphic functions on $\Sigma$ that
have prescribed poles and/or zeroes.
The problem of understanding such spaces of meromorphic functions is a
classical one, and its solution is the Riemann-Roch formula.

\begin{thm}[Riemann-Roch formula]
\label{thm:RiemannRoch}
$\ind(D) = n\chi(\Sigma) + 2 c_1(E)$.
\end{thm}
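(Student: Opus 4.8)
The plan is to reduce the statement in a few steps to a computation on holomorphic line bundles, where it becomes the classical Riemann--Roch theorem for compact Riemann surfaces.

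First I would reduce to the case where $D$ is complex-linear. By Exercise~\ref{EX:Christoffel}, any two Cauchy--Riemann type operators on a fixed bundle differ by a zeroth-order term, hence (composing with the compact inclusion $W^{k,p}\hookrightarrow W^{k-1,p}$) by a compact operator, and the same holds after transporting $D$ through a smooth complex bundle isomorphism; thus $\ind(D)$ depends only on the smooth complex isomorphism type of $(E,J)\to\Sigma$, i.e.\ on $n$, $\chi(\Sigma)$ and $c_1(E)$, and we may assume $D$ is complex-linear, as provided by Exercise~\ref{EX:nabla}. Then $(E,J)$ carries a holomorphic structure, $\ker D$ is its space of holomorphic sections, and by the splitting established in \S\ref{sec:FredholmProperty} together with Prop.~\ref{prop:formalAdjoint} we have $\coker D\cong\ker D^*$, where $D^*$ is conjugate to a Cauchy--Riemann operator on $(\widehat E,\hat J):=(\overline{\Hom}_\CC(T\Sigma,E),-J)$, a bundle with $c_1(\widehat E)=-c_1(E)-n\chi(\Sigma)$ by Exercise~\ref{EX:c1}. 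Hence $\ind(D)=2\dim_\CC\ker D-2\dim_\CC\ker D^*$ is twice the difference of dimensions of holomorphic section spaces of two bundles with Chern numbers $c_1(E)$ and $-c_1(E)-n\chi(\Sigma)$; this is the analytic form of Serre duality, and it makes the claimed identity symmetric under exchanging $E$ with $\widehat E$.

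Next I would use that every complex vector bundle over a closed surface splits smoothly as a direct sum of line bundles $E\cong L_1\oplus\cdots\oplus L_n$ with $\sum_i c_1(L_i)=c_1(E)$. Choosing complex-linear Cauchy--Riemann operators $D_i$ on the $L_i$ and using compact-perturbation invariance once more gives $\ind(D)=\sum_i\ind(D_i)$; since the proposed formula is additive under direct sums (the term $n\chi(\Sigma)$ distributing as $\sum_i\chi(\Sigma)$), it suffices to prove the rank-one case $\ind(D)=\chi(\Sigma)+2c_1(L)$. For this I would induct on $c_1(L)\in\ZZ$, working up and down from $0$. In the base case $L$ is topologically trivial, so I may take $D=\dbar$ on $\Sigma\times\CC$: then $\ker D$ consists of holomorphic functions on the compact connected surface $\Sigma$, hence of constants, so $\dim_\CC\ker D=1$, while $\widehat E\cong K_\Sigma$ and $\ker D^*$ is the space $H^0(K_\Sigma)$ of holomorphic $1$-forms on $\Sigma$, of complex dimension equal to the genus $g$ --- the one classical input about Riemann surfaces I would take as given. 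Thus $\ind(D)=2-2g=\chi(\Sigma)$. For the inductive step I would compare a Cauchy--Riemann operator on $L$ with one on $L(p):=L\otimes\mathcal O(p)$ for a point $p\in\Sigma$ (so $c_1$ increases by $1$), via the inclusion of holomorphic sections of $L$ into those of $L(p)$ and the ``principal part at $p$'' map; the bookkeeping is the long exact cohomology sequence of $0\to\mathcal O(L)\to\mathcal O(L(p))\to\CC_p\to 0$, which shows $\dim_\CC\ker-\dim_\CC\coker$ increases by exactly $1$, i.e.\ the real Fredholm index increases by $2$. Combined with the base case this gives the formula for all line bundles, hence in general.

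The hard part will be the inductive step: making the ``adding a point'' comparison precise inside the analytic framework of these notes rather than quoting sheaf cohomology --- one must construct $L(p)$ together with a Cauchy--Riemann operator agreeing with the given one away from $p$, keep track of cokernels via formal adjoints on both bundles, and verify that the net change in the real index is exactly $+2$. (Alternatively one could simply cite the classical Riemann--Roch theorem for compact Riemann surfaces at the line-bundle stage and omit the induction entirely.) Everything else --- reduction to complex-linear operators, the smooth splitting into line bundles, additivity of the Fredholm index, and the base-case computation --- is routine given the results already in hand, the only genuinely external ingredient being $\dim_\CC H^0(K_\Sigma)=g$.
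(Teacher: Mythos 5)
Your proposal is correct in outline and is a legitimate classical route to the formula, but it is organized differently from the proof in the text, and the two arguments distribute the hard work and the external inputs differently. The text gives a complete, self-contained proof only for $\Sigma = S^2$: there one builds explicit model line bundles $E_k$ by gluing two trivializations over $\CC$ with transition function $z^{-k}$, identifies their holomorphic sections with polynomials of degree at most~$k$, and invokes the transversality criterion of Theorem~\ref{thm:linearAutomatic} to know that $D$ or $D^*$ is surjective, so that the kernel dimension really is the index; higher rank follows from the direct sums $E_{-1}\oplus\cdots\oplus E_{-1}\oplus E_k$. For general genus the text is openly heuristic about exactly the step you flag as hard---that incrementing $c_1$ by~$1$ changes the real index by~$2$---but, having granted that $\ind(D) = C(\Sigma,n) + 2c_1(E)$ for some constant $C(\Sigma,n)$, it pins down the constant by the symmetry $\ind(D^*) = -\ind(D)$ combined with $c_1(\widehat{E}) = -c_1(E) - n\chi(\Sigma)$ from Exercise~\ref{EX:c1}, which yields $C(\Sigma,n) = n\chi(\Sigma)$ with no further input. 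Your base case instead computes the constant directly from $\dim_\CC H^0(K_\Sigma) = g$, a genuinely nontrivial classical fact (essentially Hodge theory for the canonical bundle) that the adjoint-symmetry trick renders unnecessary; if you keep your induction on $c_1$, you can discard that external input by borrowing the text's constant-determination step. The one real gap in your argument---a rigorous analytic version of the twist-by-a-point comparison, or equivalently a linear gluing argument---is precisely the gap the text also acknowledges and defers to \cite{McDuffSalamon:Jhol}; everything else in your reduction (compact-perturbation invariance of the index, $\coker D \cong \ker D^*$ via Prop.~\ref{prop:formalAdjoint}, the smooth splitting into line bundles, additivity of the index under direct sums) matches machinery already established in the chapter.
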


We should emphasize, especially for readers who are more accustomed to
algebraic geometry, that this is the \emph{real} index, i.e.~the difference
between $\dim\ker D$ and $\dim \coker D$ as real vector spaces---these
dimensions may indeed by odd in general since we'll be interested in cases
where $D$ is not complex-linear, but $\ind(D)$ will always be even, a
nontrivial consequence of the fact that $D$ is always 
\emph{homotopic} to a complex-linear operator.
We will later see cases (on punctured Riemann surfaces or surfaces with
boundary) where $\ind(D)$ can also be odd.

A complete proof of the Riemann-Roch formula may be found in
\cite{McDuffSalamon:Jhol}*{Appendix~C} or, from a more classical perspective,
any number of books on Riemann surfaces.  Below we will explain a proof for
the genus~$0$ case and give a heuristic argument to justify the rest.
An important feature will be the following ``transversality'' criterion,
which will also have many important applications in the study of
$J$-holomorphic curves.  It is a consequence of the identification 
$\ker D \equiv \coker D^*$, combined with the similarity principle
(recall \S\ref{sec:similarity}).

\begin{thm}
\label{thm:linearAutomatic}
Suppose $n=1$, i.e.~$(E,J) \to (\Sigma,j)$ is a complex line bundle.
\begin{itemize}
\item If $c_1(E) < 0$, then $D$ is injective.
\item If $c_1(E) > -\chi(\Sigma)$, then $D$ is surjective.
\end{itemize}
\end{thm}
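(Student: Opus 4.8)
The plan is to derive both statements from a single geometric fact about line bundles: by the similarity principle, a nonzero element of $\ker D$ on a complex line bundle has only isolated zeros, each of strictly positive local order, so that the algebraic count of its zeros equals $c_1(E)$ and is therefore nonnegative. Surjectivity then follows formally by applying the injectivity statement to the formal adjoint.

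\emph{Injectivity.} Suppose $\eta\in\ker D$ is not identically zero; elliptic regularity makes $\eta$ smooth. Near any point of $\Sigma$, choose a holomorphic chart and a complex-linear trivialization of $E$ in which $J$ is standard; then $D$ has the form $\dbar+A$ with $A$ smooth and merely real-linear, and Theorem~\ref{thm:similarity} writes $\eta=\Phi f$ locally, where $f$ is a $\CC$-valued holomorphic function and $\Phi$ is continuous, $\CC$-valued, and nonvanishing near the point in question (in fact $\Phi=\1$ at the center of the chart). Hence at each point $\eta$ either vanishes identically nearby or has an isolated zero whose order is $\ord f\in\NN$; since $\Sigma$ is connected and $\eta\not\equiv 0$, the zero set of $\eta$ is discrete, hence finite by compactness. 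On a small ball around a zero $z_i$, $\Phi$ is homotopic through nonvanishing maps to the constant $\Phi(z_i)\neq 0$, and multiplication by a nonzero complex constant is an orientation-preserving $\RR$-linear map of $\CC$, so the local intersection index of the section $\eta:\Sigma\to E$ with the zero section at $z_i$ equals $k_i:=\ord_{z_i}f>0$. Summing over all zeros gives the Euler number of the oriented bundle underlying $(E,J)$, i.e.\ $c_1(E)=\sum_i k_i\geq 0$ (and $c_1(E)=0$ if $\eta$ happens to be nowhere zero). Thus $c_1(E)<0$ forces $\ker D=0$.

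\emph{Surjectivity.} By the results of \S\ref{sec:FredholmProperty}, $\coker D\cong\ker D^{*}$, and by Proposition~\ref{prop:formalAdjoint}, $D^{*}$ is conjugate, via smooth bundle isomorphisms, to a linear Cauchy-Riemann type operator $\widehat D$ on the complex line bundle $(\widehat E,\widehat J)=(\overline{\Hom}_\CC(T\Sigma,E),-J)$; conjugation by isomorphisms preserves the kernel, so $\coker D\cong\ker\widehat D$. Exercise~\ref{EX:c1} with $n=1$ gives $c_1(\widehat E)=-c_1(E)-\chi(\Sigma)$, which is negative exactly when $c_1(E)>-\chi(\Sigma)$. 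Applying the injectivity statement just proved — now to $\widehat D$ on $\widehat E$ — yields $\ker\widehat D=0$, hence $\coker D=0$, i.e.\ $D$ is surjective.

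The only genuinely delicate step is the middle one: passing from the purely local similarity-principle normal form to the global identity $\sum_i k_i=c_1(E)$, that is, checking that the local order of a zero of $\eta$ agrees with its local intersection index against the zero section and that these indices sum to the Euler number $c_1(E)$ of the oriented bundle underlying $(E,J)$ — all while $D$ is only real-linear, so $\eta$ itself is not holomorphic but merely ``similar'' to a holomorphic function. Everything else is just assembling tools already in place: elliptic regularity, the similarity principle, the cokernel--kernel identification of \S\ref{sec:FredholmProperty}, and the Chern-number bookkeeping of Exercise~\ref{EX:c1}.
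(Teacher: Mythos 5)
Your proof is correct and follows essentially the same route as the paper's: the similarity principle forces any nontrivial element of $\ker D$ on a line bundle to have only isolated zeroes of positive order summing to $c_1(E)\ge 0$, and surjectivity is reduced to injectivity of the formal adjoint via Proposition~\ref{prop:formalAdjoint} and the Chern number computation of Exercise~\ref{EX:c1}. The extra care you take in identifying the local order of a zero with its intersection index against the zero section is a detail the paper glosses over, but it is not a different argument.
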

\begin{proof}
The criterion for injectivity is an easy consequence of the similarity
principle, for which we don't really need to know anything about~$D$
except that it's a Cauchy-Riemann type operator.
If $E \to \Sigma$ has complex rank~$1$ and $\ker D$ contains a nontrivial
section $\eta$, then by the similarity principle, $\eta$ has only
isolated (and thus finitely many) zeroes, each of which counts with positive
order.  The count of these computes the first Chern number of $E$, thus
$c_1(E) \ge 0$, and $D$ must be injective if $c_1(E) < 0$.

The second part follows now from the observation that $D$ is surjective
if and only if $D^*$ is injective, and the latter is guaranteed by the
condition $c_1(\widehat{E}) < 0$, which by Prop.~\ref{prop:formalAdjoint}
and Exercise~\ref{EX:c1} is equivalent to $c_1(E) > - \chi(\Sigma)$.
\end{proof}

Observe that we did not need to know the index formula in order to deduce
the last result. In fact, this already gives enough
information to deduce the index formula
in the special case $\Sigma=S^2$, which will be the most important in our
applications.

\begin{proof}[Proof of Theorem~\ref{thm:RiemannRoch} in the case
$\Sigma = S^2$]
We assume first that $n=1$.  In this situation, 
at least one of the criteria $c_1(E) < 0$ or
$c_1(E) > -\chi(\Sigma) = -2$ from Theorem~\ref{thm:linearAutomatic} 
is always satisfied, hence $D$ is always injective or surjective; in fact if
$c_1(E) = -1$ it is an isomorphism.  By considering $D^*$ instead of
$D$ if necessary, we can restrict our attention to the case where
$D$ is surjective, so $\ind D = \dim\ker D$.  We will now construct for
each value of $c_1(E) \ge 0$ a ``model'' holomorphic line bundle, which is
sufficiently simple so that we can identify the space of holomorphic sections
explicitly.

For the case $c_1(E) = 0$, the model bundle is obvious: just take the 
trivial line bundle
$S^2 \times \CC \to S^2$, so the holomorphic sections are holomorphic
functions $S^2 \to \CC$, which are necessarily constant and therefore
$\dim \ker D = 2$, as it should be.  A more general model bundle can 
be defined by gluing
together two local trivializations: let $E^{(1)}$ and $E^{(2)}$ denote two 
copies of the trivial holomorphic line bundle $\CC \times \CC \to \CC$, 
and for any $k \in \ZZ$, define
$$
E_k := (E^{(1)} \sqcup E^{(2)}) / (z,v) \sim \Phi_k(z,v),
$$
where $\Phi_k : E^{(1)}|_{\CC\setminus\{0\}} \to E^{(2)}|_{\CC\setminus\{0\}}$ 
is a bundle isomorphism covering the biholomorphic map $z \mapsto 1/z$ and
defined by $\Phi_k(z,v) = (1/z, g_k(z) v)$, with
$$
g_k(z) v := \frac{1}{z^k} v.
$$
The function $g_k(z)$ is a holomorphic transition map, thus $E_k$ has a
natural holomorphic structure.  Regarding a function $f : \CC \to \CC$
as a section of $E^{(1)}$, we have
$$
\Phi_k(1 / z, f(1 / z)) = (z , z^k f(1/z)),
$$
which means that $f$ extends to a smooth
section of $E_k$ if and only if the function $g(z) = z^k f(1/z)$ extends
smoothly to $z=0$.  It follows that
$c_1(E_k) = k$, as one can choose $f(z) = 1$ for $z$ in the unit disk and
then modify $g(z) = z^k$ to a smooth function that algebraically
has $k$ zeroes at~$0$ (note that an actual modification is necessary only
if $k < 0$).
Similarly, the holomorphic sections of $E_k$ can be identified with the
entire functions $f : \CC \to \CC$ such that $z^k f(1/z)$ extends 
holomorphically to $z=0$; if $k < 0$ this implies $f \equiv 0$, and if
$k \ge 0$ it means $f(z)$ is a polynomial of degree at most~$k$,
hence $\dim \ker D = 2 + 2k$.  The proof of the index
formula for $\Sigma = S^2$ and $n=1$ is now complete.

The case $n \ge 2$ can easily be derived from the above.  It suffices to
prove that $\ind(D) = 2n + 2 c_1(E)$ for some model holomorphic bundle
of rank~$n$ with a given value of~$c_1(E)$.  Indeed, for any $k \in \ZZ$,
take $E$ to be the direct sum of $n$ holomorphic line bundles,
$$
E := E_{-1} \oplus \ldots \oplus E_{-1} \oplus E_k,
$$
which has $c_1(E) = k - (n-1)$.  By construction, the natural 
Cauchy-Riemann operator $D$ on $E$ splits into a direct sum of Cauchy-Riemann
operators on its summands, and it is an isomorphism on each of the
$E_{-1}$ factors, thus we conclude as in the line bundle case
that $D$ is injective if $k < 0$ and surjective if $k \ge 0$.  By replacing
$D$ with $D^*$ if necessary, we can now assume without loss of generality
that $k \ge 0$ and $D$ is surjective.  The space of holomorphic sections
is then simply the direct sum of the corresponding spaces for its summands,
which are trivial for $E_{-1}$ and have dimension $2 + 2k$ for $E_k$.
We therefore have
$$
\ind(D) = \dim \ker D = 2 + 2k = 2n + 2[ k - (n-1)] = n\chi(\Sigma) +
2c_1(E).
$$
\end{proof}

One should not conclude from the above proof that every Cauchy-Riemann type
operator on the sphere is either injective or surjective, which is true on
line bundles but certainly not for bundles of higher rank---above we only
used the fact that for every value of $c_1(E)$, \emph{one can construct} 
a bundle that has this property.  The proof is not so simple for general
Riemann surfaces because it is less straightforward to identify spaces of
holomorphic sections.  One lesson to be drawn from the above argument, however,
is that holomorphic sections on a line bundle with $c_1(E) = k$ can also
be regarded as holomorphic sections on some related bundle 
with $c_1(E) = k + 1$, but with an extra zero at some chosen point.  
This suggests that an increment in the value of $c_1(E)$ should
also enlarge the space of holomorphic sections by two real dimensions, because
one can add two linearly independent sections that do not vanish at
the chosen point.  What's true for line bundles in this sense is also 
true for bundles of higher rank, because one can always construct 
model bundles that are direct sums of line bundles.
We will not attempt to make this argument precise, but
it should give some motivation to believe that $\ind(D)$ scales with
$2 c_1(E)$: to be exact, there exists a constant $C = C(\Sigma,n)$ such that
$$
\ind(D) = C(\Sigma,n) + 2 c_1(E).
$$
If you believe this, then we can already deduce the general Riemann-Roch
formula by comparing $D$ with its formal adjoint.  Indeed, $D^*$ has index
$$
\ind(D^*) = C(\Sigma,n) + 2 c_1(\widehat{E}) = C(\Sigma,n) - 
2 c_1(E) - 2 n \chi(\Sigma)
$$
according to Exercise~\ref{EX:c1}, and since $\coker D = \ker D^*$
and vice versa, $\ind(D^*) = -\ind(D)$.  Thus 
adding these formulas together yields
$$
0 = 2 C(\Sigma,n) - 2 n \chi(\Sigma).
$$
We conclude $C(\Sigma,n) = n\chi(\Sigma)$, and the Riemann-Roch formula
follows.

With the index formula understood, we can derive some alternative formulations
of the transversality criteria in Theorem~\ref{thm:linearAutomatic} which
will often be useful.  First, compare the formulas for
$\ind(D)$ and $\ind(D^*)$:
\begin{equation*}
\begin{split}
\ind(D) &= \chi(\Sigma) + 2 c_1(E),\\
\ind(D^*) &= \chi(\Sigma) + 2 c_1(\widehat{E}),
\end{split}
\end{equation*}
where $\widehat{E}$ is the line bundle constructed in the proof of
Prop.~\ref{prop:formalAdjoint}.  Since $\ind(D) = -\ind(D^*)$,
subtracting the second formula from the first yields
$$
\ind(D) = c_1(E) - c_1(\widehat{E}),
$$
and thus $c_1(\widehat{E}) < 0$ if and only if $\ind(D) > c_1(E)$,
which implies by Theorem~\ref{thm:linearAutomatic} that $D^*$ is injective
and thus $D$ is surjective.  We state this as a corollary.

\begin{cor}
\label{cor:linearAutomatic}
If $n=1$ and $\ind(D) > c_1(E)$, then $D$ is surjective.
\end{cor}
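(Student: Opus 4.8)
The plan is to deduce surjectivity of $D$ from injectivity of its formal adjoint $D^*$, using the Riemann-Roch index formula together with the first Chern number of $\widehat E$.

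First I would recall from Theorem~\ref{thm:FredholmProperty} and Prop.~\ref{prop:formalAdjoint} that $D^*$ is conjugate, via smooth real-linear bundle isomorphisms, to a Cauchy-Riemann type operator on the complex line bundle $(\widehat E,\widehat J) = (\overline{\Hom}_\CC(T\Sigma,E),-J)$; note that $\widehat E$ has complex rank one precisely because $E$ does. The natural isomorphisms $\coker D \cong \ker D^*$ and $\coker D^* \cong \ker D$ established in \S\ref{sec:FredholmProperty} give $\ind(D^*) = -\ind(D)$. Combining this with Riemann-Roch (Theorem~\ref{thm:RiemannRoch}) applied to both operators, namely $\ind(D) = \chi(\Sigma) + 2c_1(E)$ and $\ind(D^*) = \chi(\Sigma) + 2c_1(\widehat E)$, and subtracting the second from the first, one obtains $2\ind(D) = 2c_1(E) - 2c_1(\widehat E)$, hence $c_1(\widehat E) = c_1(E) - \ind(D)$.

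Then the hypothesis $\ind(D) > c_1(E)$ becomes exactly the statement $c_1(\widehat E) < 0$. The next step is to invoke the injectivity half of Theorem~\ref{thm:linearAutomatic}: since that criterion relies only on the similarity principle and applies to any Cauchy-Riemann type operator on a line bundle with negative first Chern number, it shows that the operator conjugate to $D^*$ is injective, and hence so is $D^*$ itself. Therefore $\coker D \cong \ker D^* = 0$, which is precisely surjectivity of $D$.

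I do not anticipate a serious obstacle, as the argument is purely formal once these ingredients are assembled; the only point requiring a little care is to confirm that Theorem~\ref{thm:linearAutomatic} transfers from the operator conjugate to $D^*$ back to $D^*$ itself, which holds because injectivity and the cokernel identification are unaffected by composition with the vector bundle isomorphisms $\Phi$ and $\Psi$ of Prop.~\ref{prop:formalAdjoint}, and because $c_1(\widehat E)$ is already the relevant Chern number appearing in the criterion.
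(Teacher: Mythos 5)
Your argument is correct and is essentially the paper's own proof: both compare the Riemann--Roch formulas for $D$ and $D^*$, use $\ind(D^*) = -\ind(D)$ to deduce $\ind(D) = c_1(E) - c_1(\widehat{E})$, and then translate the hypothesis into $c_1(\widehat{E}) < 0$ so that the injectivity criterion of Theorem~\ref{thm:linearAutomatic} applies to $D^*$. Your extra remark that injectivity is unaffected by the conjugating isomorphisms $\Phi$ and $\Psi$ is a fine (and correct) point of care that the paper leaves implicit.
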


\begin{exercise}
Show that another equivalent formulation of Theorem~\ref{thm:linearAutomatic}
for Cauchy-Riemann operators on complex line bundles
is the following:
\begin{itemize}
\item If $\ind(D) < \chi(\Sigma)$ then $D$ is injective.
\item If $\ind(D) > -\chi(\Sigma)$ then $D$ is surjective.
\end{itemize}
This means that for line bundles, $D$ is always surjective (or injective)
as soon as its index is large (or small) enough.  Observe that when
$\Sigma = S^2$, one of these conditions is always satisfied, but there is
always an ``interval of uncertainty'' in the higher genus case.
\end{exercise}

A different approach to the proof of Riemann-Roch, which is taken in
\cite{McDuffSalamon:Jhol}, is to cut up $E \to \Sigma$ into simpler pieces
on which the index can be computed explicitly, and then conclude the
general result by a ``linear gluing argument''.  We'll come back to this
idea in a later chapter when we discuss the generalization of the 
Riemann-Roch formula to open surfaces with cylindrical ends.
The proof in \cite{McDuffSalamon:Jhol} instead considers Cauchy-Riemann
operators on surfaces with boundary and
totally real boundary conditions: the upshot is that the problem can 
be reduced in this way to the following
exercise, in which one computes the index
for the standard Cauchy-Riemann operator on a closed disk.

\begin{exercise}
\label{EX:disk}
Let $\DD \subset \CC$ denote the closed unit disk and $E$ the trivial
bundle $\DD \times \CC \to \DD$.  For a given 
integer $\mu \in \ZZ$, define a real rank~$1$ subbundle 
$\ell_\mu \subset E|_{\p \DD}$ by
$$
(\ell_\mu)_{e^{i\theta}} = e^{i\pi \mu\theta}\RR \subset \CC.
$$
We call $\ell_\mu$ in this context a \defin{totally real} subbundle of
$\DD \times \CC$ at the boundary, and the integer $\mu$ is its \defin{Maslov index}.
Let $\dbar = \p_s + i \p_t$, and for $kp > 2$ consider the operator
$$
\dbar : W^{k,p}_{\ell_\mu}(\DD,\CC) \to W^{k-1,p}(\DD,\CC),
$$
where the domain is defined by
$$
W^{k,p}_{\ell_\mu}(\DD,\CC) = \{ \eta \in W^{k,p}(\DD,\CC)\ |\ \eta(\p\DD) \subset
\ell_\mu \}.
$$
Show that as an operator between these particular spaces,
$\ker\dbar$ has dimension $1 + \mu = \chi(\DD) + \mu$ if $\mu \ge -1$,
and $\dbar$ is injective if $\mu \le -1$.
(You may find it helpful to think in terms of Fourier series.)  
By constructing the appropriate formal adjoint of $\dbar$ in this setting
(which will also satisfy a totally real boundary condition), one can
also show that $\dbar$ is surjective if $\mu \ge -1$, and one can
similarly compute the kernel of the formal adjoint if $\mu \le -1$,
concluding that $\dbar$ is in fact Fredholm and has index
$\ind(\dbar) = \chi(\DD) + \mu$.  By considering direct sums of line bundles
with totally real boundary conditions, this generalizes easily to 
bundles of general rank $n \in \NN$ as
$$
\ind(\dbar) = n\chi(\DD) + \mu.
$$
One should think of this as another instance of the Riemann-Roch formula,
in which the Maslov index now plays the role of~$2 c_1(E)$.  The
details are carried out in \cite{McDuffSalamon:Jhol}*{Appendix~C}.
\end{exercise}

As a final remark, we note that the Fredholm theory of Cauchy-Riemann
operators gives a new proof of a local regularity result that we made
much use of in Chapter~\ref{chapter:local}: 
the standard $\dbar$-operator on the open unit ball $B \subset \CC$,
$$
\dbar : W^{k,p}(B,\CC^n) \to W^{k-1,p}(B,\CC^n)
$$
has a bounded right inverse (see Theorem~\ref{thm:rightInverse}).
This follows from our proof of
Theorem~\ref{thm:RiemannRoch} in the case $\Sigma=S^2$, because any
$f \in W^{k-1,p}(B,\CC^n)$ can be extended to a section in
$W^{k-1,p}(\overline{\Hom}_\CC(T S^2,S^2 \times \CC^n))$, and we can then
use the fact that the standard Cauchy-Riemann operator on the trivial
bundle $S^2 \times \CC^n$ is a surjective Fredholm operator, its kernel
consisting of the constant sections.  Alternatively, one can use the
fact established by Exercise~\ref{EX:disk}, that the restriction of
$\dbar$ to the domain $W^{k,p}_{\ell_0}(\DD,\CC^n)$ of functions with 
the totally real boundary condition $\eta(\p\DD) \subset \RR^n$ is
a surjective Fredholm operator with index~$n$; its kernel is again the
space of constant functions.


\chapter{Moduli Spaces}
\label{chapter:moduli}

\minitoc
\vspace{12pt}

\section{The moduli space of closed $J$-holomorphic curves}
\label{sec:moduliSpace}

In the previous chapter we considered the local structure of the space of
$J$-holomorphic maps $(\Sigma,j) \to (M,J)$ from a fixed closed 
Riemann surface
to a fixed almost complex manifold of dimension~$2n$.  From a
geometric point of view, this is not the most natural space to study:
geometrically, we prefer to picture holomorphic curves as 
$2$-dimensional submanifolds\footnote{This description is of course only
strictly correct for holomorphic curves that are embedded, which they need not be 
in general---though we'll see that in many important applications, they are.}
whose tangent spaces are invariant under the
action of~$J$.  In the symplectic context in particular, this means they 
give rise to symplectic submanifolds.  From this perspective, 
the interesting object is not the
parametrization $u$ but its image $u(\Sigma)$, thus we should regard all
reparametrizations of $u$ to be equivalent.  Moreover, the choice of
parametrization fully determines $j = u^*J$, thus one cannot choose 
$j$ in advance, but must allow it to vary over the space of all complex
structures on~$\Sigma$.  The interesting solution space is therefore
the following.

\begin{defn}
Given an almost complex manifold $(M,J)$ of real dimension $2n$,
integers $g, m \ge 0$ and a homology class $A \in H_2(M)$, we
define the \defin{moduli space of $J$-holomorphic curves in~$M$ 
with genus~$g$ and $m$ marked points representing~$A$} to be
$$
\Mod_{g,m}^A(J) = \{ (\Sigma,j,u,(z_1,\ldots,z_m)) \} / \sim,
$$
where $(\Sigma,j)$ is any closed connected Riemann surface of genus~$g$,
$u : (\Sigma,j) \to (M,J)$ is a pseudoholomorphic map with 
$[u] := u_*[\Sigma] = A$, and $(z_1,\ldots,z_m)$ is an ordered set of
distinct points in~$\Sigma$, which we'll often denote by
$$
\Theta = (z_1,\ldots,z_m).
$$
We say $(\Sigma,j,u,\Theta) \sim (\Sigma',j',u',\Theta')$ if and
only if there exists a biholomorphic diffeomorphism $\varphi :
(\Sigma,j) \to (\Sigma',j')$ such that $u = u' \circ \varphi$ and
$\varphi(\Theta) = \Theta'$ with the ordering preserved.
\end{defn}

We will often abbreviate the union of all these moduli spaces by
$$
\Mod(J) = \bigcup_{g,m,A} \Mod_{g,m}^A(J).
$$
Elements of $\Mod(J)$ are sometimes called
\defin{unparametrized} $J$-holomorphic curves, since the choice of
parametrization $u : \Sigma \to M$ is considered auxiliary.
We will nonetheless sometimes abuse the notation by writing an 
equivalence class of tuples
$[(\Sigma,j,u,\Theta)]$ simply as $(\Sigma,j,u,\Theta)$ or
$u \in \Mod(J)$ when there is no danger of confusion.  The significance of
the marked points $\Theta = (z_1,\ldots,z_m)$ is that they give rise
to a well-defined \defin{evaluation map}
\begin{equation}
\label{eqn:ev}
\ev = (\ev_1,\ldots,\ev_m) : \Mod_{g,m}^A(J) \to M \times \ldots \times M,
\end{equation}
where $\ev_i$ takes $[(\Sigma,j,u,\Theta)]$ to $u(z_i) \in M$ for each $i=1,\ldots,m$.
One can use this to find relations between the topology of $M$ and the structure
of the moduli space, which will be important in later applications to 
symplectic geometry.

A natural topology on $\Mod(J)$ can be defined via the following notion
of convergence: we say $[(\Sigma_k,j_k,u_k,\Theta_k)] \to
[(\Sigma,j,u,\Theta)]$ if for sufficiently large $k$, the sequence has
representatives of the form $(\Sigma,j_k',u_k',\Theta)$ such that
$j_k' \to j$ and $u_k' \to u$ in the $C^\infty$-topology.  In particular,
$\Sigma_k$ must be diffeomorphic to $\Sigma$ and have the same number of
marked points for sufficiently large~$k$;
observe that when this is the case, one can always choose a diffeomorphism
to fix the positions of the marked points.  In this topology,
$\Mod_{g,m}^A(J)$ and $\Mod_{g',m'}^{A'}(J)$ for distinct triples
$(g,m,A)$ and $(g',m',A')$ form distinct components of $\Mod(J)$, 
each of which may or may not be connected.

The main goal
of this chapter will be to show that under suitable hypotheses, a certain
subset of $\Mod(J)$ is a smooth finite-dimensional manifold, with various
dimensions on different components.  Its
``expected'' or \emph{virtual} dimension on the component containing a given
curve $u \in \Mod_{g,m}^A(J)$ is essentially a Fredholm index
with some correction terms, and depends on the topological 
data $g$, $m$ and~$A$.  We'll use the convenient abbreviation,
$$
c_1(A) = \langle c_1(TM,J) , A \rangle.
$$

\begin{defn}
If $\dim_\RR M = 2n$, define the \defin{virtual dimension} of the moduli space
$\Mod_{g,m}^A(J)$ to be the integer
\begin{equation}
\label{eqn:virdim}
\virdim \Mod_{g,m}^A(J) = (n-3) (2 - 2g) + 2 c_1(A) + 2m.
\end{equation}
For a curve $u \in \Mod_{g,0}^A(J)$ without marked points, this number is
also called the \defin{index} of $u$ and denoted by
\begin{equation}
\label{eqn:index}
\ind(u) := \virdim \Mod_{g,0}^A(J) = (n-3) (2 - 2g) + 2 c_1(A).
\end{equation}
\end{defn}

It is both interesting and important to consider the special case where
$M$ is a single point: then $\Mod_{g,m}^A(J)$ reduces to the
\defin{moduli space of Riemann surfaces} with genus~$g$ and $m$ marked points:
$$
\Mod_{g,m} = \{ (\Sigma,j,(z_1,\ldots,z_m)) \} / \sim,
$$
with the equivalence and topology defined the same as above (all statements
involving the map $u$ are now vacuous).  The elements
$(\Sigma,j,\Theta) \in \Mod_{g,m}$ are called \defin{pointed Riemann
surfaces}, and each comes with an \defin{automorphism group}
$$
\Aut(\Sigma,j,\Theta) = \big\{ \varphi : (\Sigma,j) \to (\Sigma,j)
\text{ biholomorphic} \ \big|\ \varphi|_{\Theta} = \Id \big\}.
$$
Similarly, a $J$-holomorphic curve $(\Sigma,j,u,\Theta) \in \Mod(J)$
has an automorphism group
$$
\Aut(u) := \Aut(\Sigma,j,\Theta,u) := 
\{ \varphi \in \Aut(\Sigma,j,\Theta)\ |\ u = u\circ \varphi \}.
$$
It turns out that in understanding the local structure of $\Mod(J)$,
a special role is played by holomorphic curves with trivial automorphism
groups.  The following simple result was proved as
Theorem~\ref{thm:multipleCovers} in Chapter~\ref{chapter:local}, and it
implies (via Exercise~\ref{EX:trivialAut}
below) that whenever any nontrivial 
holomorphic curves exist, one can also find curves with trivial
automorphism group.

\begin{prop}
\label{prop:multipleCovers}
For any closed, connected and nonconstant $J$-holomorphic curve
$u : (\Sigma,j) \to (M,J)$, there exists a factorization
$u = v \circ \varphi$ where
\begin{itemize}
\item
$v : (\Sigma',j') \to (M,J)$ is a closed $J$-holomorphic curve that is
embedded outside a finite set of critical points and self-intersections, and
\item
$\varphi : (\Sigma,j) \to (\Sigma',j')$ is a holomorphic map of
degree~$\deg(\varphi) \ge 1$.
\end{itemize}
Moreover, $v$ is unique up to biholomorphic reparametrization.  \qed
\end{prop}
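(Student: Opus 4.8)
The existence statement is precisely Theorem~\ref{thm:multipleCovers} of Chapter~\ref{chapter:local}, and $\deg(\varphi)\ge 1$ holds by Exercise~\ref{EX:posDegree}, since $\varphi$ is holomorphic and cannot be constant (otherwise $u$ would be). So the plan is to supply a proof of the uniqueness clause. Suppose $u = v_1\circ\varphi_1 = v_2\circ\varphi_2$, where for $i=1,2$ the curve $v_i:(\Sigma_i,j_i)\to(M,J)$ is embedded outside a finite set $Z_i\subset\Sigma_i$ and $\varphi_i:(\Sigma,j)\to(\Sigma_i,j_i)$ is holomorphic of positive degree; I want to produce a biholomorphism $\psi:(\Sigma_1,j_1)\to(\Sigma_2,j_2)$ with $v_1 = v_2\circ\psi$. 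Note each $v_i$ is nonconstant since $v_i\circ\varphi_i = u$ is.

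First I would build $\psi$ on a dense open set. Since a holomorphic map of positive degree between closed connected Riemann surfaces is surjective (Exercise~\ref{EX:posDegree}), we have $v_1(\Sigma_1) = u(\Sigma) = v_2(\Sigma_2) =: C$. Put $P := v_1(Z_1)\cup v_2(Z_2)$, a finite subset of $M$, and $\dot{C} := C\setminus P$. By the choice of $Z_i$ (cf.\ Corollary~\ref{cor:simple}), the restriction $v_i|_{\Sigma_i\setminus Z_i}$ is an injective pseudoholomorphic immersion whose image $C\setminus v_i(Z_i)$ is disjoint from $v_i(Z_i)$; hence $\dot{C}$ is a smooth submanifold of $M$ with $J$-invariant tangent spaces, carrying a natural complex structure for which the inclusion is pseudoholomorphic. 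Since $v_i(Z_i)\subset P$ is disjoint from $\dot{C}$, the set $\dot{\Sigma}_i := v_i^{-1}(\dot{C})$ is contained in $\Sigma_i\setminus Z_i$ and equals $\Sigma_i$ minus the finite set $v_i^{-1}(P)$; thus $v_i:\dot{\Sigma}_i\to\dot{C}$ is a pseudoholomorphic bijective immersion, hence a biholomorphism, and I set
\[
\psi_0 := v_2^{-1}\circ v_1:\dot{\Sigma}_1\to\dot{\Sigma}_2,
\]
a biholomorphism satisfying $v_1 = v_2\circ\psi_0$ on $\dot{\Sigma}_1$.

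The main step is then to extend $\psi_0$ holomorphically across each of the finitely many punctures $z_0\in\Sigma_1\setminus\dot{\Sigma}_1$. Since $v_1$ is continuous on all of $\Sigma_1$, $v_1(z)\to p_0 := v_1(z_0)$ as $z\to z_0$. The fibre $v_2^{-1}(p_0)$ is finite (a nonconstant holomorphic map of closed surfaces is proper with finite fibres), so I can cover it by pairwise disjoint coordinate disks in $\Sigma_2$; by compactness of $\Sigma_2$ there is a neighborhood $N$ of $p_0$ in $M$ whose $v_2$-preimage lies in the union of these disks. Choosing a punctured disk $\dot{D}\subset\dot{\Sigma}_1$ around $z_0$ small enough that $v_1(\dot{D})\subset N$, the set $\psi_0(\dot{D})$ is contained in the union of the disjoint disks, and being the continuous image of the connected set $\dot{D}$ it lies in a single one; thus $\psi_0$ is bounded near $z_0$ within one coordinate chart and extends holomorphically over $z_0$ by Riemann's removable singularity theorem. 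Applying the same argument to $\psi_0^{-1}$, and using that the extensions remain mutually inverse and continue to satisfy $v_1 = v_2\circ\psi$ by continuity on the dense set $\dot{\Sigma}_1$, I obtain the desired biholomorphism $\psi:(\Sigma_1,j_1)\to(\Sigma_2,j_2)$ with $v_1 = v_2\circ\psi$. The expected main obstacle is precisely this extension step: one must combine the hypothesis that the $v_i$ are embedded away from a finite set (which is also what makes $\psi_0$ well defined in the first place) with the compactness of the target surface to rule out $\psi_0$ oscillating between distinct preimages of $p_0$; the remaining verifications — surjectivity of the $\varphi_i$, the submanifold structure on $\dot{C}$ with its complex structure, and the density arguments — are routine.
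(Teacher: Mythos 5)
Your proposal is correct. For existence it does exactly what the paper does: Proposition~\ref{prop:multipleCovers} is stated without proof precisely because it restates Theorem~\ref{thm:multipleCovers}, and the positivity of $\deg(\varphi)$ is Exercise~\ref{EX:posDegree}. The uniqueness clause is explicitly left as an exercise in the paper, and your argument completes it correctly: passing to the complement of the finitely many ``bad'' image points to obtain the biholomorphism $\psi_0 = v_2^{-1}\circ v_1$ between punctured surfaces, and then extending over each puncture by trapping $\psi_0(\dot D)$ in a single coordinate disk (connectedness plus the preimage-of-a-neighborhood argument) and invoking Riemann's removable singularity theorem, is exactly the intended route; the mutual-inverse and $v_1 = v_2\circ\psi$ identities then propagate by continuity from the dense open set. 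One cosmetic point: your parenthetical justification that $v_2^{-1}(p_0)$ is finite (``a nonconstant holomorphic map of closed surfaces is proper with finite fibres'') does not literally apply, since $v_2$ maps into $M$ rather than into a surface; the conclusion is immediate anyway, either because $v_2$ is injective off the finite set $Z_2$ (so every fibre has at most $\# Z_2 + 1$ points) or because nonconstant $J$-holomorphic curves have isolated point-preimages by the similarity principle.
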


\begin{defn}
The degree of $\varphi : \Sigma \to \Sigma'$ in Prop.~\ref{prop:multipleCovers}
is called the \defin{covering number} or \defin{covering multiplicity} of~$u$.
If this is~$1$, then we say $u$ is \defin{simple}.
\end{defn}

\begin{defn}
\label{defn:somewhereInj}
Given a smooth map $u : \Sigma \to M$, a point $z \in \Sigma$ is called
an \defin{injective point} for~$u$ if $du(z) : T_z \Sigma \to T_{u(z)} M$ 
is injective and
$u^{-1}(u(z)) = \{z\}$.  The map $u$ is called \defin{somewhere injective}
if it has at least one injective point.
\end{defn}

Proposition~\ref{prop:multipleCovers} implies that a closed connected
$J$-holomorphic curve is somewhere injective if and only if it is
simple.  (For a word of caution about this statement, see
Remark~\ref{remark:Achtung2} below.)  We denote by
$$
\Mod^*(J) \subset \Mod(J)
$$
the open subset consisting of all curves in $\Mod(J)$ that are somewhere 
injective.  It will also be useful to generalize this as follows: given
an open subset $\uU \subset M$, define the open subset
$$
\Mod^*_\uU(J) = \{ u \in \Mod(J)\ |\ \text{$u$ has an injective point mapped
into~$\uU$} \}.
$$

\begin{exercise}
\label{EX:trivialAut}
Show that if $u : (\Sigma,j) \to (M,J)$ is somewhere injective then $\Aut(u)$ is trivial
(for any choice of marked points).
\end{exercise}
\begin{exercise}
Show that if $u : (\Sigma,j) \to (M,J)$ has covering multiplicity~$k \in \NN$
then for any set of marked points $\Theta$, the order of
$\Aut(\Sigma,j,\Theta,u)$ is at most~$k$.
\end{exercise}

Recall that a subset $Y$ in a
complete metric space $X$ is called a \defin{Baire subset} or said to be of 
\defin{second category} if it is a countable intersection of open dense 
sets.\footnote{While this usage of the terms ``Baire subset'' and ``second category''
is considered standard among symplectic topologists, the reader should
beware that it is slightly at odds with the usage in other fields.
For instance, \cite{Royden} and other standard references define a
subset $Y \subset X$
to be of second category (or \emph{nonmeager}) if and only if it is not
of first category (or \emph{meager}), where the latter means $Y$ is a
countable union of nowhere dense sets and thus is the \emph{complement} of
what we are calling a Baire subset.  Thus it would be better in principle to
say \emph{comeager} instead of ``Baire'' or ``second category''---but 
I will not attempt to change the habits of the symplectic community
single-handedly.}
The Baire category theorem implies that such subsets are also dense, and
Baire subsets are often used to define an infinite-dimensional version of the
term ``almost everywhere,'' i.e.~they are analogous to sets whose complements
have Lebesgue measure zero.  It is common to say that a property is satisfied
for \defin{generic} choices of data if the set of all possible data contains
a Baire subset for which the property is satisfied.  

Since it is important for applications, we shall assume throughout this
chapter that~$M$ carries a symplectic structure~$\omega$, and focus our attention
on the space of \emph{$\omega$-compatible} almost complex structures
$\jJ(M,\omega)$ that was defined in \S\ref{sec:compatible}; see
Remark~\ref{remark:symplecticGeneric} below on why this is not actually
a restriction.  We will also allow the following generalization:
given $\Jfix \in \jJ(M,\omega)$ and an open subset $\uU \subset M$, define
$$
\jJ(M,\omega\,;\,\uU,\Jfix) = \{ J \in \jJ(M,\omega) \ |\ 
\text{$J = \Jfix$ on $M \setminus \uU$} \}.
$$
If $\uU$ has compact closure, then this space carries a natural 
$C^\infty$-topology and is a Fr\'{e}chet manifold.\footnote{We are not
justifying the claim that it is a Fr\'echet manifold because we will not
need to use it, but this is not hard to prove using the local charts for
$\jJ(\CC^n)$ defined in \S\ref{sec:compatible}, together with a bit of
infinite-dimensional calculus from \S\ref{sec:calculus}.
In \S\ref{subsec:dense} we will make use of certain related spaces which 
are Banach manifolds.}
In the following sections we
will prove several results which, taken together, imply the following
local structure theorem.  Note that the important special case
$\uU = M$ is allowed, and in this case the choice of $\Jfix$ is
irrelevant.

\begin{thm}
\label{thm:localStructure}
Suppose $(M,\omega)$ is a symplectic manifold without 
boundary, $\uU \subset M$ is an 
open subset with compact closure, and $\Jfix \in \jJ(M,\omega)$.
Then there exists a Baire subset $\jJ_\reg(M,\omega\,;\,\uU,\Jfix) \subset 
\jJ(M,\omega\,;\,\uU,\Jfix)$ such that for every
$J \in \jJ_\reg(M,\omega\,;\,\uU,\Jfix)$, the space 
$\Mod^*_\uU(J)$ of $J$-holomorphic curves with injective points mapped into~$\uU$ 
naturally admits the structure of a smooth finite-dimensional 
manifold, and the evaluation map on this space is smooth.
The dimension of $\Mod^*_\uU(J) \cap \Mod_{g,m}^A(J)$ for any
$g,m \ge 0$ and $A \in H_2(M)$ is precisely the
virtual dimension of $\Mod_{g,m}^A(J)$.
\end{thm}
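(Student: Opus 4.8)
\textbf{Proof strategy for Theorem~\ref{thm:localStructure}.}
The plan is to realize $\Mod^*_\uU(J)$, for generic~$J$, as the zero set of a Fredholm section by a Sard--Smale argument applied to a \emph{universal moduli space}. First I would set up the relevant Banach manifolds: fix $p > 2$ and $k \ge 1$, and for each topological type $(g,m,A)$ consider the space $\bB^{k,p}_{g,m} \times \jJ^\ell(M,\omega\,;\,\uU,\Jfix)$ of pairs consisting of a $W^{k,p}$-map $u$ from a genus-$g$ surface with $m$ marked points, together with a complex structure $j$ on the domain, and an almost complex structure $J$ of class~$C^\ell$ (with $\ell$ large) agreeing with $\Jfix$ outside~$\uU$. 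Here I would use the fact that the space of complex structures on a fixed surface is itself a finite-dimensional manifold (Teichm\"uller theory, to be reviewed in Chapter~\ref{chapter:moduli}); alternatively one fixes a slice transverse to the reparametrization action. Over this base sits the Banach space bundle whose fiber at $(j,u,J)$ is $W^{k-1,p}(\overline{\Hom}_\CC(T\Sigma,u^*TM))$ (with $J$-antilinearity in the target), and $\dbar_J u = Tu + J\circ Tu \circ j$ is a smooth section. Restricting to somewhere-injective maps with an injective point landing in~$\uU$, I would call the zero set of this section the \emph{universal moduli space} $\Mod^*_\uU$; it contains the full $\Mod^*_\uU(J)$ for every admissible~$J$.

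The key analytic step is to show that the linearization of $\dbar_J$ on this enlarged domain—where we now also vary~$J$—is surjective at every somewhere-injective solution with an injective point in~$\uU$. The linearization has the form $(\eta, y, Y) \mapsto \mathbf{D}_u \eta + \tfrac12 Y(u)\circ Tu \circ j + (\text{term in } y)$, where $\mathbf{D}_u$ is the Fredholm operator from Theorem~\ref{thm:main} and $Y$ ranges over variations of~$J$. Since $\mathbf{D}_u$ has closed image of finite codimension, surjectivity reduces to showing that the image of $Y \mapsto Y(u)\circ Tu \circ j$ is not contained in any proper closed subspace containing $\im \mathbf{D}_u$; equivalently, by the identification $\coker \mathbf{D}_u \cong \ker \mathbf{D}_u^*$ of \S\ref{sec:FredholmProperty}, one must show that no nonzero $\alpha \in \ker \mathbf{D}_u^*$ is $L^2$-orthogonal to all $Y(u)\circ Tu \circ j$. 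The argument runs: if such $\alpha$ existed, then near an injective point $z_0 \in u^{-1}(\uU)$ one can choose $Y$ supported in a tiny ball around $u(z_0)$ (using that $z_0$ is injective so $u$ visits that ball only near $z_0$) making $\langle \alpha, Y(u)\circ Tu\circ j\rangle_{L^2}$ nonzero—this uses that $\alpha$ is smooth (elliptic regularity for the adjoint Cauchy--Riemann operator) and, crucially, that $\alpha$ has only isolated zeros (the similarity principle, Theorem~\ref{thm:similarity}), so $\alpha(z_0) \ne 0$ after possibly perturbing $z_0$, while $Tu(z_0) \ne 0$ since $z_0$ is injective. This is the main obstacle: producing, in the constrained class $\jJ(M,\omega\,;\,\uU,\Jfix)$ of \emph{$\omega$-compatible} perturbations vanishing outside~$\uU$, enough freedom in~$Y$—one must check that the tangent space $T_J\jJ(M,\omega)$, which by Corollary~\ref{cor:compatible} consists of $J$-antilinear symmetric endomorphisms, still contains a $Y$ doing the job, and that $Y$ can be cut off to be supported in~$\uU$ without destroying the compatibility condition or the nonvanishing of the pairing.

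Granting surjectivity of the universal linearization, the implicit function theorem of \S\ref{sec:calculus} gives $\Mod^*_\uU$ the structure of a (Banach, but in each $(g,m,A)$-component finite-codimension-plus-infinite) smooth manifold, and the projection $\pi : \Mod^*_\uU \to \jJ^\ell(M,\omega\,;\,\uU,\Jfix)$ is a smooth Fredholm map whose index on the component of $(g,m,A)$-type equals $\ind(\mathbf{D}_u)$ adjusted by the $6g-6+2m$ dimensions of Teichm\"uller space and the $-\dim\Aut$ correction—assembling to exactly $\virdim\Mod_{g,m}^A(J)$ of \eqref{eqn:virdim} once one divides by the reparametrization action (which is free, by Exercise~\ref{EX:trivialAut}, precisely on somewhere-injective curves). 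The Sard--Smale theorem then supplies a Baire subset of regular values $J \in \jJ^\ell(M,\omega\,;\,\uU,\Jfix)$; for such~$J$, $\pi^{-1}(J) \cap \Mod^*_\uU = \Mod^*_\uU(J) \cap \Mod_{g,m}^A(J)$ is a smooth manifold of dimension $\virdim\Mod_{g,m}^A(J)$, and smoothness of the evaluation map follows from Exercise~\ref{EX:evaluation}. Finally I would upgrade from $C^\ell$ to $C^\infty$ by the standard Taubes argument: intersect the Baire sets over all $\ell$ and all countably many components $(g,m,A)$, noting $H_2(M)$ is countable, and use that the $C^\infty$-regular set is dense in each $C^\ell$-regular set; some care with Floer's $C_\epsilon$-space may be needed to keep the perturbation class a Banach manifold at each stage.
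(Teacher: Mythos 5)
Your strategy coincides with the paper's proof almost step for step: a universal moduli space over a Banach manifold of perturbed almost complex structures, surjectivity of the universal linearization via the similarity principle and bump-function perturbations localized near an injective point in $\uU$, the linear-algebra check that symmetric $J$-antilinear endomorphisms suffice (this is Lemma~\ref{lemma:choice}), the Sard--Smale theorem applied to the projection $\pi$ (whose kernel and cokernel are identified with those of the relevant restricted Cauchy--Riemann operator via Lemma~\ref{lemma:parametrized}), and Theorem~\ref{thm:IFT2} to divide by reparametrizations. The only cosmetic difference is that you work by default with $C^\ell$ almost complex structures, whereas the text uses Floer's $C_\epsilon$-space so that all maps stay smooth; the paper notes both options and your hedge about $C_\epsilon$ is fine.

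The one genuine gap is your final step, where density must be upgraded to genericity. ``Intersect the Baire sets over all $\ell$'' is not the mechanism, and it cannot be: Sard--Smale produces a Baire subset of the auxiliary Banach manifold ($\jJ^\ell$ or $\jJ_\epsilon$), which yields only a \emph{dense} subset of $\jJ(M,\omega\,;\,\uU,\Jfix)$ in the $C^\infty$-topology, and a countable intersection of merely dense sets need not be dense (it may be empty). What actually closes the argument --- the Taubes trick of \S\ref{subsec:Taubes} --- is an exhaustion of the moduli space by subsets $\Mod_{g,m}^A(J,c)$ that are ``uniformly compact'' in $J$ (Lemma~\ref{lemma:exhaustion}: uniform gradient bounds, distance from the boundary of $\uU$, quantitative injectivity, and control of the conformal structure of the domain), so that each
\[
\jJ_\reg^c := \left\{ J \ \middle|\ \text{every } u \in \Mod_{g,m}^A(J,c) \text{ with an injective point in } \uU \text{ is regular} \right\}
\]
is \emph{open} in the $C^\infty$-topology (regularity is open and cannot fail in the limit of a convergent sequence) as well as dense, whence $\bigcap_{c\in\NN}\jJ_\reg^c$ is Baire. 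Your sketch supplies neither the compact exhaustion nor the openness argument, and the claim that ``the $C^\infty$-regular set is dense in each $C^\ell$-regular set'' does not substitute for it. Everything up to and including the Sard--Smale step is sound; this last step needs to be replaced by the open-and-dense decomposition just described.
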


Note that $M$ in the above statement need not be compact, but $\uU$ must
have compact closure.  In the case where $M$ is compact and
$\uU = M$, we will denote the space $\jJ_\reg(M,\omega\,;\,\uU,\Jfix)$
simply by $\jJ_\reg(M,\omega)$.

\begin{remark}
\label{remark:symplecticGeneric}
The above theorem and all other important results in this chapter
remain true if $\jJ(M,\omega)$ is replaced by the spaces of
$\omega$-tame or general almost complex structures $\jJ^\tau(M,\omega)$ or
$\jJ(M)$; in fact, the equivalence of these last two
variations is obvious since $\jJ^\tau(M,\omega)$ is an open subset
of~$\jJ(M)$.  The symplectic structure 
will play no role whatsoever in the proofs
except to make one detail slightly harder (see Lemma~\ref{lemma:choice}),
thus it will be immediate that minor alterations of the same proofs
imply the same results for tame or general almost complex structures.
\end{remark}

One of the important consequences of Theorem~\ref{thm:localStructure}
is that for generic 
choices of~$J$, every connected component of the moduli space
$\Mod^*(J)$ must have nonnegative virtual dimension, as a smooth manifold
of negative dimension is empty by definition.  Put another way,
if a somewhere injective curve of negative index exists, then one can 
always eliminate it
by a small perturbation of~$J$.  We state this as a corollary.

\begin{cor}
\label{cor:nonnegativeIndex}
If $J \in \jJ_\reg(M,\omega\,;\,\uU,\Jfix)$, then every curve
$u \in \Mod(J)$ that maps an injective point into $\uU$ satisfies
$\ind(u) \ge 0$.
\end{cor}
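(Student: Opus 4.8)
The plan is to deduce Corollary~\ref{cor:nonnegativeIndex} directly from Theorem~\ref{thm:localStructure}. The statement to prove is that if $J \in \jJ_\reg(M,\omega\,;\,\uU,\Jfix)$ and $u \in \Mod(J)$ maps an injective point into~$\uU$, then $\ind(u) \ge 0$. The key observation is simply that such a curve~$u$, forgetting any marked points, lies in $\Mod^*_\uU(J) \cap \Mod^A_{g,0}(J)$ for $A := [u] \in H_2(M)$ and $g$ the genus of the domain of~$u$, so Theorem~\ref{thm:localStructure} applies to give this space a smooth manifold structure of dimension equal to $\virdim \Mod^A_{g,0}(J)$. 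But by definition \eqref{eqn:index}, $\ind(u) = \virdim \Mod^A_{g,0}(J)$, so it suffices to show that a nonempty smooth finite-dimensional manifold has nonnegative dimension.

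First I would observe that $u$ being somewhere injective with an injective point in~$\uU$ means precisely that $u \in \Mod^*_\uU(J)$; in particular $\Mod^*_\uU(J) \cap \Mod^A_{g,0}(J)$ is nonempty, containing (the equivalence class of)~$u$. Next I would invoke Theorem~\ref{thm:localStructure}: since $J \in \jJ_\reg(M,\omega\,;\,\uU,\Jfix)$, this intersection is a smooth manifold of dimension $\virdim \Mod^A_{g,0}(J) = \ind(u)$. Finally I would note that a smooth manifold of negative dimension is, by convention, the empty set (equivalently, the notion of a smooth $k$-manifold is only defined for integers $k \ge 0$, and one declares a manifold of ``dimension $<0$'' to be empty). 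Since our manifold is nonempty, its dimension must be $\ge 0$, which gives $\ind(u) \ge 0$.

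There is essentially no hard part here: the corollary is a routine bookkeeping consequence of the main structural theorem together with the definition of the index. The only points requiring a sentence of care are (i) checking that the definition of $\ind(u)$ in \eqref{eqn:index} agrees with the virtual dimension of the no-marked-point moduli space containing~$u$, which is immediate from the definitions, and (ii) recalling the convention that a manifold of negative formal dimension is empty, which is the substantive logical step. One should also note that $u$ need not be an embedding or even an immersion for the argument to work---only that it is somewhere injective with an injective point landing in~$\uU$, which is exactly the hypothesis---and that the conclusion is independent of how many marked points one might attach, since adding marked points only increases the virtual dimension by~$2m \ge 0$.
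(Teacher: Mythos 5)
Your argument is correct and is exactly the one the paper gives: apply Theorem~\ref{thm:localStructure} to $\Mod^*_\uU(J) \cap \Mod_{g,0}^A(J)$, note that its dimension equals $\ind(u)$ by \eqref{eqn:index}, and conclude from the convention that a smooth manifold of negative dimension is empty. No gaps.
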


\begin{remark}
\label{remark:Achtung2}
By Proposition~\ref{prop:multipleCovers}, a closed $J$-holomorphic curve $u$
maps an injective point into an open set $\uU$ if and only if $u$ is simple
and intersects~$\uU$.  It should be noted however that the equivalence of
``simple'' and ``somewhere injective'' does not always hold in more
general contexts, e.g.~for holomorphic curves with totally real boundary
\cites{Lazzarini:disks,KwonOh}; in such cases, 
Corollary~\ref{cor:nonnegativeIndex} generalizes in the form stated.
\end{remark}

An important related problem is to consider the space of
$J_s$-holomorphic curves, where $\{J_s\}$ is a smooth homotopy of
almost complex structures.  Suppose $\{\omega_s\}_{s \in [0,1]}$ is a smooth
homotopy of symplectic forms on a closed manifold $M$, and given
$J_0 \in \jJ(M,\omega_0)$ and $J_1 \in \jJ(M,\omega_1)$, define
$$
\jJ(M,\{\omega_s\} \,;\, J_0,J_1)
$$
to be the space of all smooth $1$-parameter families
$\{ J_s \}_{s \in [0,1]}$ connecting $J_0$ to $J_1$ such that 
$J_s \in \jJ(M,\omega_s)$ for all~$s$.
One can similarly define the spaces $\jJ^\tau(M ,\omega \,;\, J_0,J_1)$ 
and $\jJ(M \,;\, J_0,J_1)$ of $\omega$-tame or general $1$-parameter
families respectively, or more general spaces of structures that are fixed
outside an open subset $\uU \subset M$ with compact closure (in which case
$M$ need not be closed).  All of these spaces have natural 
$C^\infty$-topologies.  Given
$\{J_s\} \in \jJ(M,\{\omega_s\} \,;\, J_0,J_1)$, we define
the ``parametrized'' moduli space,
$$
\Mod(\{ J_s \}) = \{ (s,u) \ |\ 
s \in [0,1], \ u \in \Mod(J_s) \},
$$
along with the corresponding space of somewhere injective curves
$\Mod^*(\{ J_s \})$ and the components $\Mod_{g,m}^A(\{ J_s \})$
for each $g,m \ge 0$, $A \in H_2(M)$.  These also have natural topologies,
and intuitively, we expect $\Mod^*(\{ J_s \})$ to be a manifold with
boundary $\Mod^*(J_0) \sqcup \Mod^*(J_1)$.
The only question is what should be the
proper notion of ``genericity'' to make this statement correct.  Given a
homotopy $\{ J_s \} \in \jJ(M,\{\omega_s\} \,;\, J_0,J_1)$ where 
$J_0 \in \jJ_\reg(M,\omega_0)$ and $J_1 \in \jJ_\reg(M,\omega_1)$,
it would be too much to hope that one can always perturb $\{J_s\}$ so
that $J_s \in \jJ_\reg(M,\omega_s)$ for every~$s$; by analogy with the case
of smooth Morse functions on a manifold, any two Morse functions are indeed
smoothly homotopic, but not through a family of Morse functions.
What is true however is that one
can find ``generic homotopies,'' for which $J_s \in \jJ_\reg(M,\omega_s)$
for almost every $s \in (0,1)$, and $\Mod^*(\{J_s\})$
is indeed a manifold.  We will not prove the following
result explicitly, but the proof is an easy exercise after the proof of
Theorem~\ref{thm:localStructure} is understood.  

\begin{thm}
\label{thm:genericHomotopy}
Assume $M$ is a closed manifold with a smooth $1$-parameter family
$\{\omega_s\}_{s \in [0,1]}$ of symplectic forms,
$J_0 \in \jJ_\reg(M,\omega_0)$ and $J_1 \in \jJ_\reg(M,\omega_1)$.  Then 
there exists a Baire subset $\jJ_\reg(M,\{\omega_s\} \,;\, J_0,J_1) \subset 
\jJ(M,\{\omega_s\} \,;\, J_0,J_1)$ 
such that for every $\{J_s\} \in \jJ_\reg(M,\{\omega_s\} \,;\, J_0,J_1)$, 
the parametrized
space of somewhere injective curves $\Mod^*(\{J_s\})$ admits the structure of 
a smooth finite-dimensional manifold with boundary
$$
\p \Mod^*(\{J_s\}) = \left( \{0\} \times \Mod^*(J_0) \right)
\sqcup \left( \{1\} \times \Mod^*(J_1) \right).
$$
Its dimension near any $(s,u) \in \Mod^*(\{J_s\})$ with
$u \in \Mod_{g,m}^A(J_s)$ is $\virdim \Mod_{g,m}^A(J_s) + 1$.
Moreover, for each $s \in [0,1]$ at which
$J_s \in \jJ_\reg(M,\omega_s)$, $s$ is a regular value of the natural
projection $\Mod^*(\{J_s\}) \to [0,1] : (s,u) \mapsto s$.
\end{thm}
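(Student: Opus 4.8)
The plan is to run the universal moduli space argument used to prove Theorem~\ref{thm:localStructure}, with the interval $[0,1]$ adjoined as one extra parameter; the excerpt's own remark that this is ``an easy exercise'' once Theorem~\ref{thm:localStructure} is understood is accurate, so I would lean heavily on that construction. Fix a topological type $(g,m,A)$ and a large integer $\ell$ (later traded for $C^\infty$ by the same Taubes/$C_\epsilon$-space argument used for Theorem~\ref{thm:localStructure}). Form the Banach manifold $\jJ^\ell(M,\{\omega_s\}\,;\,J_0,J_1)$ of $C^\ell$-homotopies fixing the endpoints, together with $[0,1]$ and the Banach manifold of tuples $(j,u,\Theta)$ as in \S\ref{sec:moduliSpace} (reduced, as there, to somewhere injective curves with a slice transverse to the reparametrization action; by Exercise~\ref{EX:trivialAut} the relevant automorphism groups are trivial, so no orbifold issues arise). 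The nonlinear Cauchy--Riemann section $\dbar$ then defines a smooth section of a Banach space bundle over the product, whose zero set is the parametrized universal moduli space; its somewhere injective part $\widetilde{\Mod}^{*,A}_{g,m}$ is the object to analyze, and I would show it is a smooth Banach manifold by the implicit function theorem (Theorem~\ref{thm:IFT}).

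The only genuinely new feature is that the data now varies in three ways, so the linearized operator sends $(\delta s,\delta j,\eta)$ to $\mathbf{D}_u\eta$ (Fredholm, finite-dimensional cokernel) plus the one-dimensional term $\delta s\cdot(\p_s J_s)(u)\circ Tu\circ j$ coming from moving $s$, plus $Y_s(u)\circ Tu\circ j$ coming from a variation $Y$ of the homotopy, where $Y_{s'}\equiv 0$ for $s'$ near $\{0,1\}$. Surjectivity is proved exactly as in the unparametrized case: by Proposition~\ref{prop:formalAdjoint} a cokernel element is conjugate to a solution of a Cauchy--Riemann type equation, hence by the similarity principle (Theorem~\ref{thm:similarity}) has isolated zeros, so it is nonzero near an injective point $z_0$ of $u$; since $u$ is an embedding near $z_0$ (Proposition~\ref{prop:multipleCovers} and the local theory of Chapter~\ref{chapter:local}) one builds $Y$ supported near $s$ and near $u(z_0)$ pairing nontrivially with it, and the constraint that $Y$ vanish near $\{0,1\}$ costs nothing because this is carried out for $s$ in the open interval. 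This is precisely why the construction stops at the endpoints: over $s\in\{0,1\}$ the structure is pinned to $J_0$ or $J_1$, which lie in $\jJ_\reg$ by hypothesis, so the slices $\{0\}\times\Mod^*(J_0)$ and $\{1\}\times\Mod^*(J_1)$ are already smooth manifolds (by Theorem~\ref{thm:localStructure}) and attach as the boundary, while over the interior $\widetilde{\Mod}^{*,A}_{g,m}$ becomes a Banach manifold with boundary.

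With the universal space in hand, I would apply the Sard--Smale theorem to the projection $\pi:\widetilde{\Mod}^{*,A}_{g,m}\to\jJ^\ell(M,\{\omega_s\}\,;\,J_0,J_1)$; its regular values form a Baire set, and for such a value $\{J_s\}$ the fiber $\Mod^{*,A}_{g,m}(\{J_s\})$ is a smooth finite-dimensional manifold with boundary $(\{0\}\times\Mod^*(J_0))\sqcup(\{1\}\times\Mod^*(J_1))$. Its dimension is the Fredholm index of $\mathbf{D}_u$ (computed by Theorem~\ref{thm:main}) corrected by the Teichm\"uller, marked-point and reparametrization contributions, which together give $\virdim\Mod^A_{g,m}(J_s)$, plus one for the $s$-direction; smoothness of the evaluation map carries over verbatim. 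Intersecting these Baire sets over the countably many triples $(g,m,A)$ yields $\jJ_\reg(M,\{\omega_s\}\,;\,J_0,J_1)$. For the final, regular-value assertion: if $J_{s_0}\in\jJ_\reg(M,\omega_{s_0})$ then $\mathbf{D}_u$ is surjective for every $u\in\Mod^*(J_{s_0})$, so the linearized equation cutting out $T_{(s_0,u)}\Mod^*(\{J_s\})$ admits a solution with $\delta s=1$ (take $\delta j=0$ and $\eta$ a right-inverse image of $-(\p_s J_s)(u)\circ Tu\circ j$); hence $d\pi_{s_0}$ surjects onto $\RR$ at each point of the fiber, i.e.\ $s_0$ is a regular value of $(s,u)\mapsto s$.

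The main obstacle is the same purely technical one that already appears in Theorem~\ref{thm:localStructure}: verifying that the universal linearization is honestly surjective (openness and density of the good $\{J_s\}$, and the construction of the cutoff variations $Y$), together with passing from the Banach manifold of $C^\ell$-homotopies back to the Fr\'echet space of smooth homotopies. The genuinely new bookkeeping --- keeping $Y$ fixed near $s\in\{0,1\}$ and identifying the resulting boundary --- is as described above and is straightforward once the endpoint moduli spaces $\Mod^*(J_0)$ and $\Mod^*(J_1)$ are supplied by Theorem~\ref{thm:localStructure}.
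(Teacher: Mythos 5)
Your proposal is correct and follows exactly the route the paper intends: the paper does not actually write out a proof of Theorem~\ref{thm:genericHomotopy}, declaring it ``an easy exercise'' once Theorem~\ref{thm:localStructure} is understood, and your parametrized universal moduli space argument --- the extra $\delta s\cdot(\p_s J_s)(u)\circ Tu\circ j$ term in the linearization, variations $Y$ of the homotopy vanishing near the endpoints, regularity of the endpoint fibers supplied by the hypotheses $J_0\in\jJ_\reg(M,\omega_0)$ and $J_1\in\jJ_\reg(M,\omega_1)$, then Sard--Smale followed by the Taubes open-and-dense exhaustion --- is precisely that exercise carried out. One small imprecision: in the final regular-value step, Fredholm regularity of $u$ gives surjectivity of the combined operator $D\dbar_{J_{s_0}}(j,u)$ on $T_j\tT\oplus W^{1,p}(u^*TM)$, not of $\mathbf{D}_u$ alone, so you should allow $\delta j\neq 0$ when solving for a tangent vector with $\delta s=1$; this changes nothing in the argument.
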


\begin{cor}
\label{cor:nonnegativeParametrized}
For generic homotopies of compatible almost complex structures
$\{J_s\} \in \jJ_\reg(M,\{\omega_s\} \,;\, J_0,J_1)$ in the setting
of Theorem~\ref{thm:genericHomotopy}, every somewhere
injective curve $u \in \Mod(J_s)$ for any $s \in [0,1]$ satisfies
$\ind(u) \ge -1$.
\end{cor}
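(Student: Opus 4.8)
The plan is to derive Corollary~\ref{cor:nonnegativeParametrized} as an immediate consequence of Theorem~\ref{thm:genericHomotopy}, in exact parallel with the way Corollary~\ref{cor:nonnegativeIndex} follows from Theorem~\ref{thm:localStructure}. First I would fix a generic homotopy $\{J_s\} \in \jJ_\reg(M,\{\omega_s\}\,;\,J_0,J_1)$, pick any $s \in [0,1]$, and let $u \in \Mod(J_s)$ be somewhere injective, say $u \in \Mod_{g,m}^A(J_s)$ with $m=0$ without loss of generality (marked points only raise the dimension, and we may forget them). Then $(s,u)$ is a point of the smooth manifold-with-boundary $\Mod^*(\{J_s\})$, whose dimension near $(s,u)$ is $\virdim \Mod_{g,0}^A(J_s) + 1 = \ind(u) + 1$ by the dimension formula in Theorem~\ref{thm:genericHomotopy} together with \eqref{eqn:index}.

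The key observation is then purely formal: a nonempty smooth manifold (with or without boundary) has dimension $\geq 0$, hence $\ind(u) + 1 \geq 0$, i.e.\ $\ind(u) \geq -1$. The only thing to check carefully is that $\Mod^*(\{J_s\})$ really is nonempty near $(s,u)$, which is immediate because it contains the point $(s,u)$ itself by construction. One should also note that the relevant component $\Mod_{g,0}^A(\{J_s\})$ is exactly the piece of $\Mod^*(\{J_s\})$ on which the dimension count applies, and since $(s,u)$ lies in this component, the local dimension there is $\ind(u)+1$.

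There is essentially no obstacle here; the content is entirely in Theorem~\ref{thm:genericHomotopy}, which is assumed. The one stylistic subtlety worth spelling out is the role of the boundary: if $s \in \{0,1\}$ then $(s,u)$ lies on $\p\Mod^*(\{J_s\})$, but a manifold-with-boundary of dimension $d$ still has $d \geq 0$ whenever it is nonempty, so the bound $\ind(u) \geq -1$ holds uniformly for all $s \in [0,1]$. (In fact for $s$ an interior point with $J_s \in \jJ_\reg(M,\omega_s)$ one recovers the stronger bound $\ind(u) \geq 0$ from Corollary~\ref{cor:nonnegativeIndex}, since $s$ is then a regular value of the projection to $[0,1]$ and the fiber $\Mod^*(J_s)$ has dimension $\ind(u) = (\ind(u)+1) - 1 \geq 0$; but the point of the present corollary is precisely that at the finitely many ``bad'' parameter values, where $J_s$ may fail to be regular, one must settle for $-1$.) I would present the proof in two or three sentences invoking Theorem~\ref{thm:genericHomotopy} and the nonnegativity of the dimension of a nonempty manifold, and leave the parenthetical remark about interior regular values as an aside.
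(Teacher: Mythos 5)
Your proposal is correct and is precisely the intended argument: the paper states this as an immediate corollary of Theorem~\ref{thm:genericHomotopy} without further proof, the content being exactly that $(s,u)$ lies in the nonempty smooth manifold-with-boundary $\Mod^*(\{J_s\})$ of local dimension $\ind(u)+1\ge 0$ (after forgetting marked points, as you note). Your parenthetical aside is also consistent with the paper, though the paper's Remark~\ref{remark:even} obtains the improvement to $\ind(u)\ge 0$ by the evenness of the index rather than by restricting to regular parameter values.
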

\begin{remark}
\label{remark:even}
The result of Corollary~\ref{cor:nonnegativeParametrized} can actually
be improved to $\ind(u) \ge 0$ due to the numerical coincidence that 
according to \eqref{eqn:index}, $\ind(u)$ is always an \emph{even} number.
This observation is sometimes quite useful in applications, but it
fails to hold in more general settings, e.g.~as
we will see in later chapters, moduli spaces of punctured holomorphic
curves in symplectic cobordisms can have odd dimension, in which case
the natural generalization of Corollary~\ref{cor:nonnegativeParametrized}
as stated above is usually the best result possible.
\end{remark}

\begin{remark}
\label{remark:adapt}
Obvious generalizations of Theorem~\ref{thm:genericHomotopy} and
Corollary~\ref{cor:nonnegativeParametrized} also hold
for $\omega$-tame or general almost complex structures, and for
structures fixed outside an open precompact subset $\uU$ (with curves
required to have injective points in~$\uU$).  This generalization
requires no significantly new ideas outside of what we will describe
in the proof of Theorem~\ref{thm:localStructure}.
\end{remark}

The intuition behind Theorems~\ref{thm:localStructure} and
\ref{thm:genericHomotopy} is roughly as follows.
As we've already seen, spaces of $J$-holomorphic curves typically can be
described, at least locally, as zero sets of sections of certain Banach space 
bundles, and we'll show in \S\ref{sec:IFT} precisely how to set up the 
appropriate section
$$
\dbar_J : \bB \to \eE
$$
whose zero set locally describes $\Mod_{g,m}^A(J)$.  The identification
between $\dbar_J^{-1}(0)$ and $\Mod_{g,m}^A(J)$ near a given curve
$u \in \Mod_{g,m}^A(J)$ will in general be locally
$k$-to-$1$, where $k$ is the order of the automorphism group $\Aut(u)$, 
and this means that even if $\dbar_J^{-1}(0)$ is a manifold, 
$\Mod_{g,m}^A(J)$ is at best an orbifold.  This is a moot point of course
if $u$ is somewhere injective, since it then has a trivial automorphism
group by Exercise~\ref{EX:trivialAut}.  Thus once the section $\dbar_J$ is
set up, the main task is to show that generic choices of $J$ make
$\dbar_J^{-1}(0)$ a manifold (at least near the somewhere injective curves), 
which means
showing that the linearization of $\dbar_J$ is always surjective.  This is
a question of \emph{transversality}, i.e.~if we regard $\dbar_J$ as an
embedding of $\bB$ into the total space $\eE$ and denote the zero section
by $\zZ \subset \eE$, then $\dbar_J^{-1}(0)$ is precisely the intersection,
$$
\dbar_J(\bB) \cap \zZ,
$$
and it will be a manifold if this intersection is everywhere transverse.
Intuitively, one expects this to be true after a generic perturbation of
$\dbar_J$, and it remains to check whether the most geometrically
natural perturbation, defined by perturbing~$J$, is ``sufficiently generic''
to achieve this.

The answer is yes and no: it turns out that perturbations of $J$ are
sufficiently generic if we only consider somewhere injective curves, but not 
for multiple covers.  It's not hard to see why transversality \emph{must}
sometimes fail: if $\tilde{u}$ is a multiple cover of $u$, then even if 
$\Mod(J)$ happens to be a manifold near~$u$, 
there are certain obvious relations between the components of $\Mod(J)$
containing $u$ and $\tilde{u}$ that will often cause the latter
to have ``the wrong'' dimension, i.e.~something other
than~$\ind(\tilde{u})$.  For example,
suppose $n=4$, so $M$ is $8$-dimensional, and for some
$J \in \jJ_\reg(M,\omega)$ there
exists a simple $J$-holomorphic sphere $u \in \Mod_{0,0}^A(J)$ with
$c_1(A) = -1$.  Then by \eqref{eqn:index}, $\ind(u) = 0$, and
Theorem~\ref{thm:localStructure} implies that the component of $\Mod^*(J)$ 
containing $u$ is a smooth
$0$-dimensional manifold, i.e.~a discrete set.  In fact, the implicit
function theorem implies much more (cf.~Theorem~\ref{thm:IFTparametrized}):
it implies that for any
other $J_\epsilon \in \jJ(M,\omega)$ sufficiently close to~$J$, there is a unique
$J_\epsilon$-holomorphic curve $u_\epsilon$ that is a small perturbation 
of~$u$.  Now for each of these curves and some $k \in \NN$, 
consider the $k$-fold cover
$$
\tilde{u}_\epsilon : S^2 \to M : z \mapsto u_\epsilon(z^k), 
$$
where as usual $S^2$ is identified with the extended complex plane, so that
$z \mapsto z^k$ defines a $k$-fold holomorphic branched cover $S^2 \to S^2$.
We have $[\tilde{u}_\epsilon] = k[u_\epsilon] = kA$, and thus
$$
\ind(\tilde{u}_\epsilon) = (n - 3)\chi(S^2) + 2 c_1(kA) = 2 - 2k,
$$
so if $k \ge 2$ then $\tilde{u}_\epsilon$ are $J_\epsilon$-holomorphic
spheres with negative index.  By construction, these cannot be
``perturbed away'': they exist for all $J_\epsilon$ sufficiently close to~$J$,
which shows that perturbations of $J$ do not suffice to make $\Mod(J)$ 
into a smooth manifold of the right dimension near~$\tilde{u}$.
In this situation it is not even
clear if $\Mod(J)$ is a manifold near $\tilde{u}$ at all---in a few lucky 
situations one might be able to prove this, but it is not true in general.

The failure of Theorems~\ref{thm:localStructure} and~\ref{thm:genericHomotopy}
for multiply covered
$J$-holomorphic curves is one of the great headaches of symplectic topology,
and the major reason why fully general definitions of the various invariants
based on counting holomorphic curves (Gromov-Witten theory, Floer homology,
Symplectic Field Theory) are often so technically difficult as to be 
controversial.  There have been many suggested approaches to the problem,
most requiring the introduction of complicated new structures,
e.g.~virtual moduli cycles, Kuranishi structures, polyfolds.
In some fortunate situations one can avoid these complications by
using topological constraints to rule out the appearance of any
multiple covers in the moduli space of interest---we'll see examples of this
in our applications, especially in dimension four.

\begin{remark}
\label{remark:compactSubset}
As indicated above, we normally will not need to assume $M$ is compact
in this discussion, but the region $\uU$ where we permit perturbations of
the almost complex structure is required to have compact closure.
This restriction is useful for various technical reasons, e.g.~it 
makes it relatively straightforward
to define Banach manifolds in which the perturbed almost complex structures
live; without this assumption, one can still do something, but it requires
considerably more care.

Here is an important class of examples where
$M$ is noncompact: suppose $M$ is a symplectic cobordism with
cylindrical ends, in which case it can be decomposed as
$$
M = ( (-\infty,0] \times V_-) \cup M_0 \cup ( [0,\infty) \times V_+),
$$
where $V_\pm$ are closed manifolds and $M_0$ is compact with
$\p M_0 = V_- \sqcup V_+$.
One can then restrict attention to a space of almost
complex structures that are \emph{fixed on the cylindrical ends}, but can
vary on the compact subset~$M_0$, and a generic subset
of this space ensures regularity for all holomorphic curves in $M$
that send an injective point to the interior of~$M_0$.
For curves
that live entirely in the cylindrical ends, one can exploit the fact that
$V_\pm$ is compact and argue separately that a generic choice of
$\RR$-invariant almost complex structure on the ends achieves transversality.
We will come back to this in a later chapter.
\end{remark}

\section{Classification of pointed Riemann surfaces}

\subsection{Automorphisms and Teichm\"uller space}
\label{sec:Teichmueller}

In order to understand the local structure of the 
moduli space of $J$-holomorphic
curves, we will first need to consider the space of pointed
Riemann surfaces, which appear as domains of such curves.  In particular,
we will need suitable local parametrizations of $\Mod_{g,m}$ near any
given complex structure on~$\Sigma$.  The discussion necessarily begins
with the following classical result, which is proved
e.g.~in \cite{FarkasKra}.

\begin{thm}[Uniformization theorem]
Every simply connected Riemann surface is biholomorphically equivalent
to either the Riemann sphere $S^2 = \CC \cup \{\infty\}$, the complex
plane $\CC$ or the upper half plane $\HH = \{ \Im z > 0 \} \subset \CC$.
\end{thm}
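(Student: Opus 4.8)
The plan is to carry out the classical three-case proof of uniformization via exhaustion, the Riemann mapping theorem, and normal families; the full details may be found in \cite{FarkasKra}. Let $X$ be a simply connected Riemann surface, and begin with the compact case. If $X$ is compact then $H_1(X;\ZZ) = 0$, so $X$ has genus~$0$, and by the Riemann--Roch theorem (see Theorem~\ref{thm:RiemannRoch}) there is a meromorphic function $f : X \to S^2$ with a single simple pole; such an $f$ has degree~$1$ and is therefore a biholomorphism onto~$S^2$. From now on $X$ is non-compact, and the goal is a biholomorphism onto $\CC$ or onto $\HH$; since $\HH \cong \DD$ via the Cayley transform of \S\ref{sec:compatible}, it suffices to produce one of $\CC$ or $\DD$.

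First I would exhaust $X$ by connected, relatively compact, simply connected open subsets $X_1 \Subset X_2 \Subset \cdots$ with smooth boundary and $\bigcup_n X_n = X$ (take any exhaustion by relatively compact smooth domains and absorb, at each stage, the relatively compact complementary components, which preserves simple connectedness because $X$ is simply connected). Fix a base point $p \in X_1$ and a holomorphic coordinate centered at~$p$. By the Riemann mapping theorem, each $X_n$ admits a biholomorphism $f_n : X_n \to \DD$ with $f_n(p) = 0$ and $f_n'(p) > 0$, and $f_n$ realizes the extremal value
\[
r_n := \sup\bigl\{\, |g'(p)| \ :\ g : X_n \to \DD \text{ holomorphic},\ g(p) = 0 \,\bigr\} \in (0,\infty).
\]
Since every competitor on $X_{n+1}$ restricts to one on $X_n$, the numbers $r_n$ are non-increasing; set $r_\infty := \lim_{n} r_n \ge 0$. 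The same restriction argument shows $|g'(p)| \le r_\infty$ for every holomorphic $g : X \to \DD$ with $g(p) = 0$. The proof now bifurcates according to whether $r_\infty > 0$ ($X$ \emph{hyperbolic}) or $r_\infty = 0$ ($X$ \emph{parabolic}).

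In the hyperbolic case the $f_n$ are uniformly bounded by~$1$, so Montel's theorem yields a subsequence converging locally uniformly to a holomorphic $f : X \to \overline{\DD}$ with $f(p) = 0$ and $f'(p) = r_\infty > 0$. The open mapping theorem gives $f(X) \subset \DD$, and Hurwitz's theorem (a non-constant locally uniform limit of injective maps is injective) gives that $f$ is injective. If $f(X) \subsetneq \DD$, then taking a Riemann map $\phi : \DD \to f(X)$ with $\phi(0) = 0$ and applying Schwarz's lemma to the inclusion $f(X) \hookrightarrow \DD$ forces $|\phi'(0)| < 1$, so $g := \phi^{-1} \circ f : X \to \DD$ has $g(p) = 0$ and $|g'(p)| = r_\infty / |\phi'(0)| > r_\infty$, contradicting the previous paragraph; hence $f : X \xrightarrow{\ \sim\ } \DD$ and $X \cong \HH$. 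In the parabolic case one rescales, putting $\tilde f_n := f_n / r_n : X_n \to \DD(0, 1/r_n)$; these are injective with $\tilde f_n(p) = 0$ and $\tilde f_n'(p) = 1$, and the Koebe distortion theorems turn $\{\tilde f_n\}$ into a normal family. A locally uniform limit $\tilde f : X \to \CC$ is again injective by Hurwitz, with $\tilde f'(p) = 1$, and since the images $\tilde f_n(X_n) = \DD(0, 1/r_n)$ exhaust $\CC$ as $1/r_n \to \infty$, one deduces that $\tilde f$ is onto; hence $X \cong \CC$.

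I expect the parabolic case to be the main obstacle. Everything in the compact and hyperbolic cases is essentially formal once Riemann--Roch and the Riemann mapping theorem are in hand, but in the parabolic case the rescaled family $\{\tilde f_n\}$ is no longer uniformly bounded, and making the Koebe estimates do their job — in particular, confining the maps $\tilde f_n^{-1}$ on fixed disks $\DD(0,R)$ away from the ends of $X$ so that a limit exists, and then showing that limit recovers all of $\CC$ — is where the genuine analytic content sits. An equivalent way to organize this endgame is potential-theoretic: $X$ is hyperbolic precisely when it carries a Green's function (equivalently, a non-constant negative subharmonic function), in which case $e^{-(G + iG^{*})}$ for the Green's function $G$ with pole at~$p$ and its harmonic conjugate $G^{*}$ is the uniformizing map onto $\DD$, while in the parabolic case the uniformizing map onto $\CC$ is built from suitably normalized limits of the Green's functions of the exhausting domains $X_n$ — and the same compactness difficulty reappears there as the problem of showing those normalized limits are non-trivial.
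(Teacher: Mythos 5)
The paper does not actually prove this theorem: it is quoted as a classical black box with a pointer to \cite{FarkasKra}, so there is no in-paper argument to compare yours against. Judged on its own terms, your sketch is the standard exhaustion/normal-families proof and the overall architecture is sound: the compact case via Riemann--Roch (a degree-one line bundle on a genus-zero surface has a two-dimensional space of sections by the index formula together with the surjectivity criterion $c_1 > -\chi$, yielding a degree-one map to $S^2$), the hyperbolic case via Montel, Hurwitz and the Schwarz-lemma contradiction against the extremal value $r_\infty$, and the parabolic case via rescaling. You also correctly identify the parabolic case as where the real analysis lives.

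Two steps need more than you give them. First, ``by the Riemann mapping theorem, each $X_n$ admits a biholomorphism onto $\DD$'' is not an application of the Riemann mapping theorem as usually stated, because $X_n$ is a relatively compact domain in an abstract Riemann surface, not a proper subdomain of $\CC$. What you actually need here is solvability of the Dirichlet problem on $X_n$ (Perron's method, using that the smooth boundary is regular) to produce a Green's function $G$ with pole at $p$, after which $e^{-(G+iG^*)}$ is the uniformizing map of $X_n$. This is precisely the potential theory you defer to your closing paragraph, but it enters already at the very first step, not only in the endgame. Second, in the parabolic case the inference ``the images $\tilde f_n(X_n) = \DD(0,1/r_n)$ exhaust $\CC$, hence $\tilde f$ is onto'' is a non sequitur as written: a locally uniform limit need not have the union of the approximating images as its image. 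The correct argument applies the Koebe one-quarter theorem to the univalent maps $\tilde f \circ f_m^{-1} : \DD \to \CC$, whose derivative at $0$ equals $1/r_m$, to conclude $\tilde f(X) \supset \DD(0,\,1/(4 r_m))$ for every $m$; since $r_m \to 0$ this yields surjectivity. The same Koebe distortion bounds, applied to the transition maps $f_n \circ f_m^{-1} : \DD \to \DD$, are also what make $\{\tilde f_n\}$ normal on each fixed $X_m$. You name the right tool, so this is a matter of execution rather than a wrong idea, but as stated the surjectivity step is a genuine gap.
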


We will always use $i$ to denote the standard complex structure on
the Riemann sphere $S^2 = \CC \cup \{\infty\} \cong \CC P^1$ 
or the plane~$\CC$.  The pullback of $i$ via the diffeomorphism
\begin{equation}
\label{eqn:cylinder}
\RR \times S^1 \to \CC \setminus \{0\} : (s,t) \mapsto e^{2\pi(s+it)}
\end{equation}
yields a natural complex structure on the cylinder $\RR\times S^1$, which
we'll also denote by~$i$; it satisfies $i \p_s = \p_t$.

The uniformization theorem implies that every Riemann surface can be 
presented as a quotient of either $(S^2,i)$, $(\CC,i)$ or $(\HH,i)$ by some
freely acting discrete group of biholomorphic transformations.  We will be 
most interested
in the punctured surfaces $(\dot{\Sigma},j)$ where $(\Sigma,j,\Theta)$ is
a pointed Riemann surface and $\dot{\Sigma} = \Sigma\setminus\Theta$.
The only surface of this form that has $S^2$ as its universal cover is
$S^2$ itself.  It is almost as easy to see which surfaces are covered
by~$\CC$, as the only biholomorphic transformations on $(\CC,i)$ with no
fixed points are the translations, so every freely acting discrete subgroup
of $\Aut(\CC,i)$ is either trivial, a cyclic group of translations or a
lattice.  The resulting quotients are, respectively, $(\CC,i)$,
$(\RR\times S^1,i) \cong (\CC\setminus\{0\},i)$ and the unpunctured
tori $(T^2,j)$.  All other punctured Riemann surfaces have $(\HH,i)$ as
their universal cover, and not coincidentally, these are precisely the
cases in which $\chi(\Sigma\setminus \Theta) < 0$.

\begin{prop}
\label{prop:Poincare}
There exists on $(\HH,i)$ a complete Riemannian metric $g_P$ of constant
curvature~$-1$ that defines the same conformal structure as~$i$ and has
the property that all conformal transformations on $(\HH,i)$ are also
isometries of~$(\HH,g_P)$.
\end{prop}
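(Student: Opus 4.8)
The plan is to exhibit the Poincar\'e metric explicitly and then verify each claimed property by direct computation. Writing $z = x + iy \in \HH$ with $y > 0$, I would set
$$
g_P = \frac{dx^2 + dy^2}{y^2},
$$
that is, $g_P = \frac{1}{y^2} |dz|^2$. Since $g_P$ is a positive function times the Euclidean metric, it is immediately conformal to the standard flat metric on $\HH$, hence defines the same conformal structure as $i$. This disposes of one of the three assertions at once. The remaining three are: constant curvature $-1$, completeness, and the fact that biholomorphisms of $(\HH,i)$ act by isometries of $g_P$.

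For the curvature, I would recall the standard formula for the Gaussian curvature of a conformal metric $e^{2\varphi}(dx^2 + dy^2)$, namely $K = -e^{-2\varphi}\,\Delta\varphi$. Here $e^{2\varphi} = y^{-2}$, so $\varphi = -\log y$, and $\Delta\varphi = \partial_x^2\varphi + \partial_y^2\varphi = \partial_y^2(-\log y) = 1/y^2$; thus $K = -y^2 \cdot (1/y^2) = -1$. This is a routine calculation I would state but not belabor. For completeness, I would argue that any geodesic ray leaving a point of $\HH$ has infinite $g_P$-length: vertical lines $x = \mathrm{const}$ are geodesics, and the $g_P$-distance from $i$ down to a point $iy_0$ with $y_0 \to 0^+$ is $\int_{y_0}^{1} \frac{dy}{y} = -\log y_0 \to \infty$, while the distance up to $iy_1$ with $y_1 \to \infty$ is $\log y_1 \to \infty$; a general argument (e.g.\ comparing with these vertical distances, or invoking Hopf--Rinow after showing closed balls are compact) then gives geodesic completeness.

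The main step, and the one requiring the most care, is the identification of $\Aut(\HH,i)$ with a subgroup of $\mathrm{Isom}(\HH,g_P)$. I would first recall that every biholomorphism of $(\HH,i)$ is a M\"obius transformation $z \mapsto \frac{az+b}{cz+d}$ with $a,b,c,d \in \RR$ and $ad - bc > 0$, i.e.\ $\Aut(\HH,i) = \PSL(2,\RR)$ (this follows from the uniformization theorem together with the standard classification of automorphisms of $\HH$; since integrable complex structures on surfaces are what we are dealing with via Theorem~\ref{thm:RiemannSurfaces}, a holomorphic automorphism is literally a biholomorphic map). Then for $\varphi(z) = \frac{az+b}{cz+d}$ one computes $\varphi'(z) = \frac{ad-bc}{(cz+d)^2}$ and $\Im\varphi(z) = \frac{(ad-bc)\,\Im z}{|cz+d|^2}$, from which
$$
\varphi^* g_P = \frac{|\varphi'(z)|^2 |dz|^2}{(\Im\varphi(z))^2}
= \frac{(ad-bc)^2 |cz+d|^{-4} |dz|^2}{(ad-bc)^2 (\Im z)^2 |cz+d|^{-4}}
= \frac{|dz|^2}{(\Im z)^2} = g_P,
$$
so $\varphi$ is a $g_P$-isometry. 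I expect this chain of identities to be the part where one must be careful with the factors of $|cz+d|$; everything else is comparatively mechanical. (One could alternatively package completeness and the curvature computation by transporting $g_P$ to the disk model via the Cayley transform, but the half-plane computation above is self-contained and I would present that.)
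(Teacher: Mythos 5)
Your proposal is correct and follows essentially the same route as the paper: both define $g_P = y^{-2}g_E$, identify $\Aut(\HH,i)$ with $\PSL(2,\RR)$, and verify the isometry, curvature, and completeness claims (the paper merely sketches these verifications and defers to Hummel, whereas you carry out the computations explicitly). No gaps.
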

\begin{proof}
We define $g_P$ at $z = x + iy \in \HH$ by
$$
g_P = \frac{1}{y^2} g_E,
$$
where $g_E$ is the Euclidean metric.  The conformal transformations
on $(\HH,i)$ are given by fractional linear transformations
\begin{equation*}
\begin{split}
\Aut(\HH,i) &= 
\left\{ \varphi(z) = \frac{a z + b}{c z + d} \ \Big|\ a,b,c,d \in \RR,
\quad ad - bc = 1 \right\} \bigg/ \{\pm 1\} \\
&= \SL(2,\RR) / \{\pm 1\} =: \PSL(2,\RR), \\
\end{split}
\end{equation*}
and one can check that each of these defines an isometry with respect
to~$g_P$.  One can also compute that $g_P$ has curvature~$-1$, and
the geodesics of~$g_P$ are precisely
the lines and semicircles that meet $\RR$ orthogonally, parametrized so that
they exist for all forward and backward time, thus $g_P$ is complete.
For more details on all of this, the book by Hummel \cite{Hummel} is
highly recommended.
\end{proof}
By lifting to universal covers, this implies the following.
\begin{cor}
\label{cor:Poincare}
For every pointed Riemann surface $(\Sigma,j,\Theta)$ such that
$\chi(\Sigma\setminus\Theta) < 0$, the punctured Riemann surface
$(\Sigma\setminus\Theta,j)$ admits a complete Riemannian metric $g_P$
of constant curvature~$-1$ that defines the same conformal structure as~$j$,
and has the property that all biholomorphic transformations on
$(\Sigma\setminus\Theta,j)$ are also isometries of $(\Sigma\setminus\Theta,g_P)$.
\end{cor}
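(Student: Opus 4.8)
The plan is to derive Corollary~\ref{cor:Poincare} from Proposition~\ref{prop:Poincare} by a straightforward covering-space argument, using the uniformization theorem to present the punctured surface as a quotient of the hyperbolic plane. First I would observe that the hypothesis $\chi(\Sigma\setminus\Theta) < 0$ rules out, by the classification discussed just before Proposition~\ref{prop:Poincare}, the cases where the universal cover of $\dot{\Sigma} := \Sigma\setminus\Theta$ is $(S^2,i)$ or $(\CC,i)$; hence the uniformization theorem forces the universal cover to be $(\HH,i)$. Thus there is a holomorphic covering map $\pi : (\HH,i) \to (\dot{\Sigma},j)$, and the group $\Gamma$ of deck transformations is a discrete subgroup of $\Aut(\HH,i) = \PSL(2,\RR)$ acting freely and properly discontinuously, with $\dot{\Sigma} \cong \HH/\Gamma$ biholomorphically.

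Next I would push the Poincar\'e metric $g_P$ down to $\dot{\Sigma}$. Since every element of $\Gamma \subset \Aut(\HH,i)$ is by Proposition~\ref{prop:Poincare} an isometry of $(\HH,g_P)$, the metric $g_P$ is $\Gamma$-invariant and therefore descends to a well-defined Riemannian metric $\bar{g}_P$ on $\dot{\Sigma} = \HH/\Gamma$, characterized by $\pi^*\bar{g}_P = g_P$. The three properties claimed then transfer directly from $\HH$: the curvature of $\bar{g}_P$ is $-1$ because curvature is local and $\pi$ is a local isometry; $\bar{g}_P$ induces the conformal structure $j$ because $g_P$ induces the standard conformal structure on $\HH$, $\pi$ is holomorphic, and conformality is again a local condition; and $\bar{g}_P$ is complete, which I would argue via the standard fact that a metric covered by a complete metric through a local isometry is itself complete (lift a Cauchy sequence, or a diverging geodesic, to $\HH$ and apply completeness there).

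Finally, for the statement about biholomorphic transformations, I would let $\varphi : (\dot{\Sigma},j) \to (\dot{\Sigma},j)$ be biholomorphic and lift it to a biholomorphic map $\tilde\varphi : (\HH,i) \to (\HH,i)$ with $\pi\circ\tilde\varphi = \varphi\circ\pi$ (such a lift exists since $\HH$ is simply connected, and it normalizes $\Gamma$). By Proposition~\ref{prop:Poincare}, $\tilde\varphi$ is a $g_P$-isometry, so $\tilde\varphi^*g_P = g_P$; then for any $X \in T\dot{\Sigma}$, choosing a local lift and using $\pi^*\bar g_P = g_P$ together with $\pi\circ\tilde\varphi = \varphi\circ\pi$, one computes $\varphi^*\bar g_P = \bar g_P$, i.e.\ $\varphi$ is an isometry of $(\dot{\Sigma},\bar g_P)$. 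I do not anticipate a serious obstacle here; the only point requiring a little care is the existence and equivariance of the lift $\tilde\varphi$ (one should note $\tilde\varphi\Gamma\tilde\varphi^{-1} = \Gamma$ so that the descended map is well defined), and the completeness argument, both of which are standard covering-space facts. One might also remark that the metric $\bar g_P$ so obtained is unique with these properties, since it is determined up to the choice of covering $\pi$, and different choices differ by an element of $\Aut(\HH,i)$, which is a $g_P$-isometry.
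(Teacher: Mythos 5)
Your argument is correct and is exactly the one the paper intends: the corollary is stated there as following from Proposition~\ref{prop:Poincare} "by lifting to universal covers," and your proposal simply fills in the details of that covering-space argument (descent of $g_P$ along $\HH \to \HH/\Gamma$, plus lifting biholomorphisms to $\Aut(\HH,i)$). No gaps.
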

The metric $g_P$ in Prop.~\ref{prop:Poincare} and Cor.~\ref{cor:Poincare}
is often called the \defin{Poincar\'{e} metric}.

The above discussion illustrates a general pattern in the study of
pointed Riemann surfaces: it divides naturally into the study
of punctured surfaces with negative Euler characteristic
and finitely many additional cases.

\begin{defn}
\label{defn:stable}
A pointed surface $(\Sigma,\Theta)$ is said to be
\defin{stable} if $\chi(\Sigma \setminus \Theta) < 0$.
\end{defn}

\begin{lemma}
\label{lemma:freeAction}
If $(\Sigma,j,\Theta)$ is a pointed Riemann surface with $\chi(\Sigma \setminus
\Theta) < 0$ and $\varphi \in \Aut(\Sigma,j,\Theta)$
is not the identity, then $\varphi$ is also not homotopic to the identity.
\end{lemma}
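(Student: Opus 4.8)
The plan is to deduce the statement from the Lefschetz fixed point theorem, exploiting the crucial fact that an isolated fixed point of a nonconstant \emph{holomorphic} self-map always contributes with \emph{positive} index. First I would observe that $\Fix(\varphi) := \{ z \in \Sigma \ |\ \varphi(z) = z \}$ is finite: locally, in a holomorphic chart, it is the zero set of the holomorphic function $z \mapsto \varphi(z) - z$, so it is a complex analytic subset of~$\Sigma$, and since $\varphi \ne \Id$ on the connected surface~$\Sigma$ this subset is proper, hence discrete, hence finite by compactness.

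Next I would compute the local Lefschetz index at a point $p \in \Fix(\varphi)$. In a holomorphic coordinate~$z$ centered at~$p$, the function $\varphi(z) - z$ is holomorphic and not identically zero, so it has a zero of some finite order $k_p \ge 1$ at $z = 0$; the Lefschetz index of~$\varphi$ at~$p$ is the local mapping degree of $z \mapsto \varphi(z) - z$ at the origin, which for a holomorphic function equals the order of its zero, so $\ind_p(\varphi) = k_p \ge 1$. The Lefschetz fixed point theorem then gives
\[
L(\varphi) = \sum_{p \in \Fix(\varphi)} \ind_p(\varphi) \ge \# \Fix(\varphi) \ge m,
\]
where the last inequality uses that $\varphi \in \Aut(\Sigma,j,\Theta)$ fixes each of the $m$ marked points, i.e.~$\Theta \subset \Fix(\varphi)$.

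Finally I would bring in the stability hypothesis. Deleting the $m$ marked points lowers the Euler characteristic by~$m$, so $\chi(\Sigma) = \chi(\Sigma \setminus \Theta) + m \le m - 1$ since $\chi(\Sigma \setminus \Theta)$ is a negative integer. Hence $L(\varphi) \ge m > m - 1 \ge \chi(\Sigma) = L(\Id_\Sigma)$. As the Lefschetz number is invariant under homotopy of self-maps of~$\Sigma$, this shows $\varphi$ is not homotopic to the identity, which is exactly the claim.

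I do not expect a real obstacle here; the one point that genuinely uses the holomorphicity of~$\varphi$, and hence deserves care, is the positivity of the fixed-point indices, which is what makes the fixed-point count a \emph{lower} bound for $L(\varphi)$ rather than merely a constraint on it. The argument is also indifferent to which notion of ``homotopic'' is intended, since it uses only homotopy invariance of~$L$ together with the bare fact $\varphi|_\Theta = \Id$. As an alternative route using the machinery just developed, one could instead note that $\varphi$ restricts to a biholomorphism — hence, by Corollary~\ref{cor:Poincare}, an isometry — of the complete curvature~$-1$ surface $(\Sigma \setminus \Theta, g_P)$, lift it to an isometry of~$\HH$ normalizing the deck group $\Gamma \cong \pi_1(\Sigma\setminus\Theta)$, observe that homotopy-triviality forces some lift to centralize~$\Gamma$, and conclude that this lift is the identity because a non-elementary subgroup of $\PSL(2,\RR)$ has trivial centralizer; but this route first requires upgrading the given homotopy on~$\Sigma$ to one on $\Sigma \setminus \Theta$, a step the Lefschetz argument sidesteps.
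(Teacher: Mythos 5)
Your argument is correct and is essentially the paper's own proof: finitely many fixed points by unique continuation/holomorphicity, each contributing positive local Lefschetz index, so $L(\varphi)\ge m > \chi(\Sigma) = L(\Id)$, contradicting homotopy invariance of the Lefschetz number. The only difference is that you spell out the positivity of the local index and the inequality $\chi(\Sigma)\le m-1$ in more detail than the paper does.
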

\begin{proof}
By assumption $\varphi \ne \Id$, thus by a simple unique continuation argument,
it has finitely many fixed points, each of which counts with positive index
since $\varphi$ is holomorphic.  The algebraic count of fixed points is thus
at least $m = \#\Theta$.  But if $\varphi$ is homotopic to~$\Id$, then this
count must equal $\chi(\Sigma)$ by the Lefschetz fixed point theorem,
contradicting the assumption $\chi(\Sigma) < \#\Theta$.
\end{proof}

The lemma implies that $\Aut(\Sigma,j,\Theta)$ is always a discrete group
when $(\Sigma,\Theta)$ is stable.  In fact more is true:

\begin{prop}
\label{prop:compactAut}
If $(\Sigma,j,\Theta)$ is a closed pointed Riemann surface with either genus
at least~$1$ or $\#\Theta \ge 3$, then $\Aut(\Sigma,j,\Theta)$ is compact.
\end{prop}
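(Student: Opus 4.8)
The plan is to realize $\Aut(\Sigma,j,\Theta)$ as a closed subgroup of the isometry group of a complete metric of constant curvature lying in the conformal class determined by~$j$, and to prove that this isometry group is compact. Note that the hypothesis ``genus $\ge 1$ or $\#\Theta\ge 3$'' is exactly the disjunction of the two cases $\chi(\Sigma\setminus\Theta)<0$ (the stable case) and ``$\Sigma$ is a torus with $\Theta=\emptyset$'', so I would handle these two cases separately, using the Poincar\'e metric (Corollary~\ref{cor:Poincare}) in the first and a flat metric in the second. Throughout, $G:=\Aut(\Sigma,j,\Theta)$ carries the $C^\infty$-topology as a subset of $C^\infty(\Sigma,\Sigma)$, which is metrizable since $\Sigma$ is compact, so it suffices to show $G$ is sequentially compact.

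Consider first the torus case. Here $(\Sigma,j)$ is biholomorphic to $\CC/\Lambda$ for some lattice~$\Lambda$, and lifting to the universal cover $\CC$ shows that every biholomorphic self-map has the form $z\mapsto az+b$ with $a\Lambda=\Lambda$. Since multiplication by such an~$a$ scales lattice covolumes by $|a|^2$ while preserving~$\Lambda$, one gets $|a|^2=1$, so these maps are precisely the orientation-preserving isometries of the flat metric $|dz|^2$. The group $\Isom^+(\CC/\Lambda,|dz|^2)$ is an extension of the translation torus $\CC/\Lambda$ by the finite subgroup of $\SO(2)$ preserving~$\Lambda$, hence compact, and $G=\Aut(\Sigma,j)$ when $\Theta=\emptyset$; this settles the case. (For genus $\ge 2$ one could alternatively just invoke the classical fact that $\Aut(\Sigma,j)$ is finite, but I would keep the argument uniform with the stable case below.)

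Now suppose $\chi(\Sigma\setminus\Theta)<0$ and set $S:=\Sigma\setminus\Theta$ with its Poincar\'e metric~$g_P$; by Corollary~\ref{cor:Poincare} every element of~$G$ restricts to an orientation-preserving isometry of $(S,g_P)$, which, $\Sigma$ being closed, is a complete hyperbolic surface of finite area with cusps at the points of~$\Theta$. The key step is that $\Isom(S,g_P)$ is compact. For small $\epsilon>0$ the thick part $S_{\ge\epsilon}:=\{x\in S:\inj_{g_P}(x)\ge\epsilon\}$ is nonempty and compact, because $\inj_{g_P}\to 0$ at each cusp, so $S_{\ge\epsilon}$ is bounded away from~$\Theta$ inside the compact surface~$\Sigma$. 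Isometries preserve injectivity radius, hence preserve $S_{\ge\epsilon}$, so fixing a basepoint $p\in S_{\ge\epsilon}$ the one-jet map
$$
\Isom(S,g_P)\to\operatorname{Fr}(S_{\ge\epsilon}),\qquad \varphi\mapsto\big(\varphi(p),d\varphi(p)\big),
$$
into the compact orthonormal frame bundle over $S_{\ge\epsilon}$ is well defined; it is injective since an isometry of a connected Riemannian manifold is determined by its one-jet at one point, and it is proper since a sequence of isometries whose one-jets at~$p$ converge is $C^\infty_{\loc}$-precompact with isometric limit (via the exponential map). Hence $\Isom(S,g_P)$ is a closed subset of a compact space, so it is compact.

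It then remains to check that $G$ embeds in $\Isom(S,g_P)$ as a \emph{closed} subgroup whose subspace topology agrees with the $C^\infty$-topology. A limit of elements of~$G$ is an orientation-preserving isometry of $(S,g_P)$, hence conformal, hence holomorphic on~$S$, and being bounded (as $\Sigma$ is compact) it extends holomorphically over~$\Theta$; it fixes~$\Theta$ because near each puncture the Poincar\'e metric is a standard hyperbolic cusp, on which every orientation-preserving isometry is a rotation $w\mapsto e^{i\theta}w$ in a holomorphic coordinate~$w$ centered at that puncture. This cusp description also reconciles the topologies: away from~$\Theta$ isometric $C^0_{\loc}$-convergence upgrades to $C^\infty_{\loc}$-convergence, while near~$\Theta$ the rotation angles converge, giving $C^\infty$-convergence on a neighborhood of each puncture. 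I expect the main obstacle to be precisely this interface between hyperbolic geometry and the holomorphic/$C^\infty$ setup near the cusps; the compactness of $\Isom(S,g_P)$ is otherwise standard once the thick--thin decomposition is in hand, and one should note that a limit map, being holomorphic of degree~$1$ by homotopy invariance, is genuinely biholomorphic (Exercise~\ref{EX:posDegree}), so it lies in~$G$.
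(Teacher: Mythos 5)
Your argument is correct in substance, but it is a genuinely different proof from the one in the text. The book deduces Proposition~\ref{prop:compactAut} in one line from Lemma~\ref{lemma:bubbling} (properness of the $\Diff_+(\Sigma,\Theta)$-action on $\jJ(\Sigma)$), specialized to the constant sequences $j_k = j$ and $\varphi_k^*j = j$; that lemma is stated without proof and deferred to the bubbling analysis of a later chapter. You instead give a self-contained, classical argument: identify $\Aut(\Sigma,j,\Theta)$ with a closed subgroup of the isometry group of a complete constant-curvature metric in the conformal class of~$j$ (flat for the unmarked torus, the Poincar\'e metric of Corollary~\ref{cor:Poincare} in the stable case), and prove compactness of that isometry group by the Myers--Steenrod/Arzel\`a--Ascoli mechanism together with the thick--thin decomposition. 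What your route buys is independence from the compactness machinery of Chapter~5 and a concrete geometric picture (it is essentially the hyperbolic-geometry approach alluded to in the reference to Tromba); what the book's route buys is economy, since Lemma~\ref{lemma:bubbling} is needed anyway to make $\Mod(\Sigma,\Theta)$ and $\tT(\Sigma,\Theta)$ Hausdorff, so no separate argument is required once it is available.

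One step in your write-up deserves a little more care, exactly at the cusp interface you flagged. You assert that a limit $\psi$ of $\varphi_k \in G$ ``fixes $\Theta$ because near each puncture \ldots\ every orientation-preserving isometry is a rotation''; but the rotation description only applies once you know $\psi$ maps the cusp at a given $z_0 \in \Theta$ to \emph{that same} cusp rather than permuting the ends. This is easily supplied: each $\varphi_k$, being an isometry fixing the end at $z_0$, preserves every canonical horoball neighborhood of that cusp, so $C^0_{\loc}$-convergence forces $\psi$ to do the same, whence $\psi$ restricted to that horoball is a rotation $w \mapsto e^{i\theta}w$ in the canonical holomorphic coordinate. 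This simultaneously gives the holomorphic extension over $z_0$ (cleaner than invoking boundedness, since the target is a surface rather than $\CC$), the fixing of $z_0$, and the matching of the $C^\infty_{\loc}(S)$ and $C^\infty(\Sigma)$ topologies on~$G$. With that point made explicit, the proof is complete.
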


\begin{cor}
\label{cor:finite}
If $(\Sigma,\Theta)$ is stable then $\Aut(\Sigma,j,\Theta)$ is finite.
\end{cor}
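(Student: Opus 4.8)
The plan is to combine Proposition~\ref{prop:compactAut} with Lemma~\ref{lemma:freeAction} to conclude finiteness. By definition, $(\Sigma,\Theta)$ stable means $\chi(\Sigma\setminus\Theta) < 0$, i.e.\ $2 - 2g - m < 0$, where $g$ is the genus and $m = \#\Theta$. A quick case check shows this forces either $g \ge 1$, or $g = 0$ and $m \ge 3$: indeed if $g = 0$ then $2 - m < 0$ means $m \ge 3$, and if $g = 0$ and $m \le 2$ stability fails, while $g \ge 1$ always satisfies the hypothesis of Proposition~\ref{prop:compactAut}. Hence stability implies the hypotheses of Proposition~\ref{prop:compactAut}, so $\Aut(\Sigma,j,\Theta)$ is a compact group.

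Next I would invoke Lemma~\ref{lemma:freeAction}: since $\chi(\Sigma\setminus\Theta) < 0$, any $\varphi \in \Aut(\Sigma,j,\Theta)$ other than the identity is not even homotopic to the identity. In particular, the connected component of the identity in $\Aut(\Sigma,j,\Theta)$ is trivial, so $\Aut(\Sigma,j,\Theta)$ is discrete. A compact discrete topological group is finite, which gives the claim.

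The main thing to pin down carefully is the topology on $\Aut(\Sigma,j,\Theta)$ with respect to which both ``compact'' and ``discrete'' are being asserted, so that ``compact $+$ discrete $\Rightarrow$ finite'' is legitimate. The natural choice is the $C^\infty$-topology (equivalently, since $\Sigma$ is closed, the $C^0$ or uniform topology) on the space of biholomorphic self-maps; Proposition~\ref{prop:compactAut} is stated with respect to this topology, and discreteness in this topology is exactly what Lemma~\ref{lemma:freeAction} yields once one notes that elements sufficiently $C^0$-close to $\Id$ are homotopic to $\Id$. I expect this compatibility-of-topologies bookkeeping to be the only real point requiring attention; everything else is immediate. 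An alternative, self-contained route that avoids even this is to argue directly: by Corollary~\ref{cor:Poincare}, $\Aut(\Sigma,j,\Theta)$ acts by isometries of the complete Poincar\'e metric $g_P$ on $\Sigma\setminus\Theta$, hence injects into $\Isom(\Sigma\setminus\Theta,g_P)$; the fixed-point count in the proof of Lemma~\ref{lemma:freeAction} shows the action on $\Sigma\setminus\Theta$ has no nontrivial isotropy and, more to the point, that any nontrivial $\varphi$ moves points a definite amount, so the group is discrete in $\Isom$, while $\Isom$ of a finite-area hyperbolic surface with bounded geometry is compact — again forcing finiteness. I would present the short argument via Proposition~\ref{prop:compactAut} as the main proof and perhaps mention the isometry-group viewpoint as a remark.
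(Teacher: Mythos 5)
Your proof is correct and follows exactly the route the paper intends: Lemma~\ref{lemma:freeAction} gives discreteness (non-identity automorphisms are not even homotopic to $\Id$, so nothing can be $C^0$-close to $\Id$), Proposition~\ref{prop:compactAut} gives compactness (stability forces $g\ge 1$ or $\#\Theta\ge 3$), and a compact discrete group is finite. Your attention to the topology and the alternative hyperbolic-isometry viewpoint are both consistent with the paper's framework but not needed beyond what you already wrote.
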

Prop.~\ref{prop:compactAut} follows from the more general
Lemma~\ref{lemma:bubbling} below, which we'll use to show that
$\Mod_{g,m}$ is Hausdorff, among other things.
We should note that the corollary can be strengthened considerably,
for instance one can find a priori bounds on the order of 
$\Aut(\Sigma,j)$ in terms of the genus, 
cf.~\cite{SeppalaSorvali}*{Theorem~3.9.3}.  For our purposes, the knowledge
that $\Aut(\Sigma,j,\Theta)$ is finite will be useful enough.  As we'll
review below, automorphism groups in the non-stable cases are not discrete 
and sometimes not even compact, though they are always smooth Lie groups.

It will be convenient to have an alternative (equivalent) definition of
$\Mod_{g,m}$, the moduli space of Riemann surfaces.  Fix any smooth 
oriented closed surface $\Sigma$ with
genus~$g$ and an ordered set of distinct points $\Theta =
(z_1,\ldots,z_m) \subset \Sigma$.  Then $\Mod_{g,m}$ is homeomorphic to
the quotient
$$
\Mod(\Sigma,\Theta) := \jJ(\Sigma) / \Diff_+(\Sigma,\Theta),
$$
where $\jJ(\Sigma)$ is the space of smooth almost complex structures
on $\Sigma$ and $\Diff_+(\Sigma,\Theta)$ is the space of orientation-preserving
diffeomorphisms $\varphi : \Sigma \to \Sigma$ such that
$\varphi|_\Theta = \Id$.  Here the action of 
$\Diff_+(\Sigma,\Theta)$ on $\jJ(\Sigma)$ is defined by the pullback,
$$
\Diff_+(\Sigma,\Theta) \times \jJ(\Sigma) \to \jJ(\Sigma) :
(\varphi,j) \mapsto \varphi^*j.
$$

Informally speaking,
$\jJ(\Sigma)$ is an infinite-dimensional manifold, and we expect
$\Mod(\Sigma,\Theta)$ also to be a manifold if $\Diff_+(\Sigma,\Theta)$ acts
freely and properly.  The trouble is that in general, 
it does not: each $j \in \jJ(\Sigma)$
is preserved by the subgroup $\Aut(\Sigma,j,\Theta)$.  A solution to this
complication is suggested by Lemma~\ref{lemma:freeAction}: if we consider
not the action of all of $\Diff_+(\Sigma,\Theta)$ but only the subgroup
$$
\Diff_0(\Sigma,\Theta) = \{ \varphi \in \Diff_+(\Sigma,\Theta) \ |\ 
\text{$\varphi$ is homotopic to $\Id$} \},
$$
then at least in the stable case, the group acts freely 
on $\jJ(\Sigma)$.  We take this as motivation to study, as something of an
intermediate step, the quotient
$$
\tT(\Sigma,\Theta) := \jJ(\Sigma) / \Diff_0(\Sigma,\Theta).
$$
This is the \defin{Teichm\"uller space} of genus $g$, $m$-pointed surfaces.
It is useful mainly because its local structure is simpler than that of
$\Mod(\Sigma,\Theta)$---we'll show below that it is always a smooth 
finite-dimensional manifold, and its dimension can be computed using the
Riemann-Roch formula.  The actual moduli space
of Riemann surfaces can then be understood as the quotient of Teichm\"uller 
space by a discrete group:
$$
\Mod(\Sigma,\Theta) = \tT(\Sigma,\Theta) / M(\Sigma,\Theta),
$$
where $M(\Sigma,\Theta)$ is the \defin{mapping class group},
$$
M(\Sigma,\Theta) := \Diff_+(\Sigma,\Theta) /
\Diff_0(\Sigma,\Theta).
$$

Recall that a topological group $G$ acting continuously 
on a topological space $X$ is
said to act \defin{properly} if the map $G \times X \to X \times X :
(g,x) \mapsto (gx,x)$ is proper: this means that for any sequences
$g_n \in G$ and $x_n \in X$ such that both $x_n$ and $g_n x_n$ converge,
$g_n$ has a convergent subsequence.  This is the condition one needs in
order to show that the quotient $M / G$ is Hausdorff.  Thus for the
action of $\Diff_+(\Sigma,\Theta)$ or $\Diff_0(\Sigma,\Theta)$ on
$\jJ(\Sigma)$, we need the following compactness lemma, which also implies
Prop.~\ref{prop:compactAut}.  We'll state it for
now without proof, but will later be able to prove it using a simple
case of the ``bubbling'' arguments in the next chapter.
\begin{lemma}
\label{lemma:bubbling}
Suppose either $\Sigma$ has genus at least~$1$ or $\#\Theta \ge 3$.
If $\varphi_k \in \Diff_+(\Sigma,\Theta)$ and $j_k \in \jJ(\Sigma)$ are
sequences such that $j_k \to j$ and $\varphi_k^*j_k \to j'$ in the
$C^\infty$-topology, then $\varphi_k$ has a subsequence that converges
in $C^\infty$ to a diffeomorphism $\varphi \in \Diff_+(\Sigma,\Theta)$
with $\varphi^*j = j'$.
\end{lemma}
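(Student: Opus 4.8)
The plan is to regard each $\varphi_k$ as a $J$-holomorphic curve---namely a biholomorphism $\varphi_k : (\Sigma,\varphi_k^*j_k) \to (\Sigma,j_k)$---and to extract a convergent subsequence by the usual compactness machinery for holomorphic maps, using the hypothesis on $(\Sigma,\Theta)$ precisely to rule out the ``bubbling'' degeneration that would otherwise occur. The first step is a uniform energy bound. Fix a background metric $g_0$ and set $g_k(X,Y) := \tfrac12\bigl(g_0(X,Y) + g_0(j_kX,j_kY)\bigr)$; each $g_k$ is $j_k$-compatible and $g_k \to g_j$ in $C^\infty$, and likewise $g_k' := \tfrac12\bigl(g_0(\cdot,\cdot) + g_0(j_k'\cdot,j_k'\cdot)\bigr) \to g_{j'}$, where $j_k' := \varphi_k^*j_k$. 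Since $\varphi_k$ has degree one, $\int_\Sigma \varphi_k^*\mathrm{vol}_{g_k} = \int_\Sigma \mathrm{vol}_{g_k} \to \int_\Sigma \mathrm{vol}_{g_j} < \infty$, and by holomorphicity this integral is a fixed constant times the energy of $\varphi_k$ measured with $g_k'$ and $g_k$ (cf.~\S\ref{sec:warmup}), so $\sup_k E(\varphi_k) < \infty$.

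Next comes the easy alternative of the dichotomy: suppose, after passing to a subsequence, that $\sup_k \|d\varphi_k\|_{C^0} < \infty$. Working in local charts---fixing holomorphic charts for the domain structures $j_k' \to j'$ that converge, via Corollary~\ref{cor:jvarying}, and ordinary charts on the target---the maps $\varphi_k$ then have uniformly bounded $W^{1,p}$ norms, so Corollary~\ref{cor:gradBounds} (``gradient bounds imply $C^\infty$ bounds'') yields a subsequence converging in $C^\infty$ to a smooth map $\varphi : \Sigma \to \Sigma$. Passing to the limit in $T\varphi_k \circ j_k' = j_k \circ T\varphi_k$ shows $\varphi : (\Sigma,j') \to (\Sigma,j)$ is holomorphic, and $C^0$-convergence of the degree-one maps $\varphi_k$ forces $\deg\varphi = 1$, so $\varphi$ is nonconstant, hence biholomorphic by Exercise~\ref{EX:posDegree}; since $\varphi_k|_\Theta = \Id$ for all $k$ we also get $\varphi|_\Theta = \Id$. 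Thus $\varphi \in \Diff_+(\Sigma,\Theta)$ and $\varphi^*j = j'$.

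The main obstacle is therefore to exclude the remaining case $\|d\varphi_k\|_{C^0} \to \infty$, and this is exactly where the hypothesis (genus $\ge 1$ or $\#\Theta \ge 3$) must be used: the statement genuinely fails for $S^2$ with at most two marked points, e.g.\ $\varphi_k(z) = kz$. The standard rescaling argument---choosing $z_k$ essentially maximizing $|d\varphi_k|$ and rescaling in charts near $z_k$ and near a limit of $\varphi_k(z_k)$ at the scale $1/|d\varphi_k(z_k)|$---produces a nonconstant $j$-holomorphic map $\CC \to \Sigma$ of finite energy; since $\Sigma$ is compact, removal of singularities extends it to a nonconstant holomorphic sphere $\psi : S^2 \to \Sigma$, which then has $\deg\psi \ge 1$ by Exercise~\ref{EX:posDegree}. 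If $g \ge 1$ this is impossible: $S^2$ is simply connected, so $\psi$ lifts to the contractible, noncompact universal cover of $\Sigma$, hence is null-homotopic and has degree zero. If $g = 0$ and $\#\Theta \ge 3$, I would instead bypass bubbling and argue with the projective group: uniformizing with three points pinned so that the maps converge gives biholomorphisms $\alpha_k : (S^2,j_k) \to (S^2,i)$ and $\beta_k : (S^2,j_k') \to (S^2,i)$ with $\alpha_k \to \alpha$, $\beta_k \to \beta$ in $C^\infty$, so $\gamma_k := \alpha_k \circ \varphi_k \circ \beta_k^{-1} \in \PSL(2,\CC)$; since $\varphi_k$ fixes three distinct marked points, $\gamma_k$ carries the converging distinct triple $\beta_k(z_1),\beta_k(z_2),\beta_k(z_3)$ onto the converging distinct triple $\alpha_k(z_1),\alpha_k(z_2),\alpha_k(z_3)$, and properness of the $\PSL(2,\CC)$-action on ordered triples of distinct points makes $\gamma_k$---hence $\varphi_k = \alpha_k^{-1}\gamma_k\beta_k$---subconvergent, with limit a biholomorphism of the required type.

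I expect the blow-up exclusion to be the genuine difficulty, and note that for the stable cases it can be carried out uniformly without invoking removal of singularities: with respect to the Poincar\'e metrics of Corollary~\ref{cor:Poincare} on $\Sigma\setminus\Theta$---or the unit-area flat metric when $g = 1$ and $\Theta = \emptyset$---each $\varphi_k$ is an isometry between metrics that converge in $C^\infty$ on compact sets, and such families of isometries of a fixed manifold are precompact (the $1$-jet at a point stays in a compact set, and an isometry of a complete metric is determined by its $1$-jet). Finally, Proposition~\ref{prop:compactAut} is the special case $j_k \equiv j$, $j_k' \equiv j$ of Lemma~\ref{lemma:bubbling}.
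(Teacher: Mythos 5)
A preliminary remark: the manuscript as given never actually proves Lemma~\ref{lemma:bubbling}; it is stated with the promise that it will follow from ``a simple case of the bubbling arguments in the next chapter.'' Your proposal implements exactly that announced strategy, and its skeleton is sound. The conformal-invariance energy bound, the dichotomy between a uniform $C^1$-bound (in which case Corollary~\ref{cor:gradBounds}, applied in $j_k'$-holomorphic charts supplied by Corollary~\ref{cor:jvarying}, gives $C^\infty$-subconvergence to a degree-one and hence biholomorphic limit fixing $\Theta$) and gradient blow-up, and the exclusion of blow-up for $g\ge 1$ via Hofer's lemma, removal of singularities and the nonexistence of nonconstant holomorphic spheres in a positive-genus surface, are all correct and are precisely the ingredients rehearsed in \S\ref{sec:bubbling}.

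The one step I would not accept as written is the genus-zero case. Your M\"obius argument requires biholomorphisms $\alpha_k : (S^2,j_k)\to(S^2,i)$ and $\beta_k : (S^2,\varphi_k^*j_k)\to(S^2,i)$ converging in $C^\infty$, and you assert without proof that ``pinning three points'' produces such sequences. Continuous dependence of the uniformizing map on the complex structure is true, but it is not available off the shelf here: the only global statement of this kind in the manuscript is the local-diffeomorphism property of $F(\varphi)=\varphi^*j_0$ in the proof of Theorem~\ref{thm:Teichmueller}, and that proof cites Lemma~\ref{lemma:bubbling}, so you would have to check carefully that you use only the inverse-function-theorem half of it (which is indeed independent of the lemma) to avoid circularity. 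The cleaner repair stays inside the bubbling framework you already set up: a bubble at $z_\infty\in S^2$ is a nonconstant $j$-holomorphic sphere in $S^2$, hence has degree at least one and energy at least $\Area(S^2,g_j)$, which is the entire limiting energy of the sequence; the limit $\varphi_\infty$ of $\varphi_k$ on $S^2\setminus\{z_\infty\}$ therefore has zero energy and is constant, yet it must fix the at least two distinct points of $\Theta\setminus\{z_\infty\}$ --- a contradiction. This is word for word the scheme of \S\ref{sec:bubbling} and uses $\#\Theta\ge 3$ exactly where it is needed. (Your closing remark about Poincar\'e-metric isometries has the same flavor of gap: it presupposes that the hyperbolic metrics associated to $j_k$ and $\varphi_k^*j_k$ converge, which again is true but nowhere established in the text.)
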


This implies that both $\Diff_+(\Sigma,\Theta)$ and 
$\Diff_0(\Sigma,\Theta)$ act properly on $\jJ(\Sigma)$, so
$\Mod(\Sigma,\Theta)$ and $\tT(\Sigma,\Theta)$ are both Hausdorff.
This is also trivially true in the cases $g=0$, $m \le 2$, as 
then $\Diff_0(S^2,\Theta) = \Diff_+(S^2,\Theta)$ and the
uniformization theorem implies that $\Mod(S^2,\Theta) =
\tT(S^2,\Theta)$ is a one point space.

We now examine the extent to which the discrete group $M(\Sigma,\Theta)$
does not act freely on $\tT(\Sigma,\Theta)$.

\begin{exercise}
\label{EX:stabilizer}
Show that for any stable pointed Riemann surface $(\Sigma,j,\Theta)$, the 
restriction to $\Aut(\Sigma,j,\Theta)$ of the natural quotient map
$\Diff_+(\Sigma,\Theta) \to M(\Sigma,\Theta)$ defines an isomorphism from
$\Aut(\Sigma,j,\Theta)$ to the stabilizer of $[j] \in \tT(\Sigma,\Theta)$ 
under the action of $M(\Sigma,\Theta)$.
\end{exercise}

Combining Exercise~\ref{EX:stabilizer} with Corollary~\ref{cor:finite}
above, we see that every point in Teichm\"uller space has a finite isotropy
group under the action of the mapping class group; we'll see below that
this is also true in the non-stable cases.  This gives us the best possible
picture of the local structure of $\Mod_{g,m}$: it is not a manifold
in general, but locally it looks like a quotient of Euclidean space by a
finite group action.  Hausdorff topological spaces with this kind of 
local structure are called \defin{orbifolds}.
The curious reader may consult the first section of
\cite{FukayaOno} for the definition and basic properties of orbifolds,
which we will not go into here, except to state the following local
structure result for $\Mod_{g,m}$.

\begin{thm}
$\Mod_{g,m}$ is a smooth orbifold whose isotropy subgroup at 
$(\Sigma,j,\Theta) \in \Mod_{g,m}$ is
$\Aut(\Sigma,j,\Theta)$; in particular, $\Mod_{g,m}$ is a manifold in
a neighborhood of any pointed Riemann surface $(\Sigma,j,\Theta)$ 
that has trivial automorphism group.  Its dimension is
$$
\dim \Mod_{g,m} = \begin{cases}
6g - 6 + 2m & \text{ if $2g + m \ge 3$},\\
2           & \text{ if $g=1$ and $m=0$},\\
0           & \text{ otherwise.}
\end{cases}
$$
\end{thm}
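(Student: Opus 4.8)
The plan is to reduce the statement to the structure of Teichm\"uller space together with its quotient by the mapping class group. Fix a smooth oriented closed surface $\Sigma$ of genus~$g$ with an ordered $m$-tuple $\Theta=(z_1,\dots,z_m)$ of distinct points, so that $\Mod_{g,m}\cong\jJ(\Sigma)/\Diff_+(\Sigma,\Theta)$, and recall the factorization $\jJ(\Sigma)/\Diff_+(\Sigma,\Theta)=\tT(\Sigma,\Theta)/M(\Sigma,\Theta)$ with $\tT(\Sigma,\Theta)=\jJ(\Sigma)/\Diff_0(\Sigma,\Theta)$ and $M(\Sigma,\Theta)=\Diff_+(\Sigma,\Theta)/\Diff_0(\Sigma,\Theta)$. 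I would first treat the \emph{stable} range $2g+m\ge 3$, where the real content lies, and dispose of the remaining cases ($g=0$ with $m\le 2$, and $g=1$ with $m=0$) by hand at the end.

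\textbf{Teichm\"uller space as a manifold.} The key step is a slice theorem for the $\Diff_0(\Sigma,\Theta)$-action on $\jJ(\Sigma)$. At a point $j_0$ one has $T_{j_0}\jJ(\Sigma)=\Gamma(\overline{\End}_\CC(T\Sigma,j_0))$ by Exercise~\ref{EX:homogeneous} applied fiberwise, and the tangent space to the orbit through $j_0$ is the image of the map $X\mapsto\mathcal{L}_X j_0$ on vector fields $X$ vanishing along $\Theta$. Up to a nonzero scalar and the bundle automorphism given by $j_0$, this map coincides with the Cauchy-Riemann type operator $\mathbf{D}_{\Id}$ of Exercise~\ref{EX:holX} on $T\Sigma$, with domain restricted to $\Gamma_\Theta(T\Sigma):=\{X\in\Gamma(T\Sigma):X|_\Theta=0\}$. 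Since $\mathbf{D}_{\Id}$ is Fredholm (Theorem~\ref{thm:FredholmProperty}) and $\Gamma_\Theta(T\Sigma)$ has real codimension $2m$, this restriction is Fredholm of real index $\ind(\mathbf{D}_{\Id})-2m=(6-6g)-2m$, where $\ind(\mathbf{D}_{\Id})=\chi(\Sigma)+2c_1(T\Sigma)=6-6g$ by Riemann-Roch (Theorem~\ref{thm:RiemannRoch}). Its kernel is the space of holomorphic vector fields vanishing at $\Theta$, i.e.\ $H^0$ of the line bundle $T\Sigma(-\Theta)$, whose Chern number $(2-2g)-m$ is negative precisely when $2g+m\ge 3$; by Theorem~\ref{thm:linearAutomatic} the kernel is then trivial, which is the infinitesimal form of the freeness of the $\Diff_0(\Sigma,\Theta)$-action furnished by Lemma~\ref{lemma:freeAction}. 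Hence the orbit tangent space is closed of real codimension $6g-6+2m$, and a complement $W$ --- naturally the space of meromorphic quadratic differentials holomorphic on $\Sigma$ with at most simple poles at $\Theta$ --- has $\dim_\RR W=6g-6+2m$. Carrying this out in Banach completions of $\jJ(\Sigma)$ and $\Diff_+(\Sigma,\Theta)$ (to legitimize the calculus, in view of Exercise~\ref{EX:notDifferentiable}) and invoking the inverse function theorem, one shows that a small neighborhood of~$0$ in $W$, pushed into $\jJ(\Sigma)$ via a chart as in \S\ref{sec:compatible}, is a local slice for the action; elliptic regularity forces the slice to consist of smooth structures, and transition maps between slices are smooth. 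Thus $\tT(\Sigma,\Theta)$ is a smooth manifold of dimension $6g-6+2m$.

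\textbf{The orbifold structure and the isotropy.} The discrete group $M(\Sigma,\Theta)$ acts smoothly on $\tT(\Sigma,\Theta)$. By Lemma~\ref{lemma:bubbling}, $\Diff_+(\Sigma,\Theta)$ acts properly on $\jJ(\Sigma)$, so after dividing by $\Diff_0(\Sigma,\Theta)$ the action of $M(\Sigma,\Theta)$ on $\tT(\Sigma,\Theta)$ is proper, hence --- being discrete --- properly discontinuous. By Exercise~\ref{EX:stabilizer} the stabilizer of $[j]$ is isomorphic to $\Aut(\Sigma,j,\Theta)$, which is finite by Corollary~\ref{cor:finite}. A discrete group acting smoothly, properly discontinuously and with finite stabilizers on a smooth manifold has quotient a smooth orbifold with isotropy at each point equal to the corresponding stabilizer; this puts the orbifold structure on $\Mod_{g,m}$, with isotropy $\Aut(\Sigma,j,\Theta)$ at $(\Sigma,j,\Theta)$ and $\dim\Mod_{g,m}=\dim\tT(\Sigma,\Theta)=6g-6+2m$, and in particular $\Mod_{g,m}$ is a manifold near any pointed Riemann surface with trivial automorphism group. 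For the non-stable cases one uses the uniformization theorem directly: when $g=0$ and $m\le 2$, every complex structure on $S^2$ is standard and $\Aut(S^2,i)=\PSL(2,\CC)$ acts transitively on ordered configurations of at most two points, so $\Mod_{0,m}$ is a single point (dimension $0$); when $g=1$ and $m=0$, complex tori are the $\CC/(\ZZ+\tau\ZZ)$ with $\tau\in\HH$, biholomorphic iff related by $\PSL(2,\ZZ)$, so $\Mod_{1,0}\cong\HH/\PSL(2,\ZZ)$ is a smooth $2$-dimensional orbifold with generic isotropy $\ZZ/2$ and isotropy $\ZZ/2$, $\ZZ/3$ at its two orbifold points. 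In these cases $\Aut(\Sigma,j,\Theta)$ is positive-dimensional (it contains the torus of translations, resp.\ a positive-dimensional subgroup of $\PSL(2,\CC)$), and the assertion ``isotropy $=\Aut$'' is to be read with $\Aut$ replaced by its group of connected components, which is precisely the finite group appearing above.

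\textbf{Main obstacle.} The technical heart is the slice theorem of the second step: one must choose Banach completions in which composition and the exponential map behave well enough (compare Exercise~\ref{EX:notDifferentiable}), apply the implicit function theorem transversally to the orbits of the merely Fr\'echet group $\Diff_0(\Sigma,\Theta)$, and then recover smoothness of the resulting slice via elliptic regularity --- and check that the slices so produced glue to a manifold structure independent of all choices. By contrast, the properness input (Lemma~\ref{lemma:bubbling}, together with Proposition~\ref{prop:compactAut}) is quoted here and proved later by a bubbling argument, while the identification of stabilizers with automorphism groups and the passage from ``properly discontinuous with finite stabilizers'' to ``orbifold'' are essentially formal once the pieces are in place.
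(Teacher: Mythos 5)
Your proposal is correct and follows essentially the same route as the paper: Teichm\"uller slices via the Fredholm theory of $\mathbf{D}_{(j,\Theta)}$ and Riemann--Roch, properness of the action from Lemma~\ref{lemma:bubbling}, stabilizers identified with $\Aut(\Sigma,j,\Theta)$ via Exercise~\ref{EX:stabilizer}, and the non-stable cases handled directly by uniformization. The only point worth sharpening in your ``main obstacle'' paragraph is the precise resolution of the differentiability issue: rather than differentiating the $\Diff_0(\Sigma,\Theta)$-action on all of a Banach completion of $\jJ(\Sigma)$ (where $(\varphi,j)\mapsto\varphi^*j$ is genuinely non-differentiable), one restricts to the map $F:\dD^{k+1,p}_\Theta\times\tT\to\jJ^{k,p}(\Sigma)$ with $\tT$ a finite-dimensional slice consisting of \emph{smooth} complex structures, where $F$ is smooth and the inverse function theorem applies directly.
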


Note that the inequality $2g + m \ge 3$ is precisely the 
stability condition for 
a genus~$g$ surface with $m$ marked points.

The main piece of hard work that needs to be done now is proving that
Teichm\"uller space really is a smooth manifold of the correct dimension, and
in fact it will be useful to have local slices in $\jJ(\Sigma)$ that can
serve as charts for $\tT(\Sigma,\Theta)$.  To that end, fix a pointed
Riemann surface $(\Sigma,j,\Theta)$ and consider the
nonlinear operator
$$
\dbar_j : \bB^{1,p}_\Theta \to \eE^{0,p} : \varphi \mapsto T\varphi +
j \circ T\varphi \circ j,
$$
where $p > 2$, 
$$
\bB^{1,p}_\Theta = \big\{ \varphi \in W^{1,p}(\Sigma,\Sigma)\ \big|\ 
\varphi|_\Theta = \Id \big\},
$$
and $\eE^{0,p} \to W^{1,p}(\Sigma,\Sigma)$ is the Banach space bundle with
fibers 
$$
\eE^{0,p}_\varphi = L^p(\overline{\Hom}_\CC(T\Sigma,\varphi^*T\Sigma)).
$$
The zeroes of $\dbar_j$ are the holomorphic maps from $\Sigma$ to itself
that fix the marked points,
and in particular a neighborhood of $\Id$ in $\dbar_j^{-1}(0)$ gives a local
description of $\Aut(\Sigma,j,\Theta)$.  We have
$$
T_{\Id}\bB^{1,p}_\Theta =
W^{1,p}_{\Theta}(T\Sigma) := \{ X \in W^{1,p}(T\Sigma)\ |\ X(\Theta) = 0 \},
$$
which is a closed subspace of $W^{1,p}(T\Sigma)$ with real codimension~$2m$.  
The linearization 
$$
\mathbf{D}_{(j,\Theta)} := D\dbar_j(\Id) : W^{1,p}_\Theta(T\Sigma) \to
L^p(\overline{\End}_\CC(T\Sigma))
$$
is then the restriction to
$W^{1,p}_\Theta(T\Sigma)$ of the natural
linear Cauchy-Riemann operator defined by the holomorphic structure
of $(T\Sigma,j)$.  By Riemann-Roch, the latter has index
$\chi(\Sigma) + 2 c_1(T\Sigma) = 3\chi(\Sigma)$, thus $\mathbf{D}_{(j,\Theta)}$
has index
\begin{equation}
\label{eqn:indexDj}
\ind(\mathbf{D}_{(j,\Theta)}) = 3\chi(\Sigma) - 2m.
\end{equation}
\begin{exercise}
Show that if $A : X \to Y$ is a Fredholm operator and $X_0 \subset X$ is a
closed subspace of codimension~$N$, then $A|_{X_0}$ is also Fredholm and has
index $\ind(A) - N$.
\end{exercise}

\begin{prop}
\label{prop:inj}
If $\chi(\Sigma \setminus \Theta) < 0$ then $\mathbf{D}_{(j,\Theta)}$ is injective.
\end{prop}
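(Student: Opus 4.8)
The plan is to identify $\ker\mathbf{D}_{(j,\Theta)}$ explicitly and then count zeros using the similarity principle. Recall from the discussion preceding the proposition that $\mathbf{D}_{(j,\Theta)}$ is the restriction to $W^{1,p}_\Theta(T\Sigma)$ of the natural complex-linear Cauchy-Riemann operator determined by the holomorphic structure of the complex line bundle $(T\Sigma,j)$. Hence an element $X\in\ker\mathbf{D}_{(j,\Theta)}$ is a section of $T\Sigma$ of class $W^{1,p}$ with $p>2$ --- in particular continuous --- which is holomorphic and vanishes at every one of the $m:=\#\Theta$ distinct points of $\Theta$.

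First I would argue exactly as in the proof of Theorem~\ref{thm:linearAutomatic}. Since $(T\Sigma,j)$ has complex rank one, the similarity principle (Theorem~\ref{thm:similarity}) shows that a nonzero holomorphic section $X$ has only isolated zeros, each of a well-defined positive order, and the sum of these orders equals the first Chern number $c_1(T\Sigma)=\langle c_1(T\Sigma,j),[\Sigma]\rangle=\chi(\Sigma)$. Now suppose $X\not\equiv 0$. Because $X$ vanishes at the $m$ distinct points of $\Theta$, it has at least $m$ zeros counted with multiplicity, and therefore $\chi(\Sigma)\ge m$; equivalently
$$
\chi(\Sigma\setminus\Theta)=\chi(\Sigma)-m\ge 0,
$$
which contradicts the hypothesis $\chi(\Sigma\setminus\Theta)<0$. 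Hence $X\equiv 0$, so $\mathbf{D}_{(j,\Theta)}$ is injective.

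There is no serious obstacle once the similarity principle and the identification of $\mathbf{D}_{(j,\Theta)}$ as a Cauchy-Riemann operator are in hand; the only point worth spelling out is that the zero-counting statement covers all three topological regimes uniformly. For $g\ge 2$ it reduces to the first bullet of Theorem~\ref{thm:linearAutomatic} ($c_1(T\Sigma)<0$, so there are no nonzero holomorphic vector fields at all); for $g=1$ it says every nonzero holomorphic vector field is nowhere vanishing; and for $g=0$ it says a nonzero holomorphic vector field has exactly two zeros counted with multiplicity. In each case the condition $\chi(\Sigma\setminus\Theta)<0$ is precisely what prevents a nonzero $X$ from vanishing on all of $\Theta$, so the argument above applies verbatim.
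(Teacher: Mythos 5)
Your argument is correct and is essentially identical to the paper's own proof: both invoke the similarity principle to conclude that a nontrivial element of the kernel has only positive-order zeroes, at least $m$ of them coming from $\Theta$, so $\chi(\Sigma) = c_1(T\Sigma) \ge m$, contradicting $\chi(\Sigma\setminus\Theta) < 0$. The closing case-by-case discussion is a fine sanity check but adds nothing beyond the uniform count.
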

\begin{proof}
By the similarity principle, any nontrivial section $X \in \ker \mathbf{D}_{(j,\Theta)}$
has finitely many zeroes, each of positive order, and there are at least
$m$ of them since $X|_\Theta = 0$.  Thus $\chi(\Sigma) =
c_1(T\Sigma) \ge m$, which contradicts the stability assumption.
\end{proof}
Observe that Prop.~\ref{prop:inj} provides an alternative proof of the
fact that $\Aut(\Sigma,j,\Theta)$ is always discrete in the stable case.

The target space of $\mathbf{D}_{(j,\Theta)}$ contains
$\Gamma(\overline{\End}_\CC(T\Sigma))$, which one can think of as the
``tangent space'' to $\jJ(\Sigma)$ at~$j$.  In particular, any smooth family
$j_t \in \jJ(\Sigma)$ with $j_0 = j$ has
$$
\p_t j_t|_{t=0} \in \Gamma(\overline{\End}_\CC(T\Sigma)).
$$
We shall now use $\mathbf{D}_{(j,\Theta)}$ to define a special class of smoothly 
parametrized families in~$\jJ(\Sigma)$.

\begin{defn}
\label{defn:TeichSlice}
For any $j \in \jJ(\Sigma)$, a \defin{Teichm\"uller slice through~$j$} is a
smooth family of almost complex structures parametrized by an injective map
$$
\oO \to \jJ(\Sigma) : \tau \mapsto j_\tau,
$$
where $\oO$ is a neighborhood of~$0$ in some finite-dimensional
Euclidean space, with $j_0 = j$ and the following transversality property.
If $T_j \tT \subset \Gamma(\overline{\End}_\CC(T\Sigma))$ denotes the vector
space of
all ``tangent vectors'' $\p_t j_{\tau(t)}|_{t=0}$ determined by smooth
paths $\tau(t) \in \oO$ through $\tau(0) = 0$, then
$$
L^p(\overline{\End}_\CC(T\Sigma)) = \im \mathbf{D}_{(j,\Theta)} \oplus T_j\tT.
$$
\end{defn}
We will typically denote a Teichm\"uller slice simply by the image
$$
\tT := \{j_\tau \ |\ \tau \in \oO \} \subset \jJ(\Sigma),
$$
and think of this as
a smoothly embedded finite-dimensional submanifold of $\jJ(\Sigma)$
whose tangent space at~$j$ is $T_j\tT$.
Note that the definition doesn't depend on~$p$; in fact, one would obtain
an equivalent definition by regarding $\mathbf{D}_{(j,\Theta)}$ as an operator from
$W^{k,p}_\Theta$ to $W^{k-1,p}$ for any $k \in \NN$ and $p > 2$.

It is easy to see that Teichm\"uller slices always exist.
Given $j \in \jJ(\Sigma)$, pick any complement of 
$\im \mathbf{D}_{(j,\Theta)}$, i.e.~a subspace $C \subset
L^p(\overline{\End}_\CC(T\Sigma))$ of dimension $\dim \coker \mathbf{D}_{(j,\Theta)}$
whose intersection with $\im \mathbf{D}_{(j,\Theta)}$ is trivial.
By approximation, we may assume every section in $C$ is smooth.
We can then choose a small
neighborhood $\oO \subset C$ of~$0$ and define the map
\begin{equation}
\label{eqn:Phi}
\oO \to \jJ(\Sigma) : y \mapsto j_y = \left( \1 + \frac{1}{2} j y\right)
j \left( \1 + \frac{1}{2} j y \right)^{-1},
\end{equation}
which has the properties $j_0 = j$ and
$\p_t j_{ty}|_{t=0} = y$,
thus it is injective if $\oO$ is sufficiently small.  This family
is a Teichm\"uller slice through~$j$.

Let $\pi_{\Theta} : \jJ(\Sigma) \to \tT(\Sigma,\Theta) : j \mapsto [j]$ 
denote the quotient projection.

\begin{thm}
\label{thm:Teichmueller}
$\tT(\Sigma,\Theta)$ admits the structure of a smooth finite-dimensional
manifold, and for any $(\Sigma,j,\Theta)$
there are natural isomorphisms
$$
T_{\Id}\Aut(\Sigma,j,\Theta) = \ker \mathbf{D}_{(j,\Theta)},
\qquad
T_{[j]}\tT(\Sigma,\Theta) = \coker \mathbf{D}_{(j,\Theta)}.
$$
In particular,
\begin{equation}
\label{eqn:TeichDimension}
\dim \tT(\Sigma,\Theta) - \dim \Aut(\Sigma,j,\Theta)  =
-\ind \mathbf{D}_{(j,\Theta)} = 6g - 6 + 2m.
\end{equation}
Moreover for any Teichm\"uller slice
$\tT \subset \jJ(\Sigma)$ through~$j$,
the projection
\begin{equation}
\label{eqn:projection}
\pi_{\Theta}|_{\tT} : \tT \to \tT(\Sigma,\Theta)
\end{equation}
is a local diffeomorphism near~$j$.
\end{thm}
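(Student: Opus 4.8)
The plan is to realize $\tT(\Sigma,\Theta)$ locally as a smooth slice for the action of $\Diff_0(\Sigma,\Theta)$ on $\jJ(\Sigma)$, using a Teichm\"uller slice $\tT$ (Definition~\ref{defn:TeichSlice}) as the slice, and then to read off the tangent-space identifications and dimension from the Fredholm index \eqref{eqn:indexDj} of $\mathbf{D}_{(j,\Theta)}$. I would first dispose of the finitely many non-stable cases $2g+m\le 2$, i.e.\ $(g,m)\in\{(0,0),(0,1),(0,2),(1,0)\}$, by direct computation: the uniformization theorem exhibits $\Aut(\Sigma,j,\Theta)$ as $\PSL(2,\CC)$, the complex affine group, $\CC^*$, or $T^2$, and $\tT(\Sigma,\Theta)$ as a point in the first three cases and as $\HH$ in the last; in each, $\ker\mathbf{D}_{(j,\Theta)}$ is the Lie algebra of $\Aut$, $\coker\mathbf{D}_{(j,\Theta)}$ is the tangent space of $\tT(\Sigma,\Theta)$, the formula $\dim\tT-\dim\Aut = 6g-6+2m$ holds consistently with \eqref{eqn:indexDj}, and $\pi_\Theta|_\tT$ is trivially a local diffeomorphism. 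Henceforth assume $(\Sigma,\Theta)$ is stable.

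Next I would fix the analytic framework. Completed in a suitable Sobolev class, $\jJ(\Sigma)$ is a smooth Banach manifold with charts given by the maps $Y\mapsto(\1+\frac12 jY)j(\1+\frac12 jY)^{-1}$ of \eqref{eqn:Phi}, so its tangent space at $j$ is (a completion of) $\Gamma(\overline{\End}_\CC(T\Sigma))$. The group $\Diff_0(\Sigma,\Theta)$ acts smoothly by pullback, and the infinitesimal action at $j$ sends a vector field $X$ vanishing on $\Theta$ to $\Lie_X j$; under the identification of $\dbar_j$ with the natural Cauchy-Riemann operator on $(T\Sigma,j)$ this is, up to a fixed bundle isomorphism, the operator $\mathbf{D}_{(j,\Theta)}$ on $W^{1,p}_\Theta(T\Sigma)$. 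By Lemma~\ref{lemma:freeAction} this action is free in the stable case and by Lemma~\ref{lemma:bubbling} it is proper, while the zeros of $\dbar_j$ near $\Id$ are exactly $\Aut(\Sigma,j,\Theta)$, each smooth by elliptic regularity (Theorem~\ref{thm:regularity}) and biholomorphic by Exercise~\ref{EX:posDegree}.

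The core step is to apply the implicit function theorem (Theorem~\ref{thm:IFT}) to
\begin{equation*}
\Phi:\oO\times\uU\to\jJ(\Sigma),\qquad (\tau,\varphi)\mapsto\varphi^*j_\tau,
\end{equation*}
where $\{j_\tau\}_{\tau\in\oO}$ is a Teichm\"uller slice through $j$ and $\uU$ is a neighborhood of $\Id$ among $W^{k,p}$-diffeomorphisms fixing $\Theta$, with $k$ chosen large enough that $W^{k,p}\hookrightarrow C^1$ (so $\varphi^*j_\tau$ is genuinely defined, of class $W^{k-1,p}$; smoothness of $\Phi$ into this lower-regularity space follows from the composition and multiplication estimates of \S\ref{sec:calculus}). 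Its linearization at $(0,\Id)$ is, modulo bundle isomorphisms, $(y,X)\mapsto\mathbf{D}_{(j,\Theta)}X+y$ from $T_j\tT\oplus W^{k,p}_\Theta(T\Sigma)$ to $W^{k-1,p}(\overline{\End}_\CC(T\Sigma))$, which is surjective because $T_j\tT$ complements $\im\mathbf{D}_{(j,\Theta)}$ and injective because $\mathbf{D}_{(j,\Theta)}$ is injective (Proposition~\ref{prop:inj}); hence it is an isomorphism and $\Phi$ is a local diffeomorphism onto a neighborhood of $j$. Consequently every $j'$ near $j$ equals $\varphi^*j_\tau$ for a unique nearby $(\tau,\varphi)$, so $\pi_\Theta|_\tT$ maps a neighborhood of $j$ onto a neighborhood of $[j]$ and is injective there: if $j_\tau,j_{\tau'}$ are $\Diff_0$-equivalent and close to $j$, then properness (together with Lemma~\ref{lemma:freeAction}, which forces any limiting automorphism in $\Diff_0$ to be $\Id$) produces $\psi\in\Diff_0(\Sigma,\Theta)$ near $\Id$ with $\Phi(\tau',\Id)=\Phi(\tau,\psi)$, whence $\tau=\tau'$. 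Declaring these maps to be charts gives $\tT(\Sigma,\Theta)$ a smooth structure—the transition map between two Teichm\"uller slices is $\tau\mapsto\sigma$ with $(\sigma,\varphi)=\Phi_2^{-1}(j_\tau)$, smooth as a composition of smooth maps—and $\pi_\Theta|_\tT$ is then a local diffeomorphism by construction, with elliptic regularity ensuring the structure is independent of the Sobolev-versus-smooth category.

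Finally, differentiating the chart $\pi_\Theta|_\tT$ identifies $T_{[j]}\tT(\Sigma,\Theta)$ with $T_j\tT$, a complement of $\im\mathbf{D}_{(j,\Theta)}$ and hence canonically with $\coker\mathbf{D}_{(j,\Theta)}$, while $\Aut(\Sigma,j,\Theta)=\dbar_j^{-1}(0)$ near $\Id$ has Lie algebra $\ker\mathbf{D}_{(j,\Theta)}$ (both trivial in the stable case); combining with \eqref{eqn:indexDj},
\[
\dim\tT(\Sigma,\Theta)-\dim\Aut(\Sigma,j,\Theta) = -\ind\mathbf{D}_{(j,\Theta)} = 2m - 3\chi(\Sigma) = 6g-6+2m.
\]
I expect the main obstacle to be the functional-analytic bookkeeping in the core step—pinning down the Sobolev classes so that $\Phi$ is simultaneously well-defined on diffeomorphisms, smooth, and has the advertised Fredholm linearization—together with the input that $\Diff_0$-equivalent nearby complex structures are joined by a diffeomorphism near $\Id$, which rests on the properness Lemma~\ref{lemma:bubbling} taken here as given; everything else is the standard slice argument and index arithmetic.
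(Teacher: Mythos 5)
Your proposal is correct and follows essentially the same route as the paper: dispose of the non-stable cases by explicit computation, then in the stable case apply the inverse function theorem to the map $(\varphi,j_\tau)\mapsto\varphi^*j_\tau$ on $W^{k+1,p}$-diffeomorphisms times a (smooth) Teichm\"uller slice, use freeness (Lemma~\ref{lemma:freeAction}) and properness (Lemma~\ref{lemma:bubbling}) for local injectivity of $\pi_\Theta|_{\tT}$, and obtain smooth transitions by composing the local inverses for two slices. The one subtlety you flag at the end—that the orbit map is smooth only because every structure in the slice is smooth, circumventing the loss-of-derivative problem for the full $\Diff_0$-action—is exactly the point the paper's proof hinges on, and you handle it the same way.
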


We'll prove this in the next few sections separately for the non-stable and 
stable cases.  Observe that in the stable case, $\dim\Aut(\Sigma,j,\Theta) = 0$
and thus \eqref{eqn:TeichDimension} gives $6g - 6 + 2m$ as the dimension of
Teichm\"uller space.

It should be intuitively clear why $\ker \mathbf{D}_{(j,\Theta)}$ is
the same as $T_{\Id}\Aut(\Sigma,j,\Theta)$, though since $\mathbf{D}_{(j,\Theta)}$
will usually not be surjective, we still have to do something---it doesn't
follow immediately from the implicit function theorem.
The relationship between $T_{[j]}\tT(\Sigma,\Theta)$ and
$\coker \mathbf{D}_{(j,\Theta)}$ is also not difficult to understand, though here
we'll have to deal with a few analytical subtleties.
Intuitively, $T_{[j]}\tT(\Sigma,\Theta)$ should be complementary to the
tangent space at $j \in \jJ(\Sigma)$ to its orbit under the action
of $\Diff_0(\Sigma,\Theta)$.  Without worrying about the analytical
details for the moment, consider a smooth family of diffeomorphisms
$\varphi_\tau \in \Diff_0(\Sigma,\Theta)$ with $\varphi_0 = \Id$ and
$$
\p_\tau \varphi_\tau|_{\tau=0} = X,
$$
a smooth vector field that vanishes at the marked points~$\Theta$.
Then choosing a symmetric complex connection on $\Sigma$ and differentiating
the action $(\varphi_\tau,j) \mapsto \varphi_\tau^*j$, a short
computation yields
\begin{equation}
\label{eqn:tangentOrbit}
\begin{split}
\left.\frac{\p}{\p \tau} \varphi^*_\tau j\right|_{\tau=0} &=
\left. \frac{\p}{\p\tau} \left[(T\varphi_\tau)^{-1} \circ j \circ T\varphi_\tau
\right] \right|_{\tau=0} = -\nabla X \circ j + j \circ \nabla X \\
&= j (\nabla X + j \circ \nabla X \circ j).
\end{split}
\end{equation}
Note that $\nabla$ can be chosen to be the natural connection in some
local holomorphic coordinates, in which case the last expression in
parentheses above is simply the natural linear Cauchy-Riemann operator
on~$T\Sigma$ with complex structure~$j$.  Since this operator is 
complex-linear, its image is not changed by multiplication with~$j$,
and we conclude that the tangent space to the orbit is precisely
the image of $\mathbf{D}_{(j,\Theta)}$, acting on smooth vector fields that
vanish at the marked points.

\subsection{Spheres with few marked points}

A pointed surface $(\Sigma,\Theta)$ of genus~$g$ with $m$ marked points
is stable whenever $2g + m \ge 3$.
The alternative includes three cases for $g=0$, and here
uniformization tells us that
$(S^2,j)$ is equivalent to $(S^2,i)$ for every possible~$j$.  Further,
one can choose a fractional linear transformation to map up to three marked
points to any points of our choosing, thus $\Mod_{g,m}$ is a one point space
in each of these cases.  We can now easily identify the automorphism groups
for each.

\begin{itemize}
\item $g=0$, $m=0$: $(\Sigma,j) \cong (S^2,i)$, and
$\Aut(S^2,i)$ is the real $6$-dimensional group of fractional linear
transformations,
\begin{equation*}
\begin{split}
\Aut(S^2,i) &= 
\left\{ \varphi(z) = \frac{a z + b}{c z + d} \ \Big|\ a,b,c,d \in \CC,
\quad ad - bc = 1 \right\} \bigg/ \{\pm 1\} \\
&= \SL(2,\CC) / \{\pm 1\} =: \PSL(2,\CC). \\
\end{split}
\end{equation*}
These are also called the \defin{M\"obius transformations}.
\item
$g=0$, $m=1$: $(\Sigma,j,\Theta) = (S^2,i,(\infty))$ and
$$
\Aut(S^2,i,(\infty)) = \Aut(\CC,i) = \{ \varphi(z) = a z + b \ | \ 
a, b \in \CC \},
$$
a real $4$-dimensional group.
\item
$g=0$, $m=2$: $(\Sigma,j,\Theta) = (S^2,i,(0,\infty))$ and
$$
\Aut(S^2,i,(0,\infty)) = \{ \varphi(z) = a z \ | \ a \in \CC \},
$$
a real $2$-dimensional group.  Using the biholomorphic map 
\eqref{eqn:cylinder}, one can equivalently think of this as the
group of translations on the standard cylinder $(\RR\times S^1,i)$.
\end{itemize}

\begin{prop}
For each 
$(S^2,i,\Theta) \in \Mod_{0,m}$ with $m \le 2$, $\mathbf{D}_{(i,\Theta)}$
is surjective and
$\dim \ker \mathbf{D}_{(i,\Theta)} = \dim \Aut(S^2,i,\Theta)$.
\end{prop}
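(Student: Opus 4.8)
The plan is to exploit the fact that $(TS^2,i)$ is the holomorphic line bundle $\oO(2)$, so that the unrestricted Cauchy--Riemann operator $D$ on it is completely understood, and then to see what happens when the domain is cut down to vector fields vanishing at $\Theta$.

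First I would record that $c_1(TS^2) = \langle c_1(TS^2,i),[S^2]\rangle = \chi(S^2) = 2$, and observe that since $c_1(TS^2) = 2 > -\chi(S^2) = -2$, Theorem~\ref{thm:linearAutomatic} shows the natural Cauchy--Riemann operator $D$ on $(TS^2,i)$ is surjective; hence by Riemann--Roch $\dim_\RR \ker D = \chi(S^2) + 2 c_1(TS^2) = 6$. By the similarity principle (equivalently, elliptic regularity) every element of $\ker D$ is a genuine holomorphic vector field on $S^2$, and writing these in the affine coordinate $z$ on $\CC \subset S^2$ they are exactly the fields $(a + bz + cz^2)\,\p_z$ with $a,b,c \in \CC$; this $6$-real-dimensional space is the Lie algebra of $\PSL(2,\CC)$.

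Next I would analyze the evaluation map $\ev_\Theta : \ker D \to \bigoplus_{z \in \Theta} T_z S^2$, a real-linear map into a space of real dimension $2m$. The key point --- and the only place where the hypothesis $m \le 2$ is used --- is that $\ev_\Theta$ is surjective when $m \le 2$. This is trivial for $m = 0$, and for $m = 1, 2$ it is a one-line computation: placing the marked points at $\infty$ (resp.\ at $0$ and $\infty$) and reading the field $(a + bz + cz^2)\,\p_z$ in the coordinate $w = 1/z$ near $\infty$ as $-(aw^2 + bw + c)\,\p_w$, one sees that its value at $\infty$ is $-c\,\p_w$ and its value at $0$ is $a\,\p_z$, so $c$ (resp.\ $(a,c)$) already sweeps out the whole target. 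Surjectivity of $\ev_\Theta$ then gives surjectivity of $\mathbf{D}_{(i,\Theta)}$: given $\eta \in L^p(\overline{\End}_\CC(TS^2))$, choose $X \in W^{1,p}(TS^2)$ with $DX = \eta$ (possible since $D$ is onto), then subtract the element $Y \in \ker D$ with $\ev_\Theta(Y) = \ev_\Theta(X)$ to obtain $X - Y \in W^{1,p}_\Theta(TS^2)$ with $\mathbf{D}_{(i,\Theta)}(X - Y) = \eta$.

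Finally, with surjectivity in hand, \eqref{eqn:indexDj} gives $\dim \ker \mathbf{D}_{(i,\Theta)} = \ind \mathbf{D}_{(i,\Theta)} = 3\chi(S^2) - 2m = 6 - 2m$, and comparing with the explicit descriptions of $\Aut(S^2,i,\Theta)$ in the three non-stable cases --- $\PSL(2,\CC)$ of dimension $6$, the affine group $\{az + b\}$ of dimension $4$, and $\{az\}$ of dimension $2$ --- yields $\dim \ker \mathbf{D}_{(i,\Theta)} = \dim \Aut(S^2,i,\Theta)$ in each case. Equivalently, $\ker \mathbf{D}_{(i,\Theta)} = \ker D \cap W^{1,p}_\Theta(TS^2)$ is precisely the space of holomorphic vector fields vanishing at $\Theta$, which is visibly the Lie algebra of $\Aut(S^2,i,\Theta)$. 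I do not expect any real obstacle: the one point deserving care is the surjectivity of $\ev_\Theta$, which fails exactly when $m \ge 3$ --- consistent with those being the stable cases, where $\Aut$ becomes discrete.
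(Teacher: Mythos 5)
Your argument is correct, but it runs in the opposite direction from the one in the text. The paper's proof never touches the unrestricted operator or the explicit holomorphic vector fields: it starts from $\ind \mathbf{D}_{(i,\Theta)} = 6-2m$ and then bounds the kernel from \emph{above} by evaluating elements of $\ker\mathbf{D}_{(i,\Theta)}$ at $3-m$ \emph{additional} points $\zeta_1,\dots,\zeta_{3-m}$ and showing this evaluation map is \emph{injective} --- a nontrivial kernel element vanishing at those points and at $\Theta$ would, by the similarity principle, have at least $3-m+m = 3$ positively counted zeroes, contradicting $c_1(TS^2)=2$. The inequality $\dim\ker \le 6-2m$ together with $\dim\ker - \dim\coker = 6-2m$ then forces both surjectivity and the kernel dimension at once. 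You instead prove surjectivity of $\mathbf{D}_{(i,\Theta)}$ directly, by combining surjectivity of the full operator $D$ (from Theorem~\ref{thm:linearAutomatic}, since $c_1(TS^2) = 2 > -\chi(S^2)$) with \emph{surjectivity} of the evaluation map $\ev_\Theta$ on the explicitly computed $6$-dimensional kernel $\{(a+bz+cz^2)\,\p_z\}$, and only then read off $\dim\ker$ from the index. The paper's route is the more portable one --- the same ``count the zeroes'' trick reappears throughout the book (e.g.\ in Proposition~\ref{prop:inj} and Lemma~\ref{lemma:surjective}) and requires no explicit model of the kernel --- while yours has the virtue of exhibiting $\ker\mathbf{D}_{(i,\Theta)}$ concretely as the Lie algebra of $\Aut(S^2,i,\Theta)$ rather than merely matching dimensions, which makes the identification $T_{\Id}\Aut(S^2,i,\Theta) = \ker\mathbf{D}_{(i,\Theta)}$ asserted just after the proposition essentially self-evident. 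Both arguments are complete; the only point worth flagging in yours is that you should say explicitly that elements of $\ker D$ are continuous (Sobolev embedding, $p>2$) so that $\ev_\Theta(X)$ is defined on all of $W^{1,p}(TS^2)$, but this is immediate.
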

\begin{proof}
From \eqref{eqn:indexDj}, $\ind(\mathbf{D}_{(i,\Theta)}) = 3\chi(\Sigma) - 2m = 6 - 2m
= \dim \Aut(S^2,i,\Theta)$, so it will suffice to prove that
$\dim\ker \mathbf{D}_{(i,\Theta)}$ is not larger than $6 - 2m$.  To see this,
pick $3 - m$ distinct points $\zeta_1,\ldots,\zeta_{3-m} \in \Sigma \setminus
\Theta$ and consider the linear map
$$
\Phi : \ker \mathbf{D}_{(i,\Theta)} \to T_{\zeta_1}\Sigma \oplus \ldots \oplus
T_{\zeta_{3-m}}\Sigma : X \mapsto (X(\zeta_1),\ldots,X(\zeta_{3-m})).
$$
The right hand side is a vector space of real dimension $6 - 2m$, so the result
will follow from the claim that $\Phi$ is injective.  Indeed, if
$\eta \in \ker \mathbf{D}_{(i,\Theta)}$ and $\Phi(\eta) = 0$, then the similarity
principle implies that each zero counts positively, and the points
$\zeta_1,\ldots,\zeta_{3-m}$ combined with $\Theta$ imply 
$c_1(T S^2) \ge 3-m + m =3$, giving a contradiction unless $X \equiv 0$.
\end{proof}

By the above proposition, the implicit function theorem defines a smooth
manifold structure on $\dbar_i^{-1}(0) \subset \bB^{1,p}_\Theta$ 
near~$\Id$ and yields a natural isomorphism 
$$
T_\Id \Aut(S^2,i,\Theta) = \ker \mathbf{D}_{(i,\Theta)}.
$$

\begin{exercise}
\label{EX:autGenus0}
Show that for $m \ge 3$, $\Aut(S^2,i,\Theta)$ is always trivial
and $\Mod_{0,m}$ is a smooth manifold of real dimension $2(m-3)$.
\end{exercise}

\subsection{The torus}
\label{subsec:torus}

The remaining item on the list of non-stable pointed surfaces is the torus with
no marked points, and this is the one case where both the
automorphism groups and the Teichm\"uller space have positive dimension.
Thus we'll see that $\mathbf{D}_{(j,\Theta)}$ is neither surjective nor injective,
but fortunately the torus is a simple enough manifold so that everything
can be computed explicitly.

The universal cover
of $(T^2,j)$ is the complex plane, which implies that
$(T^2,j)$ is biholomorphically equivalent to $(\CC / \Lambda,i)$ for some
lattice $\Lambda \subset \CC$.  Without loss of generality, we can take
$\Lambda = \ZZ + \lambda \ZZ$ for some $\lambda \in \HH$.  Then choosing
a real-linear map that sends $1$ to itself and $\lambda$ to~$i$, we can
write $T^2 = \CC / (\ZZ + i\ZZ)$ and identify
$$
(\CC / \Lambda,i) \cong (T^2,j_\lambda),
$$
where $j_\lambda$ is some translation invariant complex structure on $\CC$
that is compatible with the standard orientation.  Conversely, every such
translation invariant complex structure can be obtained in this way and
descends to a complex structure on~$T^2$.

\begin{prop}
\label{prop:TeichmuellerT2}
$[j_\lambda] = [j_{\lambda'}]$ in $\tT(T^2)$ if and only if 
$\lambda = \lambda'$.
\end{prop}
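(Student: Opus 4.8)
The plan is to trade the differential-topological condition $[j_\lambda]=[j_{\lambda'}]$ for an algebraic one by passing to the lattice models $(\CC/(\ZZ+\lambda\ZZ),i)$, using the uniformization theorem to replace biholomorphisms by affine self-maps of $\CC$, and then reading off $\lambda=\lambda'$ from the fact that any diffeomorphism implementing the Teichm\"uller equivalence acts trivially on first homology.

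The ``if'' direction is immediate, since $\lambda=\lambda'$ gives $j_\lambda=j_{\lambda'}$ and $\Id_{T^2}\in\Diff_0(T^2)$ then witnesses $[j_\lambda]=[j_{\lambda'}]$. For ``only if'', suppose $\varphi\in\Diff_0(T^2)$ satisfies $\varphi^*j_{\lambda'}=j_\lambda$, so that $\varphi\colon(T^2,j_\lambda)\to(T^2,j_{\lambda'})$ is biholomorphic. Write $\Lambda_\lambda:=\ZZ+\lambda\ZZ$, and let $B_\lambda$ be the orientation-preserving real-linear automorphism of $\CC=\RR^2$ with $B_\lambda(1)=1$ and $B_\lambda(\lambda)=i$; it carries $\Lambda_\lambda$ onto $\ZZ+i\ZZ$ and hence descends to a biholomorphism $\bar B_\lambda\colon(\CC/\Lambda_\lambda,i)\to(T^2,j_\lambda)$, and similarly for $\lambda'$. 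Then $\psi:=\bar B_{\lambda'}^{-1}\circ\varphi\circ\bar B_\lambda$ is a biholomorphism $(\CC/\Lambda_\lambda,i)\to(\CC/\Lambda_{\lambda'},i)$. By the uniformization theorem $\psi$ lifts to a biholomorphic self-map of $\CC$, which is necessarily affine, $z\mapsto az+b$ with $a\in\CC\setminus\{0\}$; well-definedness of $\psi$ together with that of its inverse forces $a\Lambda_\lambda=\Lambda_{\lambda'}$.

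The decisive step is then to compute $\varphi_*$ on $H_1(T^2;\ZZ)=\ZZ\cdot[1]\oplus\ZZ\cdot[i]$, where $[1],[i]$ denote the classes of the straight loops determined by $1,i\in\CC$. Factoring $\varphi=\bar B_{\lambda'}\circ\psi\circ\bar B_\lambda^{-1}$: the map $(\bar B_\lambda^{-1})_*$ sends $([1],[i])$ to the basis $([1],[\lambda])$ of $H_1(\CC/\Lambda_\lambda)$; $\psi_*$ sends the class of a loop wrapping by $v\in\Lambda_\lambda$ to that of the loop wrapping by $av\in\Lambda_{\lambda'}$; and $(\bar B_{\lambda'})_*$ sends $([1],[\lambda'])$ to $([1],[i])$. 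Since $a$ and $a\lambda$ lie in $a\Lambda_\lambda=\Lambda_{\lambda'}=\ZZ+\lambda'\ZZ$, we may write $a=a_1+a_2\lambda'$ and $a\lambda=b_1+b_2\lambda'$ with $a_1,a_2,b_1,b_2\in\ZZ$, whence $\varphi_*[1]=a_1[1]+a_2[i]$ and $\varphi_*[i]=b_1[1]+b_2[i]$. Because $\varphi$ is homotopic to the identity, $\varphi_*=\Id$ on $H_1$, which forces $a_1=b_2=1$ and $a_2=b_1=0$; hence $a=1$ and $\lambda'=a\lambda=\lambda$. The only real obstacle is carrying out this homology bookkeeping correctly through the threefold composition — in particular recognizing $a$ and $a\lambda$ as integral combinations of $1$ and $\lambda'$; the remaining ingredients (the trivial direction, the reduction to affine maps, and homotopy invariance of $H_1$) are entirely routine.
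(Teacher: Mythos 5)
Your proof is correct and follows essentially the same route as the paper's: lift the biholomorphism between the lattice models to an affine map $z\mapsto az+b$ of $\CC$ via uniformization, and use the hypothesis $\varphi\in\Diff_0(T^2)$ to force $a=1$ and $a\lambda=\lambda'$. The only cosmetic difference is that you extract this from an explicit computation of $\varphi_*$ on $H_1(T^2;\ZZ)$, whereas the paper compresses the same fact into the statement that the lift of $\varphi$, after a translation, fixes the lattice $\ZZ+i\ZZ$ pointwise.
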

\begin{proof}
If $j_\lambda = \varphi^* j_{\lambda'}$ for some $\varphi \in
\Diff_0(T^2)$, then $\varphi$ can be lifted to a diffeomorphism of
$\CC$ that (after composing with a translation) fixes the lattice 
$\ZZ + i\ZZ$.  Now composing with the linear 
map mentioned above, this gives rise to a biholomorphic map
$\psi : \CC \to \CC$ such that $\psi(0) = 0$, 
$\psi(1) = 1$ and $\psi(\lambda) = \lambda'$.
But all biholomorphic maps on $\CC$ have the form $\psi(z) = az + b$,
and the conditions at~$0$ and~$1$ imply $b=0$ and $a=1$, thus
$\lambda = \lambda'$.
\end{proof}

This shows that $\tT(T^2)$ is a smooth $2$-manifold that can be identified
naturally with the upper half plane~$\HH$, and the set of translation
invariant complex structures
$$
\tT := \{ j_\lambda \in \jJ(T^2)\ |\ \lambda \in \HH \}
$$
defines a global parametrization.  We'll see below
that it is also a Teichm\"uller slice in the sense of 
Definition~\ref{defn:TeichSlice}.

To understand the action of $M(T^2) = \Diff_+(T^2) / \Diff_0(T^2)$ 
on $\tT(T^2)$, note that every element of
$M(T^2)$ can be represented uniquely as a matrix $A \in \SL(2,\ZZ)$,
which is determined by its induced isomorphism on
$H_1(T^2) = \ZZ^2$.  Then $A^*j_\lambda$ is another translation invariant
complex structure $j_{\lambda'}$ for some $\lambda' \in \HH$, and
$$
[A] \cdot [j_\lambda] = [A^*j_\lambda] = [j_{\lambda'}].
$$
Thus the stabilizer of $[j_\lambda]$ under this action is the subgroup
$$
G_\lambda := \{ A \in \SL(2,\ZZ)\ |\ A^*j_\lambda = j_\lambda \}.
$$
This is also a subgroup of $\Aut(T^2,j_\lambda)$, and a
complementary (normal) subgroup is
formed by the intersection $\Aut(T^2,j_\lambda) \cap \Diff_0(T^2)$.

\begin{prop}
Every $\varphi \in \Aut(T^2,j_\lambda)$ that fixes $(0,0) \in T^2$ belongs
to $G_\lambda$, and every $\varphi \in \Aut(T^2,j_\lambda)
\cap \Diff_0(T^2)$ is a translation
$\varphi(z) = z+\zeta$ for some $\zeta \in T^2$.
\end{prop}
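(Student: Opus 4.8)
The proof splits naturally into two claims. First I would handle the description of $\Aut(T^2,j_\lambda) \cap \Diff_0(T^2)$: given such a $\varphi$, lift it to a biholomorphic map $\tilde\varphi : (\CC,j_\lambda) \to (\CC,j_\lambda)$ (viewing $T^2 = \CC/(\ZZ + i\ZZ)$ with the translation-invariant structure $j_\lambda$). After composing with the real-linear map sending $1 \mapsto 1$, $\lambda \mapsto i$ as in \S\ref{subsec:torus}, this becomes a biholomorphic self-map of $(\CC,i)$, hence of the form $z \mapsto az + b$ with $a \ne 0$. Since $\varphi$ is homotopic to the identity, its induced map on $H_1(T^2) = \ZZ^2$ is the identity, which forces the lift to carry the lattice $\ZZ + i\ZZ$ to itself by a translation, i.e.\ $a = 1$. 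Thus $\tilde\varphi(z) = z + b$, and $\varphi$ descends to the translation $z \mapsto z + \zeta$ with $\zeta = b \bmod (\ZZ + i\ZZ)$. Conversely every translation is obviously in $\Aut(T^2,j_\lambda)\cap\Diff_0(T^2)$, so this subgroup is exactly $T^2$ acting on itself by translation.

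For the first claim, suppose $\varphi \in \Aut(T^2,j_\lambda)$ fixes the base point $(0,0)$. Lift $\varphi$ to a biholomorphic map of $\CC$ fixing $0$; after the same linear normalization it is of the form $z \mapsto az$ for some $a \in \CC^*$, and biholomorphicity of the descent to $T^2$ forces $a(\ZZ + i\ZZ) = \ZZ + i\ZZ$, so $\varphi$ acts on $H_1(T^2) = \ZZ^2$ by a matrix $A \in \GL(2,\ZZ)$; orientation-preservation gives $A \in \SL(2,\ZZ)$. Since $\varphi$ is holomorphic for $j_\lambda$, the induced mapping-class element $[A]$ satisfies $A^* j_\lambda = j_\lambda$, i.e.\ $A \in G_\lambda$. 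Conversely, by Exercise~\ref{EX:stabilizer} (applied in the appropriate non-stable adaptation, or directly since $A^*j_\lambda = j_\lambda$ means the biholomorphism realizing $A$ preserves $j_\lambda$) every $A \in G_\lambda$ is realized by an element of $\Aut(T^2,j_\lambda)$ fixing $0$, so this identifies $G_\lambda$ with the base-point-fixing subgroup of $\Aut(T^2,j_\lambda)$.

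Finally I would note that these two subgroups intersect trivially (a nontrivial translation does not fix $(0,0)$) and together generate $\Aut(T^2,j_\lambda)$: indeed, given any $\varphi \in \Aut(T^2,j_\lambda)$, compose with the translation by $-\varphi(0,0)$ to obtain an automorphism fixing $(0,0)$, hence lying in $G_\lambda$. This exhibits $\Aut(T^2,j_\lambda)$ as a semidirect product $T^2 \rtimes G_\lambda$ with $T^2$ normal, confirming in particular that $\dim \Aut(T^2,j_\lambda) = 2$ and that the identity component is the translation subgroup. The main obstacle, such as it is, is bookkeeping: keeping straight the chain of identifications (the universal cover $\CC$, the normalizing linear map, and the descent to $T^2$) so that ``biholomorphic self-map of $\CC$ fixing a point'' really does become ``$z \mapsto az$'' in coordinates where the lattice is $\ZZ + i\ZZ$, and verifying that the homotopy-to-identity hypothesis correctly translates into the triviality of the $H_1$-action. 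No deep analysis is needed here — uniformization and the explicit form of $\Aut(\CC,i)$ do all the work.
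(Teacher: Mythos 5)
Your proof is correct and follows essentially the same route as the paper's: lift to the universal cover, use that biholomorphisms of $(\CC,i)$ are affine, and read off linearity of $\varphi$ from the fixed point (first claim) and $a=1$ from triviality of the action on $H_1$ (second claim). The only cosmetic difference is that the paper obtains the second claim by composing with a translation to reduce to the first and then observing $\SL(2,\ZZ)\cap\Diff_0(T^2)=\{\1\}$, whereas you argue directly on the lift; the two are interchangeable.
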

\begin{proof}
The first statement follows by a repeat of the argument used in the proof
of Prop.~\ref{prop:TeichmuellerT2} above: if $\varphi \in \Aut(T^2,j_\lambda)$
fixes $(0,0)$, then regarding it as a diffeomorphism on
$\CC / \Lambda$, it lifts to a biholomorphic map on $\CC$ which must be
of the form $\psi(z) = c z$ for $c \in \CC\setminus\{0\}$, implying that
$\varphi$ is the projection to $T^2 = \CC / \ZZ^2$ of a real-linear map
on~$\CC$ which preserves the lattice $\ZZ + i\ZZ$, and thus
$\varphi \in \SL(2,\ZZ)$.

The second statement follows because one can compose any
$\varphi \in \Aut(T^2,j_\lambda) \cap \Diff_0(T^2)$ with translations
until it fixes $(0,0)$, and conclude that the composed map is in
$\SL(2,\ZZ) \cap \Diff_0(T^2) = \{\1\}$.
\end{proof}

Denoting the translation subgroup by $T^2 \subset \Aut(T^2,j_\lambda)$,
we see now that the total
automorphism group is the semidirect product
$$
\Aut(T^2,j_\lambda) = T^2 \rtimes G_\lambda,
$$
and is thus a smooth $2$-dimensional manifold.

\begin{prop}
For each $[j_\lambda] \in \tT(T^2)$, $G_\lambda$ is finite.
\end{prop}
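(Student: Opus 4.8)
The plan is to realize $G_\lambda$ as a subgroup of a group of complex numbers preserving a lattice, and then to use the fact that a lattice in $\CC$ meets any bounded region in only finitely many points.

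First I would pass to the lattice picture. Recall that $(T^2,j_\lambda)$ is biholomorphic to $(\CC/\Lambda_\lambda,i)$, where $\Lambda_\lambda := \ZZ + \lambda\ZZ$, via the real-linear map $L$ that sends $1\mapsto 1$ and $\lambda \mapsto i$ and thus carries $\Lambda_\lambda$ to $\ZZ + i\ZZ$. Now take $A \in G_\lambda \subset \SL(2,\ZZ)$. As a self-map of $T^2 = \CC/(\ZZ+i\ZZ)$ it is linear, hence fixes the base point $[0]$, and since $A^*j_\lambda = j_\lambda$ it is a biholomorphism of $(T^2,j_\lambda)$. Conjugating by $L$ therefore produces a biholomorphism of $(\CC/\Lambda_\lambda,i)$ fixing $[0]$; lifting to the universal cover $\CC$ exactly as in the proof of the previous proposition, we get a biholomorphism of $\CC$ fixing $0$, i.e.\ a map $z\mapsto cz$ with $c\in\CC\setminus\{0\}$, and the descended map preserves $\Lambda_\lambda$, so $c\,\Lambda_\lambda = \Lambda_\lambda$. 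The lift is unique, so $A\mapsto c$ is a well-defined injective group homomorphism from $G_\lambda$ into $U_\lambda := \{\,c\in\CC^*\ |\ c\,\Lambda_\lambda = \Lambda_\lambda\,\}$.

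It then suffices to show $U_\lambda$ is finite. If $c\in U_\lambda$ then the area-preserving relation $c\,\Lambda_\lambda = \Lambda_\lambda$ forces $|c| = 1$ (the real-linear map $z\mapsto cz$ has determinant $|c|^2$), so $U_\lambda \subset S^1$. Moreover $1\in\Lambda_\lambda$ gives $c = c\cdot 1 \in c\,\Lambda_\lambda = \Lambda_\lambda$, so $U_\lambda \subset \Lambda_\lambda\cap S^1$. The right-hand side is the intersection of a discrete subset of $\CC$ with a compact set, hence finite, and therefore $G_\lambda$ is finite as claimed.

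I do not expect a serious obstacle here: everything rests on the lift-to-the-universal-cover argument already used above for $\Aut(T^2,j_\lambda)$, and the only delicate point is the bookkeeping of keeping straight which torus carries which complex structure during the conjugation by $L$. As an alternative one could instead invoke Proposition~\ref{prop:compactAut}: since $T^2$ has genus one, $\Aut(T^2,j_\lambda)$ is compact, while $G_\lambda\subset\SL(2,\ZZ)$ is a closed discrete subset of it (distinct integer matrices stay at $C^0$-distance at least $1/2$ as self-maps of $T^2$), so $G_\lambda$ is compact and discrete, hence finite.
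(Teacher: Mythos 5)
Your proof is correct and is essentially the same argument as the paper's: both reduce to the observation that an element of $G_\lambda$, viewed through the identification with $(\CC/\Lambda_\lambda,i)$, becomes multiplication by a unimodular complex number, so that $G_\lambda$ embeds into a compact set while sitting inside something discrete. The only cosmetic difference is the last step — you get finiteness directly from $c\in\Lambda_\lambda\cap S^1$, whereas the paper lands in $\U(1)$ and implicitly uses discreteness of $\SL(2,\ZZ)$; your parenthetical alternative via Proposition~\ref{prop:compactAut} is exactly that latter reading.
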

\begin{proof}
The claim follows from the fact that $G_\lambda$ is compact, which we
show as follows.  Choose a new real basis $(\hat{e}_1,\hat{e}_2)$ 
for $\CC = \RR^2$ such that $\hat{e}_1$ is a positive multiple of $e_1$,
$\hat{e}_2 = j_\lambda \hat{e}_1$ and the parallelogram spanned
by $\hat{e}_1$ and $\hat{e}_2$ has area~$1$.  Expressing any matrix
$A \in G_\lambda$ in this basis, $A$ now belongs to both
$\GL(1,\CC)$ and $\SL(2,\RR)$, whose intersection
$$
\GL(1,\CC) \cap \SL(2,\RR) = \U(1)
$$
is compact.
\end{proof}

By this result, $\Aut(T^2,j)$ is always compact, as was predicted by
Prop.~\ref{prop:compactAut}.  Moreover, the stabilizer of any element of
$\tT(T^2)$ under the action of $M(T^2)$ is finite, so we conclude that
$$
\Mod_{2,0} \cong \tT(T^2) / M(T^2) \cong \HH / \SL(2,\ZZ)
$$
is a smooth $2$-dimensional orbifold, and is a manifold near any
$[j_\lambda]$ for which $G_\lambda$ is trivial.  

\begin{exercise}
Show that $G_\lambda$ is trivial for all $\lambda$ in an open and
dense subset of~$\HH$.
\end{exercise}

Let us now relate the above descriptions of $\tT(T^2)$ and
$\Aut(T^2,j)$ to the natural Cauchy-Riemann operator
$$
\mathbf{D}_{j} : W^{1,p}(T T^2) \to L^p(\overline{\End}_\CC(T T^2))
$$
on $(T T^2,j)$.  After an appropriate diffeomorphism we can assume without
loss of generality that $j = j_\lambda \in \tT$ for some $\lambda \in \HH$.
Then identifying $T T^2$ with $T^2 \times \CC$ via the natural global complex
trivialization, $\mathbf{D}_j$ is equivalent to the standard Cauchy-Riemann operator
$$
\dbar = \p_s + i\p_t : W^{1,p}(T^2,\CC) \to L^p(T^2,\CC),
$$
whose kernel is the real $2$-dimensional space of constant functions,
which is precisely $T_{\Id}\Aut(T^2,j_\lambda)$ since $\Aut(T^2,j_\lambda)$
consists infinitessimally of translations.  Meanwhile, the formal adjoint
$\mathbf{D}_j^*$ is equivalent to
$$
\p = \p_s - i\p_t : W^{1,p}(T^2,\CC) \to L^p(T^2,\CC),
$$
whose kernel is again the space of constant functions, and this is precisely
$T_{j_\lambda} \tT$.

\subsection{The stable case}

Assume $2g + m \ge 3$.  We've already seen that in this case
$\Aut(\Sigma,j,\Theta)$ is finite and $\mathbf{D}_{(j,\Theta)}$ is injective, 
so Theorem~\ref{thm:Teichmueller} now
reduces to the statement that $\tT(\Sigma,\Theta)$ is a smooth manifold
whose tangent space at $[j]$ is $\coker \mathbf{D}_{(j,\Theta)}$, and local charts
are given by Teichm\"uller slices.
We argued informally above that the tangent space
at $j \in \jJ(\Sigma)$ to its orbit under $\Diff_0(\Sigma,\Theta)$ is
the image of $\mathbf{D}_{(j,\Theta)}$, which motivates the belief that
$\tT(\Sigma,\Theta)$ should locally
look like a quotient of this image, i.e.~the cokernel of $\mathbf{D}_{(j,\Theta)}$.

A naive attempt to make this precise might now
proceed by considering Banach manifold completions of $\jJ(\Sigma)$ and
$\Diff_0(\Sigma,\Theta)$ and arguing that the extension of
$$
\Phi : \Diff_0(\Sigma,\Theta) \times \jJ(\Sigma) \to \jJ(\Sigma) :
(\varphi,j) \mapsto \varphi^*j
$$
to these completions defines a smooth Banach Lie group 
action that is free and proper, so the quotient is a manifold whose
tangent space is the quotient of the relevant tangent spaces.
But this approach runs into
a subtle analytical complication: the partial derivative of the map
$\Phi$ with respect to the first factor must have the form
$$
D_1 \Phi(\Id,j) X = j \mathbf{D}_{(j,\Theta)} X,
$$
and if $j$ is not smooth, then the right hand side will always be one
step less smooth than~$j$.  Indeed, $\mathbf{D}_{(j,\Theta)}$ is in this
case a nonsmooth Cauchy-Riemann type operator, and we can see it more clearly
by redoing the computation \eqref{eqn:tangentOrbit} in smooth coordinates
that are not holomorphic: this yields a local expression of the form
$$
\left.\frac{\p}{\p \tau} \varphi^*_\tau j\right|_{\tau=0} =
j (dX + j \circ dX \circ j) + dj(X).
$$
Since this involves the first derivative of~$j$, the expression for
$D_1\Phi(\Id,j) X$ can never lie in the appropriate Banach space
completion of $T_j \jJ(\Sigma)$, but rather in a larger Banach space that
contains it.  This means that $\Phi$ is not differentiable---indeed,
this is another example (cf.~Exercise~\ref{EX:notDifferentiable}) of a
natural map between
infinite-dimensional spaces that can \emph{never} be differentiable in any
conventional
Banach space setting.  It is probably still true that one can make a
precise argument out of this idea, but it would require a significantly
different analytical framework than just smooth maps on Banach manifolds,
e.g.~one might attempt to use the category of \emph{sc-smooth} Banach
manifolds (cf.~\cite{Hofer:polyfolds}).  Another alternative, using the
correspondence between conformal structures and hyperbolic metrics on
stable Riemann surfaces, is explained in \cite{Tromba}.

Instead of trying to deal with global Banach Lie group actions, we will prove
the theorem by constructing smooth charts directly via local
Teichm\"uller slices $\tT \subset \jJ(\Sigma)$.
We will indeed need to enlarge $\Diff_0(\Sigma,\Theta)$ and $\jJ(\Sigma)$ to
Banach manifolds containing non-smooth objects, but the key observation is
that since every object in the slice $\tT$ is smooth by assumption,
the orbit of any $j \in \tT$ can still be understood as a smooth Banach
submanifold.  The following argument was explained to me 
by Dietmar Salamon, on a napkin.

\begin{proof}[Proof of Theorem~\ref{thm:Teichmueller} in the stable case]
For $k \in \NN$ and $p > 2$, let $\jJ^{k,p}(\Sigma)$ denote the space of
$W^{k,p}$-smooth almost complex structures on $\Sigma$, and for
$k \ge 2$, let
$$
\dD^{k,p}_\Theta \subset W^{k,p}_{\Theta}(\Sigma,\Sigma)
$$
denote the open subset consisting of all $\varphi\in 
W^{k,p}_\Theta(\Sigma,\Sigma)$ which are $C^1$-smooth diffeomorphisms.  Choose
$j_0 \in \jJ(\Sigma)$ and suppose $\tT \subset \jJ(\Sigma)$ is a 
Teichm\"uller slice through $j_0$.  This implies that
$T_{j_0}\tT \subset \Gamma(\overline{\End}_\CC(T\Sigma))$ is complementary
to the image of
$$
\mathbf{D}_{(j_0,\Theta)} : W^{k,p}_{\Theta}(T\Sigma) \to W^{k-1,p}
(\overline{\End}_\CC(T\Sigma))
$$
for all $k \in \NN$.

Since every $j \in \tT$ is smooth, the orbit of $j$ under the natural action of
$\dD^{k+1,p}_{\Theta}$ is in $\jJ^{k,p}(\Sigma)$; in fact the map
\begin{equation}
\label{eqn:F}
F : \dD^{k+1,p}_{\Theta} \times \tT \to \jJ^{k,p}(\Sigma) : 
(\varphi,j) \mapsto \varphi^* j
\end{equation}
is smooth and has derivative
\begin{equation*}
\begin{split}
dF(\Id,j_0) : W^{k+1,p}_{\Theta} \oplus T_{j_0}\tT &\to
W^{k,p}(\overline{\End}_\CC(T\Sigma)) \\
(X,y) &\mapsto j_0 \mathbf{D}_{(j_0,\Theta)} X + y.
\end{split}
\end{equation*}
This map is an isomorphism, thus by the inverse function theorem,
$F$ is a smooth diffeomorphism between open neighborhoods
of $(\Id,j_0) \in \dD^{k+1,p}_\Theta \times \tT$ and
$j_0 \in \jJ^{k,p}(\Sigma)$.

We claim now that after shrinking $\tT$ if necessary, the
projection $\pi_\Theta : \tT \to \tT(\Sigma,\Theta)$ is a bijection
onto a neighborhood of~$[j_0]$.  It is clearly surjective,
since every $j \in \jJ(\Sigma)$ in some neighborhood of $j_0$ is in the
image of~$F$.  To see that it is injective, we essentially use the fact
that $\Diff_0(\Sigma,\Theta)$ acts freely and properly on~$\jJ(\Sigma)$.
Indeed, we need to show that there
is no pair of sequences $j_k \ne j_k' \in \tT$ both converging to~$j_0$,
such that $j_k = \varphi_k^*j_k'$ for some $\varphi_k \in 
\Diff_0(\Sigma,\Theta)$.  If there are such sequences, then
by Lemma~\ref{lemma:bubbling}, $\varphi_k$ also has a subsequence converging
to some $\varphi \in \Diff_0(\Sigma,\Theta)$ with $\varphi^*j_0 = j_0$,
thus $\varphi = \Id$.  But then $\varphi_k$ is near the identity in
$\dD^{k+1,p}_\Theta$ for
sufficiently large~$k$, and $F(\varphi_k,j_k') = j_k$ implies
$(\varphi_k,j_k') = (\Id,j_k)$ since $F$ is locally invertible.

Finally, we show that the bijection induced by any other choice of slice 
$\tT'$ through $j_0$ to a neighborhood of $[j_0]$ in $\tT(\Sigma,\Theta)$
yields a smooth transition map
$\tT' \to \tT : j' \mapsto j$.  Indeed, this transition map must satisfy
the relation
$$
(\varphi,j) = F^{-1} \circ F'(\varphi',j')
$$
for any $\varphi,\varphi' \in \dD^{k+1,p}_{\Theta}$,
where $F' : \dD^{k+1,p}_{\Theta} \times \tT' \to \jJ^{k,p}(\Sigma)$
is the corresponding local diffeomorphism defined for $\tT'$ as in
\eqref{eqn:F}.  Explicitly then,
$j = \pr_2 \circ F^{-1} \circ F'(\Id,j')$, which is clearly a smooth map.
\end{proof}

\begin{exercise}
Using the Banach manifold charts constructed in the above proof, show that
for any $j \in \jJ(\Sigma)$ and Teichm\"uller slice $\tT$ through~$j$, 
the projection
$$
L^p(\overline{\End}_\CC(T\Sigma)) \to T_j \tT
$$
along $\im \mathbf{D}_{(j,\Theta)}$ descends to an isomorphism
$\coker \mathbf{D}_{(j,\Theta)} \to T_{[j]}\tT(\Sigma,\Theta)$ that is
independent of all choices.
\end{exercise}

\section{Fredholm regularity and the implicit function theorem}
\label{sec:IFT}

With the local structure of $\Mod_{g,m}$ understood, we now turn our
attention back to $\Mod(J)$, the moduli space of
$J$-holomorphic curves.  It is unfortunately not true that $\Mod(J)$ is
always locally a finite-dimensional manifold, nor even an orbifold.
We need an extra condition to guarantee this, called
\emph{Fredholm regularity}. To understand it, we must first set up the
appropriate version of the implicit function theorem.

The setup will be analogous to the case of $\Mod_{g,m}$ in the following
sense.  In the previous section, we analyzed $\Mod_{g,m}$ by first understanding
the Teichm\"uller space $\tT(\Sigma,\Theta)$.  The latter is a somewhat
unnatural object in that its definition depends on choices (i.e.~the surface
$\Sigma$ and marked points $\Theta \subset \Sigma$), but it has the
advantage of being a smooth finite-dimensional manifold.  Then the moduli
space $\Mod_{g,m}$ was understood as the quotient of $\tT(\Sigma,\Theta)$
by a discrete group action with finite isotropy groups: in fact,
\emph{locally} near a given $[j] \in \tT(\Sigma,\Theta)$, a neighborhood
in $\Mod_{g,m}$ looks like a quotient of $\tT(\Sigma,\Theta)$ by 
a finite group ($\Aut(\Sigma,j,\Theta)$ in the stable case), which makes
$\Mod_{g,m}$ an orbifold of the same dimension as $\tT(\Sigma,\Theta)$.

In the more general setup, we will be able to identify $\Mod_{g,m}^A(J)$
locally near a curve $(\Sigma,j,\Theta,u)$ with a quotient of the form
$$
\dbar_J^{-1}(0) / \Aut(u),
$$
where $\dbar_J$ is a generalization of the nonlinear Cauchy-Riemann operator
that we considered in Chapter~\ref{chapter:Fredholm}, using local Teichm\"uller slices to
incorporate varying complex structures on the domain.
Its zero set thus contains all $J$-holomorphic curves in some neighborhood
of~$u$, but it may also include seemingly distinct curves that are actually
equivalent in the moduli space, thus one must still divide by an appropriate
symmetry group, which locally turns out to be the finite group $\Aut(u)$.
Thus $\dbar_J^{-1}(0)$ in this context plays a role analogous
to that of Teichm\"uller space in the previous section: 
it is a somewhat unnatural object whose local structure is nonetheless very 
nice.  Unlike with Teichm\"uller space however, the nice
local structure of $\dbar_J^{-1}(0)$ doesn't come without an extra assumption,
as we need the linearization of $\dbar_J$ to be a surjective operator in
order to apply the implicit function theorem.  When this condition is
satisfied, the result will be a smooth orbifold structure for
$\Mod_{g,m}^A(J)$, with its dimension determined by
the index of the linearization of~$\dbar$.  That's the general idea; we
now proceed with the details.

Suppose $(\Sigma,j,\Theta,u) \in \Mod_{g,m}^A(J)$, and choose a
Teichm\"uller slice $\tT \subset \jJ(\Sigma)$ through~$j$.  For
any $p > 2$, denote
$$
\bB^{1,p} = W^{1,p}(\Sigma,M),
$$
and define a Banach space bundle $\eE^{0,p} \to \tT \times \bB^{1,p}$
whose fibers are
$$
\eE^{0,p}_{(j,u)} = L^p\left(\overline{\Hom}_\CC((T\Sigma,j),(u^*TM,J))\right).
$$
This bundle admits the smooth section
$$
\dbar_J : \tT \times \bB^{1,p} \to \eE^{0,p} : 
(j,u) \mapsto Tu + J \circ Tu \circ j,
$$
whose linearization at $(j,u)$ is
\begin{equation}
\label{eqn:linearization2}
\begin{split}
D\dbar_J(j,u) : T_{j}\tT \oplus W^{1,p}(u^*TM) &\to
L^p(\overline{\Hom}_\CC(T\Sigma,u^*TM)), \\
(y,\eta) &\mapsto J \circ Tu \circ y + \mathbf{D}_{u} \eta,
\end{split}
\end{equation}
where on the right hand side we take $j$ to be the complex structure on the
bundle $T\Sigma$.

\begin{defn}
\label{defn:regular}
We say that the curve $(\Sigma,j,\Theta,u) \in \Mod_{g,m}^A(J)$ is
\defin{Fredholm regular} if the linear operator $D\dbar_J(j,u)$ of
\eqref{eqn:linearization2} is surjective.
\end{defn}

The following lemma implies that our definition of Fredholm regularity 
doesn't depend on the choice of Teichm\"uller slice.  Observe that it is
also an open condition: if $D\dbar_J(j,u)$ is surjective then it will
remain surjective after small changes in $j$, $u$ and~$J$.

\begin{lemma}
The image of $D\dbar_J(j,u)$ doesn't depend on the choice of~$\tT$.
\end{lemma}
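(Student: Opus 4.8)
Write $D_\tT := D\dbar_J(j,u)$ for the operator of \eqref{eqn:linearization2} associated to a slice $\tT$, and let $L_u\colon \Gamma(\overline{\End}_\CC(T\Sigma))\to \Gamma(\overline{\Hom}_\CC(T\Sigma,u^*TM))$ be the (pointwise, hence at least $L^p$ on finite-dimensional spaces of smooth sections) linear map $L_u(y) = J\circ Tu\circ y$; one checks using $Tu\circ j = J\circ Tu$ that $L_u(y)$ is indeed complex-antilinear, so this makes sense. Then $\im D_\tT = \im\mathbf{D}_u + L_u(T_j\tT)$. Since a Teichm\"uller slice through $j$ is by definition characterized by the splitting $L^p(\overline{\End}_\CC(T\Sigma)) = \im\mathbf{D}_{(j,\Theta)}\oplus T_j\tT$, the claim will follow once we know that $\im\mathbf{D}_u + L_u(T_j\tT)$ is unchanged if $\tT$ is replaced by a second slice $\tT'$. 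By symmetry it suffices to show $L_u(T_j\tT')\subseteq \im\mathbf{D}_u + L_u(T_j\tT)$, and for this, given $y'\in T_j\tT'$, we decompose $y' = a + b$ with $a\in\im\mathbf{D}_{(j,\Theta)}$ and $b\in T_j\tT$ using $\tT$'s splitting; then it is enough to prove $L_u(a)\in\im\mathbf{D}_u$, i.e.\ to prove $L_u(\im\mathbf{D}_{(j,\Theta)})\subseteq\im\mathbf{D}_u$ (on smooth elements, which is all we shall need).

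\textbf{The main mechanism: reparametrization invariance.} The key point is that precomposition with a diffeomorphism preserves $\dbar_J^{-1}(0)$. Concretely, given a smooth vector field $X$ on $\Sigma$ with $X|_\Theta = 0$, let $\varphi_\tau\in\Diff_0(\Sigma,\Theta)$ be its flow; then for $\tau$ small the path $\tau\mapsto(\varphi_\tau^*j,\,u\circ\varphi_\tau)$ consists of pairs with $\dbar_J(\varphi_\tau^*j,u\circ\varphi_\tau)=0$, since $u\circ\varphi_\tau$ is $J$-holomorphic with respect to $\varphi_\tau^*j$ exactly when $u$ is $J$-holomorphic. Differentiating at $\tau=0$, using \eqref{eqn:tangentOrbit} for the first component (choosing a symmetric complex connection, which exists since $j$ is integrable, so that $\mathbf{D}_{(j,\Theta)}X = \nabla X + j\circ\nabla X\circ j$) and the chain rule for the second, we get that $\bigl(j\circ\mathbf{D}_{(j,\Theta)}X,\ Tu(X)\bigr)$ lies in the kernel of the linearization (extended to let the complex structure vary over all of $\jJ(\Sigma)$, which is legitimate since the formula $(y,\eta)\mapsto L_u(y)+\mathbf{D}_u\eta$ makes sense for arbitrary $y$). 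Hence $L_u\!\left(j\circ\mathbf{D}_{(j,\Theta)}X\right) = -\mathbf{D}_u\bigl(Tu(X)\bigr)\in\im\mathbf{D}_u$. Since $\mathbf{D}_{(j,\Theta)}$ is complex-linear, $j\circ\mathbf{D}_{(j,\Theta)}X = \mathbf{D}_{(j,\Theta)}(jX)$, and as $X$ ranges over smooth vector fields vanishing at $\Theta$ so does $jX$; thus every smooth element of $\im\mathbf{D}_{(j,\Theta)}$ is of this form, and we conclude $L_u(a)\in\im\mathbf{D}_u$. Plugging back in, $L_u(y') = L_u(a)+L_u(b)\in\im\mathbf{D}_u + L_u(T_j\tT)$, which finishes the argument.

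\textbf{Where care is needed.} The only delicate point — and the one I would expect to occupy most of the write-up — is regularity, since the product $J\circ Tu\circ y$ with $Tu\in L^p$ and $y\in L^p$ is a priori only $L^{p/2}$, so $L_u$ is \emph{not} defined on all of $L^p(\overline{\End}_\CC(T\Sigma))$. This is why one must stay inside finite-dimensional spaces of smooth sections: $T_j\tT$ and $T_j\tT'$ consist of smooth sections by the definition of a slice, so $L_u$ is genuinely defined there; and when $y'\in T_j\tT'$ is decomposed as $a+b$, the component $a=y'-b$ is smooth, so although $a$ lies in $\im\mathbf{D}_{(j,\Theta)} = \mathbf{D}_{(j,\Theta)}\bigl(W^{1,p}_\Theta(T\Sigma)\bigr)$, elliptic regularity for Cauchy--Riemann type operators (Exercise~\ref{EX:smoothSections} together with Corollary~\ref{cor:weakRegularity}) lets us choose the preimage vector field $X$ to be \emph{smooth}, so the reparametrization argument above applies verbatim. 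One also uses here that $u$ itself is smooth, which holds because $J$ is smooth (Theorem~\ref{thm:regularity}). With these points in place the proof is just the two short computations above.
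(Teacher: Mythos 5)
Your proof is correct and follows essentially the same route as the paper's: both reduce the claim to showing $J\circ Tu\circ y\in\im\mathbf{D}_u$ for $y\in\im\mathbf{D}_{(j,\Theta)}$, which together with the splitting $L^p(\overline{\End}_\CC(T\Sigma))=\im\mathbf{D}_{(j,\Theta)}\oplus T_j\tT$ defining a slice gives the result. The only difference is that the paper invokes the intertwining relation $\mathbf{D}_u(Tu(X))=Tu(\mathbf{D}_{(j,\Theta)}X)$ (equation~\eqref{eqn:tangential}, left as an exercise) together with complex-linearity of $\mathbf{D}_{(j,\Theta)}$, whereas you derive the equivalent identity inline by differentiating the path $\tau\mapsto(\varphi_\tau^*j,\,u\circ\varphi_\tau)$ in $\dbar_J^{-1}(0)$ — which is precisely the argument the paper's exercise has in mind — and you handle the regularity of preimages under $\mathbf{D}_{(j,\Theta)}$ by elliptic regularity rather than by density of smooth sections.
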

\begin{proof}
Let $\mathbf{L} = D\dbar_J(j,u)$ as in \eqref{eqn:linearization2},
and note that $T_{j}\tT$ is a
subspace of $L^p(\overline{\End}_\CC(T\Sigma))$, so $\mathbf{L}$ can be
extended to
\begin{equation*}
\begin{split}
\overline{\mathbf{L}} : L^p(\overline{\End}_\CC(T\Sigma)) \oplus
T_u \bB &\to \eE_{(j,u)}, \\
(y,\eta) &\mapsto J \circ Tu \circ y + \mathbf{D}_u \eta.
\end{split}
\end{equation*}
We claim $\im\overline{\mathbf{L}} = \im\mathbf{L}$.  Indeed, note first
that if $\mathbf{D}_{(j,\Theta)}$ denotes the natural linear Cauchy-Riemann
operator on $(T\Sigma,j)$ and $y = \mathbf{D}_{(j,\Theta)} X \in
\im \mathbf{D}_{(j,\Theta)}$ for some $X \in W^{1,p}_\Theta(T\Sigma)$, then
$$
\overline{\mathbf{L}}(y,0) = J \circ Tu \circ y = Tu(jy) =
Tu(\mathbf{D}_{(j,\Theta)}(j X))
$$
since $u$ is $J$-holomorphic and $\mathbf{D}_{(j,\Theta)}$ is 
complex-linear.  Now the following relation isn't hard to
show: for any smooth vector field $X \in \Gamma(T\Sigma)$ vanishing
on~$\Theta$,
\begin{equation}
\label{eqn:tangential}
\mathbf{D}_u (Tu(X)) = Tu(\mathbf{D}_{(j,\Theta)} X).
\end{equation}
By the density of smooth sections, this extends to all
$X \in W^{1,p}_\Theta(T\Sigma)$, and we conclude
$$
\overline{\mathbf{L}}(y,0) \in \im \mathbf{D}_u
$$
whenever $y \in \im \mathbf{D}_{(j,\Theta)}$.  Since
$L^p(\overline{\End}_\CC(T\Sigma)) = \im \mathbf{D}_{(j,\Theta)} \oplus
T_j \tT$, it follows that $\mathbf{L}$ and $\overline{\mathbf{L}}$ have the
same image.
\end{proof}
\begin{exercise}
Prove the relation \eqref{eqn:tangential} for all smooth vector fields
$X \in \Gamma(T\Sigma)$.  (Compare the proof of Lemma~\ref{lemma:holX}.)
\end{exercise}

Since $T_j\tT$ is finite dimensional and $\mathbf{D}_u$ is Fredholm,
$D\dbar_J(j,u)$ is also Fredholm and has index
\begin{equation}
\label{eqn:dimension}
\begin{split}
\ind D\dbar_J(j,u) &= \dim \tT(\Sigma,\Theta) + \ind \mathbf{D}_u \\
&= \dim \Aut(\Sigma,j,\Theta) -\ind \mathbf{D}_{(j,\Theta)} +
\ind\mathbf{D}_u \\
&= \dim \Aut(\Sigma,j,\Theta) - (3\chi(\Sigma) - 2m) +
\left( n\chi(\Sigma) + 2 c_1(u^*TM) \right) \\
&= \dim \Aut(\Sigma,j,\Theta) + \virdim \Mod_{g,m}^A(J),
\end{split}
\end{equation}
where we've applied \eqref{eqn:TeichDimension}, the Riemann-Roch formula
and the definition of the virtual dimension.

\begin{lemma}
\label{lemma:invtSlice}
For every $j \in \jJ(\Sigma)$, one can choose a Teichm\"uller slice
$\tT$ through $j$ that is invariant under the action of
$\Aut(\Sigma,j,\Theta)$.
\end{lemma}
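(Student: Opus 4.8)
The plan is to build the slice by the standard ``exponentiation'' procedure of \eqref{eqn:Phi}, but starting from a complement of $\im\mathbf{D}_{(j,\Theta)}$ in $L^p(\overline{\End}_\CC(T\Sigma))$ that has been made invariant under $G:=\Aut(\Sigma,j,\Theta)$ by averaging over the group.

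First I would reduce to the case in which $G$ is compact. This holds automatically when $\Sigma$ has genus at least one or $\#\Theta\ge 3$, by Proposition~\ref{prop:compactAut} (and $G$ is then even finite in the stable case, by Corollary~\ref{cor:finite}); when $\Sigma$ has genus zero with $\#\Theta\ge 3$, $G$ is trivial by Exercise~\ref{EX:autGenus0}, so \emph{every} Teichm\"uller slice is vacuously $G$-invariant; and when $\Sigma$ has genus zero with $\#\Theta\le 2$, the operator $\mathbf{D}_{(j,\Theta)}$ is surjective, so $\im\mathbf{D}_{(j,\Theta)}$ is everything and the only Teichm\"uller slice through $j$ is the one-point ``slice'' $\{j\}$, again trivially invariant. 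Thus in all nontrivial cases $G$ is a compact Lie group fixing $j\in\jJ(\Sigma)$, and its pullback action on $\jJ(\Sigma)$ induces a (strongly continuous, uniformly bounded) linear action on $L^p(\overline{\End}_\CC(T\Sigma))$ and on $W^{1,p}_\Theta(T\Sigma)$.

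Next I would record that $\im\mathbf{D}_{(j,\Theta)}\subset L^p(\overline{\End}_\CC(T\Sigma))$ is a closed, finite-codimensional, $G$-invariant subspace. Closedness and finite codimension are the Fredholm property of $\mathbf{D}_{(j,\Theta)}$. Invariance holds because each $\varphi\in G$ is biholomorphic for $j$ and restricts to the identity on $\Theta$, hence $\varphi^{*}$ preserves $W^{1,p}_\Theta(T\Sigma)$ and intertwines the natural Cauchy-Riemann operator of $(T\Sigma,j)$, i.e.\ $\mathbf{D}_{(j,\Theta)}\circ\varphi^{*}=\varphi^{*}\circ\mathbf{D}_{(j,\Theta)}$; equivalently, $\im\mathbf{D}_{(j,\Theta)}$ is the tangent space at $j$ to the $\Diff_0(\Sigma,\Theta)$-orbit of $j$ (see \eqref{eqn:tangentOrbit}), and $G$ normalizes $\Diff_0(\Sigma,\Theta)$ while fixing $j$. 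I would then choose a bounded linear section $s_0$ of the quotient projection $q\colon L^p(\overline{\End}_\CC(T\Sigma))\to Q:=L^p(\overline{\End}_\CC(T\Sigma))/\im\mathbf{D}_{(j,\Theta)}$ whose image consists of smooth sections (possible by the $C^\infty$-density argument used just after Definition~\ref{defn:TeichSlice}), and average it: with $\varphi\in G$ acting on $L^p(\overline{\End}_\CC(T\Sigma))$ and on $Q$ by pullback under $\varphi^{-1}$ (so as to define a left action) and $d\mu$ the normalized Haar measure on $G$, set $s:=\int_G \varphi\cdot s_0\circ\varphi^{-1}\,d\mu(\varphi)$. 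One checks directly that $q\circ s=\mathrm{id}_Q$ and that $s$ is $G$-equivariant by invariance of $d\mu$; moreover $\im s$ still consists of smooth sections, since each $\varphi\in G$ is a smooth (indeed biholomorphic) diffeomorphism and the integrand $\varphi\mapsto\varphi\cdot s_0(\varphi^{-1}\cdot\,\bar v)$ depends continuously on $\varphi$ in the $C^\infty$-topology, so that the integral, taken in the finite-dimensional subspace $\im s$, lands in $C^\infty$. Then $C:=\im s$ is a $G$-invariant finite-dimensional complement of $\im\mathbf{D}_{(j,\Theta)}$ consisting of smooth sections.

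Finally I would define $\tT$ by exponentiating $C$ exactly as in \eqref{eqn:Phi}: for $y$ in a sufficiently small $G$-invariant ball $\oO$ about $0$ in $C$, put $j_y:=\bigl(\1+\tfrac12 j y\bigr)\,j\,\bigl(\1+\tfrac12 j y\bigr)^{-1}$. Since $\p_t j_{ty}|_{t=0}=y$, this is a Teichm\"uller slice through $j$ in the sense of Definition~\ref{defn:TeichSlice}, with $T_j\tT=C$. For $G$-invariance, a short computation using $\varphi^{*}j=j$ (equivalently $T\varphi\circ j=j\circ T\varphi$) and conjugating the defining formula through $T\varphi$ yields $\varphi^{*}j_y=j_{\varphi^{*}y}$ for every $\varphi\in G$; since $C$ and $\oO$ are $G$-invariant, $\varphi^{*}y\in\oO$ whenever $y\in\oO$, whence $\varphi^{*}\tT=\tT$. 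I expect the only genuinely delicate point to be the verification that averaging does not destroy smoothness of the complement --- which, as indicated, works because $G$ acts through biholomorphisms and the averaging effectively takes place in finite dimensions --- together with the mild bookkeeping needed to dispose of the non-compact (genus-zero, $\#\Theta\le2$) automorphism groups, where one simply takes $\tT=\{j\}$.
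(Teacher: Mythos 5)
Your proof is correct, and its skeleton matches the paper's: reduce to producing a $G$-invariant, finite-dimensional complement $C$ of $\im \mathbf{D}_{(j,\Theta)}$ consisting of smooth sections, then exponentiate via \eqref{eqn:Phi} and verify $j_{\varphi^*y} = \varphi^*j_y$. Where you genuinely diverge is in how the invariant complement is produced. The paper takes $C$ to be the $L^2$-orthogonal complement of $\im \mathbf{D}_{(j,\Theta)}$ with respect to a $G$-invariant, $j$-compatible Riemannian metric on $\Sigma$, built by interpolating the Poincar\'e metric on $\Sigma\setminus\Theta$ (all biholomorphisms are isometries of it) with flat metrics on $G$-invariant coordinate balls around the marked points (Exercise~\ref{EX:invtNbhd}); smoothness of $C$ then comes for free from elliptic regularity for weak solutions of the formal adjoint equation. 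You instead average a bounded linear splitting over Haar measure on the compact group $G$, which is softer and has the advantage of treating the torus on the same footing as the stable case --- the paper must dispose of $(T^2,\emptyset)$ separately via the explicit translation-invariant slice of \S\ref{subsec:torus}, since its metric construction relies on stability. The one loose spot in your write-up is the smoothness of $\im s$: saying the integral is ``taken in the finite-dimensional subspace $\im s$'' is circular, since $\im s$ is what you are constructing. The clean justification, which you essentially also give, is that for fixed $\bar v$ the integrand is a continuous map from the compact group $G$ into $C^k$ for every $k$ (because $G$ acts by smooth diffeomorphisms and $s_0$ takes values in a fixed finite-dimensional space of smooth sections), so the Bochner integral lies in every $C^k$ and coincides with the $L^p$-valued integral. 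With that reading, everything checks out.
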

\begin{proof}
In the case $(\Sigma,\Theta) = (T^2,\emptyset)$, one can assume after a
diffeomorphism that $j$ is translation invariant, and
$\tT$ can then be taken to be
the global Teichm\"uller slice defined in \S\ref{subsec:torus}, consisting
of all translation invariant complex structures compatible with the 
orientation.  In all other cases where $\tT(\Sigma,\Theta)$ is nontrivial,
$(\Sigma,\Theta)$ is stable, thus the group $G := \Aut(\Sigma,j,\Theta)$
is finite.  Using the construction of \eqref{eqn:Phi}, it suffices to
find a complement $C \subset L^p(\overline{\End}_\CC(T\Sigma))$ of
$\im \mathbf{D}_{(j,\Theta)}$ that is $G$-invariant, as one can then
compute that 
$$
j_{\varphi^*y} = \varphi^* j_y
$$ 
for any $\varphi \in \Aut(\Sigma,j,\Theta)$.
To start with, we observe that $\im \mathbf{D}_{(j,\Theta)}$ itself is
$G$-invariant, since $\varphi^*j = j$ also implies
$\mathbf{D}_{(j,\Theta)}(\varphi^*X) = \varphi^*(\mathbf{D}_{(j,\Theta)} X)$
for all $X \in W^{1,p}_\Theta(T\Sigma)$.
A $G$-invariant complement $C$ can then be defined as the 
$L^2$-orthogonal complement of $\im \mathbf{D}_{(j,\Theta)}$
with respect to any $G$-invariant $L^2$-inner product on 
the sections of $\overline{\End}_\CC(T\Sigma)$; such a complement
automatically contains only smooth sections due to linear regularity
for weak solutions (cf.~Corollary~\ref{cor:weakRegularity}).

Since $L^2$-inner products on $\Gamma(\overline{\End}_\CC(T\Sigma))$
arise naturally from $j$-invariant Riemannian metrics on $\Sigma$,
it suffices to find such a Riemannian metric~$g$ which is also $G$-invariant.
Recall from Corollary~\ref{cor:Poincare} that $\Sigma\setminus\Theta$
admits a complete $j$-invariant Riemannian metric $g_P$ of constant
curvature~$-1$, the \emph{Poincar\'e metric}, and it has
the convenient property that the biholomorphic transformations on
$\Sigma\setminus\Theta$ are precisely the isometries of~$g_P$.  This is
not the desired metric since it does not extend over the marked points,
but we can fix this as follows: by Exercise~\ref{EX:invtNbhd} below,
each $z \in \Theta$ admits a $G$-invariant neighborhood $\uU_z$ which can
be biholomorphically identified with the unit ball $B \subset \CC$ such that
$G$ acts by rational rotations.  Thus on $\uU_z$, the Euclidean metric in these
coordinates is also $G$-invariant, and we can interpolate this
with $g_P$ near each $z \in \Theta$ to define the desired $G$-invariant
metric on~$\Sigma$.
\end{proof}

\begin{exercise}
\label{EX:invtNbhd}
Suppose $(\Sigma,j)$ is a Riemann surface and $G$ is a finite group of
biholomorphic maps on $(\Sigma,j)$ which all fix the point $z \in \Sigma$.
Show that $z$ has a $G$-invariant neighborhood $\uU_z$ with a biholomorphic
map $\psi : (\uU_z,j) \to (B,i)$ such that for every $\varphi \in G$,
$\psi \circ \varphi \circ \psi^{-1}$ is a rational rotation.
\end{exercise}

A quick remark about the statement of the next theorem: if
$(j,u) \in \dbar_J^{-1}(0)$ and $\varphi \in \Aut(\Sigma,j,\Theta)$, then
the natural action
$$
\varphi \cdot (j,u) = (j,u\circ \varphi)
$$
preserves $\dbar_J^{-1}(0)$.  Linearizing this action at
$\Id \in \Aut(\Sigma,j,\Theta)$, we obtain a natural map of the
Lie algebra $\aut(\Sigma,j,\Theta)$ to $\ker D\dbar_J(j,u)$ of the form
$$
\aut(\Sigma,j,\Theta) \to \ker D\dbar_J(j,u) : X \mapsto (0,Tu(X)),
$$
and this is an inclusion if $u$ is not constant.  Thus in the following,
we can regard $\aut(\Sigma,j,\Theta)$ as a subspace of
$\ker D\dbar_J(j,u)$.

\begin{thm}
\label{thm:IFT2}
The open subset 
$$
\Mod_{g,m}^{A,\reg}(J) :=  \{ u \in \Mod_{g,m}^A(J)\ |\ 
\text{$u$ is Fredholm regular and not constant} \}
$$ 
naturally admits the structure of a smooth finite-dimensional orbifold with
$$
\dim \Mod_{g,m}^{A,\reg}(J) = \virdim \Mod_{g,m}^A(J).
$$
Its isotropy group at any $(\Sigma,j,\Theta,u) \in \Mod_{g,m}^{A,\reg}(J)$ is
isomorphic to $\Aut(u)$, so in particular, it is a manifold near $u$ if
$\Aut(u)$ is trivial.  There is then also a natural isomorphism
$$
T_u \Mod_{g,m}^A(J) = \ker D\dbar_J(j,u) \big/ \aut(\Sigma,j,\Theta).
$$
Moreover, the evaluation map
$\ev : \Mod_{g,m}^{A,\reg}(J) \to M^m$ is smooth.
\end{thm}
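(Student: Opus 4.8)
The plan is to prove Theorem~\ref{thm:IFT2} by applying the infinite-dimensional implicit function theorem (Theorem~\ref{thm:IFT}) to the section $\dbar_J$ of $\eE^{0,p} \to \tT \times \bB^{1,p}$ at a Fredholm regular curve $(\Sigma,j,\Theta,u)$, then quotienting by the residual symmetry to obtain the orbifold structure on the moduli space itself, and finally checking that the evaluation map descends smoothly. First I would observe that $\dbar_J$ is a smooth section of a smooth Banach space bundle (this uses the smoothness of $J$ and the machinery of \S\ref{sec:calculus}, exactly as in Chapter~\ref{chapter:Fredholm}), so that near $(j,u)$ its zero set is governed by the linearization $D\dbar_J(j,u)$ of \eqref{eqn:linearization2}. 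By hypothesis this operator is surjective, and by the index computation \eqref{eqn:dimension} combined with the Fredholm property of $\mathbf{D}_u$ established in Theorem~\ref{thm:FredholmProperty}, it has finite-dimensional kernel; hence it automatically admits a bounded right inverse (a surjective Fredholm operator splits). The implicit function theorem then gives $\dbar_J^{-1}(0)$ the structure of a smooth finite-dimensional manifold near $(j,u)$, of dimension $\ind D\dbar_J(j,u) = \dim\Aut(\Sigma,j,\Theta) + \virdim\Mod_{g,m}^A(J)$, with tangent space $\ker D\dbar_J(j,u)$.

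Next I would pass from the local solution manifold $\dbar_J^{-1}(0) \subset \tT \times \bB^{1,p}$ to the moduli space. Here I use Lemma~\ref{lemma:invtSlice} to choose the Teichm\"uller slice $\tT$ to be invariant under the finite group $G := \Aut(\Sigma,j,\Theta)$ (or, in the torus case, the explicit translation-invariant slice); then $G$ acts smoothly on $\dbar_J^{-1}(0)$ by $\varphi \cdot (j',u') = (\varphi^*j', u'\circ\varphi)$, and the quotient by the subgroup $\Aut(u) \subset G$ accounts precisely for the identification $(j',u') \sim (\varphi^*j', u'\circ\varphi)$ inherent in the definition of $\Mod_{g,m}^A(J)$. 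One must verify that near $(j,u)$ every equivalence between elements of $\dbar_J^{-1}(0)$ arising in the moduli space is realized by an element of $G$ close to the identity---this is the analogue of the injectivity argument in the proof of Theorem~\ref{thm:Teichmueller}, and it rests on the properness of the $\Diff_0(\Sigma,\Theta)$-action (Lemma~\ref{lemma:bubbling}) together with the local invertibility of the slice construction. Since $\Aut(u)$ is finite in the stable case and the subgroup of $\Aut(u)$ acting trivially is trivial (an automorphism fixing $u$ and acting trivially near an injective point is the identity by unique continuation), one gets an effective action of a finite group, yielding a smooth orbifold chart with isotropy $\Aut(u)$; in the non-stable cases where $\Aut(\Sigma,j,\Theta)$ is positive-dimensional, one instead quotients out the $\aut(\Sigma,j,\Theta)$-directions inside $\ker D\dbar_J(j,u)$, which are the reparametrization directions spanned by $(0,Tu(X))$, accounting for the subtraction $\dim\Aut(\Sigma,j,\Theta)$ in \eqref{eqn:dimension} and leaving a manifold (or orbifold) of dimension exactly $\virdim\Mod_{g,m}^A(J)$. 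I would also note that the manifold structure is independent of $p$ by elliptic regularity (Theorem~\ref{thm:regularity}), since all solutions are smooth and the $C^\infty$-topology agrees with the $W^{1,p}$-topology.

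For the evaluation map: on $\dbar_J^{-1}(0) \subset \tT \times \bB^{1,p}$ the map $(j',u') \mapsto (u'(z_1),\ldots,u'(z_m))$ is smooth by Exercise~\ref{EX:evaluation}, which says the pointwise evaluation $W^{1,p}(\Sigma,M) \to M$ is smooth (using $W^{1,p} \hookrightarrow C^0$ and the smoothness of $\exp$). This map is $G$-equivariant in the appropriate sense---relabeling by $\varphi \in G$ sends the marked points to themselves since $\varphi|_\Theta = \Id$---so it descends to a smooth map on the quotient orbifold chart, and these local descriptions patch together (by the independence of the image of $D\dbar_J(j,u)$ from the slice, already proved) to give a globally smooth $\ev : \Mod_{g,m}^{A,\reg}(J) \to M^m$.

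The main obstacle I anticipate is not the implicit function theorem step, which is routine given Fredholm regularity, but rather the careful verification that the local chart $\dbar_J^{-1}(0)/\Aut(u)$ genuinely captures a \emph{neighborhood} in $\Mod_{g,m}^A(J)$ with the correct topology, and that the transition maps between charts built from different Teichm\"uller slices (and different base curves) are smooth. This requires combining the properness from Lemma~\ref{lemma:bubbling}, the uniqueness of complex structures up to $\Diff_0$ on each equivalence class, and an argument---as in the stable-case proof of Theorem~\ref{thm:Teichmueller}---that any two nearby solutions representing the same moduli point differ by a diffeomorphism close to one in the image of $G$ under $\Diff_+(\Sigma,\Theta) \to \Diff_+(\Sigma,\Theta)$. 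Handling the non-stable cases ($g=0$ with $m\le 2$, and $g=1$, $m=0$) uniformly with the stable case, where $\aut(\Sigma,j,\Theta)$ must be quotiented out of the kernel, also demands some care, though the explicit descriptions of these automorphism groups from \S\ref{sec:Teichmueller} make it tractable.
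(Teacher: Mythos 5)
Your proposal follows essentially the same route as the paper's proof: the implicit function theorem applied to $\dbar_J$ over $\tT\times\bB^{1,p}$ at a regular curve, an $\Aut(\Sigma,j,\Theta)$-invariant Teichm\"uller slice so that the quotient map to $\Mod_{g,m}^A(J)$ is a local homeomorphism (via Lemmas~\ref{lemma:freeAction} and~\ref{lemma:bubbling}), smoothness of transitions via the $F^{-1}\circ F'$ trick, and separate treatment of the non-stable cases. The only slight imprecision is that one quotients $\dbar_J^{-1}(0)$ by the full group $\Aut(\Sigma,j,\Theta)$ (whose stabilizer at $(j,u)$ is $\Aut(u)$), not by the subgroup $\Aut(u)$ itself, but your concluding description of the orbifold chart shows you have the right picture.
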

\begin{proof}
We shall prove this in the case where $2g + 3 \ge 0$ and give some hints how to
adapt the argument for the non-stable cases, 
leaving the details as an exercise.

Suppose $(\Sigma,j_0,\Theta,u_0) \in \Mod_{g,m}^A(J)$ is Fredholm regular
and $\tT$ is a Teichm\"uller slice through $j_0$ which is invariant under
the action of $\Aut(\Sigma,j_0,\Theta)$, as supplied by
Lemma~\ref{lemma:invtSlice}.  Then constructing the smooth section
$\dbar_J : \tT \times \bB^{1,p} \to \eE^{0,p}$ as described above, the
implicit function theorem gives
$$
\dbar_J^{-1}(0) \subset \tT \times \bB^{1,p}
$$
near $(j_0,u_0)$ the structure of a smooth submanifold with dimension
$\ind D\dbar_J(j_0,u_0)$.  The latter is equal to
$\virdim \Mod_{g,m}^A(J)$ by \eqref{eqn:dimension},
since $\Aut(\Sigma,j_0,\Theta)$ is in this case discrete.  Observe that
if $z_1,\ldots,z_m \in \Sigma$ denote the marked points $\Theta$, then the evaluation map
$$
\ev : \dbar_J^{-1}(0) \to M^m : (j,u) \mapsto (u(z_1),\ldots,u(z_m))
$$
is smooth as a consequence of the fact that for each $z_i$, the map
$\bB^{1,p} \to M : u \mapsto u(z_i)$ is smooth by
Exercise~\ref{EX:evaluation}.  

Since
$\Aut(\Sigma,j_0,\Theta)$ preserves $\tT$ and acts by biholomorphic maps,
it also acts on $\dbar_J^{-1}(0)$ by
$$
\Aut(\Sigma,j_0,\Theta) \times \dbar_J^{-1}(0) \to \dbar_J^{-1}(0) :
(\varphi , (j,u)) \mapsto (\varphi^*j , u \circ \varphi).
$$
Clearly any two pairs related by this action correspond to equivalent
curves in the moduli space, and we claim in fact that the resulting map
\begin{equation}
\label{eqn:homeo2}
\dbar_J^{-1}(0) \big/ \Aut(\Sigma,j_0,\Theta) \to \Mod_{g,m}^A(J)
\end{equation}
is a local 
homeomorphism onto an open neighborhood of $(\Sigma,j_0,\Theta,u_0)$.
The proof of this uses the fact that $\Diff_0(\Sigma,\Theta)$ acts
freely and properly on $\jJ(\Sigma)$.  

Indeed, to see that \eqref{eqn:homeo2} is surjective onto a neighborhood, 
suppose we have a sequence
$(\Sigma,j_k,\Theta,u_k) \in \Mod^A_{g,m}(J)$ with $j_k \to j_0$ and
$u_k \to u_0$.  Then $[j_k] \to [j_0]$ in $\tT(\Sigma,\Theta)$, so
for sufficiently large~$k$ there are unique diffeomorphisms
$\varphi_k \in \Diff_0(\Sigma,\Theta)$ such that $\varphi_k^*j_k$ is a
sequence in $\tT$ approaching $j_0$.  Now by the properness of the action
(Lemma~\ref{lemma:bubbling}),
a subsequence of $\varphi_k$ converges to an element of
$\Aut(\Sigma,j_0,\Theta)$ which is homotopic to the identity, and
therefore \emph{is} the identity since the action is also free
(Lemma~\ref{lemma:freeAction}).  It
follows that $\varphi_k \to \Id$, thus $u_k \circ \varphi_k \to u_0$ and
for large~$k$, $(\varphi_k^*j_k,u_k \circ \varphi_k)$ lies in an arbitrarily
small neighborhood of $(j_0,u_0)$ in $\dbar_J^{-1}(0)$.

We show now that \eqref{eqn:homeo2} is injective on a sufficiently
small neighborhood of $(j_0,u_0)$.
From Exercise~\ref{EX:stabilizer}, $\Aut(\Sigma,j_0,\Theta)$ is
the stabilizer of $[j_0]$ under the action of $M(\Sigma,\Theta)$ on
$\tT(\Sigma,\Theta)$, thus the natural projection
$$
\tT \big/ \Aut(\Sigma,j_0,\Theta) \to \Mod(\Sigma,\Theta) = \jJ(\Sigma) \big/
\Diff_+(\Sigma,\Theta)
$$
is a local homeomorphism near~$[j_0]$.  Then for any two elements
$(j,u)$ and $(j',u')$ of $\dbar_J^{-1}(0)$ sufficiently close to
$(j_0,u_0)$ that define equivalent holomorphic curves,
$[j] = [j'] \in \Mod(\Sigma,\Theta)$ implies that $j$ and $j'$ are
related by the action of $\Aut(\Sigma,j_0,\Theta)$, and this proves the claim.

We've shown that in a neighborhood of any regular
$(\Sigma,j_0,\Theta,u_0) \in \Mod_{g,m}^A(J)$, the moduli space admits 
an orbifold chart of the correct
dimension.  Its isotropy group at this point is the stabilizer of
$(j_0,u_0)$ under the action of $\Aut(\Sigma,j_0,\Theta)$ on
$\dbar_J^{-1}(0)$, and this is precisely $\Aut(u_0)$.  In particular,
$\Mod_{g,m}^A(J)$ is a manifold near $u_0$ if $\Aut(u_0)$ is trivial,
and the implicit function theorem identifies its tangent space at this point
with $\ker D\dbar_J(j_0,u_0)$.

It remains to show that the transition maps resulting from this construction
are smooth: the zero sets $\dbar_J^{-1}(0)$ inherit natural smooth structures
as submanifolds of $\tT \times \bB^{1,p}$, but we don't yet know that these
smooth structures are independent of all choices.  Put another away, we
need to show that for any two equivalent
curves $(\Sigma,j_0,\Theta,u_0)$ and $(\Sigma',j_0',\Theta',u_0')$ with
corresponding Teichm\"uller slices $\tT$, $\tT'$ and zero sets
$\dbar_J^{-1}(0)$, $(\dbar_J')^{-1}(0)$, there is a smooth local diffeomorphism
$$
\dbar_J^{-1}(0) \to (\dbar_J')^{-1}(0)
$$
that maps $(j_0,u_0) \mapsto (j_0',u_0')$ and maps each
$(j,u) \in \dbar_J^{-1}(0)$ smoothly to an equivalent curve
$(j',u') \in (\dbar_J')^{-1}(0)$.  Let us just consider the case where
$j_0 = j_0'$ and $u_0 = u_0'$ but the Teichm\"uller slices differ, as the
rest is an easy exercise.  For this, we can make use of the work we already
did in constructing the smooth structure of Teichm\"uller space: if
$\tT$ and $\tT'$ are two Teichm\"uller slices through $j_0$, then there is
a diffeomorphism
$$
\tT \to \tT' : j \mapsto j'
$$
such that $j' = \varphi^*_j j$ for some $\varphi_j \in \Diff_0(\Sigma,\Theta)$.
In fact, the diffeomorphism $\varphi_j$ depends smoothly
on~$j$, as we already found a formula for it in the proof of
Theorem~\ref{thm:Teichmueller}:
$$
(\varphi_j,j) = F^{-1} \circ F'(\Id,j'),
$$
where 
\begin{equation*}
\begin{split}
F : \dD^{1,p}_\Theta \times \tT &\to \jJ^{0,p}(\Sigma) : (\varphi,j) \mapsto
\varphi^*j, \\
F' : \dD^{1,p}_\Theta \times \tT' &\to \jJ^{0,p}(\Sigma) : (\varphi,j) \mapsto
\varphi^*j
\end{split}
\end{equation*}
are both smooth local diffeomorphisms near $(\Id,j_0)$.
From this formula it is clear that $\tT' \to \dD^{1,p}_\Theta : 
j' \mapsto \varphi_j$ is a smooth map, thus in light of the diffeomorphism
between $\tT$ and $\tT'$, so is $\tT \to \dD^{1,p}_\Theta : j \mapsto
\varphi_j$.  Moreover, since each $\varphi_j$ is a holomorphic map 
$(\Sigma,j') \to (\Sigma,j)$ with both $j$ and $j'$ smooth, elliptic
regularity implies that $\varphi_j$ is also smooth.  We can now define
a map
$$
\dbar^{-1}(0) \to \jJ^{0,p}(\Sigma) \times \bB^{1,p} : (j,u) \mapsto
(\varphi_j^*j, u \circ \varphi_j),
$$
whose image is clearly in $(\dbar_J')^{-1}(0)$ and thus consists only of
smooth pairs $(j',u')$ which are equivalent to $(j,u)$ in the moduli space.
Moreover, this map is smooth since $u$ is always smooth, again by elliptic
regularity.  This is the desired local diffeomorphism.

The proof is now complete for the case where $(\Sigma,\Theta)$ is stable.
Non-stable cases come in two flavors: the simpler one is the case
$g=0$, for then Teichm\"uller space is
trivial and we can fix $j=i$ on $S^2$.  Several details then simplify,
except that now $\Aut(S^2,i,\Theta)$ has positive 
dimension---nonetheless it is straightforward 
to see that \eqref{eqn:homeo2}
is still a local homeomorphism, so the only real difference in the end is the
computation of the dimension,
$$
\dim \Mod_{g,m}^A(J) = \ind D\dbar_J(j,u) - \dim \Aut(\Sigma,j,\Theta) =
\virdim \Mod_{g,m}^A(J),
$$
due to \eqref{eqn:dimension}.  In the case of $\Mod_{1,0}^A(J)$,
for which both Teichm\"uller space and the
automorphism groups have positive dimension, we can use the specific global
Teichm\"uller slice of \S\ref{subsec:torus}, and combine ideas from the
stable and genus~$0$ cases to obtain the same result and
same dimension formula in general.
\end{proof}

\begin{exercise}
Work out the details of the proof of Theorem~\ref{thm:IFT2} in the
non-stable cases.  \textsl{(For a more detailed exposition of this in a more
general context, see \cite{Wendl:automatic}*{\S 3.2}, the proof of
Theorem~0.)}
\end{exercise}

The implicit function theorem gives more than just a manifold or orbifold
structure for $\Mod_{g,m}^A(J)$: it can also be used for perturbation
arguments, in which the existence of curves in $\Mod_{g,m}^A(J)$ gives
rise to curves in $\Mod_{g,m}^A(J')$ as well, for any $J'$ sufficiently
close to~$J$.  We stated one result along these lines already,
Theorem~\ref{thm:genericHomotopy}.  For another example, assume
$$
\{ J_s \}_{s \in (-1,1)}
$$
is a smooth $1$-parameter family of almost complex structures on~$M$.

\begin{thm}
\label{thm:IFTparametrized}
Suppose $(\Sigma,j_0,\Theta,u_0) \in \Mod_{g,0}^A(J_0)$ is simple and 
Fredholm regular with $\ind(u_0) = 0$.
Then for sufficiently small $\epsilon > 0$,
$j_0$ and $u_0$ extend to a smooth family
of complex structures $j_s$ and maps $u_s : \Sigma \to M$ for
$s \in (-\epsilon,\epsilon)$ such that
$$
(\Sigma,j_s,\Theta,u_s) \in \Mod_{g,0}^A(J_s).
$$
Moreover this family is unique, in the sense that 
for any sequence $s_k \to 0$ and 
$(\Sigma,j_k',\Theta,u_k') \in \Mod_{g,0}^A(J_{s_k})$ with
$j_k' \to j_0$ and $u_k' \to u_0$ in the $C^\infty$-topology,
we have
$$
(\Sigma,j_k',\Theta,u_k') \sim (\Sigma,j_{s_k},\Theta,u_{s_k})
$$
for sufficiently large~$k$.
\end{thm}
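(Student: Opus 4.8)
The plan is to express all $J_s$-holomorphic curves near $u_0$ as the zero set of a single smooth Banach-space-bundle section over a one-parameter extension of the usual configuration space, and then to feed the Fredholm regularity hypothesis into the implicit function theorem.

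First I would fix, via Lemma~\ref{lemma:invtSlice}, a Teichm\"uller slice $\tT \subset \jJ(\Sigma)$ through $j_0$ that is invariant under $\Aut(\Sigma,j_0,\Theta)$, set $\bB^{1,p} = W^{1,p}(\Sigma,M)$ for some $p>2$, and define on a neighborhood of $(0,j_0,u_0)$ in $(-1,1) \times \tT \times \bB^{1,p}$ the section
$$
F(s,j,u) = \Pi_0\bigl( Tu + J_s \circ Tu \circ j \bigr)
$$
of the Banach space bundle $\eE^{0,p}$ with fibre $L^p(\overline{\Hom}_\CC((T\Sigma,j),(u^*TM,J_0)))$, where $\Pi_0$ is the fibrewise projection onto the $J_0$-antilinear part along the $J_0$-linear part. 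A small technical point here is that the natural bundle of $J_s$-antilinear maps varies with $s$; but for $s$ small it is $C^0$-close to the $J_0$-version, so $\Pi_0$ restricts to it as an isomorphism, and hence $F(s,j,u)=0$ exactly when $u$ is $(j,J_s)$-holomorphic. By construction $F(0,\cdot,\cdot)=\dbar_{J_0}$, and its $(j,u)$-partial linearization at $(0,j_0,u_0)$ is $D\dbar_{J_0}(j_0,u_0)$, which is surjective by Fredholm regularity.

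The crucial bookkeeping step is the linearization of $F$ itself. Since $u_0$ is simple it is somewhere injective, so $\Aut(u_0)$ is trivial (Exercise~\ref{EX:trivialAut}); combined with $\ind(u_0)=\virdim\Mod_{g,0}^A(J_0)=0$, the index formula \eqref{eqn:dimension} shows that $\ker D\dbar_{J_0}(j_0,u_0)$ has dimension $\dim\Aut(\Sigma,j_0,\Theta)$ and consists precisely of the infinitesimal reparametrizations $(0,Tu_0(X))$, $X\in\aut(\Sigma,j_0,\Theta)$. Using surjectivity of $D\dbar_{J_0}(j_0,u_0)$ to absorb the derivative $\partial_s F(0,j_0,u_0)$, one gets that $DF(0,j_0,u_0)$ is surjective with kernel $\RR\oplus\aut(\Sigma,j_0,\Theta)$, on which the projection to the $s$-variable is nonzero. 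By the implicit function theorem (Theorem~\ref{thm:IFT}), $F^{-1}(0)$ is then, near $(0,j_0,u_0)$, a smooth manifold of dimension $1+\dim\Aut(\Sigma,j_0,\Theta)$ on which $\pi(s,j,u)=s$ is a submersion; a smooth local section $s\mapsto(s,j_s,u_s)$ of $\pi$ through $(0,j_0,u_0)$ yields the asserted family, and joint smoothness of $(s,z)\mapsto u_s(z)$ follows from elliptic regularity (Theorem~\ref{thm:regularity}) applied to the smooth family $\{J_s\}$. In the stable case there is nothing further: there $\aut(\Sigma,j_0,\Theta)=0$, so $D_{(j,u)}F(0,j_0,u_0)$ is already an isomorphism and $F^{-1}(0)$ is literally the graph of $s\mapsto(j_s,u_s)$.

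For the uniqueness clause I would argue as in the proof of Theorem~\ref{thm:IFT2}: given $(\Sigma,j_k',\Theta,u_k')\in\Mod_{g,0}^A(J_{s_k})$ with $s_k\to0$ and $j_k'\to j_0$, $u_k'\to u_0$ in $\Cinftyloc$, use that $[j_k']\to[j_0]$ in Teichm\"uller space together with properness and freeness of the $\Diff_0(\Sigma,\Theta)$-action (Lemmas~\ref{lemma:bubbling} and~\ref{lemma:freeAction}) to reparametrize so that $j_k'\in\tT$ and $\varphi_k\to\Id$; then $(s_k,j_k',u_k')\in F^{-1}(0)$ lies near $(0,j_0,u_0)$. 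Shrinking the neighborhood so that $\pi$ is a trivial fibration with connected fibre, and using that nearby curves in $F^{-1}(0)$ remain somewhere injective (so the $\Aut(\Sigma,j_0,\Theta)$-action on $F^{-1}(0)$ is free and proper and preserves fibres of $\pi$), one sees each fibre of $\pi$ is a single $\Aut(\Sigma,j_0,\Theta)$-orbit, whence $(s_k,j_k',u_k')$ lies in the orbit of $(s_k,j_{s_k},u_{s_k})$, i.e.\ the two curves are equivalent for large $k$. I expect the main obstacle to be precisely this disentangling of the reparametrization symmetry in the non-stable cases $g=0$ and $(g,m)=(1,0)$, where $D\dbar_{J_0}(j_0,u_0)$ is surjective but not injective; the rest is routine once the parametrized section $F$ is correctly set up.
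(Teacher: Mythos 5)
Your proposal is correct and takes essentially the same route as the paper: one extends the Cauchy--Riemann section over the parameter $s$ (the paper works with the bundle whose fibre is $L^p(\overline{\Hom}_\CC((T\Sigma,j),(u^*TM,J_s)))$, varying with $s$, rather than projecting onto the fixed $J_0$-antilinear part as you do, but this is only a cosmetic difference), observes that surjectivity of $D\dbar_{J_0}(j_0,u_0)$ forces surjectivity of the parametrized linearization with index raised by one, and applies the implicit function theorem, with the reparametrization symmetry handled exactly as in the proof of Theorem~\ref{thm:IFT2}.
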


The actual meaning of this theorem is that 
the \emph{parametrized} moduli space of holomorphic curves,
$$
\Mod_{g,0}^A(\{ J_s \}) = \{ (s,u) \ |\ 
s \in (-1,1), \ u \in \Mod_{g,0}^A(J_s) \}
$$
is a smooth $1$-dimensional manifold near $(0,u_0)$, and the latter is
also a regular point of the projection $\Mod_{g,0}^A(\{J_s\}) \to (-1,1) :
(s,u) \mapsto s$.  This is practically
automatic, as one only has to generalize the previous setup a bit:
given a Teichm\"uller slice $\tT$ through $j_0$,
redefine $\eE^{0,p}$ to be a Banach space bundle over
$\tT \times \bB^{1,p} \times (-1,1)$, with fiber
$$
\eE^{0,p}_{(j,u,s)} = L^p(\overline{\Hom}_\CC((T\Sigma,j),(u^*TM,J_s))).
$$
Then we again have a natural smooth section
$$
\dbar : \tT \times \bB^{1,p} \times (-1,1) \to \eE^{0,p} :
(j,u,s) \mapsto Tu + J_s \circ Tu \circ j,
$$
whose zero set near $(j_0,u_0,0)$ can---at least in the absence of 
automorphisms---be identified with $\Mod_{g,0}^A(\{ J_s \})$.
Thus one only has to understand the linearization of $\dbar$ at
$(j_0,u_0,0)$, which is essentially the usual $D\dbar_{J_0}(j_0,u_0)$ with one
extra dimension attached to the domain, raising its index to~$1$.  
If $u_0$ is regular then
$D\dbar_{J_0}(j_0,u_0)$ is surjective and it follows immediately that
$D\dbar(j_0,u_0,0)$ is surjective as well, so the result follows 
as usual from the implicit function theorem.  

A result
of this kind can be stated more generally for any Fredholm regular
curve with nonnegative index, and for any parametrized family of almost
complex structures.  For this reason, regular curves are also
often referred to as \defin{unobstructed}.

\section{Transversality for generic~$J$}
\label{sec:transversality}

In the previous section we proved that moduli spaces of $J$-holomorphic curves
are smooth wherever they are Fredholm regular.  Since Fredholm
regularity is in general a very difficult condition to check, in this section 
we will examine ways of ensuring regularity via generic perturbations of~$J$,
leading in particular to a proof of Theorem~\ref{thm:localStructure}.

We assume throughout this section that $(M,\omega)$ is a $2n$-dimensional
symplectic manifold without boundary, and we focus on $\omega$-compatible almost 
complex structures,
though all of our results have easily derived analogues for $\omega$-tame
or general almost complex structures
(cf.~Remark~\ref{remark:symplecticGeneric}).  We will not assume that $M$ is
compact unless specifically stated, but will fix an open subset
$\uU \subset M$ with compact closure.  
Recall from \S\ref{sec:moduliSpace} the definition of the space
$\jJ(M,\omega\,;\,\uU,\Jfix)$ of compatible almost complex structures that
are fixed outside of~$\uU$; here $\Jfix \in \jJ(M,\omega)$ is an arbitrary
choice that we assume fixed in advance (which is irrelevant if $\uU = M$).
Fix also a pair of integers $g , m \ge 0$ and a homology class $A \in H_2(M)$.

\begin{defn}
\label{defn:Jreg}
Let
$$
\jJ_\reg(M,\omega\,;\,\uU,\Jfix\,;\,g,m,A) \subset \jJ(M,\omega\,;\,\uU,\Jfix)
$$
denote the set of all $J \in \jJ(M,\omega\,;\,\uU,\Jfix)$ such that
every curve $u \in \Mod_{g,m}^A(J)$ with an injective point mapped
into~$\uU$ is Fredholm regular.  
\end{defn}

For applications involving the evaluation map $\ev : \Mod_{g,m}^A(J) \to M^m$,
it will be useful to generalize this definition given the additional data
of a smooth submanifold $Z \subset M^m$ without boundary.  The reader who
is only interested in the proof of Theorem~\ref{thm:localStructure} and not
the further applications in \S\ref{sec:evaluation} is free in the following
to ignore all references to $Z$, or assume $M=Z$, in which case all conditions
involving $Z$ will be vacuous.

\begin{defn}
\label{defn:JregZ}
Given the same data as in Definition~\ref{defn:Jreg} plus a smooth
submanifold $Z \subset M^m$ without boundary, let
$$
\jJ_\reg^Z(M,\omega\,;\,\uU,\Jfix\,;\,g,m,A) \subset \jJ(M,\omega\,;\,\uU,\Jfix)
$$
denote the set of all $J \in \jJ(M,\omega\,;\,\uU,\Jfix)$ such that
every curve $u \in \Mod_{g,m}^A(J)$ that satisfies $\ev(u) \in Z$ and maps
an injective point mapped into~$\uU$ is Fredholm regular, and the
intersection of $\ev : \Mod_{g,m}^A(J) \to M^m$ with $Z$ at~$u$ is transverse.
\end{defn}

Here is the main result of this section.

\begin{thm}
\label{thm:generic}
Given $(M,\omega)$ with the data $\uU$, $\Jfix$, $g$, $m$, $A$ and $Z$ as
described above,
$\jJ_\reg^Z(M,\omega\,;\,\uU,\Jfix\,;\,g,m,A)$ is a Baire subset of 
$\jJ(M,\omega\,;\,\uU,\Jfix)$.
\end{thm}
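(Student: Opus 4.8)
The plan is to run the classical Sard--Smale argument on a \emph{universal moduli space}, following the strategy of \cite{McDuffSalamon:Jhol} adapted to the setup of \S\ref{sec:IFT}. Since $\jJ(M,\omega\,;\,\uU,\Jfix)$ is only a Fr\'echet manifold, I would first replace it by a Banach manifold of almost complex structures that still embeds continuously into $C^\infty$: concretely one can use Floer's $C_\epsilon$ space of compatible perturbations supported in $\uU$ (the construction hinted at at the end of \S\ref{sec:calculus}), whose continuous inclusion into $C^\infty$ keeps every map in sight smooth so that Lemmas~\ref{lemma:babySmoothness} and~\ref{lemma:smoothness} apply verbatim; call the resulting Banach manifold $\jJ_\epsilon$. (Alternatively one uses the $C^\ell$-completions for each $\ell$ and reassembles the $C^\infty$ statement at the end via the standard argument over $C^\ell$; I would mention but not belabor this.) Fixing $p>2$ and a Teichm\"uller slice $\tT \subset \jJ(\Sigma)$ through each relevant complex structure (Definition~\ref{defn:TeichSlice}), I then form the Banach space bundle $\eE^{0,p} \to \tT \times \bB^{1,p} \times \jJ_\epsilon$ with fiber $L^p(\overline{\Hom}_\CC((T\Sigma,j),(u^*TM,J)))$ and its smooth section $(j,u,J) \mapsto \dbar_J(j,u) = Tu + J\circ Tu \circ j$. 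The universal moduli space is the open subset
\[
\univ^* = \{ (j,u,J) \in \dbar_J^{-1}(0)\ |\ \text{$u$ has an injective point mapped into $\uU$} \},
\]
and $\univ^{*,Z}$ is obtained by additionally imposing $\ev(u) \in Z$.

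The key step is to show $\univ^*$ (resp.\ $\univ^{*,Z}$) is a smooth separable Banach manifold, which reduces to proving that at every $(j,u,J) \in \univ^*$ the linearization
\[
\mathbf{L}(y,\eta,Y) = D\dbar_J(j,u)(y,\eta) + Y(u) \circ Tu \circ j
\]
is surjective (and for $\univ^{*,Z}$ that $(\mathbf{L},d\ev(u))$ is surjective onto $L^p \oplus T_{\ev(u)}M^m / T_{\ev(u)}Z$). Since $D\dbar_J(j,u)$ is already Fredholm by \eqref{eqn:dimension} and Theorem~\ref{thm:FredholmProperty}, $\im\mathbf{L}$ is closed of finite codimension, so it suffices to show $\im\mathbf{L}$ is dense: no nonzero $\zeta$ in the $L^q$-dual ($1/p+1/q=1$) may annihilate it. Such a $\zeta$ annihilates $\im D\dbar_J(j,u) \supseteq \im\mathbf{D}_u$, hence solves the formal adjoint equation $\mathbf{D}_u^*\zeta = 0$ weakly; by Proposition~\ref{prop:formalAdjoint}, Corollary~\ref{cor:weakRegularity} and the similarity principle (Theorem~\ref{thm:similarity}), $\zeta$ is smooth and has only isolated zeroes unless $\zeta\equiv 0$. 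On the other hand $\langle \zeta, Y(u)\circ Tu\circ j \rangle_{L^2} = 0$ for every admissible infinitesimal perturbation $Y$ supported in $\uU$; choosing $Y$ supported in a tiny ball about $u(z_0)$ that misses $u(\Sigma\setminus\uU')$ for a small neighbourhood $\uU'$ of the injective point $z_0$ (possible exactly because $z_0$ is an injective point mapped into $\uU$), and letting $Y$ range over enough such perturbations, forces $\zeta\equiv 0$ on $\uU'$ and hence everywhere. This contradiction gives surjectivity of $\mathbf{L}$; the augmentation by $d\ev(u)$ is handled identically, the same perturbations showing $\ev|_{\univ^*}$ is a submersion onto $M^m$ so that $\ev^{-1}(Z)$ is a Banach submanifold. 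Separability follows as in Proposition~\ref{prop:notExotic}.

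Finally I would apply the Sard--Smale theorem to the projection $\pi : \univ^{*,Z} \to \jJ_\epsilon$, $(j,u,J) \mapsto J$. A standard computation relating $\ker d\pi$ and $\coker d\pi$ to the kernel and cokernel of $D\dbar_J(j,u)$ (cut down by the Teichm\"uller, automorphism and $Z$-constraints) shows $\pi$ is Fredholm of index $\virdim\Mod_{g,m}^A(J) - \codim Z$, and that $J$ is a regular value of $\pi$ if and only if every somewhere injective curve in $\Mod_{g,m}^A(J)$ that maps an injective point into $\uU$ and meets $Z$ is Fredholm regular with $\ev \pitchfork Z$ there (using that such curves have trivial automorphism groups by Exercise~\ref{EX:trivialAut} and Proposition~\ref{prop:multipleCovers}); that is, the regular values are precisely $\jJ_\reg^Z(M,\omega\,;\,\uU,\Jfix\,;\,g,m,A)\cap\jJ_\epsilon$. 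Since $\univ^{*,Z}$ is a second-countable Banach manifold, Sard--Smale yields a Baire set of regular values in $\jJ_\epsilon$; passing back to the $C^\infty$-topology and, if needed, taking a countable intersection over a covering of $\tT(\Sigma,\Theta)$ by Teichm\"uller slices (the non-stable cases $g\le 1$ being handled by the appropriate, possibly global, slice as in the proof of Theorem~\ref{thm:IFT2}) produces the desired Baire subset of $\jJ(M,\omega\,;\,\uU,\Jfix)$.

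The main obstacle is the density step in the second paragraph: making precise the choice of perturbations $Y$ near an injective point and the unique-continuation conclusion for $\zeta$, which is exactly where the hypotheses ``somewhere injective'' and ``injective point in $\uU$'' are indispensable and why multiply covered curves must be excluded. The remaining Fr\'echet-versus-Banach bookkeeping is routine once one commits to the $C_\epsilon$ framework.
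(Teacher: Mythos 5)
Your Sard--Smale argument on the universal moduli space is essentially the paper's Proposition~\ref{prop:dense}, and that part is sound: the $C_\epsilon$ framework, the surjectivity of the linearization via the formal adjoint, unique continuation, and a bump perturbation near an injective point in $\uU$, the submersion property of $\ev$, and Lemma~\ref{lemma:parametrized} to identify $\ker d\pi_Z$ and $\coker d\pi_Z$ with those of the restricted Cauchy--Riemann operator all appear in the paper in the same form. The gap is in your last sentence, ``passing back to the $C^\infty$-topology.'' Sard--Smale gives you a Baire subset of $\jJ_\epsilon$, which is a small $C_\epsilon$-ball about a single reference structure $\Jref$, carrying a strictly finer topology than the $C^\infty$-topology on $\jJ(M,\omega\,;\,\uU,\Jfix)$. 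What you can legitimately extract from this is only \emph{density} of $\jJ_\reg^Z$ in the $C^\infty$-topology (every $\Jref$ admits arbitrarily good $C_\epsilon$-, hence $C^\infty$-, approximations by regular structures). A Baire subset of each $\jJ_\epsilon$ does not assemble into a Baire subset of the Fr\'echet space: the open dense sets whose intersection you obtain are open only in the $C_\epsilon$-topology and only cover a neighborhood of $\Jref$, and a countable intersection of merely dense sets can be empty. Your parenthetical about covering Teichm\"uller space by slices does not touch this issue, which lives entirely in the space of almost complex structures.

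The missing ingredient is Taubes' argument (the paper's \S\ref{subsec:Taubes}). One constructs an exhaustion of the relevant part of $\Mod_{g,m}^A(J)$ by subsets $\Mod_{g,m}^A(J,c)$, $c \in \NN$, defined by quantitative bounds (gradient bounds on $u$, distance of $j$ from the boundary of Teichm\"uller space measured via marked-point configurations or hyperbolic geodesic lengths, and a quantitative injectivity condition at a point mapped well inside $\uU$), with the key property that if $J_k \to J$ in $C^\infty$ and $u_k \in \Mod_{g,m}^A(J_k,c)$, then a subsequence converges to an element of $\Mod_{g,m}^A(J,c)$; this is Lemma~\ref{lemma:exhaustion} and uses only the elliptic estimates of Corollary~\ref{cor:gradBounds}, not Gromov compactness. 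Setting $\jJ_\reg^c$ to be the set of $J$ for which all curves in $\Mod_{g,m}^A(J,c)$ meeting $Z$ are regular with $\ev \pitchfork Z$, one shows $\jJ_\reg^c$ is $C^\infty$-open (a sequence $J_k \to J \in \jJ_\reg^c$ with $J_k \notin \jJ_\reg^c$ would produce non-regular curves converging to a curve in $\Mod_{g,m}^A(J,c)$, contradicting openness of regularity), and dense because it contains the dense set you produced; then $\jJ_\reg^Z = \bigcap_{c \in \NN} \jJ_\reg^c$ is Baire. Without this step you have proved density, not genericity.
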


Taking $Z=M$, this result together with Theorem~\ref{thm:IFT2} implies
Theorem~\ref{thm:localStructure}, as we can take $\jJ_\reg(M,\omega\,;\,\uU,\Jfix)$
to be the countable intersection
$$
\jJ_\reg(M,\omega\,;\,\uU,\Jfix) := \bigcap_{g , m \ge 0,\ A \in H_2(M)}
\jJ_\reg(M,\omega\,;\,\uU,\Jfix\,;\,g,m,A).
$$
Some consequences of the case $Z \subsetneq M$ will be described in
\S\ref{sec:evaluation}.

The proof will proceed in two main steps, described in the next two
subsections.

\subsection{Regular almost complex structures are dense}
\label{subsec:dense}

In order to cut down on cumbersome notation, let us assume for the
remainder of \S\ref{sec:transversality} that the choices
$\uU \subset M$, $\Jfix \in \jJ(M,\omega)$, $g \ge 0$, $m \ge 0$, 
$A \in H_2(M)$ and $Z \subset M$ are all fixed, so we can abbreviate
$$
\jJ_\reg := \jJ_\reg^Z(M,\omega\,;\,\uU,\Jfix\,;\,g,m,A).
$$
We begin by proving a weaker version of Theorem~\ref{thm:generic}, which
nonetheless suffices for most applications.  

\begin{prop}
\label{prop:dense}
$\jJ_\reg$ is dense in $\jJ(M,\omega\,;\,\uU,\Jfix)$.
\end{prop}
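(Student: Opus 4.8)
First I would set up the universal moduli space. Fix a Teichm\"uller slice $\tT$ through each relevant $j$, and rather than working with the full (Fr\'echet) space $\jJ(M,\omega\,;\,\uU,\Jfix)$, introduce a Banach manifold $\jJ^\varepsilon$ of almost complex structures of class $C^\varepsilon$ in the sense of Floer's $C_\varepsilon$-space: these are $\omega$-compatible almost complex structures equal to $\Jfix$ outside $\uU$, with a weighted $C^\infty$ norm $\sum_k \varepsilon_k \| \cdot \|_{C^k}$ finite, where $\varepsilon_k \downarrow 0$ is chosen so that the resulting space embeds continuously into $C^\infty$ on $\uU$ and is nonempty. This Banach manifold is modeled (via the chart $Y \mapsto J_Y$ from \eqref{eqn:Cayley0}, using Corollary~\ref{cor:compatible}) on the Banach space of $C_\varepsilon$-sections $Y$ of $\End_\RR^S(TM,\omega,\Jfix)$ supported in~$\uU$. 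I would then form
$$
\univ = \{ (j,u,J) \ |\ J \in \jJ^\varepsilon, \ (\Sigma,j,\Theta,u) \in \Mod_{g,m}^A(J), \ \text{$u$ has an injective point in $\uU$} \}
$$
as the zero set of the section $\overline{\dbar}(j,u,J) = Tu + J \circ Tu \circ j$ of a Banach space bundle over $\tT \times \bB^{1,p} \times \jJ^\varepsilon$.

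Second, the key analytic step: the universal linearized operator
$$
D\overline{\dbar}(j,u,J)(y,\eta,Y) = J \circ Tu \circ y + \mathbf{D}_u \eta + Y(u)\, Tu \circ j
$$
is surjective at every point of $\univ$. This is the heart of the argument, and the main obstacle. Since $\mathbf{D}_u$ is Fredholm, $\im D\dbar_J(j,u)$ is closed of finite codimension, so it suffices to show that the extra term $Y \mapsto Y(u) \circ Tu \circ j$ fills out a complement. Arguing by contradiction, if not, there is a nonzero $\alpha \in L^q$ (with $1/p + 1/q = 1$) annihilating the image; testing against $Y$-variations supported near an injective point $z_0 \in \uU$ (where $du(z_0) \ne 0$ and $u^{-1}(u(z_0)) = \{z_0\}$) forces $\alpha$ to vanish on a neighborhood of $z_0$, because one can prescribe $Y(u(z_0))$ freely as an $\omega$-antilinear symmetric endomorphism and such endomorphisms act transitively enough on the relevant fiber — this is where compatibility makes the linear algebra slightly harder (cf.~Lemma~\ref{lemma:choice}, to be invoked). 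Testing against $\eta$-variations then shows $\alpha$ is a weak solution of $\mathbf{D}_u^* \alpha = 0$, hence smooth by elliptic regularity and subject to the similarity principle (Theorem~\ref{thm:similarity}), so its zero set is discrete; combined with $\alpha \equiv 0$ near $z_0$ this gives $\alpha \equiv 0$, a contradiction. Throwing in the condition $\ev(u) \in Z$ transversally is handled by the same mechanism: enlarge the operator by the map to $M^m/Z$ and note the $\ev$-derivative was already surjective onto the normal directions by the $\eta$-part.

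Third, conclude via Sard–Smale. Surjectivity of $D\overline{\dbar}$ with finite-dimensional, complemented kernel makes $\univ$ a $C^{k}$-Banach manifold (for $k$ as large as we like by taking $\varepsilon$ small), and after quotienting by the relevant reparametrization symmetry the projection $\pi : \univ \to \jJ^\varepsilon$ is a $C^k$ Fredholm map of index $\virdim \Mod_{g,m}^A(J)$ (plus the codimension of $Z$). The Sard–Smale theorem then gives a Baire (hence dense) set of regular values $J \in \jJ^\varepsilon$; a standard argument shows that $J$ being a regular value of $\pi$ is equivalent to every relevant $u \in \Mod_{g,m}^A(J)$ being Fredholm regular with $\ev \pitchfork Z$. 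Since a Baire subset of $\jJ^\varepsilon$ is in particular dense, and since $\jJ^\varepsilon$ is dense in $\jJ(M,\omega\,;\,\uU,\Jfix)$ in the $C^\infty_{\loc}$-topology (arbitrary $C^\infty$ structures are approximable by $C_\varepsilon$ ones), one deduces that $\jJ_\reg$ is dense in $\jJ(M,\omega\,;\,\uU,\Jfix)$. The subtlety that regularity proven in the $C^\varepsilon$-category passes to the smooth category is handled by the compact containment of $\uU$ and the continuous inclusion $C_\varepsilon \hookrightarrow C^\infty|_{\overline\uU}$, which ensures the relevant linearized operators and their surjectivity are insensitive to the choice of regularity class. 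The upgrade from dense to Baire (the full Theorem~\ref{thm:generic}) will come in the next subsection by exhausting the somewhere-injective curves with countably many compact pieces, but Proposition~\ref{prop:dense} itself requires only the single application of Sard–Smale described above.
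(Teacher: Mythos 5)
Your argument is, in outline, the one the text itself uses: a Floer $C_\epsilon$-space of compatible perturbations supported in $\uU$, a universal moduli space cut out by $\dbar$ over $\tT\times\bB^{1,p}\times\jJ_\epsilon$, surjectivity of the universal linearization via an injective point mapped into $\uU$ (dualize, obtain a weak solution of $\mathbf{D}_u^*\alpha=0$, upgrade it to a smooth solution with isolated zeroes by elliptic regularity and the similarity principle, then kill it with a bump perturbation $Y$ built from Lemma~\ref{lemma:choice}), and finally Sard--Smale applied to the projection, with the kernel/cokernel bookkeeping of Lemma~\ref{lemma:parametrized} identifying regular values of $\pi$ with Fredholm regularity plus transversality of $\ev$ to~$Z$. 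Two small slips along the way: the kernel of the universal linearization is \emph{infinite}-dimensional (it contains a finite-codimensional subspace of $T_J\jJ_\epsilon$), so what the implicit function theorem needs is a bounded right inverse, which follows from surjectivity precisely because $\mathbf{D}_u$ is Fredholm; and since the $C_\epsilon$-space embeds continuously into $C^\infty$, all maps in sight are genuinely smooth, so the ``$C^k$ for $k$ as large as we like'' hedge is unnecessary.

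The one genuine gap is in your final step. You center the $C_\epsilon$-chart at $\Jfix$ and then assert parenthetically that $\jJ^\varepsilon$ is $C^\infty$-dense in $\jJ(M,\omega\,;\,\uU,\Jfix)$, i.e.\ that every smooth $Y$ supported in $\overline{\uU}$ is a $C^\infty$-limit of sections satisfying $\sum_\nu \epsilon_\nu \| Y \|_{C^\nu} < \infty$. For a fixed choice of weights this is not obvious: the $C_\epsilon$-condition genuinely excludes smooth sections whose $C^\nu$-norms grow too quickly, and the only ready-made density statement is $L^2$-density. Your deduction of Proposition~\ref{prop:dense} collapses without this claim. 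The standard fix --- and the one the text uses --- makes the claim unnecessary: take the center of the Floer chart to be an \emph{arbitrary} $\Jref \in \jJ(M,\omega\,;\,\uU,\Jfix)$, so that $\Jref$ itself lies in $\jJ_\epsilon$ (as $Y=0$). The Baire set of regular values is then dense in $\jJ_\epsilon$ in the $C_\epsilon$-topology, which dominates the $C^\infty$-topology, so it contains structures arbitrarily $C^\infty$-close to $\Jref$; quantifying over $\Jref$ gives density in the full space. You should restructure your last paragraph this way rather than attempting to prove $C^\infty$-density of a single $C_\epsilon$-space.
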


Though certainly useful on its own, this statement is less beautiful
than Theorem~\ref{thm:generic} and sometimes also less convenient, as
countable intersections of dense subsets are not generally dense
(they may even be empty).
It will be the purpose of the next subsection to replace the word ``dense''
with ``Baire,'' using an essentially topological
argument originally due to Taubes.

Let us sketch the proof of Prop.~\ref{prop:dense} before getting into
the details.  One must first choose a smooth
Banach manifold of almost complex structures $\jJ_\epsilon \subset
\jJ(M,\omega\,;\,\uU,\Jfix)$ in which
to vary~$J$.  One can then define a (large) separable Banach manifold 
that contains all suitable holomorphic
curves in all the moduli spaces $\Mod_{g,m}^A(J)$
for $J \in \jJ_\epsilon$, called the \defin{universal moduli space},
$$
\univ^*(\jJ_\epsilon) = \{ (u,J) \ |\ J \in \jJ_\epsilon,\ 
u \in \Mod_{g,m}^A(J) \ \text{maps an injective point into~$\uU$} \},
$$
along with its constrained variant
$$
\univ^*_Z(\jJ_\epsilon) = \{ (u,J) \in \univ^*(\jJ_\epsilon) \ |\ 
\ev(u) \in Z \}.
$$
It takes a bit of care to make sure these spaces really are
Banach manifolds: as usual, the main task will be to prove that a certain
linear operator between Banach spaces is surjective, and
this is where the assumption of an injective point in $\uU$ will turn out
to be crucial.  It will also require the domain 
to be sufficiently large---in particular, $\jJ_\epsilon$ will have to contain
a certain set of $C^\infty_0$-perturbations of a
given~$J$, and must therefore be infinite dimensional.
Once the universal moduli space is understood, 
we have a natural smooth projection map
$$
\pi : \univ^*_Z(\jJ_\epsilon) \to \jJ_\epsilon : (u,J) \mapsto J,
$$
whose preimage $\pi^{-1}(J)$ at any $J \in \jJ_\epsilon$ is
precisely the set of all curves in $u \in \Mod_{g,m}^A(J)$ that map an injective
point into~$\uU$ and satisfy $\ev(u) \in Z$.  This will be
a smooth submanifold whenever $J$ is a regular 
value of~$\pi$, i.e.~the derivative $d\pi(u,J)$ is surjective for all
$(u,J) \in \pi^{-1}(J)$.
In finite dimensions, Sard's theorem would
tell us that this is true for almost every $J$, and in the present
situation one can apply the following infinite-dimensional version
due to Smale \cite{Smale:Sard}.

\begin{SardSmale}
Suppose $X$ and $Y$ are smooth Banach manifolds which are separable
and paracompact, and $f : X \to Y$ is a smooth map whose derivative
$df(x) : T_x X \to T_{f(x)} Y$ for every $x \in X$ is Fredholm.  
Then the regular values of $f$ form a Baire subset of~$Y$.
\end{SardSmale}
The theorem can be stated more generally for nonsmooth maps
$f \in C^k(X,Y)$ if $k$ is sufficiently large, but we will not need this.
A proof in the case where $f$ maps an open subset of a linear Banach
space to another Banach space may be found in
\cite{McDuffSalamon:Jhol}*{Appendix~A.5}.  The general case can be derived
from this, with the aid of the following exercise in general
topology (cf.~Proposition~\ref{prop:notExotic}).

\begin{exercise}
\label{EX:topologicalHorror}
Show that any Banach manifold that is both separable and paracompact admits a
\emph{countable} family of charts.
\end{exercise}

To apply the Sard-Smale theorem, we need to know that
$d\pi(u,J)$ is a Fredholm operator.  In the unconstrained case $Z=M$,
it turns out that
$d\pi(u,J)$ not only is Fredholm but has the same index and the same kernel 
as the linearization \eqref{eqn:linearization2} that defines Fredholm 
regularity, thus every regular value of $\pi$ belongs to~$\jJ_\reg$.
A similar argument works in the constrained case, and the Sard-Smale theorem
will thus imply that $\jJ_\reg$ is dense, as claimed by Prop.~\ref{prop:dense}.

In fact, the argument implies that the set of regular
almost complex structures is a Baire subset of $\jJ_\epsilon$, and
you may at this point be wondering why that doesn't already prove
Theorem~\ref{thm:generic}.  The answer is that we cannot simply
choose $\jJ_\epsilon$ to be $\jJ(M,\omega\,;\,\uU,\Jfix)$, as the latter with
its natural $C^\infty$-topology is not a Banach manifold, 
so the Sard-Smale theorem does not apply.  
We are thus forced to choose a somewhat less natural
space of varying almost complex structures, with a sufficiently 
different topology so that a Baire subset of $\jJ_\epsilon$ is not obviously
a Baire subset of $\jJ(M,\omega\,;\,\uU,\Jfix)$, though we will easily see 
that it is dense.  Extending density to ``genericity'' will require an 
additional topological argument, given in the next subsection.

We now carry out the details, starting with the definition of the
Banach manifold~$\jJ_\epsilon$.  It will be convenient to have explicit
local charts for the manifold of compatible complex structures on a vector
space, as provided by the following exercise.

\begin{exercise}
\label{EX:Tomega}
Suppose $\omega$ is a nondegenerate $2$-form on a $2n$-dimensional vector
space $V$, and $\jJ(V,\omega)$ denotes the space of all complex structures
$J$ on $V$ such that $\omega(\cdot,J\cdot)$ defines a symmetric
inner product.  Show that $\jJ(V,\omega)$ is a smooth submanifold of
$\jJ(V)$, whose tangent space at $J \in \jJ(V,\omega)$ is
$$
\overline{\End}_\CC(V,J,\omega)
:= \{ Y \in \overline{\End}_\CC(V,J)\ |\ 
\omega(v, Yw) + \omega(Yv,w) = 0 \text{ for all $v,w \in V$} \}.
$$
Show also that for any $J \in \jJ(V,\omega)$, the correspondence
\begin{equation}
\label{eqn:YtoJ2}
Y \mapsto \left( \1 + \frac{1}{2} J Y \right) J
\left( \1 + \frac{1}{2} J Y \right)^{-1}
\end{equation}
maps a neighborhood of~$0$ in $\overline{\End}_\CC(V,J,\omega)$
diffeomorphically to a neighborhood of~$J$ in $\jJ(V,\omega)$.
\textsl{Hint: Recall Corollary~\ref{cor:compatible}.}
\end{exercise}

There are two standard approaches for defining a Banach manifold
of perturbed almost complex structures:
one of them, which is treated in 
\cite{McDuffSalamon:Jhol}*{\S 3.2}, is to work in the space $\jJ^m(M,\omega)$
of $C^m$-smooth almost complex structures for sufficiently large
$m \in \NN$, and afterwards argue (using the ideas described in
\S\ref{subsec:Taubes} below) that the intersection of all the
spaces $\jJ^m_{\reg}(M,\omega)$ gives a Baire subset of~$\jJ(M,\omega)$.
The drawback of this approach is that if $J$ is not smooth, then
the Cauchy-Riemann operator will also have only finitely many derivatives:
indeed, $\dbar_J u = T u + J(u) \circ Tu \circ j$ involves the composition
map
\begin{equation}
\label{eqn:composition}
(u,J) \mapsto J \circ u
\end{equation}
which may be differentiable but is not smooth unless $J$~is
(recall Lemma~\ref{lemma:smoothness}).  This approach thus forces one to
consider Banach manifolds and maps with only finitely many derivatives,
causing an extra headache that we'd hoped to avoid after we proved
elliptic regularity in Chapter~\ref{chapter:local}.

The alternative approach is to stay within the smooth context by defining
$\jJ_\epsilon$ to be a Banach manifold that admits a continuous inclusion
into $\jJ(M,\omega)$: indeed, if $\jJ_\epsilon$ embeds continuously into
$\jJ^m(M,\omega)$ for every $m \in \NN$ and $u$ belongs to a Banach
manifold such as $W^{k,p}(\Sigma,M)$, then Lemma~\ref{lemma:smoothness} 
implies that
\eqref{eqn:composition} will be smooth.  Until now, all examples we've seen
of Banach spaces that embed continuously into $C^\infty$ have been
finite dimensional, and we would find such a space too small to
ensure the smoothness of the universal moduli space.  A suitable 
infinite-dimensional example was introduced by Floer 
\cite{Floer:action}, and has become known commonly as the ``Floer
$C_\epsilon$-space''.

Fix an arbitrary ``reference'' almost complex structure
$\Jref \in \jJ(M,\omega\,;\,\uU,\Jfix)$, and choose a sequence of positive real
numbers $\epsilon_\nu \to 0$ for integers $\nu \ge 0$.  Recall from
Exercise~\ref{EX:Tomega} the vector bundle $\overline{\End}_\CC(TM,\Jref,\omega)$,
whose smooth sections constitute what we think of as the 
``tangent space $T_{\Jref} \jJ(M,\omega)$.''
Define $C_\epsilon(\overline{\End}_\CC(TM,\Jref,\omega)\,;\,\uU)$ to be the
space of smooth sections $Y$ of
$\overline{\End}_\CC(TM,\Jref,\omega)$ with support in $\overline{\uU}$ 
for which the norm
$$
\| Y \|_\epsilon := \sum_{\nu=0}^\infty \epsilon_\nu \| Y \|_{C^\nu(\uU)}
$$
is finite.  Though it is not immediately clear whether this space
contains any nontrivial sections, it is at least
a Banach space, and it has a natural continuous
inclusion into the space of smooth sections supported in $\overline{\uU}$,
$$
C_\epsilon(\overline{\End}_\CC(TM,\Jref,\omega)\,;\,\uU) \hookrightarrow
\left\{ Y \in \Gamma(\overline{\End}_\CC(TM,\Jref,\omega))\ \big|\ 
Y|_{M\setminus \uU} \equiv 0 \right\}.
$$
One can always theoretically enlarge the space
by making the sequence $\epsilon_\nu$ converge to~$0$ faster.  As it
turns out, choosing $\epsilon_\nu$ small enough makes
$C_\epsilon(\overline{\End}_\CC(TM,\Jref,\omega)\,;\,\uU)$ into an infinite-dimensional
space that contains bump functions with small support and 
arbitrary values at any point in~$\uU$:

\begin{lemma}
\label{lemma:bump}
Suppose $\beta : B^{2n} \to [0,1]$ is a smooth function with compact support
on the unit ball $B^{2n} \subset \CC^n$ and $\beta(0)=1$.
One can choose a sequence of positive numbers $\epsilon_\nu \to 0$ such that 
for every $Y_0 \in \CC^N$ and $r > 0$, the function $Y : \CC^n \to \CC^N$
defined by
$$
Y(p) := \beta(p/r) Y_0
$$
satisfies $\sum_{\nu=0}^\infty \epsilon_\nu \| Y \|_{C^\nu} < \infty$.
\end{lemma}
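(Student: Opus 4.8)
The plan is to reduce the statement to a simple estimate on the $C^\nu$-norms of the rescaled bump functions $Y(p) = \beta(p/r) Y_0$, and then choose $\epsilon_\nu$ decaying fast enough to absorb whatever growth in $\nu$ these norms exhibit. First I would fix the smooth compactly supported function $\beta : B^{2n} \to [0,1]$ once and for all, and for each $\nu \ge 0$ set
\[
c_\nu := \| \beta \|_{C^\nu(B^{2n})} = \sum_{|\alpha| \le \nu} \sup_{p} |D^\alpha \beta(p)|,
\]
which is a finite constant depending only on $\beta$ and $\nu$. The key computation is the chain rule: for a multiindex $\alpha$ with $|\alpha| \le \nu$, we have $D^\alpha \big[ \beta(p/r) Y_0 \big] = r^{-|\alpha|} (D^\alpha \beta)(p/r) Y_0$, so that
\[
\| Y \|_{C^\nu} \le |Y_0| \sum_{|\alpha| \le \nu} r^{-|\alpha|} \| D^\alpha \beta \|_{C^0}
\le |Y_0| \, c_\nu \max(1, r^{-\nu}) \le |Y_0| \, c_\nu (1 + r^{-1})^\nu.
\]
The dangerous feature here is the factor $(1 + r^{-1})^\nu$, which blows up as $r \to 0$, so no fixed choice of $\epsilon_\nu$ depending on $\nu$ alone can be expected to tame all of these simultaneously in an obvious way.

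The resolution is that the claim only asks for finiteness of the sum for \emph{each} fixed pair $(Y_0, r)$, not for a uniform bound; hence I can choose $\epsilon_\nu$ after the fact so that $\epsilon_\nu$ decays super-exponentially. Concretely, I would set
\[
\epsilon_\nu := \frac{1}{2^\nu (1 + c_\nu) (1 + \nu)^\nu},
\]
or any comparable sequence; the essential point is that $\epsilon_\nu \to 0$ and $\epsilon_\nu \, c_\nu \, a^\nu$ is summable in $\nu$ for every fixed $a > 0$, which follows because $\epsilon_\nu c_\nu \le 2^{-\nu} (1+\nu)^{-\nu}$ and $(1+\nu)^{-\nu} a^\nu \to 0$ faster than geometrically once $\nu > a$. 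Then for the given $(Y_0, r)$, writing $a = 1 + r^{-1}$, we get
\[
\sum_{\nu=0}^\infty \epsilon_\nu \| Y \|_{C^\nu} \le |Y_0| \sum_{\nu=0}^\infty \epsilon_\nu \, c_\nu \, a^\nu < \infty,
\]
which is exactly the assertion of the lemma.

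I do not expect any genuine obstacle here; the only subtlety is making explicit that the choice of $\epsilon_\nu$ is allowed to depend on $\beta$ (it is, since $\beta$ is fixed in the statement) but must be \emph{independent} of $Y_0$ and $r$ (which it is, with the choice above). It is worth remarking in the write-up that this is precisely the mechanism by which the Floer $C_\epsilon$-space, though defined by an infinite sum of $C^\nu$-norms, still contains an abundant supply of localized bump sections — the price being only that one has no control over how the norm $\| Y \|_\epsilon$ depends on the support radius. I would also note that the same argument applies verbatim to $\CC^N$-valued functions componentwise, so there is no loss in stating the chain-rule estimate for scalar $\beta$ times a constant vector $Y_0$, which is all that is needed to feed into the construction of $C_\epsilon(\overline{\End}_\CC(TM,\Jref,\omega)\,;\,\uU)$ in local trivializations.
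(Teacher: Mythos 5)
Your proof is correct and uses essentially the same mechanism as the paper's: divide $\epsilon_\nu$ by $\|\beta\|_{C^\nu}$ to absorb the derivatives of $\beta$ and by a factor like $\nu^\nu$ to dominate the $r^{-\nu}$ growth from rescaling, so that the resulting series $\sum_\nu (a/\nu)^\nu$ converges for each fixed $r$. The paper simply takes $\epsilon_\nu = 1/(\nu^\nu \|\beta\|_{C^\nu})$; your variant with $(1+c_\nu)$ and $(1+\nu)^\nu$ is the same argument with cosmetically different constants.
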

\begin{proof}
Define $\epsilon_\nu > 0$ so that for $\nu \ge 1$,
$$
\epsilon_\nu = \frac{1}{\nu^\nu \| \beta \|_{C^\nu}}.
$$
Then
$$
\sum_{\nu=1}^\infty \epsilon_\nu \| Y \|_{C^\nu} \le
\sum_{\nu=1}^\infty \frac{1}{\nu^\nu \| \beta \|_{C^\nu}} \frac{\| \beta \|_{C^\nu}}{r^\nu}
= \sum_{\nu=1}^\infty \left( \frac{1 / r}{\nu} \right)^\nu < \infty.
$$
\end{proof}

\begin{exercise}[cf.~\cite{Floer:action}*{Lemma~5.1}]
Show that by choosing $\epsilon_\nu$ as in the lemma, one can arrange so that
$C_\epsilon(\overline{\End}_\CC(TM,\Jref,\omega)\,;\,\uU)$ is dense in the
space of $L^2$-sections of $\overline{\End}_\CC(TM,\Jref,\omega)$
that vanish on $M \setminus \uU$.
\end{exercise}

\begin{exercise}
\label{EX:IdontKnowHowToDoThisOne}
Prove that $C_\epsilon(\overline{\End}_\CC(TM,\Jref,\omega)\,;\,\uU)$ is
separable.
\end{exercise}

Now choose $\delta > 0$ sufficiently small so that the correspondence
\eqref{eqn:YtoJ2} with $J := \Jref$ defines an injective map
\begin{equation*}
\left\{ Y \in C_\epsilon(\overline{\End}_\CC(TM,\Jref,\omega)\,;\,\uU)\ \big|\ \| Y \|_\epsilon
< \delta \right\} \to \jJ(M,\omega\,;\,\uU,\Jfix),
\end{equation*}
and define $\jJ_\epsilon$ to be its image.
By construction, $\jJ_\epsilon$ is a smooth, separable and metrizable
Banach manifold (with only one chart),
which contains $\Jref$ and embeds continuously into $\jJ(M,\omega\,;\,\uU,\Jfix)$.
Its tangent space at any $J \in \jJ_\epsilon$ can be written naturally as
$$
T_J \jJ_\epsilon = C_\epsilon(\overline{\End}_\CC(TM,J,\omega)\,;\,\uU).
$$

As already sketched above, we now define the universal moduli space
$\univ^*(\jJ_\epsilon)$ to be the space of pairs $(u,J)$ for which
$J \in \jJ_\epsilon$ and $u \in \Mod_{g,m}^A(J)$ has an injective point
mapped into~$\uU$, and let $\univ^*_Z(\jJ_\epsilon) = \ev^{-1}(Z)$ for the
obvious extension of the evaluation map
$$
\ev : \univ^*(\jJ_\epsilon) \to M^m : (u,J) \mapsto \ev(u).
$$

\begin{prop}
\label{prop:universal}
The universal moduli space $\univ^*(\jJ_\epsilon)$ admits the structure of a
smooth, separable and metrizable Banach manifold such that the natural
projection $\pi : \univ^*(\jJ_\epsilon) \to \jJ_\epsilon : (u,J) \mapsto J$ 
and the evaluation map $\ev : \univ^*(\jJ_\epsilon) \to M^m$ are both smooth, 
and the latter is a submersion.
\end{prop}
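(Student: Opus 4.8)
The plan is to realize $\univ^*(\jJ_\epsilon)$ locally as the zero set of a universal Cauchy--Riemann section, to prove that this section is transverse to the zero section at every point of $\univ^*(\jJ_\epsilon)$, and then to read off all of the stated properties from the implicit function theorem. Concretely, given a curve $(\Sigma,j_0,\Theta,u_0)\in\Mod_{g,m}^A(J_0)$ with $J_0\in\jJ_\epsilon$ having an injective point mapped into $\uU$, I would choose a Teichm\"uller slice $\tT\subset\jJ(\Sigma)$ through $j_0$ as in \S\ref{sec:IFT}, set $\bB^{1,p}=W^{1,p}(\Sigma,M)$ for some $p>2$, form over $\tT\times\bB^{1,p}\times\jJ_\epsilon$ the Banach space bundle $\eE^{0,p}$ with fibre $\eE^{0,p}_{(j,u,J)}=L^p(\overline{\Hom}_\CC((T\Sigma,j),(u^*TM,J)))$, and consider the section $\dbar(j,u,J)=Tu+J\circ Tu\circ j$. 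The routine preliminaries are: $\dbar$ is smooth, since $\jJ_\epsilon$ embeds continuously into $C^m(M,\End_\RR(TM))$ for every $m$ and hence $(u,J)\mapsto J\circ u$ is smooth by Lemma~\ref{lemma:smoothness}; its zeros have smooth $u$ by elliptic regularity (Theorem~\ref{thm:regularity}); having an injective point mapped into $\uU$ is an open condition on the zero set (cf.\ Remark~\ref{remark:Achtung2} and Corollary~\ref{cor:simple}); and near such a zero the zero set is identified with a neighbourhood of $(u_0,J_0)$ in $\univ^*(\jJ_\epsilon)$, there being no automorphisms to divide out by Exercise~\ref{EX:trivialAut}.

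The heart of the argument is the transversality statement: the linearization
\begin{equation*}
D\dbar(j_0,u_0,J_0)(y,\eta,Y)=J_0\circ Tu_0\circ y+\mathbf{D}_{u_0}\eta+Y\circ Tu_0\circ j_0
\end{equation*}
is surjective onto $L^p(\overline{\Hom}_\CC(T\Sigma,u_0^*TM))$. Its image contains $\im\mathbf{D}_{u_0}$, which is closed with finite-dimensional cokernel by Theorem~\ref{thm:FredholmProperty}, and adding the image of $Y\mapsto Y\circ Tu_0\circ j_0$ keeps it closed and of finite codimension, so by duality it suffices to show that any $\alpha$ in the $L^q$-dual ($1/p+1/q=1$) annihilating the image vanishes. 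Annihilation of $\im\mathbf{D}_{u_0}$ makes $\alpha$ a weak --- hence, by Proposition~\ref{prop:formalAdjoint} and elliptic regularity, smooth --- solution of $\mathbf{D}_{u_0}^*\alpha=0$, so by the similarity principle (\S\ref{sec:similarity}) $\alpha$ vanishes only on a discrete set. I would then pick an injective point $z_0$ of $u_0$ with $u_0(z_0)\in\uU$ and $\alpha(z_0)\neq 0$, and use Lemma~\ref{lemma:bump} to build an $\omega$-antisymmetric complex-antilinear bump $Y$ with small support around $u_0(z_0)$ inside $\uU$, whose value at $u_0(z_0)$ is chosen so that $Y(u_0(z_0))\circ Tu_0(z_0)\circ j_0$ pairs nontrivially with $\alpha(z_0)$; since $z_0$ is an injective point, $u_0^{-1}(\supp Y)$ lies in a tiny neighbourhood of $z_0$, so $\int_\Sigma\langle\alpha,Y\circ Tu_0\circ j_0\rangle$ is dominated by its contribution near $z_0$ and cannot vanish, a contradiction. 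A bounded right inverse for $D\dbar(j_0,u_0,J_0)$ is then obtained by restricting it to $T_{j_0}\tT\oplus W^{1,p}(u_0^*TM)$ together with a finite-dimensional space of perturbation directions $Y_1,\dots,Y_N$ chosen so that the restriction becomes a surjective Fredholm operator (possible since the $(y,\eta)$-part is already Fredholm with finite-codimensional image by the index computation of \S\ref{sec:IFT}).

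With transversality and a bounded right inverse in hand, the implicit function theorem (Theorem~\ref{thm:IFT}) gives $\dbar^{-1}(0)$ the structure of a smooth Banach submanifold near each such point, with tangent space $\ker D\dbar(j_0,u_0,J_0)$. Patching these charts --- countably many suffice, since $\tT(\Sigma,\Theta)$ is finite-dimensional and second countable and can be covered by countably many Teichm\"uller slices --- makes $\univ^*(\jJ_\epsilon)$ a smooth Banach manifold, and it inherits separability and metrizability as a subspace of the separable, metrizable Banach manifolds $\tT\times\bB^{1,p}\times\jJ_\epsilon$ via Proposition~\ref{prop:notExotic} and Exercise~\ref{EX:IdontKnowHowToDoThisOne}. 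Smoothness of $\pi$ is immediate, being the restriction of the smooth projection onto $\jJ_\epsilon$, and smoothness of $\ev$ follows from Exercise~\ref{EX:evaluation}. For the submersion property of $\ev$, whose differential on $\ker D\dbar(j_0,u_0,J_0)$ sends $(y,\eta,Y)$ to $(\eta(z_1),\dots,\eta(z_m))$, I would fix target vectors $v_i\in T_{u_0(z_i)}M$, choose any smooth $\eta_0$ with $\eta_0(z_i)=v_i$, and correct it to an element of the kernel by solving $D\dbar(j_0,u_0,J_0)(y,\eta_1,Y)=-\mathbf{D}_{u_0}\eta_0$ with $\eta_1$ vanishing at $\Theta$; this last solvability is the surjectivity of $D\dbar(j_0,u_0,J_0)$ restricted to the finite-codimension subspace of sections vanishing at $\Theta$, which follows from a verbatim repeat of the duality-plus-bump-function argument, now taking the injective point $z_0\in\Sigma\setminus\Theta$ (possible since $\Theta$ is finite and the injective points are non-discrete by Corollary~\ref{cor:simple}).

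The hard part will be precisely the surjectivity of $D\dbar(j_0,u_0,J_0)$: this is where the hypothesis of an injective point inside the perturbation region $\uU$ is indispensable, and where the symplectic compatibility constraint on $J$ forces the one genuinely delicate point --- that the bump perturbation $Y$ can be taken $\omega$-antisymmetric and complex-antilinear while still pairing nontrivially with $\alpha$ at the chosen point --- which I would isolate as a pointwise linear-algebra statement (the forthcoming Lemma~\ref{lemma:choice}). Everything else is either standard Banach-manifold bookkeeping or a direct application of results already established in Chapters~\ref{chapter:local} and~\ref{chapter:Fredholm}.
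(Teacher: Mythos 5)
Your proposal is correct and follows essentially the same route as the paper: the universal moduli space is realized as the zero set of the section $\dbar(j,u,J)=Tu+J\circ Tu\circ j$ over $\tT\times\bB^{1,p}\times\jJ_\epsilon$, surjectivity of the linearization is proved by the $L^q$-duality argument combined with the similarity principle and a $C_\epsilon$-bump perturbation supported near an injective point in $\uU$ (with the pointwise linear algebra isolated exactly as in Lemma~\ref{lemma:choice}), and the submersion property of $\ev$ is obtained by correcting a prescribed section using surjectivity of the operator restricted to sections vanishing at the marked points. The only cosmetic difference is your explicit construction of a right inverse via finitely many perturbation directions, where the paper simply invokes the Fredholm property of $\mathbf{D}_{u_0}$.
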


To prove this, choose any representative $(\Sigma,j_0,\Theta,u_0)$
of an arbitrary curve $u_0 \in \Mod_{g,m}^A(J_0)$ for which 
$(u_0,J_0) \in \univ^*(\jJ_\epsilon)$, and choose a
Teichm\"uller slice $\tT$ through~$j_0$ as in \S\ref{sec:IFT}.  
A neighborhood of $(u_0,J_0)$ in
$\univ^*(\jJ_\epsilon)$ can then be described\footnote{Here we are 
restricting for the sake of
notational simplicity to the case where $(\Sigma,\Theta)$ is stable; we
leave the details of the non-stable cases as an exercise.} as the
zero set of a smooth section,
$$
\dbar : \tT \times \bB^{1,p} \times \jJ_\epsilon \to
\eE^{0,p} : (j,u,J) \mapsto Tu + J \circ Tu \circ j,
$$
where now $\eE^{0,p}$ has been extended to a Banach space bundle over
$\tT \times \bB^{1,p} \times \jJ_\epsilon$ with fiber
$$
\eE^{0,p}_{(j,u,J)} = L^p\left(\overline{\Hom}_\CC((T\Sigma,j),(u^*TM,J))\right).
$$
The linearization 
$D\dbar(j_0,u_0,J_0) : T_{j_0} \tT \oplus T_{u_0}\bB^{1,p} \oplus
T_{J_0} \jJ_\epsilon \to \eE^{0,p}_{(j_0,u_0,J_0)}$
takes the form
$$
(y,\eta,Y) \mapsto J_0 \circ Tu_0 \circ y + \mathbf{D}_{u_0} \eta +
Y \circ Tu_0 \circ j_0.
$$
The essential technical work is now contained in the following lemma.
We denote
$$
W^{1,p}_\Theta(u_0^*TM) := \left\{ \eta \in W^{1,p}(u_0^*TM) \ |\ 
\eta(\Theta) = 0 \right\},
$$
which is a closed subspace of codimension $2nm$ in $W^{1,p}(u_0^*TM)$.  

\begin{lemma}
\label{lemma:surjective}
If $u_0$ maps an injective point into~$\uU$, then the operator
\begin{equation*}
\begin{split}
\mathbf{L} : W^{1,p}_\Theta(u_0^*TM) \oplus 
C_\epsilon(\overline{\End}_\CC(TM,J_0,\omega)\,;\,\uU) &\to
L^p(\overline{\Hom}_\CC(T\Sigma,u_0^*TM)) \\
(\eta,Y) &\mapsto \mathbf{D}_{u_0} \eta +
Y \circ Tu_0 \circ j_0
\end{split}
\end{equation*}
is surjective and has a bounded right inverse.
\end{lemma}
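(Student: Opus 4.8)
The plan is to show first that $\im\mathbf{L}$ is closed of finite codimension, so that surjectivity reduces to density of the image, and then to rule out a nontrivial element of $L^{p'}$ (with $\tfrac1p+\tfrac1{p'}=1$) annihilating $\im\mathbf{L}$. Closedness is free: $\im\mathbf{L}\supset\im\mathbf{D}_{u_0}$, and $\mathbf{D}_{u_0}\colon W^{1,p}_\Theta(u_0^*TM)\to L^p(\overline{\Hom}_\CC(T\Sigma,u_0^*TM))$ is Fredholm (the point constraint at $\Theta$ lowers the index but preserves the Fredholm property), so $\im\mathbf{D}_{u_0}$ is closed with finite codimension, hence so is $\im\mathbf{L}$. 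Thus it suffices to assume $\alpha\in L^{p'}(\overline{\Hom}_\CC(T\Sigma,u_0^*TM))$ satisfies $\langle\alpha,\mathbf{L}(\eta,Y)\rangle_{L^2}=0$ for all $(\eta,Y)$ and to deduce $\alpha\equiv0$.

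Taking $Y=0$ gives $\langle\alpha,\mathbf{D}_{u_0}\eta\rangle_{L^2}=0$ for all $\eta\in W^{1,p}_\Theta(u_0^*TM)$, in particular for all smooth $\eta$ with compact support in $\Sigma\setminus\Theta$; hence $\alpha$ is a weak solution of $\mathbf{D}_{u_0}^*\alpha=0$ on $\Sigma\setminus\Theta$, where $\mathbf{D}_{u_0}^*$ is the formal adjoint of Proposition~\ref{prop:formalAdjoint}, computed with respect to the bundle metric induced by $g_{J_0}$. Since $J_0\in\jJ_\epsilon$ is smooth, $\mathbf{D}_{u_0}$ and $\mathbf{D}_{u_0}^*$ have smooth coefficients, and since $\mathbf{D}_{u_0}^*$ is conjugate to a Cauchy--Riemann type operator (so in local trivializations it has the form $\dbar+A$ up to conjugation), Corollary~\ref{cor:weakRegularity} shows $\alpha$ is smooth on $\Sigma\setminus\Theta$. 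The similarity principle (Theorem~\ref{thm:similarity}, applied after conjugation) then implies that if $\alpha\not\equiv0$, its zero set is discrete. Assume for contradiction that this holds.

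The main step is to use the injective point. By hypothesis there is $z_*\in\Sigma$ with $u_0(z_*)\in\uU$, $du_0(z_*)$ injective and $u_0^{-1}(u_0(z_*))=\{z_*\}$; since $\uU$ is open, the critical points of $u_0$ are isolated (Corollary~\ref{cor:critical}), the injective points form an open dense set, and the zeros of $\alpha$ are discrete, I may shift $z_*$ slightly to keep all of these properties and arrange in addition $\alpha(z_*)\neq0$. Pick a holomorphic coordinate $s+it$ near $z_*$, set $p:=u_0(z_*)$, $X:=Tu_0(z_*)\partial_s\in T_pM$ and $v:=\alpha(z_*)\partial_s\in T_pM$, so $X\neq0$ and $v\neq0$. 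Using $Tu_0\circ j_0=J_0\circ Tu_0$, the antilinearity of $\alpha(z_*)$, the antilinearity and $g_{J_0}$-symmetry of $Y$, and the fact that $J_0$ is a $g_{J_0}$-isometry, a short computation gives that the pointwise real pairing of $\alpha$ against $Y\circ Tu_0\circ j_0$ at $z_*$ is a positive multiple of $g_{J_0}(J_0v,\,Y(p)X)$. A finite-dimensional linear algebra argument now produces $Y_0\in\overline{\End}_\CC(T_pM,J_0,\omega)$ with $g_{J_0}(J_0v,Y_0X)>0$: if $v$ and $X$ are $\CC$-independent one prescribes $Y_0$ suitably on $\operatorname{span}_\RR(X,J_0X)$ and extends by zero on its $J_0$-invariant $g_{J_0}$-orthogonal complement; if $v=(a+bJ_0)X$ one sets $Y_0X=cX+dJ_0X$ with $ad-bc>0$ and again extends by zero; in both cases antilinearity and $g_{J_0}$-symmetry are immediate (cf.~Exercise~\ref{EX:Tomega}). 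Choosing a ball $B\ni p$ with $\overline B\subset\uU$ small enough that $u_0^{-1}(B)$ is an arbitrarily small neighborhood of $z_*$ (possible since $z_*$ is injective), and taking $Y$ to be any smooth section of $\overline{\End}_\CC(TM,J_0,\omega)$ supported in $B$ with $Y(p)=Y_0$, the sequence $\epsilon_\nu$ of Lemma~\ref{lemma:bump} guarantees $Y\in C_\epsilon(\overline{\End}_\CC(TM,J_0,\omega)\,;\,\uU)$; by continuity of the pointwise pairing, shrinking $B$ makes it positive throughout $u_0^{-1}(B)$, so
\[
\langle\alpha,\,Y\circ Tu_0\circ j_0\rangle_{L^2}=\int_{u_0^{-1}(B)}\bigl(\alpha,\,Y(u_0)\circ Tu_0\circ j_0\bigr)_g\,\mu_g>0,
\]
contradicting $\langle\alpha,\mathbf{L}(0,Y)\rangle_{L^2}=0$. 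Hence $\alpha\equiv0$, $\im\mathbf{L}$ is dense, and therefore $\mathbf{L}$ is surjective.

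For the bounded right inverse, let $V\subset W^{1,p}_\Theta(u_0^*TM)$ be a closed complement of $\ker\mathbf{D}_{u_0}$, so that $\mathbf{D}_{u_0}|_V$ is a Banach space isomorphism onto the closed finite-codimension subspace $\im\mathbf{D}_{u_0}$ (Proposition~\ref{prop:Fredholm}). Choose a finite-dimensional $W\subset L^p$ with $L^p=\im\mathbf{D}_{u_0}\oplus W$, and use the surjectivity of $\mathbf{L}$ to pick preimages $(\eta_i,Y_i)$ of a basis of $W$. Then mapping a given $\xi\in L^p$ to $\bigl((\mathbf{D}_{u_0}|_V)^{-1}(\pi\xi)+\sum_i c_i(\xi)\eta_i,\ \sum_i c_i(\xi)Y_i\bigr)$, where $\pi$ is the projection onto $\im\mathbf{D}_{u_0}$ along $W$ and $\xi-\pi\xi=\sum_i c_i(\xi)w_i$, defines a bounded right inverse of $\mathbf{L}$. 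The only genuinely delicate point in all of this is the pointwise linear-algebra step at the injective point, together with the bookkeeping needed to keep $Y$ supported inside $\uU$ and to make $u_0^{-1}(B)$ localize near $z_*$; the rest is routine once the formal adjoint, elliptic regularity for weak solutions, and the similarity principle are available.
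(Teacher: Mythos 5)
Your overall strategy is exactly the paper's: reduce surjectivity to the vanishing of an $L^{q}$-annihilator $\alpha$ of $\im\mathbf{L}$, show that $\alpha$ is a smooth solution of $\mathbf{D}_{u_0}^*\alpha=0$ with discrete zero set via the formal adjoint, elliptic regularity and the similarity principle, and then contradict $\langle\alpha,\,Y\circ Tu_0\circ j_0\rangle_{L^2}=0$ by a bump-function perturbation $Y$ localized near an injective point mapped into~$\uU$, using Lemma~\ref{lemma:bump} for membership in the $C_\epsilon$-space. The closedness of $\im\mathbf{L}$, the reduction to density, the localization argument, and your explicit construction of the bounded right inverse from the Fredholm property of $\mathbf{D}_{u_0}$ are all fine.

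The one genuine flaw is in your pointwise linear algebra, which is precisely the step the paper isolates as Lemma~\ref{lemma:choice}. In your first case ($v$ and $X$ complex-independent) you take $Y_0$ to vanish on the $g_{J_0}$-orthogonal complement of the complex line $\CC X=\operatorname{span}_\RR(X,J_0X)$. But a $g_{J_0}$-symmetric operator vanishing on $(\CC X)^{\perp}$ necessarily has image contained in $\CC X$: for any $u$ and any $w\in(\CC X)^{\perp}$ one has $g_{J_0}(Y_0u,w)=g_{J_0}(u,Y_0w)=0$. Hence $Y_0X\in\CC X$, and if $v$ is $g_{J_0}$-orthogonal to $\CC X$ (then so is $J_0v$, since $\CC X$ is $J_0$-invariant and $J_0$ is a $g_{J_0}$-isometry), you get $g_{J_0}(J_0v,Y_0X)=0$ for \emph{every} admissible $Y_0$ of this form. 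This orthogonal situation cannot be excluded --- the relevant part of a cokernel element is exactly its component normal to $\im du_0$ --- so the construction fails where it is most needed; and attempting instead to prescribe $Y_0X=J_0v\notin\CC X$ while still killing $(\CC X)^{\perp}$ destroys symmetry for the same reason. The fix is either to carry out the construction on the larger $J_0$-invariant subspace $\operatorname{span}_\CC(X,v)$ (extending by zero on \emph{its} orthogonal complement), or simply to invoke Lemma~\ref{lemma:choice}, whose explicit formula produces a symmetric complex-antilinear $Y$ with $YX=J_0v$ and hence $g_{J_0}(J_0v,YX)=|v|^2>0$. Your second case and everything downstream of this linear algebra are correct.
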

\begin{proof}
If $\mathbf{L}$ is surjective then the existence of a bounded right
inverse follows easily since $\mathbf{D}_{u_0}$ is Fredholm.  Moreover,
the Fredholm property of $\mathbf{D}_{u_0}$ implies that
$\im \mathbf{L}$ is closed, 
thus choosing a suitable bundle metric to define the $L^2$-pairing,
it suffices (by the Hahn-Banach theorem)
to show that there is no nontrivial section
$\alpha \in L^q(\overline{\Hom}_\CC(T\Sigma,u_0^*TM))$ with 
$\frac{1}{p} + \frac{1}{q} = 1$ such that
$\langle \mathbf{L} (\eta,Y) , \alpha \rangle_{L^2} = 0$ for all
$(\eta,Y)$ in the specified domain.  This can be broken down into two 
conditions:
\begin{equation*}
\begin{split}
\langle \mathbf{D}_{u_0} \eta , \alpha \rangle_{L^2} = 0 &
\ \text{ for all $\eta \in W^{1,p}_\Theta(u_0^*TM)$, and}\\
\langle Y \circ Tu_0 \circ j_0 , \alpha \rangle_{L^2} = 0 &
\ \text{ for all $Y \in C_\epsilon(\overline{\End}_\CC(TM,J_0,\omega)\,;\,\uU)$.}
\end{split}
\end{equation*}
If such $\alpha$ exists, then the first of these two equations implies it is
a weak solution of the formal adjoint equation $\mathbf{D}_{u_0}^*\alpha = 0$
on $\Sigma \setminus \Theta$,
thus by regularity of weak solutions (Corollary~\ref{cor:weakRegularity}), 
it is smooth on $\Sigma\setminus\Theta$, and the similarity principle 
(\S\ref{sec:similarity}) implies that its zero set cannot accumulate.  The 
idea is now to choose $Y \in C_\epsilon(\overline{\End}_\CC(TM,J_0,\omega)\,;\,\uU)$ so
that the second equation implies $\alpha$ \emph{must} vanish on some nonempty
open set, yielding a contradiction.  There are two important details of
our setup that make this possible:
\begin{enumerate}
\item $u_0$ has an injective point $z_0 \in \Sigma$ with $u_0(z_0) \in \uU$;
\item $C_\epsilon(\overline{\End}_\CC(TM,J_0,\omega)\,;\,\uU)$ contains bump functions
with small support and arbitrary values at~$u_0(z_0)$.
\end{enumerate}
Indeed, since the set of injective points is open and $\alpha$ has only
isolated zeroes, we can assume without loss of generality that 
$z_0 \in \uU$ is not one of the marked points and
$\alpha(z_0) \ne 0$.  Now choose (via Lemma~\ref{lemma:bump} and
Lemma~\ref{lemma:choice} below)
$Y \in C_\epsilon(\overline{\End}_\CC(TM,J_0,\omega)\,;\,\uU)$ so that 
$\langle Y \circ Tu_0 \circ j_0 , \alpha \rangle$ is positive on a
neighborhood of $z_0$ and vanishes outside this neighborhood.
Then $\langle Y \circ Tu_0 \circ j_0 , \alpha \rangle_{L^2}$ cannot be zero,
and we have the desired contradiction.
Observe the role that somewhere injectivity plays here: 
$Tu_0 \circ j_0$ is nonzero near $z_0$ since $du_0(z_0) \ne 0$,
and since $u_0$ passes through $z_0$ only once (and the same is 
obviously true for points in a small neighborhood of~$z_0$), fixing the
value of $Y$ near $u_0(z_0)$ only affects the $L^2$-product near~$z_0$ and
nowhere else.  This is why the same proof fails for multiply covered curves.
\end{proof}

In choosing the bump function 
$Y \in C_\epsilon(\overline{\End}_\CC(TM,J_0,\omega)\,;\,\uU)$ in the above proof, 
we implicitly made use of a
simple linear algebra lemma.  This is the only point
in the argument where the symplectic structure makes any difference: it
shrinks the space of available perturbations $Y$ along $J_0$, but the lemma
below shows that this space is still large enough.  Recall that on any
symplectic vector space $(V,\omega)$ with compatible complex structure $J$,
one can choose a basis to identify $J$ with $i$ and $\omega$ with the
standard structure~$\omega\std$ (cf.~Exercise~\ref{EX:Un}).  
The linear maps $Y$ that anticommute with
$i$ and satisfy $\omega\std(Y v, w) + \omega\std(v, Yw) = 0$ for all $v,w \in V$
are then precisely the \emph{symmetric} matrices that are complex antilinear.
\begin{lemma}
\label{lemma:choice}
For any nonzero vectors $v , w \in \RR^{2n}$, there exists a symmetric
matrix $Y$ that anticommutes with $i$ and satisfies $Y v = w$.
\end{lemma}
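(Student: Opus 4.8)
The plan is to convert everything into complex-matrix language. Identify $\RR^{2n} = \CC^n$ so that $i = \JJ\std$, and write $\langle a,b\rangle := \sum_j a_j\bar b_j$ for the standard Hermitian form, so that $(a,b)\mapsto\Re\langle a,b\rangle$ is the Euclidean inner product on $\RR^{2n}$. First I would record the elementary fact that a real-linear map $Y:\CC^n\to\CC^n$ anticommutes with $i$ if and only if it has the form $Y\zeta = A\bar\zeta$ for a \emph{unique} complex $n\times n$ matrix $A$, where $\bar\zeta$ denotes componentwise conjugation. Next, a one-line computation gives $\langle Y\zeta,\eta\rangle = \bar\eta^{\transpose} A\bar\zeta$ and $\langle\zeta,Y\eta\rangle = \zeta^{\transpose}\bar A\,\eta$; since the second has the same real part as $\bar\eta^{\transpose} A^{\transpose}\bar\zeta$, the map $Y$ is symmetric for the Euclidean metric — equivalently, $\omega\std(Yv,w') + \omega\std(v,Yw') = 0$ for all $v,w'$ — exactly when $\Re\bigl(b^{\transpose}(A - A^{\transpose})a\bigr) = 0$ for all $a,b\in\CC^n$, which forces the antisymmetric matrix $A - A^{\transpose}$ to vanish. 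Thus symmetric complex-antilinear maps correspond precisely to complex \emph{symmetric} matrices $A = A^{\transpose}$, consistent with Corollary~\ref{cor:compatible} and Exercise~\ref{EX:Tomega}.

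Given this dictionary, the lemma reduces to the purely algebraic claim: for nonzero $v,w\in\CC^n$ there is a complex symmetric matrix $A$ with $A\bar v = w$. Setting $u := \bar v\neq 0$, I would pick any $x\in\CC^n$ with $x^{\transpose}u\neq 0$ — such $x$ exists because the linear functional $y\mapsto y^{\transpose}u = \sum_j y_j u_j$ is not identically zero when $u\neq 0$ (e.g.\ take $x$ to be a coordinate vector supported at an index where $u$ is nonzero) — and then set
$$
A = \frac{1}{x^{\transpose}u}\bigl(w\,x^{\transpose} + x\,w^{\transpose}\bigr) - \frac{w^{\transpose}u}{(x^{\transpose}u)^2}\, x\,x^{\transpose}.
$$
This is visibly symmetric, being built from the symmetric matrices $w x^{\transpose} + x w^{\transpose}$ and $x x^{\transpose}$, and a short computation gives $Au = w$: the first term yields $w + \frac{w^{\transpose}u}{x^{\transpose}u}\,x$ and the second subtracts off $\frac{w^{\transpose}u}{x^{\transpose}u}\,x$. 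Translating back, the corresponding map $Y\zeta = A\bar\zeta$ is symmetric, anticommutes with $i$, and satisfies $Yv = A\bar v = w$, as required.

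I do not expect a genuine obstacle here — every step is elementary linear algebra — but there is one point that must not be overlooked: one cannot simply take $A$ proportional to $w u^{\transpose} + u w^{\transpose}$, because the complex bilinear pairing $u^{\transpose}u = \sum_j u_j^2$ can vanish for a nonzero complex vector (for instance $u = (1,i)$), so it is essential to introduce the auxiliary vector $x$ with $x^{\transpose}u\neq 0$ in place of $u$ itself when normalizing. The only other thing requiring a little care is the bookkeeping in the first paragraph identifying the symplectic symmetry condition with $A = A^{\transpose}$ in these coordinates, but that is the routine computation sketched there.
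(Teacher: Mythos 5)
Your proof is correct. It takes a genuinely different route from the text: the paper simply writes down an explicit $2n\times 2n$ real-matrix formula for $Y$ (borrowed from McDuff--Salamon), built from $v$, $w$, $iv$ and the projections $M \mapsto \frac{1}{2}(M + iMi)$ onto the antilinear part, and leaves the verification that it is symmetric, antilinear and sends $v$ to $w$ to the reader. You instead set up the dictionary $\{\text{antilinear }Y\} \leftrightarrow \{Y\zeta = A\bar\zeta\}$, identify the symmetric ones with complex symmetric matrices $A = A^{\transpose}$, and then prove the resulting purely algebraic transitivity statement by an explicit construction. Your dictionary and the symmetry computation check out, and the formula for $A$ does satisfy $Au = w$. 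What your approach buys is transparency and self-containedness: the reduction explains the structure of the answer, and your remark about needing the auxiliary vector $x$ (because $u^{\transpose}u$ can vanish for nonzero complex $u$) isolates exactly the subtlety that is hidden in the paper's real formula, where the normalization by $|v|^2 = v^{\transpose}v > 0$ never runs into isotropy. The paper's version buys brevity and a citation; in real coordinates its two correction terms play precisely the role of your single $xx^{\transpose}$ term, with $x$ effectively replaced by the pair $v, iv$.
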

\begin{proof}
We borrow the proof directly from \cite{McDuffSalamon:Jhol}*{Lemma~3.2.2}
and simply state a formula for $Y$:
\begin{multline*}
Y = \frac{1}{|v|^2} \left( w v^T + v w^T + i\left( wv^T + vw^T\right) i \right) \\
- \frac{1}{|v|^4} \left( \langle w,v \rangle \left( v v^T + i v v^T i\right)
- \langle w , iv \rangle \left( i v v^T - v v^T i \right) \right),
\end{multline*}
where $\langle\ ,\ \rangle$ denotes the standard real inner product on
$\RR^{2n} = \CC^n$.
\end{proof}

\begin{proof}[Conclusion of the proof of Proposition~\ref{prop:universal}]
Since $\tT$ is finite dimensional,
$W^{1,p}_\Theta(u_0^*TM) \oplus 
C_\epsilon(\overline{\End}_\CC(TM,J_0,\omega)\,;\,\uU)$ is a closed subspace
of finite codimension in $T_{j_0} \tT \oplus T_{u_0}\bB^{1,p} \oplus
T_{J_0}\jJ_\epsilon$, hence
Lemma~\ref{lemma:surjective} implies that $D\dbar(j_0,u_0,J_0)$ is also
surjective and has a bounded right inverse.  By the implicit function theorem, 
a neighborhood of $(j_0,u_0,J_0)$ in $\dbar^{-1}(0)$ is now a smooth
Banach submanifold of $\tT \times \bB^{1,p} \times \jJ_\epsilon$.
Repeating several details of the proof of
Theorem~\ref{thm:IFT2} and exploiting the fact that $\Aut(u)$ is always
trivial when $u$ is somewhere injective,
it follows also that $\univ^*(\jJ_\epsilon)$ is a smooth 
(and separable and metrizable)
Banach manifold: locally, it can be identified with
$\dbar^{-1}(0)$, and its tangent space at $(u,J)$ is
$$
T_{(u,J)} \univ^*(\jJ_\epsilon) = \ker D\dbar(j,u,J) \subset
T_j\tT \oplus W^{1,p}(u^*TM) \oplus
C_\epsilon(\overline{\End}_\CC(TM,J,\omega)\,;\,\uU).
$$
Under this local identification, the projection $\pi : \univ^*(\jJ_\epsilon) 
\to \jJ_\epsilon$ is simply the restriction to
$\dbar^{-1}(0)$ of the projection
$$
\tT \times \bB^{1,p} \times \jJ_\epsilon \to \jJ_\epsilon : 
(j,u,J) \mapsto J
$$
and is thus obviously smooth.  Writing the marked points as
$\Theta = (z_1,\ldots,z_m)$, the evaluation map is similarly the 
restriction to $\dbar^{-1}(0)$ of
$$
\tT \times \bB^{1,p} \times \jJ_\epsilon \to M^m : (j,u,J) \mapsto
(u(z_1),\ldots,u(z_m)),
$$
which is smooth by Exercise~\ref{EX:evaluation}, and its derivative 
at $(j,u,J)$ on this larger domain is the linear map
\begin{equation*}
\begin{split}
T_{j}\tT \oplus W^{1,p}(u^*TM) \oplus
C_\epsilon(\overline{\End}_\CC(TM,J,\omega)\,;\,\uU) &\to T_{u(z_1)}M \oplus
\ldots \oplus T_{u(z_m)}M,\\
(y,\eta,Y) &\mapsto (\eta(z_1),\ldots,\eta(z_m)).
\end{split}
\end{equation*}
To prove that $\ev$ is a submersion at $(u,J) \in \univ^*(\jJ_\epsilon)$, 
we therefore need to show that for any given set of tangent vectors 
$\xi_i \in T_{u(z_i)}M$ for $i=1,\ldots,m$, we can find a triple
$(y,\eta,Y) \in \ker D\dbar(j,u,J)$ such that
$\eta(z_i) = \xi_i$ for $i=1,\ldots,m$.  To see this, pick
any smooth section $\xi \in \Gamma(u^*TM)$ that satisfies
$\xi(z_i) = \xi_i$ for $i=1,\ldots,m$, then use
Lemma~\ref{lemma:surjective} to 
find $\eta \in W^{1,p}(u^*TM)$ and
$Y \in C_\epsilon(\overline{\End}_\CC(TM,J,\omega)\,;\,\uU)$ such that
$\eta$ vanishes at each of the marked points $z_1,\ldots,z_m$ and
$$
\mathbf{D}_{u} \eta + Y \circ Tu \circ j = -\mathbf{D}_{u}\xi.
$$
The desired solution is then $(0,\xi + \eta,Y)$.
The proof of Proposition~\ref{prop:universal} is now complete.
\end{proof}

To finish the proof of Proposition~\ref{prop:dense}, note first that
$\univ^*_Z(\jJ_\epsilon) := \ev^{-1}(Z) \subset \univ^*(\jJ_\epsilon)$ is
also a smooth Banach submanifold since $\ev : \univ^*(\jJ_\epsilon) \to M^m$ 
is a submersion.  
Given $(u,J) \in \univ^*_Z(\jJ_\epsilon)$ with $u$ represented by
$(\Sigma,j,\Theta,u) \in \Mod_{g,m}^A(J)$ and the marked points written as
$\Theta = (z_1,\ldots,z_m)$, identify a neighborhood of $(u,J)$ in
$\univ^*(\jJ_\epsilon)$ with $\dbar^{-1}(0)$ as in the above proof.
Then defining the finite-codimensional subspace 
$$
W^{1,p}_Z(u^*TM) = \left\{ \eta \in W^{1,p}(u^*TM)\ |\ 
(\eta(z_1),\ldots,\eta(z_m)) \in TZ \right\},
$$
the tangent space $T_{(u,J)} \univ^*_Z(\jJ_\epsilon)$ is identified with
$$
K_Z := \ker \left( D\dbar(j,u,J)\big|_{T_j\tT \oplus W^{1,p}_Z(u^*TM) \oplus T_J\jJ_\epsilon} \right),
$$
which is a finite-codimensional subspace of 
$\ker D\dbar(j,u,J) = T_{(u,J)} \univ^*(\jJ_\epsilon)$.
The smooth projection
$$
\pi_Z : \univ^*_Z(\jJ_\epsilon) \to \jJ_\epsilon : (u,J) \mapsto J
$$
then has derivative at $(u,J)$ equivalent to the linear projection 
$$
K_Z \to T_J \jJ_\epsilon : (y,\eta,Y) \mapsto Y,
$$
and this gives a natural identification of $\ker d\pi_Z(u,J)$ with the
kernel of the operator
$$
\mathbf{L}_Z := D\dbar_J(j,u)|_{T_j\tT \oplus W^{1,p}_Z(u^*TM)},
$$
where $D\dbar_J(j,u) : T_j\tT \oplus W^{1,p}(u^*TM) \to
L^p(\overline{\Hom}_\CC(T\Sigma,u^*TM))$ is the same operator that
appears in the definition of Fredholm regularity 
(see Definition~\ref{defn:regular}).
We claim that the cokernels of $d\pi_Z(u,J)$ and $\mathbf{L}_Z$
are also isomorphic, so both are Fredholm and have the same index.  This
is a special case of the following general fact from linear functional
analysis.

\begin{lemma}
\label{lemma:parametrized}
Suppose $X$, $Y$ and $Z$ are Banach spaces,
$D : X \to Z$ is a Fredholm operator, $A : Y \to Z$ is another bounded
linear operator and $L : X \oplus Y \to Z : (x,y) \mapsto Dx + Ay$ 
is surjective.  Then the projection
$$
\Pi : \ker L \to Y : (x,y) \mapsto y
$$
is Fredholm and there are natural isomorphisms $\ker \Pi = \ker D$ and
$\coker \Pi = \coker D$.
\end{lemma}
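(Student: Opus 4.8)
The plan is to identify $\ker\Pi$ and $\coker\Pi$ explicitly with $\ker D$ and $\coker D$ using nothing more than elementary linear algebra together with the surjectivity hypothesis on $L$; since $D$ is Fredholm these are finite-dimensional spaces, so $\Pi$ will automatically turn out to be Fredholm once we show that $\coker\Pi$ is finite dimensional (recall the exercise in \S\ref{sec:FredholmProperty}: an operator whose image has finite codimension has closed image).

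First I would note that $\ker L = L^{-1}(0)$ is a closed subspace of the Banach space $X\oplus Y$, hence itself a Banach space, and $\Pi$ is the restriction to it of the bounded projection $X\oplus Y\to Y$, so $\Pi$ is a bounded linear operator between Banach spaces. For the kernel, $(x,y)\in\ker\Pi$ means $y=0$ and $Dx = Dx + A(0) = 0$, so the inclusion $x\mapsto(x,0)$ restricts to a Banach space isomorphism $\ker D \to \ker\Pi$, with inverse the restriction of the projection onto the first factor. Note that this step uses only the form of $L$, not the surjectivity.

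For the cokernel, the key observation is that
\[
\im\Pi = \{\, y\in Y \mid Ay\in\im D \,\} = A^{-1}(\im D),
\]
since $y\in\im\Pi$ iff there exists $x$ with $Dx + Ay = 0$, i.e.\ $Ay\in\im D$. Because $D$ is Fredholm, $\im D$ is closed (Proposition~\ref{prop:Fredholm}), so $Z/\im D = \coker D$ is a Banach space and the composition $\phi := q\circ A : Y \to Z/\im D$ of $A$ with the quotient map $q:Z\to Z/\im D$ is bounded and linear, with $\ker\phi = A^{-1}(\im D) = \im\Pi$. Its image equals $(\im A + \im D)/\im D = \im L/\im D = Z/\im D$, using the surjectivity of $L$; hence $\phi$ descends to an isomorphism $Y/\im\Pi \to \coker D$. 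In particular $\coker\Pi = Y/\im\Pi$ is finite dimensional, so $\im\Pi$ is closed and $\Pi$ is Fredholm, with $\ind\Pi = \dim\ker D - \dim\coker D = \ind D$.

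There is no serious obstacle here; the argument is purely formal. The one thing worth keeping straight is the division of labor between the two hypotheses: the identification $\ker\Pi\cong\ker D$ uses only that $L(x,y) = Dx + Ay$, whereas the surjectivity of $L$ is exactly what makes $\phi$ surjective and thereby pins down $\coker\Pi$. I would also make sure to invoke at the outset that $\im D$ is closed (so that $\coker D$ is a genuine Banach space), which is part of the Fredholm hypothesis.
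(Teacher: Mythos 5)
Your proof is correct and follows essentially the same route as the paper's: identify $\ker\Pi$ with $\ker D$ via $x\mapsto(x,0)$, observe that $\im\Pi = A^{-1}(\im D)$, and use the map induced by $A$ on quotients (the paper factors it through the intermediate space $\im A/(\im D\cap\im A)$, whereas you compose directly with the quotient $Z\to Z/\im D$, but it is the same isomorphism). Your added remarks on why $\coker D$ is a Banach space and why finite codimension of $\im\Pi$ yields the Fredholm property are accurate and harmless.
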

\begin{proof}
The isomorphism of the kernels is clear: it is just the restriction of the
inclusion $X \hookrightarrow X \oplus Y : x \mapsto (x,0)$ to $\ker D$.
We construct an isomorphism $\coker \Pi \to \coker D$ as follows.
Observe that
$\im \Pi$ is simply the space of all $y \in Y$ such that $Ay = -Dx$ for
any $x \in X$, hence $\im \Pi = A^{-1}(\im D)$, and 
$$
\coker \Pi = Y \big/ \im \Pi = Y \big/ A^{-1}(\im D).
$$
Now it is easy to check
that the map $A : Y \to \im A$ descends to an isomorphism
$$
A : Y \big/ A^{-1}(\im D) \to \im A \big/ (\im D \cap \im A),
$$
and similarly, the inclusion $\im A \hookrightarrow Z$ descends to an injective
homomorphism
$$
\im A \big/ (\im D \cap \im A) \to Z \big/ \im D.
$$
Since every $z \in Z$ can be written as $z = Dx + Ay$ by assumption, this
map is also surjective.
\end{proof}

We can now apply the Sard-Smale theorem and conclude that the regular
values of $\pi_Z$ form a Baire subset of $\jJ_\epsilon$, and for each
$J$ in this subset, Lemma~\ref{lemma:parametrized} implies that 
$D\dbar_J(j,u)|_{T_j\tT \oplus W^{1,p}_Z(u^*TM)}$ is surjective 
onto $L^p(\overline{\Hom}_\CC(T\Sigma,u^*TM))$ for every
representative $(\Sigma,j,\Theta,u)$ of any curve $u$ with
$(u,J) \in \univ^*_Z(\jJ_\epsilon)$.  It follows that for such a curve,
$D\dbar_J(j,u)$ is also surjective, hence $u$ is Fredholm regular and
a neighborhood of $u$ in $\Mod_{g,m}^A(J)$ is identified with the
smooth neighborhood of $(j,u)$ in $\dbar_J^{-1}(0)$.  Under this local
identification, the evaluation map on $\Mod_{g,m}^A(J)$ takes the form
$$
\ev : \dbar_J^{-1}(0) \to M^m : (j,u) \mapsto (u(z_1),\ldots,u(z_m)),
$$
and we claim that $\im d(\ev)(j,u)$ is transverse to $T_{\ev(j,u)}Z$.
To see this, observe that given an arbitrary $m$-tuple
$$
(\xi_1,\ldots,\xi_m) \in T_{u(z_1)} M \oplus \ldots \oplus T_{u(z_m)} M =
T_{\ev(u)} M^m,
$$
we can choose a smooth section
$\xi \in \Gamma(u^*TM)$ that matches $\xi_i$ at $z_i$ for $i=1,\ldots,m$,
and then appeal to the surjectivity of $D\dbar_J(j,u)$
on the restricted domain to find
$y \in T_j\tT$ and $\eta \in W^{1,p}_Z(u^*TM)$ such that
$D\dbar_J(j,u)(y,\eta) = - \mathbf{D}_u \xi$.
Then $(y,\eta+\xi) \in \ker D\dbar_J(j,u)$ and
$$
(\xi_1,\ldots,\xi_m) = d\ev(j,u)(y,\eta+\xi) - (\eta(z_1),\ldots,\eta(z_m)) \in
\im d(\ev)(j,u) + T_{\ev(j,u)} Z,
$$
proving the claim.

Since Baire subsets are also dense, 
the set of regular values contains arbitrarily good approximations to $\Jref$ in 
the $C_\epsilon$-topology, and therefore also in the $C^\infty$-topology,
and since $\Jref \in \jJ(M,\omega\,;\,\uU,\Jfix)$ was chosen arbitrarily,
this implies that $\jJ_\reg$ is dense in $\jJ(M,\omega\,;\,\uU,\Jfix)$.  
The proof of Prop.~\ref{prop:dense} is thus complete.

\subsection{Dense implies generic}
\label{subsec:Taubes}

As promised, we shall now improve Prop.~\ref{prop:dense} to the statement
that $\jJ_\reg$ is not just dense but also is a  Baire subset,
i.e.~a countable intersection of open dense subsets in 
$\jJ(M,\omega\,;\,\uU,\Jfix)$, which implies Theorem~\ref{thm:generic}.  The 
idea of this step is originally due to Taubes, and it depends on the fact that
the moduli space of somewhere injective $J$-holomorphic curves can always be 
exhausted---in a way that depends continuously on~$J$---by a countable 
collection of compact subsets.  Observe that the definition of convergence
in $\Mod_{g,m}^A(J)$ does not depend in any essential way on~$J$: thus
one can sensibly speak of a convergent sequence of curves
$u_k \in \Mod_{g,m}^A(J_k)$ where $J_k \in \jJ(M)$ are
potentially different almost complex structures.

\begin{lemma}
\label{lemma:exhaustion}
For every $J \in \jJ(M)$ and $c > 0$, there exists a subset
$$
\Mod_{g,m}^A(J,c) \subset \Mod_{g,m}^A(J)
$$ 
such that the following conditions are satisfied:
\begin{itemize}
\item Every curve in $\Mod_{g,m}^A(J)$ with an injective point mapped into $\uU$
belongs to $\Mod_{g,m}^A(J,c)$ for some $c > 0$;
\item For each $c > 0$ and any sequence $J_k \to J$ in $\jJ(M)$,
every sequence $u_k \in \Mod_{g,m}^A(J_k,c)$ has a subsequence
coverging to an element of $\Mod_{g,m}^A(J,c)$.
\end{itemize}
\end{lemma}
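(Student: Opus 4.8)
The plan is to define $\Mod_{g,m}^A(J,c)$ explicitly as a set of curves satisfying a few quantitative bounds that together encode ``somewhere injectivity with quantitative control,'' in such a way that (i) these bounds are automatically satisfied (for suitable $c$) by any somewhere injective curve with an injective point in $\uU$, and (ii) the bounds are closed conditions that survive $C^\infty_\loc$-limits, while simultaneously preventing the limiting curve from degenerating or losing its injective point. Concretely, I would take $\Mod_{g,m}^A(J,c)$ to consist of all $(\Sigma,j,\Theta,u)\in\Mod_{g,m}^A(J)$ for which there exists a point $z_0\in\Sigma$ such that: $u(z_0)$ lies in a fixed compact subset $K_c\subset\uU$ exhausting $\uU$ as $c\to\infty$; $|du(z_0)|\ge 1/c$ with respect to fixed background metrics on $\Sigma$ and $M$; $\operatorname{dist}\bigl(u(z),u(z_0)\bigr)\ge \tfrac1c\operatorname{dist}(z,z_0)$ for all $z\in\Sigma$ (this is a uniform form of the injectivity of $z_0$); and in addition the complex structure $j$ stays in a compact subset of Teichm\"uller space and the domain stays in a compact (i.e. bounded-genus, bounded-number-of-marked-points, no-bubbling) region of $\Mod_{g,m}$—but since $g$ and $m$ are fixed, the only relevant constraint is a lower bound on the injectivity radius of $(\Sigma,g_j)$, equivalently a bound keeping $[j]$ in a compact part of $\tT(\Sigma,\Theta)$.

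First I would verify property (i): given a somewhere injective curve $u$ with an injective point mapped into $\uU$, the set of injective points is open and nonempty, so one can choose an injective point $z_0$ with $u(z_0)$ in the interior of $\uU$; compactness of $\overline{\uU}$ then gives $u(z_0)\in K_c$ for $c$ large. That $|du(z_0)|>0$ is immediate from injectivity of $du(z_0)$. The uniform lower bound $\operatorname{dist}(u(z),u(z_0))\ge \tfrac1c\operatorname{dist}(z,z_0)$ requires a short argument: near $z_0$ it follows from $du(z_0)$ being injective plus Taylor expansion, and away from a neighborhood of $z_0$ it follows from $u^{-1}(u(z_0))=\{z_0\}$ together with continuity of $u$ and compactness of $\Sigma$; the two estimates patch together, so such a $c$ exists. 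Finally, any fixed curve has a fixed domain with a fixed positive injectivity radius, so the Teichm\"uller constraint is met for $c$ large.

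Next, property (ii) is the compactness statement, and this is where the main work lies. Given $J_k\to J$ and $u_k\in\Mod_{g,m}^A(J_k,c)$ with witnessing points $z_{0,k}$, I would first use the uniform bound on $[j_k]$ in Teichm\"uller space to pass (after reparametrizing by elements of $\Diff_0$) to a subsequence with $j_k\to j$ in $C^\infty$, using properness of the $\Diff_0$-action (Lemma~\ref{lemma:bubbling}) and the fact that a compact part of $\tT(\Sigma,\Theta)$ is genuinely compact. The key point is then that the bound $|du_k(z_{0,k})|\ge 1/c$ together with the uniform Lipschitz-from-below estimate gives a uniform $C^0$-bound on $u_k$ (the image stays in a compact set, since $u_k$ cannot ``escape'' while its distance-comparison constant is controlled and its domain has bounded diameter) and, crucially, a uniform gradient bound: for the simple case we need, the distance-comparison estimate prevents energy concentration near any point, so no bubbling occurs, and then ``gradient bounds imply $C^\infty$-bounds'' (Corollary~\ref{cor:gradBounds}) yields a subsequence converging in $C^\infty_\loc$ to a $J$-holomorphic curve $u$ in class $A$. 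Passing to a further subsequence so that $z_{0,k}\to z_0$, all four defining inequalities pass to the limit by continuity, so $u\in\Mod_{g,m}^A(J,c)$; in particular the limit is not constant and $z_0$ remains an injective point.

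The hard part will be establishing the no-bubbling claim rigorously from the distance-comparison bound alone—that is, showing that the uniform lower Lipschitz constant, which is really an injectivity-type condition on the \emph{whole} map, combines with the a priori homological energy bound $E(u_k)=\langle[\omega],A\rangle$ to rule out concentration of the gradient. Intuitively, a bubble forming at $z_*$ would force $u_k$ to wrap a nontrivial amount of area into a shrinking disk, which is compatible with the energy bound, so the distance-comparison estimate is what must do the real work: it says $u_k$ is ``quasi-isometric from below'' near $z_{0,k}$, but a priori only near $z_{0,k}$, so one must be careful that bubbling elsewhere does not destroy the limit's membership in $\Mod_{g,m}^A(J,c)$. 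I would handle this by being slightly more generous in the definition—allowing $\Mod_{g,m}^A(J,c)$ also to record a lower bound on the injectivity radius of the \emph{graph} of $u_k$ in $\Sigma\times M$ near $z_{0,k}$, scaled by $c$, which is precisely the quantity that is both continuous under $C^\infty_\loc$-limits and incompatible with bubble formation anywhere in the domain once one also imposes the global distance-comparison inequality. Since this is a design choice in the definition rather than a theorem to be proved, the argument then closes; the subtlety is purely in choosing the defining conditions so that (i) and (ii) hold simultaneously, and once that bookkeeping is done correctly the limit arguments are the standard bubbling-free case of Gromov compactness.
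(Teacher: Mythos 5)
There is a genuine gap, and it is exactly at the point you flag as ``the hard part'': your defining conditions do not control $|du|$ away from the injective point, so the compactness claim in the second bullet fails for your definition of $\Mod_{g,m}^A(J,c)$. All of your quantitative conditions (image of $z_0$ in $K_c$, $|du(z_0)|\ge 1/c$, the global lower Lipschitz estimate $\dist(u(z),u(z_0))\ge \tfrac1c\dist(z,z_0)$, and the Teichm\"uller bound) are lower bounds anchored at $z_0$; none of them gives an \emph{upper} bound on $|du_k(z)|$ at points $z$ far from $z_{0,k}$. A sequence can satisfy all of them while its gradient blows up at some $z_*\ne z_0$ (the homological energy bound permits this, as you note), and then no subsequence converges in $C^\infty$ to an element of $\Mod_{g,m}^A(J,c)$ --- the candidate limit is only defined on $\Sigma\setminus\{z_*\}$ and, after removal of singularity, need not lie in class $A$. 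Your proposed repair --- recording a lower bound on the injectivity radius of the graph of $u_k$ near $z_{0,k}$ --- does not close this: a condition localized near $z_{0,k}$ cannot rule out concentration elsewhere, and the assertion that it is ``incompatible with bubble formation anywhere in the domain'' is not justified and is, as far as I can see, false.

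The resolution is much simpler than what you attempt, and it is what the paper does: since you are free to \emph{design} the sets $\Mod_{g,m}^A(J,c)$, simply include the global gradient bound $|du(z)|\le c$ for all $z\in\Sigma$ as one of the defining conditions (``not close to bubbling''). This costs nothing for the first bullet, because any fixed smooth curve satisfies $\sup_\Sigma|du|\le c$ for $c$ large enough; and it makes the second bullet essentially immediate, since a uniform $C^1$-bound gives a uniform $W^{1,p}$-bound and Corollary~\ref{cor:gradBounds} then yields a $C^\infty$-convergent subsequence, after which all the defining inequalities (being closed conditions) pass to the limit. Aside from this, your treatment of the domain is in the same spirit as the paper's (which uses $\dist(\mathbf z,\Delta)\ge 1/c$ in genus zero and two-sided bounds $1/c\le\ell_i(j)\le c$ on hyperbolic geodesic lengths in higher genus); note only that a lower bound on the injectivity radius alone is not quite enough --- you need the two-sided bound to keep $j_k$ in a compact family.
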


Postponing the proof for a moment,
we proceed to show that $\jJ_\reg$ is a Baire subset,
because it is the intersection
of a countable collection of subsets
$$
\jJ_\reg = \bigcap_{c \in \NN} \jJ_\reg^c,
$$
which are each open and dense in $\jJ(M,\omega\,;\,\uU,\Jfix)$.  We define these by
the condition that $J \in \jJ(M,\omega\,;\,\uU,\Jfix)$ belongs to
$\jJ_\reg^c$ if and only if every curve 
$u \in \Mod_{g,m}^A(J,c)$ with $\ev(u) \in Z$ is Fredholm regular and the 
evaluation map $\ev : \Mod_{g,m}^A(J,c) \to M^m$ is transverse to $Z$ at~$u$.
This set obviously
contains $\jJ_\reg$, and is therefore dense due to
Prop.~\ref{prop:dense}.  To see that it is open,
we argue by contradiction: suppose $J \in \jJ_\reg^c$ and 
$J_k \in \jJ(M,\omega\,;\,\uU,\Jfix) \setminus \jJ_\reg^c$ is a
sequence converging to~$J$. Then there is also a sequence
$u_k \in \Mod_{g,m}^A(J_k,c)$ of curves that either are not Fredholm regular or
fail to satisfy the transversality condition with respect to~$Z$.
A subsequence of $u_k$ then converges by Lemma~\ref{lemma:exhaustion} to some
$u \in \Mod_{g,m}^A(J,c)$, which must be regular and satisfy the transversality
condition since 
$J \in \jJ_\reg^c$.  But both conditions are open,
so we have a contradiction.

Theorem~\ref{thm:generic} is now established, except for the proof of
Lemma~\ref{lemma:exhaustion}.  Let us first sketch the intuition behind
this lemma.  Morally, it follows from an important
fact that we haven't yet discussed but soon will: the moduli space
$\Mod_{g,m}^A(J)$ has a natural compactification $\overline{\Mod}^A_{g,m}(J)$,
the \emph{Gromov compactification}, which is a metrizable topological space.
In fact, one can define a metric on $\overline{\Mod}^A_{g,m}(J)$ which does
not depend on $J$; in a more general context, the details of this construction
are carried out in \cite{SFTcompactness}*{Appendix~B}.  Thus if we
denote by $\overline{\Mod}_{\text{bad}}(J)$ the closed subset that consists
of the union of $\overline{\Mod}_{g,m}^A(J) \setminus \Mod_{g,m}^A(J)$
with all the curves in $\Mod_{g,m}^A(J)$ that have no injective point in~$\uU$,
one way to define $\Mod_{g,m}^A(J,c)$ would be as
$$
\Mod_{g,m}^A(J,c) = \left\{ u \in \Mod_{g,m}^A(J)\ \Big|\ 
\dist\left(u, \overline{\Mod}_{\text{bad}}(J)\right) \ge \frac{1}{c} \right\}.
$$
By Gromov's compactness theorem, any sequence $u_k \in \overline{\Mod}_{g,m}^A(J_k)$
with $J_k \to J \in \jJ(M,\omega\,;\,\uU,\Jfix)$ has a subsequence converging to an
element of $\overline{\Mod}_{g,m}^A(J)$, and since
$\Mod_{g,m}^A(J,c) \subset \overline{\Mod}_{g,m}^A(J)$ is a closed subset, the same
holds for a sequence $u_k \in \Mod_{g,m}^A(J_k,c)$ for any fixed $c > 0$.

We will not attempt to make the above sketch precise, as we do not actually
need Gromov's compactness theorem to prove the lemma---in fact, the latter
is true only for almost complex structures that are tamed by a symplectic 
form, and we don't need the symplectic structure either.  The 
following proof does however contain most of the crucial analytical
ingredients in the compactness theory of holomorphic curves.

\begin{proof}[Proof of Lemma~\ref{lemma:exhaustion}]
We'll give a proof first for the case $g=0$ and then sketch the modifications
that are necessary for higher genus.

Assume $g=0$ and $m \ge 3$, so $\Sigma = S^2$.
Any pointed Riemann surface $(\Sigma,j,\Theta)$
is then equivalent to one of the form $(S^2,i,\Theta)$ with 
$\Theta = (0,1,\infty,z_1,\ldots,z_{m-3})$ for
$$
\mathbf{z} := (z_1,\ldots,z_{m-3}) \in (S^2)^{m-3} \setminus \Delta,
$$
where we define the open subset $\Delta \subset (S^2)^{m-3}$
to consist of all tuples $(z_1,\ldots,z_{m-3})$ such that either 
$z_i \in \{0,1,\infty\}$ for some~$i$ or $z_i = z_j$ for some
$i \ne j$.  Choose metrics on $S^2$, $(S^2)^{m-3}$ and $M$, with distance
functions denoted by $\dist(\ ,\ )$.  We define $\Mod_{0,m}^A(J,c)$ to be the
set of all equivalence classes in $\Mod_{0,m}^A(J)$ which have representatives
$(S^2,i,\Theta,u)$ with $\Theta = (0,1,\infty,\mathbf{z})$ and
the following properties:
\begin{enumerate}
\item $(S^2,i,\Theta)$ is ``not close to degenerating,'' in the sense that
$\displaystyle \dist(\mathbf{z},\Delta) \ge \frac{1}{c}$;
\item $u$ is ``not close to bubbling,'' in the sense that
$| du(z) | \le c$ for all $z \in \Sigma$;
\item $u$ is ``not close to losing its injective points,'' meaning there 
exists $z_0 \in \Sigma$ such that
$$
\dist(u(z_0), M\setminus \uU) \ge \frac{1}{c}, \qquad | du(z_0)| \ge \frac{1}{c},
$$
and
$$
\inf_{z \in \Sigma\setminus\{z_0\}} \frac{\dist(u(z_0),u(z))}{\dist(z_0,z)} \ge
\frac{1}{c}.
$$
\end{enumerate}
Note that the map $u$ automatically sends an injective point into $\uU$ by
the third condition, and clearly every curve
$(S^2,i,\Theta,u)$ with this property belongs to $\Mod_{0,m}^A(J,c)$ for sufficiently
large~$c$.  Now if $J_k \to J \in \jJ(M)$ and we have a sequence
$(S^2,i,\Theta_k,u_k) \in \Mod_{0,m}^A(J_k,c)$ with $\Theta_k = 
(0,1,\infty,\mathbf{z}_k)$, we can take a
subsequence so that $\mathbf{z}_k \to \mathbf{z} \in (S^2)^{m-3}\setminus \Delta$
with $\dist(\mathbf{z},\Delta) \ge 1/c$.
Likewise, the images of the injective points of $u_k$ in $\uU$ may be
assumed to converge to a point at least distance $1/c$ away from $M \setminus \uU$
since $\overline{\uU}$ is compact.  Together with the bound $|du_k| \le c$,
this gives a uniform $C^1$-bound and thus a uniform $W^{1,p}$-bound on~$u_k$.
The regularity estimates of Chapter~\ref{chapter:local} 
(specifically Corollary~\ref{cor:gradBounds}) now
give a $C^\infty$-convergent subsequence $u_k \to u$, and
we conclude $(S^2,i,\Theta_k,u_k) \to (S^2,i,\Theta,u) \in \Mod_{0,m}^A(J,c)$,
where $\Theta := (0,1,\infty,\mathbf{z})$.

If $g = 0$ and $m < 3$, one only need modify the above argument by fixing the
marked points to be a suitable subset of $\{0,1,\infty\}$.  The
first condition in the above definition of $\Mod_{0,m}^A(J,c)$ is then
vacuous.

For $g \ge 1$, one can no longer describe variations in~$j$ purely in terms
of the marked points~$\Theta$, so we need a different trick to obtain
compactness of a sequence~$j_k$.  This requires some knowledge of the
Deligne-Mumford compactification of $\Mod_{g,m}$, which we will discuss
in a later chapter;
for now we simply summarize the main ideas.
Choose a model pointed surface $(\Sigma,\Theta)$ with genus $g$ and
$m$ marked points; if it is not stable, add enough additional marked points
to create a stable pointed surface $(\Sigma,\Theta')$, and let
$m' = \#\Theta'$.  Since $\chi(\Sigma \setminus \Theta') < 0$, for every
$j \in \jJ(\Sigma)$ there is a unique
complete hyperbolic metric $g_j$ of constant curvature~$-1$ on
$\Sigma\setminus \Theta'$ that defines the same conformal structure as~$j$.
There is also a singular pair of pants decomposition, that is, we can
fix $3g - 3 + m'$ distinct classes in $\pi_1(\Sigma\setminus\Theta')$ and
choose the unique geodesic in each of these so that they separate 
$\Sigma \setminus \Theta'$ into $-\chi(\Sigma\setminus\Theta')$ surfaces 
with the homotopy type of a twice punctured disk.  This procedure associates
to each $j \in \jJ(\Sigma)$ a set of real numbers
$$
\ell_1(j),\ldots,\ell_{3g - 3 + m'}(j) > 0,
$$
the lengths of the geodesics, which depend continuously on~$j$.
Now we define $\Mod_{g,m}^A(J,c)$ by the same scheme as with
$\Mod_{0,m}^A(J,c)$ above, but replacing the first condition by
$$
\frac{1}{c} \le \ell_i(j) \le c
\qquad
\text{ for each $i=1,\ldots,3g-3+m'$.}
$$
Now any sequence $(\Sigma,j_k,\Theta,u_k) \in \Mod_{g,m}^A(J,c)$
has a subsequence for which the lengths $\ell_i(j_k)$ converge in
$[1/c,c]$, implying that $j_k$ converges in $C^\infty$ to a complex
structure~$j$.  The rest of the argument works as before.
\end{proof}

\section{Transversality of the evaluation map}
\label{sec:evaluation}

Most applications of pseudoholomorphic curves involve the
natural evaluation map $\ev = (\ev_1,\ldots,\ev_m) : 
\Mod_{g,m}^A(J) \to M \times \ldots \times M$,
which can be used for instance to count intersections of holomorphic
curves with fixed points or submanifolds in the target.  Applications
of this type are facilitated by the following extension of 
Theorem~\ref{thm:localStructure}.

\begin{thm}
\label{thm:localStructureZ}
Assume $(M,\omega)$ is a $2n$-dimensional
symplectic manifold without boundary,
$\uU \subset M$ is an open subset with compact closure,
$\Jfix \in \jJ(M,\omega)$ and $m \in \NN$ are fixed, 
and $Z \subset M^m$ is a
smooth submanifold without boundary.  Then there exists a Baire subset
$\jJ_\reg^Z(M,\omega\,;\,\uU,\Jfix) \subset \jJ(M,\omega\,;\,\uU,\Jfix)$ 
such that for every $J \in \jJ_\reg^Z(M,\omega\,;\,\uU,\Jfix)$, the space
$\Mod^*_\uU(J ; Z) \subset \Mod(J)$ of $J$-holomorphic curves with
injective points mapped into~$\uU$ and $m$ marked points satisfying the
constraint
$$
\ev(u) \in Z
$$
is a smooth finite-dimensional manifold.  The dimension of
$\Mod^*_\uU(J;Z) \cap \Mod_{g,m}^A(J)$ for any $g \ge 0$ and $A \in H_2(M)$
is $\virdim \Mod_{g,m}^A(J) - (2nm - \dim Z)$.
\end{thm}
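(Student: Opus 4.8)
The plan is to reduce the statement to Theorem~\ref{thm:generic} together with the implicit function theorem of \S\ref{sec:IFT} (Theorem~\ref{thm:IFT2}), so that essentially no new analysis is required. First I would define the desired Baire set, with $m$ the fixed number of marked points, by the countable intersection
\[
\jJ_\reg^Z(M,\omega\,;\,\uU,\Jfix) := \bigcap_{g \ge 0,\ A \in H_2(M)} \jJ_\reg^Z(M,\omega\,;\,\uU,\Jfix\,;\,g,m,A).
\]
Each factor is a Baire subset of $\jJ(M,\omega\,;\,\uU,\Jfix)$ by Theorem~\ref{thm:generic}, and since $H_2(M)$ is countable, so is the index set; as $\jJ(M,\omega\,;\,\uU,\Jfix)$ is a Baire space, the intersection is again a Baire subset. (This is the same bookkeeping used to define $\jJ_\reg(M,\omega\,;\,\uU,\Jfix)$ after Theorem~\ref{thm:generic}, except that here $m$ is held fixed rather than also intersected over.)

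Next, fix $J \in \jJ_\reg^Z(M,\omega\,;\,\uU,\Jfix)$ and take any curve $u \in \Mod^*_\uU(J\,;\,Z) \cap \Mod_{g,m}^A(J)$. By definition $u$ has an injective point mapped into~$\uU$, hence is somewhere injective, so $\Aut(u)$ is trivial by Exercise~\ref{EX:trivialAut}; it also satisfies $\ev(u) \in Z$. Both conditions in Definition~\ref{defn:JregZ} therefore apply to~$u$: it is Fredholm regular, so by Theorem~\ref{thm:IFT2} a neighborhood of $u$ in $\Mod_{g,m}^A(J)$ is a genuine smooth manifold of dimension $\virdim \Mod_{g,m}^A(J)$ on which $\ev : \Mod_{g,m}^A(J) \to M^m$ is smooth, and moreover $\ev$ is transverse to $Z$ at~$u$. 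Applying the preimage theorem to this smooth map between finite-dimensional manifolds, $\ev^{-1}(Z)$ is near~$u$ a smooth submanifold of $\Mod_{g,m}^A(J)$ of codimension $\codim_{M^m} Z = 2nm - \dim Z$, hence of dimension $\virdim \Mod_{g,m}^A(J) - (2nm - \dim Z)$. Since the somewhere injective locus $\Mod^*_\uU(J) \cap \Mod_{g,m}^A(J)$ is open in $\Mod_{g,m}^A(J)$ and $\Mod^*_\uU(J\,;\,Z) \cap \Mod_{g,m}^A(J)$ is precisely its intersection with $\ev^{-1}(Z)$, this gives the asserted smooth manifold structure near~$u$. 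Finally, the components $\Mod_{g,m}^A(J)$ for distinct $(g,A)$ are mutually disjoint relatively open subsets of $\Mod(J)$ in the topology of \S\ref{sec:moduliSpace}, so $\Mod^*_\uU(J\,;\,Z) = \bigsqcup_{g,A} \big(\Mod^*_\uU(J\,;\,Z) \cap \Mod_{g,m}^A(J)\big)$ is a smooth manifold with dimension $\virdim \Mod_{g,m}^A(J) - (2nm - \dim Z)$ on each piece, as claimed.

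The main point deserving care is not analytic but conceptual: one must verify that the transversality clause built into Definition~\ref{defn:JregZ} — which in the proof of Theorem~\ref{thm:generic} is established as the surjectivity of $D\dbar_J(j,u)$ restricted to the $Z$-constrained domain $T_j\tT \oplus W^{1,p}_Z(u^*TM)$ — is exactly the condition that, read through the implicit function theorem chart of Theorem~\ref{thm:IFT2}, becomes the ordinary finite-dimensional transversality of $\ev|_{\Mod_{g,m}^{A,\reg}(J)}$ to~$Z$. This equivalence is precisely what the final paragraph of \S\ref{subsec:dense} shows, so it suffices to cite that computation. All of the genuine difficulty (the constrained universal moduli space $\univ^*_Z(\jJ_\epsilon)$ being a Banach manifold, via Lemma~\ref{lemma:surjective} and somewhere injectivity; the Sard--Smale argument; and the Taubes-type upgrade from dense to Baire in \S\ref{subsec:Taubes}) has already been carried out in the proof of Theorem~\ref{thm:generic}, so the remaining work is routine.
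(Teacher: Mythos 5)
Your proposal is correct and is essentially identical to the paper's argument: the paper likewise defines $\jJ_\reg^Z(M,\omega\,;\,\uU,\Jfix)$ as the countable intersection $\bigcap_{g,A}\jJ_\reg^Z(M,\omega\,;\,\uU,\Jfix\,;\,g,m,A)$ and deduces the result immediately from Theorems~\ref{thm:generic} and~\ref{thm:IFT2}, with the transversality clause of Definition~\ref{defn:JregZ} supplying the codimension $2nm-\dim Z$ cut via the preimage theorem. Your additional remarks (triviality of $\Aut(u)$, openness of the somewhere-injective locus, and the identification of the Banach-space transversality with the finite-dimensional one) are exactly the details the paper leaves implicit.
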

The theorem follows immediately from 
Theorems~\ref{thm:IFT2} and \ref{thm:generic}, as we can define
$\jJ_\reg^Z(M,\omega\,;\,\uU,\Jfix)$ as a countable intersection of the
Baire subsets provided by Theorem~\ref{thm:generic}:
$$
\jJ_\reg^Z(M,\omega\,;\,\uU,\Jfix) = \bigcap_{g \ge 0, A \in H_2(M)} 
\jJ_\reg^Z(M,\omega\,;\,\uU,\Jfix\,;\,g,m,A).
$$
We are also free to shrink $\jJ_\reg^Z(M,\omega\,;\,\uU,\Jfix)$ further by
taking its intersection with $\jJ_\reg(M,\omega\,;\,\uU,\Jfix)$, thus
ensuring without loss of generality that \emph{all} curves with 
injective points in $\uU$ are
regular, including those with $\ev(u) \not\in Z$.

\begin{example}
\label{ex:constraintPoints}
Suppose $Z$ is a single point, i.e.~pick
points $p_1,\ldots,p_m \in M$ and denote the resulting $1$-point subset
by $\mathbf{p} \in \{(p_1,\ldots,p_m)\} \subset M^m$.  
Then Theorem~\ref{thm:localStructureZ} implies
that for generic $J$, the space of closed somewhere injective
$J$-holomorphic curves $u$ with genus~$g$, in homology class $A$ and with
$m$ marked points satisfying the constraints $u(z_i) = p_i$ for $i=1,\ldots,m$
is a smooth manifold of dimension
\begin{equation*}
\begin{split}
\dim \ev^{-1}(\mathbf{p}) &=
\virdim \Mod_{g,m}^A(J) - \dim M^m \\
&= (n-3)(2-2g) + 2c_1(A) + 2m - 2nm \\
&= (n-3)(2-2g) + 2 c_1(A) - 2m(n-1).
\end{split}
\end{equation*}
\end{example}

Another simple application is the following generalization of
Corollary~\ref{cor:nonnegativeIndex}.

\begin{cor}
\label{cor:intZ}
Suppose $(M,\omega)$ is a $2n$-dimensional
symplectic manifold without boundary, $J$ is an
$\omega$-compatible almost complex structure, $\uU \subset M$ 
is an open subset with compact closure, and $Z_1,\ldots,Z_m \subset M$ is a 
pairwise disjoint finite collection of connected submanifolds without boundary.  
Then after a generic perturbation of $J$ to a new
compatible almost complex structure $J'$ matching $J$ outside~$\uU$, 
every $J'$-holomorphic curve that maps an injective
point into~$\uU$ and intersects all of the submanifolds
$Z_1,\ldots,Z_m$ satisfies
$$
\ind(u) \ge 2m(n-1) - \sum_{i=1}^m \dim Z_i.
$$
\end{cor}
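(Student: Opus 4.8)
The plan is to deduce this from Theorem~\ref{thm:localStructureZ} by a dimension count, applied to the product submanifold
$$Z := Z_1 \times \ldots \times Z_m \subset M^m,$$
which has no boundary and satisfies $\dim Z = \sum_{i=1}^m \dim Z_i$, hence has codimension $2nm - \sum_{i=1}^m \dim Z_i$ in $M^m$. First I would set $\Jfix := J$ and take $J'$ to be any element of the Baire (hence dense) subset $\jJ_\reg^Z(M,\omega\,;\,\uU,J) \subset \jJ(M,\omega\,;\,\uU,J)$ supplied by Theorem~\ref{thm:localStructureZ}; since $J' = J$ on $M \setminus \uU$, this is exactly the advertised generic perturbation of~$J$ supported in~$\uU$.

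Now let $u$ be a closed $J'$-holomorphic curve of genus $g$ in some class $A \in H_2(M)$ which maps an injective point into~$\uU$ and meets each of $Z_1,\ldots,Z_m$. Fixing a parametrized representative $u : \Sigma \to M$, I would pick for each $i$ a point $z_i \in \Sigma$ with $u(z_i) \in Z_i$. Because the $Z_i$ are pairwise disjoint, the points $u(z_1),\ldots,u(z_m)$ are distinct, hence the $z_i$ are distinct and $\Theta := (z_1,\ldots,z_m)$ is an admissible choice of $m$ marked points; the resulting $(\Sigma,j,u,\Theta)$ lies in $\Mod_{g,m}^A(J')$, still maps an injective point into~$\uU$, and satisfies $\ev(u) \in Z$, so it represents an element of $\Mod^*_\uU(J';Z)$. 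By Theorem~\ref{thm:localStructureZ} the component of $\Mod^*_\uU(J';Z)$ through~$u$ is a smooth manifold of dimension
$$\virdim \Mod_{g,m}^A(J') - \Big( 2nm - \sum_{i=1}^m \dim Z_i \Big),$$
and since a nonempty manifold has nonnegative dimension, this forces $\virdim \Mod_{g,m}^A(J') \ge 2nm - \sum_{i=1}^m \dim Z_i$.

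To finish, I would compare the definitions \eqref{eqn:virdim} and \eqref{eqn:index}, which give $\virdim \Mod_{g,m}^A(J') = \ind(u) + 2m$, where $\ind(u)$ is the index of the underlying unmarked curve. Substituting this into the previous inequality and rearranging yields
$$\ind(u) \ge 2nm - 2m - \sum_{i=1}^m \dim Z_i = 2m(n-1) - \sum_{i=1}^m \dim Z_i,$$
which is the claim; as a sanity check, the case $m = 0$ recovers Corollary~\ref{cor:nonnegativeIndex}. There is no genuine analytic obstacle here, since everything rests on the transversality package already in place; the only points deserving a word of care are the elementary bookkeeping facts that the forced marked points are automatically distinct — so that one really obtains a curve in $\Mod_{g,m}^A(J')$ — and that adding $m$ marked points raises the virtual dimension of the moduli space by exactly $2m$ while leaving the index $\ind(u)$ of the unmarked curve unchanged.
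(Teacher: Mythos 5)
Your proposal is correct and follows essentially the same route as the paper: apply Theorem~\ref{thm:localStructureZ} with $Z = Z_1 \times \cdots \times Z_m$, promote the curve to an element of $\ev^{-1}(Z)$ by adding marked points at the intersections, and read off the inequality from nonnegativity of the dimension. Your explicit observation that pairwise disjointness of the $Z_i$ forces the marked points to be distinct is a detail the paper leaves implicit, and it is exactly the role that hypothesis plays.
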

\begin{proof}
Assume $J'$ is generic such that for all $g \ge 0$ and $A \in H_2(M)$,
the set of curves in $\Mod_{g,m}^A(J')$ with
injective points in~$\uU$ is a smooth manifold of
the expected dimension and the evaluation map on this space is transverse
to~$Z_1 \times \ldots \times Z_m$.  If a curve $u \in \Mod_{g,0}^A(J')$ with 
the stated properties exists, 
then by adding a marked point $z_i$ at any point where it intersects~$Z_i$
for each $i=1,\ldots,m$, we can 
regard $u$ as an element of $\ev^{-1}(Z_1 \times \ldots \times Z_m) 
\subset \Mod_{g,m}^A(J')$, proving that
the latter is nonempty and has nonnegative dimension near~$u$.  
This dimension is
\begin{equation*}
\begin{split}
0 &\le \dim \ev^{-1}(Z_1 \times \ldots \times Z_m) = 
\virdim \Mod_{g,m}^A(J') - \sum_{i=1}^m \codim Z_i \\
&= \virdim \Mod_{g,0}^A(J') + 2m - \left(2mn - \sum_{i=1}^m \dim Z_i\right) \\
&= \ind(u) + 2m (1 - n) + \sum_{i=1}^m \dim Z_i.
\end{split}
\end{equation*}
\end{proof}

\begin{remark}
\label{remark:dependsOnJ}
This seems a good moment to emphasize that the definition of the word
``generic'' in Example~\ref{ex:constraintPoints} and Corollary~\ref{cor:intZ}
\emph{depends on $Z$}, i.e.~different choices of submanifolds
$Z_1, Z_2 \subset M^m$ generally yield different Baire subsets
$\jJ_\reg^{Z_1}(M,\omega\,;\,\uU,\Jfix)$ and 
$\jJ_\reg^{Z_2}(M,\omega\,;\,\uU,\Jfix)$.  For instance, one should not get the
impression from Example~\ref{ex:constraintPoints} that a generic choice of a
single $J \in \jJ(M,\omega)$ suffices to ensure that the spaces
$$
\left\{ u \in \Mod_{g,m}^A(J)\ |\ \text{$u$ is somewhere injective and
$\ev(u) = \mathbf{p}$} \right\}
$$
are smooth manifolds of dimension $(n-3)(2-2g) + 2 c_1(A) - 2m(n-1)$ for
all $\mathbf{p} \in M^m$.  One could arrange this simultaneously for any 
\emph{countable} set of points $\mathbf{p} \in M^m$, but it is easy to see
that this cannot hold for uncountable sets in general: indeed, 
Corollary~\ref{cor:intZ} implies that for each point $\mathbf{p} \in M^m$, 
taking $J$ generic ensures that every closed somewhere injective 
$J$-holomorphic curve $u$ with $m$ marked points satisfying $\ev(u) =
\mathbf{p}$ satisfies $\ind(u) \ge 2m(n-1)$.  If one could find a $J$
such that this holds for all $\mathbf{p} \in M^m$, it would imply that 
simple $J$-holomorphic curves $u$ with $\ind(u) < 2m(n-1)$ do not exist, 
and since the choice of $m \in \NN$ in this discussion was arbitrary, the
conclusion is clearly absurd.  This illustrates the fact that an 
\emph{uncountable} intersection of Baire subsets may in general be empty.
\end{remark}

For a slightly different type of application, one can prove various results
along the lines of the statement that
generic $J$-holomorphic curves in dimension greater than four are
injective.  For example:

\begin{cor}
\label{cor:injective}
Suppose $(M,\omega)$ is a closed symplectic manifold of dimension~$2n \ge 6$.
Then for generic $J \in \jJ(M,\omega)$, every somewhere injective
$J$-ho\-lo\-mor\-phic curve $u \in \Mod^*(J)$ with $\ind(u) < 2n-4$ is 
injective.\footnote{We will strengthen this result in 
Corollary~\ref{cor:embedded} below so that
the word ``injective'' can be replaced by ``embedded''.}
\end{cor}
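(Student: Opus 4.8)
The plan is to eliminate self-intersections by a dimension count, applying the constrained genericity theorem (Theorem~\ref{thm:localStructureZ}) to the diagonal in $M \times M$.

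First I would set up the relevant moduli space. Suppose $(\Sigma,j,u) \in \Mod^*(J)$ is a somewhere injective curve representing $A := [u] \in H_2(M)$ with domain of genus~$g$, and suppose $u$ fails to be injective. Since $u$ is somewhere injective it is simple (Proposition~\ref{prop:multipleCovers}), so by Corollary~\ref{cor:simple} it is an embedding outside of a finite set of critical points and self-intersections; in particular the self-intersection points are isolated, and non-injectivity forces the existence of \emph{distinct} points $z_1 \ne z_2 \in \Sigma$ with $u(z_1) = u(z_2)$. Marking these two points, $(\Sigma,j,(z_1,z_2),u)$ defines an element of $\Mod_{g,2}^A(J)$ (the marked points are distinct, as required by the definition of the moduli space), and it lies in the preimage $\ev^{-1}(\Delta)$ of the diagonal $\Delta \subset M \times M$, which is a closed submanifold of $M^2$, without boundary, of dimension~$2n$. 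Moreover $u$ still possesses an injective point (adding marked points does not affect somewhere injectivity), which is trivially mapped into $\uU := M$.

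Next I would invoke Theorem~\ref{thm:localStructureZ} with $m = 2$, $\uU = M$ (so the choice of $\Jfix$ is irrelevant) and $Z = \Delta$. This produces a Baire subset $\jJ_\reg^\Delta(M,\omega) \subset \jJ(M,\omega)$ such that for every $J$ in it and all $g \ge 0$, $A \in H_2(M)$, the space $\Mod^*_M(J;\Delta) \cap \Mod_{g,2}^A(J)$ is a smooth manifold of dimension
\[
\virdim \Mod_{g,2}^A(J) - (2n\cdot 2 - \dim \Delta) = \bigl(\ind(u) + 4\bigr) - 2n = \ind(u) - 2n + 4,
\]
where we used the virtual dimension formula \eqref{eqn:virdim}, the definition \eqref{eqn:index} of $\ind(u)$, and the fact that adding $m = 2$ marked points raises the virtual dimension by $2m = 4$. (Here $H_2(M)$ is countable because $M$ is closed, so $\jJ_\reg^\Delta(M,\omega)$ really is a countable intersection of Baire subsets and hence Baire.) This dimension is negative precisely when $\ind(u) < 2n - 4$; in that case the manifold above is empty, since a smooth manifold of negative dimension is empty (cf.\ the remark preceding Corollary~\ref{cor:nonnegativeIndex}), contradicting the existence of $(\Sigma,j,(z_1,z_2),u)$. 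Therefore for $J \in \jJ_\reg^\Delta(M,\omega)$, every somewhere injective curve with $\ind(u) < 2n - 4$ is injective, which is the assertion of the corollary.

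Essentially everything here is a formal consequence of results already in place; the only step that warrants real attention is the reduction in the second paragraph—verifying that a non-injective somewhere injective curve truly admits a \emph{pair of distinct} domain points with equal image, so that it is a legitimate point of the two-pointed moduli space—and this is exactly what the local structure of simple curves (Corollary~\ref{cor:simple}) guarantees. It is worth stressing that we make no effort to control the critical points of $u$: a somewhere injective curve with an isolated critical point can still be injective as a map, which is why the conclusion is injectivity rather than embeddedness. The stronger statement of Corollary~\ref{cor:embedded} requires the analogous argument carried out with an additional marked point constrained to be a critical point, tracking the corresponding moduli space of curves with a prescribed vanishing first derivative.
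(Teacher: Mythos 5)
Your proposal is correct and follows essentially the same route as the paper: add marked points at a self-intersection pair, apply the genericity theorem for the evaluation map with $Z = \Delta \subset M \times M$, and conclude from the negative dimension $\ind(u) + 4 - 2n$. The appeal to Corollary~\ref{cor:simple} in your second paragraph is unnecessary (non-injectivity already means, by definition, that distinct points $z_1 \ne z_2$ with $u(z_1) = u(z_2)$ exist), but this is harmless.
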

\begin{proof}
Choose $J$ generic so that for every $g \ge 0$ and $A \in H_2(M)$, the
evaluation map on the space of somewhere injective curves in
$\Mod_{g,2}^A(J)$ is transverse to the diagonal
$$
\Delta := \{ (p,p) \in M \times M\ |\ p \in M \}.
$$
Then for any curve $u$ that is somewhere injective but has a self-intersection
$u(z_1) = u(z_2)$ for $z_1 \ne z_2$, we can add marked points at $z_1$ and
$z_2$ and thus view $u$ as an element of $\ev^{-1}(\Delta) \subset
\Mod_{g,2}^A(J)$, proving that $\ev^{-1}(\Delta)$ is nonempty and therefore
has nonnegative dimension.  This dimension is
$$
0 \le \virdim \Mod_{g,2}^A(J) - \codim \Delta = \virdim \Mod_{g,0}^A(J) + 4 - 2n
= \ind(u) + 4 - 2n.
$$
\end{proof}

One consequence of this result is that in higher dimensions (i.e.~$2n \ge 6$),
a simple and Fredholm regular curve of index~$0$ can always have its self-intersections
perturbed away by a small change in~$J$.  No such result holds in dimension
four, and there are good topological reasons for this, as
positivity of intersections (Theorem~\ref{thm:positivity}) implies that no 
self-intersection of a simple $J$-holomorphic curve can ever be eliminated
by small perturbations.  The following exercise shows however that 
\emph{triple} intersections can generically be avoided, 
even in dimension four.

\begin{exercise}
Prove that in any closed symplectic manifold $(M,\omega)$ of dimension $2n \ge 4$,
for generic $J \in \jJ(M,\omega)$, there is no somewhere injective
$J$-holomorphic curve $u \in \Mod^*(J)$ with $\ind(u) < 4n - 6$ having three
pairwise disjoint points $z_1,z_2,z_3$ in its domain such that
$u(z_1) = u(z_2) = u(z_3)$.
\end{exercise}

Finally, we state a generalization of Theorem~\ref{thm:localStructureZ}
that is useful in defining the rational Gromov-Witten invariants of
semipositive symplectic manifolds, see
\cite{McDuffSalamon:Jhol}*{Chapters~6 and~7}.  The proof is a
straightforward modification of the proof of
Theorem~\ref{thm:localStructureZ}.

\begin{thm}
\label{thm:localStructureZZ}
Assume $(M,\omega)$, $\uU \subset M$ and $\Jfix \in \jJ(M,\omega)$ are
given as in Theorem~\ref{thm:localStructureZ},
along with finite collections of integers $g_i,m_i \ge 0$ and
homology classes $A_i \in H_2(M)$ for $i=1,\ldots,N$, and a smooth
submanifold
$$
Z \subset M^{m_1} \times \ldots \times M^{m_N}
$$
without boundary.  For any $J \in \jJ(M)$, let
$$
\Mod^*_N(J) \subset \Mod_{g_1,m_1}^{A_1}(J) \times \ldots \times
\Mod_{g_N,m_N}^{A_N}(J)
$$
denote the open subset consisting of $N$-tuples $(u_1,\ldots,u_N)$ such
that each curve $u_i : \Sigma_i \to M$ for $i=1,\ldots,N$ has an
injective point $z_i \in \Sigma_i$ with
$$
u_i(z_i) \in \uU, \quad\text{ and }\quad
u_i(z_i) \not\in \bigcup_{j \ne i} u_j(\Sigma_j).
$$
Then there exists a Baire subset 
$\jJ_\reg^Z \subset \jJ(M,\omega\,;\,\uU,\Jref)$ such that for all
$J \in \jJ_\reg^Z$, $\Mod^*_N(J)$ is a smooth manifold and the 
composite evaluation map
$$
(\ev^1,\ldots,\ev^N) : \Mod^*_N(J) \to M^{m_1} \times \ldots \times M^{m_m}
$$
is transverse to~$Z$, where $\ev^i$ denotes the evaluation map on
$\Mod_{g_i,m_i}^{A_i}(J)$ for $i=1,\ldots,N$.
\end{thm}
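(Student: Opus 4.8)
The plan is to adapt the proof of Theorem~\ref{thm:localStructureZ} essentially verbatim, with bookkeeping to handle the $N$-tuple and the extra ``separation'' conditions. First I would observe that the open condition defining $\Mod^*_N(J)$ --- each $u_i$ has an injective point $z_i$ mapped into $\uU$ and \emph{not} lying on any of the other curves $u_j$ --- is precisely what is needed to run the surjectivity argument of Lemma~\ref{lemma:surjective} component-by-component: when we vary $J$ near $u_i(z_i)$ by a bump function supported in a small ball around that point, the perturbation only affects the Cauchy--Riemann equation for $u_i$ on a neighborhood of $z_i$ and for none of the other curves $u_j$ (since their images miss $u_i(z_i)$), exactly as in the somewhere-injective case.

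The key steps, in order: (1) Set up the universal moduli space $\univ^*_N(\jJ_\epsilon)$ of tuples $(u_1,\ldots,u_N,J)$ with $J \in \jJ_\epsilon$ (the Floer $C_\epsilon$-space of \S\ref{subsec:dense}) and $(u_1,\ldots,u_N) \in \Mod^*_N(J)$; locally this is the zero set of a section $\dbar$ of a product Banach space bundle over $\prod_i (\tT_i \times \bB_i^{1,p}) \times \jJ_\epsilon$, whose linearization is the direct sum $\bigoplus_i D\dbar_{J}(j_i,u_i)$ plus the single $J$-perturbation term $\bigoplus_i (Y \circ Tu_i \circ j_i)$. (2) Prove $\univ^*_N(\jJ_\epsilon)$ is a smooth separable metrizable Banach manifold: the required surjectivity is that $\bigoplus_i \mathbf{D}_{u_i}$ together with the diagonal $C_\epsilon$-perturbation $Y \mapsto \bigoplus_i (Y \circ Tu_i \circ j_i)$ is onto; by the $L^2$-pairing/Hahn--Banach argument one reduces to showing a tuple $(\alpha_1,\ldots,\alpha_N)$ in the kernel of $\bigoplus_i \mathbf{D}_{u_i}^*$ orthogonal to all such $Y$-variations must vanish, and this follows by choosing $Y$ a bump supported near the injective point $u_i(z_i)$ of whichever component $u_i$ has $\alpha_i \not\equiv 0$ --- here the separation condition guarantees the bump only picks up the $L^2$-product over $u_i$ near $z_i$. (3) Show the composite evaluation map $(\ev^1,\ldots,\ev^N) : \univ^*_N(\jJ_\epsilon) \to \prod_i M^{m_i}$ is a submersion (same argument as in Proposition~\ref{prop:universal}: prescribe arbitrary tangent vectors at the marked points by choosing a section $\xi_i$ on each $u_i^*TM$ and correcting via Lemma~\ref{lemma:surjective}), hence $\univ^*_{N,Z}(\jJ_\epsilon) := (\ev^1,\ldots,\ev^N)^{-1}(Z)$ is a smooth Banach submanifold. (4) Apply Lemma~\ref{lemma:parametrized} to see that the projection $\pi_Z : \univ^*_{N,Z}(\jJ_\epsilon) \to \jJ_\epsilon$ is Fredholm, and invoke the Sard--Smale theorem: regular values form a Baire subset of $\jJ_\epsilon$, giving density in $\jJ(M,\omega\,;\,\uU,\Jfix)$ as in Proposition~\ref{prop:dense}. (5) Upgrade ``dense'' to ``Baire'' by the Taubes exhaustion argument of \S\ref{subsec:Taubes}: one builds compact-exhaustion subsets $\Mod^*_N(J,c)$ by imposing, on each component, the same uniform bounds (domain not degenerating, $|du_i| \le c$, injective point of $u_i$ at distance $\ge 1/c$ from $M \setminus \uU$ \emph{and} from the images of the other curves, with $|du_i(z_i)| \ge 1/c$ and the usual injectivity-modulus bound), then define $\jJ_{\reg}^{Z,c}$ by requiring regularity plus transversality-to-$Z$ over $\Mod^*_N(J,c)$, which is open and dense, and intersect over $c \in \NN$.

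I expect the main obstacle to be step~(2), specifically making precise that the diagonal nature of the $C_\epsilon$-perturbation --- one single $Y$ feeding into \emph{all} components simultaneously --- does not obstruct surjectivity. The point is that a single bump function localized near one component's injective point leaves the other components' equations untouched, so one can peel off the components one at a time; but writing this cleanly requires care that the injective point $z_i$ chosen in the argument can be taken to avoid the (isolated) zeros of $\alpha_i$, the marked points, \emph{and} a neighborhood of the other images $u_j(\Sigma_j)$, all of which is guaranteed by the defining open condition of $\Mod^*_N(J)$ together with the similarity principle (\S\ref{sec:similarity}). Beyond that, the Fredholm index computation via Lemma~\ref{lemma:parametrized} goes through unchanged since the $J$-variation operator $A$ now has as target the direct sum $\bigoplus_i L^p(\overline{\Hom}_\CC(T\Sigma_i,u_i^*TM))$, and all remaining details are routine adaptations of what has already been proved. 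I would state that the dimension count and the final assembly are left to the reader, as the excerpt already remarks that this is ``a straightforward modification.''
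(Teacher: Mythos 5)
Your proposal is correct and follows exactly the route the paper intends: the same universal-moduli-space/Sard--Smale/Taubes machinery as in the proof of Theorem~\ref{thm:localStructureZ}, with the separation condition $u_i(z_i) \notin \bigcup_{j\ne i} u_j(\Sigma_j)$ doing precisely the work you assign to it, namely ensuring that a bump perturbation of $J$ near one component's injective point leaves the other components' Cauchy--Riemann equations untouched (this is also the answer to the paper's accompanying exercise about curves with identical images). No gaps.
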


\begin{exercise}
Convince yourself that Theorem~\ref{thm:localStructureZZ} is true.
What can go wrong if two of the curves $u_i$ and $u_j$ for $i \ne j$
have identical images?
\end{exercise}

\section{Generic $J$-holomorphic curves are immersed}
\label{sec:immersed}

The following result demonstrates a different kind of marked point
constraint than we've seen so far.  As usual, we assume
$\uU$ is a precompact open subset in a $2n$-dimensional symplectic
manifold $(M,\omega)$ without boundary, $\Jfix \in \jJ(M,\omega)$,
$g \ge 0$ and $A \in H_2(M)$ are fixed.  

\begin{thm}
\label{thm:immersed}
Given $J \in \jJ(M)$, let
$$
\Mod_{g,\crit}^A(J) \subset \Mod_{g,1}^A(J)
$$
denote the set of curves in $\Mod_{g,1}^A(J)$ that have vanishing first
derivatives at the marked point.  Then there exists a Baire subset 
$\jJ_\reg' \subset \jJ(M,\omega\,;\,\uU,\Jfix)$
such that for every $J \in \jJ_\reg'$, the subset of 
$\Mod_{g,\crit}^A(J)$ consisting of curves with an injective point 
mapped into~$\uU$ is a
smooth manifold with dimension equal to
$\virdim \Mod_{g,0}^A(J) - (2n-2)$.
\end{thm}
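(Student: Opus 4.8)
The plan is to adapt the universal-moduli-space argument of \S\ref{sec:transversality} to a \emph{constrained} problem, where in addition to the Cauchy-Riemann equation we impose the vanishing of the first derivative at the single marked point. Throughout I fix $g \ge 0$, $A \in H_2(M)$ and the precompact open set $\uU \subset M$ with $\Jfix \in \jJ(M,\omega)$, and use pointed surfaces carrying one marked point $\Theta = (z_0)$. The key observation is that the critical-point condition defines the zero set of a section of a \emph{finite-rank} bundle over $\Mod_{g,1}^A(J)$: given a Fredholm regular curve $(\Sigma,j_0,\Theta,u_0) \in \Mod_{g,1}^A(J)$ with an injective point mapped into $\uU$, choose a Teichm\"uller slice $\tT$ through $j_0$ and form, over a neighborhood of $(j_0,u_0)$ in $\tT \times \bB^{1,p}$, the Banach space bundle with fibers $\eE^{0,p}_{(j,u)} \oplus \Hom_\CC\bigl((T_{z_0}\Sigma,j),(u^*TM,J)_{u(z_0)}\bigr)$ together with the smooth section $\widehat{\dbar}_J(j,u) = \bigl(\dbar_J(j,u),\, \pi^{1,0} du(z_0)\bigr)$, where $\pi^{1,0}$ is the complex-linear part. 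On $\dbar_J^{-1}(0)$ the first component vanishes and $du(z_0)$ is already complex-linear, so the zero set of $\widehat{\dbar}_J$ is exactly the set of critical curves near $u_0$. Its linearization at $(j_0,u_0)$ is $D\dbar_J(j_0,u_0)$ on the first factor and a bounded map $(y,\eta)\mapsto \nabla^{1,0}\eta(z_0) + (\text{term in }y)$ on the second, Fredholm of index $\ind D\dbar_J(j_0,u_0) - 2n$; since $\Aut(u_0)$ is trivial for somewhere injective curves, \eqref{eqn:dimension} gives this index as $\virdim\Mod_{g,1}^A(J) - 2n = \virdim\Mod_{g,0}^A(J) - (2n-2)$, the claimed dimension.

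The heart of the proof is showing that a generic perturbation of $J$ makes $\widehat{\dbar}_J$ transverse to the zero section at every critical curve with an injective point in $\uU$. For this I would build the universal moduli space $\univ^*_\crit(\jJ_\epsilon) = \widehat{\dbar}{}^{-1}(0)$ inside $\tT \times \bB^{1,p} \times \jJ_\epsilon$, with $\jJ_\epsilon$ the Floer $C_\epsilon$-space of \S\ref{subsec:dense} and $\widehat{\dbar}(j,u,J) = \bigl(Tu + J\circ Tu\circ j,\ \pi^{1,0}du(z_0)\bigr)$. The required surjectivity statement, the analogue of Lemma~\ref{lemma:surjective}, is that the operator
$$
(\eta,Y) \longmapsto \Bigl(\mathbf{D}_{u_0}\eta + Y\circ Tu_0\circ j_0,\ \nabla^{1,0}\eta(z_0)\Bigr)
$$
on $W^{1,p}_\Theta(u_0^*TM) \oplus C_\epsilon(\overline{\End}_\CC(TM,J_0,\omega)\,;\,\uU)$ is surjective with a bounded right inverse. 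I would prove this in two moves: first use the freedom to prescribe the $1$-jet of a smooth section at the \emph{single} point $z_0$ to reduce to surjectivity of $(\eta,Y)\mapsto \mathbf{D}_{u_0}\eta + Y\circ Tu_0\circ j_0$ with $\eta$ constrained to have vanishing $1$-jet at $z_0$; then run the Hahn--Banach/dual-element argument of Lemma~\ref{lemma:surjective} essentially verbatim. Imposing a vanishing $1$-jet at $z_0$ only cuts the domain of $\mathbf{D}_{u_0}$ by finite codimension; the dual element $\alpha$ is still a weak solution of $\mathbf{D}_{u_0}^*\alpha = 0$ away from $z_0$, hence smooth with isolated zeroes by Corollary~\ref{cor:weakRegularity} and the similarity principle, and the bump-function perturbation $Y$ is performed at a nearby injective point $z_0' \ne z_0$ with $u_0(z_0') \in \uU$, forcing $\alpha \equiv 0$. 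With this lemma, the reasoning of Proposition~\ref{prop:universal} shows $\univ^*_\crit(\jJ_\epsilon)$ is a smooth separable Banach manifold and $\pi : \univ^*_\crit(\jJ_\epsilon) \to \jJ_\epsilon$ is smooth Fredholm; Lemma~\ref{lemma:parametrized} identifies $\ker d\pi$ and $\coker d\pi$ with those of the constrained linearization at fixed $J$, so every regular value $J$ of $\pi$ makes that linearization surjective, whence $\Mod_{g,\crit}^A(J)$ is a manifold of dimension $\virdim\Mod_{g,0}^A(J)-(2n-2)$ near every critical curve with an injective point in $\uU$, and the Sard--Smale theorem yields a Baire set of such $J$ in $\jJ_\epsilon$.

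Finally I would upgrade density in $\jJ_\epsilon$ to a Baire subset $\jJ_\reg' \subset \jJ(M,\omega\,;\,\uU,\Jfix)$ in the $C^\infty$-topology by Taubes's exhaustion argument from \S\ref{subsec:Taubes}: define $\Mod_{g,\crit}^A(J,c) \subset \Mod_{g,1}^A(J)$ using the same conditions as in Lemma~\ref{lemma:exhaustion} together with the closed condition $du(z_0)=0$, observe that these sets exhaust the critical curves with injective points in $\uU$ and depend continuously on $J$ in the sense that a sequence of such curves for $J_k\to J$ subconverges (via $C^1$- and hence $W^{1,p}$-bounds and Corollary~\ref{cor:gradBounds}) to one for $J$, and conclude that $\jJ_\reg' = \bigcap_{c\in\NN}\jJ_\reg^{\prime,c}$ is a countable intersection of open dense sets. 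It is convenient to also intersect $\jJ_\reg'$ with $\jJ_\reg(M,\omega\,;\,\uU,\Jfix)$, so that $\Mod_{g,1}^A(J)$ is itself smooth and $\Mod_{g,\crit}^A(J)$ sits inside it as the transverse zero set of $\widehat{\dbar}_J$. The main obstacle is the surjectivity lemma: one must check that the bump-perturbation trick, which lives near the \emph{target} point $u_0(z_0)$, does not interfere with prescribing the \emph{domain} $1$-jet of $\eta$ at $z_0$; this is handled by separating the two roles — fixing the jet purely with the $\eta$-freedom, then perturbing $Y$ at a nearby injective point $z_0' \ne z_0$ — together with the fact that the jet constraint at $z_0$ is only a finite-codimensional restriction on the domain of $\mathbf{D}_{u_0}$.
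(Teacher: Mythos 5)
Your overall architecture — cut out the critical condition as a finite-rank (codimension $2n$) constraint, run the universal moduli space and Sard--Smale with a constrained surjectivity lemma, then upgrade from dense in $\jJ_\epsilon$ to Baire in $\jJ(M,\omega\,;\,\uU,\Jfix)$ via the Taubes exhaustion — is exactly the paper's, and your index bookkeeping $\virdim\Mod_{g,1}^A(J)-2n = \virdim\Mod_{g,0}^A(J)-(2n-2)$ is correct. But there is one genuine gap: your entire functional-analytic setup lives on $\tT\times\bB^{1,p}$ and on $W^{1,p}_\Theta(u_0^*TM)$, and neither the augmented section $(j,u)\mapsto\bigl(\dbar_J(j,u),\pi^{1,0}du(z_0)\bigr)$ nor the map $\eta\mapsto\nabla^{1,0}\eta(z_0)$ is defined there. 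For $p>2$ the Sobolev embedding gives $W^{1,p}\hookrightarrow C^0$, which is why the marked-point evaluation $u\mapsto u(z_0)$ is a smooth map on $\bB^{1,p}$, but pointwise evaluation of the \emph{first derivative} is not a bounded operation on $W^{1,p}$; elliptic regularity makes the curves in $\dbar_J^{-1}(0)$ smooth, but the implicit function theorem and Sard--Smale need the section to be defined and smooth on an open neighborhood in the ambient Banach manifold, not just on the zero set. The fix is to work in $\bB^{2,p}=W^{2,p}(\Sigma,M)\hookrightarrow C^1$, realize the constraint as the smooth codimension-$2n$ Banach submanifold $\bB^{2,p}_\crit=\{u\,:\,du(z_0)\text{ complex antilinear}\}$ (or equivalently your augmented section, now well-defined), and restrict $\dbar_J$ to it. This shift has a downstream consequence you would also need to address: the target of the constrained linearization becomes $W^{1,p}(\overline{\Hom}_\CC(T\Sigma,u_0^*TM))$ rather than $L^p$, so your plan to run the Hahn--Banach/dual-element argument of Lemma~\ref{lemma:surjective} "essentially verbatim" does not quite apply; one either bootstraps from the already-known $L^p$-surjectivity using linear elliptic regularity, or redoes the duality argument with care about the dual of $W^{1,p}$ and about removability of the point singularity of the weak solution $\mathbf{D}_{u_0}^*\alpha=0$ at $z_0$.

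Beyond that, your treatment of the surjectivity lemma differs from the paper's in a harmless way. You split off the finite-dimensional jet factor first and then perturb $Y$ at an injective point $z_0'\ne z_0$ to avoid interference with the jet condition; this works (and the condition $z_0'\ne z_0$ is automatic, since $z_0$ is a critical point of $u_0$ and hence never an injective point). The paper's route is slicker: because $du_0(z_0)=0$, the perturbation term $Y\circ Tu_0\circ j_0$ vanishes identically at $z_0$, so the constrained surjectivity is deduced directly from the unconstrained Lemma~\ref{lemma:surjective} by regularity, with no separation of roles needed. Your Taubes step and the use of Lemma~\ref{lemma:parametrized} match the paper.
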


\begin{cor}
\label{cor:immersed}
Suppose $(M,\omega)$ is a closed symplectic manifold of dimension~$2n \ge 4$.
Then for generic $J \in \jJ(M,\omega)$, every somewhere injective 
$J$-holomorphic curve $u \in \Mod^*(J)$ with $\ind(u) < 2n-2$ is
immersed.
\end{cor}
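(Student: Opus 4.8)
The plan is to deduce this directly from Theorem~\ref{thm:immersed} by a dimension count, exactly in the spirit of Corollaries~\ref{cor:nonnegativeIndex} and~\ref{cor:injective}. Since $M$ is closed we take $\uU := M$ (which is precompact, and makes the choice of $\Jfix$ irrelevant), so that a curve ``maps an injective point into~$\uU$'' if and only if it is somewhere injective. The desired Baire subset will be the countable intersection
\[
\jJ_\reg := \jJ_\reg(M,\omega) \cap \bigcap_{g \ge 0,\ A \in H_2(M)} \jJ_\reg'(g,A),
\]
where $\jJ_\reg(M,\omega)$ is the set from Theorem~\ref{thm:localStructure} (so every somewhere injective curve is Fredholm regular) and $\jJ_\reg'(g,A) \subset \jJ(M,\omega)$ is the Baire subset furnished by Theorem~\ref{thm:immersed} for the data $g$, $A$, which makes the locus $\Mod_{g,\crit}^A(J)$ of curves with a critical point at the marked point and an injective point somewhere into a smooth manifold of dimension $\virdim \Mod_{g,0}^A(J) - (2n-2)$.

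First I would observe the elementary point that a nonconstant $J$-holomorphic curve fails to be immersed at $z_0$ precisely when $du(z_0) = 0$: indeed $du(z_0)$ is complex-linear on the one-complex-dimensional space $T_{z_0}\Sigma$, so non-injectivity forces $du(z_0)=0$. Thus for $J \in \jJ_\reg$, suppose for contradiction that $u \in \Mod_{g,0}^A(J)$ is somewhere injective with $\ind(u) = \virdim\Mod_{g,0}^A(J) < 2n-2$ but not immersed. By Corollary~\ref{cor:critical} the critical set $\Crit(u)$ is discrete, hence finite since the domain is closed, and it is nonempty by assumption; pick $z_0 \in \Crit(u)$. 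Attaching $z_0$ as the single marked point exhibits $u$ as an element of $\Mod_{g,\crit}^A(J)$ which is still somewhere injective, so this space is nonempty near~$u$. But $J \in \jJ_\reg'(g,A)$, so near $u$ it is a smooth manifold of dimension
\[
\virdim\Mod_{g,0}^A(J) - (2n-2) = \ind(u) - (2n-2) < 0,
\]
and a smooth manifold of negative dimension is empty -- the required contradiction. Here the hypothesis $2n \ge 4$ enters exactly to ensure $2n-2 > 0$, so that $\ind(u) < 2n-2$ can indeed force a negative virtual dimension.

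The main obstacle is not in this deduction but in Theorem~\ref{thm:immersed}, which we are allowed to assume: its genuinely new ingredient is to enlarge the universal moduli space by additionally constraining the $1$-jet of the curve at the marked point to lie in the codimension-$2n$ ``vanishing derivative'' stratum and then showing that for generic~$J$ -- with somewhere injectivity again providing the freedom to perturb $J$ near an injective point without disturbing the equation elsewhere -- the relevant jet evaluation map is transverse to that stratum, so the constrained locus is cut out cleanly with the advertised dimension. Within the present corollary the only points requiring care are the bookkeeping of the countable intersection over all topological types $(g,A)$, the remark above that non-immersed points are exactly critical points, and the use of Corollary~\ref{cor:critical} to guarantee that a non-immersed curve on a closed domain really does supply a (finite, nonempty) set of critical points at which to place the marked point. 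A final strengthening to ``embedded,'' promised in the footnote, would then be obtained by combining this with Corollary~\ref{cor:injective}.
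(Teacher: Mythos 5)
Your proof is correct and follows essentially the same route as the paper: add a marked point at a critical point to exhibit the non-immersed curve as an element of $\Mod_{g,\crit}^A(J)$, whose dimension $\ind(u)-(2n-2)$ must be nonnegative by Theorem~\ref{thm:immersed}, giving the contradiction. Your extra bookkeeping (the complex-linearity observation identifying non-immersed points with critical points, Corollary~\ref{cor:critical}, and the countable intersection over $(g,A)$) is all consistent with the paper's intent, the only quibble being that $\ind(u)<2n-2$ yields a negative dimension regardless of the sign of $2n-2$, so the hypothesis $2n\ge 4$ merely keeps the statement from being vacuous rather than being needed for that step.
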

The proof of the corollary is analogous to that of Corollary~\ref{cor:injective}
above: if a nonimmersed curve $u \in \Mod_{g,0}^A(J)$ exists, one can add 
a marked point where $du(z)=0$ and thus view $u$ as an element of
$\Mod_{g,\crit}^A(J)$, whose dimension is given by Theorem~\ref{thm:immersed}
and must be nonnegative.  Note that unlike Corollary~\ref{cor:injective},
this gives a nontrivial result in dimension four, showing that
index~$0$ curves are generically immersed, so one can always perturb
critical points away by a small change in~$J$; Theorem~\ref{thm:critical}
indicates that in dimension four, such a perturbation produces new
self-intersections.  In higher dimensions, the above result
combines with Corollary~\ref{cor:injective} to prove:
\begin{cor}
\label{cor:embedded}
For generic $J \in \jJ(M,\omega)$ in any closed symplectic manifold $(M,\omega)$
of dimension~$2n \ge 6$, every somewhere injective 
$J$-holomorphic curve $u \in \Mod^*(J)$ with $\ind(u) < 2n-4$ is embedded.
\end{cor}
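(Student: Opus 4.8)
The plan is to deduce Corollary~\ref{cor:embedded} directly by combining the two preceding corollaries. Recall that Corollary~\ref{cor:injective} is obtained by choosing $J$ in a Baire subset of $\jJ(M,\omega)$ --- the one furnished by Theorem~\ref{thm:localStructureZ} applied with $m = 2$ and $Z$ the diagonal $\Delta \subset M \times M$ --- on which every somewhere injective $u \in \Mod^*(J)$ with $\ind(u) < 2n - 4$ is injective; and Corollary~\ref{cor:immersed} is obtained by choosing $J$ in a Baire subset of $\jJ(M,\omega)$ --- the one furnished by Theorem~\ref{thm:immersed} --- on which every somewhere injective $u \in \Mod^*(J)$ with $\ind(u) < 2n - 2$ is immersed. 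The first step is to take $J$ in the intersection of these two Baire subsets, which is again a Baire subset of $\jJ(M,\omega)$ since a finite intersection of Baire subsets is Baire; this intersection is the set of ``generic'' $J$ appearing in the statement.

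Given such a $J$ and a somewhere injective curve $u \in \Mod^*(J)$ with $\ind(u) < 2n - 4$, the assumption $2n \ge 6$ yields $\ind(u) < 2n - 4 < 2n - 2$, so both corollaries apply to $u$: it is at once injective and immersed. It then remains only to observe that every curve in $\Mod^*(J)$ has a closed (compact, without boundary) domain $\Sigma$, and that an injective immersion $u : \Sigma \to M$ of a compact manifold into a Hausdorff manifold is automatically a topological embedding --- a continuous bijection from a compact space onto its image inside a Hausdorff space is a homeomorphism onto that image --- and therefore an embedding of manifolds. This completes the argument.

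I do not expect any serious obstacle here, since all of the analytic content has already been invested in the transversality results of \S\ref{sec:transversality} and \S\ref{sec:evaluation} and in Theorem~\ref{thm:immersed}. The only points that require brief attention are that $\ind(u) < 2n-4$ is the stronger of the two index bounds, so it simultaneously forces injectivity and immersedness, and that injectivity together with immersedness and compactness of the domain upgrades to an embedding. If one wishes, one may further intersect the above Baire subset with $\jJ_\reg(M,\omega)$ so that all of the moduli spaces involved are simultaneously smooth manifolds of the expected dimension.
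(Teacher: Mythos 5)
Your argument is correct and is exactly the route the paper takes: Corollary~\ref{cor:embedded} is obtained by intersecting the Baire subsets from Corollaries~\ref{cor:injective} and~\ref{cor:immersed}, noting that $\ind(u)<2n-4$ implies both index bounds, and using that an injective immersion of a closed surface into a manifold is an embedding. (Only a cosmetic quibble: the inequality $2n-4<2n-2$ holds unconditionally; the hypothesis $2n\ge 6$ is needed because Corollary~\ref{cor:injective} itself requires it, not to compare the two index bounds.)
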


\begin{remark}
Various generalizations of Theorem~\ref{thm:immersed} and the
above corollaries can easily be proved at the cost of more cumbersome 
notation.  The general rule is that in any moduli space of somewhere injective
pseudoholomorphic curves with marked points satisfying any constraints,
imposing an additional constraint to make the curves critical at a particular
marked point decreases the dimension of the moduli space by~$2n$.
(The additional $2$ in the dimension formula of Theorem~\ref{thm:immersed}
appears because of the two dimensions gained by switching from
$\Mod_{g,0}^A(J)$ to $\Mod_{g,1}^A(J)$ before imposing the constraint.)
\end{remark}

The proof of Theorem~\ref{thm:immersed} will require a slight modification of
our previous functional analytic setup: writing down the Cauchy-Riemann 
equation on $W^{1,p}(\Sigma,M)$ will not work if we also want to impose
a pointwise constraint on derivatives, as maps in $W^{1,p}(\Sigma,M)$ are
not generally of class~$C^1$.  This problem is easy to fix by working in
$W^{2,p}(\Sigma,M)$ for any $p > 2$, which admits a continuous inclusion
into $C^1(\Sigma,M)$ due to the Sobolev embedding theorem.  
The arguments of \S\ref{sec:IFT} and \S\ref{sec:transversality} then
require only minor modifications to fit into the new setup, so we will
sketch these modifications without repeating every detail.

Recall from \S\ref{sec:Banach} that since $\Sigma$ is compact and
$\dim_\RR \Sigma = 2$,
$$
\bB^{k,p} := W^{k,p}(\Sigma,M)
$$
is a smooth Banach manifold for any $k \in \NN$ and $p > 2$,
with $W^{k,p}$-neighborhoods of smooth maps $f \in C^\infty(\Sigma,M)$
identified with neighborhoods of~$0$ in $W^{k,p}(f^*TM)$ via the
correspondence $u = \exp_f \eta$ for $\eta \in W^{k,p}(f^*TM)$.
The tangent space at $u \in \bB^{k,p}$ is
$$
T_u \bB^{k,p} = W^{k,p}(u^*TM),
$$
and the Sobolev embedding theorem implies that
there is a continuous inclusion
$$
\bB^{k,p} \hookrightarrow C^{k-1}(\Sigma,M).
$$
Recall also that for any $j \in \jJ(\Sigma)$ and $J \in \jJ(M)$, there is 
a smooth Banach space bundle 
$\eE^{k-1,p} \to \bB^{k,p}$ with fibers
$$
\eE^{k-1,p}_u := W^{k-1,p}\left(\overline{\Hom}_\CC((T\Sigma,j),(u^*TM,J))\right)
$$
and a smooth section
$$
\dbar_J : \bB^{k,p} \to \eE^{k-1,p} : u \mapsto Tu + J \circ Tu \circ j,
$$
whose zero set is the space of pseudoholomorphic maps
$(\Sigma,j) \to (M,J)$ of class~$W^{k,p}$.  Elliptic regularity implies of
course that all such maps are smooth, regardless of the values of $k$ and~$p$.
Given a Teichm\"uller slice $\tT \subset \jJ(\Sigma)$ and a Banach manifold of
Floer perturbations $\jJ_\epsilon \subset \jJ(M)$ as in \S\ref{subsec:dense},
the bundle $\eE^{k-1,p}$ has an obvious extension over the base 
$\tT \times \bB^{k,p} \times \jJ_\epsilon$, with $\dbar_J$ extending to
a smooth section
$$
\dbar : \tT \times \bB^{k,p} \times \jJ_\epsilon \to \eE^{k-1,p} :
(j,u,J) \mapsto Tu + J \circ Tu \circ j.
$$
Its linearization at any zero has the usual form restricted to the
appropriate domain and target.  One can similarly define the smooth section
\begin{equation}
\label{eqn:dOperator}
\p_J u = Tu - J \circ Tu \circ j,
\end{equation}
which for $u \in \bB^{k,p}$ takes values in the Banach space bundle whose
fiber over~$u$ is $W^{k-1,p}(\Hom_\CC(T\Sigma,u^*TM))$.  Its 
linearization takes the form
\begin{equation}
\label{eqn:lindJ}
\begin{split}
D \p_J(u) : W^{k,p}(u^*TM) &\to W^{k-1,p}(\Hom_\CC(T\Sigma,u^*TM)) \\
\eta &\mapsto \nabla \eta - J(u) \circ \nabla \eta \circ j - 
(\nabla_\eta J) \circ Tu \circ j
\end{split}
\end{equation}
for any choice of symmetric connection $\nabla$ on~$M$, and it has a
similarly obvious extension to smooth sections of Banach space bundles
over $\tT \times \bB^{k,p}$ or
$\tT \times \bB^{k,p} \times \jJ_\epsilon$.

Suppose now that
$\left(\Sigma,j_0,(z_0),u_0\right)$ represents a curve in the moduli space
$\mM_{g,\crit}^A(J)$ defined in Theorem~\ref{thm:immersed}, so in particular
$du_0(z_0) = 0$.  We shall consider the nonlinear Cauchy-Riemann operator
on the domain
$$
\bB^{2,p}_\crit := \left\{ u \in \bB^{2,p}\ |\ 
\text{$du(z_0) : T_{z_0}\Sigma \to T_{u(z_0)}M$ is complex antilinear} \right\}.
$$
Notice that since any $u \in \dbar_J^{-1}(0)$ has complex-linear derivatives,
such a map belongs to $\bB^{2,p}_\crit$ if and only if $du(z_0)=0$.
We claim that $\bB^{2,p}_\crit$ is a smooth Banach submanifold of $\bB^{2,p}$.
Indeed, define a vector bundle $\vV \to \bB^{2,p}$ with fibers
$$
\vV_u := \Hom_\CC(T_{z_0} \Sigma , T_{u(z_0)} M).
$$
It is easy to see that $\vV$ is a smooth vector bundle, as it is the pullback
of the finite-dimensional smooth vector bundle
$$
\Hom_\CC(T_{z_0}\Sigma , TM) \to M
$$
via the smooth evaluation map $\ev : \bB^{2,p} \to M : u \mapsto u(z_0)$.
Moreover, the inclusion $\bB^{2,p} \subset C^1(\Sigma,M)$ permits us
to define a smooth section
$$
\bB^{2,p} \to \vV : u \mapsto \p_J u(z_0),
$$
where $\p_J$ is the operator defined in \eqref{eqn:dOperator}.
The zero set of this section is precisely $\bB^{2,p}_\crit$, and its
linearization at a
zero $u \in \bB^{2,p}_\crit$ is simply the restriction of \eqref{eqn:lindJ} 
to the point~$z_0$, which gives the continuous linear map
\begin{equation}
\label{eqn:linPsi}
\begin{split}
W^{2,p}(u^*TM) &\to \Hom_\CC(T_{z_0}\Sigma,T_{u(z_0)}M)\\
\eta &\mapsto \left.\Big( \nabla \eta - J(u) \circ \nabla \eta \circ j - 
(\nabla_\eta J) \circ Tu \circ j\Big)\right|_{T_{z_0}\Sigma}.
\end{split}
\end{equation}

\begin{exercise}
Convince yourself that \eqref{eqn:linPsi} is surjective for any
$u \in \bB^{2,p}_\crit$.
\end{exercise}

By the exercise and the implicit function theorem, $\bB^{2,p}_\crit$ is
a smooth Banach submanifold of $\bB^{2,p}$, with codimension~$2n$.
The zero set of the restriction
$$
\dbar_J|_{\bB^{2,p}_\crit} : \bB^{2,p}_\crit \to \eE^{1,p}
$$
then consists of $J$-holomorphic maps $u : \Sigma \to M$ 
with $du(z_0)=0$, and the
linearization of this restricted section at a map
$u \in \dbar_J^{-1}(0) \cap \bB^{2,p}_\crit$ is the usual
linear Cauchy-Riemann type operator $\mathbf{D}_u$ on a restricted domain
\begin{equation}
\label{eqn:restrictedD}
\mathbf{D}_u : W^{2,p}_\crit(u^*TM) \to W^{1,p}(\overline{\Hom}_\CC(T\Sigma,u^*TM)),
\end{equation}
where we plug in $du(z_0)=0$ to \eqref{eqn:linPsi}, obtaining the space
$$
W^{2,p}_\crit(u^*TM) := \left\{ \eta \in W^{2,p}(u^*TM)\ |\ 
\text{$\nabla \eta(z_0)$ is complex antilinear} \right\}.
$$
Note that since $du(z_0)=0$, the condition defining $W^{2,p}_\crit(u^*TM)$ does not
depend on the choice of symmetric connection.  Since $W^{2,p}_\crit(u^*TM)$
has codimension $2n$ in $W^{2,p}(u^*TM)$, plugging in the index formula
from Theorem~\ref{thm:RiemannRoch} for a Cauchy-Riemann type operator
$W^{2,p}(u^*TM) \to W^{1,p}(\overline{\Hom}_\CC(T\Sigma,u^*TM))$ gives
\begin{equation}
\label{eqn:indCrit}
\ind(\mathbf{D}_u) = n \chi(\Sigma) + 2 c_1(A) - 2n.
\end{equation}

Let us call a curve
$u \in \Mod_{g,\crit}^A(J)$ \defin{Fredholm regular for $\Mod_{g,\crit}^A(J)$}
whenever the operator \eqref{eqn:restrictedD} is surjective.  
Given a Teichm\"uller slice $\tT$ through $j_0$, we can now consider the
nonlinear operator $\dbar_J$ on the finite-codimensional submanifold
$$
\left\{ (j,u) \in \tT \times \bB^{2,p}\ |\ 
\text{$du(z_0) : (T_{z_0}\Sigma,j) \to (T_{u(z_0)}M,J)$ is complex
antilinear} \right\}.
$$
With the index formula \eqref{eqn:indCrit} in hand,
a repeat of the proof of Theorem~\ref{thm:IFT2} in this context shows:

\begin{prop}
\label{prop:IFT3}
The open subset of $\Mod_{g,\crit}^A(J)$ consisting of curves that are
Fredholm regular for $\Mod_{g,\crit}^A(J)$ and have trivial automorphism
group is a smooth manifold of dimension 
$$
\virdim \Mod_{g,\crit}^A(J) := \virdim \Mod_{g,1}^A(J) - 2n = 
(n-3)(2-2g) + 2 c_1(A) + 2 - 2n.
$$
\end{prop}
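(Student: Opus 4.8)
The plan is to imitate the proof of Theorem~\ref{thm:IFT2}, replacing the Banach manifold $\bB^{1,p}$ by the codimension-$2n$ submanifold $\bB^{2,p}_\crit \subset \bB^{2,p}$ of maps whose derivative at the marked point $z_0$ is complex antilinear (equivalently, once one is on $\dbar_J^{-1}(0)$, whose derivative at $z_0$ vanishes), and replacing the Cauchy--Riemann operator $\mathbf{D}_u$ by its restriction \eqref{eqn:restrictedD} to $W^{2,p}_\crit(u^*TM)$. As already established, $\bB^{2,p}_\crit$ is a smooth Banach submanifold whose tangent space at $u \in \dbar_J^{-1}(0)\cap\bB^{2,p}_\crit$ is $W^{2,p}_\crit(u^*TM)$, and the latter has finite codimension in $W^{2,p}(u^*TM)$.

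First I would fix a representative $(\Sigma,j_0,(z_0),u_0)$ of a curve in $\Mod_{g,\crit}^A(J)$ that is Fredholm regular for $\Mod_{g,\crit}^A(J)$ and has trivial automorphism group, and choose a Teichm\"uller slice $\tT \subset \jJ(\Sigma)$ through $j_0$ that is invariant under $\Aut(\Sigma,j_0,(z_0))$ (Lemma~\ref{lemma:invtSlice}). One then forms the restriction of the nonlinear Cauchy--Riemann section
\[
\dbar : \big\{ (j,u) \in \tT \times \bB^{2,p}_\crit \big\} \to \eE^{1,p}, \qquad (j,u) \mapsto Tu + J \circ Tu \circ j ,
\]
whose linearization at $(j_0,u_0)$ is $(y,\eta) \mapsto J \circ Tu_0 \circ y + \mathbf{D}_{u_0}\eta$ on $T_{j_0}\tT \oplus W^{2,p}_\crit(u_0^*TM)$, with $\mathbf{D}_{u_0}$ now the restricted operator of \eqref{eqn:restrictedD}. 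Surjectivity of this linearization is immediate: its restriction to $\{0\}\oplus W^{2,p}_\crit(u_0^*TM)$ is already surjective, since that is exactly the hypothesis of Fredholm regularity for $\Mod_{g,\crit}^A(J)$. Being a Fredholm operator (a finite-codimension restriction of a Fredholm operator, plus the finite-dimensional summand $T_{j_0}\tT$) that is surjective, it admits a bounded right inverse, so the implicit function theorem (Theorem~\ref{thm:IFT}) endows a neighborhood of $(j_0,u_0)$ in $\dbar^{-1}(0)$ with the structure of a smooth finite-dimensional submanifold of $\tT \times \bB^{2,p}_\crit$, of dimension $\ind\big(\mathbf{D}_{u_0}|_{W^{2,p}_\crit}\big) + \dim\tT(\Sigma,(z_0))$.

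Next I would identify this zero set locally with $\Mod_{g,\crit}^A(J)$ exactly as in Theorem~\ref{thm:IFT2}. The group $\Aut(\Sigma,j_0,(z_0))$ acts on $\dbar^{-1}(0)$ by $(\varphi,(j,u)) \mapsto (\varphi^*j,\,u\circ\varphi)$; this preserves $\bB^{2,p}_\crit$ because every such $\varphi$ fixes $z_0$ and is holomorphic, so $d(u\circ\varphi)(z_0)$ is complex antilinear whenever $du(z_0)$ is. The resulting map to $\Mod_{g,\crit}^A(J)$ is a local homeomorphism onto a neighborhood of $u_0$ by the same properness-and-freeness argument (Lemmas~\ref{lemma:bubbling} and~\ref{lemma:freeAction}), and since $\Aut(u_0)$ is trivial it is locally injective near $u_0$, yielding a smooth chart; smoothness of transition maps between charts built from different representatives and different Teichm\"uller slices follows as in Theorem~\ref{thm:IFT2}, using elliptic regularity to make the relevant reparametrizations smooth. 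Finally the dimension is $\ind\big(\mathbf{D}_{u_0}|_{W^{2,p}_\crit}\big) + \dim\tT(\Sigma,(z_0)) - \dim\Aut(\Sigma,j_0,(z_0))$, which by \eqref{eqn:indCrit}, the identity $\dim\tT(\Sigma,\Theta) - \dim\Aut(\Sigma,j,\Theta) = 6g-6+2m$ from \eqref{eqn:TeichDimension} (with $m=1$), and $\chi(\Sigma) = 2-2g$, equals $n(2-2g) + 2c_1(A) - 2n + (6g-4) = (n-3)(2-2g) + 2c_1(A) + 2 - 2n$, as claimed.

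The work is conceptually routine, and the real burden is bookkeeping: one must check that passing to the constrained base $\bB^{2,p}_\crit$ is a harmless modification of the proof of Theorem~\ref{thm:IFT2}, and in particular that the non-stable case $g=0$ (where $\tT$ is a point, $j$ is fixed at $i$ on $S^2$, and $\Aut(S^2,i,(z_0))$ has positive dimension) goes through with the same dimension count, exactly as in the last paragraph of the proof of Theorem~\ref{thm:IFT2}. If one also wanted to connect with the intermediate operator language of that proof rather than quoting the restricted hypothesis directly, the only point needing care is that for $X \in W^{2,p}_\Theta(T\Sigma)$ the section $Tu_0(jX)$ has vanishing covariant derivative at $z_0$ (because $du_0(z_0)=0$ and $X(z_0)=0$) and therefore lies in $W^{2,p}_\crit(u_0^*TM)$, so that the tangential relation \eqref{eqn:tangential} keeps one inside the constraint; but this is not strictly needed given the way Fredholm regularity for $\Mod_{g,\crit}^A(J)$ has been defined.
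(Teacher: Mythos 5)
Your proposal is correct and follows exactly the route the paper intends: the text explicitly states that Proposition~\ref{prop:IFT3} follows by ``a repeat of the proof of Theorem~\ref{thm:IFT2}'' on the constrained base $\bB^{2,p}_\crit$ with the restricted operator \eqref{eqn:restrictedD}, and your write-up supplies precisely that repeat, including the two points genuinely worth checking (that the $\Aut(\Sigma,j_0,(z_0))$-action preserves the antilinearity constraint at $z_0$, and the dimension bookkeeping via \eqref{eqn:indCrit} and \eqref{eqn:TeichDimension}).
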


It remains to show that the regularity condition is achieved for
generic~$J$.  Following the prescription of \S\ref{sec:transversality},
choose a Banach manifold $\jJ_\epsilon$ of $C_\epsilon$-smooth perturbations
of an arbitrary reference structure $\Jref \in \jJ(M,\omega\,;\,\uU,\Jfix)$,
all matching $\Jfix$ outside of~$\uU$.  Define a universal moduli space
$\univ_\crit^*(\jJ_\epsilon)$ to consist of all pairs
$(u,J)$ such that $J \in \jJ_\epsilon$, $u \in \Mod_{g,\crit}^A(J)$, and
$u$ maps an injective point into~$\uU$.

\begin{prop}
\label{prop:univ3}
$\univ_\crit^*(\jJ_\epsilon)$ admits the structure of a smooth 
(separable and metrizable) Banach manifold such that the projection
$\pi : \univ_\crit^*(\jJ_\epsilon) \to \jJ_\epsilon$ is smooth, and for
every regular value $J$ of $\pi$, every curve $u \in \Mod_{g,\crit}^A(J)$
with an injective point mapped into~$\uU$ is Fredholm regular
for~$\Mod_{g,\crit}^A(J)$.
\end{prop}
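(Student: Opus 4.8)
The plan is to transcribe the proof of Proposition~\ref{prop:universal} almost verbatim, the only new ingredient being that the derivative constraint at the marked point is built into the base of the universal Cauchy-Riemann section; the analogue of the index formula \eqref{eqn:restrictedD}/\eqref{eqn:indCrit} is already available. I restrict throughout to the stable case $(\Sigma,\Theta)=(\Sigma,(z_0))$; the only non-stable case is $g=0$, handled by the modifications indicated in the proof of Theorem~\ref{thm:IFT2}. Fix a representative $(\Sigma,j_0,(z_0),u_0)$ of a curve with $(u_0,J_0)\in\univ_\crit^*(\jJ_\epsilon)$, a Teichm\"uller slice $\tT$ through $j_0$, and the Banach manifold $\jJ_\epsilon$ of Floer $C_\epsilon$-perturbations of $\Jref$ from \S\ref{subsec:dense}. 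First I would build the constrained base: over $\tT\times\bB^{2,p}\times\jJ_\epsilon$ let $\vV$ be the bundle with fiber $\Hom_\CC((T_{z_0}\Sigma,j),(T_{u(z_0)}M,J))$ at $(j,u,J)$, a smooth finite-rank subbundle of the pullback of $\Hom_\RR(T_{z_0}\Sigma,TM)\to M$ by $(j,u,J)\mapsto u(z_0)$, since the fiberwise projection $A\mapsto\frac{1}{2}(A-JAj)$ onto complex-linear maps varies smoothly with $(j,J)$. Then $(j,u,J)\mapsto\frac{1}{2}(du(z_0)-J\,du(z_0)\,j)$ is a smooth section of $\vV$ whose zero set
\[
\bB^{2,p}_{\tT,\crit,\epsilon}:=\bigl\{(j,u,J)\ \big|\ du(z_0)\colon(T_{z_0}\Sigma,j)\to(T_{u(z_0)}M,J)\ \text{is complex antilinear}\bigr\}
\]
is, by the implicit function theorem, a smooth Banach submanifold of real codimension~$2n$, because perturbations of $u$ prescribe $\nabla\eta(z_0)$ freely and hence make the vertical derivative of the section surjective onto $\vV$.

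Next I would restrict the universal operator $\dbar\colon(j,u,J)\mapsto Tu+J\circ Tu\circ j$ to $\bB^{2,p}_{\tT,\crit,\epsilon}$. Since a $J$-holomorphic map has complex-linear derivative, a zero of this restriction is a triple with $du(z_0)=0$, so after imposing the open condition that $u$ carry an injective point into~$\uU$ the zero set is locally $\univ_\crit^*(\jJ_\epsilon)$. At such a zero the constraint linearizes, using $du(z_0)=0$, to the condition that $\nabla\eta(z_0)$ be complex antilinear, so the linearization of the restricted section is
\[
\mathbf{L}_\crit\colon T_j\tT\oplus W^{2,p}_\crit(u^*TM)\oplus C_\epsilon(\overline{\End}_\CC(TM,J,\omega)\,;\,\uU)\to W^{1,p}(\overline{\Hom}_\CC(T\Sigma,u^*TM)),\qquad (y,\eta,Y)\mapsto\mathbf{D}_u\eta+Y\circ Tu\circ j,
\]
with $W^{2,p}_\crit(u^*TM)$ as in \eqref{eqn:restrictedD}. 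The hard part — though it is really only a transcription of Lemma~\ref{lemma:surjective} — is to show $\mathbf{L}_\crit$ is surjective with bounded right inverse whenever $u$ has an injective point in~$\uU$. The operator $\mathbf{D}_u|_{W^{2,p}_\crit(u^*TM)}$ is still Fredholm, so $\im\mathbf{L}_\crit$ is closed; if $\alpha\in L^q$ annihilated it, then testing against smooth sections compactly supported in $\Sigma\setminus\{z_0\}$ makes $\alpha$ a weak, hence smooth, solution of $\mathbf{D}_u^*\alpha=0$ on $\Sigma\setminus\{z_0\}$ with isolated zeros there by the similarity principle, and one reaches a contradiction by choosing an injective point $z'\in\uU\setminus\{z_0\}$ with $\alpha(z')\ne0$ and a bump function $Y$ supported near $u(z')$ (Lemmas~\ref{lemma:bump} and~\ref{lemma:choice}) so that $\langle Y\circ Tu\circ j,\alpha\rangle_{L^2}>0$. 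Excluding the single point $z_0$ when choosing $z'$ is the only deviation from Lemma~\ref{lemma:surjective}.

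With this surjectivity, the implicit function theorem makes $\univ_\crit^*(\jJ_\epsilon)$ a smooth Banach manifold, separable and metrizable because it embeds in $\tT\times\bB^{2,p}\times\jJ_\epsilon$ and somewhere injective curves have trivial automorphism group (as in Theorem~\ref{thm:IFT2}); the projection $\pi$ is the restriction of $(j,u,J)\mapsto J$ and is therefore smooth. Finally, at $(u,J)\in\pi^{-1}(J)$ the derivative $d\pi(u,J)$ is equivalent to the projection $\ker\mathbf{L}_\crit\to C_\epsilon(\overline{\End}_\CC(TM,J,\omega)\,;\,\uU)$, so Lemma~\ref{lemma:parametrized}, applied with the Fredholm operator $D\dbar_J(j,u)|_{T_j\tT\oplus W^{2,p}_\crit(u^*TM)}$ and $A\colon Y\mapsto Y\circ Tu\circ j$, identifies $\coker d\pi(u,J)$ with $\coker\bigl(D\dbar_J(j,u)|_{T_j\tT\oplus W^{2,p}_\crit(u^*TM)}\bigr)$. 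Thus a regular value $J$ of $\pi$ makes that operator surjective for every representative of every curve in $\pi^{-1}(J)$; by the same-image argument from the proof of Theorem~\ref{thm:IFT2} — note $z_0\in\Theta$, so any $X$ with $\mathbf{D}_{(j,\Theta)}X$ in $\im\mathbf{D}_{(j,\Theta)}$ vanishes at $z_0$, whence $Tu(jX)\in W^{2,p}_\crit(u^*TM)$ since $du(z_0)=0$ — this is equivalent to surjectivity of $\mathbf{D}_u|_{W^{2,p}_\crit(u^*TM)}$, i.e.\ to Fredholm regularity of $u$ for $\Mod_{g,\crit}^A(J)$. The one point demanding care is the constraint submanifold of Step~1 and the bump-function surjectivity of $\mathbf{L}_\crit$; everything else is bookkeeping carried over from \S\ref{subsec:dense}.
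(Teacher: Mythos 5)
Your overall architecture matches the paper's: the paper disposes of everything except the key surjectivity statement with the sentence that ``the proof is essentially the same as that of Proposition~\ref{prop:universal},'' and isolates the one new ingredient as Lemma~\ref{lemma:surjective3}, which is exactly your operator $\mathbf{L}_\crit$ (minus the harmless $T_j\tT$ summand). The constraint-submanifold bookkeeping, the use of Lemma~\ref{lemma:parametrized}, and the identification of regular values of $\pi$ with Fredholm regularity all agree with the paper. The problem is your surjectivity argument for $\mathbf{L}_\crit$, and it is a genuine gap rather than a stylistic difference. You transplant the Hahn--Banach/duality step of Lemma~\ref{lemma:surjective} verbatim: ``if $\alpha\in L^q$ annihilated $\im\mathbf{L}_\crit$, then\ldots''. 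In Lemma~\ref{lemma:surjective} the target is $L^p$, whose dual \emph{is} $L^q$, so triviality of the $L^q$-annihilator of the (closed) image really does force surjectivity. Here the target is $W^{1,p}(\overline{\Hom}_\CC(T\Sigma,u^*TM))$, whose dual is strictly larger than $L^q$: since $p>2$, $W^{1,p}$ embeds into $C^0$, so its dual contains point evaluations and other functionals supported at the single point $z_0$ that are not represented by any $L^q$ section. A proper closed subspace of $W^{1,p}$ can perfectly well have trivial $L^q$-annihilator (consider $\{f\ |\ f(z_0)=0\}$), so ruling out $L^q$-annihilators proves nothing --- and a functional concentrated at $z_0$ is exactly what the pointwise constraint defining $W^{2,p}_\crit$ threatens to produce.

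The paper's proof of Lemma~\ref{lemma:surjective3} avoids duality on $W^{1,p}$ entirely: given $f\in W^{1,p}\subset L^p$, it invokes the already-established surjectivity of the \emph{unconstrained} operator $W^{1,p}\oplus C_\epsilon\to L^p$ from Lemma~\ref{lemma:surjective} to produce $(\eta,Y)$ solving the equation, upgrades $\eta$ to $W^{2,p}$ by linear elliptic regularity (Corollary~\ref{cor:weakRegularity}), and then verifies the constraint at $z_0$ by restricting the equation there and using $du_0(z_0)=0$. That final verification is the one delicate point of the whole lemma (the equation at $z_0$ directly controls only the antilinear part of $\nabla\eta(z_0)$), but the regularity bootstrap is in any case the device you need in order to reduce to a duality argument that is actually valid, namely one at the $L^p$ level. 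A secondary, smaller issue: your closing claim that surjectivity of $D\dbar_J(j,u)|_{T_j\tT\oplus W^{2,p}_\crit}$ is ``equivalent'' to surjectivity of $\mathbf{D}_u|_{W^{2,p}_\crit}$ in \eqref{eqn:restrictedD} does not follow from the same-image lemma, which only shows independence of the choice of Teichm\"uller slice; the two conditions coincide when $\tT$ is trivial (the genus-zero case relevant to Corollary~\ref{cor:immersed}) but not obviously otherwise, and for higher genus the regularity condition should be read as including the $T_j\tT$ summand.
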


The proof is essentially the same as that of Proposition~\ref{prop:universal},
the crucial step being to establish the following analogue of
Lemma~\ref{lemma:surjective}:

\begin{lemma}
\label{lemma:surjective3}
If $u_0 : (\Sigma,j_0) \to (M,J_0)$ is a pseudoholomorphic curve that maps 
an injective point into~$\uU$ and satisfies $du_0(z_0) = 0$, then
the operator
\begin{equation*}
\begin{split}
\mathbf{L} : W^{2,p}_\crit(u_0^*TM) \oplus 
C_\epsilon(\overline{\End}_\CC(TM,J_0,\omega)\,;\,\uU) &\to
W^{1,p}(\overline{\Hom}_\CC(T\Sigma,u_0^*TM)) \\
(\eta,Y) &\mapsto \mathbf{D}_{u_0} \eta +
Y \circ Tu_0 \circ j_0
\end{split}
\end{equation*}
is surjective and has a bounded right inverse.
\end{lemma}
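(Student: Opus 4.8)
The plan is to run the proof of Lemma~\ref{lemma:surjective} essentially verbatim, the only genuinely new features being that the domain is cut down by the pointwise constraint $\nabla\eta(z_0)\in\overline{\Hom}_\CC$ and that the target is now a $W^{1,p}$-space rather than an $L^p$-space. I would begin with the two routine reductions. Since $W^{2,p}_\crit(u_0^*TM)$ is a closed subspace of finite codimension $2n$ in $W^{2,p}(u_0^*TM)$, the restricted operator $\mathbf{D}_{u_0}|_{W^{2,p}_\crit}$ is still Fredholm, with index given by \eqref{eqn:indCrit} via Theorem~\ref{thm:RiemannRoch}; in particular its image is closed and of finite codimension, and hence so is $\im\mathbf{L}\supseteq\im(\mathbf{D}_{u_0}|_{W^{2,p}_\crit})$. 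Just as in Lemma~\ref{lemma:surjective}, once $\mathbf{L}$ is known to be surjective the existence of a bounded right inverse is automatic from the Fredholm property of $\mathbf{D}_{u_0}|_{W^{2,p}_\crit}$. So the whole content is the surjectivity of $\mathbf{L}$.

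To prove surjectivity I would argue by contradiction. If $\im\mathbf{L}$ is a proper closed subspace of $W^{1,p}(\overline{\Hom}_\CC(T\Sigma,u_0^*TM))$, there is a nonzero continuous functional $\lambda$ annihilating it. In particular $\lambda$ kills $\mathbf{D}_{u_0}\eta$ for every smooth section $\eta$ supported in $\Sigma\setminus\{z_0\}$ (such $\eta$ lie in $W^{2,p}_\crit(u_0^*TM)$ since $\nabla\eta(z_0)=0$), which says exactly that $\lambda$ is a distributional solution of $\mathbf{D}_{u_0}^*\lambda=0$ on $\Sigma\setminus\{z_0\}$. By elliptic regularity for weak solutions --- the argument behind Theorem~\ref{thm:inhomWeak} and Corollary~\ref{cor:weakRegularity}, applied to $\mathbf{D}_{u_0}^*$, which by Proposition~\ref{prop:formalAdjoint} is conjugate to a Cauchy-Riemann type operator --- $\lambda$ is represented on $\Sigma\setminus\{z_0\}$ by a section $\alpha$ that is smooth there, with $\lambda(\gamma)=\langle\alpha,\gamma\rangle_{L^2}$ for every smooth $\gamma$ supported in $\Sigma\setminus\{z_0\}$. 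The similarity principle (\S\ref{sec:similarity}) then forces the zero set of $\alpha$ in $\Sigma\setminus\{z_0\}$ to be discrete.

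The endgame is the same perturbation argument as in Lemma~\ref{lemma:surjective}. Since $u_0$ has an injective point mapped into $\uU$, and injective points form an open set while $du_0$ and $\alpha$ vanish only on discrete sets, I may pick an injective point $z_1\in\Sigma\setminus\{z_0\}$ with $u_0(z_1)\in\uU$, $du_0(z_1)\neq0$ and $\alpha(z_1)\neq0$. Using Lemma~\ref{lemma:bump} together with the linear-algebra input of Lemma~\ref{lemma:choice}, I then choose $Y\in C_\epsilon(\overline{\End}_\CC(TM,J_0,\omega)\,;\,\uU)$ supported in a tiny neighborhood of $u_0(z_1)$ so that $\langle Y\circ Tu_0\circ j_0,\alpha\rangle$ is positive near $z_1$ and vanishes elsewhere; the support of the smooth section $Y\circ Tu_0\circ j_0$ may be arranged disjoint from $z_0$, so it is annihilated by $\lambda$ yet has strictly positive $L^2$-pairing with $\alpha$, a contradiction. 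Somewhere injectivity enters exactly as before: $Tu_0\circ j_0$ is nonzero near $z_1$ and $u_0$ passes through a neighborhood of $z_1$ only once, so fixing the value of $Y$ near $u_0(z_1)$ affects the pairing only near $z_1$, which is why this step fails for multiply covered curves.

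I expect the only real obstacle to be the $W^{1,p}$-versus-$L^p$ discrepancy in the target: unlike in Lemma~\ref{lemma:surjective}, where the annihilator lives in $L^q$, one must verify that $\lambda$ is genuinely represented by a section smooth away from $z_0$ against which the $L^2$-pairing computes $\lambda$ on the relevant test sections. This is a hypoellipticity statement for Cauchy-Riemann type operators and is where the care goes. A minor bookkeeping point worth checking along the way is that the constraint defining $W^{2,p}_\crit$ imposes no hidden constraint on $(\mathbf{D}_{u_0}\eta)(z_0)$: because $du_0(z_0)=0$ one has $(\mathbf{D}_{u_0}\eta)(z_0)=\nabla\eta(z_0)+J(u_0(z_0))\,\nabla\eta(z_0)\circ j_0$, which is twice the complex-antilinear part of $\nabla\eta(z_0)$ and hence ranges over all complex-antilinear maps as $\eta$ ranges over $W^{2,p}_\crit(u_0^*TM)$. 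With these two items in hand, the argument is a line-by-line transcription of the proof of Lemma~\ref{lemma:surjective}.
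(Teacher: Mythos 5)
Your proposal takes a genuinely different route from the paper's. The paper does not rerun the Hahn--Banach/formal-adjoint argument at all: it quotes Lemma~\ref{lemma:surjective} to solve $\mathbf{D}_{u_0}\eta + Y\circ Tu_0\circ j_0 = f$ with $\eta\in W^{1,p}$ for the given $f\in W^{1,p}\subset L^p$, observes that $Y\circ Tu_0\circ j_0$ is smooth so that $\mathbf{D}_{u_0}\eta\in W^{1,p}$ and hence $\eta\in W^{2,p}$ by elliptic regularity, and then handles the constraint by evaluating the equation at $z_0$, where $du_0(z_0)=0$ kills both the $Y$-term and the zeroth-order part of $\mathbf{D}_{u_0}$. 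That bootstrap buys a three-line proof and, crucially, keeps all duality considerations in the $L^p$/$L^q$ setting where they were already carried out.

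Your rerun of the duality argument at the $W^{1,p}$ level can be completed, but the two places where the real work now sits are exactly the ones you pass over. First, the annihilator $\lambda$ lives in $(W^{1,p})^*$, a space of genuine distributions, not in $L^q$; the regularity input you need is therefore hypoellipticity of $\mathbf{D}_{u_0}^*$ for distributional solutions, which is strictly stronger than Corollary~\ref{cor:weakRegularity} and Theorem~\ref{thm:inhomWeak} (those treat weak solutions a priori in $L^1_{\loc}$), so this step must actually be supplied rather than cited. Second, and unflagged in your write-up: since $W^{1,p}\hookrightarrow C^0$ for $p>2$, the functional $f\mapsto\langle v,f(z_0)\rangle$ is a nonzero element of $(W^{1,p})^*$ supported at $\{z_0\}$, so knowing that $\lambda$ is represented by a smooth $\alpha$ on $\Sigma\setminus\{z_0\}$ does not let you assume $\alpha\not\equiv 0$ there, and your contradiction evaporates if $\alpha$ vanishes identically away from $z_0$. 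This case can be excluded, and your closing ``bookkeeping point'' is precisely the computation that does it --- $(\mathbf{D}_{u_0}\eta)(z_0)$ sweeps out all of $\overline{\Hom}_\CC(T_{z_0}\Sigma,T_{u_0(z_0)}M)$ as $\eta$ ranges over $W^{2,p}_\crit(u_0^*TM)$, so no delta-type functional at $z_0$ can annihilate the image --- but you present it as an idle consistency check rather than wiring it into the argument where it is needed. With those two items filled in your proof closes; the paper's bootstrap avoids both issues entirely, which is why it is the shorter route.
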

\begin{proof}
As in the proof of Lemma~\ref{lemma:surjective}, the Fredholm property of 
$\mathbf{D}_{u_0}$ implies that $\mathbf{L}$ has a bounded right
inverse if and only if it is surjective.  To prove surjectivity,
we can appeal to the fact that the same
operator is (by Lemma~\ref{lemma:surjective}) already known to be
surjective as a map
$$
W^{1,p}(u_0^*TM) \oplus 
C_\epsilon(\overline{\End}_\CC(TM,J_0,\omega)\,;\,\uU) \to
L^p(\overline{\Hom}_\CC(T\Sigma,u_0^*TM)).
$$
Thus for any $f \in W^{1,p}(\overline{\Hom}_\CC(T\Sigma,u_0^*TM))$,
we have $f \in L^p$ and thus find $\eta \in W^{1,p}$ and
$Y \in C_\epsilon$ with $\mathbf{D}_{u_0} \eta + Y \circ Tu_0 \circ j_0 = f$.
Since $Y$ and $u_0$ are both smooth, this implies that
$\mathbf{D}_{u_0} \eta \in W^{1,p}$, so by linear elliptic
regularity (see e.g.~Corollary~\ref{cor:weakRegularity}), $\eta \in W^{2,p}$.
The first derivative of $\eta$ is therefore well defined pointwise,
and since $du_0(z_0)=0$, restricting the relation
$\mathbf{D}_{u_0}\eta = - Y \circ Tu_0 \circ j_0 + f$ to the point $z_0$ gives
$$
\left. \nabla \eta + J_0 \circ \nabla \eta \circ j_0 \right|_{T_{z_0}\Sigma} =
f(z_0) \in \overline{\Hom}_\CC(T_{z_0}\Sigma,T_{u_0(z_0)}M),
$$
which implies $\eta \in W^{2,p}_\crit(u_0^*TM)$.
\end{proof}

The Sard-Smale theorem now implies that the space of $J \in \jJ_\epsilon$
that are regular for $\Mod_{g,\crit}^A(J)$ is a Baire subset of
$\jJ_\epsilon$ and therefore dense in $\jJ(M,\omega\,;\,\uU,\Jfix)$.
Finally, one can adapt the argument of \S\ref{subsec:Taubes} and define
an exhaustion of $\Mod_{g,\crit}^A(J)$ by compact subsets
$$
\Mod_{g,\crit}^A(J,c) := \Mod_{g,\crit}^A(J) \cap \Mod_{g,1}^A(J,c)
$$
for $c > 0$, where $\Mod_{g,1}^A(J,c)$ are defined as
in \S\ref{subsec:Taubes}.  The sets
\begin{equation*}
\begin{split}
\jJ_{\reg,c}' := \Big\{ J \in \jJ(M,\omega\,;\,\uU,\Jfix)\ |\ 
&\text{all $u \in \Mod_{g,\crit}^A(J,c)$ are}\\
&\text{Fredholm regular for $\Mod_{g,\crit}^A(J)$} \Big\}
\end{split}
\end{equation*}
are then open and dense, and the countable intersection 
$\bigcap_{c \in \NN} \jJ_{\reg,c}'$ is the desired Baire subset,
completing the proof of Theorem~\ref{thm:immersed}.

The approach outlined in this section can be taken quite a bit further,
e.g.~by working in Banach manifolds $\bB^{k,p}$ for $k > 2$, one can
also impose constraints on higher-order derivatives.  
One case that is important in applications is
to consider spaces of holomorphic curves intersecting a fixed almost complex
submanifold with prescribed orders of tangency,
see e.g.~\cite{CieliebakMohnke:transversality}*{\S 6}.  For moduli spaces of
\emph{parametrized} $J$-holomorphic curves (i.e.~without dividing out by
reparametrizations), a somewhat different and very general approach to 
higher-order constraints has been introduced by Zehmisch \cite{Zehmisch:jets}, 
using the notion of \emph{holomorphic jets}.  

Here are a few exercises to illustrate what else can be done.  They are
not necessarily easy.

\begin{exercise}
Recall that $H_2(\CC P^2)$ is generated by $[\CC P^1] \in
H_2(\CC P^2)$, with $[\CC P^1] \cdot [\CC P^1] = 1$ and, for the standard
symplectic structure $\omega\std$ and complex structure~$i$,
$c_1([\CC P^1]) = 3$ and $\langle [\omega\std] , [\CC P^1] \rangle > 0$.
For $J \in \jJ(\CC P^2,\omega\std)$, a closed $J$-holomorphic curve 
$u : \Sigma \to \CC P^2$ is said to have \defin{degree} $d \in \NN$
if $[u] = d[\CC P^1]$.  Show that for any $d \in \NN$ and any set of 
pairwise distinct points $p_1,\ldots,p_{3d-1} \in \CC P^2$, there exists
a Baire subset $\jJ_\reg \subset \jJ(\CC P^2,\omega\std)$ such that for
all $J \in \jJ_\reg$, every somewhere injective $J$-holomorphic sphere
passing through all the points $p_1,\ldots,p_{3d-1}$ has degree at least~$d$,
and if its degree is exactly~$d$, then it is immersed.
\end{exercise}

In each of the following, assume $(M,\omega)$ is a closed $2n$-dimensional
symplectic manifold, all almost complex structures are $\omega$-compatible,
and all $J$-holomorphic curves are closed and connected.

\begin{exercise}
Prove that if $\dim_\RR M = 4$, then for generic $J$, every somewhere injective
$J$-holomorphic curve with sufficiently small index 
has only transverse self-intersections.  (How small must the index be?)
\end{exercise}

\begin{exercise}
Prove that if $\dim_\RR M = 4$, then for generic $J$, any pair of
inequivalent somewhere injective $J$-holomorphic curves $u$ and $v$ with
$\ind(u) = \ind(v) = 0$ satisfies $u \pitchfork v$.
\end{exercise}

\begin{exercise}[cf.~\cite{CieliebakMohnke:transversality}*{Prop.~6.9}]
Suppose $\Sigma \subset M$ is a symplectic hypersurface, i.e.~a symplectic
submanifold of dimension $2n-2$, and $A \in H_2(M)$ satisfies $c_1(A) = 3 - n$
and $A \cdot [\Sigma] = \ell > 0$, where $\ell$ is prime.  Show that the space
$$
\{ J \in \jJ(M,\omega)\ |\ J(T\Sigma) = T\Sigma \}
$$
contains a Baire subset $\jJ_\reg$ such that for all $J \in \jJ_\reg$,
every $J$-holomorphic sphere $u : S^2 \to M$ homologous to~$A$ either is
contained in $\Sigma$ or intersects it exactly $\ell$ times, always
transversely.
\end{exercise}


\chapter{Bubbling and Nonsqueezing}
\label{chapter:nonsqueezing}

\minitoc
\vspace{12pt}

\section{Gromov's nonsqueezing theorem}
\label{sec:nonsqueezing}

In the previous chapters we have developed a large part of the technical
apparatus needed to study $J$-holomorphic curves in symplectic manifolds
of arbitrary dimension.  The only major component still missing is the
compactness theory, which we will tackle in earnest in the next chapter.
In this chapter we shall provide some extra motivation by explaining
one of the first and most famous applications
of this technical apparatus: Gromov's nonsqueezing theorem.  The proof we shall
give is essentially Gromov's original proof (see \cite{Gromov}*{0.3.A}),
and it depends on a compactness result (Theorem~\ref{thm:NSCompactness})
that is one of the simplest applications of Gromov's compactness theorem, but can
also be proved without developing the compactness theory in its full
generality.  We will explain
in \S\ref{sec:bubbling} a proof of that result using the standard method
known as ``bubbling off'' analysis, which also plays an essential role
in the more general compactness theory.

Let us first recall the statement of the theorem.  Throughout the following
discussion, we shall use the symbol $\omega\std$ to denote the standard
symplectic form on Euclidean spaces of various dimensions, as well as on
tori defined as
$$
T^{2n} = \RR^{2n} / N \ZZ^{2n}
$$
for $N > 0$.  Note that~$\omega\std$ descends to a symplectic form
on $T^{2n}$ since it is invariant under the action of $\ZZ^{2n}$ on
$\RR^{2n}$ by translations.

\begin{thm}[Gromov's ``nonsqueezing'' theorem \cite{Gromov}]
\label{thm:nonsqueezing}
For any $n \ge 2$,
there exists a symplectic embedding of $(B_r^{2n},\omega\std)$ into
$(B_R^2 \times \RR^{2n - 2},\omega\std)$ if and only if $r \le R$.
\end{thm}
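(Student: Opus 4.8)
The ``if'' direction is immediate: when $r\le R$ the inclusion $B_r^{2n}\hookrightarrow B_R^2\times\RR^{2n-2}$ is a symplectic embedding, so the whole content lies in the ``only if'' direction, which I would prove by producing a holomorphic sphere through the centre of a hypothetical embedded ball and estimating its area. The first move is to replace the target by a \emph{closed} manifold. Given a symplectic embedding $\psi:B_r^{2n}\hookrightarrow B_R^2\times\RR^{2n-2}$, fix $r'<r$ and $R'>R$. Since $\psi(\bar B_{r'}^{2n})$ is compact it lies in $B_R^2\times B_\rho^{2n-2}$ for some $\rho>0$; choosing $N>2\rho$, together with a closed symplectic surface $(S^2,\sigma)$ of total area $\pi(R')^2$ containing an area-preserving copy of $B_R^2$, one obtains a symplectic embedding $\psi:\bar B_{r'}^{2n}\hookrightarrow(V,\omega):=(S^2\times T^{2n-2},\,\sigma\oplus\omega\std)$, where $T^{2n-2}=\RR^{2n-2}/N\ZZ^{2n-2}$. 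It suffices to prove $\pi(r'')^2\le\pi(R')^2$ for every $r''<r'$, since letting $r''\nearrow r$ and $R'\searrow R$ then forces $r\le R$.

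Next I would choose an $\omega$-compatible almost complex structure $J$ on $V$ that equals $\psi_\ast i$ on $\psi(B_{r'}^{2n})$ (possible by Exercise~\ref{EX:Jextension}, as $\psi_\ast i$ is $\omega$-compatible there) and is generic away from $\psi(0)$ --- concretely, generic in $\jJ(V,\omega\,;\,\uU,\Jfix)$ with $\uU$ the complement of a small ball around $\psi(0)$ and $\Jfix$ any compatible structure agreeing with $\psi_\ast i$ near $\psi(0)$. Consider $A=[S^2\times\{pt\}]\in H_2(V)$; then $\langle[\omega],A\rangle=\pi(R')^2$, $c_1(A)=2$, so by \eqref{eqn:virdim} and \eqref{eqn:index} one has $\virdim\Mod_{0,1}^A(J)=2n=\dim V$. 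The key claim is: \emph{there exists a $J$-holomorphic sphere $u:S^2\to V$ in class $A$ with $\psi(0)\in u(S^2)$.}

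To prove the claim I would run a degree/cobordism argument. For the split structure $J_0=j_{S^2}\oplus i$ the $J_0$-holomorphic spheres in class $A$ are exactly the slices $S^2\times\{c\}$: asphericity of $T^{2n-2}$ forces the $T$-component to be constant, and the $S^2$-component has degree one, hence is biholomorphic (Exercise~\ref{EX:posDegree}); these curves are embedded, and a splitting of the Cauchy--Riemann operator together with Theorem~\ref{thm:linearAutomatic} shows $J_0$ is Fredholm regular, with $\ev:\Mod_{0,1}^A(J_0)\xrightarrow{\ \cong\ }V$ a diffeomorphism, so $\deg_2\ev=1$. Every $J$-holomorphic sphere in class $A$ is simple, since the $S^2$-component of $A$ is primitive (no multiple cover is homologically possible), and it is not contained in $\psi(B_{r'}^{2n})$ (otherwise $\psi^{-1}\circ u$ would be an $i$-holomorphic sphere in $\CC^n$, hence constant, contradicting $A\ne0$); being somewhere injective (Corollary~\ref{cor:simple}) it has an injective point mapped into $\uU$, so for generic $J$, $\Mod_{0,1}^A(J)$ is a smooth $2n$-manifold (Theorem~\ref{thm:localStructure}), with trivial automorphisms (Exercise~\ref{EX:trivialAut}). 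The crucial input is compactness: by Theorem~\ref{thm:NSCompactness} any degenerating sequence would distribute $A$ among two or more non-constant $J$-holomorphic spheres; but for \emph{any} sphere $v$ in $V$ the projection to $T^{2n-2}$ is nullhomotopic, so $\langle[\omega],[v]\rangle=\langle[\sigma],[v]\rangle=d\cdot\pi(R')^2$ with $d\ge1$ when $v$ is non-constant, i.e. each piece already carries the full energy $\pi(R')^2$ --- impossible. Hence $\Mod_{0,1}^A(\{J_t\})$ over a generic path from $J_0$ to $J$ (Theorem~\ref{thm:genericHomotopy}) is a compact cobordism, $\deg_2\ev$ is invariant, so $\ev_J$ has image all of $V$ and the claim follows.

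Finally, given such a $u$: it is non-constant and $u(S^2)\not\subset\psi(B_{r'}^{2n})$ as above, so for $r''<r'$ the open set $C:=(\psi^{-1}\circ u)^{-1}(B_{r''}^{2n})\subset S^2$ maps under $\psi^{-1}\circ u$ to a non-constant holomorphic curve in $B_{r''}^{2n}\subset\CC^n$, properly onto its image, passing through the origin. The monotonicity (mean-value) inequality for holomorphic curves then gives
$$
\pi(r'')^2\ \le\ \int_C(\psi^{-1}\circ u)^\ast\omega\std\ =\ \int_C u^\ast\omega\ \le\ \int_{S^2}u^\ast\omega\ =\ \langle[\omega],A\rangle\ =\ \pi(R')^2,
$$
using $\psi^\ast\omega=\omega\std$ and the fact that $u^\ast\omega\ge0$ pointwise by $\omega$-compatibility of $J$, cf.~\eqref{eqn:area2}. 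This is the desired inequality. I expect the main obstacle to be the existence claim: one must organise the genericity argument correctly in the presence of the constraint ``$J=\psi_\ast i$ near $\psi(0)$'', verify simplicity and the injective-point condition for all class-$A$ spheres, and invoke the compactness statement Theorem~\ref{thm:NSCompactness} (the content of \S\ref{sec:bubbling}) to exclude bubbling. The concluding monotonicity estimate, though geometrically the heart of the theorem, is essentially a classical computation once the curve is in hand.
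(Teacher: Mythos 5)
Your proposal is correct and follows essentially the same route as the paper: reduction to a closed target $S^2\times T^{2n-2}$, a cobordism/degree argument producing a $J$-holomorphic sphere in the primitive class $[S^2\times\{*\}]$ through the centre of the embedded ball (with automatic transversality for the split structure, compactness from $\pi_2(T^{2n-2})=0$ together with energy quantization), and the monotonicity estimate to conclude. The one internal inconsistency is your choice of perturbation domain: $\uU$ must be the complement of $\psi(\overline{B}_{r'}^{2n})$ itself, not of a small ball around $\psi(0)$, since one cannot simultaneously impose $J=\psi_*i$ on the whole embedded ball and perturb $J$ generically on most of it --- your own observation that every class-$A$ sphere escapes the embedded ball and hence has an injective point outside it is precisely what makes this smaller perturbation domain sufficient.
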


The existence of the embedding when $r \le R$ is clear, so the hard part
is to show that if an embedding
$$
\iota : (B_r^{2n},\omega\std) \hookrightarrow (B_R^2 \times \RR^{2n-2},
\omega\std)
$$
exists, then we must have $r \le R$.  We shall assume $r > R$ and argue
by contradiction.  Since the theory of
$J$-holomorphic curves is generally easier to work with in \emph{closed} 
manifolds, the first step is to transform this into a problem involving
embeddings into closed symplectic manifolds.  
To that end, choose a small number
$\epsilon > 0$ and an area form~$\sigma$ on the sphere~$S^2$ such that
$$
\int_{S^2} \sigma = \pi (R + \epsilon)^2.
$$
Then there exists a symplectic embedding $(B^2_R,\omega\std) \hookrightarrow
(S^2,\sigma)$, and hence also
$$
(B^2_R \times \RR^{2n-2},\omega\std) \hookrightarrow (S^2 \times \RR^{2n-2},
\sigma \oplus \omega\std).
$$
Composing this with $\iota$ above, we may regard~$\iota$ as a symplectic
embedding
$$
\iota : (B_r^{2n},\omega\std) \hookrightarrow (S^2 \times \RR^{2n-2},
\sigma \oplus \omega\std).
$$
We can assume without loss of generality that the image
$\iota(B_r^{2n}) \subset S^2 \times \RR^{2n-2}$ is bounded: indeed, this
is obviously true for the image of a \emph{closed} ball
$\overline{B}_{r'}$ if $r' < r$, thus it can be made true for~$r$ by
shrinking~$r$ slightly but keeping the condition $r > R$.
We can then choose a number
$N > 0$ sufficiently large so that 
$\iota(B_r^{2n}) \subset S^2 \times [-N,N]^{2n-2}$.  Composing with the
natural quotient projection on the second factor,
$$
\RR^{2n-2} \to T^{2n-2} := \RR^{2n-2} / N \ZZ^{2n-2}
$$
and letting~$\omega\std$ descend to a symplectic form on $T^{2n-2}$, this
gives rise to a symplectic embedding
\begin{equation}
\iota : (B_r^{2n},\omega\std) \to (S^2 \times T^{2n-2}, \sigma \oplus \omega\std).
\end{equation}
Since $\pi_2(T^{2n-2}) = 0$, we now
obtain a contradiction if we can prove the following.

\begin{thm}
\label{thm:nonsqueezingClosed}
Suppose $(M,\omega)$ is a closed symplectic manifold of dimension $2n-2 \ge 2$
which is \emph{aspherical}, i.e.~$\pi_2(M) = 0$, $\sigma$
is an area form on~$S^2$, and there exists a symplectic embedding
$$
\iota : (B^{2n}_r,\omega\std) \hookrightarrow (S^2 \times M, \sigma
\oplus \omega).
$$
Then $\pi r^2 \le \int_{S^2} \sigma$.
\end{thm}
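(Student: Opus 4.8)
The plan is to use a cobordism argument in the space of almost complex structures to produce a $J$-holomorphic sphere in $S^2\times M$ through the center of the embedded ball, and then to compare its total symplectic area with its area inside the ball. First I would set up the almost complex structure: fix $r'$ with $0<r'<r$; since $\iota$ is a symplectic embedding, $\iota_*i$ (where $i$ is the standard complex structure on $B^{2n}_{r'}\subset\CC^n$) is a $(\sigma\oplus\omega)$-compatible complex structure on $\iota(\overline{B}^{2n}_{r'})$, which by Exercise~\ref{EX:Jextension} extends to a global $J\in\jJ(S^2\times M,\sigma\oplus\omega)$; by contractibility of that space (Theorem~\ref{thm:contractible}) I connect $J$ to the split structure $J_0:=j_{S^2}\oplus J_M$ by a smooth homotopy $\{J_t\}_{t\in[0,1]}$ with $J_1=J$. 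Let $A:=[S^2\times\{\mathrm{pt}\}]\in H_2(S^2\times M)$ and $p_0:=\iota(0)$. Since $\pi_2(M)=0$, $\pi_2(S^2\times M)=\pi_2(S^2)=\ZZ\cdot A$, so $A$ is primitive (no $J_t$-holomorphic sphere in class $A$ is multiply covered) and $c_1(A)=\langle c_1(TS^2),[S^2]\rangle=2$; hence $\virdim\Mod^A_{0,1}(J_t)=(n-3)\cdot2+2c_1(A)+2=2n$, so constraining the marked point to $p_0$ (codimension $2n=\dim(S^2\times M)$) gives a $0$-dimensional moduli space and a $1$-dimensional parametrized one over $t\in[0,1]$.

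For the cobordism step I would perturb $\{J_t\}$ generically rel endpoints (Theorem~\ref{thm:genericHomotopy}) so that the parametrized space $\widehat{\Mod}$ of pairs $(t,u)$ with $u$ a $J_t$-holomorphic sphere in class $A$ satisfying $u(z_1)=p_0$ is a smooth $1$-manifold with boundary over $\{0,1\}$. At $t=0$: a $J_0$-holomorphic sphere in class $A$ projects to a homologically trivial $J_M$-holomorphic sphere in $M$, which is constant by \eqref{eqn:area2}, and to a degree-one holomorphic self-map of $S^2$, hence biholomorphic (Exercise~\ref{EX:posDegree}); so it parametrizes $S^2\times\{q\}$, and the constraint forces $q=\pr_M(p_0)$, leaving exactly one curve, which a direct computation of the linearization (equivalently, the automatic-transversality criterion Theorem~\ref{thm:linearAutomatic} applied to its normal bundle $\cong\CC^{n-1}$, of first Chern number $0>-\chi(S^2)$) shows to be Fredholm regular. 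The decisive point is that \emph{no bubbling occurs along the family}: a nodal degeneration would express $A$ as a sum of two or more nonzero classes of positive $(\sigma\oplus\omega)$-energy, impossible since every nonzero class in $\pi_2(S^2\times M)$ is a positive multiple of $A$; together with primitivity of $A$ this is exactly the hypothesis for the compactness result Theorem~\ref{thm:NSCompactness}, which makes $\widehat{\Mod}$ compact. A compact $1$-manifold has evenly many boundary points, and the $t=0$ end contributes exactly one, so the $t=1$ end is nonempty, yielding a $J$-holomorphic sphere $u:S^2\to S^2\times M$ with $[u]=A$ and $p_0\in u(S^2)$. (To avoid relying on transversality at the specific structure $J$, one may instead take generic $J^{(k)}\to J$, obtain curves $u_k$ as above, and pass to a limit, which by the same no-bubbling argument is an honest $J$-holomorphic sphere in class $A$ through $p_0$.)

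Finally I would estimate areas. Let $\Sigma_0\subset S^2$ be the connected component of $u^{-1}(\iota(\operatorname{int}B^{2n}_{r'}))$ containing a preimage of $p_0$; then $v:=\iota^{-1}\circ u|_{\Sigma_0}:\Sigma_0\to\operatorname{int}B^{2n}_{r'}$ is genuinely $i$-holomorphic (by the choice of $J$), proper into the open ball (a sequence in $\Sigma_0$ leaving every compact set maps under $u$ toward $\iota(\partial B^{2n}_{r'})$), non-constant, and passes through the origin. The monotonicity inequality for proper holomorphic curves through the center of a Euclidean ball gives $\Area(v)\ge\pi(r')^2$. On the other hand, since $\iota^*(\sigma\oplus\omega)=\omega\std$ and $u^*(\sigma\oplus\omega)\ge0$ pointwise ($J$ being $(\sigma\oplus\omega)$-compatible),
\[
\Area(v)=\int_{\Sigma_0}v^*\omega\std=\int_{\Sigma_0}u^*(\sigma\oplus\omega)\le\int_{S^2}u^*(\sigma\oplus\omega)=\langle[\sigma\oplus\omega],A\rangle=\int_{S^2}\sigma
\]
using \eqref{eqn:area2}. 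Combining the two estimates gives $\pi(r')^2\le\int_{S^2}\sigma$, and letting $r'\to r$ yields $\pi r^2\le\int_{S^2}\sigma$.

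I expect the main obstacle to be the compactness statement Theorem~\ref{thm:NSCompactness} underlying the cobordism argument — that spheres in class $A$ cannot bubble off, which is where $\pi_2(M)=0$ enters and which is established by the bubbling analysis of \S\ref{sec:bubbling} — together with the monotonicity inequality, an elementary but nontrivial consequence of Stokes' theorem applied to the K\"ahler potential $\tfrac14|z|^2$ of $\omega\std$ and the fact that a holomorphic curve meets $\partial B_\rho$ transversally for almost every $\rho$. The remaining ingredients — the index bookkeeping, the explicit description and regularity of the $t=0$ curves, and the genericity results of Chapter~\ref{chapter:moduli} — are routine.
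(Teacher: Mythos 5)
Your proposal is correct and follows essentially the same route as the paper: a deformation from the split structure $J_0 = i\oplus J_M$ to a $J$ that is standard on the embedded ball, compactness of the parametrized moduli space in class $A_0=[S^2\times\{*\}]$ via the no-bubbling argument using $\pi_2(M)=0$, and the comparison of the monotonicity lower bound $\pi(r')^2$ with the homological energy $\int_{S^2}\sigma$. The only differences are cosmetic — you count boundary points of the point-constrained $1$-dimensional cobordism where the paper computes the $\ZZ_2$-degree of the evaluation map on the full $2n$-dimensional moduli space, and you approximate $J$ by generic structures rather than choosing $J_1$ generic within $\jJ(W,\Omega;\iota)$ from the start — and both variants are fully justified by the results cited.
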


We will prove this as a corollary of the following two results.  The first
has its origins in the theory of minimal surfaces and is a special case
of much more general results, though it admits an easy direct proof that
we will explain in \S\ref{sec:monotonicity}.  The second will require us
to apply the technical machinery developed in the previous chapters, together
with the compactness arguments explained in \S\ref{sec:bubbling}.

\begin{thm}[monotonicity]
\label{thm:monotonicity}
Suppose $r_0 > 0$, $(\Sigma,j)$ is a Riemann surface and
$$
u : (\Sigma,j) \to (B^{2n}_{r_0},i)
$$
is a proper holomorphic map whose image contains~$0$.  Then for every
$r \in (0,r_0)$,
$$
\int_{u^{-1}(\overline{B}^{2n}_r)} u^*\omega\std \ge \pi r^2.
$$
\end{thm}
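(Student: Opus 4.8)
The plan is to prove the classical \emph{monotonicity differential inequality} for the area function
$$
\mu(r) := \int_{u^{-1}(\overline{B}^{2n}_r)} u^*\omega\std , \qquad 0 < r < r_0 ,
$$
and then to integrate it, using the behaviour of $u$ near a preimage of $0$ to fix the constant. First I would make two harmless reductions. Since $i$ is compatible with $\omega\std$ we have $u^*\omega\std \ge 0$ (cf.\ \eqref{eqn:area}), so passing to a connected component of $\Sigma$ containing a point of $u^{-1}(0)$ only decreases the left-hand side; thus I may assume $\Sigma$ connected and, discarding the trivial degenerate case, $u$ nonconstant. Then each coordinate of $u$ is holomorphic, so by the identity theorem $u^{-1}(0)$ is discrete, and it is compact since $u$ is proper (and $\{0\}\subset B_{r_0}$ is compact), hence finite and non-empty.

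The first main step is a Stokes-theorem estimate. I would write $\omega\std = d\lambda$ with $\lambda := \iota_{V\std}\omega\std$ for the radial Liouville vector field $V\std$ of \eqref{eqn:LiouvilleR}: since $\Lie_{V\std}\omega\std = \omega\std$ (Exercise~\ref{ex:Liouville}) and $d\omega\std = 0$, indeed $d\lambda = \omega\std$, and Cauchy--Schwarz gives the pointwise bound $|\lambda_z(X)| = |\langle i V\std(z), X\rangle| \le \tfrac{|z|}{2}\,|X|$. For $r \in (0,r_0)$ the set $u^{-1}(\overline B_r)$ is compact, and for almost every such $r$ (Sard's theorem applied to $\tfrac12|u|^2$ away from $u^{-1}(0)$) it is a smooth compact surface-with-boundary, with $\Gamma_r := u^{-1}(\partial B_r)$ a smooth $1$-manifold. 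For such $r$, Stokes' theorem yields
\[
\mu(r) \;=\; \int_{u^{-1}(\overline B_r)} d(u^*\lambda) \;=\; \int_{\Gamma_r} u^*\lambda \;\le\; \frac{r}{2}\,\ell(r),
\]
where, using that $|u| = r$ on $\Gamma_r$ together with the bound on $\lambda$, the quantity $\ell(r)$ is the length of the image curve $u|_{\Gamma_r}$ counted with multiplicity, equivalently the length of $\Gamma_r$ in the pullback metric $u^*\gEucl$.

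The second step brings in the coarea formula. Because $u$ is holomorphic it is conformal, so $u^*\omega\std$ is the area form of $u^*\gEucl$, and away from $u^{-1}(0)$ the function $h := |u|$ satisfies $|\nabla h|_{u^*\gEucl} = 1$ wherever $du \ne 0$. Applying the coarea formula on $u^{-1}(\{a < |u| < b\})$ and letting $a \to 0^+$ (so $\mu(a) \to 0$, since $u^{-1}(0)$ has measure zero) shows $\mu$ is absolutely continuous on $[0,r_0)$ with $\mu'(r) = \ell(r)$ for a.e.\ $r$; note also $\mu(r) > 0$ for all $r \in (0,r_0)$, as $u^{-1}(B_r)$ contains a neighbourhood of any $z_0 \in u^{-1}(0)$ on which $u$ is nonconstant holomorphic. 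Combining with the Stokes estimate gives $\mu(r) \le \tfrac{r}{2}\mu'(r)$ a.e., i.e.\ $\tfrac{d}{dr}\log\mu(r) \ge \tfrac{2}{r}$ a.e.; integrating the absolutely continuous function $\log\mu$ from $\epsilon$ to $r$ yields $\mu(r)/r^2 \ge \mu(\epsilon)/\epsilon^2$ for $0 < \epsilon < r < r_0$. It therefore remains to bound below $\Theta := \lim_{\epsilon \to 0^+}\mu(\epsilon)/\epsilon^2$, which exists by the monotonicity just obtained.

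For the final step I would analyse $u$ near a fixed $z_0 \in u^{-1}(0)$. Arguing as in the easy (honestly holomorphic) case of Theorem~\ref{thm:representation}, after a holomorphic coordinate change centred at $z_0$ and a complex-linear change on the target, $u$ has the normal form $w \mapsto (w^k, \hat u(w))$ with $\hat u$ holomorphic, $\hat u(w) = O(|w|^{k+1})$, and $k \ge 1$. Then $|u(w)|^2 = |w|^{2k}(1 + O(|w|^2))$, and in these conformal coordinates the density of $u^*\omega\std$ is $|u'(w)|^2 = k^2|w|^{2k-2}(1 + O(|w|^2))$; hence $\{|w| < \rho^{1/k}(1 - O(\rho^{2/k}))\} \subset u^{-1}(B_\rho)$, and integrating $k^2|w|^{2k-2}$ over this disc gives $\mu(\rho) \ge \pi k \rho^2 - O(\rho^{2 + 2/k})$. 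Thus $\Theta \ge \pi k \ge \pi$, and therefore $\mu(r) \ge \pi r^2$ for all $r \in (0,r_0)$, as claimed. The main obstacle I anticipate is not any single deep ingredient — Stokes and the coarea formula are both elementary here — but the bookkeeping: justifying the a.e.\ identities and the absolute continuity of $\mu$, and in particular securing the \emph{sharp} constant $\pi$ at the origin, which forces the careful Taylor analysis with sharp control on the sublevel-set radius $\rho^{1/k}(1 - o(1))$ rather than a crude estimate.
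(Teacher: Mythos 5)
Your argument is correct in substance and hits the same two milestones as the paper's proof --- monotonicity of $F(r) := r^{-2}\int_{\Sigma_r}u^*\omega\std$, followed by the limit $F(0^+)\ge \pi$ extracted from the normal form $w\mapsto (w^k,O(|w|^{k+1}))$ at a point of $u^{-1}(0)$ --- but it reaches the monotonicity by a genuinely different route. You run the classical minimal-surface scheme: Stokes with the Liouville primitive $\lambda\std=\iota_{V\std}\omega\std$ gives $\mu(r)\le \tfrac{r}{2}\ell(r)$, the coarea formula relates $\ell(r)$ to $\mu'(r)$, and the resulting differential inequality $\mu\le\tfrac{r}{2}\mu'$ integrates to monotonicity of $\mu(r)/r^2$. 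The paper instead passes to the symplectization: under $\Phi:\RR\times S^{2n-1}\to\RR^{2n}\setminus\{0\}$ one has $\Phi^*\lambda\std=e^t\alpha\std$, the same Stokes identity becomes $F(r)=\int_{\p\Sigma_r}u_S^*\alpha\std$ (your boundary integral divided by $r^2=e^t$), and then $F(R)-F(r)=\int_{\overline{\Sigma_R\setminus\Sigma_r}}u_S^*d\alpha\std\ge 0$ because the integrand is pointwise nonnegative for a $\Phi^*i$-holomorphic curve, by compatibility of $J_0|_{\xi\std}$ with $d\alpha\std|_{\xi\std}$. The paper's version buys a proof with no coarea formula and none of the a.e./absolute-continuity bookkeeping you rightly flag as the main nuisance, and it sets up contact-geometric language reused later in the book; your version is the one that transplants most directly to ambient Riemannian monotonicity and to non-integrable $J$. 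Your treatment of the constant at the origin is actually more careful than the paper's one-sentence sketch.

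One correction in the coarea step: the claim that $|\nabla h|_{u^*\gEucl}=1$ wherever $du\ne 0$ (for $h=|u|$) is false in general. The gradient of $|x|$ restricted to the image surface is the tangential projection of the unit radial vector $x/|x|$, so one only gets $|\nabla h|_{u^*\gEucl}\le 1$, with equality precisely where the radial direction is tangent to the image (already for $u(z)=(z,z^2)$ it is strictly less than $1$ at generic points). Fortunately the inequality points the right way: the coarea formula gives $\mu(b)-\mu(a)=\int_a^b\bigl(\int_{\Gamma_t}|\nabla h|^{-1}\,d\mathcal{H}^1\bigr)\,dt\ \ge\ \int_a^b\ell(t)\,dt$, hence $\mu'(r)\ge\ell(r)$ for a.e.\ $r$, which combined with $\mu(r)\le\tfrac{r}{2}\ell(r)$ is all your differential inequality requires. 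So the step survives, but the justification should be stated as an inequality rather than an identity.
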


\begin{prop}
\label{prop:GWinvariant}
Given the setup of Theorem~\ref{thm:nonsqueezingClosed}, there exists
a compatible almost complex structure 
$J \in \jJ(S^2 \times M,\sigma \oplus \omega)$ with $\iota^*J = i$ on
$B_r^{2n}$ and a $J$-holomorphic sphere
$$
u : S^2 \to S^2 \times M
$$
with $[u] = [S^2 \times \{*\}] \in H_2(S^2 \times M)$ whose image 
contains~$\iota(0)$.
\end{prop}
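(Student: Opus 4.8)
<br>

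The plan is to construct the almost complex structure $J$ and the $J$-holomorphic sphere $u$ by combining the transversality results of Chapter~\ref{chapter:moduli} with the bubbling/compactness result of \S\ref{sec:bubbling} (Theorem~\ref{thm:NSCompactness}, referenced but not yet stated in the excerpt), applied to the split structure on $(S^2 \times M, \sigma \oplus \omega)$. First I would fix the notation: let $A = [S^2 \times \{*\}] \in H_2(S^2 \times M)$. Since $\pi_2(M) = 0$, any sphere in $S^2 \times M$ projects to a nullhomotopic sphere in $M$, so $A$ is the only homology class of spheres with $\langle [\sigma \oplus \omega], \cdot \rangle = \int_{S^2}\sigma$ projecting to $[S^2]$ under the first factor; this will be crucial for compactness. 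Compute the index: with $\dim_{\RR}(S^2 \times M) = 2n$, we have $c_1(A) = \langle c_1(T(S^2 \times M)), A\rangle = c_1(TS^2) = 2$, so $\ind(u) = (n-3)(2) + 2\cdot 2 = 2n - 2$ for a sphere without marked points, and adding one marked point to impose the constraint $u(z_0) = \iota(0)$ drops the dimension by $2n$ (Example~\ref{ex:constraintPoints}), leaving a $0$-dimensional moduli space. I would first verify existence for the \emph{split} complex structure $J_0 = i \oplus j_M$ (where $j_M$ is any $\omega$-compatible structure on $M$): the spheres $S^2 \times \{p\}$ are $J_0$-holomorphic, exactly one passes through $\iota(0)$, and one checks using the splitting of the linearized operator (the normal direction carries the Cauchy-Riemann operator of a trivial bundle over $S^2$, which is surjective by Theorem~\ref{thm:linearAutomatic} since $c_1 = 0 > -\chi(S^2) = -2$) that this curve is Fredholm regular. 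Thus $\Mod_{0,1}^A(J_0)$ with the point constraint is a compact, regular, oriented $0$-manifold with a nonzero count — in fact exactly one point.

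The next step is to deform $J_0$ to a structure $J$ that agrees with $\iota_* i$ on the image of $B_r^{2n}$. By Exercise~\ref{EX:Jextension} (extension of compatible almost complex structures), since $\iota$ is a symplectic embedding the pushforward $\iota_* i$ is a compatible structure defined near a point of $S^2 \times M$, and it extends to a global $J \in \jJ(S^2 \times M, \sigma \oplus \omega)$ with $\iota^* J = i$ on $B_r^{2n}$. Choose a smooth homotopy $\{J_s\}_{s \in [0,1]}$ from $J_0$ to $J$ through compatible structures; by a generic perturbation of this homotopy (Theorem~\ref{thm:genericHomotopy}, adapted to incorporate the point constraint as in \S\ref{sec:evaluation} / Theorem~\ref{thm:localStructureZZ}), the parametrized moduli space of constrained curves is a $1$-manifold with boundary the constrained moduli spaces at $s = 0$ and $s = 1$. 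Since the count at $s = 0$ is nonzero (odd, in fact $=1$), the count at $s = 1$ is also nonzero provided the parametrized space is compact — this is where Theorem~\ref{thm:NSCompactness} enters: it guarantees that no bubbling or degeneration occurs in a sequence of $J_s$-holomorphic spheres in class $A$ inside $S^2 \times M$ with $M$ aspherical, because the asphericity of $M$ and the minimal energy of $A$ prevent sphere bubbles from carrying off energy. Therefore $\Mod_{0,1}^A(J)$ with the constraint $u(z_0) = \iota(0)$ is nonempty, yielding the desired $J$-holomorphic sphere whose image contains $\iota(0)$.

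The main obstacle, and the step deserving most care, is the compactness argument packaged in Theorem~\ref{thm:NSCompactness}, i.e.~ruling out bubbling in the parametrized family. The issue is that a sequence $u_k$ of $J_{s_k}$-holomorphic spheres in class $A$ with $u_k(z_0) = \iota(0)$ could in principle develop a bubble: after rescaling near a point of concentrating gradient one extracts a nonconstant $J$-holomorphic sphere $v$ in some class $B$ with $0 < \langle [\sigma \oplus \omega], B \rangle \le \langle [\sigma \oplus \omega], A\rangle$. The asphericity $\pi_2(M) = 0$ forces the $M$-component of $[v]$ to vanish, so $B = k [S^2 \times \{*\}]$ for some integer $k \ge 1$, but then positivity of area forces $k = 1$ and the bubble absorbs \emph{all} the energy, leaving a constant principal component — and the marked point constraint plus a careful accounting shows this cannot happen without contradicting the normalization (the limit must still represent $A$ and pass through $\iota(0)$, so in fact the limit is a single smooth sphere, no bubbling). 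I would establish this by the standard ``bubbling off'' analysis of \S\ref{sec:bubbling}: a uniform energy bound ($E(u_k) = \int_{S^2}\sigma$, fixed), the epsilon-regularity lemma giving local gradient bounds where energy is small, and the removal of singularities theorem to cap off any bubble, together with the energy quantization that follows from Theorem~\ref{thm:monotonicity} (monotonicity gives a minimal energy $\pi r_{\min}^2$ for any nonconstant sphere, hence only finitely many bubbles). Finally I would note that Theorem~\ref{thm:nonsqueezingClosed}, and hence Theorem~\ref{thm:nonsqueezing}, follows: applying Theorem~\ref{thm:monotonicity} to $\iota^{-1}\circ u$ restricted to the component of $u^{-1}(\iota(B_r^{2n}))$ through $\iota^{-1}(\iota(0)) = 0$ — which is proper into $B_r^{2n}$ and holomorphic since $\iota^* J = i$ there — gives $\int u^*(\sigma\oplus\omega) \ge \pi r^2$ over that piece, while the total energy of $u$ is $\int_{S^2}\sigma$, so $\pi r^2 \le \int_{S^2}\sigma$.
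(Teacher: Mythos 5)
Your proposal is correct and follows essentially the same route as the paper: the split structure $J_0 = i \oplus J_M$ with automatic transversality via the line-bundle splitting of $\mathbf{D}_{u_m}$, extension of $\iota_* i$ to a compatible $J_1$, a generic homotopy, compactness of the parametrized moduli space from $\pi_2(M)=0$ via the bubbling analysis, and a degree-theoretic conclusion. The only difference is one of packaging: you count points in the zero-dimensional constrained space $\ev^{-1}(\iota(0))$, whereas the paper computes the $\ZZ_2$-degree of $\ev$ on the full $2n$-dimensional moduli space (a diffeomorphism for $J_0$) and deduces surjectivity of $\ev$ for $J_1$ — equivalent arguments, though the paper's version sidesteps the need to verify that $\ev$ is transverse to the particular point $\iota(0)$, which lies in the region where $J$ is frozen.
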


Before discussing the proof of Proposition~\ref{prop:GWinvariant}, let us
prove the main result.  To simplify notation, denote
$$
(W,\Omega) := (S^2 \times M,\sigma \oplus \omega),
\quad\text{ and }\quad
A_0 := [S^2 \times \{*\}] \in H_2(W).
$$
Recall that in Chapter~\ref{chapter:local}, we defined the \emph{energy}
$E(u)$ of a $J$-holomorphic curve $u : \Sigma \to W$ as $\int_\Sigma u^*\Omega$,
and observed that whenever~$J$ is tamed by~$\Omega$, this is also the
(nonnegative!) area traced out by~$u$ for a natural choice of Riemannian
metric on~$W$.  For the curve $u : S^2 \to S^2 \times W$ provided by
Proposition~\ref{prop:GWinvariant}, we can find the energy by a purely
homological computation:
$$
E(u) = \int_{S^2} u^*\Omega = \langle [\Omega],[u] \rangle =
\langle [\sigma \oplus \omega],A_0 \rangle =
\langle [\sigma] , [S^2] \rangle = \int_{S^2} \sigma.
$$
Since the integrand $u^*\Omega$ is always nonnegative, this gives an
upper bound for the amount of energy~$u$ has in the image of the 
ball $B^{2n}_r$, and in this ball, we can use $\iota^{-1}$ to pull back~$u$
to a map $\iota^{-1} \circ u : u^{-1}(\iota(B_r^{2n})) \to
B_r^{2n}$ which contains~$0$ in its image and
is $i$-holomorphic since $\iota^*J = i$.  Thus combining
the above upper bound with the lower bound from
Theorem~\ref{thm:monotonicity},
we find that for any $r' \in (0,r)$,
$$
\pi (r')^2 \le \int_{u^{-1}\left(\iota(B_{r'}^{2n})\right)} 
(\iota^{-1} \circ u)^*\omega\std
= \int_{u^{-1}(\iota(B^{2n}_{r'})} u^*\Omega \le \int_{S^2} u^*\Omega =
\int_{S^2} \sigma.
$$
This proves Theorem~\ref{thm:nonsqueezingClosed}.

For the rest of this section, we discuss the truly nontrivial part of the
proof above: why does the $J$-holomorphic sphere in 
Proposition~\ref{prop:GWinvariant} exist?  This turns out to be true
not just for a specific~$J$ but also for \emph{generic}
$\Omega$-compatible almost complex structures on~$W$, and there is nothing
special about the point $\iota(0)$, as \emph{every} point in~$W$ is in the
image of some $J$-holomorphic sphere homologous to~$A_0$.
Moreover, this is also true
for a generic subset of the special class of almost complex structures 
that match the
integrable complex structure $\iota_*i$ on $\iota(B_r^{2n})$.  We will not
be able to find these $J$-holomorphic curves explicitly, as we have no
concrete knowledge about the symplectic embedding $\iota : B_r^{2n} \to W$
and thus cannot even write down an explicit expression for~$J$ having
the desired property in $\iota(B_r^{2n})$.  Instead, we argue from more
abstract principles by starting from a simpler almost complex structure,
for which the holomorphic curves are easy to classify, and then using a
deformation argument to show that the desired curves
for our more general data must also exist.  This argument can be
outlined as follows:
\begin{enumerate}
\item Find a special $J_0 \in \jJ(W,\Omega)$ for which the moduli space
$\Mod_{0,1}^{A_0}(J_0)$ of $J_0$-holomorphic
spheres homologous to $[S^2 \times \{*\}]$ and with one marked point is
easy to describe precisely: in particular, the curves in 
$\Mod_{0,1}^{A_0}(J_0)$ are all Fredholm regular, and the moduli space is a
closed $2n$-dimensional manifold diffeomorphic to~$W$, with a diffeomorphism
provided by the natural evaluation map
$$
\ev : \Mod_{0,1}^{A_0}(J_0) \to W : [(S^2,j,z,u)] \mapsto u(z).
$$
\item Choose $J_1 \in \jJ(W,\Omega)$ with the desired property 
$\iota^*J_1 = i$ and show that for a generic such choice, the moduli
space $\Mod_{0,1}^{A_0}(J_1)$ is also a smooth $2n$-dimensional manifold.
\item Choose a homotopy $\{J_t\}$ from $J_0$ to $J_1$ and show that for
a generic such choice, the resulting parametrized moduli space
$\Mod_{0,1}^{A_0}(\{J_t\})$ is a smooth $(2n+1)$-dimensional
manifold with boundary
$$
\p \Mod_{0,1}^{A_0}(\{J_t\}) = \Mod_{0,1}^{A_0}(J_0) \sqcup
\Mod_{0,1}^{A_0}(J_1).
$$
Moreover, $\Mod_{0,1}^{A_0}(\{J_t\})$ is \emph{compact}.
\item Since $\ev : \Mod_{0,1}^{A_0}(J_0) \to W$ is a diffeomorphism, its
$\ZZ_2$-mapping degree is~$1$, and the fact that $\ev$ extends naturally
over the cobordism $\Mod_{0,1}^{A_0}(\{J_t\})$ implies that its restriction
to the other boundary component $\Mod_{0,1}^{A_0}(J_1)$ also has
$\ZZ_2$-degree~$1$.  It follows that
$\ev : \Mod_{0,1}^{A_0}(J_1) \to W$ is surjective, so for every $p \in W$,
there is a $J_1$-holomorphic sphere $u : S^2 \to W$ with
$[u] = A_0$ and a point $z \in S^2$ such that $u(z) = p$.
\end{enumerate}

We carry out the details in the next several subsections.  The only
part that cannot be proved using the tools we've already developed
is the compactness of $\Mod_{0,1}^{A_0}(\{J_t\})$, which is incidentally
the only place where the assumption $\pi_2(M) = 0$ is used.  This compactness
is a deep result which we shall prove in \S\ref{sec:bubbling}.

\subsection{The moduli space for~$J_0$}
\label{subsec:J0}

Identify $S^2$ with the Riemann sphere $\CC \cup \{\infty\}$ with its
standard complex structure~$i$, choose any $J_M \in \jJ(M,\omega)$, and
define $J_0 \in \jJ(W,\Omega)$ via the natural direct sum decomposition
$T_{(z,p)}W = T_z S^2 \oplus T_p M$, that is
$$
J_0 := i \oplus J_M.
$$
Then a map $u = (u_S,u_M) : S^2 \to S^2 \times M$ is $J_0$-holomorphic
if and only if $u_S : S^2 \to S^2$ is holomorphic and
$u_M : S^2 \to M$ is $J_M$-holomorphic.  If $[u] = A_0 = [S^2 \times \{*\}]$,
then we also have
$$
[u_S] = [S^2],
\quad\text{ and }\quad
[u_M] = 0.
$$
The latter implies that $u_M$ has zero energy as a $J_M$-holomorphic curve
in~$M$, i.e.~$\int_{S^2} u_M^*\omega = \langle [\omega],[u_M] \rangle = 0$,
hence~$u_M$ is constant.  Moreover, $u_S : S^2 \to S^2$ is a holomorphic
map of degree~$1$, and thus is biholomorphic (cf.~Exercise~\ref{EX:posDegree}),
so after a reparametrization of the domain we can assume $u_S = \Id$.
It follows that the moduli space $\Mod_{0,1}^{A_0}(J_0)$ can be identified
with the following set:
$$
\Mod_{0,1}^{A_0}(J_0) = \left\{ (u_m,\zeta) \ |\ 
\text{$m \in M$ and $\zeta \in S^2$} \right\},
$$
where we define the $J_0$-holomorphic maps 
$$
u_m : S^2 \to S^2 \times M : z \mapsto (z,m).
$$
The evaluation map
$\ev : \Mod_{0,1}^{A_0}(J_0) \to S^2 \times M$ then takes the form
$$
\ev(u_m,\zeta) = (\zeta,m),
$$
and is thus clearly a diffeomorphism.  Observe that there is a natural
splitting of complex vector bundles 
\begin{equation}
\label{eqn:umSplitting}
u_m^*TW = T S^2 \oplus E_0^{(n-1)},
\end{equation}
where $E_0^{(n-1)} \to S^2$ denotes the trivial complex 
bundle of rank~$n-1$ whose fiber at every point $z \in S^2$ is $(T_m M,J_M)$.

The observations above imply that $\Mod_{0,1}^{A_0}(J_0)$ is a smooth manifold
of dimension~$2n$, and indeed, this is precisely the prediction made by
the index formula \eqref{eqn:virdim}, which gives
$$
\virdim \Mod_{0,1}^{A_0}(J_0) = 2(n - 3) + 2 c_1(A_0) + 2 = 2n
$$
after plugging in the computation
$$
c_1(A_0) = c_1(u_m^*T(S^2 \times M)) = c_1(TS^2) + c_1(E_0^{n-1}) = 2.
$$
The above does \emph{not} immediately imply that every curve in
$\Mod_{0,1}^{A_0}(J_0)$ is Fredholm regular; in general only the converse of
this statement is true.  This is something we will need to know in order
to understand the local structure of the parametrized moduli space
$\Mod_{0,1}^{A_0}(\{J_t\})$, and we must proceed with caution since
our choice of $J_0$ is definitively
\emph{non-generic}.\footnote{Even if $j \in \jJ(S^2)$ and 
$J_M \in \jJ(M,\omega)$ are chosen
generically, product structures of the form $j \oplus J_M$ on 
$S^2 \times M$ are still of a rather
special type that can never be regarded as generic.  
See Remark~\ref{remark:nongeneric} for an example of just how badly things
can potentially go wrong.}
This means that 
we cannot expect transversality to be achieved for general reasons, but
must instead check it explicitly.  This turns out to be not so hard,
simply because the curves $u_m(z) = (z,m)$ are so explicit.

\begin{lemma}
\label{lemma:productTransversality}
Every $J_0$-holomorphic sphere of the form $u_m : S^2 \to S^2 \times M :
z \mapsto (z,m)$ for $m \in M$ is Fredholm regular.
\end{lemma}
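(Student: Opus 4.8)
The plan is to reduce the statement to the surjectivity of a single linear Cauchy-Riemann type operator and then exploit the product structure to split that operator into pieces whose surjectivity is already guaranteed by the automatic-transversality results established above. Since the domain is $S^2$, the relevant Teichm\"uller space is a point (independently of the one marked point), so in the notation of Definition~\ref{defn:regular} the operator $D\dbar_{J_0}(i,u_m)$ of \eqref{eqn:linearization2} is simply the linear Cauchy-Riemann type operator
$$
\mathbf{D}_{u_m} : W^{1,p}(u_m^*TW) \to L^p(\overline{\Hom}_\CC(TS^2, u_m^*TW)),
$$
and Fredholm regularity of $u_m$ is equivalent to surjectivity of $\mathbf{D}_{u_m}$. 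First I would show that under the splitting \eqref{eqn:umSplitting}, namely $u_m^*TW = TS^2 \oplus E_0^{(n-1)}$, the operator $\mathbf{D}_{u_m}$ splits as a direct sum $D_1 \oplus D_2$, where $D_1$ is a Cauchy-Riemann type operator on $TS^2$ and $D_2$ one on $E_0^{(n-1)}$.

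To carry this out, equip $W = S^2 \times M$ with a product connection $\nabla = \nabla^{S^2} \oplus \nabla^M$ (with $\nabla^{S^2}$ symmetric and $i$-compatible, e.g.\ the Levi-Civita connection of the round metric, and $\nabla^M$ symmetric); this connection is symmetric, so it may be used in formula \eqref{eqn:linearization} for $\mathbf{D}_{u_m}$. Since $u_m(z) = (z,m)$ is constant in the $M$-direction, along $u_m$ the pulled-back connection on $E_0^{(n-1)}$ is trivial and $Tu_m(X) = (X,0)$. Writing $\eta = (\eta_S,\eta_M)$ for a section of $u_m^*TW$, the first two terms $\nabla\eta + J_0(u_m)\circ\nabla\eta\circ j$ are manifestly diagonal with respect to the splitting because $J_0 = i \oplus J_M$ is a product and $\nabla$ is a product connection. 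For the zeroth-order term $(\nabla_\eta J_0)\circ Tu_m\circ j$ one checks that $\nabla_\eta J_0$ vanishes whenever it is applied to a vector tangent to the $S^2$-factor: the $S^2$-component of $J_0$ is the fixed structure $i$, which is $\nabla^{S^2}$-parallel, and $J_M$ does not depend on the $S^2$-variable, so $\nabla_{\eta_S}i = 0$ and $\nabla_{\eta_S}J_M = 0$; meanwhile $Tu_m(jX) = (jX,0)$ lies entirely in the $S^2$-factor. Hence this term vanishes identically, and one obtains $\mathbf{D}_{u_m} = D_1 \oplus D_2$, where $D_1\eta_S = \nabla\eta_S + i\circ\nabla\eta_S\circ j$ is the natural holomorphic Cauchy-Riemann operator on $(TS^2,i)$ (the operator $\mathbf{D}_{\Id}$ of Exercise~\ref{EX:holX}), and $D_2\eta_M = \nabla\eta_M + J_M\circ\nabla\eta_M\circ j$ is the standard $\dbar$-operator on the trivial bundle $E_0^{(n-1)}$.

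Surjectivity of each summand then follows from Theorem~\ref{thm:linearAutomatic}: the complex line bundle $TS^2$ has $c_1(TS^2) = 2 > -2 = -\chi(S^2)$, so $D_1$ is surjective (equivalently, $D_1 = \mathbf{D}_{\Id}$ has index $\chi(S^2) + 2c_1(TS^2) = 6$ and its kernel is the $6$-dimensional space of holomorphic vector fields on $S^2$), and $E_0^{(n-1)}$ is a direct sum of $n-1$ trivial complex line bundles, each with $c_1 = 0 > -2 = -\chi(S^2)$, so $D_2$ is a direct sum of surjective operators. Therefore $\mathbf{D}_{u_m}$ is surjective, i.e.\ $u_m$ is Fredholm regular. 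The only genuinely delicate point is the verification that the zeroth-order term of $\mathbf{D}_{u_m}$ respects the splitting --- that $\nabla_\eta J_0$ annihilates $S^2$-directions --- which is precisely where the product form of both $J_0$ and $u_m$ is essential; everything else is bookkeeping with index and automatic-transversality facts already in hand.
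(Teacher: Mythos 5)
Your proof is correct and follows essentially the same route as the paper: reduce regularity to surjectivity of $\mathbf{D}_{u_m}$ (trivial Teichm\"uller space), split the operator diagonally over $u_m^*TW = TS^2 \oplus E_0^{(n-1)}$, and apply Theorem~\ref{thm:linearAutomatic} to each line-bundle summand using $c_1(TS^2) = 2 > -\chi(S^2)$ and $c_1 = 0 > -\chi(S^2)$. The only difference is that you spell out, via a product connection, why the zeroth-order term respects (indeed vanishes on) the splitting, a point the paper leaves implicit.
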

\begin{proof}
We recall from Definition~\ref{defn:regular} that $u_m$ is Fredholm
regular if and only if a certain bounded linear operator of the form
$$
D\dbar_{J_0}(i,u_m) : T_{i}\tT \oplus W^{1,p}(u_m^*TW) \to
L^p(\overline{\Hom}_\CC(T S^2,u_m^*TW))
$$
is surjective.  Here $\tT$ is a Teichm\"uller slice, which in the present
case is trivial since the Teichm\"uller space of $S^2$ with one marked
point is trivial, so we can drop this factor and simply consider the
linearized Cauchy-Riemann operator
$$
\mathbf{D}_{u_m} : W^{1,p}(u_m^*TW) \to L^p(\overline{\Hom}_\CC(T S^2,u_m^*TW)).
$$
We can make use of the natural splitting \eqref{eqn:umSplitting} to
split the domain and target of $\mathbf{D}_{u_m}$ as
$$
W^{1,p}(u_m^*TW) = W^{1,p}(TS^2) \oplus W^{1,p}(E_0^{n-1})
$$
and
$$
L^p(\overline{\Hom}_\CC(T S^2,u_m^*TW)) =
L^p(\overline{\End}_\CC(T S^2)) \oplus
L^p(\overline{\Hom}_\CC(T S^2,E_0^{n-1})).
$$
In light of the split nature of the nonlinear Cauchy-Riemann equation
for $J_0$-ho\-lo\-mor\-phic maps $u : S^2 \to S^2 \times M$, 
it then turns out that the matrix form of $\mathbf{D}_{u_m}$
with respect to these splittings is
$$
\mathbf{D}_{u_m} = \begin{pmatrix}
\mathbf{D}_i^{S^2} & 0 \\
0 & \mathbf{D}_m
\end{pmatrix},
$$
where $\mathbf{D}_i^{S^2} : W^{1,p}(TS^2) \to L^p(\overline{\End}_\CC(T S^2))$
is the natural Cauchy-Riemann operator defined by the holomorphic vector
bundle structure of $(T S^2,i)$, and 
$$
\mathbf{D}_m : W^{1,p}(T_m M) \to L^p(\overline{\Hom}_\CC(T S^2,T_m M))
$$
is the linearization of $\dbar_{J_M}$ at the constant $J_M$-holomorphic
sphere $S^2 \to M : z \mapsto m$.  Specializing \eqref{eqn:linearization}
for the case of a constant map, we see that the latter is simply the
standard Cauchy-Riemann operator on the trivial bundle $E_0^{n-1}$,
i.e.~it is the operator determined by the unique holomorphic structure on
$E_0^{n-1}$ for which the constant sections are holomorphic.
As such, this operator splits further with respect to the splitting of
$E_0^{n-1}$ into holomorphic line bundles determined by any complex basis
of $T_m M$.  This yields a presentation of $\mathbf{D}_{u_m}$ in the form
$$
\mathbf{D}_{u_m} = \begin{pmatrix}
\mathbf{D}_i^{S^2} & 0 & \cdots & 0 \\
0 & \dbar & \cdots & 0 \\
\vdots & \vdots & \ddots & \vdots \\
0 & 0 & \cdots & \dbar
\end{pmatrix},
$$
where each of the diagonal terms are complex-linear Cauchy-Riemann
type operators on line bundles, with the $\dbar$ entries in particular
denoting operators that are equivalent to the standard operator
$$
\dbar : W^{1,p}(S^2,\CC) \to L^p(\overline{\Hom}_\CC(T S^2,\CC)) :
f \mapsto df + i\, df \circ i.
$$
These operators are surjective by Theorem~\ref{thm:linearAutomatic}
since $c_1(E_0^1) = 0 > -\chi(S^2)$.
Similarly, $\mathbf{D}_i^{S^2}$ is also surjective since
$c_1(T S^2) = 2 > -\chi(S^2)$.
\end{proof}

\begin{remark}
The above is an example of a general phenomenon often called
``automatic transversality'': it refers to various situations in which
despite (or in this case even \emph{because of}) a non-generic choice of~$J$,
transversality can be achieved by reducing it to a problem involving
Cauchy-Riemann operators on line bundles and applying
Theorem~\ref{thm:linearAutomatic}.  The case above is unusually fortunate,
as it is not often possible to split a given Cauchy-Riemann operator over a
sum of line bundles in just the right way.  In dimension four, however,
arguments like this do often work out in greater generality, and we'll make 
considerable use of them in later applications to symplectic
$4$-manifolds.
\end{remark}

\begin{remark}
\label{remark:nongeneric}
The following example is meant to persuade you that no almost complex
structure of the product form $j \oplus J_M$ can be regarded as 
``generic'' by any reasonable definition.
Suppose $(\Sigma,j)$ is a closed connected Riemann surface of genus~$g$,
$\sigma$ is a compatible area form on~$\Sigma$,
$J_M \in \jJ(M,\omega)$ is as above and $J_0 = j \oplus J_M \in
\jJ(\Sigma \times M, \sigma \oplus \omega)$.  Then any
$J_0$-holomorphic curve of the form $u_m : \Sigma \to \Sigma \times M :
z \mapsto (z,m)$ for $m \in M$ has 
$$
c_1(u_m^*T(\Sigma \times M))  = 
c_1(T\Sigma) = \chi(\Sigma),
$$
so
$$
\ind(u_m) = (n-3) \chi(\Sigma) + 2 c_1([\Sigma \times \{*\}]) = 
(n-1) \chi(\Sigma),
$$
which for $n \ge 2$ is negative whenever $g \ge 2$.  Thus in this case,
a generic perturbation of~$J_0$ should eliminate such curves altogether,
but it is clear that a perturbation of the form $i \oplus J_M'$ for
$J_M' \in \jJ(M,\omega)$ will never accomplish this.  In 
Lemma~\ref{lemma:productTransversality}, we were simply lucky to be working
with genus zero.
\end{remark}

\subsection{Transversality for~$J_1$}

From now on, assume the symplectic embedding $\iota : (B_r^{2n},\omega\std) \to
(W,\Omega)$ can be extended symplectically to a neighborhood of the closure
$\overline{B}_r^{2n}$; this can always be achieved by shrinking~$r$ slightly
without violating the assumption $r > R$.  Now
consider the closed subspace of $\jJ(W,\Omega)$ defined by
$$
\jJ(W,\Omega ; \iota) := \{ J \in \jJ(W,\Omega)\ |\ 
\text{$\iota^*J = i$ on $\overline{B}_r^{2n}$} \},
$$
in other words this is the space of all $\Omega$-compatible almost complex
structures on~$W$ which match the particular integrable complex
structure $\iota_*i$ on the closed set
$\iota(\overline{B}_r^{2n})$.  

\begin{exercise}
Convince yourself that $\jJ(W,\Omega ; \iota)$ is not empty.
\textsl{Hint: It may help to recall that the usual space of 
compatible almost complex structures is always not only nonempty but also
connected, see \S\ref{sec:compatible}.}
\end{exercise}

As with~$J_0$ in the previous
subsection, the condition $\iota^*J = i$ is nongeneric in some sense, but
it turns out not to matter for our purposes:

\begin{prop}
There exists a Baire subset $\jJ_\reg(W,\Omega ; \iota) \subset
\jJ(W,\Omega ; \iota)$ such that for any $J \in \jJ_\reg(W,\Omega ; \iota)$,
all $J$-holomorphic spheres homologous to~$A_0$ are Fredholm regular, hence
$\Mod_{0,1}^{A_0}(J)$ is a smooth manifold of dimension~$2n$.
\end{prop}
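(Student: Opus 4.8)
The plan is to deduce the proposition from the general transversality theorem of Chapter~\ref{chapter:moduli}, namely Theorem~\ref{thm:localStructure} applied to structures that are held fixed outside an appropriate open set. The one genuinely geometric input needed is that, for the specific homology class $A_0$, no $J$-holomorphic sphere representing $A_0$ can be entirely contained in the region $\iota(\overline{B}_r^{2n})$ where $J$ is frozen; once this is known, the region where $J$ is allowed to vary already "sees" every relevant curve, and the abstract machinery takes over.

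Concretely, I would set $\uU := W \setminus \iota(\overline{B}_r^{2n})$, which is an open subset of the compact manifold $W = S^2 \times M$ and hence has compact closure, and I would fix any $\Jfix \in \jJ(W,\Omega\,;\,\iota)$ (such a structure exists, as in the exercise preceding the proposition). By definition $\jJ(W,\Omega\,;\,\iota) = \jJ(W,\Omega\,;\,\uU,\Jfix)$, so Theorem~\ref{thm:localStructure} yields a Baire subset $\jJ_\reg(W,\Omega\,;\,\uU,\Jfix) \subset \jJ(W,\Omega\,;\,\iota)$ such that for every $J$ in it the space $\Mod^*_\uU(J)$ of $J$-holomorphic curves with an injective point mapped into $\uU$ is a smooth manifold, with $\Mod^*_\uU(J) \cap \Mod_{0,1}^{A_0}(J)$ of dimension $\virdim \Mod_{0,1}^{A_0}(J) = 2(n-3) + 2c_1(A_0) + 2 = 2n$ (using $c_1(A_0) = 2$ as computed in \S\ref{subsec:J0}). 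It then remains to prove that for \emph{every} $J \in \jJ(W,\Omega\,;\,\iota)$ one has $\Mod_{0,1}^{A_0}(J) = \Mod^*_\uU(J) \cap \Mod_{0,1}^{A_0}(J)$, after which one simply sets $\jJ_\reg(W,\Omega\,;\,\iota) := \jJ_\reg(W,\Omega\,;\,\uU,\Jfix)$.

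For this, let $u : S^2 \to W$ be $J$-holomorphic with $[u] = A_0$. First I would observe that $u$ is simple: the class $A_0 = [S^2 \times \{*\}]$ is primitive in $H_2(W)$, since its homological intersection number with $[\{*\} \times M] \in H_{2n-2}(W)$ equals $1$ (the two submanifolds meet transversely in one point), so $A_0 = k[v]$ with $k \ge 1$ forces $k \mid 1$; hence by Proposition~\ref{prop:multipleCovers} the sphere $u$ is somewhere injective, and by Corollary~\ref{cor:simple} its injective points form an open dense subset of $S^2$. Second I would show $u(S^2) \not\subset \iota(\overline{B}_r^{2n})$: since $\iota$ extends symplectically to a neighborhood of $\overline{B}_r^{2n}$, the form $\Omega$ is exact there, so if $u(S^2)$ lay inside $\iota(\overline{B}_r^{2n})$ then $E(u) = \int_{S^2} u^*\Omega = 0$ by Stokes, and because $\Omega$ tames $J$ this forces $u$ to be constant, contradicting $[u] = A_0 \ne 0$. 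Therefore $\{z \in S^2 : u(z) \notin \iota(\overline{B}_r^{2n})\}$ is open and nonempty, and intersecting it with the dense set of injective points produces an injective point mapped into $\uU$, i.e.\ $u \in \Mod^*_\uU(J)$, as desired.

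I expect the only real content to be the second observation above — excluding $J$-holomorphic spheres trapped in the model ball where $J$ is pinned to $\iota_* i$ — and this is short given the exactness argument (a maximum-principle argument on $|\iota^{-1}|^2$ would work equally well). Everything else is bookkeeping: matching $\jJ(W,\Omega\,;\,\iota)$ to a space of the form $\jJ(W,\Omega\,;\,\uU,\Jfix)$ and invoking Theorem~\ref{thm:localStructure}. I would also note, to justify the closing phrase about $\Mod_{0,1}^{A_0}(J)$ being a smooth $2n$-manifold rather than just $\Mod_{0,0}^{A_0}(J)$, that the Teichm\"uller slice for $S^2$ with one marked point is trivial, so Fredholm regularity of a curve in $\Mod_{0,1}^{A_0}(J)$ is exactly surjectivity of $\mathbf{D}_u$, the same condition as in $\Mod_{0,0}^{A_0}(J)$.
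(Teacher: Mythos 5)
Your proof is correct and follows essentially the same route as the paper's: the two key observations (curves in class $A_0$ are simple because $A_0$ is primitive, and no nonconstant closed curve can lie in $\iota(\overline{B}_r^{2n})$ because $\Omega$ is exact there) are exactly the paper's observations, and the rest is the generic transversality machinery. The only difference is cosmetic: you invoke Theorem~\ref{thm:localStructure} verbatim with $\uU = W \setminus \iota(\overline{B}_r^{2n})$, whereas the paper phrases this as a ``minor modification'' of its proof (cf.\ Remark~\ref{remark:compactSubset}); your explicit bookkeeping is a perfectly valid way to make that reduction precise.
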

\begin{proof}
We begin with the following observations:
\begin{enumerate}
\item The virtual dimension of $\Mod_{g,m}^{A}(J)$ depends in general
on~$g$, $m$ and~$A$, but not on~$J$, thus our earlier computation
$\virdim \Mod_{0,1}^{A_0}(J_0) = 2n$ also applies to $\Mod_{0,1}^{A_0}(J)$ for
any~$J$.
\item Every pseudoholomorphic curve $u : S^2 \to W$ homologous to~$A_0$
is \emph{simple}, as $A_0 = [S^2 \times \{*\}]$ is not a positive multiple
of any other homology class in $H_2(S^2 \times M)$.
\item For any $J \in \jJ(W,\Omega ; \iota)$, there is no closed nonconstant
$J$-holomorphic curve $u : \Sigma \to W$ whose image lies entirely in
$\iota(\overline{B}_r^{2n})$.  If such a curve did exist, then
$\iota^{-1} \circ u$ would be a nonconstant closed $i$-holomorphic curve
in~$\RR^{2n}$ and would thus have positive energy
$$
\int_\Sigma (\iota^{-1} \circ u)^* \omega\std > 0,
$$
but this is impossible
since $\omega\std$ vanishes on every cycle in~$\RR^{2n}$.
\end{enumerate}
The result now follows by a minor modification of the proof of
Theorem~\ref{thm:localStructure}, see also
Remark~\ref{remark:compactSubset}.  The crucial point is that the set of
perturbations allowed by $\jJ(W,\Omega ; \iota)$ is still large enough to
prove that the universal moduli space for somewhere injective curves
is smooth, because every such curve necessarily has an injective point
outside of $\iota(\overline{B}_r^{2n})$.
\end{proof}

In light of this result, we can choose
$$
J_1 \in \jJ_\reg(W,\Omega ; \iota)
$$
so that $\Mod_{0,1}^{A_0}(J_1)$ is a smooth manifold of dimension~$2n$.

\subsection{The homotopy of almost complex structures}

Denote by
$$
\jJ(W,\Omega \,;\, J_0,J_1),
$$
the space of smooth $\Omega$-compatible homotopies between
$J_0$ and $J_1$,
i.e.~this consists of all smooth $1$-parameter families
$\{ J_t \}_{t \in [0,1]}$ such that $J_t \in \jJ(W,\Omega)$ for all 
$t \in [0,1]$ and $J_t$ matches the structures chosen above for $t =0,1$.
This gives rise to the parametrized moduli space
$$
\Mod_{0,1}^{A_0}(\{ J_t \}) = \{ (u,t) \ |\ 
t \in [0,1], \ u \in \Mod(J_t) \}.
$$
The following is the fundamental input we need from the compactness
theory of holomorphic curves.  It depends on certain topological details in
the setup we've chosen, and in particular on the fact that $A_0 =
[S^2 \times \{*\}]$ is a primitive homology class and $\pi_2(M) = 0$.

\begin{prop}
\label{prop:NSCompactness}
For any $\{J_t\} \in \jJ(W,\Omega \,;\, J_0,J_1)$,
$\Mod_{0,1}^{A_0}(\{ J_t \})$ is compact.
\end{prop}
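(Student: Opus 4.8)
The plan is to prove that $\Mod_{0,1}^{A_0}(\{J_t\})$ is \emph{sequentially} compact; since its topology (defined via $C^\infty$-convergence of reparametrized maps) is metrizable, this suffices. So I start with a sequence $(u_k,t_k) \in \Mod_{0,1}^{A_0}(\{J_t\})$, pass to a subsequence with $t_k \to t \in [0,1]$, and aim to extract a further subsequence converging in the moduli space to some $(u,t) \in \Mod_{0,1}^{A_0}(J_t)$. Representing each $u_k$ by a $J_{t_k}$-holomorphic sphere $S^2 \to W$ and using M\"obius transformations of $(S^2,i)$, I may assume all marked points sit at a fixed basepoint $z_0 \in S^2$, with residual reparametrization group $G := \Aut(S^2,i,z_0)$; since the domain $(S^2,i)$ has trivial Teichm\"uller space, no degeneration of the conformal structure can occur, and the only source of noncompactness is bubbling of the maps.

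First I would isolate the two facts that drive everything. \textbf{(i)} All the curves have the same energy $E(u_k) = \langle [\Omega],A_0\rangle = \int_{S^2}\sigma =: \hbar$, by the homological computation in \S\ref{sec:nonsqueezing}. \textbf{(ii)} Every \emph{nonconstant} $J_s$-holomorphic sphere $v : S^2 \to W$, for any $s \in [0,1]$, satisfies $E(v) \ge \hbar$. Indeed, writing $[\Omega] = \pr_{S^2}^*[\sigma] + \pr_M^*[\omega]$ gives
\begin{equation*}
E(v) = \deg(\pr_{S^2}\circ v)\int_{S^2}\sigma + \langle [\omega],(\pr_M\circ v)_*[S^2]\rangle,
\end{equation*}
and the second term vanishes because $(\pr_M\circ v)_*[S^2]$ lies in the image of the Hurewicz homomorphism $\pi_2(M)\to H_2(M)$, which is zero since $\pi_2(M)=0$. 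As $\Omega$ tames $J_s$ and $v$ is nonconstant, $E(v)>0$, forcing $\deg(\pr_{S^2}\circ v)\ge 1$ and hence $E(v)\ge\hbar$. (This is the only place the hypotheses $\pi_2(M)=0$ and ``$A_0$ primitive'' — equivalently, $A_0$ not a proper multiple, which also makes all such curves simple — are really used.)

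Next I invoke the bubbling-off analysis of \S\ref{sec:bubbling}. Either $\sup_k\|du_k\|_{C^0}<\infty$ with respect to a fixed metric on the compact manifold $W$, or it is not. In the bounded case one gets a uniform $W^{1,p}$-bound, so Corollary~\ref{cor:gradBounds} yields a $C^\infty$-convergent subsequence $u_k\to u$, with $u$ a $J_t$-holomorphic sphere, $[u]=A_0$, and marked point at $z_0$; thus $(u,z_0,t)\in\Mod_{0,1}^{A_0}(J_t)$ and $(u_k,t_k)\to(u,t)$ in $\Mod_{0,1}^{A_0}(\{J_t\})$. In the unbounded case, the standard rescaling argument at a point of maximal gradient produces a nonconstant $J_t$-holomorphic sphere $v$ (a ``bubble'') which captures, in the limit, energy at least $\hbar$. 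Since total energy is conserved, $\lim E(u_k)=\hbar$, and energy is nonnegative, there can be \emph{exactly one} bubble, it captures \emph{all} the energy, and the remaining (``principal'') component is constant — in particular there is no energy left in the neck and no second bubble, so the Gromov limit is not genuinely nodal. Applying suitable transformations from $G$ to move the (single) bubbling point to a fixed location and then rescaling, the reparametrized sequence converges in $C^\infty$ on \emph{all} of $S^2$ to $v$, with the marked point converging to some $z_\infty\in S^2$; hence $(v,z_\infty,t)\in\Mod_{0,1}^{A_0}(J_t)$ and again the subsequence converges in the moduli space. Either way the desired subsequential limit exists, proving the proposition.

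The main obstacle is the bubbling-off analysis itself: extracting the bubble by rescaling, and — crucially — the energy-quantization / no-loss-in-the-neck statement that upgrades a priori Gromov-type convergence (to a possibly nodal configuration) into honest $C^\infty$-convergence to a smooth sphere in our low-energy setting. This is exactly what \S\ref{sec:bubbling} is designed to provide, so I would develop that machinery first and then feed it facts (i) and (ii); the bookkeeping with the residual M\"obius group $G$ and the location of the marked point is routine once the convergence statement is in hand.
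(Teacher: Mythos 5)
Your proposal is correct in outline and rests on the same three pillars as the paper's argument (energy quantization via $\pi_2(M)=0$, Hofer's lemma plus rescaling to extract bubbles, and removable singularities together with the elliptic estimates of Corollary~\ref{cor:gradBounds}), but it takes a genuinely different route through the bubbling dichotomy. The paper does not stop at fixing the marked point: using the fact that $u_k^S := \pr_{S^2}\circ u_k$ has degree~$1$ and is therefore surjective, it normalizes by M\"obius maps $\varphi_k$ satisfying \emph{three} conditions, $\varphi_k(0)=\zeta_k$, $u_k^S\circ\varphi_k(1)=1$ and $u_k^S\circ\varphi_k(\infty)=\infty$. This kills the residual group $G=\Aut(S^2,i,z_0)$ entirely and forces the principal component of any limit to pass through at least two of the three distinct fibers $\{\zeta_\infty\}\times M$, $\{1\}\times M$, $\{\infty\}\times M$, hence to be nonconstant with energy $\ge\hbar$. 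A bubble would then make the total energy at least $2\hbar>\hbar$, a contradiction, so bubbling simply cannot occur and the normalized sequence itself satisfies the $C^1$-bound. Your version instead permits bubbling and, when it happens, follows the bubble by re-reparametrizing with the rescaling maps. What the paper's normalization buys is that it never has to analyze the limit \emph{near the bubble point from the outside}; what yours buys is that it avoids the three-point normalization and generalizes more readily to settings where no such degree-$1$ projection is available.

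The price of your route is the step you describe as ``the Gromov limit is not genuinely nodal, so the rescaled sequence converges in $C^\infty$ on all of $S^2$.'' This does not come for free from \S\ref{sec:bubbling}: that section develops no neck analysis or $\epsilon$-regularity statement, precisely because the contradiction argument makes them unnecessary. To close your argument you must show that the rescaled maps $u_k\circ\psi_k$ converge in a full neighborhood of $\infty\in S^2$, not merely in $C^\infty_\loc(\CC)$. This is fillable with the tools at hand: for $R$ large, the energy of $u_k\circ\psi_k$ on $S^2\setminus B_R$ is eventually less than $\hbar - \int_{B_R}v^*\Omega$, which is strictly less than $\hbar$; a gradient blow-up at points tending to $\infty$ would produce, by a second application of Hofer's lemma and rescaling, another nonconstant sphere of energy $\ge\hbar$ extracted from a region carrying energy $<\hbar$, which is absurd. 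Hence the gradients are uniformly bounded near $\infty$, Corollary~\ref{cor:gradBounds} applies, and the limit agrees with $v$ by uniqueness on the overlap. You should also record where the marked point goes under $\psi_k^{-1}$ (after a further subsequence it converges to some $z_\infty\in S^2$, typically $\infty$) and compose with one more convergent family of M\"obius maps to return it to the basepoint, so that the limit is honestly an element of $\Mod_{0,1}^{A_0}(J_t)$ in the sense of the definition of convergence in the moduli space. With these two points supplied, your proof is complete.
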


We'll come back to the proof of this in \S\ref{sec:bubbling}.  
Notice that since $\Mod_{0,1}^{A_0}(J_1)$ is naturally a closed subset of
$\Mod_{0,1}^{A_0}(\{ J_t \})$ and is already known to be a smooth manifold,
this implies that $\Mod_{0,1}^{A_0}(J_1)$ is a closed manifold.  Since
Fredholm regularity is an open condition, the same is then true for all
$\Mod_{0,1}^{A_0}(J_t)$ with~$t$ in some neighborhood of either~$0$ or~$1$,
and for $t$ in this range the natural projection
$$
\Mod_{0,1}^{A_0}(\{J_t\}) \to \RR : (u,t) \mapsto t
$$
is a submersion.  We cannot expect this to be true for all
$t \in [0,1]$, not even for a generic choice of the homotopy, but
by applying Theorem~\ref{thm:genericHomotopy} we can at least arrange for
$\Mod_{0,1}^{A_0}(\{ J_t \})$ to carry a smooth structure:

\begin{prop}
\label{prop:NSHomotopy}
There exists a Baire subset 
$$
\jJ_\reg(W,\Omega \,;\, J_0,J_1) \subset \jJ(W,\Omega \,;\, J_0,J_1)
$$ 
such that for any $\{J_t\} \in \jJ_\reg(W,\Omega \,;\, J_0,J_1)$,
$\Mod_{0,1}^{A_0}(\{ J_t \})$ is a compact smooth manifold, with boundary
$$
\p \Mod_{0,1}^{A_0}(\{J_t\}) = \Mod_{0,1}^{A_0}(J_0) \sqcup
\Mod_{0,1}^{A_0}(J_1).
$$
\end{prop}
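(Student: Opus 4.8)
The plan is to assemble the statement from two inputs that are already in hand: the generic smoothness result for parametrized moduli spaces, Theorem~\ref{thm:genericHomotopy}, and the compactness statement of Proposition~\ref{prop:NSCompactness}. The only preliminary point that needs checking is that every curve in sight is somewhere injective, so that Theorem~\ref{thm:genericHomotopy} genuinely applies to all of $\Mod_{0,1}^{A_0}(\{J_t\})$ rather than merely to an open subset.

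First I would observe that the homology class $A_0 = [S^2 \times \{*\}]$ is primitive in $H_2(W)$, i.e.\ it is not a positive integer multiple of any other class. Hence by Proposition~\ref{prop:multipleCovers} every closed $J_t$-holomorphic sphere homologous to $A_0$ has covering multiplicity one, so it is simple and therefore somewhere injective; also $A_0 \ne 0$ forces every such curve to be nonconstant. Consequently $\Mod_{0,1}^{A_0}(\{J_t\}) \subset \Mod^*(\{J_t\})$, and since it consists precisely of those connected components of $\Mod^*(\{J_t\})$ carrying genus $0$, one marked point and homology class $A_0$, it is both open and closed there. Thus any conclusion that Theorem~\ref{thm:genericHomotopy} draws about $\Mod^*(\{J_t\})$ restricts verbatim to $\Mod_{0,1}^{A_0}(\{J_t\})$.

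Next I would note that the two endpoints already satisfy the regularity hypothesis required by Theorem~\ref{thm:genericHomotopy}. By Lemma~\ref{lemma:productTransversality} every curve in $\Mod_{0,1}^{A_0}(J_0)$ is Fredholm regular, and $J_1$ was chosen in $\jJ_\reg(W,\Omega;\iota)$, which by definition means every curve in $\Mod_{0,1}^{A_0}(J_1)$ is Fredholm regular as well. Applying Theorem~\ref{thm:genericHomotopy} to the constant family $\omega_s \equiv \Omega$ on the closed manifold $W = S^2 \times M$, restricted to the component determined by $g=0$, $m=1$, $A=A_0$, we obtain a Baire subset $\jJ_\reg(W,\Omega\,;\,J_0,J_1) \subset \jJ(W,\Omega\,;\,J_0,J_1)$ such that for every $\{J_t\}$ in it the space $\Mod_{0,1}^{A_0}(\{J_t\})$ is a smooth finite-dimensional manifold with boundary $\Mod_{0,1}^{A_0}(J_0) \sqcup \Mod_{0,1}^{A_0}(J_1)$ (and without corners, precisely because the endpoints are already regular, so that $0$ and $1$ are regular values of the projection to $[0,1]$).

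Finally, for any such generic homotopy, Proposition~\ref{prop:NSCompactness} asserts that $\Mod_{0,1}^{A_0}(\{J_t\})$ is compact; combining this with the previous paragraph gives the compact smooth manifold with the asserted boundary, completing the proof. I do not expect any real obstacle here: the substance of the statement is packaged entirely into Theorem~\ref{thm:genericHomotopy} and Proposition~\ref{prop:NSCompactness}, and the only thing requiring (minor) care is the bookkeeping that identifies $\Mod_{0,1}^{A_0}(\{J_t\})$ with a union of components of $\Mod^*(\{J_t\})$. The genuinely hard part of the overall argument is Proposition~\ref{prop:NSCompactness} itself, whose proof is deferred to \S\ref{sec:bubbling}, and which is the only place where the hypothesis $\pi_2(M)=0$ and the primitivity of $A_0$ are used in an essential way.
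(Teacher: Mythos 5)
Your proposal is correct and follows exactly the route the paper takes: the paper states Proposition~\ref{prop:NSHomotopy} as a direct consequence of Theorem~\ref{thm:genericHomotopy} (applied to the component with $g=0$, $m=1$, $A=A_0$, using that $A_0$ is primitive so all curves are simple, and that both endpoints are already regular for this component) combined with the compactness of Proposition~\ref{prop:NSCompactness}. Your write-up merely makes explicit the bookkeeping the paper leaves implicit, so there is nothing to add.
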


\subsection{Conclusion of the proof}

We will now derive the desired existence result using the
$\ZZ_2$-mapping degree of the evaluation map.  Recall that in general,
if $X$ and~$Y$ are closed and connected $n$-dimensional manifolds and
$f : X \to Y$ is a continuous map, then the degree 
$\deg_2(f) \in \ZZ_2$ can be defined by the condition
$$
f_*[X] = \deg_2(f) [Y] \in H_n(Y ; \ZZ_2),
$$
where $[X] \in H_n(X ; \ZZ_2)$ and $[Y] \in H_n(Y ; \ZZ_2)$ denote the
respective fundamental classes with $\ZZ_2$-coefficients.
Equivalently, if $f$ is smooth then $\deg_2(f)$ can be defined as the 
modulo~$2$ count of points in $f^{-1}(y)$ for a regular point~$y$.

Choosing a generic homotopy $\{J_t\} \in \jJ_\reg(W,\Omega \,;\, J_0,J_1)$
as provided by Proposition~\ref{prop:NSHomotopy}, the parametrized
moduli space $\Mod_{0,1}^{A_0}(\{J_t\})$ now furnishes a smooth cobordism 
between the two closed manifolds $\Mod_{0,1}^{A_0}(J_0)$ and
$\Mod_{0,1}^{A_0}(J_1)$.\footnote{With a little more work, one can also give all
of these moduli spaces natural orientations and thus obtain an
\emph{oriented} cobordism.  This has the result that our use of the
$\ZZ_2$-mapping degree could be replaced by the integer-valued mapping
degree, but we don't need this to prove the nonsqueezing theorem.}
Consider the evaluation map
$$
\ev : \Mod_{0,1}^{A_0}(\{J_t\}) \to W : ([(S^2,j,z,u)],t) \mapsto u(z),
$$
and denote its restriction to the two boundary components by
$\ev_0 : \Mod_{0,1}^{A_0}(J_0) \to W$ and $\ev_1 : \Mod_{0,1}^{A_0}(J_1) \to W$.
As we saw in \S\ref{subsec:J0}, $\ev_0$ is a diffeomorphism, thus
$(\ev_0)_*[\Mod_{0,1}^{A_0}(J_0)] = [W] \in H_{2n}(W ; \ZZ_2)$.  It follows
that
$$
(\ev_1)_*[\Mod_{0,1}^{A_0}(J_1)] = [W] \in H_{2n}(W ; \ZZ_2)
$$
as well, hence $\deg_2(\ev_1) = 1$ and $\ev_1$ is therefore surjective.
In particular, $\ev_1^{-1}(\iota(0))$ is not empty, and this proves
Proposition~\ref{prop:GWinvariant}.

\section{Monotonicity in the integrable case}
\label{sec:monotonicity}

In this section, we consider only holomorphic curves in $\RR^{2n} = \CC^n$
with its standard complex structure~$i$ and symplectic structure~$\omega\std$.
Recall that a smooth map $u : \Sigma \to B_{r_0}$ is called \defin{proper} if
every compact set in the target has a compact preimage.  For any
$r \in (0,r_0)$, we define the compact subset
$$
\Sigma_r := u^{-1}(\overline{B}_r^{2n}) \subset \Sigma,
$$
which by Sard's theorem is a submanifold with smooth boundary for 
almost every~$r$.  Our main goal is to prove the following result,
which was previously stated as Theorem~\ref{thm:monotonicity} and was a
crucial ingredient in the proof of the nonsqueezing theorem.

\begin{thm}[monotonicity]
\label{thm:monotonicity2}
if $u : (\Sigma,j) \to (B_{r_0}^{2n},i)$ is a proper holomorphic map
whose image contains~$0$, then for every
$r \in (0,r_0)$,
$$
\int_{\Sigma_r} u^*\omega\std \ge \pi r^2.
$$
\end{thm}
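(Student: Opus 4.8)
The plan is to analyze the nondecreasing function $a(r) := \int_{\Sigma_r} u^*\omega\std \ge 0$ and to prove that the normalized ratio $a(r)/r^2$ is monotone nondecreasing on $(0,r_0)$, with $\liminf_{r \to 0^+} a(r)/r^2 \ge \pi$; combining the two facts gives $a(r) \ge \pi r^2$ for all $r$. I may assume $u$ is nonconstant: a proper constant holomorphic map has compact domain, and its single-point image either misses $0$ or equals $\{0\}$, the latter being a degenerate exception that never occurs in the application of the monotonicity lemma to Theorem~\ref{thm:nonsqueezingClosed}, where $u$ is a restriction of a nonconstant closed holomorphic curve. Since $u$ is proper into $B_{r_0}^{2n}$ each $\Sigma_r$ is compact, so $a(r) < \infty$, and since $0$ lies in the image of the nonconstant map $u$ we have $a(r) > 0$ for every $r > 0$. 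Recall also from \S\ref{sec:warmup} that for the holomorphic map $u$ the form $u^*\omega\std$ is exactly the area form of the pulled-back Euclidean metric $u^*\gEucl$ (degenerating only at the isolated points of $\Crit(u)$), so $a(r)$ is also the $u^*\gEucl$-area of $\Sigma_r$.

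First I would prove the differential inequality $a'(r) \ge \tfrac{2}{r}\,a(r)$ for almost every $r$. Fix a value $r$ that is regular for the function $\rho := |u| : \Sigma \to \RR$ and for which $\rho^{-1}(r)$ avoids $\Crit(u)$; by Sard's theorem almost every $r$ qualifies, and then $\Sigma_r = \{\rho \le r\}$ is a compact surface with smooth boundary $\partial\Sigma_r = \rho^{-1}(r)$, along which $u^*\gEucl$ is a genuine metric. Using the radial Liouville primitive $\lambda := \iota_{V\std}\omega\std = \tfrac12\sum_j (p_j\,dq_j - q_j\,dp_j)$ of $\omega\std$ (cf.\ Exercise~\ref{ex:Liouville}), Stokes' theorem gives $a(r) = \int_{\partial\Sigma_r} u^*\lambda$. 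By the definition of the pulled-back metric $du$ is an isometry from $u^*\gEucl$ onto its image in $\gEucl$ off $\Crit(u)$, and $|\lambda_x| = \tfrac12|x|$ in the Euclidean norm, so along $\partial\Sigma_r$ (where $|u| = r$) one has $|u^*\lambda(T)| \le \tfrac{r}{2}\,|T|_{u^*\gEucl}$, hence $a(r) \le \tfrac{r}{2}\,\ell(r)$, with $\ell(r)$ the $u^*\gEucl$-length of $\partial\Sigma_r$. Likewise $|d\rho|_{u^*\gEucl} \le 1$ (as $\rho = \psi \circ u$ with $\psi := |\cdot|$ satisfying $|d\psi| = 1$ away from $0$), so the coarea formula applied to $\rho$ on $(\Sigma, u^*\gEucl)$ yields $a(r) = \int_0^r \big(\int_{\rho^{-1}(s)} |\nabla\rho|^{-1}\,d\mathcal{H}^1\big)\,ds$, whence $a'(r) = \int_{\rho^{-1}(r)}|\nabla\rho|^{-1}\,d\mathcal{H}^1 \ge \ell(r)$ for a.e.\ $r$. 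Combining the two estimates gives $a'(r) \ge \ell(r) \ge \tfrac{2}{r}\,a(r)$, i.e.\ $\tfrac{d}{dr}\log\big(a(r)/r^2\big) \ge 0$ a.e.; since $a$ is nondecreasing and right-continuous, this propagates to show $a(r)/r^2$ is nondecreasing on all of $(0,r_0)$.

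Next I would compute the limit as $r \to 0^+$ from the local structure of $u$ at a preimage of $0$. Choose $z_0 \in \Sigma$ with $u(z_0) = 0$ and a holomorphic coordinate $\zeta$ centered at $z_0$; since $u$ is not identically zero near $z_0$, its Taylor expansion reads $u(\zeta) = \zeta^k w_0 + O(|\zeta|^{k+1})$ with $k \in \NN$ and $w_0 \in \CC^n \setminus \{0\}$, so on a fixed small disk $u$ is $C^1$-close to the model map $\zeta \mapsto \zeta^k w_0$. A direct computation gives $u^*\omega\std = k^2 |\zeta|^{2k-2}|w_0|^2\,ds\,dt$ for this model, and integrating over its preimage $\{|\zeta|^k |w_0| \le r\}$ of $\overline B_r^{2n}$ yields exactly $k\pi r^2$; feeding in the $C^1$-small correction shows that the portion of $\Sigma_r$ lying in this coordinate neighborhood carries $\int u^*\omega\std = (k + o(1))\pi r^2$ as $r \to 0$. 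Since $a(r)$ is at least this contribution, $\liminf_{r\to 0^+} a(r)/r^2 \ge k\pi \ge \pi$, and the monotonicity established above then gives $a(r) \ge \pi r^2$ for all $r \in (0,r_0)$, as claimed.

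The main obstacle is the analytic bookkeeping around the degeneracy of $u^*\gEucl$ at $\Crit(u)$: one must justify applying the coarea formula and Stokes' theorem on $\Sigma_r$ by working on $\Sigma \setminus \Crit(u)$ and restricting to the full-measure set of $r$ for which $\rho^{-1}(r)$ meets neither $\Crit(u)$ nor the critical set of $\rho$ (the latter a proper real-analytic subvariety, since $|u|^2$ is real-analytic and nonconstant), and then that the a.e.\ differential inequality, together with monotonicity and right-continuity of $a$, genuinely forces $a(r)/r^2$ to be nondecreasing everywhere on $(0,r_0)$. The local asymptotics in the last step are routine but still require uniform control of the $O(|\zeta|^{k+1})$ remainder and its first derivative on a fixed neighborhood of $z_0$.
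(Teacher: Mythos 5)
Your proof is correct, and the key step---monotonicity of $r \mapsto a(r)/r^2$---is reached by a genuinely different argument from the one in the text. Both proofs start identically, using Stokes' theorem with the radial Liouville primitive $\lambda\std = \iota_{V\std}\omega\std$ to write $a(r) = \int_{\p\Sigma_r} u^*\lambda\std$ for regular values of $r$, and both finish with the same small-$r$ asymptotics at a preimage of the origin (local model $\zeta \mapsto \zeta^k w_0$ contributing $k\pi r^2$). Where you diverge is in how the boundary integral is exploited. You run the classical minimal-surface argument: the pointwise bound $|\lambda\std| = \tfrac{1}{2}|x|$ gives the isoperimetric-type inequality $a(r) \le \tfrac{r}{2}\,\ell(r)$, the coarea formula for $\rho = |u|$ gives $a'(r) \ge \ell(r)$, and the two combine into $a' \ge \tfrac{2}{r}\,a$. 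The text instead passes to the symplectization: writing $u = \Phi \circ (u_\RR,u_S)$ with $\Phi(t,m) = e^{t/2}m$, so that $\Phi^*\omega\std = d(e^t\alpha\std)$ and $e^{u_\RR} \equiv r^2$ on $\p\Sigma_r$, one finds $a(r)/r^2 = \int_{\p\Sigma_r} u_S^*\alpha\std$, and then $F(R)-F(r) = \int_{\overline{\Sigma_R\setminus\Sigma_r}} u_S^*\,d\alpha\std \ge 0$ because $u_S^*d\alpha\std$ is pointwise nonnegative for holomorphic curves in the symplectization. The text's route is shorter and avoids the coarea formula entirely, thereby sidestepping the measure-theoretic bookkeeping around $\Crit(u)$ and the critical set of $\rho$ that you rightly flag as the delicate part of your argument; it needs only Sard's theorem and Stokes on the annular region. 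Your route, on the other hand, is the standard one in minimal surface theory and generalizes directly to non-integrable tame almost complex structures with the metric $g_J$, which is the form of monotonicity needed later in the compactness theory, and it makes the geometric mechanism (boundary length controls area, area controls length of level sets) more transparent. Your caveat about proper constant maps with compact domain is pedantic but legitimate---the statement is degenerate in that case---and, as you observe, it never arises in the application.
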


This result gives a quantitative version of the statement
that a holomorphic curve cannot fit an arbitrarily small amount of area
into some fixed neighborhood of a point in its image.  
More general versions also hold for non-integrable almost complex structures
and are useful in proving a number of technical results, especially in
the compactness theory; we'll come back to this in the next chapter.
We should also mention that this kind of result is by no means unique to
the theory of holomorphic curves: monotonicity formulas are also a popular
tool in the theory of minimal surfaces 
(cf.~\cites{Lawson:minimal,Grueter:monotonicity,ColdingMinicozzi}),
and indeed, 
Theorem~\ref{thm:monotonicity2} can be regarded as a corollary of such
results after observing that whenever $J$ is \emph{compatible} with a
symplectic structure~$\omega$ and a Riemannian metric is defined by
$\omega(\cdot,J\cdot)$, $J$-holomorphic curves are also \emph{area minimizing},
cf.~\cite{McDuffSalamon:Jhol}*{Lemma~2.2.1}.  This was also the perspective
adopted by Gromov in \cite{Gromov}; see also \cite{Fish:estimates} for
some more recent results along these lines.
In order to keep the discussion
self-contained and avoid delving into the theory of minimal surfaces,
we shall instead present a direct ``contact geometric'' proof, which is
fairly simple and uses a few notions that we will find useful in our
later discussions of contact geometry.

To start with, it's easy to see from our knowledge of the local behavior
of holomorphic curves that the estimate of 
Theorem~\ref{thm:monotonicity2} holds for any given curve~$u$ whenever
$r > 0$ is sufficiently small.  Indeed, in an appropriate choice
of local coordinates on a small enough neighborhood, $u$ looks like
a small perturbation of the map
$$
B_\epsilon \to \CC \times \CC^{n-1} : z \mapsto (z^k,0),
$$
whose area is $k \pi \epsilon^2$.  (See \S\ref{sec:intersections}
for a discussion of such local representation formulas.)

The result then follows from the next statement, which explains our use
of the term ``monotonicity''.

\begin{prop}
\label{prop:monotonicity}
Given the setup of Theorem~\ref{thm:monotonicity2}, the function
$$
F(r) = \frac{1}{r^2} \int_{\Sigma_r} u^*\omega\std
$$
is nondecreasing.
\end{prop}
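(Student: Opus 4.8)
The plan is to recast the statement as a one-variable differential inequality for $A(r) := \int_{\Sigma_r} u^*\omega\std$ and then integrate it. First I would record two elementary facts about holomorphic curves in Euclidean space, both essentially the content of \S\ref{sec:warmup}: the form $u^*\omega\std$ is pointwise nonnegative, and away from the (discrete) critical set $\Crit(u)$ it equals the Riemannian area form of the pullback metric $g := u^*\gEucl$. In particular $A$ is nondecreasing. Since $\psi := |u|^2 : \Sigma \to [0,r_0^2)$ is a smooth proper function, Sard's theorem makes $\Sigma_r = \psi^{-1}([0,r^2])$ a compact surface with smooth boundary $\partial\Sigma_r$ (a finite union of circles) for almost every $r \in (0,r_0)$, and a short coarea argument, using smoothness of $u$ (so Sard applies to $\psi$) to see that $\psi(\Crit(\psi))$ is Lebesgue-null, shows that $A$ is locally absolutely continuous. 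It therefore suffices to prove $r\,A'(r) \ge 2\,A(r)$ for a.e.\ $r$, together with $A(r) > 0$ for all $r \in (0,r_0)$; the latter follows from the local normal form of $u$ near a preimage of $0 \in u(\Sigma)$.

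The heart of the argument is two geometric inequalities relating $A(r)$ and $A'(r)$ to the $g$-length $L(r)$ of $\partial\Sigma_r$. For the first, I would use the radial Liouville primitive $\lambda\std := \iota_{V\std}\omega\std = \tfrac12\sum_j(p_j\,dq_j - q_j\,dp_j)$, which satisfies $d\lambda\std = \omega\std$ (cf.~Exercise~\ref{ex:Liouville}); Stokes' theorem then gives $A(r) = \int_{\partial\Sigma_r} u^*\lambda\std$, and since $A(r)\ge 0$ this equals its absolute value. At a point $z \in \partial\Sigma_r$ one has $|u(z)| = r$, so $V\std = \tfrac12 u(z)$ has norm $r/2$, and for the $g$-unit tangent $T$ to $\partial\Sigma_r$ the vector $u_*T$ is a Euclidean unit vector; hence $|u^*\lambda\std(T)| = \tfrac{r}{2}\,|\omega\std(u(z)/r,\,u_*T)| \le \tfrac{r}{2}$, giving $A(r) \le \tfrac{r}{2} L(r)$. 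For the second, I would differentiate $A$ using the coarea formula for $\psi$: after the substitution $c = r^2$ this yields $A'(r) = 2r\int_{\partial\Sigma_r}(|\nabla^g\psi|_g)^{-1}\,ds_g$, and since $u_*(\nabla^g\psi(z))$ equals twice the orthogonal projection of $u(z)$ onto the tangent plane $u_*(T_z\Sigma)$, we get $|\nabla^g\psi|_g \le 2r$ on $\partial\Sigma_r$, whence $A'(r) \ge L(r)$. Combining the two, $A(r) \le \tfrac{r}{2} L(r) \le \tfrac{r}{2} A'(r)$, i.e.~$r A'(r) \ge 2 A(r)$.

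Finally, on the set $\{A(r) > 0\}$ this reads $(\log A)'(r) \ge (\log r^2)'(r)$ for a.e.\ $r$; integrating the locally absolutely continuous function $\log\!\big(A(r)/r^2\big)$ then shows it is nondecreasing, which is the claim, and Theorem~\ref{thm:monotonicity2} follows since for small $r$ the estimate $A(r) \ge \pi r^2$ holds by the local representation of $u$ near a preimage of $0$. I expect the main obstacle to be not any single inequality — each is a routine but careful computation — but rather the bookkeeping needed to legitimately differentiate $A$ and to control critical points: one must check that $\Crit(u)$ and $\Crit(\psi)$ are negligible, that $A$ is genuinely absolutely continuous rather than merely monotone, that the conformal identity $u^*\omega\std = dA_g$ is invoked only where $g$ is nondegenerate, and that $\partial\Sigma_r$ is nonempty for regular $r$ (which follows a posteriori from $A(r) > 0$ and $A(r)\le \tfrac r2 L(r)$). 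None of this is deep, but it is where the argument must be written with some care.
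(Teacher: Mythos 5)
Your argument is correct, but it follows a genuinely different route from the one in \S\ref{sec:monotonicity}. The paper also begins with Stokes' theorem applied to the Liouville primitive $\lambda\std$, but instead of estimating $\int_{\p\Sigma_r}u^*\lambda\std$ against the length of $\p\Sigma_r$, it transports the whole picture to the symplectization $(\RR\times S^{2n-1}, d(e^t\alpha\std))$ via the Liouville flow: there the boundary integral becomes exactly $F(r)=\int_{\p\Sigma_r}u_S^*\alpha\std$ (the factor $r^2=e^\tau$ is absorbed), so that $F(R)-F(r)=\int_{\overline{\Sigma_R\setminus\Sigma_r}}u_S^*d\alpha\std$, and monotonicity is immediate from the pointwise nonnegativity of $u_S^*d\alpha\std$ for curves holomorphic with respect to the cylindrical structure $J_0=\Phi^*i$. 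That version needs only two regular values and no differentiation of $A(r)$ at all, which is why the paper can afford to be brief; it also introduces the contact-geometric language the book reuses later. Your version is the classical minimal-surface monotonicity argument (the one alluded to in the paper's references to Lawson, Gr\"uter, and Colding--Minicozzi): the isoperimetric-type bound $2A(r)\le r\,L(r)$ from $\lambda\std$, the coarea bound $A'(r)\ge L(r)$ from $|\nabla^g|u||_g\le 1$, and integration of $rA'\ge 2A$. It is more elementary in that it never leaves $\CC^n$, and it is the form of the argument that survives essentially verbatim when $i$ is replaced by a nonintegrable $J$ close to $i$ (with constants degrading accordingly), which the contact reformulation does not do as directly.

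Two small points of care. First, Sard's theorem applied to $\psi=|u|^2$ tells you that $\psi(\Crit(\psi))$ is Lebesgue-null, which is not by itself what the coarea decomposition of $A$ needs; you need $\Crit(\psi)$ to be $u^*\omega\std$-null in $\Sigma$. This is true (away from the discrete set $\Crit(u)$ one has $\p_{\bar z}(\p_z\psi)=|\p_z u|^2\ne 0$, so $d(\p_z\psi)$ has real rank at least one and $\{d\psi=0\}$ is locally contained in a curve), but it deserves its own sentence. Second, even if one does not verify absolute continuity, the argument closes: $A$ is nondecreasing, so the singular part of $dA$ is a nonnegative measure, and the distributional derivative of $F=A/r^2$ is then $\bigl(r^{-2}A'-2r^{-3}A\bigr)\,dr+r^{-2}\,dA_{\mathrm{sing}}\ge 0$, which already forces $F$ to be nondecreasing.
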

Note that it will suffice to prove that $F(R) > F(r)$ whenever
$0 < r < R < r_0$ and both~$r$ and~$R$ lie in the dense set of
\emph{regular} values, i.e.~those for which
the intersection of~$u$ with $\p\overline{B}_r$
is transverse.
For regular values, $\Sigma_r$ is a smooth manifold with boundary and
we can use Stokes' theorem to compute $\int_{\Sigma_r} u^*\omega\std$.
In order to uncover the dependence on $r^2$, we shall 
switch perspectives and regard~$u$ as a map into the \emph{symplectization
of the standard contact sphere}.

Label the natural coordinates on $\RR^{2n} = \CC^n$ by
$(z_1,\ldots,z_n) = (p_1 + i q_1,\ldots,p_n + iq_n)$, so the symplectic
structure has the form
$$
\omega\std = \sum_{j=1}^n dp_j \wedge dq_j.
$$
Recall from \S\ref{sec:Weinstein} that the vector field
$$
V\std := \frac{1}{2} \sum_{j=1}^n \left( p_j \frac{\p}{\p p_j} + q_j
\frac{\p}{\p q_j} \right)
$$
is a \emph{Liouville} vector field on $(\RR^{2n},\omega\std)$, meaning it
satisfies $\Lie_{V\std}\omega\std = \omega\std$.  Let $\lambda\std$ denote the
$1$-form on $\RR^{2n}$ which is $\omega$-dual to~$V\std$, i.e.
$$
\lambda\std := \omega\std(V\std,\cdot).
$$
An easy computation then produces the expression
$$
\lambda\std = \frac{1}{2} \sum_{j=1}^n \left( p_j\, dq_j - q_j\, dp_j \right),
$$
and the fact that~$V\std$ is Liouville is equivalent to the observation
that $d\lambda\std = \omega\std$.  Moreover, since $\lambda\std(V\std) =
\omega\std(V\std,V\std) = 0$, we also have
$$
\Lie_{V\std}\lambda\std = \iota_{V\std} d\lambda\std + d \iota_{V\std} \lambda\std =
\iota_{V\std}\omega\std = \lambda\std.
$$
Identify the sphere $S^{2n-1}$ with the boundary of the closed unit ball
$\overline{B}^{2n} \subset \RR^{2n}$, and define
the \defin{standard contact form} $\alpha\std$ 
on $S^{2n-1}$ as the restriction of~$\lambda\std$,
$$
\alpha\std := \lambda\std|_{T\left(\p\overline{B}^{2n}\right)}.
$$
Now consider the diffeomorphism
$$
\Phi : \RR \times S^{2n-1} \to \RR^{2n} \setminus \{0\} : (t,m) \mapsto
\varphi_{V\std}^t(m) = e^{t/2}m,
$$
where $\varphi_{V\std}^t$ denotes the flow of~$V\std$.  By
Exercise~\ref{ex:symplectization}, we have
$$
\Phi^*\lambda\std = e^t \alpha\std,
\qquad
\Phi^*\omega\std = d(e^t \alpha\std),
$$
where $t$ denotes the $\RR$-coordinate on $\RR\times S^{2n-1}$ and 
$\alpha\std$ is defined on $\RR\times S^{2n-1}$ as the pullback via
the projection $\RR\times S^{2n-1} \to S^{2n-1}$.  Define an
integrable complex structure $J_0$ on $\RR\times S^{2n-1}$ so that this
diffeomorphism is biholomorphic, i.e.
$$
J_0 := \Phi^*i.
$$
Now removing at most finitely many points from $\Sigma$ to define
$$
\dot{\Sigma} := \{ z \in \Sigma \ |\ u(z) \ne 0 \}
$$
and defining $\dot{\Sigma}_r \subset \Sigma_r$ similarly,
we obtain a $J_0$-holomorphic map 
$$
(u_\RR,u_S) := \Phi^{-1} \circ u : \dot{\Sigma} \to \RR\times S^{2n-1},
$$
so that if $r = e^{\tau/2} \in (0,r_0)$ is regular, we have
\begin{equation*}
\begin{split}
F(r) &= \frac{1}{r^2} \int_{\Sigma_r} u^*\omega\std = 
e^{-\tau} \int_{\dot{\Sigma}_r} (u_\RR,u_S)^* d(e^t \alpha\std) = 
e^{-\tau} \int_{\p\Sigma_r} (u_\RR,u_S)^*(e^t\alpha\std) \\
&= \int_{\p\Sigma_r} u_S^*\alpha\std.
\end{split}
\end{equation*}
Thus for any two regular values $0 < r < R < r_0$, we now have
$$
F(R) - F(r) = \int_{\p\Sigma_R} u_S^*\alpha\std -
\int_{\p\Sigma_r} u_S^*\alpha\std = \int_{\overline{\Sigma_R\setminus \Sigma_r}}
u_S^*d\alpha\std.
$$
Proposition~\ref{prop:monotonicity} is then immediate from the following
exercise.

\begin{exercise}
Show that the almost complex structure~$J_0 = \Phi^*i$ 
on $\RR\times S^{2n-1}$ has
the following properties:
\begin{enumerate}
\item It is invariant under the natural $\RR$-action by translation of
the first factor in $\RR\times S^{2n-1}$.
\item For any $t \in \RR$, the unique hyperplane field in 
$\{t\} \times S^{2n-1}$ preserved by $J_0$ is precisely the
\emph{contact structure} $\xi\std := \ker\alpha\std$.
\item The restriction of $J_0$ to $\xi\std$ is \emph{compatible} with the
symplectic bundle structure $d\alpha\std|_{\xi\std}$, i.e.~the pairing
$\langle X,Y \rangle := d\alpha\std(X,J_0 Y)$ defines a bundle metric
on~$\xi\std$.
\item $J_0$ maps $\p_t$ to the \emph{Reeb vector field} of~$\alpha\std$,
i.e.~the unique vector field $R_{\alpha\std}$ on $S^{2n-1}$ satisfying the
conditions
$$
d\alpha\std(R_{\alpha\std},\cdot) \equiv 0
\quad\text{ and }\quad
\alpha\std(R_{\alpha\std}) \equiv 1.
$$
\end{enumerate}
Derive from these properties the fact that for any $J_0$-holomorphic curve
$(u_\RR,u_S) : \Sigma \to \RR \times S^{2n-1}$, the integrand
$u_S^*d\alpha\std$ is nonnegative.
\end{exercise}

\section{Bubbling off}
\label{sec:bubbling}

Our goal in this section is to provide a \emph{mostly} self-contained
proof of Proposition~\ref{prop:NSCompactness}, as a
consequence of the following result.

\begin{thm}
\label{thm:NSCompactness}
Suppose $(M,\omega)$ is a closed symplectic manifold of dimension $2n-2 \ge 2$
with $\pi_2(M) = 0$, $\sigma$ is an area form on~$S^2$,
$W := S^2 \times M$, $\Omega := \sigma \oplus \omega$,
$A_0 := [S^2 \times \{*\}] \in H_2(W)$ and we have the
following sequences:
\begin{itemize}
\item $J_k \to J$ is a $C^\infty$-convergent sequence of $\Omega$-compatible
almost complex structures on $W$,
\item $u_k : (S^2,i) \to (W,J_k)$ is a sequence of
pseudoholomorphic spheres with $[u_k] = A_0$, and
\item $\zeta_k \in S^2$ is a sequence of marked points.
\end{itemize}
Then after taking a subsequence, 
there exist biholomorphic maps $\varphi_k : (S^2,i) \to (S^2,i)$
with $\varphi_k(0) = \zeta_k$ such that the reparametrized curves
$$
u_k \circ \varphi_k : S^2 \to W
$$
converge in $C^\infty$ to a $J$-holomorphic sphere
$u : (S^2,i) \to (W,J)$.
\end{thm}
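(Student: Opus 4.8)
The plan is to run a standard ``bubbling off'' argument, exploiting the very special topology of the target $W = S^2 \times M$ with $\pi_2(M) = 0$ to rule out the formation of bubbles entirely. The key structural fact is that any nonconstant $J$-holomorphic sphere $v : S^2 \to W$ with $[v] = k A_0$ for $k \in \ZZ$ actually has $k \ge 1$ and energy $E(v) = \langle [\Omega], k A_0 \rangle = k \int_{S^2}\sigma$; in particular there is a minimal energy quantum $\hbar := \int_{S^2}\sigma > 0$ for nonconstant $J$-holomorphic (or $J_k$-holomorphic, for large $k$) spheres in $W$, since the second factor of the homology class must vanish by $\pi_2(M)=0$ and hence the $S^2$-factor is a nonnegative multiple of $[S^2]$. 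This ``energy gap'' is what prevents bubbling from eating all the energy.

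First I would set up the local gradient estimate: working in charts on $W$ and using Corollary~\ref{cor:gradBounds} (``gradient bounds imply $C^\infty$-bounds''), a sequence of $J_k$-holomorphic maps with $J_k \to J$ in $C^\infty$ and uniformly bounded $\|du_k\|_{C^0}$ on a ball has a $C^\infty_{\loc}$-convergent subsequence. So the whole difficulty is to control $\|du_k\|_{C^0}$ after a suitable reparametrization. The total energy is fixed: $E(u_k) = \langle[\Omega], A_0\rangle = \hbar$ for all $k$. Now I would argue by the usual dichotomy. Pass to a subsequence so that $\zeta_k \to \zeta_\infty \in S^2$; precompose with M\"obius transformations $\varphi_k$ fixed to send $0 \mapsto \zeta_k$ and chosen (via the standard rescaling/Hofer-lemma argument) so that, if $\sup |d(u_k\circ\varphi_k)| \to \infty$, the rescaled maps concentrate energy near $0$. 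The rescaling lemma produces, after a further subsequence, a nonconstant $J$-holomorphic plane $\CC \to W$ of finite energy $\le \hbar$; by the removal of singularities theorem (a standard fact I would cite, or prove via the monotonicity/isoperimetric estimates of \S\ref{sec:monotonicity}) it extends to a nonconstant $J$-holomorphic sphere $v : S^2 \to W$, which therefore has energy $\ge \hbar$. Hence this bubble carries \emph{all} the available energy, and a soft argument (no energy left over, plus the fact that $S^2$ minus a point is connected and the ``neck'' carries zero energy in the limit) shows that in fact no energy is lost: the reparametrized sequence $u_k\circ\varphi_k$ itself already has uniformly bounded derivatives, i.e.\ the supposed blow-up does not occur after the correct choice of $\varphi_k$. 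Concretely, I would phrase this as: choose $\varphi_k$ to normalize the ``center of mass'' of the energy density of $u_k$ at $0$ and fix the conformal scale so that a fixed fraction $\hbar/2$ of the energy sits in the unit disk and a fixed fraction sits outside; then gradient blow-up would force a bubble of energy $\ge \hbar$ at an interior point, leaving $< \hbar$ total, a contradiction. Therefore $\|d(u_k\circ\varphi_k)\|_{C^0}$ is bounded.

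With the uniform gradient bound in hand, Corollary~\ref{cor:gradBounds} applied on finitely many charts covering $S^2$ gives a subsequence of $u_k\circ\varphi_k$ converging in $C^\infty$ to some smooth map $u : S^2 \to W$, which is $J$-holomorphic since $J_k \to J$ (again by the convergence statement in Corollary~\ref{cor:gradBounds} or Theorem~\ref{thm:regularity}). By continuity $[u] = A_0$, and $u$ is nonconstant because $E(u) = \lim E(u_k\circ\varphi_k) = \hbar > 0$ (using that $u^*\Omega$ is the $C^\infty$-limit of $(u_k\circ\varphi_k)^*\Omega$, or invoking Proposition~\ref{prop:seqConst}). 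Finally, $\varphi_k(0) = \zeta_k$ by construction, so $u$ is the desired limit $J$-holomorphic sphere, completing the proof of Theorem~\ref{thm:NSCompactness}. From here Proposition~\ref{prop:NSCompactness} follows by applying the theorem to a sequence $(u_k, t_k) \in \Mod_{0,1}^{A_0}(\{J_t\})$: pass to a subsequence with $t_k \to t_\infty$, apply Theorem~\ref{thm:NSCompactness} with the convergent sequence of almost complex structures $J_{t_k} \to J_{t_\infty}$, and obtain a limit curve in $\Mod_{0,1}^{A_0}(J_{t_\infty})$, so $(u_k,t_k)$ subconverges in $\Mod_{0,1}^{A_0}(\{J_t\})$.

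I expect the main obstacle to be making the ``no energy is lost'' / ``a single bubble captures everything'' step fully rigorous without invoking the general machinery of Gromov compactness: one must carefully choose the reparametrizations $\varphi_k$ so that gradient blow-up is incompatible with the fixed total energy $\hbar$, which requires the standard rescaling lemma (a Hofer-type lemma producing a bubble at a blow-up point), the removal of singularities theorem, and the ``no neck'' energy estimate. Each of these is classical but their careful interplay — especially showing that at most one bubble can form and that it cannot coexist with a nonconstant principal component given the energy budget $\hbar$ — is where the real work lies; the energy gap $\hbar$ coming from $\pi_2(M) = 0$ and the primitivity of $A_0$ is precisely the hypothesis that makes it go through.
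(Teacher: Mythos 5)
Your overall strategy coincides with the paper's: the energy quantum $\hbar := \int_{S^2}\sigma$ forced by $\pi_2(M)=0$, Hofer's lemma and rescaling to extract a bubble from a gradient blow-up, the removable singularity theorem to close it up into a nonconstant sphere of energy $\ge\hbar$, and Corollary~\ref{cor:gradBounds} to upgrade a uniform $C^1$-bound to $C^\infty$-convergence. The one step where you diverge --- and which you yourself identify as the crux --- is the choice of the reparametrizations $\varphi_k$ and the mechanism by which a bubble contradicts the energy budget, and here your proposal has a genuine gap. First, your normalization (``center of mass of the energy density at $0$,'' or ``the rescaled maps concentrate energy near $0$'') is in general incompatible with the required condition $\varphi_k(0)=\zeta_k$: pinning the energy concentration region of $u_k\circ\varphi_k$ to the origin forces $\varphi_k(0)$ to lie near wherever the energy of $u_k$ happens to concentrate, which has nothing to do with $\zeta_k$. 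Second, even granting a normalization with exactly $\hbar/2$ of energy inside and outside the unit disk, the asserted contradiction (``a bubble of energy $\ge\hbar$ at an interior point leaves $<\hbar$ total'') fails precisely when the blow-up point lands on the unit circle separating the two regions: a small ball about such a point meets both halves and can absorb the full energy $\hbar$, in which case the principal limit component is constant and the energy distribution alone yields no contradiction.

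The paper sidesteps all of this with a normalization that makes no reference to energy: writing $u_k=(u_k^S,u_k^M)$, the first factor has degree $1$ and is therefore surjective, so one can choose $\varphi_k$ with $\varphi_k(0)=\zeta_k$, $u_k^S\circ\varphi_k(1)=1$ and $u_k^S\circ\varphi_k(\infty)=\infty$. Whichever point $z_\infty$ the blow-up occurs at, at most one of these three conditions is lost in the limit, so the punctured limit curve on $S^2\setminus\{z_\infty\}$ passes through two distinct fibers $\{a\}\times M$ and $\{b\}\times M$ and is hence nonconstant; energy quantization then assigns it energy $\ge\hbar$ in addition to the bubble's $\hbar$, exceeding the fixed total $E(u_k)=\hbar$. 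If you replace your energy-balancing normalization with this three-point normalization, the rest of your argument goes through essentially as you wrote it.
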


To prove this, we shall introduce some of the crucial technical
tools that underlie the more general compactness results of the next
chapter.  There's only one result which we will need to take for now
as a ``black box'':

\begin{prop}[Gromov's removable singularity theorem]
\label{prop:removable}
Suppose $(M,\omega)$ is a symplectic manifold with a tame almost complex
structure~$J$, and $u : B \setminus \{0\} \to M$ is a $J$-holomorphic
curve which has finite energy $\int_{B\setminus\{0\}} u^*\omega < \infty$
and image contained in a compact subset of~$M$.  Then~$u$ extends smoothly
over~$0$ to a $J$-holomorphic curve $B \to M$.
\end{prop}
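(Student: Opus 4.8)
The plan is to reduce the statement to an analysis problem on a half-cylinder, prove that the gradient of $u$ decays near the puncture, upgrade this to \emph{exponential} decay, and then close the argument with the nonlinear elliptic regularity already established. First I would change coordinates via the biholomorphism $[0,\infty)\times S^1 \to B\setminus\{0\}$, $(s,t)\mapsto e^{-2\pi(s+it)}$, so that $u$ becomes a $J$-holomorphic map $\tilde u$ on the complete cylinder $Z := [0,\infty)\times S^1$ with the puncture pushed to $s=\infty$. Finite energy becomes $\int_Z \tilde u^*\omega < \infty$, and since $J$ is tame this is (up to a bounded ratio, by tameness) the $g_J$-area of $\tilde u$; in particular the tail energy $\int_{[s_0,\infty)\times S^1}\tilde u^*\omega$ is nonnegative and tends to $0$ as $s_0\to\infty$.

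The first real step is to show that $\sup_{\{s\}\times S^1}|d\tilde u| \to 0$ as $s\to\infty$. I would argue by contradiction and rescaling: if this fails, choose $(s_k,t_k)$ with $s_k\to\infty$ and $|d\tilde u(s_k,t_k)|\ge c_0>0$, apply Hofer's elementary ``almost-maximum'' lemma on the complete metric space $Z$ with a sequence of radii $\delta_0^{(k)}\to\infty$ chosen slowly enough that all the relevant balls stay near the end of the cylinder, and obtain points $x_k$ and radii $\delta_k$ such that $c_k := |d\tilde u(x_k)|$ satisfies $c_k\delta_k\to\infty$ and $|d\tilde u|\le 2c_k$ on the $\delta_k$-ball about $x_k$. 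Rescaling $v_k(\zeta):=\tilde u(x_k+\zeta/c_k)$ gives $J_k$-holomorphic maps with $J_k\to J$ on balls of radius $c_k\delta_k\to\infty$, satisfying $|dv_k(0)|=1$ and $|dv_k|\le 2$, so by Corollary~\ref{cor:gradBounds} a subsequence converges in $C^\infty_{\mathrm{loc}}$ to a \emph{nonconstant} finite-energy $J$-holomorphic plane $v:\CC\to M$. But the energy of $v_k$ over any fixed ball equals the energy of $\tilde u$ over a subset of a tail $[a_k,\infty)\times S^1$ with $a_k\to\infty$, forcing $E(v)=0$, which contradicts the fact (the Proposition following Definition~\ref{defn:energy}, using tameness) that a nonconstant pseudoholomorphic curve has positive energy. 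Translated back to the disk, this says $|z|\,|du(z)|\to 0$ as $z\to 0$.

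The step I expect to be the main obstacle is upgrading gradient decay to exponential decay of the tail energy $E(s):=\int_{[s,\infty)\times S^1}\tilde u^*\omega$. Once the loops $\gamma_s:=\tilde u(\{s\}\times S^1)$ have diameter tending to zero, they lie in small balls where $J$ differs from a standard (integrable) structure by an arbitrarily small amount; capping $\gamma_s$ off and applying the classical isoperimetric inequality to the caps, together with $|\partial_s\tilde u|\approx|\partial_t\tilde u|$ (pseudoholomorphicity) and the identity $E'(s)=-\int_{\{s\}\times S^1}|\partial_s\tilde u|^2\,dt$, should yield a differential inequality $E(s)\le -C\,E'(s)$, hence $E(s)\le E(s_0)e^{-(s-s_0)/C}$. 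Feeding this back into the rescaling/mean-value estimate then gives $\sup_{\{s\}\times S^1}|d\tilde u|\le C' e^{-\delta s}$ for some $\delta>0$. This isoperimetric-type estimate is the delicate part; alternatively one could replace several of these steps by invoking a monotonicity lemma for general tame $J$ (the version mentioned after Theorem~\ref{thm:monotonicity}).

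Finally I would assemble the conclusion. Exponential gradient decay makes $\int^\infty \sup_t|\partial_s\tilde u|\,ds$ finite, so $\tilde u(s,t)$ converges, uniformly in $t$, to a single point $x_0\in M$ as $s\to\infty$; hence $u$ extends continuously over $0$ with $u(0)=x_0$. Back in disk coordinates, $|du(z)|\le C|z|^{\delta-1}$ near $0$, so $\int_{B_\rho}|du|^p<\infty$ for any $p\in\bigl(2,\tfrac{2}{1-\delta}\bigr)$. Working in a coordinate chart around $x_0$, the extended map is continuous on a small disk, smooth away from the origin, and has $L^p$ gradient, so by Exercise~\ref{EX:weakly} it belongs to $W^{1,p}$ of that disk and its weak derivative agrees with the classical one; thus $u$ is a $W^{1,p}$ solution of the nonlinear Cauchy-Riemann equation across the puncture, and Theorem~\ref{thm:regularity} (nonlinear elliptic regularity for $p>2$ and smooth $J$) promotes it to a smooth $J$-holomorphic map on all of $B$.
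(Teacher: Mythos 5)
Your proposal is correct, and it is essentially the standard proof of this result (the one in the references the text cites for it --- the paper itself deliberately treats this proposition as a black box and defers its proof, so there is no in-text argument to compare against). Your argument is built entirely from tools already developed here: cylindrical coordinates, Hofer's Lemma~\ref{lemma:Hofer} plus the rescaling trick and Corollary~\ref{cor:gradBounds} to produce a zero-energy (hence constant, by tameness) bubble and thereby force $\sup_{\{s\}\times S^1}|d\tilde u|\to 0$; the isoperimetric inequality to get exponential decay; and Exercise~\ref{EX:weakly} together with Theorem~\ref{thm:regularity} to upgrade the continuous extension to a smooth one via $W^{1,p}$ with $p>2$. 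You correctly identify the isoperimetric step as the crux; the one technical point worth spelling out there is that one cannot apply Stokes' theorem to the whole tail at once (its image need not lie in a Darboux ball), so one instead compares $E(s)$ and $E(s+h)$ for small $h$, where the cycle formed by the short annulus and the two caps \emph{does} lie in a small ball, concluding that $E(s)+a(\gamma_s)$ is locally constant (with $a(\gamma_s)$ the local symplectic action of the boundary loop) and hence that $E(s)=|a(\gamma_s)|\le c\,\ell(\gamma_s)^2$.
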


A proof may be found in the next chapter, or in 
\cites{McDuffSalamon:Jhol,Sikorav,Hummel}.

As a fundamental analytical tool for our compactness arguments, we will use
the following piece of local elliptic regularity theory that was proved in
Chapter~\ref{chapter:local} as Corollary~\ref{cor:gradBounds}:
\begin{lemma}
\label{lemma:gradBounds}
Assume $p \in (2,\infty)$ and $m \ge 1$, $J_k \in \jJ^m(B^{2n})$ is a 
sequence of almost complex structures converging in $C^m$ to $J \in 
\jJ^m(B^{2n})$, and $u_k : B \to B^{2n}$ is a sequence of $J_k$-holomorphic
curves satisfying a uniform bound $\| u_k \|_{W^{1,p}(B)} < C$.
Then $u_k$ has a subsequence converging in $W^{m+1,p}_\loc$ to a
$J$-holomorphic curve $u : B \to B^{2n}$.
\end{lemma}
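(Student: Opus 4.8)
The plan is embarrassingly short at the top level: Lemma~\ref{lemma:gradBounds} is, word for word, the statement of Corollary~\ref{cor:gradBounds}, which was already established in Chapter~\ref{chapter:local} as a consequence of the nonlinear elliptic regularity theorem, Theorem~\ref{thm:regularity}. So the proof I would write is simply to invoke that corollary. The reason it is restated here is that it is the analytical engine for the bubbling-off arguments that follow, so the only ``work'' is to remind the reader of the three ingredients behind it and check that nothing in the present formulation is new.

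Spelling those out: first, the uniform $W^{1,p}$-bound together with the compact Sobolev embedding $W^{1,p}(B) \hookrightarrow C^0(B)$ (available since $p>2$) lets us pass to a subsequence along which $u_k \to u$ in $C^0$ for some continuous $u : B \to B^{2n}$; after a fixed coordinate change on the target we may assume $u(0)=0$ and $J(0)=i$. Second, the rescaling $u \mapsto u^\epsilon(z):=u(\epsilon z)$ shows that for any prescribed $\delta>0$ one can fix $\epsilon>0$ so that $\|u_k^\epsilon\|_{W^{1,p}(B)} < \delta$ for all large $k$ — here $u(0)=0$ kills the $L^p$-part and $p>2$ makes the $L^p$-norm of the derivative scale by a positive power of $\epsilon$. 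Third, and this is the only genuinely nontrivial input (already discharged in Chapter~\ref{chapter:local}), Theorem~\ref{thm:regularity} identifies the local moduli space $\Mod^{1,p,m}$ of pairs $(J,u)$ with $u$ a $J$-holomorphic $W^{1,p}$-map with the space $\Mod^{1,p,m}_r$ carrying the stronger $W^{m+1,p}(B_r)$-topology, via a homeomorphism. Feeding the rescaled pairs $(J_k,u_k^\epsilon)$, which now lie in an arbitrarily small $\Mod^{1,p,m}$-ball about $(J,0)$, through this homeomorphism yields a uniform bound $\|u_k^\epsilon\|_{W^{m+1,p}(B_r)} < C$; undoing the rescaling gives a uniform $W^{m+1,p}$-bound on a fixed small ball around any interior point of $B$.

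With these local $W^{m+1,p}$-bounds in hand, the conclusion follows by a diagonal argument: exhaust $B$ by countably many such small balls, extract via the compact embedding $W^{m+1,p}\hookrightarrow W^{m,p}$ a subsequence converging in $W^{m,p}_{\loc}$, and invoke Theorem~\ref{thm:regularity} once more — which says precisely that $W^{m,p}_{\loc}$-convergence of $J_k$-holomorphic maps with $J_k\to J$ in $C^m$ upgrades automatically to $W^{m+1,p}_{\loc}$-convergence — to obtain the limit $u$, which is $J$-holomorphic since the equation $\p_s u + J(u)\p_t u = 0$ survives the $C^1_{\loc}$-limit. I do not expect any real obstacle beyond what was already handled in Chapter~\ref{chapter:local}; the only points requiring a little care are bookkeeping with the radii in the rescaling-and-covering step, and noting that the target coordinate change used to arrange $J(0)=i$ is harmless because it is one fixed diffeomorphism applied uniformly to every $u_k$.
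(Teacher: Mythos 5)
Your proposal is correct and matches the paper exactly: the paper introduces Lemma~\ref{lemma:gradBounds} with the remark that it ``was proved in Chapter~\ref{chapter:local} as Corollary~\ref{cor:gradBounds},'' whose statement is word-for-word identical, so the intended proof is precisely the citation you give. Your sketch of the underlying argument (compact embedding $W^{1,p}\hookrightarrow C^0$ to extract a $C^0$-limit, the rescaling $u^\epsilon(z)=u(\epsilon z)$ to land in a small ball about $(J,0)$, the homeomorphism $\Mod^{1,p,m}_r\hookrightarrow\Mod^{1,p,m}$ from Theorem~\ref{thm:regularity}, and the covering plus $W^{m+1,p}\hookrightarrow W^{m,p}$ diagonal argument) also reproduces the paper's proof of that corollary step for step.
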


In our situation, we have $J_k \to J$ in $C^m$ for all~$m$, thus we will
obtain a $C^\infty_{\text{loc}}$-convergent subsequence if we can establish
$C^1$-bounds for our maps $u_k : S^2 \to W$, since
$C^1$ embeds continuously into $W^{1,p}$.  The lemma can be applied in
a more global setting as follows.  Fix Riemannian metrics on $S^2$ and~$W$
and use these to define the norm $| du(z) | \ge 0$ of the linear map
$du(z) : T_z S^2 \to T_{u(z)} W$ for any $u \in C^1(S^2,W)$ and $z \in S^2$.
If the given sequence of $J_k$-holomorphic maps $u_k : S^2 \to W$
satisfies a uniform bound of the form
\begin{equation}
\label{eqn:globalC1bound}
| du_k(z) | < C \quad
\text{ for all $k$ and all $z \in S^2$},
\end{equation}
then since $W$ is compact, a subsequence of $u_k$ will converge in
$C^0$ to some continuous map $u : S^2 \to W$.  We can then cover both $S^2$
and~$u(S^2) \subset W$ with finitely many local coordinate charts and
apply Lemma~\ref{lemma:gradBounds}, obtaining:

\begin{lemma}
\label{lemma:globalC1bound}
Suppose $J_k \to J$ is a $C^\infty$-convergent sequence of almost complex
structures on a closed manifold~$W$ and 
$u_k : (S^2,i) \to (W,J_k)$ is a sequence of pseudoholomorphic curves
satisfying a uniform $C^1$-bound as in \eqref{eqn:globalC1bound}.
Then a subsequence of~$u_k$ converges in $C^\infty$ to a pseudoholomorphic
curve $u : (S^2,i) \to (W,J)$.
\end{lemma}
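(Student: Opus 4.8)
The plan is to first extract a $C^0$-convergent subsequence by Arzel\`a--Ascoli, then upgrade the convergence to $C^\infty$ on small coordinate balls by invoking the interior regularity estimate of Lemma~\ref{lemma:gradBounds}, and finally patch these local statements together over a finite cover of $S^2$. For the first step, fix the Riemannian metrics on $S^2$ and $W$ referred to in the statement. The hypothesis \eqref{eqn:globalC1bound} says precisely that the maps $u_k$ are uniformly Lipschitz, and since $W$ is compact the sequence $(u_k)$ is equicontinuous with precompact image; hence, after passing to a subsequence, $u_k \to u$ in $C^0(S^2,W)$ for some continuous map $u : S^2 \to W$.

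Next I would prove local $C^\infty$-convergence. Fix $z_0 \in S^2$. Since $(S^2,i)$ is the Riemann sphere, there is a holomorphic chart $\psi : \uU \to B$ with $\psi(z_0) = 0$, and we may choose a smooth chart $\Phi : \vV \to B^{2n}$ on $W$ with $u(z_0) \in \vV$. Because $u_k \to u$ uniformly, for $k$ large the image $u_k(\psi^{-1}(\overline{B}_{1/2}))$ lies in $\vV$, so after a harmless rescaling of the domain the maps $v_k := \Phi \circ u_k \circ \psi^{-1}(\tfrac12\,\cdot\,)$ are defined on $B$, take values in $B^{2n}$, and solve the nonlinear Cauchy--Riemann equation for the almost complex structures $\tilde J_k := \Phi_* J_k$, which converge in $C^\infty$ to $\tilde J := \Phi_* J$. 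On the bounded set $B$ the coordinate expressions of the metrics are uniformly comparable to the Euclidean one, so \eqref{eqn:globalC1bound} yields a uniform bound $\| v_k \|_{W^{1,p}(B)} < C'$ for every $p \in (2,\infty)$. Applying Lemma~\ref{lemma:gradBounds} successively with $m = 1,2,3,\dots$ and taking a diagonal subsequence, $v_k$ converges in $W^{m+1,p}_\loc(B)$ for all $m$, hence in $C^\infty_\loc(B)$, to a $\tilde J$-holomorphic map. Translating back, $u_k$ converges in $C^\infty$ on $\psi^{-1}(\overline{B}_\rho)$ for every $\rho < 1/2$; since limits in $C^0$ are unique, this limit is the map $u$ from the first step, which is therefore smooth and $J$-holomorphic near $z_0$.

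Finally I would globalize. Cover the compact surface $S^2$ by finitely many sets $\psi_j^{-1}(B_{\rho_j})$, $j=1,\dots,N$, of the type produced above. Running the previous step on the first chart, then extracting a further subsequence for the second, and so on, after $N$ extractions we obtain a single subsequence of $(u_k)$ converging in $C^\infty$ on each $\psi_j^{-1}(\overline{B_{\rho_j'}})$ for slightly smaller radii $\rho_j' < \rho_j$, which still cover $S^2$. Hence $u_k \to u$ in $C^\infty(S^2,W)$, and passing to the limit in $Tu_k \circ i = J_k \circ Tu_k$ (using $J_k \to J$) shows that $u : (S^2,i) \to (W,J)$ is pseudoholomorphic, as claimed.

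The argument is essentially routine: all of the analytic substance is already contained in Lemma~\ref{lemma:gradBounds} (``gradient bounds imply $C^\infty$-bounds''), and the only genuinely new input is the elementary observation that a uniform $C^1$-bound on a compact domain becomes, in any coordinate chart, a uniform $W^{1,p}$-bound for all finite $p$, so that the lemma applies. The one place demanding a little care is the bookkeeping in the last step --- arranging the finitely many nested subsequence extractions and the nested covers so that the local $C^\infty_\loc$-convergence assembles into honest $C^\infty$-convergence on all of $S^2$ --- but this presents no real difficulty.
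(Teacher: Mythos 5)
Your proposal is correct and follows essentially the same route as the paper: Arzel\`a--Ascoli via the uniform $C^1$-bound and compactness of $W$ to get a $C^0$-limit, then finitely many coordinate charts on $S^2$ and $W$ in which the $C^1$-bound becomes a uniform $W^{1,p}$-bound so that Lemma~\ref{lemma:gradBounds} applies, and finally patching over the finite cover. The paper only sketches this in the paragraph preceding the lemma, and your write-up supplies the same details (chart shrinking, diagonal subsequences) that the sketch leaves implicit.
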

\begin{remark}
The above lemma is obviously also true if~$W$ is not compact but the
images of the curves~$u_k$ are confined to a compact subset.  This 
generalization is important for compactness results in contact geometry
and symplectic field theory, e.g.~\cite{SFTcompactness}.
\end{remark}

In most situations, one cannot expect to derive a $C^1$-bound directly
from the given data, and in the general case such a bound does
not even hold.  The strategy is however as follows: if a $C^1$-bound does
\emph{not} hold, then we can find a sequence of points $z_k \in S^2$
such that $| du_k(z_k) | \to \infty$, and by an intelligent choice of
rescalings, the restriction of $u_k$ to small neighborhoods of~$z_k$
gives rise to a sequence of holomorphic disks on expanding domains that
exhaust~$\CC$.
These disks are always nonconstant but satisfy a uniform $C^1$-bound
by construction, thus by Lemma~\ref{lemma:gradBounds} they will converge
in $C^\infty_{\text{loc}}$ to a $J$-holomorphic plane with finite energy.
Since a plane is really just a punctured sphere, this $J$-holomorphic
plane can be extended to a nonconstant holomorphic sphere, often
called a ``bubble'', and the process by which this sphere is extracted
from the original sequence is often called ``bubbling off''.  In our
situation, we will find that the existence of this bubble leads to a
contradiction and thus implies the desired $C^1$-bound on the original
sequence.  In more general settings, there is no contradiction and one
must instead find a way of organizing the information that these bubbles add
to the limit of the original sequence---this leads to the notion of
\emph{nodal} holomorphic curves, the more general objects that make up
the \emph{Gromov compactification}, to be discussed in the next chapter.

We now carry out the details of the above argument, using a 
particular type of rescaling trick that has been popularized by Hofer
and collaborators (see e.g.~\cite{HoferZehnder}*{\S 6.4}).
The results stated below all assume the setting described in
the statement of Theorem~\ref{thm:NSCompactness}: in particular,
$(W,\Omega) = (S^2 \times M,\sigma \oplus \omega)$ and $\pi_2(M) = 0$.
Notice that the curves in the sequence $u_k : S^2 \to W$ are all homologous
and thus all have the same energy
$$
E(u_k) = \int_{S^2} u_k^*\Omega = \langle [\Omega],A_0 \rangle =
\langle [\sigma],[S^2] \rangle = \int_{S^2} \sigma.
$$
For reasons that will hopefully become clear in a moment, we now give
this positive constant a special name and write
$$
\hbar := \int_{S^2} \sigma > 0.
$$
The following is then a very simple example of a general
phenomenon known as \emph{energy quantization}.

\begin{lemma}
\label{lemma:NSquantization}
For any $J \in \jJ(W,\Omega)$, every nonconstant closed
$J$-holomorphic sphere in~$W$ has energy at least~$\hbar$.
\end{lemma}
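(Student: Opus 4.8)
The plan is to reduce the statement to a purely homological computation, combining the topological invariance and positivity of energy for tame almost complex structures with the asphericity hypothesis $\pi_2(M) = 0$.

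First I would record that since $\jJ(W,\Omega) \subset \jJ^\tau(W,\Omega)$, any $J$-holomorphic sphere $u : (S^2,i) \to (W,J)$ has energy equal to the topological pairing $E(u) = \int_{S^2} u^*\Omega = \langle [\Omega], [u] \rangle$, where $[u] = u_*[S^2] \in H_2(W)$. Writing $\pr_{S^2} : W \to S^2$ and $\pr_M : W \to M$ for the two projections, we have $[\Omega] = \pr_{S^2}^*[\sigma] + \pr_M^*[\omega]$, so by the projection formula
$$
E(u) = \langle [\sigma], (\pr_{S^2})_*[u] \rangle + \langle [\omega], (\pr_M)_*[u] \rangle .
$$

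Next I would use $\pi_2(M) = 0$: the composition $\pr_M \circ u : S^2 \to M$ represents an element of $\pi_2(M) = 0$, hence is null-homotopic, so $(\pr_M)_*[u] = 0$ and the second term vanishes. Meanwhile $(\pr_{S^2})_*[u] = m\,[S^2] \in H_2(S^2)$ where $m := \deg(\pr_{S^2} \circ u) \in \ZZ$, and therefore $E(u) = m\,\langle [\sigma],[S^2]\rangle = m\hbar$. Finally I would invoke the positivity-of-energy statement (the Proposition stated immediately after Definition~\ref{defn:energy}): since $J$ is $\Omega$-tame, $E(u) \ge 0$, with equality if and only if $u$ is locally constant, which on the connected domain $S^2$ means constant. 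Hence if $u$ is nonconstant then $E(u) > 0$, which together with $E(u) = m\hbar$ and $\hbar > 0$ forces $m \ge 1$, so $E(u) = m\hbar \ge \hbar$.

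There is no serious obstacle here; the only point needing care is the homological input $(\pr_M)_*[u] = 0$, which is precisely where the hypothesis $\pi_2(M) = 0$ is used, and it is the reason this quantization result (and hence Theorem~\ref{thm:NSCompactness} and Proposition~\ref{prop:NSCompactness}) holds in this particular geometric setting rather than for general symplectic targets. One could alternatively phrase the first two paragraphs via the Künneth decomposition $H_2(W) \cong H_2(S^2) \oplus H_2(M)$ (using $H_1(S^2) = 0$) to conclude directly that $[u] = m A_0$, but the projection-formula argument above avoids invoking Künneth explicitly.
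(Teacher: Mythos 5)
Your proof is correct and follows essentially the same route as the paper's: both reduce the energy to the topological pairing $\langle[\Omega],[u]\rangle$, split it into the $S^2$- and $M$-components, kill the latter using $\pi_2(M)=0$, and conclude that the remaining term is a positive integer multiple of $\hbar$. The only cosmetic difference is that you invoke the projection formula where the paper writes $u=(u_S,u_M)$ and decomposes $[u]$ directly; the content is identical.
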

\begin{proof}
If $u = (u_S,u_M) : S^2 \to S^2 \times M$ is $J$-holomorphic and not constant,
then
\begin{equation*}
\begin{split}
0 &< E(u) = \int_{S^2} u^*\Omega = \langle [\sigma \oplus \omega],
[u_S] \times [\{*\}] + [\{*\}] \times [u_M] \rangle \\
&= \langle [\sigma],[u_S] \rangle + \langle [\omega],[u_M] \rangle.
\end{split}
\end{equation*}
Since $\pi_2(M) = 0$, the spherical homology class $[u_M] \in H_2(M)$
necessarily vanishes, so the above expression implies $E(u) =
\langle [\sigma],[u_S] \rangle$, which must be
an integer multiple of~$\hbar$.  Since it is also positive, the result follows.
\end{proof}

We next choose reparametrizations of the sequence $u_k$ so as to rule out
certain trivial possibilities, such as $u_k$ converging almost everywhere to a
constant.  Write $u_k = (u_k^S,u_k^M) : S^2 \to S^2 \times M$, and observe
that since $[u_k] = [S^2 \times \{*\}]$, $u_k^S : S^2 \to S^2$ is always
a map of degree~$1$ and hence surjective.  After taking a subsequence,
we may assume that the images of the marked points in~$S^2$ converge, i.e.
$$
u_k^S(\zeta_k) \to \zeta_\infty \in S^2.
$$
Assume without loss of generality that $\zeta_\infty$ 
is neither~$1$ nor~$\infty$; if it \emph{is} one of these,
then the remainder of our argument will require
only trivial modifications.  Now since $u_k^S$ is surjective, for
sufficiently large~$k$ we can always find biholomorphic maps
$\varphi_k : (S^2,i) \to (S^2,i)$ that have the following properties:
\begin{itemize}
\item $\varphi_k(0) = \zeta_k$,
\item $u_k^S \circ \varphi_k(1) = 1$,
\item $u_k^S \circ \varphi_k(\infty) = \infty$.
\end{itemize}
To simplify notation, let us now replace the original sequence
by these reparametrizations and thus assume without loss of generality
that the maps $u_k = (u_k^S,u_k^M) : S^2 \to S^2 \times M$ and marked
points $\zeta_k \in S^2$ satisfy
$$
\zeta_k = 0, \qquad u_k(1) \in \{1\} \times M, \qquad 
u_k(\infty) \in \{\infty\} \times M
$$
for all~$k$.

If the maps $u_k$ satisfy a uniform $C^1$-bound, then we are now finished
due to Lemma~\ref{lemma:globalC1bound}.  Thus assume the contrary, that
there is a sequence $z_k \in S^2$ with
$$
| du_k(z_k) | \to \infty,
$$
and after taking a subsequence we may assume $z_k \to z_\infty \in S^2$.
Choose a neighborhood $z_\infty \in \uU \subset S^2$ and a biholomorphic
map
$$
\varphi : (B,i) \to (\uU,i)
$$
identifying~$\uU$ with the unit ball in~$\CC$ such that
$\varphi(0) = z_\infty$, and write
$$
\tilde{u}_k = u_k \circ \varphi : (B,i) \to (W,J_k),
\qquad
\tilde{z}_k = \varphi^{-1}(z_k).
$$
We then have $| d\tilde{u}_k(\tilde{z}_k) | \to \infty$ and
$\tilde{z}_k \to 0$.

We now examine a rescaled reparametrization of the sequence $\tilde{u}_k$
on shrinking neighborhoods of~$\tilde{z}_k$.  In particular, let
$R_k := | d\tilde{u}_k(\tilde{z}_k) | \to \infty$,  pick a sequence of 
positive numbers $\epsilon_k \to 0$ which decay slowly enough so that 
$\epsilon_k R_k \to \infty$, and
consider the sequence of $J_k$-holomorphic maps
$$
v_k : (B_{\epsilon_k R_k},i) \to (W,J_k) : z \mapsto
\tilde{u}_k\left( \tilde{z}_k + \frac{z}{R_k} \right).
$$
Then 
$$
| dv_k(z) | = \frac{1}{R_k}
\left| d\tilde{u}_k\left( \tilde{z}_k + \frac{z}{R_k}
\right) \right| ,
$$ 
so in particular 
$| d v_k(0) | = \frac{1}{R_k} | d \tilde{u}_k(\tilde{z}_k) | = 1$.
To proceed further, we'd like to be able to say that $| d v_k(z) |$
satisfies a uniform bound for $z \in B_{\epsilon_k R_k}$, as then
Lemma~\ref{lemma:gradBounds} would give a subsequence converging in
$C^\infty_\text{loc}$ on~$\CC$.  Such a bound is not obvious: it would
require being able to bound $| d\tilde{u}_k(z) |$ in terms of
$| d\tilde{u}_k(\tilde{z}_k) |$ for all $z \in B_{\epsilon_k}(\tilde{z}_k)$.
While there is no reason that such a bound should 
necessarily hold for the chosen sequence, the
following topological lemma due to Hofer tells us that we can always
ensure this bound after a slight adjustment.
\begin{lemma}[Hofer]
\label{lemma:Hofer}
Suppose $(X,d)$ is a complete metric space, $g : X \to [0,\infty)$ is
continuous, $x_0 \in X$ and $\epsilon_0 > 0$.  Then there exist
$x \in X$ and $\epsilon > 0$ such that,
\begin{enumerate}
\renewcommand{\labelenumi}{(\alph{enumi})}
\item $\epsilon \le \epsilon_0$,
\item $g(x) \epsilon \ge g(x_0) \epsilon_0$,
\item $d(x,x_0) \le 2\epsilon_0$, and
\item $g(y) \le 2 g(x)$ for all $y \in \overline{B_\epsilon(x)}$.
\end{enumerate}
\end{lemma}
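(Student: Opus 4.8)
The plan is to argue by contradiction: suppose no such pair $(x,\epsilon)$ exists, with the intention of constructing a divergent Cauchy sequence in the complete metric space $(X,d)$. The guiding idea is that if property (d) fails for a candidate point, then there is a nearby point where $g$ is more than twice as large, and on that point we only need half the radius while still satisfying property (b); iterating, one produces a sequence of points marching off to infinity in $g$-value but converging in $X$, contradicting continuity of $g$ at the limit.

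First I would set up the iteration. Put $x_0$ and $\epsilon_0$ as given, and suppose for contradiction that the pair $(x_0,\epsilon_0)$ itself violates the conclusion. Since (a) holds trivially, (b) holds trivially (equality), and (c) holds trivially ($d(x_0,x_0)=0$), the only property that can fail is (d). So there exists $x_1 \in \overline{B_{\epsilon_0}(x_0)}$ with $g(x_1) > 2 g(x_0)$. Now set $\epsilon_1 := \epsilon_0 / 2$; then I would check that the pair $(x_1,\epsilon_1)$ satisfies (a) (obvious), (b) ($g(x_1)\epsilon_1 > 2g(x_0)\cdot \epsilon_0/2 = g(x_0)\epsilon_0$, so in particular $\ge$), and a partial version of (c). If $(x_1,\epsilon_1)$ still fails the conclusion, it can only fail (d) (as before the other three hold), so we repeat. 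This produces sequences $x_n \in X$ and $\epsilon_n = \epsilon_0 / 2^n$ with $d(x_{n+1},x_n) \le \epsilon_n = \epsilon_0/2^n$ and $g(x_{n+1}) > 2 g(x_n)$, hence $g(x_n) > 2^n g(x_0)$ — but here I should be slightly careful if $g(x_0) = 0$, in which case the hypothesis $g(x_n) > 2^n g(x_0)$ is vacuous; instead I would note that if at any stage $g$ vanishes, (d) is actually satisfied automatically for sufficiently small $\epsilon$, or more simply that the chain of strict inequalities $g(x_{n+1}) > 2 g(x_n)$ forces $g(x_n) > 0$ for all $n \ge 1$ once $g(x_1) > 2g(x_0) \ge 0$, and in the degenerate case $g(x_1) = 0$ cannot happen when it exceeds $2g(x_0)$ unless... — this is a routine case check I would dispatch cleanly by observing $g(x_1) > 2 g(x_0) \ge 0$ gives $g(x_1) > 0$, and then $g(x_n) \to \infty$.

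Next I would extract the contradiction. The estimate $d(x_{n+1},x_n) \le \epsilon_0 / 2^n$ shows $(x_n)$ is Cauchy, so by completeness it converges to some $x_\infty \in X$; moreover $d(x_\infty, x_0) \le \sum_{n=0}^\infty \epsilon_0/2^n = 2\epsilon_0$. Continuity of $g$ then forces $g(x_n) \to g(x_\infty) < \infty$, contradicting $g(x_n) \to \infty$. Therefore the iteration must terminate: there is some $n$ for which $(x_n, \epsilon_n)$ satisfies all four properties. Finally I would verify that this terminating pair $(x,\epsilon) := (x_n,\epsilon_n)$ meets every requirement: (a) holds since $\epsilon_n = \epsilon_0/2^n \le \epsilon_0$; (b) holds by the telescoping estimate $g(x_n)\epsilon_n \ge g(x_0)\epsilon_0$ accumulated along the way; (c) holds since $d(x_n,x_0) \le \sum_{j=0}^{n-1} \epsilon_0/2^j < 2\epsilon_0$; and (d) is exactly the property whose failure drove the iteration, so at the terminating index it holds.

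The step I expect to require the most care is not any single estimate but the bookkeeping of the induction — precisely formulating "the pair $(x_n,\epsilon_n)$ either satisfies the conclusion or fails only property (d)," since one must confirm that (a), (b), (c) are automatically inherited at each stage so that (d) is genuinely the only escape route. Everything else is elementary: a geometric series for completeness and the triangle-inequality bound, plus continuity of $g$ for the final contradiction. No compactness or smoothness enters; this is purely a metric-space argument, which is why it applies verbatim in the holomorphic-curve setting with $X$ a ball in $\CC$, $d$ the Euclidean metric, and $g(z) = |d\tilde u_k(z)|$.
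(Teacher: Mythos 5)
Your proposal is correct and follows essentially the same argument as the paper: the iteration that halves $\epsilon$ and more than doubles $g$ whenever (d) fails, terminated by the observation that an infinite iteration would produce a Cauchy sequence on which $g$ blows up, contradicting completeness and continuity. Your verification that (a), (b), (c) are automatically inherited at each stage (so that (d) is the only way to fail) and the telescoping estimates are exactly the bookkeeping the paper leaves implicit.
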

\begin{proof}
If there is no $x_1 \in \overline{B_{\epsilon_0}(x_0)}$ such that
$g(x_1) > 2 g(x_0)$, then we can set $x = x_0$ and $\epsilon = \epsilon_0$
and are done.  If such a point~$x_1$ does exist, then we
set $\epsilon_1 := \epsilon_0 / 2$ and repeat the above process for the
pair $(x_1,\epsilon_1)$: that is, if there is no $x_2 \in
\overline{B_{\epsilon_1}(x_1)}$ with $g(x_2) > 2 g(x_1)$, we set
$(x,\epsilon) = (x_1,\epsilon_1)$ and are finished, and 
otherwise define $\epsilon_2 = \epsilon_1 / 2$ and repeat for
$(x_2,\epsilon_2)$.  This process must eventually terminate, as otherwise
we obtain a Cauchy sequence $x_n$ with $g(x_n) \to \infty$, which is
impossible if~$X$ is complete.
\end{proof}
The upshot of the lemma is that the sequences $\epsilon_k > 0$ and
$\tilde{z}_k \in B$ can be modified slightly to have the additional
property that
\begin{equation}
\label{eqn:extraCondition}
| d \tilde{u}_k(z) | \le 2 | d \tilde{u}_k(\tilde{z}_k) | \quad
\text{ for all $z \in \overline{B_{\epsilon_k}(\tilde{z}_k)}$}.
\end{equation}
From this it follows that the rescaled sequence $v_k : B_{\epsilon_k R_k} \to W$
satisfies
$$
| dv_k(z) |  \le 2,
\qquad
| dv_k(0) | = 1,
$$ 
so we conclude from 
Lemma~\ref{lemma:gradBounds} that a subsequence of $v_k$ converges in
$C^\infty_{\text{loc}}(\CC,W)$ to a $J$-holomorphic plane
$$
v_\infty : (\CC,i) \to (W,J)
$$
which satisfies $| d v_\infty(0) | = 1$ and is thus not constant.
We claim that $v_\infty$ also has finite energy bounded by~$\hbar$.  
Indeed, for any $R > 0$, we have
$$
\int_{B_{R}} v_\infty^*\Omega = \lim_k \int_{B_R} v_k^*\Omega,
$$
while for sufficiently large~$k$,
$$
\int_{B_{R}} v_k^*\Omega \le
\int_{B_{\epsilon_k R_k}} v_k^*\Omega =
\int_{B_{\epsilon_k}(\tilde{z}_k)} \tilde{u}_k^*\Omega =
\int_{\varphi\left(B_{\epsilon_k}(\tilde{z}_k)\right)} u_k^*\Omega
\le \int_{S^2} u_k^*\Omega = \hbar.
$$
Applying the removable singularity theorem (Prop.~\ref{prop:removable}),
$v_\infty$ thus extends to a nonconstant $J$-holomorphic sphere
$$
v_\infty : (S^2,i) \to (W,J),
$$
and energy quantization (Lemma~\ref{lemma:NSquantization}) implies that its
energy is exactly~$\hbar$.  This sphere is our first real life example
of a so-called ``bubble''.

We claim next that if the above scenario happens, then for any other
sequence $z_k' \in S^2$ with $| d u_k(z_k') | \to \infty$, $z_k'$ can
only accumulate at the same point $z_\infty$ again.  Indeed, otherwise
the above procedure produces a second bubble
$v_\infty' : (S^2,i) \to (W,J)$ with energy~$\hbar$, and by inspecting the
energy estimate above, one sees that
for large~$k$, $u_k$ must have a concentration of energy close to~$\hbar$
in small neighborhoods of both $z_\infty$ and~$z_\infty'$.  That is
impossible since $E(u_k)$ is already bounded by~$\hbar$.

The above implies that on any compact subset of $S^2 \setminus \{z_\infty\}$,
$u_k$ satisfies a uniform $C^1$-bound and thus converges 
in $C^\infty_{\text{loc}}(S^2 \setminus \{ z_\infty \})$ to a
$J$-holomorphic punctured sphere
$$
u_\infty : (S^2 \setminus \{ z_\infty \},i) \to (W,J).
$$
Moreover, we have
$$
u_\infty(0) \in \{\zeta_\infty\} \times M, 
\quad u_\infty(1) \in \{1\} \times M \quad \text{ and } \quad
u_\infty(\infty) \in \{\infty\} \times M
$$
unless $z_\infty \in \{\zeta_\infty,1,\infty\}$, in which case at least 
two of these three
statements still holds.  It follows that $u_\infty$ cannot be constant,
so by Lemma~\ref{lemma:NSquantization} it has energy at least~$\hbar$.
But this again gives a contradiction if the bubble $v_\infty$ exists, 
as it implies that for large~$k$, the restrictions of~$u_k$ to some 
large subset of $S^2 \setminus \{z_\infty\}$ and some disjoint small 
neighborhood of $z_\infty$ each have energy at least slightly less 
than~$\hbar$, so that $\int_{S^2} u_k^*\Omega$ must be strictler greater
than~$\hbar$.  This contradiction excludes the bubbling scenario,
thus establishing the desired $C^1$-bound for~$u_k$ and completing
the proof of Theorem~\ref{thm:NSCompactness}.


\backmatter

\begin{bibdiv}
\begin{biblist}
\bibselect{wendlc}
\end{biblist}
\end{bibdiv}

\end{document}